\documentclass[11pt,twoside]{amsart}
\usepackage[dvips]{graphicx}
\usepackage[usenames,dvipsnames]{xcolor}
\usepackage{colortbl}
\usepackage{multirow}
\usepackage{pgf,tikz}
\usepackage{tkz-graph}
\usepackage{subfig}
\usepackage{amsmath, amsthm, amscd, amsfonts, amssymb, color}
\usepackage[bookmarksnumbered, plainpages]{hyperref}
\usepackage{relsize}
\usepackage{longtable}
\addtolength{\topmargin}{-1.5cm}
\linespread {1.3}
\textwidth 17.3cm
\textheight 23.3cm
\addtolength{\hoffset}{-0.3cm}
\oddsidemargin 0cm
\evensidemargin 0cm
\setcounter{page}{1}

\markboth{{\small\rm \hfill A. Abdollahi and F. Jafari
\hfill}\hspace{-\textwidth}%
\underline{${{}_{}}_{}$\hspace{\textwidth}}}
{\underline{${{}_{}}_{}$\hspace{\textwidth}}\hspace{-\textwidth}%
{\small\rm \hfill A. Abdollahi and F. Jafari
\hfill}}


\newtheorem{thm}{Theorem}[section]
\newtheorem{cor}[thm]{Corollary}
\newtheorem{lem}[thm]{Lemma}

\newtheorem{defn}[thm]{Definition}
\newtheorem{rem}[thm]{Remark}
\newtheorem{conj}[thm]{Conjecture}

\numberwithin{equation}{section}

\newtheorem{ex}{Example}


\begin{document}

\oddsidemargin 0mm
\evensidemargin 0mm

\thispagestyle{plain}

\vspace{5cc}
\begin{center}

{\large\bf   Zero divisor and unit  elements with supports of size $ 4 $ in
group algebras of torsion-free groups}
\rule{0mm}{6mm}\renewcommand{\thefootnote}{}
\footnotetext{{\scriptsize 2010 Mathematics Subject Classification: 20C07, 16S34. }\\
{\rule{2.4mm}{0mm} \scriptsize Keywords and Phrases: Kaplansky  zero divisor conjecture, Kaplansky  unit conjecture, group ring, torsion-free group.}}

\vspace{1.5cc}
{\large\it Alireza Abdollahi and Fatemeh Jafari}

\vspace{2.5cc}
\parbox{28cc}{{\small \textbf{ABSTRACT. } Kaplansky  Zero Divisor Conjecture states that if $G $ is a 
torsion-free group and $  \mathbb{F} $  is a field, then the group ring $\mathbb{F}[G]$ contains no zero divisor and Kaplansky  Unit Conjecture states that if $G $ is a torsion-free group and $  \mathbb{F} $  is a field, then $\mathbb{F}[G]$ contains no non-trivial units. The support of an  element $ \alpha= \sum_{x\in G}\alpha_xx$ in $\mathbb{F}[G] $, denoted by $supp(\alpha)$, is the set $ \{x \in G|\alpha_x\neq 0\} $.  In this paper we  study  possible zero divisors and units with supports of size $ 4 $ in group algebras of torsion-free groups. We prove that if
 $ \alpha, \beta $ are non-zero elements in $ \mathbb{F}[G] $ for a possible torsion-free group $ G $ and an arbitrary field $ \mathbb{F} $ such that $ |supp(\alpha)|=4 $ and $ \alpha\beta=0 $, then   $|supp(\beta)|\geq 7 $. In [J. Group Theory, $16$ $ (2013),$ no. $5$, $667$-$693$], it is proved that if $ \mathbb{F}=\mathbb{F}_2 $ is the field with two elements, $ G $ is a  torsion-free group and $ \alpha,\beta \in \mathbb{F}_2[G]\setminus \{0\}$ such that  $|supp(\alpha)|=4 $ and $ \alpha\beta =0 $, then   $|supp(\beta)|\geq 8$. We improve the latter result to  $|supp(\beta)|\geq 9$. Also, 
concerning the Unit Conjecture, we prove that if $\mathsf{a}\mathsf{b}=1$ for some $\mathsf{a},\mathsf{b}\in \mathbb{F}[G]$ and $|supp(\mathsf{a})|=4$, then  $|supp(\mathsf{b})|\geq 6$. 
  }}

\end{center}

\vspace{1cc}
\vspace{1cc}
\begin{center}
\section {\bf Introduction and Results}
\end{center}
We call a non-zero element $\alpha$ of a ring  zero divisor if $\alpha \beta=0$  for some non-zero element  $\beta$ in the ring.
Let $G$ be a  group and $\mathbb{F}$  any field. We denote by  $\mathbb{F}[G]$  the group algebra of  $G$ over  $\mathbb{F}$. 
In 1940, Irving Kaplansky \cite{100} stated his well known conjecture as follows:\\
\begin{conj} [Kaplansky Zero Divisor Conjecture] \label{KZDC} Let $  \mathbb{F}$ be a field and $G$ a torsion-free group. Then $\mathbb{F}[G]$ contains no  zero divisor. 
\end{conj}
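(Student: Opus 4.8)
The plan is to first reduce to the case where $G$ is finitely generated. Any hypothetical relation $\alpha\beta=0$ with $\alpha,\beta\neq 0$ involves only the finitely many group elements lying in $\mathrm{supp}(\alpha)\cup\mathrm{supp}(\beta)$; letting $H$ be the subgroup they generate, $H$ is again torsion-free and finitely generated, and $\alpha,\beta\in\mathbb{F}[H]$. Thus it suffices to show that $\mathbb{F}[H]$ has no zero divisor, so I may assume $G$ is finitely generated throughout.

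The first genuine attack is a leading-term argument. Suppose $G$ carries a total order $\leq$ invariant under both left and right multiplication. Writing $\alpha=\sum_{x}\alpha_x x$ and $\beta=\sum_{y}\beta_y y$ and setting $A=\mathrm{supp}(\alpha)$, $B=\mathrm{supp}(\beta)$, let $a$ and $b$ be the $\leq$-maximal elements of $A$ and $B$. Invariance forces $ab$ to be the unique $\leq$-maximal element of the product set $AB$, so its coefficient in $\alpha\beta$ equals $\alpha_a\beta_b\neq 0$, contradicting $\alpha\beta=0$. This settles the conjecture for all bi-orderable groups, and the same idea---run under the weaker hypothesis that $AB$ contains some element admitting a unique factorization $ab$ with $a\in A$, $b\in B$---settles it for all unique-product groups. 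The strategy would then aim to prove that every finitely generated torsion-free group is a unique-product group.

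The hard part, and the reason the statement is a conjecture rather than a theorem, is precisely this last implication, which is \emph{false}: the Rips--Segev and Promislow constructions exhibit finitely generated torsion-free groups that are not unique-product groups, producing finite sets $A,B$ in which every element of $AB$ has at least two distinct factorizations. For such $G$ the leading-term argument collapses, and one must instead rule out total cancellation combinatorially: although no single product monomial is isolated, a surviving nonzero coefficient should persist because the $\alpha_x\beta_y$ cannot conspire to vanish simultaneously across the entire overlap pattern of $AB$. Making this precise requires controlling the incidence structure between $A$ and $B$, and the only tractable handle is to fix $|A|$ small and force $|B|$ to grow. I expect the decisive obstacle to be the absence, in the non-unique-product case, of any order-like invariant singling out a monomial that must survive; without it, an unconditional proof appears to demand a global combinatorial identity on torsion-free groups that is not presently available, so the realistic deliverable is a quantitative lower bound on $|\mathrm{supp}(\beta)|$ rather than an outright contradiction for supports of arbitrary size.
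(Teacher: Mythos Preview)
The statement is a \emph{conjecture}, not a theorem, and the paper does not prove it; it is stated as Conjecture~\ref{KZDC} and remains open. Your proposal correctly recognizes this: you outline the standard reduction to finitely generated groups and the unique-product (leading-term) argument, then identify precisely why that argument cannot close the gap---namely the existence of torsion-free non-unique-product groups (Promislow, Rips--Segev). Your conclusion, that the realistic output is a quantitative lower bound on $|\mathrm{supp}(\beta)|$ in terms of $|\mathrm{supp}(\alpha)|$ rather than a full proof, is exactly what the paper delivers (Theorems~1.4 and~1.5). So there is no gap in your reasoning; you have simply, and accurately, explained why no proof exists and situated the paper's actual contribution within that landscape.
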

Another famous conjecture of Kaplansky on group algebras is the following \cite{100}:
\begin{conj}[Kaplansky Unit Conjecture]\label{uk}
Let $  \mathbb{F}$ be a field and $G$ a torsion-free group. Then $\mathbb{F}[G]$ contains no  non-trivial units $($i.e., non-zero scalar multiples of group elements$)$.
\end{conj}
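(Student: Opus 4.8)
The plan is to reduce the conjecture to a statement about the multiplicative combinatorics of finite subsets of $G$ and then to exploit an order or ``unique product'' structure on the group to control the extreme terms of a product. First I would observe that if $\mathsf{a}\mathsf{b}=1$ then, replacing $G$ by the subgroup generated by $\mathrm{supp}(\mathsf{a})\cup\mathrm{supp}(\mathsf{b})$, we may assume $G$ is finitely generated and torsion-free, and, after multiplying by a suitable group element, that $1\in\mathrm{supp}(\mathsf{a})$. Writing $A=\mathrm{supp}(\mathsf{a})$ and $B=\mathrm{supp}(\mathsf{b})$, the identity $\mathsf{a}\mathsf{b}=1$ forces every element of the product set $AB$ other than $1$ to be hit by at least two cancelling pairs $(a,b)$ with $ab=g$. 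The aim is to show that this massive cancellation is impossible unless $|A|=|B|=1$, i.e. unless $\mathsf{a}$ and $\mathsf{b}$ are scalar multiples of group elements.

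The cleanest route is through ordered groups. If $G$ carries a total order $<$ invariant under both left and right translation, then for nonzero $\alpha,\beta\in\mathbb{F}[G]$ the extreme elements of $\mathrm{supp}(\alpha\beta)$ are $\min\mathrm{supp}(\alpha)\cdot\min\mathrm{supp}(\beta)$ and $\max\mathrm{supp}(\alpha)\cdot\max\mathrm{supp}(\beta)$, each attained by a unique pair and hence surviving in $\alpha\beta$. Applying this to $\mathsf{a}\mathsf{b}=1$, whose support is the single point $1$, forces $\min A=\max A$ and $\min B=\max B$, so $|A|=|B|=1$ and the unit is trivial. The same conclusion holds, more generally, whenever $G$ is a unique product group: for finite $A,B$ with $|A|+|B|>2$ at least two elements of $AB$ admit a unique factorisation, so at least two coefficients of $\mathsf{a}\mathsf{b}$ cannot cancel, again contradicting $\mathsf{a}\mathsf{b}=1$. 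Thus the plan reduces to proving that every torsion-free group is a unique product group, which would settle the Unit Conjecture (and the Zero Divisor Conjecture) at a single stroke.

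The hard part --- and the reason the conjecture remains open --- is precisely this last step. Not every torsion-free group is orderable, and, worse, there exist finitely generated torsion-free groups failing the unique product property (Promislow's group is the standard example), so the leading-term method cannot reach the general case. For such groups I would have to replace the single ``unique'' product element by a finer invariant: one wants to show that even when \emph{no} product in $AB$ is unique, the pattern of multiplicities forced by $\mathsf{a}\mathsf{b}=1$ over-determines the coefficients of $\mathsf{a}$ and $\mathsf{b}$ in a way that cannot be realised inside a torsion-free group. This is exactly why a careful analysis of small supports is the natural first target: bounding $|\mathrm{supp}(\mathsf{b})|$ from below when $|\mathrm{supp}(\mathsf{a})|=4$, as in the present paper, is a concrete instance of ruling out such multiplicity patterns, and I would expect the full conjecture to require either a uniform combinatorial obstruction of this kind or an entirely new structural input beyond orderability.
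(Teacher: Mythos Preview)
The statement you are addressing is Conjecture~\ref{uk}, not a theorem: the paper does not prove it and does not claim to. There is therefore no ``paper's own proof'' to compare your proposal against; the paper instead establishes partial results of the type you anticipate in your final paragraph, namely lower bounds on $|\mathrm{supp}(\mathsf{b})|$ when $|\mathrm{supp}(\mathsf{a})|=4$ (Theorem~\ref{finalunit}).

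Your proposal is not a proof but a correct and well-organised survey of the state of affairs: the reduction to finitely generated $G$, the unique-product argument, and the obstruction coming from Promislow-type groups are all accurate, and you explicitly identify the gap --- ``proving that every torsion-free group is a unique product group'' --- as the missing step. You are right that this step cannot be completed, since torsion-free non-unique-product groups exist. Your closing remarks accurately describe the strategy the present paper pursues.

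For completeness you should be aware that the Unit Conjecture is now known to be \emph{false}: Gardam (2021) constructed a non-trivial unit in $\mathbb{F}_2[P]$ where $P$ is the Promislow (Hantzsche--Wendt) group, precisely the torsion-free group without the unique product property that blocks your argument. So no proof along the lines you sketch --- nor any other --- can succeed over all fields; the paper under review predates this development.
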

 Conjecture \ref{uk} is actually  stronger than   Conjecture \ref{KZDC} so that  the affirmative solution to Conjecture \ref{uk} implies the positive one for Conjecture \ref{KZDC} \cite[Lemma 13.1.2]{PI}.  Conjecture \ref{uk} is known for any field $ \mathbb{F} $  and any unique product group $ G $  \cite{PI,b1}. 
Conjecture \ref{KZDC} is known to be hold valid for any field $ \mathbb{F} $, where $G$ is a  unique product group \cite[Theorem 26.2]{PI} or  elementary amenable group \cite{a5}. The proof of the former case is elementary however the proof of the latter one needs  advanced concepts and techniques. The latter results are somehow the state-of-art of what one knows on Conjecture \ref{KZDC}; for more details concerning  Conjecture \ref{KZDC} the reader may see  \cite{Brown, a12, a2, a13,  a5, a14, a6, PI, a1}. 
  Conjecture \ref{KZDC} is still open and attempts to confirm it for fixed
fields $  \mathbb{F}$ were even unsuccessful.  It seems that the simplest case to verify Conjecture \ref{KZDC} is for the field with 2 elements which is still far from to have solved.\\
For each  element $ \alpha= \sum_{x\in G}\alpha_xx$ in $\mathbb{F}[G] $, the support of $ \alpha $, denoted by $supp(\alpha)$, is the set $ \{x \in G|\alpha_x\neq 0\}$. \\
 Recently,  zero divisors and  units with small supports have been studied in \cite{BP,PS} and \cite{a55}, respectively. It is proved that the group algebra of a torsion-free group has no zero divisor and no unit with support of size at most $ 2  $  \cite[Theorem $2.1$]{PS} and \cite[Theorem $4.2$]{a55}, respectively. Also,   by using a combinatorial structure in \cite{PS} it is shown that if $ G $ is a torsion-free group and  $ \alpha,\beta \in \mathbb{F}_2[G]\setminus \{0\} $ such that $ |supp(\alpha)|=4$,  then $ |supp(\beta)|>4 $ \cite[Theorem $1.2$]{PS} and with a computer-assisted approach  the latter result improved to $|supp(\beta)|>7$  \cite[Theorem 1.3]{PS}. 
Zero divisors with support of size $3$ are also studied in \cite{PS} and it is proved that  if $ G $ is a torsion-free group and  $ \alpha,\beta \in \mathbb{F}_2[G]\setminus \{0\} $ such that $ |supp(\alpha)|=3$,  then $ |supp(\beta)|>16$. Unit elements with support of size $3$ are studied in \cite{a55}, where it is proved that 
if $G$ is a torsion-free group and $ \mathsf{a},\mathsf{b} \in \mathbb{F}_2[G]$ such that  $\mathsf{a} \mathsf{b}=1$,  then $ |supp(\mathsf{b})|>11$ whenever $ |supp(\mathsf{a})|=3$; and $ |supp(\mathsf{b})|>5$ whenever $ |supp(\mathsf{a})|=5$. 
 In \cite{AT} using two multigraphs $Z(\alpha,\beta)$ and $U(\mathsf{a},\mathsf{b})$ associated with a pair $(\alpha,\beta)$ of zero divisors (i.e. $\alpha \beta=0$) or a pair $(\mathsf{a},\mathsf{b})$ of unit elements (i.e. $\mathsf{a}\mathsf{b}=1$) of any support sizes in a group algebra the lower bound $16$ is improved to $18$ in the latter result; also units with support of size $3$ have been studied. As we are following the same approach introduced in \cite{AT} to study zero divisors and units with support size 3,  we need to  remind some definitions from graph theory that we will use in the sequel. \\

 A graph $ \mathcal{G}=(\mathcal{V}_{\mathcal{G}},\mathcal{E}_{\mathcal{G}},\psi_{\mathcal{G}}) $  consists of a non-empty set $ \mathcal{V}_{\mathcal{G}} $, a possibly empty set $ \mathcal{E}_{\mathcal{G}} $ and if $ \mathcal{E}_{\mathcal{G}}\neq \varnothing $ a function $ \psi_{\mathcal{G}}:\mathcal{E}_{\mathcal{G}}\longrightarrow \mathcal{V}_{\mathcal{G}}^{(1)}\cup \mathcal{V}_{\mathcal{G}}^{(2)}\cup (\mathcal{V}_{\mathcal{G}}\times \mathcal{V}_{\mathcal{G}}) $, where $ \mathcal{V}_{\mathcal{G}}^{(i)} $ denotes the set of all $  i$-element subsets of $ \mathcal{V}_{\mathcal{G}} $ for $i = 1, 2$. The elements of $ \mathcal{V}_{\mathcal{G}} $ and $ \mathcal{E}_{\mathcal{G}} $ are called vertices and edges of the graph $\mathcal{G}$, respectively. An edge
$e \in \mathcal{E}_{\mathcal{G}}$ is called an undirected loop if $ \psi_{\mathcal{G}}(e)\in \mathcal{V}_{\mathcal{G}}^{(1)} $. The edge $e$ is called directed if $ \psi_{\mathcal{G}}(e)\in \mathcal{V}_{\mathcal{G}}\times \mathcal{V}_{\mathcal{G}} $ and it is called a directed loop if $ \psi_{\mathcal{G}}(e)= (v, v)$ for some $v \in \mathcal{V}_{\mathcal{G}}$. The graph $ \mathcal{G} $ is called undirected if
$\psi_{\mathcal{G}}(\mathcal{E}_{\mathcal{G}})\cap (\mathcal{V}_{\mathcal{G}}\times \mathcal{V}_{\mathcal{G}}) = \varnothing$. We say a vertex $u$ is adjacent to a vertex $v$, denoted by $u \sim v$, if $\psi_{\mathcal{G}}(e) = \{u, v\}, (u, v)$ or $(v, u)$ for some $e \in \mathcal{E}_{\mathcal{G}}$; otherwise, we say $u$ is not adjacent to $v$, denoted by $u \nsim v$. In the latter case the vertices $u$ and $v$ are called the endpoints of the edge $e$ and we say $e$ joins its endpoints. If a vertex $v$ is an endpoint of an edge $e$, we say $v$ is adjacent to $e$ or also $e$ is adjacent to $v$. Two edges $e_1$ and
$e_2$ are called adjacent, denoted by $e_1\sim e_2$, if the sets of their endpoints have non-empty intersection.
The graph $ \mathcal{G} $ is called loopless whenever $\psi_{\mathcal{G}}(\mathcal{E}_{\mathcal{G}}) \cap \mathcal{V}_{\mathcal{G}}^{(1)} = \varnothing$ and $\psi_{\mathcal{G}}(\mathcal{E}_{\mathcal{G}}) \cap \{(v, v)|v \in \mathcal{V}_{\mathcal{G}}\}=\varnothing$.  Multi-edges are  two or more edges  that have the same endpoints.  A multigraph is a graph which is permitted to have multi-edges but they have  no loop. A simple graph is an undirected loopless graph having no multi-edge. So, the graph $ \mathcal{G} $ is simple if either $ \mathcal{E}_{\mathcal{G}} $ is empty or $ \psi_{\mathcal{G}}$ is an injective function from
$ \mathcal{E}_{\mathcal{G}} $ to $ \mathcal{V}_{\mathcal{G}}^{(2)} $. The degree of a vertex $v$ of $ \mathcal{G} $, denoted by $deg_{\mathcal{G}}(v)$, is the number of edges adjacent to $v$.
If all vertices of a graph have the same degree $k$, we say that the graph is $k$-regular.  
We say an undirected graph is connected if for any two distinct vertices $v$ and $w$ there are vertices $v_0=v, v_1, \dots , v_n=w$ such that $v_{j-1}\sim v_j$ for $j = 1, \dots , n$. A
subgraph of a graph $\mathcal{G}$ is a graph $H$ such that $ \mathcal{V}_{\mathcal{H}} \subseteq  \mathcal{V}_{\mathcal{G}}, \mathcal{E}_{\mathcal{H}} \subseteq  \mathcal{E}_{\mathcal{G}}$ and $\psi_{\mathcal{H}}$ is the restriction of $ \psi_{\mathcal{G}} $ to
$\mathcal{E}_{\mathcal{H}}  $. In a graph $ \mathcal{G} $, an induced subgraph $\mathcal{I}$ on a set of vertices $W \subseteq \mathcal{V}_{\mathcal{G}}$ is a subgraph in which $\mathcal{V}_{\mathcal{I}} = W$
and $\mathcal{E}_{\mathcal{I}} = \{e \in \mathcal{E}_{\mathcal{G}} \;|\; \text{the endpoints of} \; e \;\text{are in}\; W\}$. An isomorphism between two undirected and loopless
graphs $\mathcal{G}$ and $\mathcal{H}$ is a pair of bijections $\phi_{\mathcal{V}}:\mathcal{V}_{\mathcal{G}}\longrightarrow \mathcal{V}_{\mathcal{H}} $ and $\phi_{\mathcal{E}}:\mathcal{E}_{\mathcal{G}}\longrightarrow \mathcal{E}_{\mathcal{H}} $ preserving adjacency and non-adjacency i.e., for any pair of vertices $u, v \in \mathcal{V}_{\mathcal{G}}$, $\phi_{\mathcal{V}}(u) \sim \phi_{\mathcal{V}} (v) \Leftrightarrow  u \sim v $ and for any pair of edges $e_1, e_2 \in  \mathcal{V}_{\mathcal{G}}$, $\phi_{\mathcal{E}}(e_1) \sim \phi_{\mathcal{E}} (e_2) \Leftrightarrow e_1 \sim e_2$. Two undirected and loopless graphs $\mathcal{G}$ and $\mathcal{H}$ are called
isomorphic (denoted by $\mathcal{G}\cong\mathcal{H}$) if there is an isomorphism between them. We say that an undirected
and loopless graph $\mathcal{H}$ is a forbidden subgraph of a graph $\mathcal{G}$ if there is no subgraph isomorphic to $\mathcal{H}$ in
$\mathcal{G}$. A Cayley graph of a group $G$ with respect to an inverse closed set $S$ with $1\not\in S$, we mean  the graph whose vertex set is $G$ and two vertices $x$ and $y$ are adjacent if $xy^{-1} \in S$.

 In this paper  we  study  $ Z(\alpha,\beta) $ and $ U(\mathsf{a},\mathsf{b}) $ for a pair of zero divisors  $ (\alpha,\beta) $ and a pair of unit elements $ (\mathsf{a},\mathsf{b})$ in $\mathbb{F}[G]$ for a possible torsion-free group $ G $ and arbitrary field $ \mathbb{F} $ such that $ supp(\alpha) $ and $ supp(\mathsf{a}) $ are  of size  4. We specifically study  $ Z(\alpha,\beta) $ for a pair of zero divisors  $ (\alpha,\beta) $ in $\mathbb{F}_2[G]$ for a possible torsion-free group $ G $ such that $ |supp(\alpha)|=4 $. Note that any group algebra over the field $ \mathbb{F}_2 $ has no  unit with support of size  $ 4  $ (Remark \ref{unit4}, below). For a non-zero element $\alpha$ of a group algebra we denote by  $S_{\alpha}$ the set $\lbrace h^{-1}h^\prime \mid h\neq h^\prime, h,h^\prime \in supp(\alpha) \rbrace$. 
Concerning  Conjecture \ref{KZDC}, our main results  are the following:
\begin{thm} \label{a} 
Let $G $ be a torsion-free group, $  \mathbb{F} $  an arbitrary field and $ \alpha,\beta $ be non-zero elements of $ \mathbb{F}[G]$ such that $ |supp(\alpha)|=4 $ and $ \alpha\beta=0 $. Then the following statements hold:
\begin{itemize}
\item[i.]$ | S_{\alpha} | \in \{10,12\} $.
\item[ii.] If  $ | S_{\alpha} | =12 $, then $ Z(\alpha,\beta) $ contains no subgraph isomorphic to one of the  graphs in Figure {\rm \ref{606}}, where each vertex of the subgraph has degree $ 4 $ in $ Z(\alpha,\beta) $, and the graph in Figure {\rm\ref{6066}}, where the degree of white vertices of the subgraph in $ Z(\alpha ,\beta) $ must be 5.
\item[iii.] If  $ | S_{\alpha} | =12 $ and $\mathbb{F}=\mathbb{F}_2$, then $Z(\alpha,\beta)$ contains a $4$-regular subgraph.
\item[iv.]If  $ | S_{\alpha} | =10 $, then $ Z(\alpha,\beta) $  contains no    subgraph isomorphic to   the graph in Figure {\rm \ref{multicycle}} for all   $ n\geq 3 $. 
\item[v.] If $ | S_{\alpha} | =10 $ and $\mathbb{F}=\mathbb{F}_2$, then $ Z(\alpha,\beta) $ contains no  subgraph isomorphic to one of the graphs in Figure {\rm \ref{tt}}, where the degrees of white vertices of any subgraph in $ Z(\alpha,\beta)$ must be $ 4 $.
\end{itemize}
\end{thm}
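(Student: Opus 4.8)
The plan is to fix a normalisation and extract from \cite{AT} the combinatorial skeleton of $Z(\alpha,\beta)$, then turn every closed walk in it into a relation in $G$ and play that against torsion-freeness. Concretely: after left-translating $\alpha$ we may assume $1\in supp(\alpha)$, which leaves $S_\alpha$ and (up to isomorphism) $Z(\alpha,\beta)$ unchanged; recall that the vertex set of $Z(\alpha,\beta)$ may be identified with $supp(\beta)$, and that an edge of $Z(\alpha,\beta)$ records a coincidence $gh=g'h'$ with $g,g'\in supp(\alpha)$ and $h,h'\in supp(\beta)$, carried along with the label $g'^{-1}g\in S_\alpha$. Two facts are immediate from $\alpha\beta=0$: for each $t\in supp(\alpha)\,supp(\beta)$ the multiplicity $m_t:=|\{(g,h):gh=t\}|$ lies in $\{2,3,4\}$ (its value mod the characteristic is the coefficient of $t$ in $\alpha\beta$, hence $0$; and over $\mathbb{F}_2$ this forces $m_t\in\{2,4\}$), and therefore $\deg_{Z(\alpha,\beta)}(h)=\sum_{g\in supp(\alpha)}(m_{gh}-1)\ge 4$ for every vertex $h$. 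Finally, reading the labels around any closed walk $h_0\sim h_1\sim\cdots\sim h_k=h_0$ telescopes to an identity $s_k\cdots s_1=1$ with each $s_i\in S_\alpha$; this single mechanism drives all the forbidden-subgraph parts.

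For part~(i) I would argue group-theoretically. The set $S_\alpha$ is inverse-closed and misses $1$, so $|S_\alpha|$ is even and at most $12$, and it remains to rule out $|S_\alpha|\le 8$. Writing $supp(\alpha)=\{1,b,c,d\}$, each drop of $2$ below the generic value $12$ is caused by a coincidence among the twelve quotients $b^{\pm1},c^{\pm1},d^{\pm1},b^{-1}c,\dots$, and each such coincidence is a multiplicative relation among $b,c,d$ (the degenerate coincidences $h^{-1}h'=h'^{-1}h$ already give torsion and are excluded). A short case analysis of the possible patterns then shows that two or more such relations force $supp(\alpha)$ either to collapse to fewer than four distinct elements, or to produce a nontrivial torsion element, or to lie inside a subgroup $H\le G$ for which $\mathbb{F}[H]$ is already known to contain no zero divisor --- $H$ abelian (so $\mathbb{F}[H]$ a domain) or, in the remaining small cases, $H$ elementary amenable \cite{a5}. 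Since $|supp(\alpha)|=4$ and $G$ is torsion-free the first two outcomes are impossible, and in the third, decomposing $\beta$ along a transversal of $H$ in $G$ shows $\alpha$ would be a zero divisor already in $\mathbb{F}[H]$, a contradiction. Hence $|S_\alpha|\in\{10,12\}$.

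In the case $|S_\alpha|=12$ I would dispose of part~(iii) first, as it is the shortest: over $\mathbb{F}_2$ every $m_t\in\{2,4\}$, and since $|S_\alpha|=12$ the ``collision cliques'' (a $K_{m_t}$ on the columns of the block $t$) are pairwise edge-disjoint, so replacing each $K_4$-clique by a perfect matching on its four vertices deletes edges to leave a spanning subgraph in which every vertex has degree exactly $\sum_{g}1=4$. For part~(ii), note that a vertex of degree $4$ in $Z(\alpha,\beta)$ must have all four local multiplicities $m_{gh}$ equal to $2$, hence exactly four neighbours --- one along each element of $supp(\alpha)$, joined by pairwise distinctly-labelled edges (distinctness uses $|S_\alpha|=12$) --- while a vertex of degree $5$ has exactly one $m_{gh}=3$ and the other three equal to $2$. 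Imposing these local pictures on the configurations of Figures~\ref{606} and~\ref{6066}, I would, for each listed graph, pick a closed walk through its degree-constrained vertices whose telescoped label-product is a fixed reduced word in $b,c,d$; because $|S_\alpha|=12$ prevents the word from simplifying through accidental coincidences, torsion-freeness makes it $\ne 1$, contradicting that the walk is closed.

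Parts~(iv) and~(v) run the same template for $|S_\alpha|=10$, the only new feature being the single essential coincidence $s=g_1^{-1}g_1'=g_2^{-1}g_2'$ (and its inverse), which may cause an $s$-labelled edge to come from either of two collisions, so the walk bookkeeping must branch on that choice. In part~(iv) the graph of Figure~\ref{multicycle} is an infinite family indexed by $n\ge 3$, so the crux is to show that the label-product around its distinguished cycle telescopes --- independently of $n$, or to an $n$-th power --- to an element which in a torsion-free group with $|S_\alpha|=10$ cannot be trivial; isolating that $n$-uniform obstruction settles all $n$ at once. In part~(v) the $\mathbb{F}_2$ restriction $m_t\in\{2,4\}$ pins down the structure at the white (degree-$4$) vertices of the graphs in Figure~\ref{tt} just as in part~(ii), and the closed-walk/torsion-free contradiction applies verbatim. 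I expect the principal obstacle to be the case bookkeeping itself: in part~(i), enumerating coincidence patterns without overlooking one that is genuinely realisable; and in parts~(ii), (iv), (v), certifying for every listed graph --- and, in (iv), uniformly in $n$ --- that there is a closed walk whose label word provably cannot equal $1$, which is precisely where torsion-freeness has to be applied at just the right spot.
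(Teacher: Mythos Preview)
Your treatment of parts (iii) and (iv) is essentially correct and matches the paper: the $K_4$-to-matching reduction for (iii) is exactly Theorem~\ref{4regularlem}, and for (iv) the paper (Theorem~\ref{lemtriangle}) first pins down $supp(\alpha)$ to one of the forms $\{1,x,x^{-1},y\}$ or $\{1,x,y,xy\}$ using Lemma~\ref{01}, observes that every multi-edge is then forced to carry the label $x^{\pm1}$ (resp.\ $y^{\pm1}$), and reads off $x^n=1$ (resp.\ $y^n=1$) around the multicycle---precisely your ``$n$-th power'' mechanism. For (i), your outline is close; the paper's Lemma~\ref{01} does the coincidence case analysis and shows the residual non-abelian, torsion-free possibility is specifically a quotient of the Klein bottle group (not a general elementary amenable group), which is handled because it is a unique product group (Remark~\ref{klein}).

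The genuine gap is in parts (ii) and (v). Your plan is to ``pick a closed walk \dots\ whose telescoped label-product is a fixed reduced word'' and invoke torsion-freeness. But the label word around a cycle is \emph{not} determined by the subgraph: each edge $e$ of an $m$-edge configuration can carry any of the $12$ ordered pairs $(h_e,h'_e)$ with $h_e\ne h'_e$, so a priori there are up to $12^m$ systems of relations $Rel_C(\Gamma_0)$ to rule out. The degree-$4$ constraint at a vertex only forces the four incident edge-labels to use all four elements of $supp(\alpha)$; it does not single out one labeling. The paper's proof (Appendix~\ref{app12} for $|S_\alpha|=12$, Appendix~\ref{app10} for $|S_\alpha|=10$) is an exhaustive, computer-assisted enumeration in GAP: for each forbidden graph one lists all non-equivalent labelings (e.g.\ $126$ for a single triangle, $2264$ for $K_{1,1,3}$, tens of thousands for the larger graphs), forms the group $\mathcal{G}(\Gamma_0)=\langle h_{e_0}^{-1}supp(\alpha)\mid Rel'_C(\Gamma_0)\rangle$, and checks that it is abelian, has nontrivial torsion, \emph{or} is a quotient of a Baumslag--Solitar group $BS(1,n)$. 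The last two alternatives---abelian and $BS(1,n)$---are where a large fraction of the cases land (see Tables~\ref{t2}, \ref{t3}, \ref{tgamma1}, \ref{t7}), and neither is captured by ``torsion-freeness makes the word $\ne1$''; they require the separate input that $\mathbb{F}[H]$ has no zero divisors for $H$ abelian or $H$ a torsion-free quotient of $BS(1,n)$. So your single closed-walk mechanism is not sufficient: you need the full labeling enumeration together with these three independent obstructions, and the paper carries this out case by case with machine assistance.
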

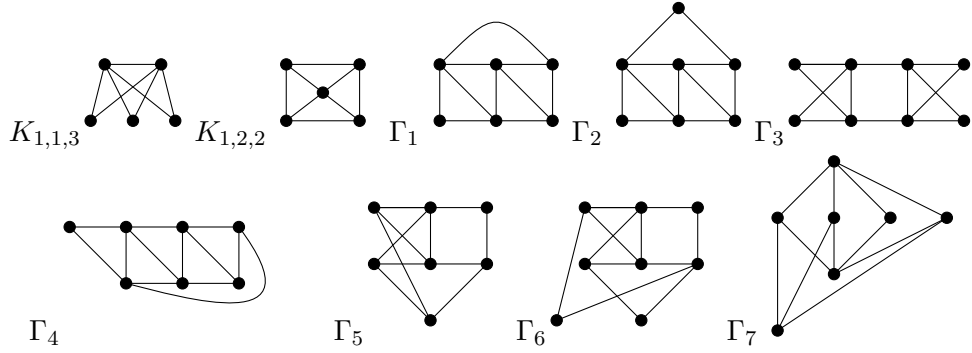
\begin{figure}
\subfloat{$ K_{1,1,3} $}
\begin{tikzpicture}[scale=.75]
\draw [fill] (0,0) circle
[radius=0.1] node  [left]  {};
\draw [fill] (1,0) circle
[radius=0.1] node  [right]  {};
\draw [fill] (-.25,-1) circle
[radius=0.1] node  [below]  {};
\draw [fill] (.5,-1) circle
[radius=0.1] node  [below]  {};
\draw [fill] (1.25,-1) circle
[radius=0.1] node  [below]  {};
\draw (0,0) -- (1,0) -- (-.25,-1) -- (0,0);
\draw  (1,0) -- (.5,-1) -- (0,0);
\draw  (1,0) -- (1.25,-1) -- (0,0);
\end{tikzpicture}
\subfloat{$  K_{1,2,2} $}
\begin{tikzpicture}[scale=.75]
\draw [fill] (0,0) circle
[radius=0.1] node  [left]  {};
\draw [fill] (1.30,0) circle
[radius=0.1] node  [right]  {};
\draw [fill] (0,-1) circle
[radius=0.1] node  [below]  {};
\draw [fill] (1.30,-1) circle
[radius=0.1] node  [below]  {};
\draw [fill] (.65,-.5) circle
[radius=0.1] node  [below]  {};
\draw (0,0) -- (1.30,0) -- (1.30,-1) -- (0,-1)-- (0,0);
\draw  (1.30,-1) -- (.65,-.5) -- (0,0);
\draw  (1.30,0) -- (.65,-.5) -- (0,-1);
\end{tikzpicture}
\subfloat{$ \Gamma_1 $}
\begin{tikzpicture}[scale=.75]
\draw [fill] (0,0) circle
[radius=0.1] node  [left]  {};
\draw [fill] (1,0) circle
[radius=0.1] node  [right]  {};
\draw [fill] (2,0) circle
[radius=0.1] node  [below]  {};
\draw [fill] (0,-1) circle
[radius=0.1] node  [below]  {};
\draw [fill] (1,-1) circle
[radius=0.1] node  [below]  {};
\draw [fill] (2,-1) circle
[radius=0.1] node  [below]  {};
\draw (0,0) -- (1,0) -- (2,0) -- (2,-1)-- (1,-1)-- (0,-1)-- (0,0);
\draw  (1,0) -- (1,-1) -- (0,0);
\draw  (1,0) -- (2,-1) ;
\draw (0,0) .. controls (1,1)  .. (2,0);
\end{tikzpicture}
\subfloat{$  \Gamma_2 $}
\begin{tikzpicture}[scale=.75]
\draw [fill] (0,0) circle
[radius=0.1] node  [left]  {};
\draw [fill] (1,0) circle
[radius=0.1] node  [right]  {};
\draw [fill] (2,0) circle
[radius=0.1] node  [below]  {};
\draw [fill] (0,-1) circle
[radius=0.1] node  [below]  {};
\draw [fill] (1,-1) circle
[radius=0.1] node  [below]  {};
\draw [fill] (2,-1) circle
[radius=0.1] node  [below]  {};
\draw [fill] (1,1) circle
[radius=0.1] node  [below]  {};
\draw (0,0) -- (1,0) -- (2,0) -- (2,-1)-- (1,-1)-- (0,-1)-- (0,0);
\draw  (1,0) -- (1,-1) -- (0,0);
\draw  (1,0) -- (2,-1) ;
\draw  (0,0) -- (1,1)-- (2,0);
\end{tikzpicture}
\subfloat{$  \Gamma_3 $}
\begin{tikzpicture}[scale=.75]
\draw [fill] (0,0) circle
[radius=0.1] node  [left]  {};
\draw [fill] (1,0) circle
[radius=0.1] node  [right]  {};
\draw [fill] (2,0) circle
[radius=0.1] node  [below]  {};
\draw [fill] (0,-1) circle
[radius=0.1] node  [below]  {};
\draw [fill] (1,-1) circle
[radius=0.1] node  [below]  {};
\draw [fill] (2,-1) circle
[radius=0.1] node  [below]  {};
\draw [fill] (-1,0) circle
[radius=0.1] node  [below]  {};
\draw [fill] (-1,-1) circle
[radius=0.1] node  [below]  {};
\draw (-1,0) -- (0,0) -- (1,0) -- (2,0) ;
\draw  (1,0) -- (1,-1) -- (0,-1) -- (0,0);
\draw  (1,0) -- (2,-1) ;
\draw  (0,0) -- (-1,0)-- (0,-1);
\draw  (0,0) -- (-1,-1)-- (0,-1);
\draw  (2,0) -- (1,-1)-- (2,-1);
\end{tikzpicture}\\
\subfloat{$  \Gamma_4 $}  
\begin{tikzpicture}[scale=.75]
\draw [fill] (0,0) circle
[radius=0.1] node  [left]  {};
\draw [fill] (1,0) circle
[radius=0.1] node  [right]  {};
\draw [fill] (2,0) circle
[radius=0.1] node  [below]  {};
\draw [fill] (0,-1) circle
[radius=0.1] node  [below]  {};
\draw [fill] (1,-1) circle
[radius=0.1] node  [below]  {};
\draw [fill] (2,-1) circle
[radius=0.1] node  [below]  {};
\draw [fill] (-1,0) circle
[radius=0.1] node  [below]  {};
\draw (0,0) -- (1,0) -- (2,0) -- (2,-1)-- (1,-1)-- (0,-1)-- (0,0);
\draw  (1,0) -- (1,-1) -- (0,0);
\draw  (1,0) -- (2,-1) ;
\draw  (0,0) -- (-1,0)-- (0,-1);
\draw (0,-1) .. controls (1,-1.25) and (3.5,-2)  .. (2,0);
\end{tikzpicture}
\subfloat{$  \Gamma_5 $}
\begin{tikzpicture}[scale=.75]
\draw [fill] (0,0) circle
[radius=0.1] node  [left]  {};
\draw [fill] (1,0) circle
[radius=0.1] node  [right]  {};
\draw [fill] (0,-1) circle
[radius=0.1] node  [below]  {};
\draw [fill] (1,-1) circle
[radius=0.1] node  [below]  {};
\draw [fill] (-1,0) circle
[radius=0.1] node  [below]  {};
\draw [fill] (-1,-1) circle
[radius=0.1] node  [below]  {};
\draw [fill] (0,-2) circle
[radius=0.1] node  [below]  {};
\draw (-1,0) -- (0,0) -- (1,0) ;
\draw  (1,0) -- (1,-1) -- (0,-1) -- (0,0);
\draw  (0,0) -- (-1,0)-- (0,-1);
\draw  (0,0) -- (-1,-1)-- (0,-1);
\draw  (-1,0) -- (0,-2)-- (1,-1) ;
\draw  (-1,-1) -- (0,-2);
\end{tikzpicture}
\subfloat{$  \Gamma_6 $}
\begin{tikzpicture}[scale=.75]
\draw [fill] (0,0) circle
[radius=0.1] node  [left]  {};
\draw [fill] (1,0) circle
[radius=0.1] node  [right]  {};
\draw [fill] (0,-1) circle
[radius=0.1] node  [below]  {};
\draw [fill] (1,-1) circle
[radius=0.1] node  [below]  {};
\draw [fill] (-1,0) circle
[radius=0.1] node  [below]  {};
\draw [fill] (-1,-1) circle
[radius=0.1] node  [below]  {};
\draw [fill] (0,-2) circle
[radius=0.1] node  [below]  {};
\draw [fill] (-1.5,-2) circle
[radius=0.1] node  [below]  {};
\draw (-1,0) -- (0,0) -- (1,0) ;
\draw  (1,0) -- (1,-1) -- (0,-1) -- (0,0);
\draw  (0,0) -- (-1,0)-- (0,-1);
\draw  (0,0) -- (-1,-1)-- (0,-1);
\draw  (-1,0) -- (-1.5,-2)-- (1,-1) ;
\draw  (-1,-1) -- (0,-2);
\draw  (1,-1) -- (0,-2);
\end{tikzpicture}
\subfloat{$  \Gamma_7 $}
\begin{tikzpicture}[scale=.75]
\draw [fill] (0,0) circle
[radius=0.1] node  [left]  {};
\draw [fill] (1,0) circle
[radius=0.1] node  [left]  {};
\draw [fill] (2,0) circle
[radius=0.1] node  [left]  {};
\draw [fill] (3,0) circle
[radius=0.1] node  [left]  {};
\draw [fill] (1,1) circle
[radius=0.1] node  [left]  {};
\draw [fill] (1,-1) circle
[radius=0.1] node  [left]  {};
\draw [fill] (0,-2) circle
[radius=0.1] node  [left]  {};
\draw  (1,1) -- (1,0) -- (1,-1) -- (0,0)--(1,1);
\draw  (1,1) -- (2,0) -- (1,-1) ;
\draw  (1,1) -- (3,0) -- (1,-1) ;
\draw  (0,0) -- (0,-2) -- (1,0) ;
\draw  (3,0) -- (0,-2)  ;
\end{tikzpicture}
\caption{Some forbidden subgraphs of $Z(\alpha,\beta) $ and $ U(\mathsf{a},\mathsf{b}) $, where  $|S_\alpha|=12$,  $ |S_\mathsf{a}|=12 $ and  each vertex of the subgraph has degree $ 4 $ in  these graphs.}\label{606}
\end{figure}
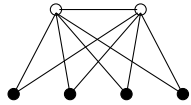
\begin{figure}
\begin{tikzpicture}[scale=.75]
\draw [] (0,0) circle
[radius=0.1] node  [left]  {};
\draw [] (1.5,0) circle
[radius=0.1] node  [right]  {};
\draw [fill] (-.75,-1.5) circle
[radius=0.1] node  [below]  {};
\draw [fill] (.25,-1.5) circle
[radius=0.1] node  [below]  {};
\draw [fill] (1.25,-1.5) circle
[radius=0.1] node  [below]  {};
\draw [fill] (2.25,-1.5) circle
[radius=0.1] node  [below]  {};
\draw (.06,0) -- (1.44,0); 
\draw (1.45,-.06)-- (-.75,-1.5) -- (0,-.06);
\draw  (1.5,-.06) -- (.25,-1.5) -- (0,-.06);
\draw  (1.5,-.06) -- (1.25,-1.5) -- (0,-.06);
\draw  (1.5,-.06) -- (2.25,-1.5) -- (0,-.06);
\end{tikzpicture}
\caption{A forbidden subgraph of $ Z(\alpha ,\beta) $, where  $ |S_\alpha|=12 $ and  the degree of white vertices of the subgraph in $ Z(\alpha ,\beta) $ must be 5. }\label{6066}
\end{figure}
\begin{figure}
\begin{tikzpicture}[scale=.75]
\draw [fill] (0,0) circle
[radius=0.1] node  [left]  {};
\draw [fill] (2,0) circle
[radius=0.1] node  [left]  {};
\draw [] (1,1) circle
[radius=0.1] node  [left]  {};
\draw [fill] (1,-1) circle
[radius=0.1] node  [left]  {};
\draw  (0,0) -- (.95,.95);
\draw (1.05,.95) -- (2,0) -- (1,-1)-- (0,0);
\draw  (1,.9) --  (1,-1) ;
\end{tikzpicture}
\begin{tikzpicture}[scale=.75]
\draw [] (0,0) circle
[radius=0.1] node  [left]  {};
\draw [] (2,0) circle
[radius=0.1] node  [left]  {};
\draw [fill] (1,1) circle
[radius=0.1] node  [left]  {};
\draw [fill] (1,-1) circle
[radius=0.1] node  [left]  {};
\draw  (.05,.05) -- (1,1);
\draw (1,1) -- (1.95,.05) ;
\draw (1.95,-.05)-- (1,-1)-- (.05,-.05);
\draw  (1,1) --  (1,-1) ;
\end{tikzpicture}
\begin{tikzpicture}[scale=.75]
\draw [fill] (0,0) circle
[radius=0.1] node  [left]  {};
\draw [fill] (2,0) circle
[radius=0.1] node  [left]  {};
\draw [] (1,1) circle
[radius=0.1] node  [left]  {};
\draw [] (1,-1) circle
[radius=0.1] node  [left]  {};
\draw [fill] (1,0) circle
[radius=0.1] node  [left]  {};
\draw [fill] (1.5,0) circle
[radius=0.1] node  [left]  {};
\draw  (1.47,.05) -- (.98,.95);
\draw  (1.47,-.05) -- (1,-.95);
\draw  (.05,.05) -- (1,.95);
\draw (1,.95) -- (1.95,.05) ;
\draw (1.95,-.05)-- (1,-.95)-- (.05,-.05);
\draw  (1,.95) --  (1,.1) ;
\draw  (1,-.1) --  (1,-.9) ;
\end{tikzpicture}
\begin{tikzpicture}[scale=.75]
\draw [] (0,0) circle
[radius=0.1] node  [left]  {};
\draw [] (2,0) circle
[radius=0.1] node  [left]  {};
\draw [fill] (1,1) circle
[radius=0.1] node  [left]  {};
\draw [fill] (1,-1) circle
[radius=0.1] node  [left]  {};
\draw [] (1,0) circle
[radius=0.1] node  [left]  {};
\draw [] (1.5,0) circle
[radius=0.1] node  [left]  {};
\draw  (1.47,.05) -- (1,1);
\draw  (1.47,-.05) -- (1,-1);
\draw  (.05,.05) -- (1,1);
\draw (1,1) -- (1.95,.05) ;
\draw (1.95,-.05)-- (1,-1)-- (.05,-.05);
\draw  (1,1) --  (1,.1) ;
\draw  (1,-.1) --  (1,-.9) ;
\end{tikzpicture}
\begin{tikzpicture}[scale=.75]
\draw [] (0,0) circle
[radius=0.1] node  [left]  {};
\draw [] (2,0) circle
[radius=0.1] node  [right]  {};
\draw [fill] (1,1) circle
[radius=0.1] node  [left]  {};
\draw [fill] (1,-1) circle
[radius=0.1] node  [left]  {};
\draw [] (1,0) circle
[radius=0.1] node  [left]  {};
\draw [fill] (2,1) circle
[radius=0.1] node  [left]  {};
\draw   (1.1,.05) -- (2,1);
\draw   (2,.05) -- (2,1);
\draw   (.05,.05) -- (1,.95);
\draw  (1.95,.05)--(1,.95)  ;
\draw   (1,-1)-- (.05,-.05);
\draw   (1,1) --  (1,.13) ;
\draw   (1,-.1) --  (1,-.9) ;
\draw   (1,-1)--(1.95,-.05);
\end{tikzpicture}
\begin{tikzpicture}[scale=.75]
\draw [] (0,0) circle
[radius=0.1] node  [left]  {};
\draw [fill] (1,0) circle
[radius=0.1] node  [left]  {};
\draw [fill] (0,-1) circle
[radius=0.1] node  [left]  {};
\draw [] (1,-1) circle
[radius=0.1] node  [left]  {};
\draw [fill] (.5,1) circle
[radius=0.1] node  [left]  {};
\draw  (.09,.1) --  (.5,1) ;
\draw  (.5,1) --  (1,0) ;
\draw  (.1,0) --  (1,0) ;
\draw  (1,-.9) --  (1,0) ;
\draw  (.9,-1) --  (0,-1) ;
\draw  (0,-.1) --  (0,-1) ;
\end{tikzpicture}
\begin{tikzpicture}[scale=.75]
\draw [fill] (0,0) circle
[radius=0.1] node  [left]  {};
\draw [fill] (1,0) circle
[radius=0.1] node  [left]  {};
\draw [] (0,-1) circle
[radius=0.1] node  [left]  {};
\draw [] (1,-1) circle
[radius=0.1] node  [left]  {};
\draw [] (.5,1) circle
[radius=0.1] node  [left]  {};
\draw  (.09,.1) --  (.45,.95) ;
\draw  (.55,.95) --  (1,0) ;
\draw  (.1,0) --  (1,0) ;
\draw  (1,-.9) --  (1,0) ;
\draw  (.9,-1) --  (.1,-1) ;
\draw  (0,-.1) --  (0,-.9) ;
\end{tikzpicture}
\begin{tikzpicture}[scale=.75]
\draw [] (0,0) circle
[radius=0.1] node  [left]  {};
\draw [] (1,0) circle
[radius=0.1] node  [left]  {};
\draw [fill] (0,-1) circle
[radius=0.1] node  [left]  {};
\draw [fill] (1,-1) circle
[radius=0.1] node  [left]  {};
\draw [fill] (.5,1) circle
[radius=0.1] node  [left]  {};
\draw  (.09,.1) --  (.45,.95) ;
\draw  (.55,.95) --  (.95,.05) ;
\draw  (.1,0) --  (.9,0) ;
\draw  (1,-.9) --  (1,-.1) ;
\draw  (.9,-1) --  (.1,-1) ;
\draw  (0,-.1) --  (0,-.9) ;
\end{tikzpicture}
\caption{Some forbidden subgraphs of $Z(\alpha,\beta) $, where $\mathbb{F}=\mathbb{F}_2$,  $ |S_\alpha|=10 $ and   the degrees of white vertices in $ Z(\alpha,\beta)$ must be $ 4 $.} \label{tt}
\end{figure}
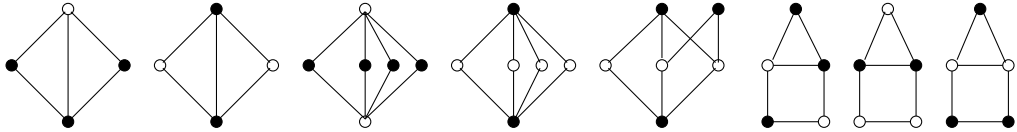
The following result extends \cite[Theorem 1.2]{PS} to arbitrary fields with a better lower bound. 
\begin{thm}
Let $ G $ be a torsion-free group and  $ \alpha , \beta $ be non-zero elements in $ \mathbb{F}[G] $ such that $ \alpha\beta=0 $. If  $ | supp(\alpha)| =4 $, then $ | supp(\beta)| \geq 7 $. 
\end{thm}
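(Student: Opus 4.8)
I would argue by contradiction: suppose $|supp(\beta)|\le 6$. Two preliminary reductions come first. Applying the anti\-automorphism $\gamma=\sum_g\gamma_g g\mapsto\widetilde\gamma=\sum_g\gamma_g g^{-1}$ to $\alpha\beta=0$ yields $\widetilde\beta\,\widetilde\alpha=0$, so $\widetilde\beta$ is a zero divisor with $|supp(\widetilde\beta)|=|supp(\beta)|$, and \cite[Theorem 2.1]{PS} forces $|supp(\beta)|\ge 3$; moreover the case $\mathbb{F}=\mathbb{F}_2$ is already covered by \cite[Theorem 1.3]{PS}, though the argument below is uniform in $\mathbb{F}$ since it invokes only Theorem~\ref{a}. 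Next I would record the basic local fact: for each $y\in supp(\beta)$ the four elements $xy$ ($x\in supp(\alpha)$) are pairwise distinct, and each must be cancelled in $\alpha\beta=0$, hence lies in a coincidence class of size $\ge 2$; so every vertex of $Z(\alpha,\beta)$ has degree $\ge 4$ (counted with multiplicity). By Theorem~\ref{a}(i) we have $|S_\alpha|\in\{10,12\}$, and I would split into these two cases.

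For $|S_\alpha|=12$ I would first note that $Z(\alpha,\beta)$ is then \emph{simple}: a multiple edge between $y,y'$ would come from coincidences $x_1y=x_2y'$, $x_3y=x_4y'$ with $(x_1,x_2)\ne(x_3,x_4)$, giving $x_1^{-1}x_2=y'y^{-1}=x_3^{-1}x_4$, i.e.\ two of the twelve differences $h^{-1}h'$ coincide, contrary to $|S_\alpha|=12$. A simple graph on $m:=|supp(\beta)|$ vertices of minimum degree $4$ has $\ge 2m$ edges, so $2m\le\binom{m}{2}$ and $m\ge 5$; if $m=5$ then $Z(\alpha,\beta)$ is $4$-regular, hence $K_5$, which contains a $K_{1,1,3}$ (Figure~\ref{606}) all of whose vertices have degree $4$ in $Z(\alpha,\beta)$, contradicting Theorem~\ref{a}(ii). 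If $m=6$, every degree is $4$ or $5$, so the complement of $Z(\alpha,\beta)$ has maximum degree $\le 1$ and $Z(\alpha,\beta)$ is $K_6$ minus a matching of size $0$, $1$, $2$ or $3$; when at most two edges are removed there are adjacent vertices of degree $5$ with four common neighbours, yielding the graph of Figure~\ref{6066} with white vertices of degree $5$, and when a perfect matching is removed $Z(\alpha,\beta)\cong K_{2,2,2}$, from which deleting one vertex exhibits a $K_{1,2,2}$ (Figure~\ref{606}) whose five vertices all have degree $4$. Either way Theorem~\ref{a}(ii) is contradicted.

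For $|S_\alpha|=10$ I would first observe, using torsion-freeness, that among the twelve differences $h^{-1}h'$ exactly one inverse-pair $\{s,s^{-1}\}$ with $s\ne s^{-1}$ occurs with multiplicity $2$ and every other value occurs once; hence a multiple edge of $Z(\alpha,\beta)$ is a double edge joining some $y$ to $sy$ or $s^{-1}y$, each vertex meets at most two double edges, and the double edges span no cycle (otherwise a telescoping product of elements of $\{s,s^{-1}\}$ equals $1$, so $s$ is torsion). I would then enumerate the finitely many multigraphs on $m\le 6$ vertices of minimum degree $4$ compatible with these constraints and with all edge-labels $y_iy_j^{-1}$ lying in the $10$-element set $S_\alpha$: for $m=3$ none survives (a doubled triangle forces $s^3=1$), and each surviving candidate with $4\le m\le 6$ I would rule out by exhibiting inside it a copy of one of the multigraphs of Figure~\ref{multicycle} for some $n\ge 3$ (contradicting Theorem~\ref{a}(iv)), or, when that fails, by tracking the labels and using the absence of torsion in $G$.

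The case $|S_\alpha|=12$ is short precisely because $Z(\alpha,\beta)$ is simple, so that only $K_5$ and the four ``$K_6$ minus a matching'' graphs arise. The main obstacle will be the case $|S_\alpha|=10$: double edges survive, the clean ``complement is a matching'' dichotomy is lost, and one must carry out a multigraph enumeration while simultaneously respecting the placement of the double edges (only in the $s^{\pm1}$-directions), the requirement that all admissible labels lie in $S_\alpha$ with its single built-in coincidence, and torsion-freeness. I expect the tightest sub-cases to be $m\in\{3,4\}$ with $|S_\alpha|=10$, where one may be forced down to the level of the $4|supp(\beta)|$ products $x_iy_j$ and their exact cancellation pattern. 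A pervasive technical point, in both cases, is that every application of Theorem~\ref{a}(ii) or (iv) must produce a copy of a forbidden graph whose marked vertices genuinely have the stipulated degree ($4$, or $5$ for the white vertices of Figure~\ref{6066}) in the whole of $Z(\alpha,\beta)$.
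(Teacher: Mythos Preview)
Your treatment of the case $|S_\alpha|=12$ is correct and, in fact, slightly slicker than the paper's: you dispose of $m=5$ directly via $K_5\supset K_{1,1,3}$, whereas the paper first proves $m\ge 6$ by a separate reduction to $\mathbb{F}_2$ (Lemma~\ref{fieldf}). Your complement-is-a-matching dichotomy for $m=6$ is clean and recovers exactly the two forbidden configurations the paper uses.

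The gap is in the case $|S_\alpha|=10$. What you wrote is a plan, not a proof, and the only structural tool you invoke from Theorem~\ref{a} is part~(iv), the multicycle. That is too weak: there are plenty of multigraphs on $m\le 6$ vertices with minimum degree~$4$, double edges only along a fixed $\{s,s^{-1}\}$-direction, and no doubled cycle, so the multicycle obstruction alone does not finish the enumeration. Your fallback ``tracking the labels and using the absence of torsion'' is precisely where all the work lies, and you have not indicated how to organise it.

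The paper's route here is quite different and worth knowing. It first normalises $supp(\alpha)$ (Lemma~\ref{83}) to one of two shapes. For $supp(\alpha)=\{1,x,y,xy\}$ it avoids graph enumeration entirely: a counting argument (Theorem~\ref{endxy}) shows $|BC|\le 2|C|-2$, which together with $|BC|\ge |C|+5$ forces $|C|\ge 7$. For $supp(\alpha)=\{1,x,x^{-1},y\}$ it establishes a further list of forbidden subgraphs (Theorem~\ref{app}, Figure~\ref{36}) beyond the multicycle---proved by a computer-assisted case analysis on edge labels---and then analyses the possible structure of $\Delta^3_{(\alpha,\beta)}$ when $|C|=6$. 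Both ingredients go well beyond Theorem~\ref{a}(iv), so to complete your approach you would essentially have to reproduce them.
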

The following result improves one step the lower bound $8$ in \cite[Theorem 1.3]{PS}. 
\begin{thm}
Let $ G $ be a torsion-free group and  $ \alpha , \beta $ be non-zero elements in $ \mathbb{F}_2[G] $ such that $ \alpha\beta=0 $. If  $ | supp(\alpha)| =4 $, then $ | supp(\beta)| \geq 9 $. 
\end{thm}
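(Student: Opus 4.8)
The plan is to exclude the possibility $|supp(\beta)|=8$; since $|supp(\beta)|\geq 8$ is already known from \cite[Theorem 1.3]{PS}, this will give $|supp(\beta)|\geq 9$. So assume for contradiction that $\alpha\beta=0$ with $|supp(\alpha)|=4$ and $|supp(\beta)|=8$, and (replacing $\alpha$ by $\alpha x^{-1}$ and $\beta$ by $x\beta$ for a suitable $x\in G$) that $1\in supp(\alpha)\cap supp(\beta)$. Consider the graph $Z:=Z(\alpha,\beta)$, whose vertex set is $supp(\beta)$, so it has $8$ vertices. Because $\mathbb{F}=\mathbb{F}_2$, for each vertex $h$ and each $x\in supp(\alpha)$ the group element $xh$ appears with multiplicity at least $2$ in the formal expansion of $\alpha\beta$ (it appears at least once, and an even number of times since its coefficient vanishes), which produces an edge of $Z$ at $h$ from the slot $x$; as the four elements $xh$ are distinct, $\deg_Z(h)\geq 4$ for every $h$ (counting multiplicities). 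Moreover, if $W$ is the vertex set of a connected component of $Z$, then $\beta_W:=\sum_{h\in W}b_h h$ satisfies $\alpha\beta_W=0$, since all pairs $(x,h)$ contributing to a fixed group element $g$ of $\alpha\beta$ lie in a single component (any two of them are joined by the edge of $Z$ witnessing that collision); as $\beta_W\neq 0$ and $|supp(\alpha)|=4$, \cite[Theorem 1.3]{PS} forces $|W|\geq 8$, hence $Z$ is connected. Finally, by Theorem~\ref{a}(i) we have $|S_\alpha|\in\{10,12\}$, and we treat the two cases separately.

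In the case $|S_\alpha|=12$, each element of $S_\alpha$ arises in exactly one way as a quotient $x^{-1}x'$ of distinct elements of $supp(\alpha)$; so when every group element occurs exactly twice in $\alpha\beta$ the graph $Z$ is a simple $4$-regular graph on $8$ vertices, realised as an induced subgraph of the Cayley graph $\mathrm{Cay}(G,S_\alpha)$ restricted to $supp(\beta)$, and in general $Z$ contains a $4$-regular subgraph $H$ by Theorem~\ref{a}(iii). I would enumerate the short list of connected $4$-regular (multi)graphs on at most $8$ vertices; for each candidate $H$, splitting according to whether its vertices have degree exactly $4$ or larger in $Z$, I would check that $H$ contains one of the forbidden subgraphs of Figure~\ref{606} (with its degree-$4$ labelling) or the one of Figure~\ref{6066} (with its degree-$5$ labelling), contradicting Theorem~\ref{a}(ii); the few candidates not covered this way are excluded because the $S_\alpha$-labelling they would have to carry cannot be realised inside a Cayley graph of a torsion-free group (it would create a short relation forcing torsion) or because it would force a multi-edge incompatible with every element of $S_\alpha$ having multiplicity one. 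This eliminates $|S_\alpha|=12$.

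In the case $|S_\alpha|=10$, exactly one inverse-pair $\{s_0,s_0^{-1}\}\subseteq S_\alpha$ is represented twice as such a quotient; whenever two vertices $h_1,h_2$ of $Z$ satisfy $h_1h_2^{-1}\in\{s_0,s_0^{-1}\}$ this produces a double edge, so the analysis splits according to whether $Z$ is simple (hence an induced subgraph of the $10$-valent graph $\mathrm{Cay}(G,S_\alpha)$ on $supp(\beta)$ with minimum degree $\geq 4$) or genuinely a multigraph. Theorem~\ref{a}(iv) forbids the multicycles of Figure~\ref{multicycle} for all $n\geq 3$, which bounds how the double edges can sit inside $Z$, and Theorem~\ref{a}(v) forbids the configurations of Figure~\ref{tt} whenever their white vertices have degree $4$ in $Z$. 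I would then run over all connected multigraphs on $8$ vertices with minimum degree $\geq 4$ compatible with the $|S_\alpha|=10$ multiplicity pattern, the Cayley-embeddability constraint, and avoiding every subgraph forbidden by Theorem~\ref{a}(iv)--(v), and verify that no such multigraph exists. Together with the previous paragraph this contradicts the existence of such a pair $(\alpha,\beta)$, and the theorem follows.

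The main obstacle is exactly this exhaustive analysis: one must correctly encode all the constraints on $Z$ — minimum degree $\geq 4$, the $\mathbb{F}_2$-parity conditions on collision multiplicities, the edge-multiplicity pattern dictated by $|S_\alpha|\in\{10,12\}$, connectivity, and embeddability into a Cayley graph of a torsion-free group (which is what rules out the short-cycle and multicycle patterns responsible for torsion) — generate every candidate multigraph on $8$ vertices consistent with them, and check that each one contains a subgraph forbidden by Theorem~\ref{a}, paying close attention to the degree labels attached to those forbidden subgraphs so that the search is both sound and complete. All the group-theoretic content is already packaged into Theorem~\ref{a}, so once the candidate list is shown to be empty the proof is finished; organising and pruning that case analysis — almost certainly with computer assistance, in the spirit of \cite{PS} — is where the real work lies.
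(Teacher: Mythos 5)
Your overall strategy coincides with the paper's: reduce to excluding $|supp(\beta)|=8$ via \cite[Theorem 1.3]{PS}, split on $|S_\alpha|\in\{10,12\}$, and eliminate each case by an exhaustive search over candidate graphs constrained by minimum degree, connectivity, the $\mathbb{F}_2$-parity conditions, and the forbidden subgraphs of Theorem~\ref{a}. In the case $|S_\alpha|=12$ your plan is essentially the paper's: $Z(\alpha,\beta)$ is simple, connected, and contains a spanning $4$-regular subgraph, which on $8$ vertices is automatically connected, and each of the six connected $4$-regular graphs on $8$ vertices contains one of the configurations of Figure~\ref{606}. Two small remarks here: over $\mathbb{F}_2$ every vertex degree is even, so the degree-$5$ configuration of Figure~\ref{6066} never enters; and the forbidden subgraphs of Figure~\ref{606} carry the side condition that their vertices have degree exactly $4$ in $Z(\alpha,\beta)$ itself, a point your phrase ``splitting according to whether its vertices have degree exactly $4$ or larger'' flags but does not actually resolve.

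The genuine gap is in the branch $|S_\alpha|=10$. The arsenal you propose there --- Theorem~\ref{a}(iv) (the multicycles of Figure~\ref{multicycle}) and Theorem~\ref{a}(v) (the graphs of Figure~\ref{tt}) --- is not enough to empty the candidate list on $8$ vertices, and ``verify that no such multigraph exists'' is precisely the step that fails with only those tools. The paper first normalizes $supp(\alpha)=\{1,x,x^{-1},y\}$ (Lemma~\ref{83}), passes to the simple graph $\mathcal{Z}(\alpha,\beta)$ obtained by collapsing double edges, and carries out a local analysis of how the $4$-cliques $K[s]$, $s\in\Delta_{(\alpha,\beta)}^{4}$, can intersect; this is what produces the short list of admissible degree sequences in Lemma~\ref{degreese}. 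Even after that reduction it still needs the \emph{additional} forbidden subgraphs of Figures~\ref{36} and~\ref{forbidden mathcal} (Theorems~\ref{app} and~\ref{appmatcal}), each requiring its own group-theoretic verification in the appendices, together with the structural Remarks~\ref{deg5}--\ref{deg7}, before the $567$ surviving connected graphs on $8$ vertices can all be excluded. So while your outline is the right one and all the group theory is indeed packaged into forbidden-subgraph statements, the decisive content --- the extra forbidden configurations beyond Theorem~\ref{a} and the executed search --- is missing, and as written the $|S_\alpha|=10$ case would not close.
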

Concerning Conjecture \ref{uk}, our main results  are the following:
\begin{thm} \label{aunit}
Let $G $ be a torsion-free group, $  \mathbb{F} $  an arbitrary field and $ \mathsf{a},\mathsf{b} \in \mathbb{F}[G]$ such that $ |supp(\mathsf{a})|=4 $ and $ \mathsf{a}\mathsf{b}=1 $. Then the following statements hold:
\begin{itemize}
\item[i.]$ | S_{\mathsf{a}} | \in \{10,12\} $.
\item[ii.] If  $ | S_{\mathsf{a}} | =12 $, then $ U(\mathsf{a},\mathsf{b}) $  contains no subgraph isomorphic to one of the  graphs  in   Figure {\rm \ref{606}}, where each vertex of the subgraph has degree $ 4 $ in $ U(\mathsf{a},\mathsf{b}) $.
\item[iii.]If  $ | S_{\mathsf{a}} | =10 $, then $ U(\mathsf{a},\mathsf{b}) $   contains no    subgraph isomorphic to   the graph in Figure {\rm \ref{multicycle}} for all   $ n\geq 3 $. 
\end{itemize}
\end{thm}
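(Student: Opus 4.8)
The three parts are the unit analogues of Theorem~\ref{a}(i),(ii),(iv), and the plan is to reuse those arguments, keeping track of the only structural difference between $\mathsf{a}\mathsf{b}=1$ and $\alpha\beta=0$. First I would left-translate $\mathsf{a}$ by $a^{-1}$ for some $a\in supp(\mathsf{a})$ and right-translate $\mathsf{b}$ by $a$; this preserves $S_{\mathsf{a}}$ and the isomorphism type of $U(\mathsf{a},\mathsf{b})$, so we may assume $1\in A:=supp(\mathsf{a})=\{1,x,y,z\}$. Since the coefficients of $\mathsf{a}$ and $\mathsf{b}$ are non-zero, the coefficient $\sum_{ab=w}\mathsf{a}_a\mathsf{b}_b$ of any $w\in A\cdot supp(\mathsf{b})$ being $1$ for $w=1$ and $0$ otherwise forces every such $w$ to have at least two factorisations $w=ab$, \emph{with the single exception} of $w=1$, which is only guaranteed one. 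Hence in the multigraph $U(\mathsf{a},\mathsf{b})$ of \cite{AT} every vertex has degree $\geq 4$, except possibly the vertex $b_0\in supp(\mathsf{b})$ with $a_0b_0=1$ ($a_0\in A$), whose guaranteed degree is only $3$. Note that only these multiplicity bounds, not the actual coefficient values, enter the argument, so everything below works over an arbitrary field; in particular there is no $\mathbb{F}_2$ clause, which would anyway be vacuous by Remark~\ref{unit4}.

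For part (i): $S_{\mathsf{a}}$ is inverse-closed, avoids $1$, and, $G$ being torsion-free, has no self-inverse element, so $|S_{\mathsf{a}}|$ is even and at most $4\cdot3=12$; it remains to exclude $|S_{\mathsf{a}}|\leq 8$. A short analysis of how the twelve quotients $h^{-1}h'$ ($h\neq h'$ in $A$) can coincide in a torsion-free group shows that $|S_{\mathsf{a}}|\leq 8$ forces $A$, after translation, to lie in a torsion-free \emph{abelian} subgroup $H$ (a ``parallelogram'' $\{1,h,k,hk\}$ with $hk=kh$, the case $|S_{\mathsf{a}}|=6$ being the sub-case $\{1,h,h^2,h^3\}$); thus $\mathsf{a}\in\mathbb{F}[H]$. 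Writing $\mathbb{F}[G]=\bigoplus_i\mathbb{F}[H]g_i$ as a free left $\mathbb{F}[H]$-module with $1$ among the coset representatives and comparing $\mathbb{F}[H]$-components in $\mathsf{a}\mathsf{b}=1$ gives $\mathsf{a}\lambda=1$ in the commutative ring $\mathbb{F}[H]$ for the corresponding component $\lambda$ of $\mathsf{b}$; hence $\mathsf{a}$ is a unit of $\mathbb{F}[H]$, and it is trivial since $H$ is torsion-free abelian, hence a unique product group \cite{PI,b1}. This contradicts $|supp(\mathsf{a})|=4$, so $|S_{\mathsf{a}}|\in\{10,12\}$.

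For parts (ii) and (iii): suppose $U(\mathsf{a},\mathsf{b})$ contains a subgraph $H$ as in the statement, with the prescribed degrees in $U(\mathsf{a},\mathsf{b})$. I would identify the vertices of $H$ with elements of $supp(\mathsf{b})$, label each edge $\{b,b'\}$ coming from $a_ib=a_jb'$ by $b'b^{-1}=a_i^{-1}a_j\in S_{\mathsf{a}}$, and then argue as for Theorem~\ref{a}: the labels multiply to $1$ around every closed walk in $H$, and the degree-$4$ condition at a vertex $b_p$ of $H$ means that the four translates $a_1b_p,\dots,a_4b_p$ are \emph{entirely} consumed by the edges of $H$ at $b_p$. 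Since $b_0$ is only guaranteed degree $3$, either $b_0\notin V(H)$ — and then the local picture at every vertex of $H$ is literally the zero-divisor one — or $b_0$ happens to have degree $\geq 4$, in which case the extra factorisation there only adds constraints. Substituting $|S_{\mathsf{a}}|=12$ (all twelve quotients distinct), respectively $|S_{\mathsf{a}}|=10$, into the resulting word equations then yields, for each listed graph, a relation impossible in a torsion-free group — a non-trivial element of finite order, a collision inside $A$, or a forced drop of $|S_{\mathsf{a}}|$ below the assumed value. The one item of Theorem~\ref{a}(ii) with no counterpart here is the assertion about Figure~\ref{6066}: excluding that degree-$5$ subgraph for zero divisors rests on a rank argument for the coefficient vectors that exploits the homogeneity of $\alpha\beta=0$, which has no analogue for $\mathsf{a}\mathsf{b}=1$, so no claim is made about it for units.

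The hard part will be the case analysis in the last step: for each of $K_{1,1,3},K_{1,2,2},\Gamma_1,\dots,\Gamma_7$ in Figure~\ref{606} and for the whole family in Figure~\ref{multicycle} one must fix the labelling forced by the degree condition and then verify, using only torsion-freeness and the prescribed value of $|S_{\mathsf{a}}|$, that the corresponding word equations are unsolvable; the bookkeeping is slightly more delicate than in the zero-divisor case because one must also track whether the special vertex $b_0$ can occur among the vertices of $H$.
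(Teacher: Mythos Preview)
Your overall plan for parts~(ii) and~(iii) is essentially the paper's: reduce the forbidden-subgraph question to a system of word equations coming from the edge labels, and observe via Remark~\ref{tupleunit} that the tuple constraints at degree-$4$ vertices of $U(\mathsf{a},\mathsf{b})$ coincide with those at degree-$4$ vertices of a zero-divisor graph, so the very same case analysis (Appendix~\ref{app12} for part~(ii), the short argument of Theorem~\ref{lemtriangle} for part~(iii)) goes through. One point you should make explicit: if the special vertex $b_0$ has degree~$4$ with $\theta_3(b_0)=1$, the two edges from the ``triple'' slot at $b_0$ land on vertices of degree $\geq 5$, so a cycle on degree-$4$ vertices cannot use them; this is what actually forces $h'_i\neq h_{i+1}$ at $b_0$ and makes your claim that ``the extra factorisation only adds constraints'' true.

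There is, however, a genuine gap in part~(i). Your assertion that $|S_{\mathsf{a}}|\leq 8$ forces $A$ to lie in a torsion-free \emph{abelian} subgroup is false. Take the Klein bottle group $\langle a,c\mid cac=a\rangle$ and $A=\{1,a,a^{-1},c\}$: a direct check (using $ac=c^{-1}a$ and $a^{-1}c=c^{-1}a^{-1}$) gives $|S_A|=8$, yet $\langle A\rangle$ is non-abelian. The paper's Lemma~\ref{01} carries out the full case analysis and shows that $|S|<10$ in a torsion-free group with $|A|=4$ forces $\langle A\rangle$ to be abelian \emph{or} a quotient of the Klein bottle group; both possibilities must then be excluded. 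Your coset-decomposition argument (reducing to a unit in $\mathbb{F}[\langle A\rangle]$) is fine and would in fact dispose of the Klein bottle case too, since that group is a unique-product group and hence satisfies the unit conjecture (Remark~\ref{klein}); but as written, your ``short analysis'' is simply incorrect and needs to be replaced by the full dichotomy of Lemma~\ref{01}.
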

\begin{thm}
Let $G $ be a torsion-free group, $  \mathbb{F} $  an arbitrary field and $ \mathsf{a},\mathsf{b} \in \mathbb{F}[G]$ such that $ |supp(\mathsf{a})|=4 $ and $ \mathsf{a}\mathsf{b}=1 $. Then  $ |supp(\mathsf{b})|\geq 6 $.
\end{thm}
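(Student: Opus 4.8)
The plan is to assume $|supp(\mathsf{b})|\le 5$ and derive a contradiction. A group algebra of a torsion-free group has no unit of support size at most $2$ \cite[Theorem 4.2]{a55}, so we may assume $|supp(\mathsf{b})|\in\{3,4,5\}$; and since $\mathbb{F}_2[G]$ has no unit with support of size $4$ at all (Remark \ref{unit4}), the statement is non-vacuous only for $\mathbb{F}\neq\mathbb{F}_2$, although the argument below is uniform in $\mathbb{F}$. First I would dispose of $|supp(\mathsf{b})|=3$: the $\mathbb{F}$-linear anti-automorphism of $\mathbb{F}[G]$ induced by $g\mapsto g^{-1}$ carries the relation $\mathsf{a}\mathsf{b}=1$ to $\mathsf{b}^{\ast}\mathsf{a}^{\ast}=1$ with $|supp(\mathsf{b}^{\ast})|=3$ and $|supp(\mathsf{a}^{\ast})|=|supp(\mathsf{a})|=4$, so $(\mathsf{b}^{\ast},\mathsf{a}^{\ast})$ is a pair of elements multiplying to $1$ whose first term has support $3$; invoking the known fact that such a partner must then have support much larger than $4$ (see \cite{a55, AT}) yields a contradiction. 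It therefore remains to treat $|supp(\mathsf{b})|\in\{4,5\}$.

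For these two cases I would first replace $(\mathsf{a},\mathsf{b})$ by $(\mathsf{a}g,\, g^{-1}\mathsf{b})$ for a suitable $g\in G$ and by a scalar multiple, so that $1\in supp(\mathsf{a})\cap supp(\mathsf{b})$ with coefficient $1$; then pass to the multigraph $U(\mathsf{a},\mathsf{b})$ of \cite{AT} and apply Theorem \ref{aunit}. By part (i), $|S_{\mathsf{a}}|\in\{10,12\}$, and I would split accordingly. In each subcase the relation $\mathsf{a}\mathsf{b}=1$ forces the cancellation pattern of the $4\,|supp(\mathsf{b})|$ products $a b$ $(a\in supp(\mathsf{a}),\, b\in supp(\mathsf{b}))$ to be extremely rigid: nearly every product must be matched with exactly one other, so nearly every relevant vertex of $U(\mathsf{a},\mathsf{b})$ has degree $4$, and because $|supp(\mathsf{b})|$ is tiny there are only finitely many admissible patterns. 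Enumerating them, one shows that each pattern either embeds one of the forbidden configurations of Figure \ref{606} or Figure \ref{6066} (when $|S_{\mathsf{a}}|=12$) or a copy of the graph of Figure \ref{multicycle} (when $|S_{\mathsf{a}}|=10$), contradicting Theorem \ref{aunit}(ii)--(iii); or else it yields directly a nontrivial torsion element of $G$, through a cycle of matched products giving a relation $x^{n}=1$ with $x\ne 1$; or it yields an inconsistent linear system for the coefficients $(\mathsf{a}_{x})$, $(\mathsf{b}_{y})$ over $\mathbb{F}$. A handful of genuinely extremal patterns may have to be excluded by a short computation using only torsion-freeness.

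The step I expect to be hardest is the enumeration when $|supp(\mathsf{b})|=5$ and $|S_{\mathsf{a}}|=12$: here $U(\mathsf{a},\mathsf{b})$ is dense — most vertices already have degree $4$ — while having very few vertices, so pinning down which of the $20$ products collide, and with what multiplicities, requires a careful case split, and one must keep track of the product equal to the identity (forced because the coefficient of $1$ in $\mathsf{a}\mathsf{b}$ is $1$), since it lowers one or two degrees and thereby changes which forbidden subgraph is applicable. A subtler point, absent from the $\mathbb{F}_{2}$ analyses of \cite{PS, a55}, is that over an arbitrary field the vanishing of a coefficient is a genuine linear condition rather than a parity count, so at every branch where one would like to argue ``too few terms to cancel'' one must additionally check that no choice of nonzero coefficients makes the offending coefficient vanish; this is precisely where the degree-$5$ white-vertex exclusion of Figure \ref{6066}, and the failure over general fields of the $4$-regular-subgraph guarantee available over $\mathbb{F}_2$ (contrast Theorem \ref{a}(iii) with Theorem \ref{aunit}), enter the argument.
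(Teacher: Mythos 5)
There are genuine gaps here, both in the preliminary reductions and in the main case. First, the cases $|supp(\mathsf{b})|\in\{3,4\}$ do not need (and in the case of size $3$, cannot safely use) the route you propose. Your inversion trick reduces size $3$ to the statement that a unit of support size $3$ cannot have a partner of support size $4$; the only such bound quoted in this paper (from \cite{a55}) is proved over $\mathbb{F}_2$, where support-$4$ units do not exist at all (Remark \ref{unit4}), so the case you actually need is over $\mathbb{F}\neq\mathbb{F}_2$ and is not covered by what you cite. The paper instead kills both sizes $3$ and $4$ in one line: by the Hamidoune--Llad\'o--Serra inequality (Theorem \ref{4}), after normalizing so that $1\in supp(\mathsf{b})$ and $supp(\mathsf{b})$ generates $G$, one has $|BC|\geq |B|+|C|+1=|C|+5$, while the cancellation pattern forced by $\mathsf{a}\mathsf{b}=1$ gives $|BC|\leq 2|C|$ (Remark \ref{sizeunit}); hence $|supp(\mathsf{b})|\geq 5$ with no enumeration. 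You never invoke this additive-combinatorial input, and without it your size-$4$ ``enumeration of patterns'' is a large unexecuted task.

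Second, for the remaining case $|supp(\mathsf{b})|=5$ your toolkit is not the right one. The graph of Figure \ref{6066} is established as forbidden only for the zero-divisor graph (Theorem \ref{55}); Theorem \ref{55unit} for the unit graph covers only Figure \ref{606}, so the ``degree-$5$ white-vertex exclusion'' you lean on is unavailable for $U(\mathsf{a},\mathsf{b})$. In fact it is not needed: when $|S_{\mathsf{a}}|=12$ and $|supp(\mathsf{b})|=5$, $U(\mathsf{a},\mathsf{b})$ is simple (Theorem \ref{2unit}) and a degree/parity count forces it to be $K_5$, which contains $K_{1,1,3}$. When $|S_{\mathsf{a}}|=10$ the paper does not use Figure \ref{multicycle} at all for this step: the subcase $supp(\mathsf{a})=\{1,x,y,xy\}$ is handled by a product-set argument (Theorem \ref{endxyunit}, resting on Kemperman's Theorem \ref{set2}) showing $|BC|\leq 2|C|-1$, and the subcase $supp(\mathsf{a})=\{1,x,x^{-1},y\}$ requires the three specific forbidden subgraphs of Figure \ref{36unit} (Theorem \ref{appunit}, proved by the relation analysis of the appendix) together with the multiplicity bookkeeping of Remark \ref{repetitionunit}. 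Theorem \ref{aunit}(ii)--(iii) alone, which is all your plan appeals to, does not eliminate the surviving configurations on five vertices.
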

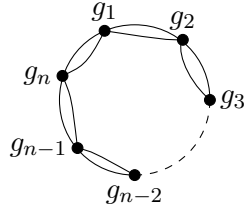
\begin{figure}[htb]
\begin{tikzpicture}[scale=.8]
\draw (0,-1.25) .. controls (-0.700,-1.25) and (-1.25,-0.7) .. (-1.25,0)
.. controls (-1.25,0.7) and (-0.7,1.25) .. (0,1.25)
.. controls (0.7,1.25) and (1.25,0.7) .. (1.25,0);
\draw (1.25,0)[dashed] .. controls (1.25,-0.7) and (0.7,-1.25) .. (0,-1.25);
\draw [fill] (0,-1.25) circle
[radius=0.09] node  [below]  {$g_{n-2}$};
\draw [fill] (-.95,-.8) circle
[radius=0.09] node  [left]  {$g_{n-1}$};
\draw [fill] (-1.20,.4) circle
[radius=0.09] node  [left]  {$g_{n}$};
\draw [fill] (-.5,1.15) circle
[radius=0.09] node  [above]  {$g_{1}$};
\draw [fill] (.8,1) circle
[radius=0.09] node  [above]  {$g_{2}$};
\draw [fill] (1.25,0) circle
[radius=0.09] node  [right]  {$g_{3}$};
\draw (0,-1.25) .. controls (-.7,-.86) and (-.7,-.86) .. (-.95,-.8).. controls (-1.05,.05) and (-1.05,.05) ..(-1.20,.4).. controls (-.9,.55) and (-.8,.55) ..(-.5,1.15).. controls (.5,1) and (.5,1) ..(.8,1).. controls (.7,.9) and (.8,.4) .. (1.25,0);
\end{tikzpicture}
\caption{ A forbidden subgraph of $ Z(\alpha,\beta) $ and $ U(\mathsf{a},\mathsf{b}) $ for all $ n\geq 3  $, where $ | S_{\alpha} |$ and $|S_\mathsf{a}|  $ are equal to 10.}\label{multicycle}
\end{figure}

\section {\bf Preliminaries}
We  encounter to the Klein bottle group in the sequel, so we give its definition and some of its properties.
\begin{rem}\label{klein}
The Klein bottle group has the  presentation $ \left\langle x,y \mid x^{2}=y^{2} \right\rangle$, also   $ \left\langle x,y \mid xyx=y \right\rangle  $ is another presentation of the Klein bottle group. It follows from Remark $ 3.16 $ of  \cite{AT} that  Conjectures {\rm\ref{KZDC}}  and {\rm\ref{uk}} are known to be hold valid for any field $ \mathbb{F}$ and  every torsion-free quotient of the Klein bottle group.
\end{rem}
\begin{defn}\label{set}
Let $G$ be a  group and $B,C$ be two finite subsets of $G$. As usual we denote by $BC$ the set $\lbrace bc\;| \;b\in B,\, c\in C\rbrace $. Also, for each element $ x\in BC$, denote by  $R_{BC}(x)$ the set $\{(b,c)\in B\times C\;|\; x=bc\}$ and let $r_{BC}(x):=|R_{BC}(x)| $. It is clear that $ r_{BC}(x)\leq min\{|B|,|C|\} $ for all  $ x\in BC $.
\end{defn} 
\begin{lem}\label{01}
Let $x,y,z$ be $3$ pairwise distinct non-trivial elements of a  torsion-free group.  Suppose that the subgroup $\langle x,y,z \rangle$ is neither abelian nor isomorphic to the Klein bottle group. If
\begin{equation*}
 S:=(\{1,x,y,z\}^{-1}\{1,x,y,z\})\setminus\{1\}=\lbrace x,y,z,x^{-1},y^{-1},z^{-1},x^{-1}y,x^{-1}z,y^{-1}x,y^{-1}z,z^{-1}x,z^{-1}y\rbrace, 
\end{equation*}
then $|S|\in \{10,12\}$. In particular, if $|S|=10$, then $\{x,y,z\} \in \big {\{} \{a,a^{-1},b\},\{ a,a^2,b\},\{a,b,ab^{-1}a\},\{a,b,ab\} \big{\}}$ for some $a,b\in \{ x,y,z \}$.
\end{lem}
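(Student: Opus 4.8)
The plan is to analyse exactly which of the twelve elements in the displayed list can coincide. First I would record two consequences of torsion-freeness: none of the twelve listed elements is trivial (since $x,y,z$ are non-trivial and pairwise distinct), and $S=(\{1,x,y,z\}^{-1}\{1,x,y,z\})\setminus\{1\}$ is closed under inversion, the twelve elements grouping into the six inverse-pairs $\{x,x^{-1}\}$, $\{y,y^{-1}\}$, $\{z,z^{-1}\}$, $\{x^{-1}y,y^{-1}x\}$, $\{x^{-1}z,z^{-1}x\}$, $\{y^{-1}z,z^{-1}y\}$. Hence $s\mapsto s^{-1}$ is a fixed-point-free involution of $S$ (a fixed point would be a non-trivial involution of $G$), so $|S|=2r$ is even with $2\le|S|\le12$. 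Writing each listed element $h^{-1}h'$ as an arc $h\to h'$ of the complete digraph on $\{1,x,y,z\}$ and letting $m(s)$ count the arcs of value $s$, one has $12-|S|=\sum_{s\in S}\bigl(m(s)-1\bigr)$, an even number. So the lemma reduces to showing $12-|S|\le2$, equivalently ruling out both (a) some value $s$ with $m(s)\ge3$ and (b) two distinct inverse-pairs of values each attained twice.

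Next I would enumerate the fifteen possible \emph{merges} (unordered pairs among the six inverse-pairs that could coincide), discarding at once the sub-cases killed by torsion-freeness (e.g.\ $x^{-1}y=y^{-1}x$ would give $(x^{-1}y)^2=1$). Each surviving merge is equivalent to a single relation among $x,y,z$, and these fall into four shapes: (A) two of $x,y,z$ are mutually inverse; (B) one of $x,y,z$ is the square of another; (C) one of $x,y,z$ is the product of the other two in one of the two orders; (D) one of $x,y,z$ is $ab^{-1}a$ for $a,b$ the other two. A short check shows each of these makes $\{x,y,z\}$ exactly $\{a,a^{-1},b\}$, $\{a,a^2,b\}$, $\{a,b,ab\}$, or $\{a,b,ab^{-1}a\}$ respectively for suitable $a,b\in\{x,y,z\}$; since $|S|=10$ is precisely the case of exactly one merge (by the previous paragraph, $12-|S|=2$ forces a single inverse-pair of values attained twice and no triple coincidence), this yields the ``in particular'' clause.

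It then remains to kill the two forbidden configurations under the hypothesis that $\langle x,y,z\rangle$ is neither abelian nor the Klein bottle group. For a triple coincidence, I would use that the complete digraph on four vertices cannot contain three arcs no two of which share a vertex (the three tails and the three heads, each a $3$-subset of a $4$-set, cannot be disjoint), so among three equal-value arcs two must form a chain $p\to ps\to ps^{2}$; chasing the remaining arc then forces $\{1,x,y,z\}=p\{1,s,s^{2},s^{3}\}$ or $p\{s^{-1},1,s,s^{2}\}$, whence $S\subseteq\langle s\rangle$ and $\langle x,y,z\rangle$ is cyclic --- excluded. For two simultaneous merges, note that a single merge already forces $\langle x,y,z\rangle$ to be $2$-generated, so a second merge leaves a $2$-generator group with an extra short relation; I would run the case analysis over unordered pairs of types (A)--(D), reduced by the $S_{3}$-symmetry in $x,y,z$ and by inversion. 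In every case, substituting one relation into the other produces either a relation $g^{n}=1$ with $1\ne g$, $n\ge2$ (impossible), or a relation making $\langle x,y,z\rangle$ abelian, or a relation of Klein-bottle type --- e.g.\ $x^{2}=z^{2}$ from a pair of type-(B) merges, or $yxy=x$ from a pair of type-(C) merges --- exhibiting $\langle x,y,z\rangle$ as a torsion-free quotient of the Klein bottle group; by Remark \ref{klein}, together with the fact that a torsion-free quotient of the Klein bottle group (a polycyclic group of Hirsch length $2$) is $\{1\}$, $\mathbb{Z}$, or the group itself, this too is excluded.

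The routine parts are the translation of the fifteen merges into the four families and the triple-coincidence claim. I expect the main obstacle to be the two-merge case analysis: making it genuinely exhaustive after the symmetry reduction, and correctly isolating precisely the pairs of relations that produce the Klein-bottle outcome (this is exactly where the Klein bottle exception in the statement is needed) as opposed to a torsion or abelian obstruction.
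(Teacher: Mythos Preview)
Your approach is essentially the same as the paper's: both reduce the problem to showing that two independent ``merges'' among the listed elements cannot coexist, classify the single merges into the four families (A)--(D), and then run a case analysis over unordered pairs of these types to obtain a torsion, abelian, or Klein-bottle contradiction. The only organizational difference is that you split off the triple-coincidence case~(a) and dispose of it directly via the chain argument (forcing $\langle x,y,z\rangle$ cyclic), whereas the paper absorbs that case into the same two-equality case analysis; your explicit remark that one must pass through torsion-free \emph{quotients} of the Klein bottle group (rather than the group itself) is also a little more careful than the paper's phrasing.
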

\begin{proof}
Let $ X=\lbrace 1,x,y,z\rbrace $ and $ H=\langle x,y,z \rangle $. Clearly, $ | X^{-1} X | \leq 16  $. Since $x,y,z$ are pairwise distinct non-trivial elements,  $ R_{X^{-1} X}(1)=\{(1,1),(x^{-1},x),(y^{-1},y),(z^{-1},z)\} $. Hence,  $ | S | = | X^{-1} X  \setminus \lbrace 1\rbrace | \leq 12 $.  Since $1\not\in S^{-1}=S$ and the group has no element of order $2$ as it is torsion-free, it follows that $|S|$ is even and so $ | S |  \neq 11 $. 

Let us see under what conditions 
on two distinct pairs $(x_1,x_2)$ and $(x_3,x_4)$ of elements of $X$, $x_{1}^{-1}x_2 =x_{3}^{-1}x_4$.

 Let $ t=|\{x_1,x_2\}\cap \{x_3,x_4\}| $.  It is clear that $ 0\leq t\leq 2 $. If  $ t=2 $, then 
$x_1=x_4$ and $x_2=x_3$ so that $(x_1^{-1} x_2)^2=1$; since $H$ is torsion-free $x_1=x_2$ and so $(x_1,x_2)=(x_3,x_4)$, a contradiction. Thus $t\in\{0,1\}$. 
 Note that if  $ t=1 $, since $ |X|=4 $,  either $ x_1=x_4 $ or $ x_2=x_3 $.\\
  
Hence,   one of  the following cases holds:
\begin{itemize}
\item[(A)]$ t=1 $ and $ \{x_1,x_2\}\cap \{x_3,x_4\}=\{1\} $. It follows that,  in this case there exist distinct elements $ a,b\in X\setminus \{1\} $ such that $ b=a^{-1} $ and $ b^{-1}=a $.
\item[(B)]$ t=1 $ and $ 1\in \{x_1,x_2,x_3,x_4\}\setminus (\{x_1,x_2\}\cap \{x_3,x_4\})$. It follows that,  in this case there exist distinct elements $ a,b\in X\setminus \{1\} $ such that $ a^{-1}b=a $ which is equivalent to  $ b=a^{2} $.
\item[(C)]$ t=1 $ and $ 1\notin \{x_1,x_2,x_3,x_4\}$. It follows that, in this case there exist pairwise distinct elements $ a,b,c\in X\setminus \{1\} $ such that $ c^{-1}b=a^{-1}c $ which is equivalent to  $b=ca^{-1}c$.
\item[(D)]$ t=0 $. It follows that, in this case there exist pairwise distinct elements $ a,b,c\in X\setminus \{1\} $ such that $ a^{-1}b=c $ which is equivalent to $  b=ac$.
\end{itemize}
 Suppose, for a contradiction, that $ |S|<10 $. 
 Then there exist some pairs $(x_1,x_2)$, $(x_3,x_4)$, $(x'_1,x'_2)$ and $(x'_3,x'_4)$ of  elements of $X$ such that  $(x_1,x_2)\neq (x_3,x_4)$, $(x'_1,x'_2)\neq (x'_3,x'_4)$, $ x_{1}^{-1}x_2=x_{3}^{-1}x_4 $, $ {x'}_{1}^{-1}{x'}_2 = {x'}_{3}^{-1}{x'}_4 $, $ \{(x'_1,x'_2),(x'_3,x'_4)\}\neq \{(x_1,x_2),(x_3,x_4)\}$  and $ \{(x'_1,x'_2),(x'_3,x'_4)\}\neq \{({x_2},{x_1}),({x_4},{x_3})\}$.
  Thus, the pairs $(x_1,x_2)$ and $(x_3,x_4)$ satisfy the case (I) and 
  $(x'_1,x'_2)$ and $(x'_3,x'_4)$ satisfy the case (J), where (I) and (J) are one the above cases (A), (B), (C) or (D). 
  
 In the following, we show that every choice for the set $\{\rm{I,J}\}$ leads to a contradiction and so $|S|\in \{10, 12\}$.
\begin{itemize}
\item[(1)] Suppose that  $\{\rm{I,J}\}=\{\rm{A,B}\}$. Then there exist distinct elements  $ a,b\in X\setminus \{1\} $ such that $ b=a^{-1} $ and there exist distinct elements $ a',b'\in X\setminus \{1\} $ such that $ b'=a'^{2} $. Hence, one of the  following cases occurs:
\begin{itemize}
\item[1.] If $ \{a',b'\}=\{a,b\} $, then $H$ has  a non-trivial torsion element, a contradiction. 
\item[2.] If $ \{a',b'\}\neq\{a,b\} $, then $ H $ is a cyclic group, a contradiction.
\end{itemize}
\item[(2)] Suppose that  $\{\rm{I,J}\}=\{\rm{A,C}\}$. Then there exist distinct elements  $ a,b\in X\setminus \{1\} $ such that $ b=a^{-1} $ and there exist pairwise distinct elements $ a',b',c'\in X\setminus \{1\} $ such that  $a'=b'c'^{-1}b'$. Clearly, $ \{a,b\}\subseteq \{a',b',c'\} $. Thus, one of the  following cases occurs:
\begin{itemize}
\item[1.] If $ \{a',c'\}=\{a,b\} $, then $ H $ is isomorphic to the Klein bottle group, a contradiction.
\item[2.] If $ \{a',b'\}=\{a,b\} $ or  $ \{b',c'\}=\{a,b\} $, then $ H $ is a cyclic group, a contradiction.
\end{itemize}
\item[(3)] Suppose that $\{\rm{I,J}\}=\{\rm{A,D}\}$. Then there exist distinct elements  $ a,b\in X\setminus \{1\} $ such that $ b=a^{-1} $ and there exist pairwise distinct elements $ a',b',c'\in X\setminus \{1\} $ such that  $b'=a'c'$. Hence, one of the following cases occurs:
\begin{itemize}
\item[1.] If $ \{a',c'\}=\{a,b\} $, then $ b'=1 $, a contradiction.
\item[2.] If $ \{a',b'\}=\{a,b\} $ or  $ \{b',c'\}=\{a,b\} $, then $ H $ is a cyclic group, a contradiction.
\end{itemize}
\item[(4)] Suppose that $\{\rm{I,J}\}=\{\rm{B,C}\}$. Then, there exist distinct elements $ a,b\in X\setminus \{1\} $ such that  $ b=a^{2} $ and there exist pairwise distinct elements $ a',b',c'\in X\setminus \{1\} $ such that  $a'=b'c'^{-1}b'$. Thus,  one of the following cases occurs:
\begin{itemize}
\item[1.] If $ \{a',c'\}=\{a,b\} $, then $ H$ is  abelian, a contradiction.
\item[2.] If $ \{a',b'\}=\{a,b\} $ or  $(b',c')=(b,a)$, then $ H $ is a cyclic group, a contradiction.
\item[3.] If $(b',c')=(a,b)$, then $ a'=1 $, a contradiction.
\end{itemize}
\item[(5)] If $\{\rm{I,J}\}=\{\rm{B,D}\}$, then it follows that, $ H$ is a cyclic group, a contradiction.
\item[(6)]Suppose that $\{\rm{I,J}\}=\{\rm{C,D}\}$. Then  there exist pairwise distinct elements $ a,b,c\in X\setminus \{1\} $ such that  $b=ca^{-1}c$ and there exist pairwise distinct elements $ a',b',c'\in X\setminus \{1\} $ such that $  b'=a'c'$. Hence, one of the following cases occurs:
\begin{itemize}
\item[1.] If $\{a',c'\}\neq\{a,b\}$, then  $ H $  is  a cyclic group, a contradiction.
\item[2.] If $ \{a',c'\}=\{a,b\} $, then  $c=1 $, a contradiction.
\end{itemize}
\item[(7)] If $\{\rm{I,J}\}=\{\rm{A}\}$, then it follows that,  $|X|<4$, a contradiction.
 \item[(8)] Suppose that $\{\rm{I,J}\}=\{\rm{B}\}$. Then there exist distinct elements $ a,b\in X\setminus \{1\} $ such that  $ b=a^{2} $ and also there exist distinct elements $ a',b'\in X\setminus \{1\} $ such that  $ b'={a'}^{2} $. Note that since $ \{(x'_1,x'_2),(x'_3,x'_4)\}\neq \{(x_1,x_2),(x_3,x_4)\}$ and $ \{(x'_1,x'_2),(x'_3,x'_4)\}\neq \{({x_2},{x_1}),({x_4},{x_3})\}$, $ (a',b')\neq (a,b) $. Hence, one of the following cases occurs:
  \begin{itemize}
   \item[1.]If $a=b'$ and $b=a'$, then $ H $ has a non-trivial torsion element, a contradiction.
 \item[2.]If $a\neq a'$ and $b= b'$, then $ H $ is isomorphic to the Klein bottle group, a contradiction.
 \item[3.]If $a\neq b'$ and $b= a'$ or $a= b'$ and $b\neq a'$, then $ H $ is a cyclic group, a contradiction.
  \item[4.]If $a=a'$ and $b\neq  b'$, then $|X|<4$, a contradiction.
 \end{itemize}
  \item[(9)] If $\{\rm{I,J}\}=\{\rm{C}\}$, then it follows that, $ H $ has a non-trivial torsion element, a contradiction. 
  \item[(10)] Suppose that $\{\rm{I,J}\}=\{\rm{D}\}$. Then  there exist pairwise distinct elements $ a,b,c\in X\setminus \{1\} $ such that  $b=ac$ and there exist pairwise distinct elements $ a',b',c'\in X\setminus \{1\} $ such that $  b'=a'c'$. Clearly, $ (a',b',c')\neq (a,b,c) $.  Hence, one of the following cases occurs:
 \begin{itemize}
 \item[1.]If $(a',b',c')\in\{(a,c,b),(b,a,c)\} $, then $ H $ has a non-trivial torsion element, a contradiction.
 \item[2.]If $(a',b',c')\in\{(b,c,a) ,(c,a,b)\} $, then it follows that $ H $ is isomorphic to the Klein bottle group, a contradiction.
  \item[3.]If $(a',b',c')=(c,b,a) $, then it follows that $ H $ is abelian, a contradiction.
 \end{itemize}  
\end{itemize} 
Thus, $ |S|\in \{10,12\} $ and if $ |S|=10 $, then exactly one of the cases among (A), (B), (C)  and  (D)  holds. This completes the proof. Table \ref{1010} depicts the elements of $ S $ which are equal  in  each case among  (A), (B), (C)  and  (D). 
\begin{table}
\begin{tabular}{|c|c|c|c|c|c|c|c|c|c|c|c|c|}
\cline{2-13}
\multicolumn{1}{c|}
{} & $ a $ & $ b $ & $ c $ & $ a^{-1} $ & $ {b}^{-1} $ & $c^{-1} $ & $ a^{-1}b $ & $ a^{-1}c $ & $ {b}^{-1}a $ & $ {b}^{-1}c  $ & $c^{-1}a$ & $c^{-1}b $ \\
\hline
$  b=a^{-1} $ &\cellcolor{black!70!white}{ } & \cellcolor{black!30!white}{ }  &  & \cellcolor{black!30!white}{ } & \cellcolor{black!70!white}{ } &  &  &  & & & &  \\
\hline
$ b=a^{2}$ & \cellcolor{black!30!white}{ } &   &  & \cellcolor{black!70!white}{ } & &  & \cellcolor{black!30!white}{ } &  & \cellcolor{black!70!white}{ } & & &  \\
\hline
$ b=ca^{-1}c $ &  &   &  &  & &  &  & \cellcolor{black!70!white}{ } & &\cellcolor{black!30!white}{ }  &\cellcolor{black!30!white}{ }  & \cellcolor{black!70!white}{ } \\
\hline
$b=ac  $ &  &  & \cellcolor{black!70!white}{ }  &  & &\cellcolor{black!30!white}{ }  & \cellcolor{black!70!white}{ }  & &\cellcolor{black!30!white}{ }  &  &  &\\
\hline
\end{tabular}
\caption{  The same colors (black and gray) in each row  show the elements of $ X^{-1}X\setminus \{1\} $, where $ X=\{1,a,b,c\} $, that are equal  in   each case which leads to $ |X^{-1}X\setminus \{1\}|=10 $.}\label{1010}
\end{table}
\end{proof}
\begin{defn}
For each element  $ \alpha \in \mathbb{F}[G] $, let $ S_\alpha:=(supp(\alpha)^{-1} supp(\alpha)) \setminus \{1\}=\lbrace h^{-1}h^\prime \mid h\neq h^\prime, h,h^\prime \in supp(\alpha) \rbrace $.
\end{defn} 
\begin{defn}
Let $ \mathbb{F}$ be an arbitrary field and  $G$  a  group and also $ \alpha, \beta$ be non-zero elements in $ \mathbb{F}[G] $ such that  $ \alpha\beta=a $, where $ a\in \{0,1\} $. We say that $ \beta $ is a mate of $ \alpha $ if  for each non-zero element  $ \beta'\in \mathbb{F}[G]$ such that $\alpha\beta'=a$,  then $|supp(\beta)|\leq |supp(\beta')|$.
\end{defn}
\begin{rem}
Note that if $ \beta $ is a mate of $  \alpha$ then it is not necessary that $  \alpha$ is a mate of $  \beta$ {\rm(}see Example {\rm\ref{ex}}, below{\rm)}.
\end{rem} 
\begin{lem}\label{1}
Let $ \alpha$ be a zero divisor in $\mathbb{F}[G]$ for a possible torsion-free group $ G $ and arbitrary field $ \mathbb{F}$ and $ \beta $ be a mate of $ \alpha $. If $ |  supp(\alpha) | \leq 5 $, then  $ \left\langle h^{-1} supp(\alpha)\right\rangle=\left\langle  supp(\beta)g^{-1}\right\rangle $ for all  $ h \in  supp(\alpha) $ and all $ g \in supp(\beta) $.
\end{lem}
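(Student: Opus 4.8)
The plan is to first use translation invariance to arrange that both supports contain the identity, and then to confront $supp(\alpha)$ and $supp(\beta)$ with the cosets of the two subgroups in question. Fix $h\in supp(\alpha)$ and $g\in supp(\beta)$ and put $\alpha':=h^{-1}\alpha$ and $\beta':=\beta g^{-1}$; then $1\in supp(\alpha')\cap supp(\beta')$ and $\alpha'\beta'=0$. Since left multiplication by $h^{-1}$ is a bijection of $\mathbb{F}[G]$, the right annihilators of $\alpha'$ are exactly those of $\alpha$; together with the fact that right multiplication by $g^{-1}$ preserves support sizes, this shows that $\beta'$ is a mate of $\alpha'$. Writing $H:=\langle supp(\alpha')\rangle=\langle h^{-1}supp(\alpha)\rangle$ and $K:=\langle supp(\beta')\rangle=\langle supp(\beta)g^{-1}\rangle$, it therefore suffices to prove $H=K$ for every such choice of $h$ and $g$.

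To get $K\subseteq H$, I would split $supp(\beta')$ along the left cosets of $H$: write $supp(\beta')=\bigsqcup_i B_i$ with $B_i=supp(\beta')\cap Ht_i$ in distinct cosets, and let $\beta'_i$ be the part of $\beta'$ supported on $B_i$. Since $supp(\alpha')\subseteq H$, the product $\alpha'\beta'_i$ is supported in $Ht_i$; as these cosets are pairwise disjoint and $\alpha'\beta'=\sum_i\alpha'\beta'_i$, each $\alpha'\beta'_i=0$. Choosing any $y_i\in B_i$, the element $\beta'_iy_i^{-1}$ is then a nonzero right annihilator of $\alpha'$ with support of size $|B_i|\le|supp(\beta')|$, so minimality of $|supp(\beta')|$ forces $|B_i|=|supp(\beta')|$, i.e. $B_i=supp(\beta')$. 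Hence there is only one coset, and as $1\in supp(\beta')$ it is $H$ itself, so $supp(\beta')\subseteq H$.

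To get $H\subseteq K$, I would symmetrically split $supp(\alpha')$ along the right cosets of $K$: write $supp(\alpha')=\bigsqcup_j A_j$ with $A_j=supp(\alpha')\cap s_jK$, and let $\alpha'_j$ be the part of $\alpha'$ on $A_j$. Since $supp(\beta')\subseteq K$, the product $\alpha'_j\beta'$ is supported in $s_jK$, and disjointness of the cosets gives $\alpha'_j\beta'=0$ for each $j$. If some $|A_j|=1$ then $\alpha'_j$ is a nonzero scalar multiple of a group element, so $\alpha'_j\beta'=0$ would force $\beta'=0$; thus $|A_j|\ge2$ for all $j$. Since $\sum_j|A_j|=|supp(\alpha')|\le5$, there are at most two cosets, and if there are exactly two then $\{|A_1|,|A_2|\}$ equals $\{2,2\}$ or $\{2,3\}$; in either case some $\alpha'_j$ is a zero divisor with support of size $2$, contradicting \cite[Theorem 2.1]{PS}. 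Hence there is a single coset, which is $K$ because $1\in supp(\alpha')$, so $supp(\alpha')\subseteq K$. Combining the two inclusions yields $H=K$, and since $h$ and $g$ were arbitrary this is exactly the assertion of the lemma.

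The only place where the hypothesis $|supp(\alpha)|\le5$ is used is the coset-counting in the last paragraph: for larger supports the right-coset decomposition of $\alpha'$ could split into two parts each of size $\ge3$, which are strongly restricted but not forbidden as zero divisors, so that step would no longer close immediately. I expect this to be the main (and essentially only) obstacle, the remainder being routine manipulation of truncations of elements of $\mathbb{F}[G]$ along cosets.
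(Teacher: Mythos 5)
Your proof is correct and follows essentially the same route as the paper: the inclusion $H\subseteq K$ is obtained exactly as in the paper, by partitioning $supp(\alpha')$ along cosets of $K$, killing each piece separately, and excluding pieces of size $1$ or $2$ via \cite[Theorem 2.1]{PS} together with the bound $|supp(\alpha)|\le 5$. The only difference is that for the reverse inclusion $K\subseteq H$ the paper simply cites \cite[Lemma 2.5]{AT}, whereas you supply the underlying argument yourself (splitting $supp(\beta')$ along cosets of $H$ and invoking minimality of the mate), which is a correct and self-contained substitute.
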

\begin{proof}
 Let $B=supp(\alpha ) $, $C=supp(\beta ) $, $ H= \left\langle h^{-1}B\right\rangle $ and $K= \left\langle Cg^{-1}\right\rangle $.  
 Partition $B=B_1\cup \cdots \cup B_l $, where  $B_i=B\cap t_iK$ and $t_iK$ is a left coset of $K$ in $G$. Clearly, since $|B|\leq 5$, $l\leq 5$.  Suppose, for a contradiction, that  $ l\geq 2 $. Since 
\begin{equation*}
h^{-1}\alpha \beta g^{-1} =  h^{-1}(\sum_{h'\in B }\alpha_{h'}h')\beta g^{-1}=  h^{-1}(\sum_{i=1}^{l}\sum_{h'\in B_i }\alpha_{h'}h')\beta g^{-1}=0,
\end{equation*}
and $t_iK \cap t_jK=\varnothing$, where $i,j$ are distinct elements in $ \{1,\ldots,l\}$, we have   $ (\sum_{h_i\in B_s }\alpha_{i}h_{i})\beta =0 $ for each  $ s\in \{1,\ldots,l\} $. By  \cite[Theorem 2.1]{PS}, $ |B_i|\notin \{1,2\} $ for all $ i\in \{1,\ldots,l\} $. On the other hand, since $ | B | \leq 5 $ and $ l\geq 2$, there exists $ i\in \{1,\ldots,l\} $ such that $ |B_i|\in \{1,2\} $, a contradiction. Hence, $l=1$ and  since $ 1\in h^{-1}B $,  $ H \subseteq K $. By  \cite[Lemma  2.5 ]{AT},  $ K\subseteq H $ and therefore $ K= H $. 
\end{proof}
\begin{lem}\label{z}
Let  $ \mathbb{F} $ be an arbitrary field and $G$  a torsion-free group. If $\alpha$ is a possible  zero divisor in $\mathbb{F}[G]$ with $|supp(\alpha)|=4$, then  $|S_\alpha|\in\{10,12\}$.
\end{lem}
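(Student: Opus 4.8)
The plan is to reduce everything to Lemma~\ref{01}. First I would normalize the support: the set $S_\alpha=(supp(\alpha)^{-1}supp(\alpha))\setminus\{1\}$ is unchanged if we replace $\alpha$ by a left translate $h^{-1}\alpha$ (since $(h^{-1}supp(\alpha))^{-1}(h^{-1}supp(\alpha))=supp(\alpha)^{-1}supp(\alpha)$), and $h^{-1}\alpha$ is again a zero divisor. So I may assume $1\in supp(\alpha)$, say $supp(\alpha)=\{1,x,y,z\}$ with $x,y,z$ pairwise distinct and non-trivial. Then $S_\alpha$ is exactly the set $S$ of Lemma~\ref{01}, and $H:=\langle x,y,z\rangle$ is a torsion-free group, being a subgroup of $G$. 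By Lemma~\ref{01} we get $|S_\alpha|\in\{10,12\}$ as soon as we know that $H$ is neither abelian nor isomorphic to the Klein bottle group, so the whole task is to exclude those two possibilities.

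To exclude them, I would pass to a mate $\beta$ of $\alpha$, i.e.\ a non-zero element with $\alpha\beta=0$ whose support has minimal size. Since $|supp(\alpha)|=4\le 5$, Lemma~\ref{1} applies and yields $\langle h^{-1}supp(\alpha)\rangle=\langle supp(\beta)g^{-1}\rangle$ for all $h\in supp(\alpha)$, $g\in supp(\beta)$; taking $h=1$ this common subgroup is precisely $H=\langle x,y,z\rangle$. Consequently $supp(\alpha)\subseteq H$ and $supp(\beta g^{-1})=supp(\beta)g^{-1}\subseteq H$, so $\alpha$ and $\beta g^{-1}$ both lie in $\mathbb{F}[H]$, with $\beta g^{-1}\neq 0$ and $\alpha\,(\beta g^{-1})=(\alpha\beta)g^{-1}=0$. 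Hence $\alpha$ is a zero divisor already in $\mathbb{F}[H]$.

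Now I would invoke the known cases of Conjecture~\ref{KZDC}. If $H$ were abelian it would be a torsion-free abelian group, hence orderable and therefore a unique-product group, so by \cite[Theorem 26.2]{PI} the group algebra $\mathbb{F}[H]$ has no zero divisors (alternatively, $H$ is elementary amenable and one cites \cite{a5}); this contradicts the previous paragraph. If $H$ were isomorphic to the Klein bottle group, then $H$ is a torsion-free quotient of the Klein bottle group, and by Remark~\ref{klein} the Zero Divisor Conjecture holds for $\mathbb{F}[H]$, again a contradiction. Therefore $H$ is neither abelian nor the Klein bottle group, Lemma~\ref{01} applies, and $|S_\alpha|=|S|\in\{10,12\}$.

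The only genuinely delicate point is making sure the mate's support lives inside the same finitely generated subgroup $\langle supp(\alpha)\rangle$ rather than spreading out over a larger subgroup of $G$ — but this is exactly the content of Lemma~\ref{1}, so once that is in hand the argument is just bookkeeping together with citing the unique-product (or elementary amenable) and Klein-bottle cases of Kaplansky's conjecture. No computation beyond what is already done in Lemma~\ref{01} is needed.
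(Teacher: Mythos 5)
Your proposal is correct and follows essentially the same route as the paper: reduce to a mate $\beta$, use Lemma~\ref{1} to place $h^{-1}\alpha$ and $\beta g^{-1}$ inside $\mathbb{F}[H]$ with $H=\langle h^{-1}supp(\alpha)\rangle$, rule out $H$ abelian via \cite[Theorem 26.2]{PI} and $H$ the Klein bottle group via Remark~\ref{klein}, and then invoke Lemma~\ref{01} together with the translation-invariance of $S_\alpha$. The extra care you take in spelling out why the mate's support stays in $H$ and why torsion-free abelian groups are unique-product groups is consistent with, and slightly more explicit than, the paper's own argument.
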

\begin{proof}
Suppose that $ \beta $ is a mate of $ \alpha $. Let $B=supp(\alpha)$, $C=supp(\beta)$ and  $H=\left\langle h^{-1}B\right\rangle$, where $ h $ is an arbitrary element of $B$. By Lemma \ref{1}, $h^{-1}\alpha,\beta g^{-1}\in \mathbb{F}[H]$  for all $ g\in C $. Since $ (h^{-1}\alpha)(\beta g^{-1})=0 $,  $ H $ is not abelian \cite[Theorem 26.2]{PI} and  also $H$ is not isomorphic to the Klein bottle group  (see Remark \ref{klein}). Then by Lemma \ref{01}, $|B^{-1}B\setminus \{1\}|=|((h^{-1}B)^{-1}(h^{-1}B))\setminus \{1\}|\in \{10,12\}$. This completes the proof.
\end{proof}
\begin{lem}\label{hg}
Let  $ \mathsf{a} $  be a non-trivial unit in $\mathbb{F}[G]$ for a possible torsion-free group $ G $ and arbitrary field $ \mathbb{F}$ and $ \mathsf{b} $ be a mate of $ \mathsf{a} $. If $ |supp(\mathsf{a})| \leq 5 $, then  $ \left\langle  supp(\mathsf{a})\right\rangle=\left\langle  supp(\mathsf{b})\right\rangle $.
\end{lem}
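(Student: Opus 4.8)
The plan is to prove the two inclusions $\langle supp(\mathsf{a})\rangle\subseteq\langle supp(\mathsf{b})\rangle$ and $\langle supp(\mathsf{b})\rangle\subseteq\langle supp(\mathsf{a})\rangle$ separately. Write $B=supp(\mathsf{a})$, $C=supp(\mathsf{b})$, $H=\langle B\rangle$, $K=\langle C\rangle$. Since the coefficient of $1$ in $\mathsf{a}\mathsf{b}=1$ is nonzero, there is a pair $(h_0,g_0)\in B\times C$ with $h_0g_0=1$; thus $g_0=h_0^{-1}$, and in particular $h_0=g_0^{-1}\in K$ and $g_0=h_0^{-1}\in H$. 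If $|B|=1$ then $\mathsf{a}$ is a trivial unit, $\mathsf{b}=\mathsf{a}^{-1}$ is its monomial inverse, and the claim is immediate; so we may assume $\mathsf{a}$ is non-trivial, i.e. $|B|\geq 2$.

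For $K\subseteq H$ I would use the mate hypothesis. Decompose $C$ into its parts lying in the distinct left cosets of $H$ that meet it, say $C=C_1\sqcup\cdots\sqcup C_m$ with $C_1=C\cap H$ (nonempty, as $g_0\in C_1$), and write $\mathsf{b}=\mathsf{b}^{(1)}+\cdots+\mathsf{b}^{(m)}$ accordingly. Since $B\subseteq H$, the support of $\mathsf{a}\mathsf{b}^{(j)}$ lies in $BC_j\subseteq HC_j$, which is contained in $H$ when $j=1$ and disjoint from $H$ when $j\geq 2$ (if $h'u_jh''\in H$ with $u_j$ a coset representative then $u_j\in H$). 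Comparing the parts of $\mathsf{a}\mathsf{b}=1$ supported in $H$ gives $\mathsf{a}\mathsf{b}^{(1)}=1$. As $\mathsf{b}$ is a mate of $\mathsf{a}$ and $supp(\mathsf{b}^{(1)})=C_1\subseteq C$, minimality of $|supp(\mathsf{b})|$ forces $C_1=C$; hence $C\subseteq H$ and $K\subseteq H$. (Alternatively one can invoke \cite[Lemma 2.5]{AT} at this point, as in the proof of Lemma \ref{1}.)

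For $H\subseteq K$ I would run the unit analogue of the argument of Lemma \ref{1}. Decompose $B=B_1\sqcup\cdots\sqcup B_l$ into its parts in the distinct left cosets of $K$ meeting it, with $B_1=B\cap K$ (nonempty, as $h_0\in B_1$), and write $\mathsf{a}=\mathsf{a}^{(1)}+\cdots+\mathsf{a}^{(l)}$ accordingly. Since $C\subseteq K$, the support of $\mathsf{a}^{(i)}\mathsf{b}$ lies in $B_iC\subseteq B_iK$, a single left coset of $K$; these cosets are pairwise disjoint for distinct $i$, and $\sum_i\mathsf{a}^{(i)}\mathsf{b}=\mathsf{a}\mathsf{b}=1$ is supported in the coset $K$ itself. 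Hence $\mathsf{a}^{(1)}\mathsf{b}=1$ and $\mathsf{a}^{(i)}\mathsf{b}=0$ for all $i\geq 2$. Because $\mathsf{b}\neq 0$ and $\mathbb{F}[G]$ has no zero divisor with support of size $\leq 2$ \cite[Theorem 2.1]{PS}, we get $|B_i|\geq 3$ for every $i\geq 2$; together with $|B|\leq 5$ and $|B_1|\geq 1$ this forces $l\leq 2$. If $l=1$ then $B\subseteq K$, so $H\subseteq K$ and we are done.

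Ruling out $l=2$ is the only real obstacle, and is where one must go beyond the zero divisor case (where all pieces are $0$ and $l=1$ is immediate). If $l=2$ then $|B_1|=|B|-|B_2|\leq 5-3=2$ while $\mathsf{a}^{(1)}\mathsf{b}=1$. If $|B_1|=1$, then $\mathsf{a}^{(1)}$ is a monomial, so $\mathsf{b}=(\mathsf{a}^{(1)})^{-1}$ is a monomial, hence $\mathsf{a}=\mathsf{b}^{-1}$ is a monomial, contradicting non-triviality of $\mathsf{a}$. If $|B_1|=2$, write $B_1=\{h_0,h_1\}$ and set $z=h_0^{-1}h_1\neq 1$, an element of infinite order; then $h_0^{-1}\mathsf{a}^{(1)}=\lambda+\mu z$ with $\lambda,\mu\neq 0$, and $(\lambda+\mu z)(\mathsf{b}h_0)=h_0^{-1}\mathsf{a}^{(1)}\mathsf{b}h_0=1$. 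Splitting $\mathsf{b}h_0$ into its parts on the right cosets of $\langle z\rangle$ and comparing components on $\langle z\rangle$ (as above, with $\langle z\rangle$ in place of $K$) shows that $\lambda+\mu z$ has a right inverse inside $\mathbb{F}[\langle z\rangle]\cong\mathbb{F}[t,t^{-1}]$, hence is a unit of this commutative ring; but $\lambda+\mu t$ is not a Laurent monomial, a contradiction (this is essentially \cite[Theorem 4.2]{a55}). Therefore $l=1$, which completes $H\subseteq K$, and combining the two inclusions yields $\langle supp(\mathsf{a})\rangle=\langle supp(\mathsf{b})\rangle$.
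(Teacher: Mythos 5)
Your proof is correct and follows essentially the same route as the paper: partition the supports over cosets of $\langle supp(\mathsf{b})\rangle$ (resp.\ $\langle supp(\mathsf{a})\rangle$), split $\mathsf{a}\mathsf{b}=1$ accordingly, and use the fact that neither units nor zero divisors can have support of size at most $2$ together with $|supp(\mathsf{a})|\leq 5$ to force a single coset. The only difference is that you prove inline what the paper delegates to citations — the reverse inclusion $\langle supp(\mathsf{b})\rangle\subseteq\langle supp(\mathsf{a})\rangle$ (the paper's appeal to \cite[Lemma 2.12]{AT}, which you derive from the mate-minimality) and the exclusion of $|B_1|\in\{1,2\}$ (the paper's appeal to \cite[Theorem 4.2]{a55}, which you rederive via the Laurent-polynomial argument) — which makes your write-up more self-contained but not substantively different.
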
 
\begin{proof}
Let $B=supp(\mathsf{a} ) $, $C=supp(\mathsf{b} ) $, $ H= \left\langle B\right\rangle $ and $K= \left\langle C\right\rangle $. Partition $B=B_1\cup \cdots \cup B_r $, where  $B_i=B\cap t_iK$ and $t_iK$ is a  left coset of $K$ in $G$. Suppose, for a contradiction, that $ r\geq 2 $. Since 
\begin{equation*}
\mathsf{a}\mathsf{b} =  (\sum_{\mathsf{a}'\in B }\alpha_{\mathsf{a}'}\mathsf{a}')\mathsf{b} =  (\sum_{i=1}^{r}\sum_{\mathsf{a}'\in B_i }\alpha_{\mathsf{a}'}\mathsf{a}')\mathsf{b}=1,
\end{equation*}
and for each $i\neq j,\,  t_iK\cap t_jK=\varnothing $, there exists $ l\in\{1,\ldots,r\} $ such that $ ( \sum_{\mathsf{a}'\in B_l }\alpha_{\mathsf{a}'}\mathsf{a}')\mathsf{b}=1 $ and for each $ t\in \{1,\ldots,r\}\setminus\{l\}, \;( \sum_{\mathsf{a}'\in B_t }\alpha_{\mathsf{a}'}\mathsf{a}')\mathsf{b}=0 $. By  \cite[Theorem 4.2 ]{a55},   $ |B_l|\notin \{1,2\} $ and  by  \cite[Theorem 2.1]{PS}, for each $ l\neq i\in \{1,\ldots,r\} $, $ |B_i|\notin \{1,2\} $. On the other hand, since $ |B|\leq 5 $ and $ r\geq 2 $,  there exists  $ i\in \{1,\ldots,r\} $ such that $ |B_i|\in \{1,2\}  $, a contradiction.   Thus,  we must have  $ r=1 $ and so there exists $ t\in G $ such that $ H \subseteq tK$. Now, since $ \mathsf{a}\mathsf{b}=1 $, there exists $ h\in B\cap C^{-1} $ which implies $ t\in K $ and therefore $ H \subseteq K$. By \cite[Lemma 2.12]{AT}, $ K\subseteq H $ and so $ K= H $.
\end{proof}
\begin{lem}\label{unit size}
Let  $ \mathbb{F} $ be an arbitrary field and  $G$  a torsion-free group. If $\mathsf{a}$ is a possible  unit in $\mathbb{F}[G]$ with $|supp(\mathsf{a})|=4$, then  $|S_\mathsf{a}|\in\{10,12\}$.
\end{lem}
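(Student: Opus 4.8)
The plan is to run the exact unit analogue of the proof of Lemma~\ref{z}, replacing the zero-divisor inputs there (Lemma~\ref{1} and the fact that $\mathbb{F}[H]$ has no zero divisors when $H$ is a unique product group) by the unit inputs available here: Lemma~\ref{hg}, the Unit Conjecture for unique product groups \cite{PI,b1}, and Remark~\ref{klein}. So first I would fix $\mathsf{b}$ to be a mate of $\mathsf{a}$, i.e.\ $\mathsf{a}\mathsf{b}=1$ with $|supp(\mathsf{b})|$ minimal, and set $B=supp(\mathsf{a})$, $C=supp(\mathsf{b})$. Since $S_{\mathsf{a}}=(B^{-1}B)\setminus\{1\}$ depends only on the set $B^{-1}B$, which is invariant under left translation $B\mapsto h^{-1}B$, and since for a fixed $h\in B$ the element $h^{-1}\mathsf{a}$ is again a non-trivial unit admitting $\mathsf{b}h$ as a mate (the map $\mathsf{b}'\mapsto \mathsf{b}'h$ is a support-preserving bijection between the right inverses of $\mathsf{a}$ and those of $h^{-1}\mathsf{a}$, because $\mathsf{a}\mathsf{b}'=1 \iff h^{-1}\mathsf{a}(\mathsf{b}'h)=1$), I may replace $\mathsf{a}$ by $h^{-1}\mathsf{a}$ and $\mathsf{b}$ by $\mathsf{b}h$, so that from now on $1\in B$; write $B=\{1,x,y,z\}$ with $x,y,z$ pairwise distinct and non-trivial.

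Next I would apply Lemma~\ref{hg}, which is legitimate since $|supp(\mathsf{a})|=4\le 5$ and $\mathsf{a}$ is non-trivial: it gives $H:=\langle B\rangle=\langle C\rangle$, and because $1\in B$ we in fact have $H=\langle x,y,z\rangle$. As $supp(\mathsf{a})\subseteq H$ and $supp(\mathsf{b})\subseteq \langle C\rangle=H$, the identity $\mathsf{a}\mathsf{b}=1$ already holds in $\mathbb{F}[H]$, so $\mathsf{a}$ is a non-trivial unit of the group algebra of the torsion-free group $H$. If $H$ were abelian it would be a unique product group, so $\mathbb{F}[H]$ would have no non-trivial unit \cite{PI,b1}, a contradiction; and if $H$ were isomorphic to the Klein bottle group, Remark~\ref{klein} would likewise forbid a non-trivial unit, again a contradiction. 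Hence $\langle x,y,z\rangle=H$ is neither abelian nor isomorphic to the Klein bottle group, and Lemma~\ref{01} applies verbatim to $x,y,z$, yielding $|S_{\mathsf{a}}|=|(\{1,x,y,z\}^{-1}\{1,x,y,z\})\setminus\{1\}|\in\{10,12\}$, which is exactly the claim.

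I do not expect a genuine obstacle, since this is the unit counterpart of the short proof of Lemma~\ref{z}; the only points needing a little care are (i) that passing from $\mathsf{a}$ to $h^{-1}\mathsf{a}$ preserves the existence of a mate and the minimality of $|supp(\mathsf{b})|$, i.e.\ that the translate $\mathsf{b}h$ really is a mate of $h^{-1}\mathsf{a}$ — this is the support-preserving bijection between right inverses noted above — and (ii) that the unit stays \emph{non-trivial} after translation, which is automatic because $|supp(h^{-1}\mathsf{a})|=4>1$. Once these are checked, everything reduces to a direct invocation of Lemmas~\ref{hg} and \ref{01}.
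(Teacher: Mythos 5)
Your proof is correct and follows essentially the same route as the paper: translate so that $1\in supp(\mathsf{a})$, use Lemma~\ref{hg} to identify $\langle supp(\mathsf{a})\rangle=\langle supp(\mathsf{b})\rangle$, rule out the abelian and Klein bottle cases via the unique product property and Remark~\ref{klein}, and then invoke Lemma~\ref{01}. Your extra care in checking that $\mathsf{b}h$ is genuinely a mate of $h^{-1}\mathsf{a}$ is exactly the content the paper delegates to Remark~\ref{idenunit} and Lemma~2.10 of \cite{AT}.
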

\begin{proof}
Suppose that $ \mathsf{b} $ is a mate of $ \mathsf{a} $. Let $B=supp(\mathsf{a})$, $C=supp(\mathsf{b})$ and  $H=\left\langle h^{-1}B\right\rangle$, where $ h $ is an arbitrary element of $B$. Since $ (h^{-1}\mathsf{a})(\mathsf{b} h)=1 $, Lemma \ref{1} implies $(h^{-1}\mathsf{a}),(\mathsf{b} h)\in \mathbb{F}[H]$ and therefore  $ H $ is neither abelian \cite[Theorem 26.2]{PI} nor isomorphic to the Klein bottle group (see Remark \ref{klein}). Hence, the result follows from Lemma \ref{01}.
\end{proof}
\begin{defn}\label{definitions}
Suppose that $ \alpha , \beta \in \mathbb{F}[G] $, $ B=supp(\alpha) $ and $ C=supp(\beta) $. For each $ g\in C $ and $ i\in\{1,\ldots,|B|\} $,  let $ \Theta_i(g):=\{h\in B \, |\; r_{BC}(hg)=i\} $, $ \theta_i(g):=|\Theta_i(g)| $,  $ \Delta_{(\alpha ,\beta)}^{i}:=\{s\in BC \,|\,r_{BC}(s)=i\}  $ and $ \delta_{(\alpha ,\beta)}^{i}:=|\Delta_{(\alpha ,\beta)}^{i}| $. Clearly, $ 0\leq \theta_i(g)\leq |B| $,  $ \sum_{i=1}^{|B|}\theta_i(g)=|B| $, $ \sum_{i=1}^{|B|} \delta_{(\alpha ,\beta)}^{i}=|BC| $ and $ \sum_{i=1}^{|B|}i \delta_{(\alpha ,\beta)}^{i}=|B||C| $.
\end{defn} 
\section {\bf Zero divisor  and unit graphs}\label{section2}
In \cite{AT}, the zero divisor graph  is introduced   as follows:
\begin{defn}
For any pair of non-zero elements $(\alpha,\beta)$ of a group algebra over a field $ \mathbb{F} $ and a group $ G $ such that $\alpha\beta=0$, we assign a graph $ Z(\alpha,\beta) $ to $ (\alpha,\beta) $ called the zero-divisor graph of $ (\alpha,\beta) $ as follows:\\ the vertex set is $ supp(\beta) $, the edge set is 
\[
\big{\{}\{(h,h',g,g'),(h',h,g',g)\}\,|\;h,h'\in supp(\alpha),\;g,g'\in supp(\beta),\; g\neq g',\;hg=h'g'\big{\}},
\] 
and if $ \mathcal{E}_{(\alpha,\beta)}\neq \varnothing $, the function $ \psi_{Z(\alpha,\beta)}:\mathcal{E}_{Z(\alpha,\beta)}\rightarrow {\mathcal{V}}^{2}_{_{Z(\alpha,\beta)}} $ is defined by
\[
\psi_{Z(\alpha,\beta)}(\{(h,h',g,g'),(h',h,g',g)\})=\{g,g'\},
\]
for all $ \{(h,h',g,g'),(h',h,g',g)\}\in  \mathcal{E}_{Z(\alpha,\beta)}$.
\end{defn}
$ Z(\alpha,\beta) $ is called a zero divisor graph of length $ |supp(\alpha)| $ over the field $ \mathbb{F} $ and on the group $ G $.
\begin{lem}\label{degreeconn}
Let $ \alpha,\beta $ be   non-zero elements  in $\mathbb{F}[G]$ such that $ |supp(\alpha)|=n $   and $ \alpha\beta=0 $. Then every vertex of $ Z(\alpha,\beta)  $ has degree   $n,n+1,\ldots$ or $n(n-1)$. Moreover, if $ \mathbb{F}=\mathbb{F}_2 $, then if $ n $ is even or odd, then the degrees of all vertices  of $ Z(\alpha,\beta)  $ are even or odd, respectively. 
\end{lem}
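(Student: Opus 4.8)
The plan is to read off the degree of a fixed vertex $g\in supp(\beta)$ directly from the defining relation $\alpha\beta=0$ restricted to a single group element of $BC$, where $B=supp(\alpha)$ and $C=supp(\beta)$. Fix $g\in C$ and consider, for each $h\in B$, the group element $hg$. By the definition of the edge set, an edge incident to $g$ is a pair $\{(h,h',g,g'),(h',h,g',g)\}$ with $h,h'\in B$, $g'\in C\setminus\{g\}$ and $hg=h'g'$; in other words $hg$ is hit more than once as a product from $B\times C$. So the degree of $g$ counts, over all $h\in B$, the number of pairs $(h',g')\in B\times C$ with $g'\neq g$ and $h'g'=hg$. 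With the notation of Definition \ref{definitions}, if $h\in\Theta_i(g)$ then $hg$ is realized $r_{BC}(hg)=i$ times, so there are $i-1$ pairs $(h',g')\ne(h,g)$ with $h'g'=hg$, each giving one edge at $g$ (an edge with multiplicity, since multi-edges are allowed). Hence
\[
deg_{Z(\alpha,\beta)}(g)=\sum_{h\in B}\bigl(r_{BC}(hg)-1\bigr)=\sum_{i=1}^{n} i\,\theta_i(g)-\sum_{i=1}^n\theta_i(g)=\sum_{i=1}^{n} i\,\theta_i(g)-n.
\]

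First I would establish the lower bound. The key input is that $\alpha\beta=0$ forces, for \emph{every} $s\in BC$, the coefficient $\sum_{(h,g):hg=s}\alpha_h\beta_g$ to vanish; in particular a group element that is a product $hg$ cannot be realized exactly once, since then its coefficient would be $\alpha_h\beta_g\ne0$. Therefore $\Theta_1(g)=\varnothing$, i.e. $\theta_1(g)=0$, so $r_{BC}(hg)\ge 2$ for all $h\in B$ and
\[
deg_{Z(\alpha,\beta)}(g)=\sum_{h\in B}\bigl(r_{BC}(hg)-1\bigr)\ge \sum_{h\in B}1=n.
\]
For the upper bound, note $r_{BC}(hg)\le\min\{|B|,|C|\}\le|B|=n$, but in fact we can do slightly better: the $n$ elements $hg$ ($h\in B$) are distinct, and each contributes at most $r_{BC}(hg)-1\le n-1$; however $\sum_{h\in B} r_{BC}(hg)$ counts pairs $(h',g')$ with $h'g'\in Bg$, which is at most $|B||C|$ but more sharply, among the $g'$ appearing we always have $g$ itself contributing exactly once for each $h$, so $deg(g)=\sum_h(r_{BC}(hg)-1)$ counts pairs landing in $Bg$ with second coordinate $\ne g$; this is at most $n(n-1)$ since there are $n$ "target" elements $hg$ and each can be hit at most $n-1$ further times by pairs whose $C$-coordinate differs from $g$. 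I would spell out this last counting carefully — that is the only mildly delicate point — concluding $n\le deg_{Z(\alpha,\beta)}(g)\le n(n-1)$.

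For the parity statement with $\mathbb{F}=\mathbb{F}_2$, the vanishing coefficient condition becomes: for every $s\in BC$, the number of pairs $(h,g)\in B\times C$ with $hg=s$ is even. Fix $g$; then for each $h\in B$, $r_{BC}(hg)$ is even, so $r_{BC}(hg)-1$ is odd. Summing over the $n$ elements $h\in B$ gives $deg_{Z(\alpha,\beta)}(g)\equiv n\cdot 1\equiv n\pmod 2$, so all vertex degrees are congruent to $n$ mod $2$; in particular they are all even when $n$ is even and all odd when $n$ is odd. I expect the main obstacle to be none too serious here — it is essentially bookkeeping with Definition \ref{definitions} — but care is needed to make sure the edge-multiplicity convention is used consistently (each of the $i-1$ "extra" representations of $hg$ yields exactly one edge at $g$, counted with multiplicity), so that the identity $deg(g)=\sum_{h\in B}(r_{BC}(hg)-1)$ is exactly right rather than off by a symmetry factor.
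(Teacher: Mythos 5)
Your proposal is correct and follows essentially the same route as the paper: both derive $deg_{Z(\alpha,\beta)}(g)=\sum_{h\in B}(r_{BC}(hg)-1)$ (the paper writes this equivalently as $n+\sum_{i=3}^n(i-2)\theta_i(g)$), use $\alpha\beta=0$ to force $r_{BC}(x)\ge 2$ for the lower bound and $r_{BC}(x)\le n$ for the upper bound, and over $\mathbb{F}_2$ use that every $r_{BC}(x)$ is even to get $deg(g)\equiv n\pmod 2$. The upper-bound counting you flag as delicate is immediate from $r_{BC}(hg)-1\le n-1$ for each of the $n$ distinct elements $hg$, so no extra care is actually needed there.
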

\begin{proof}
 Let $B=supp(\alpha)$, $C=supp(\beta)$.  Since $ \alpha\beta=0 $, $ \sum_{(h,g)\in R_{BC}(x)}\alpha_h\beta_g=0 $  for all $ x\in BC $. Hence, we must have $ r_{BC}(x)\geq 2 $ and by Definition \ref{set},    $ r_{BC}(x)\in \{2,3,\ldots,n\} $  for all $ x\in BC $. Suppose that $g$ is a vertex of $ Z(\alpha,\beta) $. As discussed above  for each $ h\in B $, $ r_{BC}(hg)\in \{2,3,\ldots,n\}$ and therefore  $deg_{Z(\alpha,\beta)}(g)$ satisfies the following formula:
\begin{equation}\label{deg1}
deg_{Z(\alpha,\beta)}(g)=n+\sum_{i=3}^n (i-2)\theta_i(g),\; \text{where} \;\; 0\leq \sum_{3}^{n}\theta_i(g)\leq n .
\end{equation}
Hence, $ deg_{Z(\alpha,\beta)}(g)\in\{n,n+1,\ldots,n(n-1) \}$. If  $ \mathbb{F}=\mathbb{F}_2 $, then $ \alpha\beta=\sum_{ x\in BC }r_{BC}(x)x=0 $. Thus, $ r_{BC}(x) $ is an  even number  for all $ x\in BC $. Hence, if $ i $ is an odd number, then $ \theta_i(g)=0 $  for all $ g\in supp(\beta) $ and therefore \ref{deg1} implies that if  $ n $ is  even  or odd, then $ deg_{Z(\alpha,\beta)}(g) $ is  even or odd, respectively. This completes the proof.
\end{proof}
\begin{ex}
 Let $ G' $ be a group with an element $ a\in G' $ of order $ 10 $. If we let $ \alpha=1+a+a^{4}+a^{9} $ and  $ \beta=1+a^{2}+a^{4}+a^{6}+a^{8} $, then  $ \alpha,\beta\in \mathbb{F}_2[G'] $ and $ \alpha\beta=0=\beta\alpha $. $ Z(\alpha,\beta) $ and $ Z(\beta,\alpha) $ are shown in Figure {\rm\ref{ex}}. It can be seen that $ \beta $ is a mate of $ \alpha $ but as shown in Figure {\rm\ref{ex}}, since $ Z(\beta,\alpha) $ is not connected, Lemma $2.6$ of {\rm\cite{AT}} implies that $ \alpha $ is not a mate of $ \beta $. It can be seen that $ \alpha'=1+a^{2} $ is a mate of $ \beta $. 
\end{ex}
\begin{figure}[!htb]
\begin{minipage}{.4\textwidth}\centering
\begin{tikzpicture}[scale=.75]
\draw [fill] (0,.25) circle
[radius=0.1] node  [above]  {1};
\draw [fill] (-1,-.5) circle
[radius=0.1] node  [left]  {$ a^{2} $};
\draw [fill] (1,-.5) circle
[radius=0.1] node  [right]  {$ a^{8} $};
\draw [fill] (-1,-1.5) circle
[radius=0.1] node  [left]  {$ a^{4} $};
\draw [fill] (1,-1.5) circle
[radius=0.1] node  [right]  {$ a^{6} $};
\draw (0,.25) -- (1,-.5);
\draw (0,.25) -- (-1,-.5);
\draw (0,.25) -- (1,-1.5);
\draw (0,.25) -- (-1,-1.5);
\draw (1,-.5) -- (-1,-.5);
\draw (1,-.5) -- (1,-1.5);
\draw (1,-.5) -- (-1,-1.5);
\draw (-1,-.5) -- (1,-1.5);
\draw (-1,-.5) -- (-1,-1.5);
\draw (-1,-1.5) -- (1,-1.5);
\end{tikzpicture}
\caption*{$ Z(\alpha,\beta) $}
\end{minipage}
\begin{minipage}{.4\textwidth}\centering
\begin{tikzpicture}[scale=.85]
\draw [fill] (-1,-.4) circle
[radius=0.1] node  [above]  {$ 1 $};
\draw [fill] (1,-.4) circle
[radius=0.1] node  [above]  {$ a$};
\draw [fill] (-1,-1.5) circle
[radius=0.1] node  [below]  {$ a^{4} $};
\draw [fill] (1,-1.5) circle
[radius=0.1] node  [below]  {$a^{9}$};
\draw (-1,-.5) -- (-1,-1.5);
\draw (1,-1.5) -- (1,-.5);
\draw (1,-1.5) .. controls (1.4,-.9) and (1.2,-.87) .. (1,-.4);
\draw (1,-1.5) .. controls (1.55,-.9) and (1.35,-.85) .. (1,-.4);
\draw (1,-1.5) .. controls (.6,-.9) and (.8,-.9) .. (1,-.4);
\draw (1,-1.5) .. controls (.45,-.9) and (.6,-.9) .. (1,-.4);

\draw (-1,-.4) .. controls (-1.4,-.9) and (-1.2,-.87) .. (-1,-1.5);
\draw (-1,-.4) .. controls (-1.55,-.9) and (-1.35,-.85) .. (-1,-1.5);
\draw (-1,-.4) .. controls (-.6,-.9) and (-.8,-.9) .. (-1,-1.5);
\draw (-1,-.4) .. controls (-.45,-.9) and (-.6,-.85) .. (-1,-1.5);
\end{tikzpicture}
\caption*{$ Z(\beta,\alpha) $}
\end{minipage}
\caption{$ Z(\alpha,\beta) $ and $ Z(\beta,\alpha) $,  where $ \alpha $ and $ \beta $ are two elements of $ \mathbb{F}_2[G'] $ with $ supp(\alpha)=\{1,a,a^{4},a^{9}\} $ and  $ supp(\beta)=\{1,a^{2},a^{4},a^{6},a^{8}\} $.}\label{ex}
\end{figure}
In \cite{AT}, the unit graph  is introduced   as follows:
\begin{defn}
For any pair of  elements $(\mathsf{a},\mathsf{b})$ of a group algebra over a field $ \mathbb{F} $ and a group $ G $ such that $\mathsf{a}\mathsf{b}=1$, we assign a graph $ U(\mathsf{a},\mathsf{b}) $ to $ (\mathsf{a},\mathsf{b})$ called the unit graph of $ (\mathsf{a},\mathsf{b})$ as follows:\\ the vertex set is $ supp(\mathsf{b}) $, the edge set is 
\[
\big{\{}\{(h,h',g,g'),(h',h,g',g)\}\,|\;h,h'\in supp(\mathsf{a}),\;g,g'\in supp(\mathsf{b}),\; g\neq g',\;hg=h'g'\big{\}},
\] 
and if $ \mathcal{E}_{U(\mathsf{a},\mathsf{b})}\neq \varnothing $, the function $ \psi_{U(\mathsf{a},\mathsf{b})}:\mathcal{E}_{U(\mathsf{a},\mathsf{b})}\rightarrow \mathcal{V}^{2}_{_{U(\mathsf{a},\mathsf{b})}} $ is defined by
\[
\psi_{U(\mathsf{a},\mathsf{b})}(\{(h,h',g,g'),(h',h,g',g)\})=\{g,g'\},
\]
for all $ \{(h,h',g,g'),(h',h,g',g)\}\in  \mathcal{E}_{U(\mathsf{a},\mathsf{b})}$.
\end{defn}
 $ U(\mathsf{a},\mathsf{b}) $ is called a unit graph of length $ |supp(\mathsf{a})| $ over the field $ \mathbb{F} $ and on the group $ G $.
\begin{lem}\label{degreeconnunit}
Let $ \mathsf{a},\mathsf{b} \in \mathbb{F}[G]$ such that $ |supp(\mathsf{a})|=n $   and $ \mathsf{a}\mathsf{b}=1 $. Then every vertex of $ U(\mathsf{a},\mathsf{b})  $ has degree   $n-1,n,\ldots$ or $n(n-1)$. Moreover, the number of vertices of degree $ n-1 $ in $ U(\mathsf{a},\mathsf{b})  $ is at most $ 1 $.
\end{lem}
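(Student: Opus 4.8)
The plan is to mirror the proof of Lemma \ref{degreeconn}, keeping careful track of the contribution of the identity element of $G$. Put $B=supp(\mathsf{a})$, $C=supp(\mathsf{b})$, so $|B|=n$. First I would record the arithmetic input: comparing coefficients in $\mathsf{a}\mathsf{b}=1$ gives $\sum_{(h,g)\in R_{BC}(x)}\mathsf{a}_h\mathsf{b}_g=0$ for every $x\in BC\setminus\{1\}$ and $=1$ for $x=1$. Since no single term of such a sum vanishes ($h\in B$, $g\in C$ force $\mathsf{a}_h\mathsf{b}_g\neq 0$), it follows that $r_{BC}(x)\geq 2$ for all $x\in BC\setminus\{1\}$, while $1\in BC$ and $r_{BC}(1)\geq 1$. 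This is the exact analogue of the step ``$r_{BC}(x)\geq 2$'' in Lemma \ref{degreeconn}, with the identity now an exceptional fibre.

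Next I would establish the degree formula. An edge of $U(\mathsf{a},\mathsf{b})$ incident to a vertex $g$ is, for some $h\in B$, an unordered pair of distinct elements of $R_{BC}(hg)$ one of whose second coordinates is $g$; within a single fibre $R_{BC}(x)$ all first coordinates (and all second coordinates) are automatically distinct, so this is just a $2$-element subset of $R_{BC}(hg)$ containing $(h,g)$. Because $h\mapsto hg$ is injective, the fibres $R_{BC}(hg)$, $h\in B$, are pairwise disjoint and there is no overcounting, so
\[
deg_{U(\mathsf{a},\mathsf{b})}(g)=\sum_{h\in B}\bigl(r_{BC}(hg)-1\bigr)=\sum_{i=1}^{n}(i-1)\,\theta_i(g),
\]
using $\sum_{i=1}^{n}\theta_i(g)=n$ from Definition \ref{definitions}.

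Now I would bound this sum. The key point is $\theta_1(g)\leq 1$: if $r_{BC}(hg)=1$ for some $h\in B$, then by the first step $hg=1$, and at most one element of $B$ equals $g^{-1}$. If $\theta_1(g)=0$ then $\sum_{i\geq 2}\theta_i(g)=n$ and $deg(g)\geq\sum_{i\geq2}\theta_i(g)=n$; if $\theta_1(g)=1$ then $\sum_{i\geq 2}\theta_i(g)=n-1$ and $deg(g)\geq n-1$. For the upper bound, $r_{BC}(x)\leq\min\{|B|,|C|\}\leq n$ (Definition \ref{set}), so each coefficient $i-1\leq n-1$ and $deg(g)\leq n(n-1)$. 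Hence $deg_{U(\mathsf{a},\mathsf{b})}(g)\in\{n-1,n,\dots,n(n-1)\}$.

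Finally, for the ``moreover'' clause: if $deg(g)=n-1$, then the above forces $\theta_1(g)=1$, i.e.\ there is $h\in B$ with $hg=1$ and $r_{BC}(1)=r_{BC}(hg)=1$; thus $g$ is the unique second coordinate occurring in $R_{BC}(1)$. If there were a second vertex $g'\neq g$ of degree $n-1$, the same reasoning would produce a pair $(h',g')\in R_{BC}(1)$ distinct from $(h,g)$, contradicting $r_{BC}(1)=1$. So at most one vertex has degree $n-1$. I do not expect a genuine obstacle here; as in Lemma \ref{degreeconn} the only delicate point is the bookkeeping in the degree formula, together with isolating the single fibre over $1\in G$, which is what produces both the value $n-1$ and its uniqueness.
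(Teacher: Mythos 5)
Your proof is correct and follows essentially the same route as the paper: the coefficient comparison gives $r_{BC}(x)\geq 2$ for $x\neq 1$ and $r_{BC}(1)\geq 1$, the degree of $g$ is $\sum_{h\in B}(r_{BC}(hg)-1)$, and the exceptional value $n-1$ can only come from the fibre over $1$ being a singleton, which pins down a unique such vertex. Your uniform formula via $\theta_1(g)\leq 1$ is just a cleaner packaging of the paper's case split on $C\cap B^{-1}$, and you make explicit the uniqueness argument that the paper leaves implicit.
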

\begin{proof}
 Let $B=supp(\mathsf{a})$ and $C=supp(\mathsf{b})$. Since $ \mathsf{a}\mathsf{b}=1 $, we must have $ 1\in BC $, $ \sum_{(h,g)\in R_{BC}(1)}\alpha_h\beta_g=1 $ and  
$ \sum_{(h,g)\in R_{BC}(x)}\alpha_h\beta_g=0 $  for all $ x\in BC\setminus \{1\} $. Thus,  by Definition \ref{set},  $ r_{BC}(1)\in \{1,2,\ldots,n\} $ and $ r_{BC}(x)\in \{2,3,\ldots,n\} $  for all $ x\in BC\setminus \{1\} $. Suppose that $g$ is a vertex of $ U(\mathsf{a},\mathsf{b}) $. As discussed above, if $ g\notin C\cap B^{-1} $, then $ r_{BC}(hg)\in \{2,3,\ldots,n\} $  for all $ h\in B$. So, $  deg_{U(\mathsf{a},\mathsf{b})}(g)$ satisfies  the following formula:\begin{equation}\label{deg1unit}
deg_{U(\mathsf{a},\mathsf{b})}(g)=n+\sum_{i=3}^n (i-2)\theta_i(g), \; \text{where} \;\; 0\leq \sum_{3}^{n}\theta_i(g)\leq n .
\end{equation}
Hence, $ deg_{U(\mathsf{a},\mathsf{b})}(g)\in\{n,n+1,\ldots, n(n-1) \}$. Now, suppose that $ g\in C\cap B^{-1}$.
Consider two cases:  $ (1) $  $  C\cap B^{-1}\neq\{g\} $: Then $ r_{BC}(1)\geq 2 $ and therefore $  deg_{U(\mathsf{a},\mathsf{b})}(g)$ satisfies  \ref{deg1unit}; $ (2) $  $  C\cap B^{-1}=\{g\} $: Then $ r_{BC}(1)=1 $ and for each $ h\in B\setminus \{g^{-1}\} $, $ r_{BC}(hg)\geq 2 $. Hence, $ deg_{U(\mathsf{a},\mathsf{b})}(g)=n-1+\sum_{i=3}^{n} (i-2)\theta_i(g) $, where $ 0\leq \sum_{3}^{n}\theta_i(g)\leq n-1 $. So, $ deg_{U(\mathsf{a},\mathsf{b})}(g)\in\{n-1,\ldots,(n-1)^2 \}$. This completes the proof.
\end{proof}
\begin{rem}\label{complete}
Suppose that $ \Gamma $ is a zero divisor graph or a unit graph on a pair of elements $ (\alpha,\beta) $ in a group algebra. Let $B=supp(\alpha)$ and $C=supp(\beta)$.  For each $ s\in \Delta_{(\alpha,\beta)}^i$,  let  $ \mathcal{V}_{(\alpha,\beta)}(s):=\{g\in C\,| \,(sg^{-1},g)\in R_{BC}(s)\} $. Clearly, the induced subgraph on $ \mathcal{V}_{(\alpha,\beta)}(s) $ in $ \Gamma $ is a complete graph.
\end{rem}
\begin{lem}\label{3part}
Let $ \alpha ,\beta \in   \mathbb{F}[G]\setminus \{0\} $ such that $ \alpha\beta=0 $. Then the following statements hold: 
\begin{itemize}
\item[(1)]Suppose that $ s\in \Delta_{(\alpha ,\beta)}^{i}  $, where $ 3\leq i\leq |supp(\alpha)| $. Then for each $ g\in \mathcal{V}_{(\alpha,\beta)}(s) $, $deg_{Z(\alpha ,\beta)}(g)> |supp(\alpha)|$.
\item[(2)]Suppose that $ s\in \Delta_{(\alpha ,\beta)}^{i}  $ and $ s'\in \Delta_{(\alpha ,\beta)}^{j}  $ such that $ s\neq s' $ and $ 2\leq i,j\leq |supp(\alpha)| $. Then each pair of distinct elements in $ \mathcal{V}_{(\alpha,\beta)}(s)\cap \mathcal{V}_{(\alpha,\beta)}(s') $ are adjacent by  more than  one edge in $ Z(\alpha,\beta) $.
\item[(3)] For each $ g\in supp(\beta) $ and $ i\in \{1,\ldots,|supp(\alpha)|\}$, $ |\{s\in \Delta_{(\alpha ,\beta)}^{i}\, |\,g\in  \mathcal{V}_{(\alpha,\beta)}(s)\}|=\theta_i(g) $.
\end{itemize}
\end{lem}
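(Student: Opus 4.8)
The plan is to obtain all three parts as essentially formal consequences of the degree formula (\ref{deg1}) from Lemma \ref{degreeconn}, once the definitions of $\mathcal{V}_{(\alpha,\beta)}(s)$, $\Theta_i(g)$, $\Delta_{(\alpha,\beta)}^{i}$ and the edge set of $Z(\alpha,\beta)$ are carefully unwound. Throughout I would write $B=supp(\alpha)$, $C=supp(\beta)$ and, for $g\in C$, $Bg=\{hg\mid h\in B\}$; the one observation underlying everything is that $h\mapsto hg$ is a bijection of $B$ onto $Bg$ (since $G$ is a group) and that, for $s\in BC$, one has $g\in\mathcal{V}_{(\alpha,\beta)}(s)$ exactly when $sg^{-1}\in B$, i.e. when $s\in Bg$.

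I would establish part (3) first, since it isolates precisely the bijection that also underlies (\ref{deg1}). Under $h\mapsto hg$, the set $\{s\in\Delta_{(\alpha,\beta)}^{i}\mid g\in\mathcal{V}_{(\alpha,\beta)}(s)\}=\Delta_{(\alpha,\beta)}^{i}\cap Bg$ is carried onto $\{h\in B\mid r_{BC}(hg)=i\}=\Theta_i(g)$, because $r_{BC}(hg)=r_{BC}(s)$ when $s=hg$; hence the cardinality is $\theta_i(g)$. Part (1) then follows at once from the formula already proved in Lemma \ref{degreeconn}: if $g\in\mathcal{V}_{(\alpha,\beta)}(s)$ with $s\in\Delta_{(\alpha,\beta)}^{i}$ and $i\ge 3$, then $sg^{-1}\in\Theta_i(g)$, so $\theta_i(g)\ge 1$, and (\ref{deg1}) gives $deg_{Z(\alpha,\beta)}(g)\ge |B|+(i-2)\ge |B|+1>|supp(\alpha)|$.

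For part (2), given distinct $g,g'\in\mathcal{V}_{(\alpha,\beta)}(s)\cap\mathcal{V}_{(\alpha,\beta)}(s')$, I would note that $sg^{-1},\,sg'^{-1},\,s'g^{-1},\,s'g'^{-1}$ all lie in $B$ and satisfy $(sg^{-1})g=s=(sg'^{-1})g'$ and $(s'g^{-1})g=s'=(s'g'^{-1})g'$ with $g\ne g'$; reading these off against the edge-set definition of $Z(\alpha,\beta)$, each of the two equalities yields an edge of $Z(\alpha,\beta)$ joining $g$ and $g'$. The remaining step is to check that the two edges thus produced are genuinely distinct edges of the multigraph and not the same edge counted twice: if they coincided, comparing the $4$-tuples whose third coordinate is $g$ would force $sg^{-1}=s'g^{-1}$, hence $s=s'$, contrary to hypothesis. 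So $g$ and $g'$ are joined by at least two edges.

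None of the steps is technically deep; the only thing requiring real care is the bookkeeping in part (2) — faithfully translating the combinatorial data $(s,g,g')$ and $(s',g,g')$ into edges of the multigraph and verifying that they are not identified. That verification is exactly the point at which the hypothesis $s\ne s'$ enters, and I expect it to be the one place where a hasty write-up could slip.
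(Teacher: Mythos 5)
Your proposal is correct and follows essentially the same route as the paper: part (1) via the observation that $sg^{-1}\in\Theta_i(g)$ combined with the degree formula of Lemma \ref{degreeconn}, part (2) by exhibiting the two edge-pairs coming from $s$ and $s'$ and noting they are distinct since $s\neq s'$, and part (3) directly from the definition of $\Theta_i(g)$. Your write-up is merely more explicit than the paper's (in particular the distinctness check of the two edges in part (2), which the paper asserts without the coordinate comparison), but there is no substantive difference.
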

\begin{proof}
Clearly, for each $ g\in \mathcal{V}_{(\alpha,\beta)}(s) $, $ sg^{-1}\in \Theta_i(g) $ and therefore \ref{deg1} implies $deg_{Z(\alpha ,\beta)}(g)> |supp(\alpha)|$. Suppose that there are distinct elements $g_i,g_j\in\mathcal{V}_{(\alpha,\beta)}(s)\cap\mathcal{V}_{(\alpha,\beta)}(s') $.  $ s\neq s' $ implies that  \[ \{(sg_{i}^{-1},sg_{j}^{-1},g_{i} ,g_{j}),(sg_{j}^{-1},sg_{i}^{-1},g_{j} ,g_{i})\} \,\, \text{and }\, \{(s'g_{i}^{-1},s'g_{j}^{-1},g_{i} ,g_{j}),(s'g_{j}^{-1},s'g_{i}^{-1},g_{j} ,g_{i})\} \] are distinct elements of $ \mathcal{E}_{Z(\alpha,\beta)} $. This completes the proof of part $ (2)$. The proof of part $ (3) $ is clear by the definition of the set $ \Theta_i(g) $.
\end{proof}
\begin{lem}\label{3partunit}
Let $ \mathsf{a},\mathsf{b} \in \mathbb{F}[G] $ such that $ \mathsf{a}\mathsf{b}=1 $. Then the following statements hold: 
\begin{itemize}
\item[(1)]Suppose that $ s\in \Delta_{(\mathsf{a},\mathsf{b})}^{i}  $ and $ s'\in \Delta_{(\mathsf{a},\mathsf{b})}^{j}  $ such that $ s\neq s' $ and $ 2\leq i,j\leq |supp(\mathsf{a})| $. Then each pair of distinct elements in $ \mathcal{V}_{(\mathsf{a},\mathsf{b})}(s)\cap \mathcal{V}_{(\mathsf{a},\mathsf{b})}(s') $ are adjacent by  more than  one edge in $ U(\mathsf{a},\mathsf{b}) $.
\item[(2)] For each $ g\in supp(\mathsf{b}) $ and $ i\in \{1,\ldots,|supp(\mathsf{a})|\}$, $ |\{s\in \Delta_{(\mathsf{a},\mathsf{b})}^{i}\, |\,g\in  \mathcal{V}_{(\mathsf{a},\mathsf{b})}(s)\}|=\theta_i(g) $.
\end{itemize}
\end{lem}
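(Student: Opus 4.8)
Throughout write $B=supp(\mathsf{a})$ and $C=supp(\mathsf{b})$. The plan is to transfer the proof of Lemma~\ref{3part}, parts (2) and (3), essentially verbatim: neither of the two assertions here involves the field coefficients, so the only structural input that matters is the definition of the edge set of $U(\mathsf{a},\mathsf{b})$ together with Definition~\ref{definitions} and Remark~\ref{complete}, and these are literally the same for the zero-divisor and the unit graph. Thus I expect the whole argument to be routine, with the only point of care being the distinctness check in part~(1).

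For part~(1) I would start from two distinct vertices $g_i,g_j\in\mathcal{V}_{(\mathsf{a},\mathsf{b})}(s)\cap\mathcal{V}_{(\mathsf{a},\mathsf{b})}(s')$. By the definition of $\mathcal{V}_{(\mathsf{a},\mathsf{b})}(\cdot)$ one has $sg_i^{-1},sg_j^{-1},s'g_i^{-1},s'g_j^{-1}\in B$, together with $(sg_i^{-1})g_i=(sg_j^{-1})g_j=s$ and $(s'g_i^{-1})g_i=(s'g_j^{-1})g_j=s'$. Hence
\[
\{(sg_i^{-1},sg_j^{-1},g_i,g_j),(sg_j^{-1},sg_i^{-1},g_j,g_i)\}\quad\text{and}\quad\{(s'g_i^{-1},s'g_j^{-1},g_i,g_j),(s'g_j^{-1},s'g_i^{-1},g_j,g_i)\}
\]
are both edges of $U(\mathsf{a},\mathsf{b})$ (here $g_i\neq g_j$ is what is needed), each with endpoint set $\{g_i,g_j\}$ under $\psi_{U(\mathsf{a},\mathsf{b})}$. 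To finish I would check these are distinct elements of $\mathcal{E}_{U(\mathsf{a},\mathsf{b})}$: an equality of the two two-element sets would force either $(sg_i^{-1},sg_j^{-1},g_i,g_j)=(s'g_i^{-1},s'g_j^{-1},g_i,g_j)$, giving $s=s'$, or $(sg_i^{-1},sg_j^{-1},g_i,g_j)=(s'g_j^{-1},s'g_i^{-1},g_j,g_i)$, giving $g_i=g_j$; both are excluded by hypothesis. So $g_i$ and $g_j$ are adjacent by at least two edges.

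For part~(2) I would, for fixed $g\in C$ and $i$, exhibit the bijection $h\mapsto hg$ between $\Theta_i(g)$ and $\{s\in\Delta_{(\mathsf{a},\mathsf{b})}^{i}\mid g\in\mathcal{V}_{(\mathsf{a},\mathsf{b})}(s)\}$. If $h\in\Theta_i(g)$, then $r_{BC}(hg)=i$, so $hg\in\Delta_{(\mathsf{a},\mathsf{b})}^{i}$, and $((hg)g^{-1},g)=(h,g)\in R_{BC}(hg)$ shows $g\in\mathcal{V}_{(\mathsf{a},\mathsf{b})}(hg)$. Conversely, if $s\in\Delta_{(\mathsf{a},\mathsf{b})}^{i}$ and $g\in\mathcal{V}_{(\mathsf{a},\mathsf{b})}(s)$, then $sg^{-1}\in B$ and $r_{BC}\big((sg^{-1})g\big)=r_{BC}(s)=i$, so $sg^{-1}\in\Theta_i(g)$. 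Since right multiplication by $g$ is a bijection of $G$, these two assignments are mutually inverse, whence both sets have cardinality $\theta_i(g)$. I do not anticipate any genuine obstacle: the whole statement is purely combinatorial and is the exact analogue of Lemma~\ref{3part}, the distinctness bookkeeping in part~(1) being the only mildly fiddly point.
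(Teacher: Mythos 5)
Your proposal is correct and follows exactly the route the paper takes: the paper's proof of this lemma is literally "similar to the proof of parts (2) and (3) of Lemma~\ref{3part}," and your argument is that transfer, carried out with slightly more care on the distinctness of the two edges and with an explicit bijection $h\mapsto hg$ for part~(2). No issues.
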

\begin{proof}
The proof is similar to the proof of parts (2) and (3) of Lemma \ref{3part}.
\end{proof}
In the sequel, we  study the zero divisor graph and the unit graph of lengths $ 4 $ over an arbitrary field  and on any torsion-free group. If $ \alpha $ is a  zero divisor or a unit in $  \mathbb{F}[G] $ for a possible  torsion-free group $ G $ and arbitrary field $ \mathbb{F}$ with  $ |supp(\alpha)|=4 $, then by Lemmas \ref{z} and \ref{unit size},  $ | S_{\alpha} | \in\{ 10,12\} $. In Sections $  4$ and $ 5 $ we  study these graphs in the cases  $ | S_{\alpha} |=12 $ and   $ | S_{\alpha} |=10 $, respectively.
\begin{rem}\label{iden}
 Suppose that $  \alpha  $ is a  zero divisor in $ \mathbb{F}[G] $ for a possible torsion-free group $ G $  and arbitrary field $ \mathbb{F}$ with  $ |supp(\alpha)|=4 $ and $ \beta $ is a mate of $ \alpha $ . By Lemma $ 2.4 $ of  \cite{AT} and Lemma {\rm\ref{1}}, we may assume that  $ 1\in supp(\alpha)\cap supp(\beta) $ and $ G=\left\langle supp(\alpha)\right\rangle=\left\langle supp(\beta)\right\rangle $.
\end{rem}
 \begin{rem}\label{idenunit}
 Suppose that $  \mathsf{a}  $ is a  unit in $ \mathbb{F}[G] $ for a possible torsion-free group $ G $  and arbitrary field $ \mathbb{F}$ with  $ |supp(\mathsf{a})|=4 $ and $ \mathsf{b} $ is a mate of $ \mathsf{a} $. Since $ \mathsf{a}\mathsf{b}=1 $,  there exists $ h\in supp(\mathsf{a})\cap (supp(\mathsf{b}))^{-1} $. By Lemma $ 2.10 $ of  \cite{AT} and  since $ (h^{-1}\mathsf{a})(\mathsf{b} h)=1 $, we may assume that $ 1\in supp(\mathsf{a})\cap supp(\mathsf{b}) $. Also,  according to Lemma {\rm\ref{hg}}, we may assume that $ G=\left\langle supp(\mathsf{a})\right\rangle=\left\langle supp(\mathsf{b})\right\rangle $.
\end{rem}
\vspace{1cc}
\section {\bf Zero divisor  and  unit graphs of lengths $ 4 $ over an arbitrary field and on any torsion-free group, where  $ |S_\alpha|=12 $ and $ |S_{\mathsf{a}}|=12 $}
The following remark follows from Lemma \ref{degreeconn} and \cite[Lemma 2.6]{AT}.
\begin{rem}\label{deg12}
Let $ \alpha $ be a  zero divisor  in $\mathbb{F}[G]$ for a possible torsion-free group $ G $ and arbitrary field $\mathbb{F}$ with $|supp(\alpha)|=4$ and $ \beta $ be a mate of $ \alpha $. Then $  Z(\alpha,\beta) $ is  connected and also for each $ g\in supp(\beta) $, $ deg_{Z(\alpha,\beta)}(g)=4+\theta_3(g)+2\theta_4(g) $, where $ 0\leq \theta_3(g)+\theta_4(g)\leq 4 $. Moreover, if   $\mathbb{F}=\mathbb{F}_2 $, then   $ deg_{Z(\alpha,\beta)}(g)=4+2\theta_4(g) $, where $ 0\leq \theta_4(g)\leq 4 $.
\end{rem} 
\begin{thm}\label{2}
Let $ \alpha $ be a  zero divisor  in $\mathbb{F}[G]$ for a possible torsion-free group $ G $ and arbitrary field $\mathbb{F}$ with $|supp(\alpha)|=4,$  $ |S_\alpha|=12 $ and $ \beta $ be a non-zero element  of $ \mathbb{F}[G] $ such that $ \alpha\beta=0 $. Then $ Z(\alpha,\beta) $  is  the induced subgraph of the Cayley graph of $G$  with respect to  $ S_{\alpha}$ on the set $supp(\beta)$.
\end{thm}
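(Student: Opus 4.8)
The plan is to unwind both graphs directly from their definitions and check that they coincide vertex-by-vertex and edge-by-edge; the whole statement is essentially a counting observation once one notices what the hypothesis $|S_\alpha|=12$ says. Write $B=supp(\alpha)$ and $C=supp(\beta)$. Both $Z(\alpha,\beta)$ and the induced subgraph $\Gamma$ of the Cayley graph of $G$ with respect to $S_\alpha$ on $C$ have vertex set $C$, and this Cayley graph makes sense because $S_\alpha=(B^{-1}B)\setminus\{1\}$ is inverse-closed and does not contain $1$.

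First I would record the consequence of $|S_\alpha|=12$. Consider the map $\phi$ from the set of ordered pairs $(h,h')$ of \emph{distinct} elements of $B$ to $S_\alpha$ defined by $\phi(h,h')=h^{-1}h'$. Its domain has $|B|(|B|-1)=12$ elements, and by the definition of $S_\alpha$ it is onto; since $|S_\alpha|=12$ as well, $\phi$ is a bijection. Thus for every $u\in S_\alpha$ there is exactly one ordered pair $(h,h')\in B\times B$ with $h^{-1}h'=u$, and for such a pair automatically $h\neq h'$.

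Next I would translate edges of $Z(\alpha,\beta)$. Fix distinct $g,g'\in C$. An edge of $Z(\alpha,\beta)$ joining $g$ and $g'$ is an unordered pair $\{(h,h',g,g'),(h',h,g',g)\}$ with $h,h'\in B$ and $hg=h'g'$; because $g\neq g'$ the quadruple $(h,h',g,g')$ recovers the ordered pair $(h,h')$, so distinct such edges correspond to distinct ordered pairs $(h,h')$, and each has $h\neq h'$ (else $hg=h'g'$ forces $g=g'$). Now $hg=h'g'$ is equivalent to $h^{-1}h'=g(g')^{-1}$, i.e.\ to $\phi(h,h')=g(g')^{-1}$. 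Since $g(g')^{-1}\neq 1$, if $g(g')^{-1}\notin S_\alpha$ there is no such $(h,h')$ and hence no edge between $g$ and $g'$; if $g(g')^{-1}\in S_\alpha$, injectivity of $\phi$ gives exactly one $(h,h')$ with $\phi(h,h')=g(g')^{-1}$, and one checks this pair indeed satisfies $hg=h'g'$, so there is exactly one edge between $g$ and $g'$. This matches $\Gamma$ precisely: in $\Gamma$ the vertices $g$ and $g'$ are joined by a single edge when $g(g')^{-1}\in S_\alpha$ and by none otherwise.

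Finally I would assemble this into the required identification: the identity on $C$ together with the map sending the edge $\{(h,h',g,g'),(h',h,g',g)\}$ of $Z(\alpha,\beta)$ to $\{g,g'\}$; the previous paragraph shows this is a bijection from $\mathcal{E}_{Z(\alpha,\beta)}$ onto $\mathcal{E}_\Gamma$ compatible with the endpoint assignments, so $Z(\alpha,\beta)=\Gamma$. The only point that genuinely uses the hypothesis is the absence of multi-edges in $Z(\alpha,\beta)$, which is exactly the injectivity of $\phi$ coming from $|S_\alpha|=12$; everything else is bookkeeping with the two definitions. (The assumption $\alpha\beta=0$ enters only so that $Z(\alpha,\beta)$ is defined — the identification itself is purely combinatorial.)
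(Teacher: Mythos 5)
Your proof is correct and follows essentially the same idea as the paper's: the hypothesis $|S_\alpha|=12$ forces the map $(h,h')\mapsto h^{-1}h'$ on ordered pairs of distinct elements of $supp(\alpha)$ to be a bijection onto $S_\alpha$, which is exactly what rules out multi-edges and makes adjacency in $Z(\alpha,\beta)$ coincide with adjacency in the Cayley graph. The paper only writes out the multi-edge exclusion (as a contradiction argument) and leaves the rest of the identification implicit, so your version is simply a more complete write-up of the same argument.
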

\begin{proof}
 Let $B=supp(\alpha)$ and $C=supp(\beta)$.
 Suppose that there are two edges between distinct vertices $ g $ and $ g' $ of $ Z(\alpha,\beta) $. Hence, there exist  $ h_1,h_2,h'_1,h'_2\in B $ such that $ \{(h_{1},h_{2},g ,g'),(h_{2},h_{1},g' ,g)\} $ and $ \{(h'_{1},h'_{2},g ,g'),(h'_{2},h'_{1},g' ,g)\} $  are distinct elements of $\mathcal{E}_{Z(\alpha,\beta)} $.  Clearly, $ h_1\neq h'_1 $, $ h_2\neq h'_2 $ and  $ h_{1}^{-1}{h'}_{1}=gg'^{-1}=h_{2}^{-1}{h'}_{2} $ which imply $ |S_\alpha|<12 $, a contradiction. 
\end{proof}
 The following corollary follows from  Theorem \ref{2} and Lemma \ref{3part}.
\begin{cor}\label{cap}
Let $ \alpha $ be a  zero divisor  in $\mathbb{F}[G]$ for a possible torsion-free group $ G $ and arbitrary field $\mathbb{F}$ with $|supp(\alpha)|=4,$  $ |S_\alpha|=12 $ and $ \beta $ be a non-zero element  of $ \mathbb{F}[G] $ such that $ \alpha\beta=0 $. If $ s\in \Delta_{(\alpha ,\beta)}^{i}  $ and $ s'\in \Delta_{\alpha\beta}^{j}  $ such that $ s\neq s' $ and $  i,j\in \{2,3,4\}, $ then $ |\mathcal{V}_{(\alpha,\beta)}(s)\cap \mathcal{V}_{(\alpha,\beta)}(s')|\leq 1 $.
\end{cor}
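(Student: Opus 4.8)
The plan is to argue by contradiction, combining the simplicity of $Z(\alpha,\beta)$ coming from Theorem~\ref{2} with the multi-edge criterion of Lemma~\ref{3part}(2). First I would record the structural consequence of Theorem~\ref{2} that matters here: since $Z(\alpha,\beta)$ is (when $|S_\alpha|=12$) an induced subgraph of the Cayley graph of $G$ with respect to the inverse-closed set $S_\alpha$, and $1\notin S_\alpha$, that Cayley graph is a simple graph; hence every induced subgraph of it is simple, and in particular no two distinct vertices of $Z(\alpha,\beta)$ can be joined by more than one edge.

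Next I would suppose, toward a contradiction, that $|\mathcal{V}_{(\alpha,\beta)}(s)\cap\mathcal{V}_{(\alpha,\beta)}(s')|\geq 2$ and pick two distinct elements $g_i,g_j$ in this intersection. Because $s\neq s'$ and $2\le i,j\le 4=|supp(\alpha)|$, the hypotheses of Lemma~\ref{3part}(2) are satisfied, so $g_i$ and $g_j$ are joined by more than one edge of $\mathcal{E}_{Z(\alpha,\beta)}$ — concretely, the two distinct edges coming from the representations $s=(sg_i^{-1})g_i=(sg_j^{-1})g_j$ and $s'=(s'g_i^{-1})g_i=(s'g_j^{-1})g_j$, each of which has endpoint set $\{g_i,g_j\}$ under $\psi_{Z(\alpha,\beta)}$. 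This contradicts the simplicity noted in the previous step, so $|\mathcal{V}_{(\alpha,\beta)}(s)\cap\mathcal{V}_{(\alpha,\beta)}(s')|\leq 1$.

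There is no real obstacle here; the corollary is essentially immediate once Theorem~\ref{2} and Lemma~\ref{3part} are available. The only points needing a little care are to check that the hypotheses of Lemma~\ref{3part}(2) genuinely hold (which is exactly the assumption $i,j\in\{2,3,4\}$ together with $|supp(\alpha)|=4$) and to note that the simplicity of $Z(\alpha,\beta)$ crucially uses the standing hypothesis $|S_\alpha|=12$: for smaller $|S_\alpha|$ the graph need not embed as an induced subgraph of a simple Cayley graph, and multi-edges can indeed occur, so the conclusion would fail. I would close by remarking that the identical statement holds verbatim for unit graphs $U(\mathsf{a},\mathsf{b})$ with $|S_{\mathsf{a}}|=12$, using the unit analogue of Theorem~\ref{2} together with Lemma~\ref{3partunit}(1).
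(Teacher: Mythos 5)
Your argument is correct and is precisely the one the paper intends: the corollary is stated as following from Theorem~\ref{2} (simplicity of $Z(\alpha,\beta)$ as an induced subgraph of a simple Cayley graph when $|S_\alpha|=12$) combined with Lemma~\ref{3part}(2) (two common vertices of $\mathcal{V}_{(\alpha,\beta)}(s)$ and $\mathcal{V}_{(\alpha,\beta)}(s')$ would be joined by a multi-edge). Your added remarks on where $|S_\alpha|=12$ is used and on the unit-graph analogue match Corollary~\ref{capunit} in the paper.
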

\begin{lem}\label{5regular}
Let $ \alpha $ be a  zero divisor  in $\mathbb{F}[G]$ for a possible torsion-free group $ G $ and arbitrary field $\mathbb{F}$ with $|supp(\alpha)|=4,$  $ |S_\alpha|=12 $ and $ \beta $ be a non-zero element  of $ \mathbb{F}[G] $ such that $ \alpha\beta=0 $. If the maximum degree of $ Z(\alpha,\beta) $ is $ 5 $, then the number of vertices of degree $ 5 $ in  $ Z(\alpha,\beta) $ is a multiple of $ 6 $.
\end{lem}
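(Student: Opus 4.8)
The plan is a two-way count of the number $n_5$ of degree-$5$ vertices of $Z(\alpha,\beta)$: once through the multiplicity classes $\Delta^i_{(\alpha,\beta)}$, and once through the handshake identity. First I would use the degree bound to pin down the local structure. Write $B=supp(\alpha)$ and $C=supp(\beta)$. By (\ref{deg1}) with $n=4$, every $g\in C$ satisfies $deg_{Z(\alpha,\beta)}(g)=4+\theta_3(g)+2\theta_4(g)$. If some $s$ lay in $\Delta^4_{(\alpha,\beta)}$, then every $g\in\mathcal{V}_{(\alpha,\beta)}(s)$ would have $sg^{-1}\in\Theta_4(g)$, hence $\theta_4(g)\geq 1$ and $deg_{Z(\alpha,\beta)}(g)\geq 6$, contradicting that the maximum degree is $5$; so $\Delta^4_{(\alpha,\beta)}=\varnothing$. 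This forces $\theta_4(g)=0$ for every $g\in C$, since an $h\in B$ with $r_{BC}(hg)=4$ would put $hg$ in $\Delta^4_{(\alpha,\beta)}$. Hence $deg_{Z(\alpha,\beta)}(g)=4+\theta_3(g)$, and as $\theta_3(g)\leq 4$ while the degree is at most $5$, we get $\theta_3(g)\in\{0,1\}$; a vertex has degree $5$ exactly when $\theta_3(g)=1$.

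Next I would count $n_5$ two ways. Since $\theta_3(g)\in\{0,1\}$ for all $g$, Lemma \ref{3part}(3) gives
\[
n_5=\sum_{g\in C}\theta_3(g)=\sum_{s\in\Delta^3_{(\alpha,\beta)}}|\mathcal{V}_{(\alpha,\beta)}(s)|=3\,\delta^3_{(\alpha,\beta)},
\]
using that for $s\in\Delta^3_{(\alpha,\beta)}$ the second-coordinate map $R_{BC}(s)\to\mathcal{V}_{(\alpha,\beta)}(s)$ is a bijection, so $|\mathcal{V}_{(\alpha,\beta)}(s)|=r_{BC}(s)=3$; thus $3\mid n_5$. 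On the other hand, $Z(\alpha,\beta)$ is a finite loopless graph with all degrees at most $5$, so the handshake identity $\sum_{g\in C}deg_{Z(\alpha,\beta)}(g)=2|\mathcal{E}_{Z(\alpha,\beta)}|$ applies, and its left-hand side equals $\sum_{g\in C}\big(4+\theta_3(g)\big)=4|C|+n_5$. Therefore $n_5=2|\mathcal{E}_{Z(\alpha,\beta)}|-4|C|$ is even.

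Being divisible by both $2$ and $3$, the number $n_5$ of degree-$5$ vertices is a multiple of $6$, which is the assertion. The argument is essentially bookkeeping with the quantities of Definition \ref{definitions}; the only points that need care are the two reductions at the start — excluding $\Delta^4_{(\alpha,\beta)}\neq\varnothing$ and then deducing $\theta_4\equiv 0$ — which are what make the clean degree formula $deg=4+\theta_3$ available, and checking that the handshake count is being applied to a genuinely finite loopless graph (guaranteed since $supp(\beta)$ is finite and all degrees are at most $5$).
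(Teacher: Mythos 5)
Your proof is correct and follows essentially the same route as the paper's: both establish $\Delta^{4}_{(\alpha,\beta)}=\varnothing$ from the degree bound, count the degree-$5$ vertices as $3\,\delta^{3}_{(\alpha,\beta)}$ (the paper via disjointness of the sets $\mathcal{V}_{(\alpha,\beta)}(s)$, you via the equivalent double count $\sum_g\theta_3(g)$ using Lemma \ref{3part}(3)), and finish with the handshake parity argument. The only cosmetic difference is that you derive evenness from $n_5=2|\mathcal{E}_{Z(\alpha,\beta)}|-4|C|$ rather than quoting that the number of odd-degree vertices is even, which is the same fact.
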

\begin{proof}
Since the maximum degree of $ Z(\alpha,\beta) $ is $ 5 $,  Remark \ref{deg12} implies $ \Delta_{(\alpha ,\beta)}^{3} \neq \varnothing $ and $\Delta_{(\alpha ,\beta)}^{4} =\varnothing $. According to hypothesis and part $ (1) $ of Lemma \ref{3part}, for each $ s \in \Delta_{(\alpha ,\beta)}^{3}  $, there exist $ 3 $ vertices of degree $ 5 $ in $ Z(\alpha,\beta) $. On the other hand, according to  part $ (3) $ of Lemma \ref{3part}, if $ s $ and $ s' $ are distinct elements of $ \Delta_{(\alpha ,\beta)}^{3}  $, then $ \mathcal{V}_{(\alpha,\beta)}(s)\cap\mathcal{V}_{(\alpha,\beta)}(s')=\varnothing $. Hence, the number of vertices of degree $ 5 $ in  $ Z(\alpha,\beta) $ is a multiple of $ 3 $. The result follows from this fact that the number of vertices with odd degree in each graph is even.
\end{proof}
Suppose that $ \Gamma $ is a zero divisor graph or a unit graph on a pair of elements $ (\alpha,\beta) $ in $\mathbb{F}[G]$ for a possible torsion-free group $ G $ and arbitrary field $\mathbb{F}$ such that $ |supp(\alpha)|=4 $.
To verify if a given graph $ \Gamma_0 $ can not be appeared as a subgraph of $ \Gamma $ one may do as follows: Let $ m $ be the number of edges of $ \Gamma_0  $. Then for each edge $ e $ of $ \Gamma_0  $, there is a relation  as $ h_eg_e=h'_eg'_e $, where $ g_e, g'_e\in supp(\beta) $ are the endpoints of $ e $ and some distinct $ h_e, h'_e\in supp(\alpha)  $. Therefore, for each edge  of $ \Gamma_0  $, there are 12 possible relations as above according to how one chooses a pair $(h_e,h'_e)$ of distinct elements of $supp(\alpha)$. Hence there are $ 12^m $ systems of relations as follows:
\begin{equation}
Rel_{C}(\Gamma_0 ): \left\{
\begin{array}{ll}
h_{e} g_{e}=h'_{e}g'_{e}  \\ \text{for all} \; e\in \mathcal{E}_{\Gamma_0 }
\end{array} \right.,
\end{equation}
where $C$ runs through the set of all $12^m$ choices of the pairs $(h_e,h'_e)$.
Now fix one of the  systems of relations $Rel_{C}(\Gamma_0 )$. Take an arbitrary edge $e_0$ of $\Gamma_0 $ and consider the following new relations:

\begin{equation}
Rel'_{C}(\Gamma_0 ): \left\{
\begin{array}{ll}
h_{e_0}^{-1}h_{e} g_{e}=h_{e_0}^{-1}h'_{e}g'_{e}  \\ \text{for all} \; e\in \mathcal{E}_{\Gamma_0 }
\end{array} \right..
\end{equation}
Now consider  the group $\mathcal{G}$ with presentation $\left\langle  h_{e_0}^{-1}supp(\alpha)\,|\,Rel'_{C}(\Gamma_0 )\right\rangle   $. According to Lemma \ref{1}, $  h_{e_0}^{-1}\alpha$ and $ \beta g^{-1}  $, where  $ g\in supp(\beta) $, are non-zero elements of $\mathcal{G}$ and $ h_{e_0}^{-1}\alpha\beta g^{-1}=0 $. Therefore, 
\begin{itemize}
\item  if $\mathcal{G}$ is  abelian, then we get a contradiction e.g. by Theorem 26.2 of \cite{PI},
\item if $\mathcal{G}$ is a quotient  group  of $ BS(1,n)$\footnote{For integers $m$ and $n$, the Baumslag-Solitar group $BS(m, n)$ is the group given by the
presentation $ \left\langle a,b\,|\,ba^{m}b^{-1}=a^n\right\rangle  $.}, then we get a contradiction e.g. by Remark  3.17  of \cite{AT},  
\item if $\mathcal{G}$ has a non-trivial torsion element, then since $ \mathcal{G} $ is a torsion-free group we get a contradiction.
\end{itemize} 
 Of course, it is clear that the number of such systems is high even for a graph with a small number of edges. By using GAP \cite{a9} and some of the techniques described in Appendixs \ref{app12} and \ref{app10}, we have reduced the number of such systems.   
\begin{thm}\label{55}
Let $ \alpha $ be a  zero divisor  in $\mathbb{F}[G]$ for a possible torsion-free group $ G $ and arbitrary field $\mathbb{F}$ with $|supp(\alpha)|=4,$  $ |S_\alpha|=12 $ and $ \beta $ be a mate of $ \alpha $. Then $ Z(\alpha,\beta) $  contains no subgraph isomorphic to one of the graphs in Figure {\rm \ref{606}}, where each vertex of the subgraph has degree $ 4 $  in $ Z(\alpha,\beta) $, and the graph in Figure {\rm \ref{6066}}, where the degree of white vertices of the subgraph in $ Z(\alpha ,\beta) $ must be $5$.
\end{thm}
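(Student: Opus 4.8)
The proof will apply, for each of the nine graphs of Figure \ref{606} and for the graph of Figure \ref{6066}, the elimination procedure set up in the paragraph preceding the statement; the point is that the prescribed vertex degrees cut the number of relation systems $Rel_{C}(\Gamma_0)$ that must be examined down to a size a computer search can handle. The first thing I would do is extract these constraints. Write $B=supp(\alpha)$ and $C=supp(\beta)$. Since $\beta$ is a mate of $\alpha$, Lemma \ref{1} gives, for any $h\in B$ and $g\in C$, a \emph{torsion-free} group $H:=\langle h^{-1}B\rangle=\langle Cg^{-1}\rangle$ with $h^{-1}\alpha,\ \beta g^{-1}\in\mathbb{F}[H]\setminus\{0\}$ and $(h^{-1}\alpha)(\beta g^{-1})=0$. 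Because $|S_\alpha|=12$, Theorem \ref{2} shows $Z(\alpha,\beta)$ is an induced subgraph of a Cayley graph, hence simple, and Remark \ref{deg12} gives $deg_{Z(\alpha,\beta)}(g)=4+\theta_3(g)+2\theta_4(g)$. So a vertex $g$ of degree $4$ has $\theta_3(g)=\theta_4(g)=0$; then $r_{BC}(hg)=2$ for every $h\in B$, and the assignment sending $h\in B$ to the edge at $g$ whose label on the $g$-side is $h$ is a \emph{bijection} from $B$ onto the set of edges incident with $g$. For the graph of Figure \ref{6066} a white vertex $g$ has degree $5$, so $\theta_3(g)=1$ and $\theta_4(g)=0$; by Remark \ref{complete} the unique $s\in\Delta_{(\alpha,\beta)}^{3}$ with $g\in\mathcal{V}_{(\alpha,\beta)}(s)$ produces a triangle through $g$, and Corollary \ref{cap} bounds the overlaps of such triangles.

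With these facts, for a fixed embedding of $\Gamma_0$ into $Z(\alpha,\beta)$ I would run over the admissible labellings: attach to each edge $e=\{g,g'\}$ a pair of distinct elements $(h_e,h'_e)$ of $B$ with $h_e g=h'_e g'$, modulo the edge-reversal symmetry, the automorphisms of $\Gamma_0$, and --- crucially --- the bijectivity condition above at every degree-$4$ vertex (together with the triangle conditions at the white vertices in the Figure \ref{6066} case). After these reductions only a short list of labellings remains. For each, I would normalise to $Rel'_{C}(\Gamma_0)$ by left-multiplying by $h_{e_0}^{-1}$ for a chosen edge $e_0$, form the finitely presented group $\mathcal{G}=\langle h_{e_0}^{-1}B\mid Rel'_{C}(\Gamma_0)\rangle$, and check in GAP \cite{a9} that $\mathcal{G}$ is abelian, or a quotient of some $BS(1,n)$, or has a non-trivial torsion element. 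Since $h_{e_0}^{-1}\alpha,\ \beta g^{-1}\in\mathbb{F}[H]\setminus\{0\}$ have product $0$ and $H$ is a torsion-free quotient of $\mathcal{G}$, the first alternative contradicts Theorem 26.2 of \cite{PI}, the second contradicts Remark 3.17 of \cite{AT}, and the third contradicts the torsion-freeness of $G$. The devices of Appendix \ref{app12} serve to discard labellings even earlier --- for instance when two of the relations already force torsion, or collapse $h_{e_0}^{-1}B$ to fewer than four elements (impossible when $|S_\alpha|=12$) --- and to recognise the Baumslag--Solitar quotients.

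The hard part will be the combinatorial explosion: these graphs have up to ten edges, so an unrestricted search would involve on the order of $12^{10}$ systems. The plan for containing it has three layers: first, fix and normalise one edge and work with orbit representatives under $\mathrm{Aut}(\Gamma_0)$ and edge reversal; second, propagate the degree hypotheses vertex by vertex --- in the degree-$4$ case the bijectivity condition forces most of the labels once a handful are chosen, and in the Figure \ref{6066} case the triangles at the white vertices are pinned down via Remark \ref{complete} and Corollary \ref{cap}; third, build the presentations incrementally and prune any partial system whose consequences are already abelian, torsion-bearing, or generating-set-collapsing. What survives is small enough to certify, and in every surviving case one of the three contradictions above is reached; therefore $Z(\alpha,\beta)$ contains no such subgraph.
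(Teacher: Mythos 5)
Your proposal follows essentially the same route as the paper's Appendix~A: constrain the edge labellings by the degree hypotheses (at a degree-$4$ vertex $r_{BC}(hg)=2$ for all $h$, forcing the consecutive-label inequalities of Remark~\ref{tuple}; at the white vertices of Figure~\ref{6066} the unique $s\in\Delta^3_{(\alpha,\beta)}$ forces all three triangles to share one relation system, reducing to the $K_{1,1,3}$ case), normalise one edge, enumerate the surviving systems incrementally from shared-edge cycle configurations, and kill each in GAP via abelianness, a $BS(1,n)$ quotient, or torsion. This matches the paper's proof in structure and in every contradiction criterion, so there is nothing to add beyond the explicit tables the paper records.
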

We refer the reader to Appendix  \ref{app12} to see details of the proof of Theorem \ref{55}. 
\begin{thm}\label{4regularlem}
Let $ \alpha $ be a zero divisor  in $\mathbb{F}_2[G]$ for a possible torsion-free group $ G $ with $|supp(\alpha)|=4$ and $ \beta $ be a mate of $ \alpha $. If $|S_\alpha|=12 $, then $Z(\alpha,\beta)$ contains a $4$-regular subgraph.
\end{thm}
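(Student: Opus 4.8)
The plan is to exhibit a spanning $4$-regular subgraph of $Z(\alpha,\beta)$ obtained by deleting a Hamiltonian $4$-cycle from each of the ``$K_4$-blocks'' of the graph. Write $B=supp(\alpha)$ and $C=supp(\beta)$.

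First I would record the structure forced by $\mathbb{F}=\mathbb{F}_2$. Since $\alpha\beta=0$, the coefficient of each $x\in BC$ equals $r_{BC}(x)\bmod 2$ and hence vanishes; as $1\le r_{BC}(x)\le|B|=4$, this forces $r_{BC}(x)\in\{2,4\}$, i.e.\ $BC$ is the disjoint union of $\Delta_{(\alpha,\beta)}^{2}$ and $\Delta_{(\alpha,\beta)}^{4}$. Then $\theta_1(g)=\theta_3(g)=0$ and $\theta_2(g)+\theta_4(g)=4$ for every vertex $g$, and by Remark~\ref{deg12},
\begin{equation*}
deg_{Z(\alpha,\beta)}(g)=4+2\,\theta_4(g)\qquad\text{for all }g\in C ,
\end{equation*}
where, by part~(3) of Lemma~\ref{3part}, $\theta_4(g)=|\{\,s\in\Delta_{(\alpha,\beta)}^{4}\mid g\in\mathcal{V}_{(\alpha,\beta)}(s)\,\}|$.

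Next I would set up the block decomposition of the edges. For $s\in\Delta_{(\alpha,\beta)}^{4}$ the set $\mathcal{V}_{(\alpha,\beta)}(s)$ has exactly $4$ elements, and by Remark~\ref{complete} the subgraph $K_s$ of $Z(\alpha,\beta)$ induced on it is complete, i.e.\ a copy of $K_4$. By Theorem~\ref{2} the graph $Z(\alpha,\beta)$ is simple, so any two vertices span at most one edge; combined with Corollary~\ref{cap} (which gives $|\mathcal{V}_{(\alpha,\beta)}(s)\cap\mathcal{V}_{(\alpha,\beta)}(s')|\le 1$ when $s\neq s'$), the subgraphs $\{K_s\mid s\in\Delta_{(\alpha,\beta)}^{4}\}$ are pairwise edge-disjoint. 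Now for each $s$ I would pick a Hamiltonian cycle $Q_s$ of $K_s$; it has length $4$ and is a $2$-regular spanning subgraph of $K_s$. Let $D$ be the subgraph of $Z(\alpha,\beta)$ on vertex set $C$ whose edges are those of all the $Q_s$. By edge-disjointness of the $K_s$, these edge sets are disjoint, so for each $g\in C$,
\begin{equation*}
deg_D(g)=\sum_{s\in\Delta_{(\alpha,\beta)}^{4},\ g\in\mathcal{V}_{(\alpha,\beta)}(s)} deg_{Q_s}(g)=2\,\theta_4(g).
\end{equation*}
Deleting the edges of $D$ from $Z(\alpha,\beta)$ then yields a subgraph $H$ on $C$ with $deg_H(g)=(4+2\theta_4(g))-2\theta_4(g)=4$ for every $g\in C$; thus $H$ is a $4$-regular subgraph of $Z(\alpha,\beta)$, as required. (When $\Delta_{(\alpha,\beta)}^{4}=\varnothing$, $D$ is empty and $Z(\alpha,\beta)$ is already $4$-regular.)

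The argument is short once the block decomposition is available, so the only point needing care is the bookkeeping: one must be certain that every edge of $Z(\alpha,\beta)$ that lies in some $K_s$ lies in exactly one of them — this is precisely what simplicity of $Z(\alpha,\beta)$ (Theorem~\ref{2}) together with Corollary~\ref{cap} give — and that the cycles $Q_s$ can be chosen independently, which is immediate from edge-disjointness. Essentially no group-theoretic input is needed beyond what is already packaged in Remark~\ref{deg12}, Theorem~\ref{2} and Corollary~\ref{cap}; the hypothesis $\mathbb{F}=\mathbb{F}_2$ enters only through the divisibility $2\mid r_{BC}(x)$, which is exactly what makes the blocks $K_4$'s and the degree excess $2\,\theta_4(g)$ even.
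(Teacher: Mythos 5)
Your proof is correct and is essentially the paper's own argument: the paper likewise deletes, from each $K_4$-block $K[s]$ with $s\in\Delta_{(\alpha,\beta)}^{4}$, all edges except two non-adjacent ones (which is exactly your removal of a Hamiltonian $4$-cycle from $K_4$), uses Corollary~\ref{cap} for the overlap bound and Lemma~\ref{3part}(3) to identify $\theta_4(g)$ with the number of blocks through $g$, and concludes $deg_H(g)=deg_{Z(\alpha,\beta)}(g)-2\theta_4(g)=4$ via Remark~\ref{deg12}. Your explicit remark on edge-disjointness of the blocks is a slightly more careful statement of the same bookkeeping.
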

\begin{proof}
Suppose that $ | supp(\beta) | = n$. If $ \Delta_{(\alpha,\beta)}^{4}=\varnothing $, then Remark \ref{deg12} implies $ deg_{Z(\alpha,\beta)}(g)=4$ for all $ g\in supp(\beta) $ and there is nothing to prove. Now, suppose that $ \Delta_{(\alpha,\beta)}^{4}\neq\varnothing $. Let $ B=\{K[s]\;|\;s\in \Delta_{(\alpha,\beta)}^{4}\}  $ and  $ B_g=\{K[s]\in B\;|\; g\in \mathcal{V}_{(\alpha,\beta)}(s)\} $  for all $ g\in supp(\beta) $. By part $ (3) $ of Lemma \ref{3part},  $ |B_g|=\theta_{4}(g) $ and by Corollary \ref{cap}, if $  K[s]$ and  $K[s']$ are distinct elements of  $ B_g $, then  $  \mathcal{V}_{(\alpha,\beta)}(s)\cap  \mathcal{V}_{(\alpha,\beta)}(s')=\{g\}  $.
Now, let $ H $ be a subgraph of $  Z(\alpha,\beta) $   obtained by deleting edges of   $\mathcal{E}_{Z(\alpha,\beta)} $ as follows: For each $K[s]\in B $, remove all edges of $K[s] $ except two non-adjacent edges. Note that $ H $ is not unique and as above, for every $K[s] $ by deleting edges from $  Z(\alpha,\beta) $, the degree of each vertex $ g\in \mathcal{V}_{(\alpha,\beta)}(s) $ is reduced   by $ 2 $. According to  above description, for each $g\in supp(\beta)$, $ deg_H(g)=deg_{Z(\alpha,\beta)}(g)-2\theta_4(g)  $. Thus,  Remark \ref{deg12} implies that $ H $ is isomorphic to a $4$-regular graph.  This completes the proof.
 \end{proof}
 The following corollary follows from Theorem \ref{4regularlem}.
\begin{cor}\label{4regular}
 If any $4$-regular graph with $ n $ vertices is a forbidden subgraph of any zero divisor graph of length $4$ over $\mathbb{F}_2$ and on the torsion-free group $G$. Then $ | supp(\beta) | \neq n$.
\end{cor}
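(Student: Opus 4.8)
The plan is to treat Corollary \ref{4regular} as an immediate reformulation of Theorem \ref{4regularlem}, the one extra observation being that the $4$-regular subgraph produced there is \emph{spanning}. So I would argue by contradiction, assuming $|supp(\beta)|=n$ while keeping the hypotheses of Theorem \ref{4regularlem} in force, i.e. $\alpha\beta=0$, $|supp(\alpha)|=4$, $\mathbb{F}=\mathbb{F}_2$, $|S_\alpha|=12$, and $\beta$ a mate of $\alpha$. (If the given $\beta$ is not a mate, one first replaces it by a mate, whose support has size at most $n$; this is why in practice the corollary is applied to mates directly, or else in tandem with the lower bounds on $|supp(\beta)|$ established elsewhere in the paper, for every admissible value $\le n$ at once.)

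First I would invoke Theorem \ref{4regularlem} to obtain a $4$-regular subgraph $H$ of $Z(\alpha,\beta)$, and then revisit how $H$ is constructed in that proof: it is obtained from $Z(\alpha,\beta)$ by deleting \emph{edges} only --- for each $s\in\Delta_{(\alpha,\beta)}^{4}$ one removes from the corresponding complete piece all but two non-adjacent edges --- so no vertex is ever discarded. Hence $\mathcal{V}_H=\mathcal{V}_{Z(\alpha,\beta)}=supp(\beta)$ and $|\mathcal{V}_H|=n$. In the degenerate case $\Delta_{(\alpha,\beta)}^{4}=\varnothing$ one simply takes $H=Z(\alpha,\beta)$, which by Remark \ref{deg12} is already $4$-regular on $n$ vertices. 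Either way, $H$ is a $4$-regular graph on exactly $n$ vertices occurring as a subgraph of $Z(\alpha,\beta)$, and $Z(\alpha,\beta)$ is, by definition, a zero divisor graph of length $4$ over $\mathbb{F}_2$ on the torsion-free group $G$.

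This contradicts the hypothesis that every $4$-regular graph on $n$ vertices is a forbidden subgraph of every zero divisor graph of length $4$ over $\mathbb{F}_2$ on $G$. Hence $|supp(\beta)|\neq n$.

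I expect no genuine obstacle here; the mathematical content is entirely carried by Theorem \ref{4regularlem}. The only point that must not be skipped is the upgrade from ``$Z(\alpha,\beta)$ contains a $4$-regular subgraph'' to ``$Z(\alpha,\beta)$ contains a $4$-regular subgraph on $|supp(\beta)|$ vertices'', which is precisely the spanning property read off from the construction in Theorem \ref{4regularlem}; secondarily, one should record the reduction to a mate so that the statement is applied in the form in which Theorem \ref{4regularlem} is available.
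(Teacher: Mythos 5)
Your proposal is correct and follows essentially the same route as the paper, which simply asserts that the corollary follows from Theorem \ref{4regularlem}. The one detail you rightly make explicit --- that the $4$-regular subgraph constructed there is obtained by deleting edges only, hence is spanning on all $|supp(\beta)|$ vertices --- is exactly the point the paper leaves implicit, and your handling of the reduction to a mate is also appropriate.
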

The following remark follows  from Lemma \ref{degreeconnunit} and Lemma $ 2.11 $ of \cite{AT}: 
 \begin{rem}\label{deg12unit}
Let  $ \mathsf{a} $ be a unit  in $\mathbb{F}[G]$ for a possible torsion-free group $ G $ and arbitrary field  $\mathbb{F}$ with $|supp(\mathsf{a})|=4 $ and $ \mathsf{b} $ be a mate of $\mathsf{a} $. Then $  U(\mathsf{a},\mathsf{b}) $ is  connected  and every vertex of $ U(\mathsf{a},\mathsf{b})  $ has degree   $3,4,\ldots$ or $12$. Also, the number of vertices of degree  $ 3 $ in $ U(\mathsf{a},\mathsf{b})  $ is  at most $  1$.
\end{rem} 
\begin{thm}\label{2unit}
Let  $ \mathsf{a} $ be a unit  in $\mathbb{F}[G]$ for a possible torsion-free group $ G $ and arbitrary field  $\mathbb{F}$ with $|supp(\mathsf{a})|=4 $, $|S_\mathsf{a}|=12 $ and $ \mathsf{b} $ be an element  of $ \mathbb{F}[G] $ such that $ \mathsf{a}\mathsf{b}=1 $. Then $ U(\mathsf{a},\mathsf{b}) $  is  the induced subgraph of the Cayley graph of $G$  with respect to  $ S_{\mathsf{a}}$ on the set $supp(\mathsf{b})$.
\end{thm}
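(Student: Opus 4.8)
The plan is to reproduce, essentially verbatim, the argument of Theorem~\ref{2}: the edge set of $U(\mathsf{a},\mathsf{b})$ has exactly the same combinatorial description as that of $Z(\alpha,\beta)$, and the proof of Theorem~\ref{2} never used $\alpha\beta=0$ rather than $\mathsf{a}\mathsf{b}=1$. Write $B=supp(\mathsf{a})$ and $C=supp(\mathsf{b})$, so $|B|=4$ and, by hypothesis, $|S_\mathsf{a}|=12$. The one fact driving everything is an elementary count: the map $\pi$ sending an ordered pair $(h,h')$ of \emph{distinct} elements of $B$ to $h^{-1}h'\in S_\mathsf{a}$ is a bijection, since its domain has $|B|(|B|-1)=12$ elements, its codomain has $12$ elements, and $\pi$ is surjective by the very definition of $S_\mathsf{a}$. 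Also note $1\notin S_\mathsf{a}$ (as $h^{-1}h'=1$ forces $h=h'$) and $S_\mathsf{a}=S_\mathsf{a}^{-1}$, so the Cayley graph of $G$ with respect to $S_\mathsf{a}$ is defined.

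First I would show that $U(\mathsf{a},\mathsf{b})$ is a simple graph, i.e.\ has no multi-edge. Suppose distinct $g,g'\in C$ were joined by two distinct edges. By the explicit form of $\mathcal{E}_{U(\mathsf{a},\mathsf{b})}$, each such edge is $\{(h,h',g,g'),(h',h,g',g)\}$ for some ordered pair $(h,h')$ of distinct elements of $B$ with $hg=h'g'$, and since $g\neq g'$ two such edges coincide exactly when their ordered pairs coincide; so we obtain distinct ordered pairs $(h_1,h_2)\neq(h_1',h_2')$ with $h_1g=h_2g'$ and $h_1'g=h_2'g'$. Then $h_1^{-1}h_2=g(g')^{-1}=(h_1')^{-1}h_2'$, i.e.\ $\pi(h_1,h_2)=\pi(h_1',h_2')$, contradicting the injectivity of $\pi$. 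Hence $U(\mathsf{a},\mathsf{b})$ is undirected, loopless and multi-edge-free.

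Second I would pin down the adjacency. For distinct $g,g'\in C$, by definition $g\sim g'$ in $U(\mathsf{a},\mathsf{b})$ iff $hg=h'g'$ for some distinct $h,h'\in B$, i.e.\ iff $g(g')^{-1}=h^{-1}h'$ for some distinct $h,h'\in B$, i.e.\ iff $g(g')^{-1}\in S_\mathsf{a}$; and this is exactly the adjacency condition of the Cayley graph of $G$ with respect to $S_\mathsf{a}$. Combined with the previous step (which guarantees at most one edge between any two vertices), this says precisely that $U(\mathsf{a},\mathsf{b})$ is the induced subgraph of that Cayley graph on the vertex set $C=supp(\mathsf{b})$, as claimed.

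I do not expect any real obstacle. The only feature distinguishing the unit setting from the zero-divisor one --- namely that some vertex of $U(\mathsf{a},\mathsf{b})$ may lie in $C\cap B^{-1}$ and hence (cf.\ Lemma~\ref{degreeconnunit}) have degree $n-1$ --- is irrelevant here, since the edge set records only pairs $g\neq g'$ with $hg=h'g'$ and never involves $r_{BC}(1)$. The single point needing care is the reduction in the first step from ``two distinct edges'' to ``two distinct ordered pairs $(h,h')$'', which is immediate once one writes out the definition of $\mathcal{E}_{U(\mathsf{a},\mathsf{b})}$ and uses $g\neq g'$.
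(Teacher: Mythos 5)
Your proposal is correct and matches the paper's intended argument: the paper proves Theorem \ref{2unit} by simply noting that the proof of Theorem \ref{2} carries over, which is exactly the multi-edge exclusion via injectivity of $(h,h')\mapsto h^{-1}h'$ (forced by $|S_{\mathsf{a}}|=12$) together with the observation that adjacency is equivalent to $g(g')^{-1}\in S_{\mathsf{a}}$. Your remark that the condition $\mathsf{a}\mathsf{b}=1$ versus $\alpha\beta=0$ plays no role here is also exactly why the paper can dispatch the proof by reference.
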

\begin{proof}
 The proof is similar to the proof of Theorem \ref{2}.
\end{proof}
The following corollary follows from  Theorem \ref{2unit} and Lemma \ref{3partunit}.
\begin{cor}\label{capunit}
Suppose that $ \mathsf{a} $ is a  unit  in $\mathbb{F}[G]$ for a possible torsion-free group $ G $ and arbitrary field  $\mathbb{F}$ with $|supp(\mathsf{a})|=4,\; |S_\mathsf{a}|=12 $ and $ \mathsf{b} $ is an element  of $ \mathbb{F}[G] $ such that $ \mathsf{a}\mathsf{b}=1 $. If $ s\in \Delta_{(\mathsf{a},\mathsf{b})}^{i}  $ and $ s'\in \Delta_{(\mathsf{a},\mathsf{b})}^{j}  $ such that $ s\neq s' $ and $  i,j\in \{2,3,4\}, $ then $ |\mathcal{V}_{(\mathsf{a},\mathsf{b})}(s)\cap \mathcal{V}_{(\mathsf{a},\mathsf{b})}(s')|\leq 1 $.
\end{cor}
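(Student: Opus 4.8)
The plan is to combine the structural description of $U(\mathsf{a},\mathsf{b})$ coming from Theorem \ref{2unit} with the multi-edge criterion of Lemma \ref{3partunit}, exactly as Corollary \ref{cap} is obtained from Theorem \ref{2} and Lemma \ref{3part}. First I would record the only ingredient that is not completely formal: by Theorem \ref{2unit}, $U(\mathsf{a},\mathsf{b})$ is an induced subgraph of the Cayley graph of $G$ with connection set $S_{\mathsf{a}}$. A Cayley graph, as defined in the Introduction, is a simple graph — two vertices $x,y$ are joined precisely when $xy^{-1}\in S_{\mathsf{a}}$, so there is at most one edge between any pair of vertices and no loops — and an induced subgraph of a simple graph is again simple. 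Hence $U(\mathsf{a},\mathsf{b})$ has no multi-edges.

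Next I would argue by contradiction. Suppose $s\in\Delta_{(\mathsf{a},\mathsf{b})}^{i}$ and $s'\in\Delta_{(\mathsf{a},\mathsf{b})}^{j}$ with $s\neq s'$ and $i,j\in\{2,3,4\}$, but $|\mathcal{V}_{(\mathsf{a},\mathsf{b})}(s)\cap\mathcal{V}_{(\mathsf{a},\mathsf{b})}(s')|\geq 2$. Choose two distinct vertices $g,g'$ in this intersection. By part (1) of Lemma \ref{3partunit}, since $s\neq s'$ the vertices $g$ and $g'$ are joined by more than one edge of $U(\mathsf{a},\mathsf{b})$, which contradicts the simplicity established in the first step. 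Therefore $|\mathcal{V}_{(\mathsf{a},\mathsf{b})}(s)\cap\mathcal{V}_{(\mathsf{a},\mathsf{b})}(s')|\leq 1$, as claimed.

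I do not expect a genuine obstacle here: the corollary is the unit-graph transcription of Corollary \ref{cap}, and once Theorem \ref{2unit} and Lemma \ref{3partunit} are available the argument is a two-line deduction. The only point that deserves a moment's care is making explicit that ``induced subgraph of a Cayley graph'' forces the absence of multi-edges (and loops), which is immediate from the definitions of Cayley graph and induced subgraph recalled in the Introduction; after that observation, Lemma \ref{3partunit}(1) does all the work.
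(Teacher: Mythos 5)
Your argument is correct and is precisely the deduction the paper intends: the paper states Corollary \ref{capunit} follows from Theorem \ref{2unit} (which forces $U(\mathsf{a},\mathsf{b})$ to be simple, being an induced subgraph of a Cayley graph) together with Lemma \ref{3partunit}(1), exactly as you have written. Your explicit remark that induced subgraphs of Cayley graphs carry no multi-edges is the only non-formal step, and it is handled correctly.
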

\begin{thm}\label{55unit}
Let $ \mathsf{a}$ be a unit  in $\mathbb{F}[G]$ for a possible torsion-free group $ G $ and arbitrary field  $\mathbb{F}$ such that $|supp(\mathsf{a})|=4, |S_\mathsf{a}|=12 $ and $ \mathsf{b} $ be an element  of $ \mathbb{F}[G] $ such that $ \mathsf{a}\mathsf{b}=1 $. Then $ U(\mathsf{a},\mathsf{b}) $  contains no subgraph isomorphic to one of the graphs in Figure {\rm\ref{606}}, where each vertex of the subgraph has degree $ 4 $ in $ U(\mathsf{a},\mathsf{b}) $.
\end{thm}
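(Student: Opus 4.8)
The plan is to carry the argument behind Theorem~\ref{55} over to the unit setting, replacing $\alpha\beta=0$ throughout by $\mathsf{a}\mathsf{b}=1$ and each zero-divisor tool by its unit analogue (Theorem~\ref{2unit}, Remark~\ref{deg12unit}, Lemma~\ref{3partunit} and Corollary~\ref{capunit} in place of Theorem~\ref{2}, Remark~\ref{deg12}, Lemma~\ref{3part} and Corollary~\ref{cap}); as in the zero-divisor case the bulk of the work is a computer-assisted case check whose details are postponed to Appendix~\ref{app12}.

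First I would normalize the pair $(\mathsf{a},\mathsf{b})$. Since $\mathsf{a}\mathsf{b}=1$, the coefficient of $1$ in $\mathsf{a}\mathsf{b}$ is nonzero, so there is $h\in supp(\mathsf{a})$ with $h^{-1}\in supp(\mathsf{b})$; replacing $(\mathsf{a},\mathsf{b})$ by $(h^{-1}\mathsf{a},\mathsf{b}h)$ leaves $|supp(\mathsf{a})|$, the set $S_{\mathsf{a}}$ and the relation $\mathsf{a}\mathsf{b}=1$ unchanged and, by Lemma~$2.10$ of \cite{AT}, only replaces $U(\mathsf{a},\mathsf{b})$ by an isomorphic graph, so I may assume $1\in supp(\mathsf{a})\cap supp(\mathsf{b})$ (compare Remark~\ref{idenunit}). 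By Theorem~\ref{2unit}, $U(\mathsf{a},\mathsf{b})$ is an induced subgraph of the Cayley graph of $G$ with respect to $S_{\mathsf{a}}$; in particular it has no multiple edges, which is why the candidate graphs in Figure~\ref{606} are drawn simple. Now suppose, for a contradiction, that $U(\mathsf{a},\mathsf{b})$ contains a subgraph $\Gamma_0$ isomorphic to one of $K_{1,1,3}$, $K_{1,2,2}$, $\Gamma_1,\dots,\Gamma_7$ in which every vertex has degree $4$ in $U(\mathsf{a},\mathsf{b})$; by Remark~\ref{deg12unit} such a vertex $g$ has $\theta_4(g)=0$, and this, with Lemma~\ref{3partunit} and Corollary~\ref{capunit}, restricts which coincidences $s\in\Delta_{(\mathsf{a},\mathsf{b})}^{i}$ may contain a vertex of $\Gamma_0$ --- exactly the combinatorial input exploited in Appendix~\ref{app12}.

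Next I would run the labelling procedure described just before Theorem~\ref{55}. If $\Gamma_0$ has $m$ edges, each edge $e$ with endpoints $g_e,g'_e$ records a relation $h_e g_e = h'_e g'_e$ for distinct $h_e,h'_e\in supp(\mathsf{a})$, and, after shifting so that a chosen vertex of $\Gamma_0$ equals $1$ and rewriting the remaining vertices as words over $S_{\mathsf{a}}$ (Theorem~\ref{2unit}), these become relations among $h_{e_0}^{-1}supp(\mathsf{a})$; this yields, for each of the $12^m$ labellings $C$, a group $\mathcal{G}=\langle h_{e_0}^{-1}supp(\mathsf{a})\mid Rel'_{C}(\Gamma_0)\rangle$. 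As in the zero-divisor case, $h_{e_0}^{-1}\mathsf{a}$ and $\mathsf{b}h_{e_0}$ descend to nonzero elements of $\mathbb{F}[\mathcal{G}]$ whose product is $1$; hence it suffices to check, for every labelling $C$ of every $\Gamma_0$ in Figure~\ref{606}, that $\mathcal{G}$ is abelian, a quotient of some $BS(1,n)$, or forces a nontrivial torsion element --- each of these yielding a contradiction, respectively by the Unit Conjecture for orderable groups \cite{PI,b1}, by Remark~$3.17$ of \cite{AT} (see also Remark~\ref{klein}), and by torsion-freeness of $G$.

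The step I expect to be the main obstacle is the size of this enumeration: $m$ ranges over roughly $7$ to $11$ for the graphs in Figure~\ref{606}, so the $12^m$ labellings cannot be listed directly. I would prune exactly as in Appendix~\ref{app12}: use the automorphisms of $\Gamma_0$ together with the action on $supp(\mathsf{a})$ to fix the label of the distinguished edge $e_0$ and to collapse symmetric labellings; propagate each deduced relation at once and discard a branch the moment two elements of $h_{e_0}^{-1}supp(\mathsf{a})$ are forced to coincide, or the partial group becomes abelian, or a relation of the form $ba^mb^{-1}=a^n$ or a nontrivial torsion element appears; and let GAP~\cite{a9} handle the surviving enumeration and the recognition of abelian, $BS(1,n)$-quotient and torsion groups. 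Since the degree-$4$ requirement on the subgraph automatically keeps the (at most one) vertex of degree $3$ permitted by Remark~\ref{deg12unit} out of $\Gamma_0$, no genuinely new phenomenon arises relative to the zero-divisor argument, and the verification proceeds as there.
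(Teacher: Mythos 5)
Your proposal follows the paper's own route essentially verbatim: normalize via Remark~\ref{idenunit}, use Theorem~\ref{2unit} to view $U(\mathsf{a},\mathsf{b})$ as a simple induced subgraph of the Cayley graph of $G$ with respect to $S_{\mathsf{a}}$, and reduce to the GAP-assisted enumeration of the $12^m$ relation systems in Appendix~\ref{app12}, which the paper carries out simultaneously for $Z(\alpha,\beta)$ and $U(\mathsf{a},\mathsf{b})$, rejecting each surviving system because the resulting group is abelian, a quotient of some $BS(1,n)$, or has torsion. The one point where your justification is slightly imprecise is the one the paper isolates in Remark~\ref{tupleunit}: the new phenomenon in the unit case is not the (excluded) degree-$3$ vertex but a degree-$4$ vertex $g$ with $g^{-1}\in supp(\mathsf{a})$ and $r_{BC}(1)=1$, which can have $\theta_3(g)=1$; one must check (as the paper does) that the tuple constraints $h'_i\neq h_{i+1}$ driving the enumeration still hold for such a vertex, after which your conclusion that nothing genuinely new arises is correct.
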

In Appendix \ref{app12}, we give  details of the proof of Theorem  \ref{55unit}.
\section {\bf Zero divisor  and  unit graphs of lengths $ 4 $ over an arbitrary field and on any torsion-free group, where  $ |S_\alpha|=10 $ and $ |S_{\mathsf{a}}|=10 $}\label{s5}
\begin{lem}\label{83}
Suppose that $ \alpha $  is a  zero divisor in  $\mathbb{F}[G]$ for a possible torsion-free group $ G $ and arbitrary field $\mathbb{F}$ with $ |supp(\alpha)|=4 , |S_\alpha|=10 $ and $ \beta $ is a mate of $ \alpha $. Then one of the following cases occurs:
\begin{itemize}
\item[$  (i)$]There exists  $ \alpha' \in \mathbb{F}[G]$ with $supp(\alpha') =\{1,x,x^{-1},y\} $, where $ x,y $ are distinct non-trivial elements of  $ G$,   such that    $ Z(\alpha,\beta)\cong Z(\alpha',\beta)$.
\item[$  (ii)$] There exists   $ \alpha' \in \mathbb{F}[G]$ with $supp(\alpha') =\{1,x,y,xy\}$, where $ x,y $  are distinct non-trivial elements of  $ G$, such that   $ Z(\alpha,\beta)\cong Z(\alpha',\beta)$.
\end{itemize}  Furthermore, if $\mathbb{F}=\mathbb{F}_{2}$, then the case $(i)$  occurs.
\end{lem}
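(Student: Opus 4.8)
The plan is to use left translation of the first factor to bring $\alpha$ into one of the normal forms supplied by Lemma~\ref{01}, and then to recognise each such form as being of type $(i)$ or $(ii)$ (and, over $\mathbb{F}_2$, to exclude the one form that is of type $(ii)$ only).

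First I would record that left translation of $\alpha$ does not change the zero divisor graph: for $t\in G$, sending the edge $\{(h,h',g,g'),(h',h,g',g)\}$ of $Z(\alpha,\beta)$ to $\{(th,th',g,g'),(th',th,g',g)\}$ is a bijection onto $\mathcal{E}_{Z(t\alpha,\beta)}$ preserving the assignment $\{g,g'\}$, since $hg=h'g'\Leftrightarrow(th)g=(th')g'$; hence $Z(t\alpha,\beta)\cong Z(\alpha,\beta)$ via the identity on $supp(\beta)$, $t\alpha$ is again a zero divisor annihilated by $\beta$ with $\beta$ still a mate of it, and $S_{t\alpha}=S_\alpha$. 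Choosing $h\in supp(\alpha)$ and replacing $\alpha$ by $h^{-1}\alpha$, I may therefore assume $1\in supp(\alpha)$, say $supp(\alpha)=\{1,x,y,z\}$ with $x,y,z$ pairwise distinct non-trivial, still with $|S_\alpha|=10$.

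Next I would rerun the opening of the proof of Lemma~\ref{z}: by Lemma~\ref{1} (using that $\beta$ is a mate) one has $\alpha\in\mathbb{F}[H]$ and $\beta g^{-1}\in\mathbb{F}[H]$ for every $g\in supp(\beta)$, where $H=\langle x,y,z\rangle=\langle supp(\alpha)\rangle$, and $\alpha(\beta g^{-1})=0$; so $H$ is not abelian by \cite[Theorem~26.2]{PI} and not isomorphic to the Klein bottle group by Remark~\ref{klein}. Lemma~\ref{01} then applies, and since $|S_\alpha|=10$ its last clause gives $\{x,y,z\}\in\{\{a,a^{-1},b\},\{a,a^{2},b\},\{a,b,ab^{-1}a\},\{a,b,ab\}\}$ for suitable $a,b$. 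I would dispatch the four cases: for $\{1,a,a^{-1},b\}$ and $\{1,a,b,ab\}$ take $\alpha'=\alpha$ (types $(i)$ and $(ii)$ directly); for $\{1,a,a^{2},b\}$ take $\alpha'=a^{-1}\alpha$, giving $supp(\alpha')=\{1,a,a^{-1},a^{-1}b\}$, type $(i)$; for $\{1,a,b,ab^{-1}a\}$ take $\alpha'=a^{-1}\alpha$, giving $supp(\alpha')=\{1,a^{-1},a^{-1}b,b^{-1}a\}=\{1,a^{-1},c,c^{-1}\}$ with $c=a^{-1}b$, again type $(i)$. In every case $Z(\alpha',\beta)\cong Z(\alpha,\beta)$ by the first step, and the four displayed elements are automatically distinct and non-trivial because $|supp(\alpha)|=4$ and $a\neq1$ (for instance $a^{-1}b\in\{1,a^{-1}\}$ would force $b\in\{a,1\}$).

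Finally, for the $\mathbb{F}_2$ statement I would eliminate the form $\{1,a,b,ab\}$: over $\mathbb{F}_2$ every coefficient of $\alpha$ equals $1$, so $\alpha=1+a+b+ab=(1+a)(1+b)$, and with $\gamma=(1+b)\beta$ either $\gamma\neq0$, so that $(1+a)\gamma=\alpha\beta=0$ makes the support-$2$ element $1+a$ a zero divisor, or $\gamma=0$, so that $1+b$ is a zero divisor; both contradict \cite[Theorem~2.1]{PS}. Hence over $\mathbb{F}_2$ only the first three forms can occur, each of type $(i)$. The only points I expect to require genuine care are the bookkeeping that left translation of $\alpha$ fixes the vertex set and the mate property, and the routine distinctness checks in the two cases that need a translation; the identity $1+a+b+ab=(1+a)(1+b)$ is the single new idea and is the crux of the $\mathbb{F}_2$ refinement.
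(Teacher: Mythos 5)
Your proposal is correct and follows essentially the same route as the paper: translate $\alpha$ on the left (the paper invokes \cite[Lemma 2.4]{AT} for the graph isomorphism you verify by hand), reduce via Lemma \ref{1}, \cite[Theorem 26.2]{PI}, Remark \ref{klein} and Lemma \ref{01} to the four normal forms, fold $\{1,x,x^{2},y\}$ and $\{1,x,xy^{-1}x,y\}$ into type $(i)$ by multiplying by $x^{-1}$, and kill $\{1,x,xy,y\}$ over $\mathbb{F}_2$ via the factorization $(1+x)(1+y)$ and \cite[Theorem 2.1]{PS}. The only difference is that you spell out details the paper cites or leaves implicit (mate preservation under translation, and the two-case split on whether $(1+y)\beta$ vanishes), which is harmless.
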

\begin{proof}
 Since $ G  $ is neither abelian \cite[Theorem 26.2]{PI} nor isomorphic to the Klein bottle  group (see   Remark \ref{klein}), in view of Remark \ref{iden} and  Lemma \ref{01}, we may assume that $supp(\alpha)$ is one of the sets  $\{1,x,x^{-1},y\}$, $ \{1,x,x^{2},y\}$, $ \{1,x,xy^{-1}x,y \}$ or $ \{1,x,xy,y\} $, where $ x,y $ are distinct non-trivial elements of $ G $. By   \cite[Lemma  2.4 ]{AT},  if $ supp(\alpha) $ is  one of the sets $\{1,x,x^{2},y\}  $ or $ \{1,x,xy^{-1}x,y\} $, then it is sufficient that  replacing $ \alpha $ by $ x^{-1}\alpha $.  Suppose that $\mathbb{F}=\mathbb{F}_{2}$. If $ supp(\alpha) =\{1,x,xy,y\} $, then $\alpha\beta=(1+x)(1+y)\beta=0 $ contradicting   \cite[Theorem 2.1]{PS}. This completes the proof.
\end{proof}
\begin{lem}\label{83unit}
Suppose that $ \mathsf{a} $  is a  unit in  $\mathbb{F}[G]$ for a possible torsion-free group $ G $ and arbitrary field $\mathbb{F}$ with $ |supp(\mathsf{a})|=4 , |S_\mathsf{a}|=10 $ and $ \mathsf{b} $ is a mate of $ \mathsf{a} $. Then one of the following cases occurs:
\begin{itemize}
\item[$  (i)$]There exist  $ \mathsf{a}',\mathsf{b}' \in \mathbb{F}[G]$  such that $supp(\mathsf{a}') =\{1,x,x^{-1},y\} $,  where $ x,y $  are distinct non-trivial elements of  $ G$,     and   $ U(\mathsf{a},\mathsf{b})\cong U(\mathsf{a}',\mathsf{b}')$.
\item[$  (ii)$] There exist   $ \mathsf{a}',\mathsf{b}'  \in \mathbb{F}[G]$  such that $supp(\mathsf{a}') =\{1,x,y,xy\}$, where $ x,y $  are distinct non-trivial elements of  $ G$,   and   $ U(\mathsf{a},\mathsf{b})\cong U(\mathsf{a}',\mathsf{b}')$.
\end{itemize} 
\end{lem}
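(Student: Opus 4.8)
The plan is to follow the proof of Lemma~\ref{83} almost verbatim, substituting the unit-theoretic version of each ingredient. First I would record that, since $\mathsf{a}\mathsf{b}=1$ and $\mathsf{b}$ is a mate of $\mathsf{a}$, the group $G=\langle supp(\mathsf{a})\rangle$ is neither abelian nor isomorphic to the Klein bottle group: this is exactly what is extracted inside the proof of Lemma~\ref{unit size}, which uses Lemma~\ref{hg} to place $h^{-1}\mathsf{a}$ and $\mathsf{b}h$ in $\mathbb{F}[H]$ and then applies \cite[Theorem 26.2]{PI} and Remark~\ref{klein}. By Remark~\ref{idenunit} I may assume $1\in supp(\mathsf{a})\cap supp(\mathsf{b})$ and write $supp(\mathsf{a})=\{1,x,y,z\}$; applying Lemma~\ref{01} to $\{x,y,z\}$ with $|S_\mathsf{a}|=10$, and relabelling, $supp(\mathsf{a})$ is one of the four sets $\{1,x,x^{-1},y\}$, $\{1,x,x^{2},y\}$, $\{1,x,xy^{-1}x,y\}$, $\{1,x,y,xy\}$ with $x,y$ distinct and non-trivial.

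Next I would settle the four cases. The set $\{1,x,x^{-1},y\}$ is already case~$(i)$ with $(\mathsf{a}',\mathsf{b}')=(\mathsf{a},\mathsf{b})$, and $\{1,x,y,xy\}$ is already case~$(ii)$. In the remaining two cases I pass to $(\mathsf{a}',\mathsf{b}')=(x^{-1}\mathsf{a},\,\mathsf{b}x)$, which still satisfies $\mathsf{a}'\mathsf{b}'=x^{-1}\mathsf{a}\mathsf{b}x=1$, and invoke the translation lemma for unit graphs, \cite[Lemma 2.10]{AT}, to get $U(\mathsf{a},\mathsf{b})\cong U(\mathsf{a}',\mathsf{b}')$. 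A direct computation gives $x^{-1}\{1,x,x^{2},y\}=\{1,x,x^{-1},x^{-1}y\}$ and $x^{-1}\{1,x,xy^{-1}x,y\}=\{1,x^{-1},x^{-1}y,y^{-1}x\}$; since $(x^{-1}y)^{-1}=y^{-1}x$, the latter is $\{1,w,w^{-1},v\}$ with $w=x^{-1}y$ and $v=x^{-1}$. So in both cases $supp(\mathsf{a}')$ has the shape required by case~$(i)$, and a quick check confirms its four elements are distinct. This exhausts the possibilities.

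I expect no genuine difficulty: the whole argument is bookkeeping on top of Lemma~\ref{01} and the normalisations already installed in Remark~\ref{idenunit}. The two things to be careful about are (a) pairing the left-translation $\mathsf{a}\mapsto x^{-1}\mathsf{a}$ with the right-translation $\mathsf{b}\mapsto \mathsf{b}x$ so that $\mathsf{a}'\mathsf{b}'=1$ is preserved, and confirming that \cite[Lemma 2.10]{AT} indeed delivers the claimed unit-graph isomorphism, and (b) rewriting each translated support into the canonical form $\{1,x,x^{-1},y\}$. Note that, unlike Lemma~\ref{83}, there is no $\mathbb{F}_2$-refinement here: over $\mathbb{F}_2$ no unit of support $4$ exists at all (Remark~\ref{unit4}), and over larger fields the factorisation trick $\alpha=(1+x)(1+y)$ used in Lemma~\ref{83} to kill the support $\{1,x,y,xy\}$ has no analogue, since a general $\mathsf{a}$ supported on $\{1,x,y,xy\}$ need not factor as a product of two elements of support $2$ (each of which would then have to be a unit, contradicting \cite[Theorem 4.2]{a55}); hence case~$(ii)$ must genuinely be retained.
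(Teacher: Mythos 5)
Your proof is correct and follows essentially the same route as the paper's: reduce to the four canonical supports via Remark~\ref{idenunit} and Lemma~\ref{01} (using that $G$ is neither abelian nor the Klein bottle group), then absorb $\{1,x,x^{2},y\}$ and $\{1,x,xy^{-1}x,y\}$ into case~$(i)$ by the translation $(\mathsf{a},\mathsf{b})\mapsto(x^{-1}\mathsf{a},\mathsf{b}x)$ together with \cite[Lemma 2.10]{AT}. Your explicit verification that the translated supports have the shape $\{1,w,w^{-1},v\}$, and your closing remark on why no $\mathbb{F}_2$ refinement is available here, are both accurate additions to what the paper leaves implicit.
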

\begin{proof}
 Since $ G  $ is neither abelian \cite[Theorem 26.2]{PI} nor isomorphic to the Klein bottle  group (see   Remark \ref{klein}), in  view of Remark \ref{idenunit} and  Lemma \ref{01}, we may assume that $supp(\mathsf{a})$ is one of the sets  $\{1,x,x^{-1},y\}$, $ \{1,x,x^{2},y\}$, $ \{1,x,xy^{-1}x,y \}$ or $ \{1,x,xy,y\} $, where $ x,y $ are distinct non-trivial elements of $ G $. By   \cite[Lemma  2.10 ]{AT}, if $ supp(\mathsf{a}) $ is  one of the sets $\{1,x,x^{2},y\}  $ or $ \{1,x,xy^{-1}x,y\} $, then it is sufficient that  replacing $ \mathsf{a} $ and $ \mathsf{b}$ by $ x^{-1}\mathsf{a} $ and $ \mathsf{b} x $, respectively.   This completes the proof.
\end{proof}
 \begin{rem}
 Suppose that $ \Gamma $ is a graph. For each  $ e \in \mathcal{E}_{\Gamma} $, denote  the set of its endpoints by $ E(e) $  and for each  $ v\in \mathcal{V}_{\Gamma} $, let $ M_{\Gamma}(v):=\big{\{}\{e,e'\}\subseteq \mathcal{E}_{\Gamma}\,|\,e\neq e',\, E(e)=E(e'),\,v\in E(e)\cap E(e')\big{\}} $.
 \end{rem} 
Let $ \Gamma $ be a zero divisor graph or a unit graph on a pair of elements $ (\alpha,\beta) $ in a group algebra of a possible torsion-free group $ G $ and arbitrary field $\mathbb{F}$ such that  $|supp(\alpha)|=4$, $ |S_\alpha|=10 $ and $ \beta $ be a mate of $ \alpha $. Let $ g $ and  $ g' $ be two vertices of   $ \Gamma $ which are adjacent by more than  one edge and $A=\big{\{}e\in \mathcal{E}_{\Gamma}\,|\,E(e)=\{g,g'\}\big{\}}$. Hence, for each  $ e_i\in A $, there exist distinct elements  $ h_i,h'_i\in supp(\alpha) $ such that $ \{(h_i,h'_i,g,g'),(h'_i,h_i,g',g)\}\in \mathcal{E}_{\Gamma}$ and $ gg'^{-1}={h_i}^{-1}h'_i $. Obviously,   if $ e_i,e_j $ are distinct elements of $ A $, then $ (h_i,h'_i)\neq (h_j,h'_j) $ and  
 ${h_i}^{-1}h'_i={h_j}^{-1}h'_j\in S_{\alpha}$. \\ By Lemmas \ref{83} and  \ref{83unit}, we may assume that $supp(\alpha) $ is one of the sets  $\{1,x,x^{-1},y\} $ or $ \{1,x,y,xy\} $, where $ x,y $  are distinct non-trivial elements of  $ G$. Firstly, suppose that  $supp(\alpha) =\{1,x,x^{-1},y\} $. Thus, it follows from Table \ref{1010} that $ |A|=2 $ and if $ A=\{e_1,e_2\} $, then  $$ \{(h_1,h'_1),(h_2,{h'_2}^{-1})\}\in\big{\{}\{(1,x),(x^{-1},1)\},\{(1,x^{-1}),(x,1)\}\big{\}} .$$
 Now, suppose that $supp(\alpha) =\{1,x,y,xy\} $.  Then it follows from Table \ref{1010} that $ |A|=2 $ and if $ A=\{e_1,e_2\} $, then $ \{(h_1,h'_1),(h_2,{h'_2}^{-1})\}\in\big{\{}\{(1,y),(x,xy)\},\{(xy,x),(y,1)\}\big{\}} $. 
 
  According to  above argument, we have the following remarks:
\begin{rem}\label{multi}
 Let $ \alpha $  be a  zero divisor in  $\mathbb{F}[G]$ for a possible torsion-free group $ G $  and arbitrary field $\mathbb{F}$ with $ |supp(\alpha)|=4 , |S_\alpha|=10 $ and $ \beta $ be a mate of $ \alpha $. Then $  Z(\alpha,\beta) $ is a multigraph with the following properties:
 \begin{itemize}
 \item[$(i)$] If $ g,g' $ are distinct vertices of $  Z(\alpha,\beta) $, then $ \big{|\{}e\in \mathcal{E}_{Z(\alpha,\beta)}\,|\,E(e)=\{g,g'\}\big{\}|}\leq 2 $.
 \item[$(ii)$] For each  vertex $ g $ of $  Z(\alpha,\beta) $, $ |M_{Z(\alpha,\beta)}(g)|\leq 2  $.
 \end{itemize} 
\end{rem} 
\begin{rem}\label{multiunit}
 Let $ \mathsf{a} $  be a   unit in  $\mathbb{F}[G]$ for a possible torsion-free group $ G $  and arbitrary field $\mathbb{F}$ with $ |supp(\mathsf{a})|=4 , |S_\mathsf{a}|=10 $ and $ \mathsf{b} $ be a mate of $ \mathsf{a} $. Then $U(\mathsf{a},\mathsf{b}) $ is a multigraph with the following properties:
 \begin{itemize}
 \item[$(i)$] If $ g,g' $ are distinct vertices of $  U(\mathsf{a},\mathsf{b}) $, then $ \big{|\{}e\in \mathcal{E}_{U(\mathsf{a},\mathsf{b})}\,|\,E(e)=\{g,g'\}\big{\}|}\leq 2 $.
 \item[$(ii)$] For each  vertex $ g $ of $  U(\mathsf{a},\mathsf{b}) $, $  |M_{U(\mathsf{a},\mathsf{b})}(g)|\leq 2  $.
 \end{itemize} 
\end{rem} 
 \begin{thm}\label{lemtriangle}
 Let $ \alpha $  be a   zero divisor in  $\mathbb{F}[G]$ for a possible torsion-free group $ G $  and arbitrary field $\mathbb{F}$ with $ |supp(\alpha)|=4 , |S_\alpha|=10 $ and $ \beta $ be a mate of $ \alpha $. Then for each integer number $ n\geq 3 $, $  Z(\alpha,\beta) $  contains no subgraph  isomorphic to the graph in Figure {\rm\ref{multicycle}}.
 \end{thm}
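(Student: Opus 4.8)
The plan is to reduce $supp(\alpha)$ to one of two normal forms, read off from each doubled edge of the alleged copy of the graph in Figure~\ref{multicycle} that consecutive vertices differ by left translation by $z^{\pm1}$ for a fixed $z\in\{x,y\}$, and then derive a contradiction from the fact that a $\pm1$ walk on $\mathbb{Z}$ cannot simultaneously return to its origin in $n\ge 3$ steps and take pairwise distinct values along the way. Since $\beta$ is a mate of $\alpha$, Lemma~\ref{83} lets us replace $\alpha$ by an element whose support is $\{1,x,x^{-1},y\}$ or $\{1,x,y,xy\}$, with $x,y$ distinct and non-trivial, because $Z(\alpha,\beta)\cong Z(\alpha',\beta)$ sends any copy of Figure~\ref{multicycle} in one graph to a copy in the other; set $z:=x$ in the first case and $z:=y$ in the second.

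Assume, for contradiction, that $Z(\alpha,\beta)$ contains a subgraph isomorphic to the graph of Figure~\ref{multicycle}, with pairwise distinct vertices $g_1,\dots,g_n\in supp(\beta)$ arranged cyclically (indices mod $n$), so that for each $i$ the vertices $g_i$ and $g_{i+1}$ are joined by two distinct edges of $Z(\alpha,\beta)$. I then invoke the description of pairs of parallel edges established just before Remark~\ref{multi} (which rests on Table~\ref{1010}): in either normal form the two label-pairs attached to $\{g_i,g_{i+1}\}$ encode one and the same relation, namely $g_i=z\,g_{i+1}$ or $g_i=z^{-1}g_{i+1}$; in particular $g_{i+1}=z^{\epsilon_i}g_i$ for some $\epsilon_i\in\{1,-1\}$.

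Because $\langle z\rangle$ is abelian, iterating these relations gives $g_{k+1}=z^{\sigma_k}g_1$ for $0\le k\le n-1$, where $\sigma_0:=0$ and $\sigma_k:=\epsilon_1+\cdots+\epsilon_k$. Closing the cycle gives $g_1=g_{n+1}=z^{\sigma_n}g_1$, hence $z^{\sigma_n}=1$, hence $\sigma_n=0$ since $G$ is torsion-free and $z\ne 1$. Moreover, $g_1,\dots,g_n$ are pairwise distinct and $z$ has infinite order, so $\sigma_0,\dots,\sigma_{n-1}$ must be pairwise distinct integers. After possibly replacing every $\epsilon_i$ by $-\epsilon_i$ we may assume $\sigma_1=1$, and then an easy induction yields $\sigma_k=k$ for $0\le k\le n-1$: if $\sigma_{k-1}=k-1$ and $\sigma_k=k$ with $k\le n-2$, then $\sigma_{k+1}\in\{k-1,k+1\}$, and the value $k-1=\sigma_{k-1}$ is excluded by distinctness, so $\sigma_{k+1}=k+1$. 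Thus $\sigma_n=\sigma_{n-1}\pm1=n-1\pm1\in\{n-2,n\}$, which is nonzero for $n\ge 3$, contradicting $\sigma_n=0$. Hence no such subgraph exists. (The threshold $n\ge 3$ is sharp: for $n=2$ the walk $0,1,0$ causes no trouble, matching the fact that a double edge between two vertices is allowed by Remark~\ref{multi}.)

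The one genuinely delicate point is the middle step: being sure that a pair of parallel edges of $Z(\alpha,\beta)$, in each of the two normal forms, is exactly a left-translation relation $g_{i+1}=z^{\pm1}g_i$ and nothing richer — in particular that it involves only the ``$x$-part'' of the support and never the generic element $y$, and keeping track of which of $z,z^{-1}$ occurs on each edge. This is precisely what the discussion preceding Remark~\ref{multi} supplies, but it must be read carefully. Once it is in hand, the remainder collapses to the elementary combinatorics of injective $\pm1$ walks on $\mathbb{Z}$.
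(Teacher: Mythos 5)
Your proof is correct, and its skeleton is the same as the paper's: reduce $supp(\alpha)$ to $\{1,x,x^{-1},y\}$ or $\{1,x,y,xy\}$ via Lemma~\ref{83}, observe from Table~\ref{1010} (the discussion preceding Remark~\ref{multi}) that each doubled edge encodes $g_i=z^{\pm1}g_{i+1}$ for a fixed $z\in\{x,y\}$, and contradict torsion-freeness by going around the cycle. The one place you diverge is the direction-consistency step. The paper forces all the exponents to be $+1$: in the case $supp(\alpha)=\{1,x,x^{-1},y\}$ it invokes (in effect) Lemma~\ref{repetition}, which says a vertex lying on two doubled edges has its two doubled-edge neighbours equal to $xg$ and $x^{-1}g$, so the signs propagate coherently around the cycle and one gets $x^n=1$ outright; in the case $\{1,x,y,xy\}$ it simply asserts the coherent orientation ``without loss of generality.'' You instead allow arbitrary signs $\epsilon_i$ and rule them out with the injective $\pm1$-walk argument on $\mathbb{Z}$. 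This buys you two things: you never need Lemma~\ref{repetition}, and you supply the justification for the paper's unexplained WLOG in the $\{1,x,y,xy\}$ case, where the analogue of Lemma~\ref{repetition} is not actually stated. Both routes land on the same contradiction with $G$ torsion-free, and both correctly identify $n\ge 3$ as the threshold.
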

\begin{proof}
Suppose that $ Z(\alpha,\beta) $ contains a cycle as  Figure \ref{multicycle}.
By Lemma \ref{83}, we may assume that $ supp(\alpha)$ is one of the sets  $\{1,x,x^{-1},y\} $ or $\{1,x,xy,y\} $, where $ x,y $ are distinct non-trivial elements of $ G$. Firstly, suppose that $ supp(\alpha)=\{1,x,x^{-1},y\}$. According to above discussion, since $ g_1 $ is adjacent to each of the vertices $ g_2 $ and $ g_n $ by two edges,   by renumbering, we may assume that $ g_1=xg_2 $ and $ xg_1=g_n $. Then we must have $ g_2=xg_3 ,\ldots, g_{n-1}=xg_n $. Hence, $ g_1{g_2}^{-1}g_2{g_3}^{-1}\cdots g_{n-1}{g_n}^{-1}g_n{g_1}^{-1}=x^n=1 $ contradicting $ G $ is a torsion-free group.
 Similarly, if $ supp(\alpha)=\{1,x,y,xy\}$, then without loss of generality we may assume that $ g_1=yg_2 $, $ g_2=yg_3, \ldots ,g_{n-1}=yg_n  $ and $ g_n=yg_1 $. So, $ g_1{g_2}^{-1}g_2{g_3}^{-1}\cdots g_{n-1}{g_n}^{-1}g_n{g_1}^{-1}=y^n=1 $ contradicting $ G $ is a torsion-free group. This completes the proof.
\end{proof}
The following corollary follows from Lemma   \ref{3part} and Theorem \ref{lemtriangle}.
\begin{cor}\label{cap1}
 Let $ \alpha $  be a   zero divisor in  $\mathbb{F}[G]$ for a possible torsion-free group $ G $  and arbitrary field $\mathbb{F}$ with $ |supp(\alpha)|=4 , |S_\alpha|=10 $ and $ \beta $ be a mate of $ \alpha $. If $ s\in \Delta_{(\alpha ,\beta)}^{i}  $ and $ s'\in \Delta_{(\alpha ,\beta)}^{j}  $ such that $ s\neq s' $ and $  i,j\in \{3,4\}, $ then $ |\mathcal{V}_{(\alpha,\beta)}(s)\cap \mathcal{V}_{(\alpha,\beta)}(s')|\leq 2 $.
\end{cor}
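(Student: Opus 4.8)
The plan is to argue by contradiction, extracting from a large intersection the forbidden configuration of Figure \ref{multicycle} with $n=3$. Suppose $|\mathcal{V}_{(\alpha,\beta)}(s)\cap\mathcal{V}_{(\alpha,\beta)}(s')|\geq 3$ and choose three pairwise distinct vertices $g_1,g_2,g_3$ in this intersection. (Observe first that if $i=2$ or $j=2$ the corresponding set $\mathcal{V}_{(\alpha,\beta)}(\cdot)$ has exactly two elements, so the bound is automatic; this is why only the cases $i,j\in\{3,4\}$ require an argument, and why picking three distinct common vertices is possible under the failure hypothesis.)

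Next I would apply part (2) of Lemma \ref{3part}: since $s\neq s'$ and each of $g_1,g_2,g_3$ lies in $\mathcal{V}_{(\alpha,\beta)}(s)\cap\mathcal{V}_{(\alpha,\beta)}(s')$, every one of the three pairs $\{g_1,g_2\}$, $\{g_1,g_3\}$, $\{g_2,g_3\}$ is joined by more than one edge of $Z(\alpha,\beta)$. Selecting two edges between each of these three pairs yields a subgraph of $Z(\alpha,\beta)$ on $\{g_1,g_2,g_3\}$ in which every pair of vertices is joined by exactly two parallel edges; edges coming from distinct pairs are automatically distinct since their endpoint sets differ. This subgraph is exactly the graph of Figure \ref{multicycle} for $n=3$ (the doubled triangle). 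By Remark \ref{multi} one in fact knows that the multiplicity of any edge of $Z(\alpha,\beta)$ is at most $2$, so "more than one edge" is literally "exactly two," which makes this extraction clean.

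Finally I would invoke Theorem \ref{lemtriangle} with $n=3$: as $\beta$ is a mate of $\alpha$, $Z(\alpha,\beta)$ contains no subgraph isomorphic to the graph of Figure \ref{multicycle} for any $n\geq 3$, in particular none for $n=3$, contradicting the subgraph just constructed. Hence the assumption fails and $|\mathcal{V}_{(\alpha,\beta)}(s)\cap\mathcal{V}_{(\alpha,\beta)}(s')|\leq 2$. There is no genuine obstacle here: the result is a direct consequence of the two cited statements, and the only points needing a line of care are the trivial reduction disposing of $i=2$ or $j=2$ and the verification that the three double edges assemble into an honest subgraph isomorphic to the $n=3$ case of Figure \ref{multicycle}.
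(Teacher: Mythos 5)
Your proposal is correct and is exactly the argument the paper intends: the corollary is stated as an immediate consequence of part (2) of Lemma \ref{3part} and Theorem \ref{lemtriangle}, and your extraction of the doubled triangle (Figure \ref{multicycle} with $n=3$) from three common vertices is the standard way to fill in those details. No issues.
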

\begin{thm}\label{lemtriangleunit}
 Let $ \mathsf{a} $  be a   unit in  $\mathbb{F}[G]$ for a possible torsion-free group $ G $  and arbitrary field $\mathbb{F}$ with $ |supp(\mathsf{a})|=4 , |S_\mathsf{a}|=10 $ and $ \mathsf{b} $ be a mate of $ \mathsf{a} $. Then for each integer number $ n\geq 3 $, $ U(\mathsf{a},\mathsf{b}) $  contains no subgraph  isomorphic to the graph in Figure {\rm\ref{multicycle}}.
 \end{thm}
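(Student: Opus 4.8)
The plan is to follow, essentially line for line, the argument of Theorem~\ref{lemtriangle}, substituting the unit-graph inputs for the zero-divisor ones. Assume for contradiction that $U(\mathsf{a},\mathsf{b})$ contains a subgraph isomorphic to the multicycle of Figure~\ref{multicycle} for some $n\geq 3$, with vertices $g_1,\dots,g_n$ in cyclic order and with each consecutive pair $g_i,g_{i+1}$ (indices read modulo $n$) joined by two edges. By Lemma~\ref{83unit} we may assume $supp(\mathsf{a})$ is one of $\{1,x,x^{-1},y\}$ or $\{1,x,y,xy\}$ for distinct non-trivial $x,y\in G$; note Lemma~\ref{83unit} applies because $G$ is neither abelian (\cite[Theorem 26.2]{PI}) nor isomorphic to the Klein bottle group (Remark~\ref{klein}), as $\mathsf{a}$ is a genuine unit with $|S_\mathsf{a}|=10$.

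First I would treat the case $supp(\mathsf{a})=\{1,x,x^{-1},y\}$. By Remark~\ref{multiunit} together with the case analysis (via Table~\ref{1010}) recorded in the paragraph just before that remark, a double edge joining $g_i$ and $g_{i+1}$ forces $g_ig_{i+1}^{-1}\in\{x,x^{-1}\}$. After possibly replacing $x$ by $x^{-1}$ we may assume the edge $\{g_1,g_2\}$ gives $g_1=xg_2$; then the double edge $\{g_1,g_n\}$ cannot also give $g_1=xg_n$, since that would force $g_2=g_n$, impossible for $n\geq 3$, so $g_n=xg_1$. Now I would argue by induction around the cycle: if $g_{i-1}=xg_i$, the double edge $\{g_i,g_{i+1}\}$ cannot give $g_{i+1}=xg_i$ (that would give $g_{i+1}=g_{i-1}$, impossible since $n\geq 3$ forces $i-1\not\equiv i+1\pmod n$), so $g_i=xg_{i+1}$. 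Multiplying the telescoping relations $g_1g_2^{-1}=g_2g_3^{-1}=\dots=g_{n-1}g_n^{-1}=g_ng_1^{-1}=x$ yields $x^n=1$, contradicting torsion-freeness of $G$ since $x\neq 1$.

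The case $supp(\mathsf{a})=\{1,x,y,xy\}$ is identical once one uses the corresponding list (again from the discussion before Remark~\ref{multiunit}): a double edge joining $g_i$ and $g_{i+1}$ now forces $g_ig_{i+1}^{-1}\in\{y,y^{-1}\}$, and the same renumbering-and-induction argument produces $y^n=1$, again a contradiction. This exhausts the cases and completes the proof.

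I expect no genuine obstacle here: the only nontrivial content — which pairs $(h,h')$ of distinct elements of $supp(\mathsf{a})$ realize a prescribed ratio $h^{-1}h'$ when $|S_\mathsf{a}|=10$ — has already been carried out in the paragraph preceding Remark~\ref{multiunit}, and it is exactly this bookkeeping that pins each double edge down to only two possible relations. The one place to be mildly careful is the inductive step ruling out an alternation of orientations: one should check that the forbidden coincidences $g_2=g_n$ and $g_{i+1}=g_{i-1}$ are genuinely impossible for every $n\geq 3$, the case $n=3$ being the extremal one.
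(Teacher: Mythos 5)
Your proposal is correct and follows exactly the route the paper intends: the paper's proof of this theorem is literally "similar to the proof of Theorem \ref{lemtriangle}", and you have carried out that transfer faithfully, using Lemma \ref{83unit} and the classification of double edges recorded before Remark \ref{multiunit} to force $g_ig_{i+1}^{-1}$ to be a fixed element ($x^{\pm1}$ or $y^{\pm1}$) around the cycle and conclude $x^n=1$ or $y^n=1$. The care you take with the orientation coincidences $g_2=g_n$ and $g_{i+1}=g_{i-1}$ for $n\geq 3$ is exactly the right (and only) point needing verification.
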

\begin{proof}
The proof is similar to the proof of Theorem \ref{lemtriangle}. 
\end{proof}
The following corollary follows from Lemma   \ref{3partunit} and Theorem \ref{lemtriangleunit}.
\begin{cor}\label{cap1unit}
 Let $ \mathsf{a} $  be a   unit in  $\mathbb{F}[G]$ for a possible torsion-free group $ G $  and arbitrary field $\mathbb{F}$ with $ |supp(\mathsf{a})|=4 , |S_\mathsf{a}|=10 $ and $ \mathsf{b} $ be a mate of $ \mathsf{a} $. If $ s\in \Delta_{(\mathsf{a},\mathsf{b})}^{i}  $ and $ s'\in \Delta_{(\mathsf{a},\mathsf{b})}^{j}$ such that $ s\neq s' $ and $  i,j\in \{3,4\}, $ then $ |\mathcal{V}_{(\mathsf{a},\mathsf{b})}(s)\cap \mathcal{V}_{(\mathsf{a},\mathsf{b})}(s')|\leq 2 $.
\end{cor}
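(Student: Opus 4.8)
The plan is to derive this immediately from the two results it is advertised to follow from, so the argument should be only a few lines. First I would argue by contradiction: suppose $|\mathcal{V}_{(\mathsf{a},\mathsf{b})}(s)\cap\mathcal{V}_{(\mathsf{a},\mathsf{b})}(s')|\ge 3$ and fix three pairwise distinct vertices $g_1,g_2,g_3$ in $\mathcal{V}_{(\mathsf{a},\mathsf{b})}(s)\cap\mathcal{V}_{(\mathsf{a},\mathsf{b})}(s')$. By Remark \ref{complete} each of $\mathcal{V}_{(\mathsf{a},\mathsf{b})}(s)$ and $\mathcal{V}_{(\mathsf{a},\mathsf{b})}(s')$ induces a complete subgraph of $U(\mathsf{a},\mathsf{b})$, so $g_1,g_2,g_3$ are pairwise adjacent, and the relevant edges come, for each pair, both ``through $s$'' and ``through $s'$''.

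Next I would invoke part (1) of Lemma \ref{3partunit} with these $s,s'$ (whose indices $i,j$ lie in $\{3,4\}\subseteq\{2,\dots,4\}$, as required): since $s\ne s'$, every two distinct vertices of $\mathcal{V}_{(\mathsf{a},\mathsf{b})}(s)\cap\mathcal{V}_{(\mathsf{a},\mathsf{b})}(s')$ are joined by more than one edge of $U(\mathsf{a},\mathsf{b})$ --- concretely, for a common vertex $g$ the factor $sg^{-1}$ differs from $s'g^{-1}$, so the edge recording the factorization with product $s$ and the one recording the factorization with product $s'$ are distinct. Applying this to the three pairs $\{g_1,g_2\}$, $\{g_2,g_3\}$, $\{g_1,g_3\}$ shows that $U(\mathsf{a},\mathsf{b})$ contains the triangle on $g_1,g_2,g_3$ with each of its three edges doubled, i.e.\ a subgraph isomorphic to the graph of Figure \ref{multicycle} in the case $n=3$. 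This contradicts Theorem \ref{lemtriangleunit}, so $|\mathcal{V}_{(\mathsf{a},\mathsf{b})}(s)\cap\mathcal{V}_{(\mathsf{a},\mathsf{b})}(s')|\le 2$, as claimed.

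There is no genuine obstacle here: the whole content already sits in Lemma \ref{3partunit} (the existence of the two parallel edges) and in Theorem \ref{lemtriangleunit} (the absence of a doubled cycle), and the corollary is just the $n=3$ instance of the latter. The only points deserving a word of care are that the two edges produced by Lemma \ref{3partunit} are genuinely distinct --- exactly the bookkeeping carried out in the proof of part (2) of Lemma \ref{3part}, transported to the unit setting --- and the harmless remark that the restriction $i,j\in\{3,4\}$ in the statement is not used by the argument beyond ensuring $s\in\Delta_{(\mathsf{a},\mathsf{b})}^{i}$ and $s'\in\Delta_{(\mathsf{a},\mathsf{b})}^{j}$, so that $\mathcal{V}_{(\mathsf{a},\mathsf{b})}(s)$ and $\mathcal{V}_{(\mathsf{a},\mathsf{b})}(s')$ are defined; the same proof would give the analogous bound for any $i,j\in\{2,3,4\}$, though only $\le 2$ is needed downstream.
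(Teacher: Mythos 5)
Your proof is correct and follows exactly the route the paper intends: the corollary is stated as an immediate consequence of Lemma \ref{3partunit} (each pair of common vertices of $\mathcal{V}_{(\mathsf{a},\mathsf{b})}(s)$ and $\mathcal{V}_{(\mathsf{a},\mathsf{b})}(s')$ is joined by more than one edge) together with Theorem \ref{lemtriangleunit} (no doubled cycle, in particular no doubled triangle), and your argument supplies precisely that deduction. The side remarks about the distinctness of the two parallel edges and about the inessential role of the restriction $i,j\in\{3,4\}$ are accurate.
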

The following simple lemma is useful.
\begin{lem}\label{repetition}
Let $ \alpha $  be a  zero divisor in  $\mathbb{F}[G]$ for a possible torsion-free group $ G $  and arbitrary field $\mathbb{F}$ with $supp(\alpha) =\{1,x,x^{-1},y\} $, where $ x,y $  are distinct non-trivial elements of  $ G$, $ \beta $ be a mate of $ \alpha $, $ B= supp(\alpha)$ and $ C= supp(\beta)$. If  $ g $ is a vertex of $  Z(\alpha,\beta) $ such that $ |M_{Z(\alpha,\beta)}(g)|=2 $, then $ r_{BC}(g)\in \{3,4\} $ and $ \{g,xg,x^{-1}g\}\subseteq \mathcal{V}_{(\alpha,\beta)}(g) $. Moreover,  if $ \mathbb{F}=\mathbb{F}_2 $, then $ |M_{Z(\alpha,\beta)}(g)|=2 $ if and only if  $ r_{BC}(g)=4$.
\end{lem}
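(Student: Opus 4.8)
The plan is to translate the hypothesis $|M_{Z(\alpha,\beta)}(g)|=2$ into information about which left translates of $g$ lie in $C:=supp(\beta)$, and then read off $r_{BC}(g)$. Put $B:=supp(\alpha)=\{1,x,x^{-1},y\}$. First I would record that, being a zero divisor of support size $4$, $\alpha$ has $|S_\alpha|=10$ by Lemma \ref{z} (the value $12$ is excluded since $1^{-1}x=x=(x^{-1})^{-1}\cdot 1$), and that, as spelled out in the discussion preceding the lemma, $x$ and $x^{-1}$ are the only elements of $S_\alpha$ admitting two representations of the form $h^{-1}h'$ with $h\ne h'$ in $B$. Combined with Remark \ref{multi}, this says that two distinct vertices of $Z(\alpha,\beta)$ are joined by at most two edges, and by exactly two edges precisely when $gg'^{-1}\in\{x,x^{-1}\}$; hence $|M_{Z(\alpha,\beta)}(g)|$ is simply the number of vertices joined to $g$ by a double edge. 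I would also note that $g\in BC$ (since $1\in B$), that $\mathcal{V}_{(\alpha,\beta)}(g)=\{g'\in C\mid gg'^{-1}\in B\}=\{g,x^{-1}g,xg,y^{-1}g\}\cap C$, and hence that $r_{BC}(g)=|\mathcal{V}_{(\alpha,\beta)}(g)|$.

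Assume now $|M_{Z(\alpha,\beta)}(g)|=2$. Then $g$ has two distinct double-edge neighbours; each such neighbour $g'$ satisfies $gg'^{-1}\in\{x,x^{-1}\}$, so it lies in $\{x^{-1}g,xg\}$, and $x^{-1}g\ne xg$ because $x^2\ne 1$ in the torsion-free group $G$. Hence both $xg$ and $x^{-1}g$ are vertices of $Z(\alpha,\beta)$ and thus lie in $C$, so $g,xg,x^{-1}g$ are three pairwise distinct elements of $\mathcal{V}_{(\alpha,\beta)}(g)$; since $r_{BC}(g)=|\mathcal{V}_{(\alpha,\beta)}(g)|\le|B|=4$, this forces $r_{BC}(g)\in\{3,4\}$ together with $\{g,xg,x^{-1}g\}\subseteq\mathcal{V}_{(\alpha,\beta)}(g)$, which is the first assertion of the lemma.

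For the last sentence suppose $\mathbb{F}=\mathbb{F}_2$. If $|M_{Z(\alpha,\beta)}(g)|=2$, then over $\mathbb{F}_2$ the coefficient of $g$ in $\alpha\beta=0$ equals $r_{BC}(g)$ modulo $2$ (as in the proof of Lemma \ref{degreeconn}), so $r_{BC}(g)$ is even and hence equals $4$. Conversely, if $r_{BC}(g)=4$ then $\mathcal{V}_{(\alpha,\beta)}(g)=\{g,x^{-1}g,xg,y^{-1}g\}$ with all four elements distinct and in $C$; by Remark \ref{complete} the induced subgraph on this set is complete, so $g$ is adjacent to $x^{-1}g$, to $xg$ and to $y^{-1}g$. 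Counting the pairs of distinct $h,h'\in B$ with $h^{-1}h'$ equal to $g(xg)^{-1}=x^{-1}$, to $g(x^{-1}g)^{-1}=x$, and to $g(y^{-1}g)^{-1}=y$ respectively, one sees that $g$ is joined to $xg$ and to $x^{-1}g$ by two edges each and to $y^{-1}g$ by a single edge (here $|S_\alpha|=10$ is used to rule out degeneracies such as $y=x^{\pm2}$); since any further double-edge neighbour of $g$ would again lie in $\{x^{-1}g,xg\}$, Remark \ref{multi}(ii) forces $|M_{Z(\alpha,\beta)}(g)|=2$.

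The only genuinely fiddly step is the edge-multiplicity bookkeeping in the converse: one must check both that each of the two candidate relations producing an edge between $g$ and $xg$ (respectively between $g$ and $x^{-1}g$) actually occurs in $Z(\alpha,\beta)$, so the multiplicity is truly $2$, and that it cannot exceed $2$ (Remark \ref{multi}), while verifying that no accidental coincidence among $1,x,x^{-1},y$ and their pairwise products inflates any of the counts. Beyond this finite verification I anticipate no conceptual obstacle.
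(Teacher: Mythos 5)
Your proof is correct and follows essentially the same route as the paper's: you translate $|M_{Z(\alpha,\beta)}(g)|=2$ into $xg,x^{-1}g\in supp(\beta)$ via the characterization of double edges by $gg'^{-1}\in\{x,x^{-1}\}$, deduce $r_{BC}(g)\ge 3$, use the parity of $r_{BC}$ over $\mathbb{F}_2$ for one implication, and for the converse read off the two double-edge neighbours from $R_{BC}(g)=\{(1,g),(x,x^{-1}g),(x^{-1},xg),(y,y^{-1}g)\}$. The extra bookkeeping you flag (exact multiplicity $2$, no third double-edge neighbour, no coincidences like $y=x^{\pm2}$) is handled correctly and only makes explicit what the paper leaves implicit via Remark \ref{multi} and Lemma \ref{01}.
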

\begin{proof}
 Since  $ |M_{Z(\alpha,\beta)}(g)|=2 $, there exist distinct elements  $ g',g''\in C\setminus \{g\} $  such that $ g=xg' $ and $ g=x^{-1}g'' $. Therefore, $ \{(1,g),(x,x^{-1}g),(x^{-1},xg)\}\subseteq R_{BC}(g) $ and so  Definition \ref{set} leads to $ r_{BC}(g)\in \{3,4\} $. If $ \mathbb{F}=\mathbb{F}_2 $, then  $ \alpha\beta=\sum_{x\in BC}r_{BC}(x)x=0 $ which implies  $ r_{BC}(x)\in \{2,4\} $  for all $ x\in BC $. Hence, it follows from the first part of  lemma  that if $ |M_{Z(\alpha,\beta)}(g)|=2 $, then $ r_{BC}(g)=4$. Now, suppose that $ r_{BC}(g)=4$. Then there exist pairwise distinct elements $ g',g'' $ and $ \bar{g} $ of $ C\setminus \{g\} $ such that $ R_{BC}(g)=\{(1,g),(x,g'),(x^{-1},g''),(y,\bar{g})\} $  and therefore $ g $ is adjacent to each of the vertices $ g' $ and $ g'' $ by two edges. Hence, $ |M_{Z(\alpha,\beta)}(g)|=2 $. This completes the proof.
\end{proof}
\begin{rem}\label{deg4f}
Let $ \alpha $  be a  zero divisor in  $\mathbb{F}[G]$ for a possible torsion-free group $ G $  and arbitrary field $\mathbb{F}$ with $supp(\alpha) =\{1,x,x^{-1},y\} $, where $ x,y $  are distinct non-trivial elements of  $ G$, and $ \beta $ be a mate of $ \alpha $.  In view of  Remark {\rm\ref{deg12}} and Lemma {\rm\ref{repetition}}, if $ g $  is a vertex of degree $ 4 $ in $ Z(\alpha,\beta) $, then  two cases hold: $ (i)\;\;  |M_{Z(\alpha,\beta)}(g)|=0 $; $ (ii)\;\;  |M_{Z(\alpha,\beta)}(g)|=1$. More precisely, we shall speak of a vertex of degree $ 4 $ of type $ (j) $ if the vertex satisfies in the condition $ (j) $ as above {\rm(}$ j $ being $ i $ or $ ii ${\rm)}. 
\end{rem}
\begin{thm}\label{app}
Let $ \alpha $  be a  zero divisor in  $\mathbb{F}[G]$ for a possible torsion-free group $ G $  and arbitrary field $\mathbb{F}$ with $supp(\alpha) =\{1,x,x^{-1},y\} $, where $ x,y $  are distinct non-trivial elements of  $ G$, and $ \beta $ be a mate of $ \alpha $. Then $ Z(\alpha,\beta) $  contains no  subgraph isomorphic to one of  the graphs  in Figures {\rm\ref{36}} and {\rm\ref{tt}}, where the degrees of the vertex $ g $, white and  gray vertices of any subgraph in $Z(\alpha,\beta) $ must be $ 5 $, $ 4 $ and   $ 4 $ of type $ (ii) $, respectively.
\end{thm}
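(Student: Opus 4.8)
Throughout I would use that, by Remark~\ref{iden}, we may take $G=\langle x,y\rangle$; since $G$ is torsion-free and $\mathbb{F}[G]$ has a zero divisor, $G$ is non-abelian and is not a torsion-free quotient of any Baumslag--Solitar group $BS(1,n)$ (Theorem~26.2 of \cite{PI}, Remark~\ref{klein}, Remark~3.17 of \cite{AT}), and each of $x,y,xy,x^{-1}y,\dots$ is non-trivial (otherwise $|supp(\alpha)|<4$). The engine of the argument is a labelling calculus for the edges of $Z(\alpha,\beta)$. Because $supp(\alpha)=\{1,x,x^{-1},y\}$ and $|S_\alpha|=10$, each element of $S_\alpha$ is of the form $h^{-1}h'$ for a \emph{unique} ordered pair $(h,h')$ of distinct elements of $supp(\alpha)$, with the sole exceptions $x=1^{-1}x=(x^{-1})^{-1}1$ and $x^{-1}=1^{-1}x^{-1}=x^{-1}1$. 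Hence an edge of $Z(\alpha,\beta)$ joining $g$ to $g'$ may be labelled by $\lambda:=gg'^{-1}\in S_\alpha$; it belongs to a \emph{double} edge precisely when $\lambda\in\{x,x^{-1}\}$; and, viewed from $g$, it occupies the ``slot'' $h\in supp(\alpha)$ determined by $\lambda$ (the two slots $\{1,x^{-1}\}$, resp. $\{1,x\}$, in the double case).

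Next I would record the rigidity imposed by the degree/type hypotheses. By Remark~\ref{deg12}, a white vertex $g$ has $\deg_{Z(\alpha,\beta)}(g)=4$, so $\theta_3(g)=\theta_4(g)=0$; thus $r_{BC}(hg)=2$ for every $h\in supp(\alpha)$ and the four $Z(\alpha,\beta)$-edges at $g$ occupy the slots $1,x,x^{-1},y$ once each, and by Remark~\ref{deg4f} such a $g$ is of type $(i)$ (four single edges to four distinct neighbours) or of type $(ii)$ (one double edge using $\{1,x^{-1}\}$ or $\{1,x\}$, plus two single edges in the remaining slots), the gray vertices being exactly the type-$(ii)$ ones. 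The distinguished vertex $g$ of degree $5$ has $\theta_3(g)=1$, so by part~(3) of Lemma~\ref{3part} there is a unique $s\in\Delta^3_{(\alpha,\beta)}$ with $g\in\mathcal V_{(\alpha,\beta)}(s)$; by Remark~\ref{complete} the three vertices of $\mathcal V_{(\alpha,\beta)}(s)$ span a triangle in $Z(\alpha,\beta)$, two of them being further neighbours of $g$, and Corollary~\ref{cap1} together with the bound $|M_{Z(\alpha,\beta)}(g)|\le 2$ of Remark~\ref{multi} controls how that triangle can sit relative to the rest of the configuration.

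For each of the finitely many graphs $\Gamma_0$ in Figures~\ref{36} and~\ref{tt} I would assume $Z(\alpha,\beta)$ contains a copy meeting the prescribed degrees and types, label its drawn double edges by $x$ (after orienting), and use the slot constraints at the white, gray and degree-$5$ vertices --- remembering that an edge of $\Gamma_0$ drawn simple can still be part of a double edge of $Z(\alpha,\beta)$, and exploiting the symmetries $x\leftrightarrow x^{-1}$ and simultaneous inversion --- to enumerate the few admissible labellings of all edges of $\Gamma_0$ by elements of $S_\alpha$. For each admissible labelling, multiplying the labels around the independent cycles of $\Gamma_0$, exactly as in the proof of Theorem~\ref{lemtriangle}, yields relations that $x$ and $y$ must satisfy in $G$. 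In every surviving case these relations either identify two elements of $supp(\alpha)$, or force a provably non-trivial word $w(x,y)$ to have finite order, or make $\langle x,y\rangle$ abelian, or make it a torsion-free quotient of some $BS(1,n)$ --- each impossible because $|supp(\alpha)|=4$, because $G$ is torsion-free, because $G$ is non-abelian, and because Kaplansky's conjecture is known for torsion-free quotients of $BS(1,n)$ (Remark~3.17 of \cite{AT}), respectively. This yields the contradiction.

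The main obstacle is volume rather than conceptual depth: Figures~\ref{36} and~\ref{tt} together contain many graphs, and even after imposing the slot constraints several candidate labellings survive for each and must be eliminated one at a time by a group-theoretic computation, so the argument is genuinely computer-assisted. The one delicate point is the degree-$5$ vertex $g$: one must correctly locate the triangle coming from the $\Delta^3$-element $s$ among the edges at $g$ --- which may or may not be edges of $\Gamma_0$ --- and combine Remark~\ref{complete}, Corollary~\ref{cap1} and the multiplicity bound of Remark~\ref{multi} so as to keep the case split finite. The full analysis, together with the reductions performed in GAP, is carried out in Appendix~\ref{app10}.
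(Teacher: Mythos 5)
Your proposal follows essentially the same route as the paper's Appendix~\ref{app10}: translate each edge into a relation $hg=h'g'$, use the $|S_\alpha|=10$ structure (only $x,x^{-1}$ admit two representations, accounting for double edges) and the degree/type constraints of Remarks~\ref{deg12}, \ref{deg4f}, \ref{multi} and Lemma~\ref{3part} to cut down the admissible labellings, then multiply around cycles and eliminate each surviving case by showing $\langle x,y\rangle$ is abelian, has torsion, or is a quotient of some $BS(1,n)$ --- exactly the GAP-assisted case analysis the paper carries out. Your treatment of the degree-$5$ vertex via the unique $s\in\Delta^3_{(\alpha,\beta)}$ and its triangle also matches the paper's handling, so the argument is correct and not a genuinely different proof.
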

We refer the reader to Appendix  \ref{app10} to see details of the proof of Theorem \ref{app}.
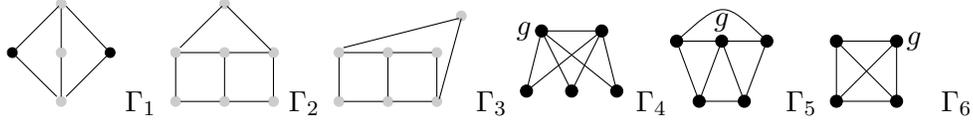
\begin{figure}
\begin{tikzpicture}[scale=.65]
\draw [fill] (0,0) circle
[radius=0.1] node  [left]  {};
\draw [fill] (2,0) circle
[radius=0.1] node  [left]  {};
\draw [fill][black!20!white] (1,1) circle
[radius=0.1] node  [left]  {};
\draw[fill][black!20!white] (1,-1) circle
[radius=0.1] node  [left]  {};
\draw [fill][black!20!white] (1,0) circle
[radius=0.1] node  [left]  {};
\draw  (.05,.05) -- (.95,.95);
\draw (1.1,.95) -- (1.95,.05) ;
\draw (1.95,-.05)-- (1.1,-.9);
\draw(.91,-.9)-- (.05,-.05);
\draw  (1,.9) --  (1,.1) ;
\draw  (1,-.1) --  (1,-.9) ;
\end{tikzpicture}
\subfloat{$ \Gamma_1$}
\begin{tikzpicture}[scale=.65]
\draw [fill][black!20!white] (0,0) circle
[radius=0.1] node  [left]  {};
\draw [fill][black!20!white] (1,0) circle
[radius=0.1] node  [left]  {};
\draw [fill][black!20!white] (0,-1) circle
[radius=0.1] node  [left]  {};
\draw [fill][black!20!white] (1,-1) circle
[radius=0.1] node  [left]  {};
\draw [fill][black!20!white] (1,1) circle
[radius=0.1] node  [left]  {};
\draw [fill][black!20!white] (2,0) circle
[radius=0.1] node  [left]  {};
\draw [fill][black!20!white] (2,-1) circle
[radius=0.1] node  [left]  {};
\draw  (.09,.1) --  (.95,.95) ;
\draw  (1.05,.95) --  (1.95,.05) ;
\draw  (.1,0) --  (.9,0) ;
\draw  (1,-.9) --  (1,-.1) ;
\draw  (.9,-1) --  (.1,-1) ;
\draw  (0,-.1) --  (0,-.9) ;
\draw  (1.1,0) --  (1.9,0) ;
\draw  (2,-.1) --  (2,-.9) ;
\draw  (1.9,-1) --  (1.1,-1) ;
\end{tikzpicture}
\subfloat{$ \Gamma_2$}
\begin{tikzpicture}[scale=.65]
\draw [fill][black!20!white] (0,0) circle
[radius=0.1] node  [left]  {};
\draw [fill][black!20!white] (1,0) circle
[radius=0.1] node  [left]  {};
\draw [fill][black!20!white] (0,-1) circle
[radius=0.1] node  [left]  {};
\draw [fill][black!20!white] (1,-1) circle
[radius=0.1] node  [left]  {};
\draw [fill][black!20!white] (2.5,.75) circle
[radius=0.1] node  [left]  {};
\draw [fill][black!20!white] (2,0) circle
[radius=0.1] node  [left]  {};
\draw [fill][black!20!white] (2,-1) circle
[radius=0.1] node  [left]  {};
\draw  (.09,.1) --  (2.45,.7) ;
\draw  (2.5,.7) --  (2.05,-.95) ;
\draw  (.1,0) --  (.9,0) ;
\draw  (1,-.9) --  (1,-.1) ;
\draw  (.9,-1) --  (.1,-1) ;
\draw  (0,-.1) --  (0,-.9) ;
\draw  (1.1,0) --  (1.9,0) ;
\draw  (2,-.1) --  (2,-.9) ;
\draw  (1.9,-1) --  (1.1,-1) ;
\end{tikzpicture}
\subfloat{$\Gamma_{3}$}
\begin{tikzpicture}[scale=.8]
\draw [fill] (0,0) circle
[radius=0.1] node  [left]  {$ g $};
\draw [fill] (1,0) circle
[radius=0.1] node  [right]  {};
\draw [fill] (-.25,-1) circle
[radius=0.1] node  [below]  {};
\draw [fill] (.5,-1) circle
[radius=0.1] node  [below]  {};
\draw [fill] (1.25,-1) circle
[radius=0.1] node  [below]  {};
\draw (0,0) -- (1,0) -- (-.25,-1) -- (0,0);
\draw  (1,0) -- (.5,-1) -- (0,0);
\draw  (1,0) -- (1.25,-1) -- (0,0);
\end{tikzpicture}
\subfloat{$\Gamma_{4}$}
\begin{tikzpicture}[scale=.8]
\draw [fill] (0,0) circle
[radius=0.1] node  [left]  {};
\draw [fill] (.75,0) circle
[radius=0.1] node  [right]  {};
\draw [fill] (-.375,1) circle
[radius=0.1] node  [below]  {};
\draw [fill] (.375,1) circle
[radius=0.1] node  [above]  {$ g $};
\draw [fill] (1.125,1) circle
[radius=0.1] node  [below]  {};
\draw (.375,1) -- (.75,0);
\draw (.375,1) -- (-.375,1);
\draw (.375,1) -- (-.375,1);
\draw (.375,1) -- (0,0);
\draw (.375,1) -- (1.125,1);
\draw (.75,0) -- (0,0);
\draw (-.375,1) -- (0,0);
\draw (.75,0) -- (1.125,1);
\draw (-.375,1) .. controls (.4,1.7)  .. (1.125,1);
\end{tikzpicture}
\subfloat{$\Gamma_{5}$}
\begin{tikzpicture}[scale=.8]
\draw [fill] (0,0) circle
[radius=0.1] node  [left]  {};
\draw [fill] (1,0) circle
[radius=0.1] node  [right]  {};
\draw [fill] (0,1) circle
[radius=0.1] node  [below]  {};
\draw [fill] (1,1) circle
[radius=0.1] node  [right]  {$ g $};
\draw (0,0) -- (1,0) -- (1,1) -- (0,1)-- (0,0);
\draw (0,0) -- (1,1);
\draw (1,0) -- (0,1);
\end{tikzpicture}
\subfloat{$\Gamma_{6}$}
\caption{ Some forbidden subgraphs of $ Z(\alpha,\beta) $, where $ supp(\alpha)=\{1,x,x^{-1},y\}$ and the degrees of the vertex $ g $ and  gray vertices of any subgrph in $Z(\alpha,\beta) $ must be $ 5 $ and   $ 4 $ of type $ (ii) $, respectively.}\label{36}
\end{figure}
\begin{thm}\label{appunit}
Let $ \mathsf{a} $ be a  unit in $ \mathbb{F}[G] $ for a possible torsion-free group $ G $ and arbitrary field $\mathbb{F}$ with $ supp(\mathsf{a})=\{1,x,x^{-1},y\}$, where $ x,y $ are distinct non-trivial elements of $ G $,  and $ \mathsf{b} $ be a mate of $ \mathsf{a} $. Then $ U(\mathsf{a},\mathsf{b}) $  contains no  subgraph isomorphic to one of  the graphs  in Figure {\rm\ref{36unit}}, where  the degrees of  gray  and white vertices of any subgraph in $U(\mathsf{a},\mathsf{b}) $ must be  $ 3$ and $ 4 $, respectively. 
\end{thm}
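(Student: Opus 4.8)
The plan is to run, in the unit setting, the relation-counting argument that Appendix~\ref{app10} uses for the zero-divisor statement (Theorem~\ref{app}). First I would normalize. By Lemma~\ref{unit size} we have $|S_\mathsf{a}|=10$, so Lemma~\ref{83unit} together with Remark~\ref{idenunit} lets us assume $supp(\mathsf{a})=\{1,x,x^{-1},y\}$ with $x,y$ distinct non-trivial elements, $1\in supp(\mathsf{a})\cap supp(\mathsf{b})$, and $G=\langle supp(\mathsf{a})\rangle=\langle supp(\mathsf{b})\rangle$; write $B=supp(\mathsf{a})$ and $C=supp(\mathsf{b})$. The structural input I need is the unit analogue of Remark~\ref{multi}, Lemma~\ref{repetition} and Remark~\ref{deg4f}: reading Table~\ref{1010} in the row $b=a^{-1}$ shows that the only elements of $S_\mathsf{a}$ with two distinct representations $h^{-1}h'$, $h\neq h'\in B$, are $x$ and $x^{-1}$, so $U(\mathsf{a},\mathsf{b})$ is a multigraph with at most a double edge between any two vertices and $|M_{U(\mathsf{a},\mathsf{b})}(g)|\leq 2$ for every $g$; a vertex $g$ with $|M_{U(\mathsf{a},\mathsf{b})}(g)|=2$ satisfies $r_{BC}(g)\in\{3,4\}$ and $\{g,xg,x^{-1}g\}\subseteq\mathcal V_{(\mathsf{a},\mathsf{b})}(g)$; and by Lemma~\ref{degreeconnunit} a degree-$4$ vertex is of type $(i)$ ($|M_{U(\mathsf{a},\mathsf{b})}(g)|=0$) or of type $(ii)$ ($|M_{U(\mathsf{a},\mathsf{b})}(g)|=1$). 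The point special to the unit case is that a vertex of degree $3$ must be the unique ``defect'' vertex $g_0$ of Lemma~\ref{degreeconnunit}, i.e.\ $B\cap C^{-1}=\{g_0\}$ and $r_{BC}(1)=1$; since the normalization gives $1\in B\cap C^{-1}$, this forces $g_0=1$, so in any copy of a graph from Figure~\ref{36unit} inside $U(\mathsf{a},\mathsf{b})$ the gray vertex is the identity of $G$ (and if $U(\mathsf{a},\mathsf{b})$ has no vertex of degree $3$ at all there is nothing to prove), while at it $r_{BC}(h)=2$ for each $h\in\{x,x^{-1},y\}$ and $r_{BC}(1)=1$.

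With this dictionary, for each $\Gamma_0$ in Figure~\ref{36unit} I would assume a subgraph $H\cong\Gamma_0$ of $U(\mathsf{a},\mathsf{b})$ realizing the prescribed degrees, attach to each edge $e$ of $H$ a relation $h_eg_e=h_e'g_e'$ with $h_e\neq h_e'\in B$ and $g_e,g_e'$ the endpoints of $e$, and then apply the procedure of Section~\ref{section2}: the degree data force the three edges at the gray vertex to use, up to the symmetry $x\leftrightarrow x^{-1}$, the pairs $(1,x)$, $(1,x^{-1})$ and $(1,y)$ with none of them repeated, while at each white vertex the admissible pairs are dictated by its type $(i)$ or $(ii)$ and, in type $(ii)$, by which neighbour carries the double edge. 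For each admissible assignment $C$ of pairs to the edges, fixing a base edge $e_0$ and passing to the group $\mathcal G=\langle h_{e_0}^{-1}B\mid Rel'_{C}(\Gamma_0)\rangle$, Lemmas~\ref{1} and \ref{hg} keep $h_{e_0}^{-1}\mathsf{a}$ and $\mathsf{b}g^{-1}$ non-zero with product $1$, so it suffices to show that $\mathcal G$ is abelian (contradiction by Theorem~26.2 of \cite{PI}), or a quotient of some $BS(1,n)$, in particular of the Klein bottle group $BS(1,-1)$ (contradiction by Remark~3.17 of \cite{AT} and Remark~\ref{klein}), or has a non-trivial torsion element (impossible, as $\mathcal G$ is torsion-free).

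Because the admissible assignments are still numerous, I would carry out the enumeration by computer with GAP \cite{a9}, using the reductions of Appendix~\ref{app10}: collapse assignments related by an automorphism of $\Gamma_0$ or by the involution $x\leftrightarrow x^{-1}$ of $B$, discard at once any assignment whose relations already imply abelianness or a short torsion word, and test the survivors for the Baumslag--Solitar / Klein-bottle quotient property. The main obstacle is exactly this bookkeeping, compounded by a feature absent from the zero-divisor setting: the unique degree-$3$ defect vertex breaks the local uniformity of a minimal counterexample, so the edges meeting the gray vertex obey a genuinely different (and slightly more delicate) family of relations than those meeting the white vertices, and one must check that the symmetry reductions remain legitimate in the presence of this distinguished vertex. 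Once the finitely many surviving presentations are eliminated, the theorem follows.
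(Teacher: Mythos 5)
Your overall strategy --- encode each edge of a candidate subgraph by a relation $h_eg_e=h'_eg'_e$, cut down the admissible assignments using the degree data, and eliminate each surviving presentation by showing the resulting quotient group is abelian, a $BS(1,n)$-quotient, or has torsion --- is exactly the strategy of Appendix~\ref{app10}. In fact the paper avoids any fresh computation: by Remark~\ref{3punit}, every triangle of each graph in Figure~\ref{36unit} contains either a degree-$3$ vertex or two degree-$4$ vertices, so its $6$-tuples satisfy precisely the constraints already enumerated for the zero-divisor graph $H_1$ of Figure~\ref{10k}, all of whose cases were shown to lead to contradictions; Theorem~\ref{appunit} then follows with no new case analysis.

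The genuine gap is in your local analysis at the degree-$3$ vertex. At such a vertex $g_0$ one has $r_{BC}(1\cdot g_0)=1$ and $r_{BC}(hg_0)=2$ for each $h\in\{x,x^{-1},y\}$ (proof of Lemma~\ref{degreeconnunit}, case $(2)$, together with your correct observation that the normalization forces $g_0=1$). Hence the identity coefficient contributes \emph{no} edge at $g_0$: the three edges at $g_0$ carry relations $hg_0=h'g'$ in which the coefficient multiplying $g_0$ runs over $\{x,x^{-1},y\}$, one value per edge, while the coefficient $h'$ at the other endpoint is the a priori unconstrained one (any element of $B\setminus\{h\}$). Your claim that these three edges ``use the pairs $(1,x)$, $(1,x^{-1})$ and $(1,y)$'' places the coefficient $1$ at $g_0$, which is exactly the configuration excluded by $r_{BC}(1)=1$; an enumeration built on that constraint both admits impossible relations and, more seriously, omits the admissible ones, so the exhaustion would not be exhaustive. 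A second slip: the parenthetical ``if $U(\mathsf{a},\mathsf{b})$ has no vertex of degree $3$ there is nothing to prove'' is false for the graphs $(b)$ and $(c)$ of Figure~\ref{36unit}, which contain no gray vertex at all; those two configurations are ruled out by the two-degree-$4$-vertices clause of Remark~\ref{3punit}, not by anything involving a degree-$3$ vertex. Both defects are repaired by replacing your local dictionary with the constraints of Remark~\ref{3punit}, after which the required computation coincides with the one already carried out for $H_1$ in the proof of Theorem~\ref{app}.
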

In Appendix \ref{app10}, we give  details of the proof of Theorem \ref{appunit}.
 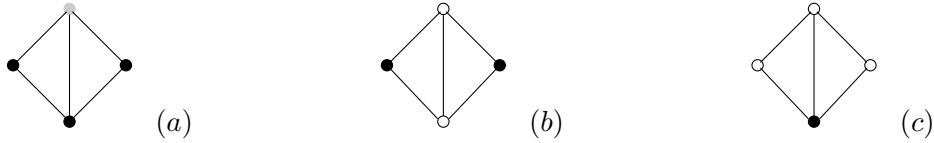
\begin{figure}[htp]
\begin{tikzpicture}[scale=.75]
\draw [fill] (0,0) circle
[radius=0.1] node  [left]  {};
\draw [fill] (2,0) circle
[radius=0.1] node  [right]  {};
\draw [fill][black!20!white] (1,1) circle
[radius=0.1] node  [left]  {};
\draw [fill] (1,-1) circle
[radius=0.1] node  [left]  {};
\draw  (0,0) -- (.95,.95);
\draw (1.05,.95) -- (2,0) -- (1,-1)-- (0,0);
\draw  (1,.9) --  (1,-1) ;
\end{tikzpicture}
\subfloat{$ (a)$}
\qquad \qquad \qquad
\begin{tikzpicture}[scale=.75]
\draw [fill] (0,0) circle
[radius=0.1] node  [left]  {};
\draw [fill] (2,0) circle
[radius=0.1] node  [right]  {};
\draw [] (1,1) circle
[radius=0.1] node  [left]  {};
\draw [] (1,-1) circle
[radius=0.1] node  [left]  {};
\draw  (0,0) -- (.95,.95);
\draw (1.05,.95) -- (2,0) -- (1.1,-.95);
\draw (.9,-.95)-- (0,0);
\draw  (1,.9) --  (1,-.9) ;
\end{tikzpicture}
\subfloat{$(b)$}
\qquad \qquad \qquad
\begin{tikzpicture}[scale=.75]
\draw [] (0,0) circle
[radius=0.1] node  [left]  {};
\draw [] (2,0) circle
[radius=0.1] node  [right]  {};
\draw [] (1,1) circle
[radius=0.1] node  [left]  {};
\draw [fill] (1,-1) circle
[radius=0.1] node  [left]  {};
\draw  (.05,.05) -- (.95,.95);
\draw(1.94,-.05) -- (1.1,-.95);
\draw (1.05,.95) -- (1.94,.05);
\draw (.9,-.95)-- (.05,-.05);
\draw  (1,.9) --  (1,-.9) ;
\end{tikzpicture}
\subfloat{$(c)$}
\caption{Three forbidden subgraphs of $ U(\mathsf{a},\mathsf{b}) $, where $ supp(\mathsf{a})=\{1,x,x^{-1},y\}$ and the degrees of  white and gray vertices of any subgraph in $U(\mathsf{a},\mathsf{b}) $ must be   $ 3 $ and $ 4 $, respectively.}\label{36unit}
\end{figure}
\begin{rem}\label{repetitionunit}
Suppose that $ \mathsf{a} $ is a  unit in $ \mathbb{F}[G] $ for a possible torsion-free group $ G $ and arbitrary field $\mathbb{F}$ with  $ supp(\mathsf{a})=\{1,x,x^{-1},y\}$, where $ x,y $ are distinct non-trivial elements of $ G$, and  $ \mathsf{b} $ is a mate of $  \mathsf{a} $. Let $ B= supp(\mathsf{a})$, $ C= supp(\mathsf{b})$ and $ g $ be a vertex of $ U(\mathsf{a},\mathsf{b}) $. By the same argument as Lemma {\rm\ref{repetition}}, it can be  seen that if $ |M_{U(\mathsf{a},\mathsf{b})}(g)|=2 $, then $ r_{BC}(g)\in \{3,4\} $ and $ \{g,xg,x^{-1}g\}\subseteq \mathcal{V}_{(\mathsf{a},\mathsf{b})}(g) $.
\end{rem}
\section{ \textbf{Zero divisors whose supports are of size $ 4 $ in  group algebras of  torsion-free groups} }
Throughout this section let $ \alpha $ be a zero divisor
in $ \mathbb{F}[G]  $ for a possible torsion-free group $ G $ and arbitrary field $ \mathbb{F} $ with  $|supp(\alpha)|= 4 $ and $ \beta $ be a mate of $ \alpha $. It is known that $ |supp(\beta)|> 2 $ \cite[Theorem 2.1]{PS}. In  this section we will show that $ |supp(\beta)|\geq 7 $.
  
Let $G$ be a group and $B,C$ be two finite subsets of $G$. Then $ |BC|\geq |B|+|C|-1 $ \cite{BF}. The extremal sets in the inequality are characterized  in the following theorem:
\begin{thm}[\cite{set2}] \label{set2}
Let $G$ be a group and $B,C$ be two  subsets of $G$ such that $min\{ |B|, |C|\}\geq 2 $ and $ |BC|= |B|+|C|-1 $. Then the subsets $ B $ and $ C $ have the form $ B=\{b,bq,\ldots,bq^{l-1}\}$ and $ C=\{c,qc,\ldots,q^{k-1}c\}$, where $ b,c,q\in G $ and $ q\neq 1 $.
\end{thm}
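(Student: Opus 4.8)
The statement is the non‑commutative analogue of the classical fact that equality $|B+C|=|B|+|C|-1$ in $\mathbb{Z}$ forces $B,C$ to be arithmetic progressions with a common difference, so the plan is to imitate that argument by induction on $|B|+|C|$, writing $l=|B|$, $k=|C|$; I will run everything for $G$ torsion‑free, the case needed in the sequel. From Definition~\ref{set} one has $\sum_{x\in BC}r_{BC}(x)=|B||C|$ (each pair $(b,c)$ contributes to the single product $bc$), hence the book‑keeping identity $\sum_{x\in BC}\bigl(r_{BC}(x)-1\bigr)=(l-1)(k-1)$. \emph{Base case} $\min\{l,k\}=2$, say $l=2$: write $B=\{b,b'\}$ and put $q:=b^{-1}b'\ne 1$, so $BC=b\,(C\cup qC)$ and therefore $|C\cap qC|=|C\cup qC|-1=k-1$. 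Thus there is a unique $c_0\in C$ with $q^{-1}c_0\notin C$, while for every other $c\in C$ the sequence $c,q^{-1}c,q^{-2}c,\dots$ has pairwise distinct terms (here torsion‑freeness enters: $q\ne 1$ forces the powers $q^{i}c$ distinct), stays inside the finite set $C$, and so must reach $c_0$; hence $c=q^{t}c_0$ with $c_0,qc_0,\dots,q^{t}c_0\in C$, and letting $c$ vary gives $C=\{c_0,qc_0,\dots,q^{k-1}c_0\}$, which with $B=\{b,bq\}$ is the asserted form. The case $k=2$ is identical, running the chain inside $B$.

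For the inductive step assume $l,k\ge 3$, and split according to whether some element of $BC$ has a unique representation. \emph{Case $1$: there is $x=b^{*}c^{*}\in BC$ with $r_{BC}(x)=1$.} Put $C':=C\setminus\{c^{*}\}$. Since $x$ is lost, $|BC'|\le|BC|-1$, while the inequality recalled just above gives $|BC'|\ge|B|+|C'|-1=|BC|-1$; hence $|BC'|=|B|+|C'|-1$ with $2\le|C'|<|C|$, and the induction hypothesis produces $q\ne 1$ with $B=\{b,bq,\dots,bq^{l-1}\}$ and $C'=\{c',qc',\dots,q^{k-2}c'\}$, so that $BC'=\{bq^{m}c'\mid 0\le m\le l+k-3\}$. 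As $|Bc^{*}\setminus BC'|=|BC|-|BC'|=1<l$, some $bq^{i}c^{*}$ lies in $BC'$, which forces $c^{*}=q^{t}c'$ for a single integer $t$; then $bq^{i}c^{*}\in BC'$ exactly when $0\le i+t\le l+k-3$, and a short computation (using also $c^{*}\notin C'$) pins $t$ down to $-1$ or $k-1$. In either case $C=C'\cup\{c^{*}\}$ is a $q$‑progression of length $k$ with the \emph{same} ratio $q$ as $B$, which closes this case.

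\emph{Case $2$: every $x\in BC$ has $r_{BC}(x)\ge 2$.} This is the genuinely hard case: now neither $B$ nor $C$ has a deletable element, the reduction above is unavailable, and the identity above only yields $(l-2)(k-2)\ge 2$, so such a ``saturated'' configuration is not excluded by counting --- it is the non‑commutative shadow of the subgroup‑type extremal families in Kemperman's structure theorem, and I expect essentially all the real work to be concentrated here. The plan is to rule it out using torsion‑freeness: $r_{BC}(x)\ge 2$ for all $x$ means $Bc_0\subseteq B(C\setminus\{c_0\})$ for every $c_0\in C$, hence $B\subseteq BT$ with $T:=(C\setminus\{c_0\})c_0^{-1}$, where $1\notin T$ and $|T|=k-1\ge 2$, and symmetrically $C\subseteq T'C$ with $T':=b_0^{-1}(B\setminus\{b_0\})$; one then has to exploit the interaction of these two self‑absorbing inclusions in a torsion‑free group --- for instance by iterating $B\subseteq BT\subseteq BT^{2}\subseteq\cdots$ while tracking $|BT^{n}|$, or by extracting a two‑element subconfiguration to which the base case applies --- in order to reach a contradiction, equivalently to force the progression structure outright. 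This saturated case is the main obstacle and is precisely what the detailed analysis of \cite{set2} carries out; granted it, the induction above completes the proof.
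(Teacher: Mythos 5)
The paper does not prove this statement at all: it is quoted from Kemperman's 1956 paper \cite{set2}, so there is no in-paper argument to compare yours against. Judged on its own terms, your base case ($\min\{|B|,|C|\}=2$) and your Case $1$ of the induction (some $x\in BC$ with $r_{BC}(x)=1$) are correct for torsion-free $G$ --- and restricting to torsion-free $G$ is in fact necessary, since the statement as printed is false in general groups: in $\mathbb{Z}/6\mathbb{Z}$ take $B=\{0,2,3\}$ and $C=\{0,3\}$, so that $|B+C|=|\{0,2,3,5\}|=4=|B|+|C|-1$, yet $C$ forces the common difference to be $3$ and $B$ is not a $3$-term progression with difference $3$. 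The paper only ever applies the theorem inside torsion-free groups, so this restriction is harmless for its purposes.

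The genuine gap is your Case $2$. You correctly identify that when every element of $BC$ has at least two representations the deletion argument is unavailable, and that the counting identity only yields $(l-2)(k-2)\ge 2$, which does not exclude such a saturated configuration. But you then stop: the two plans you sketch (iterating $B\subseteq BT\subseteq BT^{2}\subseteq\cdots$, or extracting a two-element subconfiguration) are not carried out, and you explicitly concede that this case is ``precisely what the detailed analysis of \cite{set2} carries out.'' Since every extremal pair of progressions does contain a uniquely represented product (namely the one with extremal exponent, $bc$), Case $2$ must be shown to be impossible, and ruling it out is exactly where the entire difficulty of Kemperman-type structure theorems is concentrated. As written, your argument proves the theorem only under the additional hypothesis that some element of $BC$ admits a unique representation; without Case $2$ the induction does not close.
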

\begin{thm}[The main result of \cite{a7}] \label{4}
 Let $ C $ be a finite generating subset of a nonabelian torsion-free group
$ G $ such that $ 1\in C $. Then for all $ B\subset G $ with $  |B|\geq 4 $,
$ | BC|\geq |B|+|C|+1$. 
\end{thm}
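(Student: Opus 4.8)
The plan is to upgrade the elementary bound $|BC|\ge |B|+|C|-1$ of \cite{BF} by two units, by showing that under the stated hypotheses neither of the two near-extremal values $|BC|=|B|+|C|-1$ nor $|BC|=|B|+|C|$ can occur. Observe first that $|C|\ge 2$: since $1\in C$ and $C$ generates the nonabelian group $G$, the set $C\setminus\{1\}$ must contain two non-commuting elements. Hence $\min\{|B|,|C|\}\ge 2$ and Theorem \ref{set2} applies to the first extremal value.

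Suppose $|BC|=|B|+|C|-1$. By Theorem \ref{set2}, $C=\{c,qc,\dots,q^{k-1}c\}$ for some $q\neq 1$, where $k=|C|$. Since $1\in C$, we have $q^{j}c=1$ for some $j$, whence $c=q^{-j}$ and $C=\{q^{-j},q^{1-j},\dots,q^{k-1-j}\}\subseteq\langle q\rangle$. Then $G=\langle C\rangle\subseteq\langle q\rangle$ is cyclic, contradicting that $G$ is nonabelian. This rules out the first value, and I note that this step uses only $1\in C$ together with the generation and nonabelianness hypotheses, not $|B|\ge 4$.

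The substantive case is $|BC|=|B|+|C|$, and here I would prove a ``one-above-minimum'' refinement of Theorem \ref{set2}: if $\min\{|B|,|C|\}\ge 2$ and $|BC|=|B|+|C|$, then, outside a short list of small exceptional configurations that are excluded by $|B|\ge 4$, the sets $B$ and $C$ are contained in geometric progressions $B\subseteq\{bq^{i}\}$ and $C\subseteq\{q^{j}c\}$ of lengths $|B|+1$ and $|C|+1$ with a common ratio $q\neq 1$. This is the multiplicative, torsion-free analogue of Freiman's $3k-4$ theorem. Granting it, $C\subseteq\{q^{j}c : 0\le j\le |C|\}$ together with $1\in C$ again forces $C\subseteq\langle q\rangle$ and hence $G$ cyclic, exactly the contradiction obtained before. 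Combining the two cases yields $|BC|\ge|B|+|C|+1$.

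The main obstacle is precisely this structure theorem for the value $|B|+|C|$, which is the content of \cite{a7}. In a \emph{bi-orderable} torsion-free group the argument is essentially classical: fixing a bi-order, one compresses $B$ and $C$ toward their extremal elements and reads off the progression structure as over $\mathbb{Z}$, with torsion-freeness guaranteeing that no short relation $q^{n}=1$ collapses the progression. The real difficulty is that a general torsion-free group need not be orderable; there the order must be replaced by a direct combinatorial analysis of how the successively adjoined translates $Bc_i$ enlarge the partial union $\bigcup Bc_j$, showing that at least $|C|+1$ new elements must appear in total. It is exactly in controlling these increments that the threshold $|B|\ge 4$ enters, ruling out the degenerate configurations in which too few translates overlap; this combinatorial heart is where I expect the essential work to lie.
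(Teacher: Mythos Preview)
This theorem is not proved in the paper at all; it is merely quoted as the main result of the external reference \cite{a7} (Hamidoune, Llad\'o, Serra) and used as a black box in Sections 6 and 7. There is therefore no proof in the present paper to compare your proposal against.

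As for the proposal itself: your handling of the first extremal value $|BC|=|B|+|C|-1$ via Kemperman's Theorem \ref{set2} is correct and is indeed the easy step. For the second value $|BC|=|B|+|C|$ you have identified the right target, but you have essentially deferred the content back to \cite{a7} by positing a torsion-free analogue of Freiman's $3k-4$ theorem without proving it. That structure theorem \emph{is} the substance of the result, and the actual proof in \cite{a7} does not proceed by progression structure at all: it uses isoperimetric/connectivity methods on Cayley graphs (Hamidoune's atom technique), which is where the hypothesis $|B|\ge 4$ genuinely enters. Your suggestion that bi-orderability would make the argument classical is not available, since general torsion-free groups need not be orderable, as you note; but the replacement you gesture at (``direct combinatorial analysis of how translates $Bc_i$ enlarge the partial union'') is too vague to count as a proof, and in fact recovering the needed increment bounds without the isoperimetric machinery is exactly the difficulty the cited paper overcomes.
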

  Let $B=supp(\alpha)$, $C=supp(\beta)$. In view of Remark \ref{iden}, we may assume that $ C $ is a generating set of  $G$ and  $ 1\in B\cap C $. Since  $G$ is not abelian \cite[Theorem 26.2]{PI}, Theorem \ref{4} implies  $ | BC|\geq | B | + | C |+1$.\\
Since $ \alpha\beta=0 $, we must have $ \sum_{(h,g)\in R_{BC}(x)}\alpha_h\beta_g=0 $  for all $ x\in BC$. Hence, $ r_{BC}(x)\geq 2 $ and by Definition \ref{set},  $ r_{BC}(x)\in \{2,3,4\} $  for all $ x\in BC$. 
 So, $| BC|\leq 2|C|$ because if there is an integer number $ i\geq 1 $ such that  $|BC|\leq 2|C|-i$, then by Definition \ref{definitions}, $\delta_{(\alpha,\beta)}^{3}+2\delta_{(\alpha,\beta)}^{4}=-2i $, a contradiction. Thus, we have the following:
\begin{rem}\label{fieldf1}
Let   $ \alpha $ be a zero divisor
in $ \mathbb{F}[G]  $ for a possible torsion-free group $ G $ and arbitrary field $ \mathbb{F} $ with  $|supp(\alpha)|= 4 $ and $ \beta $ be a mate of $ \alpha $. Then $|supp(\beta)|+5\leq |supp(\alpha)supp(\beta)|\leq 2|supp(\beta)| $. Also, according to Definition {\rm\ref{definitions}}, $ \delta_{(\alpha,\beta)}^{2}+\delta_{(\alpha,\beta)}^{3}+\delta_{(\alpha,\beta)}^{4}=|supp(\alpha)supp(\beta)| $ and $2\,\delta_{(\alpha,\beta)}^{2}+3\delta_{(\alpha,\beta)}^{3}+4\delta_{(\alpha,\beta)}^{4}=4|supp(\beta)| $.
\end{rem}
\begin{lem}\label{fieldf}
Let  $ \alpha $ be a zero divisor
in $ \mathbb{F}[G]  $ for a possible torsion-free group $ G $  and arbitrary field $ \mathbb{F} $ with  $|supp(\alpha)|= 4 $ and $ \beta $ be a mate of $ \alpha $. Then $ |supp(\beta)|\geq 6 $.
\end{lem}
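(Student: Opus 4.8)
The plan is to first squeeze $|supp(\beta)|\ge 5$ out of the counting already in place, and then to exclude the borderline value $|supp(\beta)|=5$ by a structural analysis of $Z(\alpha,\beta)$. Put $B=supp(\alpha)$, $C=supp(\beta)$; by Remark~\ref{fieldf1} we have $|C|+5\le |BC|\le 2|C|$, hence $|C|\ge 5$. Assume for contradiction that $|C|=5$. Then both inequalities are equalities, so $|BC|=10$; and since $\sum_{x\in BC}r_{BC}(x)=4|C|=20=2|BC|$ while every $r_{BC}(x)\ge 2$, we must have $r_{BC}(x)=2$ for all $x\in BC$, i.e.\ $\delta^{2}_{(\alpha,\beta)}=10$ and $\delta^{3}_{(\alpha,\beta)}=\delta^{4}_{(\alpha,\beta)}=0$. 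By part~(3) of Lemma~\ref{3part} this forces $\theta_{3}(g)=\theta_{4}(g)=0$ for every vertex $g$, so Remark~\ref{deg12} gives $deg_{Z(\alpha,\beta)}(g)=4$ for all $g$: thus $Z(\alpha,\beta)$ is a $4$-regular (multi)graph on the $5$ vertices of $C$.

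Now invoke Lemma~\ref{z}, so $|S_\alpha|\in\{10,12\}$, and split into two cases. If $|S_\alpha|=12$, then by Theorem~\ref{2} the graph $Z(\alpha,\beta)$ is simple, hence being $4$-regular on $5$ vertices it equals $K_{5}$; deleting the three edges of any triangle exhibits a subgraph of $Z(\alpha,\beta)$ isomorphic to $K_{1,1,3}$, and all five of its vertices have degree $4$ in $Z(\alpha,\beta)$, contradicting Theorem~\ref{55}. If $|S_\alpha|=10$, then by Lemma~\ref{83} we may assume $supp(\alpha)\in\{\{1,x,x^{-1},y\},\{1,x,y,xy\}\}$ and, by Remark~\ref{multi}, $Z(\alpha,\beta)$ is a $4$-regular multigraph on $5$ vertices with at most two parallel edges between any pair and $|M_{Z(\alpha,\beta)}(g)|\le 2$ for each $g$. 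In the subcase $supp(\alpha)=\{1,x,x^{-1},y\}$, since $r_{BC}(g)=2$ for every vertex $g$, Lemma~\ref{repetition} improves this to $|M_{Z(\alpha,\beta)}(g)|\le 1$; if there are $d$ double edges then $2d\le 5$, so $d\le 2$, and $d=1$ is incompatible with $4$-regularity, leaving only $Z(\alpha,\beta)=K_{5}$ or one explicit $4$-regular multigraph, in each of which one finds a copy of the first graph of Figure~\ref{tt} (which is $K_{4}$ with one edge removed) whose white vertex has degree $4$ in $Z(\alpha,\beta)$ — contradicting Theorem~\ref{app}. The subcase $supp(\alpha)=\{1,x,y,xy\}$ is handled by the same scheme: enumerate the $4$-regular multigraphs on $5$ vertices allowed by Remark~\ref{multi}, discard those containing the multicycle of Figure~\ref{multicycle} via Theorem~\ref{lemtriangle}, and rule out each survivor either by locating a forbidden subgraph or directly — a double edge forces two of its endpoints to differ by $y^{\pm1}$, and a closed walk in this ``$y$-translation'' structure would give a nontrivial power of $y$ equal to $1$, impossible in a torsion-free group. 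Since $|C|=5$ is impossible in all cases, $|supp(\beta)|\ge 6$.

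The delicate point is the last subcase, $|S_\alpha|=10$ with $supp(\alpha)=\{1,x,y,xy\}$: there is no off-the-shelf list of forbidden subgraphs tailored to this support shape, so I expect to need either an analogue of Lemma~\ref{repetition}/Theorem~\ref{app} for $\{1,x,y,xy\}$ (bounding $|M_{Z(\alpha,\beta)}(g)|$ and excluding small dense configurations) or a hands-on check that every admissible $4$-regular multigraph on $5$ vertices carries relations collapsing $G$ to an abelian, Klein-bottle, or Baumslag--Solitar quotient, against Remark~\ref{klein} and the standing hypothesis that $G$ is torsion-free and non-abelian. Everything else is bookkeeping with the degree formula of Remark~\ref{deg12} and the already-established forbidden-subgraph theorems.
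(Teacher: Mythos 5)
Your opening reductions are correct: $|C|\ge 5$, and if $|C|=5$ then $|BC|=10$ and $r_{BC}(x)=2$ for every $x\in BC$. But from that point the paper finishes in one line, and you miss the shortcut: since every $r_{BC}(x)$ equals $2$, the elements $\alpha'=\sum_{h\in B}h$ and $\beta'=\sum_{g\in C}g$ of $\mathbb{F}_2[G]$ satisfy $\alpha'\beta'=\sum_{x\in BC}r_{BC}(x)\,x=0$, so one has a zero divisor over $\mathbb{F}_2$ with $|supp(\alpha')|=4$ and $|supp(\beta')|=5$, directly contradicting \cite[Theorem 1.3]{PS} (which gives $|supp(\beta')|\ge 8$ over $\mathbb{F}_2$). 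No case split on $|S_\alpha|$, no regularity analysis, and no forbidden-subgraph theorems are needed.

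Your alternative route is mostly sound where it is carried out --- the $|S_\alpha|=12$ case ($Z(\alpha,\beta)\cong K_5\supseteq K_{1,1,3}$ against Theorem \ref{55}) and the $supp(\alpha)=\{1,x,x^{-1},y\}$ case (where Lemma \ref{repetition} forces $|M_{Z(\alpha,\beta)}(g)|\le 1$ and the two surviving $4$-regular multigraphs contain a forbidden configuration from Figure \ref{tt}) both go through. The genuine gap is the subcase $|S_\alpha|=10$ with $supp(\alpha)=\{1,x,y,xy\}$, which you yourself flag as unfinished: Theorem \ref{app}, Lemma \ref{repetition} and Figure \ref{tt} are all proved only for the support shape $\{1,x,x^{-1},y\}$, so none of the forbidden-subgraph tools you invoke earlier apply there, and your sketch (``enumerate the admissible $4$-regular multigraphs \dots\ rule out each survivor either by locating a forbidden subgraph or directly'') is a plan rather than an argument. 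As written the proposal therefore does not establish the lemma. Note that the mod-$2$ reduction above closes exactly this subcase for free, since it is insensitive to the shape of $supp(\alpha)$; incorporating it would let you delete the entire second half of your write-up.
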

\begin{proof}
By Remark \ref{fieldf1}, it is clear that $|C|\geq 5$. Suppose that  $ |C| = 5 $, then  Remark \ref{fieldf1} implies $ |BC|=10 $ and $ r_{BC}(x)= 2 $ for each $ x\in BC $. Hence, if we let   $ \alpha'=\sum_{h\in B}h $ and $ \beta'=\sum_{g\in C}g $, then $ \alpha',\beta' \in \mathbb{F}_2 [G] $ and $ \alpha'\beta'=0 $ contradicting   \cite[Theorem 1.3 ]{PS}. This completes the proof.
\end{proof}
\begin{thm}\label{endxy}
Let  $ \alpha $ be a  zero divisor in $\mathbb{F}[G]$ for a possible torsion-free group $ G $  and arbitrary field $ \mathbb{F} $ with  $ supp(\alpha)= \{1,x,y,xy\} $, where $ x,y $ are distinct non-trivial elements of $ G$, and $ \beta $ be a mate of $ \alpha $. Then $  |supp(\alpha)supp(\beta)|\leq 2|supp(\beta)|-2$. 
\end{thm}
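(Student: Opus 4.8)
The plan is to exploit the special multiplicative structure $supp(\alpha)=\{1,x,y,xy\}$, which factors as $\alpha = (1+x)(1+y)$ up to the scalars (more precisely $\alpha = \alpha_1(1+ux)(1+vy)$ for suitable nonzero $u,v$; the factorization $BC = \{1,x\}\{1,y\}C$ on the level of supports is what matters). Writing $B=supp(\alpha)$ and $C=supp(\beta)$, the key observation is that because $\alpha\beta=0$, each element $s\in BC$ has $r_{BC}(s)\in\{2,3,4\}$ by Remark~\ref{fieldf1}, and I want to show that the number of $s$ with $r_{BC}(s)\geq 3$ is large enough to force $|BC|\leq 2|C|-2$ rather than just $|BC|\leq 2|C|$. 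Equivalently, by the counting identity $2\delta^2_{(\alpha,\beta)}+3\delta^3_{(\alpha,\beta)}+4\delta^4_{(\alpha,\beta)}=4|C|$ and $\delta^2+\delta^3+\delta^4=|BC|$ from Remark~\ref{fieldf1}, it suffices to prove $\delta^3_{(\alpha,\beta)}+2\delta^4_{(\alpha,\beta)}\geq 4$, i.e. that there are at least (counted with the weight $i-2$) four ``extra'' coincidences.

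First I would pass to the graph-theoretic picture: since $|S_\alpha|=10$ (Lemma~\ref{z} gives $|S_\alpha|\in\{10,12\}$, and $supp(\alpha)=\{1,x,y,xy\}$ is exactly one of the size-$10$ configurations of Lemma~\ref{01}), Theorem~\ref{lemtriangle} tells us $Z(\alpha,\beta)$ contains no subgraph isomorphic to the multi-cycle of Figure~\ref{multicycle} for any $n\geq 3$, and Remark~\ref{multi} tells us $Z(\alpha,\beta)$ is a multigraph in which any two vertices are joined by at most $2$ edges and $|M_{Z(\alpha,\beta)}(g)|\leq 2$ for every vertex $g$. The idea is then to look at the ``doubled-edge'' structure: by the analysis preceding Remark~\ref{multi} (specialized via Table~\ref{1010} to $supp(\alpha)=\{1,x,y,xy\}$), a pair of vertices $g,g'$ joined by a double edge corresponds to one of the two relations $\{g=xg',\,g'=yg\}$-type patterns, so the double edges come in two ``colors'' (the $x$-type and the $y$-type), and following double edges of a single color around would produce exactly the forbidden multi-cycle. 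Hence the subgraph of double edges, with this $2$-coloring, is a union of paths (no cycles), and I can count how many vertices lie on at least one double edge.

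The main quantitative step is to argue that a mate $\beta$ of $\alpha$ cannot have its zero-divisor graph ``too sparse'': combining $|BC|\geq |B|+|C|+1 = |C|+5$ from Theorem~\ref{4} (valid since $G$ is nonabelian and $C$ generates $G$, by Remark~\ref{iden}) with $|BC|\leq 2|C|$, and then improving the upper bound. Concretely, $2|C|-|BC| = \delta^3+2\delta^4$, so I must show $\delta^3+2\delta^4\geq 2$ is not enough — I need $\geq 4$ — and this is where the structure of $Z(\alpha,\beta)$ for a \emph{mate} enters: using that $Z(\alpha,\beta)$ is connected (Remark~\ref{deg12}-type statement, here for the $|S_\alpha|=10$ case one argues connectivity from Lemma~2.6 of~\cite{AT}), every vertex has degree $\geq 4$, and by Lemma~\ref{degreeconn} (with $\mathbb F$ not necessarily $\mathbb F_2$) the degree is $4+\theta_3(g)+2\theta_4(g)$. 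Summing degrees and relating $\sum_g(\theta_3(g)+2\theta_4(g))$ to $3\delta^3+4\delta^4$ versus edge counts, together with the path (forest) structure of the double-edge subgraph forcing at least two ``endpoints'' worth of deficiency at both ends, should push the bound from the naive $\delta^3+2\delta^4\geq 0$ to $\geq 2$ per ``end'' and hence $\geq 4$ overall. I expect the main obstacle to be precisely this last combinatorial accounting: ruling out the borderline configurations where $\delta^3+2\delta^4\in\{1,2,3\}$, which presumably requires checking that the corresponding small graphs either contain a forbidden multi-cycle, violate the $|M_{Z(\alpha,\beta)}(g)|\leq 2$ bound, or contradict connectivity/minimality of the mate — likely the same type of GAP-assisted case analysis the authors defer to their appendix.
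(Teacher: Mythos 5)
There is a genuine gap, and also an error at the very start. You assert that $\alpha$ "factors as $\alpha=\alpha_1(1+ux)(1+vy)$ for suitable nonzero $u,v$"; this is false in general, and in fact it is impossible here: if $\alpha$ factored in that way, then $\alpha\beta=0$ would produce a zero divisor with support of size $2$, contradicting \cite[Theorem 2.1]{PS}. The paper's proof \emph{opens} by writing $\alpha=1+\lambda x+\gamma y+\delta xy$ and deducing $\delta\neq\lambda\gamma$ from exactly this observation, and that non-factorizability is the engine of the whole argument (it also forces $\mathbb{F}\neq\mathbb{F}_2$). Only the support factorizes, as $BC=\{1,x\}\{1,y\}C$, which you do note.

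The quantitative heart of the theorem --- showing $\delta^3_{(\alpha,\beta)}+2\delta^4_{(\alpha,\beta)}\geq 4$, equivalently $|BC|\leq 2|C|-2$ --- is not established by your sketch. The degree-sum identity you invoke reads $2|\mathcal{E}|=4|C|+3\delta^3+8\delta^4$ and gives no lower bound on $\delta^3+2\delta^4$ without independent control of the edge count, and the proposed "forest of double edges with endpoint deficiencies" mechanism is never turned into an inequality; you yourself flag the borderline cases as an obstacle to be handled by unspecified case analysis. The paper's route is quite different and elementary: (i) for every $a\in C\cap yC$ one has $\max\{r_{BC}(a),r_{BC}(xa)\}\geq 3$, by a two-line computation on the coefficients $\beta_g,\beta_{g'}$ that uses $\delta\neq\lambda\gamma$; (ii) setting $H=C\cup yC$, one builds an injection from $(H\cup xH)\setminus(H\cap xH)$ into $C\cap yC$ and combines it with Kemperman's extremal theorem (Theorem \ref{set2}), which forces $|\{1,x\}H|\geq |H|+2$ and hence $|C\cap yC|\geq 4$; (iii) since $\delta^3+2\delta^4=4|C|-2|BC|$ is even, it remains to exclude the values $0$ and $2$, which follows because $\Delta^3=\varnothing$ would reduce to an $\mathbb{F}_2$ zero divisor (contradiction) and $\Delta^4=\varnothing$ forces $\delta^3\geq|C\cap yC|\geq 4$. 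None of steps (i)--(iii) appear in your proposal, and the graph-theoretic apparatus (forbidden multi-cycles, $|M_{Z(\alpha,\beta)}(g)|\leq 2$) that you lean on is not used in the paper's proof of this theorem at all. A small further point: for $supp(\alpha)=\{1,x,y,xy\}$ the double edges all correspond to the single element $y^{\pm1}$ of $S_\alpha$, not to two colors $x$ and $y$.
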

\begin{proof}
 Let $B=supp(\alpha)$ and $C=supp(\beta)$. Since   $ {\alpha_1}^{-1}\alpha\beta =0 $, where $\alpha_1\in \mathbb{F}\setminus\{0\} $ ,  we may assume that $ \alpha= 1+\lambda x+\gamma y+\delta xy$, where $ \gamma ,\lambda,\delta\in \mathbb{F}\setminus\{0\} $. Note that if $ \delta=\lambda\gamma $, then $ \alpha\beta=(1+\lambda x)(1+\gamma y)\beta=0 $ contradicting  \cite[Theorem 2.1]{PS}. Hence, $ \mathbb{F}\neq \mathbb{F}_2 $. It is clear that 
$BC=\{1,x\}\{1,y\}C$.  Let $ H=C\cup yC $ and $ |C\cap yC|=t $.  We will show that $ t\geq 4 $. As discussed in the beginning of this section, $ r_{BC}(a)\in \{2,3,4\} $  for all $ a\in BC $. Firstly, we show that for each $ a\in  C\cap y C $, 
\begin{equation}\label{maximum}
max\{r_{BC}(a),r_{BC}(xa)\}\geq 3.
\end{equation}
Suppose that $ r_{BC}(a)=2$, $r_{BC}(xa)=2 $ and $ R_{BC}(a)=\{(1,g), (y,g')\} $  which imply $ R_{BC}(xa)=\{(x,g), (xy,g')\} $. Then $ \beta_g+\gamma \beta_{g'}=0 $ and $ \lambda \beta_g+ \delta \beta_{g'}=0$ which imply $ \delta=\lambda\gamma $ that is a contradiction. Hence, \ref{maximum}   holds. Now suppose that $ a\in (H\cup xH) \setminus (H\cap xH) $. Since $ r_{BC}(a)\geq 2 $, if $ a\in H $, then  $ a\in C\cap yC $ and  if  $ a\in xH $, then $ x^{-1}a\in C\cap yC $. Therefore, we can define a map $ f:(H\cup xH) \setminus (H\cap xH)\longrightarrow C\cap y C $ such that for each $ a\in (H\cup xH) \setminus (H\cap xH) $, 
\[
f(a)=\left\{
\begin{array}{ll}
a &  a\in H\\
x^{-1}a &  a\in xH\\
\end{array} \right. .
\]
Let us show that $ f $ is injective. Indeed, $ f(a)=f(a') $ and $ a\neq a' $ would imply either $ a\in H $ and $ a'\in xH $ or $ a'\in H $ and $ a\in xH $. Without loss of generality, let $ f(a)=a $ and $ f(a')=x^{-1}a' $. Since $ a,a'\notin H\cap xH $, $ r_{BC}(a)=2$ and  $r_{BC}(a')=2 $ leads to $r_{BC}(xa)=2 $ contradicting \ref{maximum}. Then if we let $ s=|(H\cup xH)\setminus(H\cap xH)\}| $, then  $ 0\leq s\leq t  $. On the other hand, Theorem \ref{set2} implies:
\begin{equation}\label{tm}
|BC|=|\{1,x\}H|\geq 2+|H|. 
\end{equation} 
 Now, since $ |BC|=|H|+\frac{s}{2} $,  by \ref{tm},  $ s\geq 4 $ which leads to $ t\geq 4 $.\\
 Clearly, if $ \Delta_{(\alpha,\beta)}^{3}=\varnothing $, then  for each $ a\in BC $, $ r_{BC}(a)\in\{2,4\}$ and thus if we let $ \alpha'=\sum_{b\in B}b $ and $ \beta'=\sum_{c\in C}c $, then $ \alpha',\beta' \in \mathbb{F}_2[G]$  and $ \alpha'\beta'=0 $ that is a contradiction. On the other hand, \ref{maximum} implies that if  $ \Delta_{\alpha\beta}^{4}=\varnothing $, then $ \delta_{(\alpha,\beta)}^{3}\geq t $ and therefore $ \delta_{(\alpha,\beta)}^{3}\geq 4 $. By Remark \ref{fieldf1}, $ |BC|\leq 2|C| $ and  if $ |BC|= 2|C| $ or $ |BC|= 2|C|-1 $, then either $ \Delta_{(\alpha,\beta)}^{3}=\varnothing $ or $ \delta_{(\alpha,\beta)}^{3}=2$ and  $ \Delta_{(\alpha,\beta)}^{4}=\varnothing $  that are contradictions. Then $ |BC|\leq 2|C|-2 $.  This completes the proof.
\end{proof}
\subsection{ The size of  $supp(\beta) $ must be at least $ 7 $}
By Lemma \ref{z}, $ |S_\alpha|\in\{10,12\} $.
At first we show that if $ |S_\alpha|=12 $, then  $ |supp(\beta)|\neq 6 $ . Suppose that $ |S_\alpha|=12 $ and $ |supp(\beta)|= 6 $. So, Remark \ref{deg12} and  Lemma \ref{2} imply that $ Z(\alpha,\beta) $ is a simple and connected graph with $ 6 $ vertices of degree  $ 4 $ or $ 5$. According to Lemma \ref{5regular},  $ Z(\alpha,\beta) $ is isomorphic to a $ 4 $-regular graph or a $ 5 $-regular graph. It can be seen that  the number of non-isomorphic $ 4 $-regular graphs with $ 6 $ vertices is $ 1 $.  This graph contains the graph $ K_{1,2,2} $ in  Figure \ref{606} as a subgraph. Also, the number of non isomorphic $ 5 $-regular graphs with $ 6 $ vertices is $ 1 $.  This graph contains the graph in Figure \ref{6066} as a subgraph. Therefore, 
according to above discussion and Lemma \ref{fieldf} and Theorem \ref{55}, we have the following theorem:
\begin{thm}\label{set vertex1}
Let $ \alpha$ be a  zero divisor  in $\mathbb{F}[G]$ for a possible torsion-free group $ G $ and arbitrary field $ \mathbb{F} $ with $|supp(\alpha)|=4, |S_\alpha|=12 $ and $ \beta $ be a mate of $ \alpha $. Then the size of the vertex set of $ Z(\alpha,\beta) $ is at least $ 7 $. 
\end{thm}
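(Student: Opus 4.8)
The plan is to combine Lemma~\ref{fieldf} with the structural restrictions on $Z(\alpha,\beta)$ that are available when $|S_\alpha|=12$. Since Lemma~\ref{fieldf} already gives $|supp(\beta)|\geq 6$, it suffices to rule out the case $|supp(\beta)|=6$, so I would argue by contradiction and assume that $Z(\alpha,\beta)$ has exactly $6$ vertices.

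First I would pin down the local structure of $Z(\alpha,\beta)$. By Theorem~\ref{2}, since $|S_\alpha|=12$, the graph $Z(\alpha,\beta)$ is the induced subgraph of the Cayley graph of $G$ with respect to $S_\alpha$ on the set $supp(\beta)$; in particular it is a simple graph. By Remark~\ref{deg12} it is connected and each vertex $g$ has degree $4+\theta_3(g)+2\theta_4(g)\geq 4$. But a simple graph on $6$ vertices has maximum degree at most $5$, so every vertex of $Z(\alpha,\beta)$ has degree $4$ or $5$. Next I would eliminate the mixed-degree possibility using Lemma~\ref{5regular}: if the maximum degree of $Z(\alpha,\beta)$ equals $5$, then the number of degree-$5$ vertices is a (positive) multiple of $6$; as there are only $6$ vertices in all, either all of them or none of them have degree $5$. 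Hence $Z(\alpha,\beta)$ is either a $4$-regular or a $5$-regular graph on $6$ vertices.

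Finally I would invoke the elementary classification of regular graphs on $6$ vertices together with Theorem~\ref{55}. A $4$-regular graph on $6$ vertices is the complement of a $1$-regular graph on $6$ vertices, i.e.\ of a perfect matching, so it is unique up to isomorphism, namely the octahedron $K_{2,2,2}$; this graph contains $K_{1,2,2}$ (one of the graphs of Figure~\ref{606}) as a subgraph, and since $Z(\alpha,\beta)$ is $4$-regular every vertex of that copy has degree $4$ in $Z(\alpha,\beta)$, contradicting Theorem~\ref{55}. A $5$-regular graph on $6$ vertices is $K_6$, which contains the graph of Figure~\ref{6066} as a subgraph; its two distinguished (white) vertices have degree $5$ in the $5$-regular graph $Z(\alpha,\beta)$, again contradicting Theorem~\ref{55}. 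Either way we reach a contradiction, so $|supp(\beta)|\neq 6$, and therefore $|supp(\beta)|\geq 7$, which is the size of the vertex set of $Z(\alpha,\beta)$.

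I do not expect a genuine obstacle in this argument: all of the real work is already packaged in Theorem~\ref{55} (proved via the reduction to systems of relations, cf.\ Appendix~\ref{app12}) and in Lemmas~\ref{fieldf} and~\ref{5regular}. The only points that need a line of care are the two uniqueness statements for $k$-regular graphs on $6$ vertices and the two subgraph containments $K_{1,2,2}\subseteq K_{2,2,2}$ and ``Figure~\ref{6066}'' $\subseteq K_6$, all of which are routine finite checks.
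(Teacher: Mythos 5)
Your proposal is correct and follows essentially the same route as the paper: reduce to $|supp(\beta)|=6$ via Lemma~\ref{fieldf}, use Theorem~\ref{2}, Remark~\ref{deg12} and Lemma~\ref{5regular} to force $Z(\alpha,\beta)$ to be $4$- or $5$-regular on $6$ vertices, and then apply Theorem~\ref{55} to the unique such graphs ($K_{2,2,2}\supseteq K_{1,2,2}$ and $K_6\supseteq$ the graph of Figure~\ref{6066}). Your explicit identification of the $4$-regular graph as the octahedron and the parity remark that the multiple of $6$ must be positive are minor elaborations of what the paper leaves implicit.
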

The following corollary  immediately  follows from Theorem \ref{set vertex1}.
\begin{cor}\label{end12}
Let $ \alpha $ be a  zero divisor  in $\mathbb{F}[G]$ for a possible torsion-free group $ G $ and arbitrary field $ \mathbb{F} $ with $ | supp(\alpha) | = 4, | S_{\alpha} |=12$ and $ \beta $ be a mate of $ \alpha $. Then $ | supp(\beta) |\geq 7 $. 
\end{cor}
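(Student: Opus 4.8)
The plan is to read the bound off directly from Theorem~\ref{set vertex1}. Recall that, by the definition of the zero divisor graph in Section~\ref{section2}, the vertex set $\mathcal{V}_{Z(\alpha,\beta)}$ is precisely $supp(\beta)$; hence the size of the vertex set of $Z(\alpha,\beta)$ equals $|supp(\beta)|$. Since $|supp(\alpha)|=4$, $|S_\alpha|=12$ and $\beta$ is a mate of $\alpha$, Theorem~\ref{set vertex1} applies directly and gives $|\mathcal{V}_{Z(\alpha,\beta)}|\geq 7$, i.e.\ $|supp(\beta)|\geq 7$. At the level of the corollary there is nothing further to do, and so no real obstacle here: all the difficulty has already been absorbed into Theorem~\ref{set vertex1}.

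For orientation I would also recall why Theorem~\ref{set vertex1} holds, since that is where the content lives. Writing $B=supp(\alpha)$ and $C=supp(\beta)$, Remark~\ref{fieldf1} (whose lower bound rests on Theorem~\ref{4}) already forces $|C|\geq 5$, and Lemma~\ref{fieldf} upgrades this to $|C|\geq 6$: if $|C|=5$ then $|BC|=10$ and $r_{BC}$ is identically $2$, so $\sum_{h\in B}h$ and $\sum_{g\in C}g$ would be a pair of zero divisors in $\mathbb{F}_2[G]$ with supports of sizes $4$ and $5$, contradicting \cite[Theorem 1.3]{PS}. Thus everything comes down to excluding the case $|C|=6$.

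Assume $|C|=6$. By Theorem~\ref{2}, $Z(\alpha,\beta)$ is an induced subgraph of a Cayley graph of $G$, hence simple, and by Remark~\ref{deg12} it is connected with every vertex of degree at least $4$; a simple graph on $6$ vertices has maximum degree $5$, so all degrees lie in $\{4,5\}$. Lemma~\ref{5regular} then forces the number of degree-$5$ vertices to be $0$ or $6$, so $Z(\alpha,\beta)$ is $4$-regular or $5$-regular. But the only $4$-regular graph on $6$ vertices is the octahedron, i.e.\ the complete tripartite graph $K_{2,2,2}$, which contains $K_{1,2,2}$ of Figure~\ref{606} as a subgraph with all of its vertices of degree $4$ in $Z(\alpha,\beta)$, and the only $5$-regular graph on $6$ vertices is $K_6$, which contains the graph of Figure~\ref{6066} with both white vertices of degree $5$. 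Each possibility is excluded by Theorem~\ref{55}, a contradiction. The genuine difficulty along this route is Theorem~\ref{55} itself --- the GAP-assisted elimination of those forbidden configurations --- but since that result is already in hand, Corollary~\ref{end12} follows at once.
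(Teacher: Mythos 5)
Your proposal is correct and follows the paper exactly: the corollary is read off from Theorem~\ref{set vertex1} because the vertex set of $Z(\alpha,\beta)$ is $supp(\beta)$, and your recap of why that theorem holds (Lemma~\ref{fieldf} to rule out $|C|\le 5$, then regularity via Lemma~\ref{5regular} and the forbidden subgraphs $K_{1,2,2}$ and Figure~\ref{6066} via Theorem~\ref{55} to rule out $|C|=6$) is precisely the paper's argument.
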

Now, suppose that $ |S_\alpha|=10 $. By Lemma \ref{83}, we may assume that  $ supp(\alpha)$ is one of the sets  $\{1,x,y,xy\} $ or  $ \{1,x,x^{-1},y\} $, where $ x,y $ are distinct non-trivial elements of $ G$. Combining Remark \ref{fieldf1} and Theorem \ref{endxy}, we obtain the following corollary.
\begin{cor}\label{endf10xy}
Let $ \alpha $ be a  zero divisor  in $\mathbb{F}[G]$ for a possible torsion-free group $ G $  with $ supp(\alpha)=\{1,x,y,xy\} $, where $ x,y $ are distinct non-trivial elements of $ G$, and $ \beta $ be a mate of $ \alpha $. Then $ |supp(\beta)|\geq 7 $.
\end{cor}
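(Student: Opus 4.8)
The plan is to simply combine the two inequalities that have already been established. Write $B=supp(\alpha)$ and $C=supp(\beta)$, so that $B=\{1,x,y,xy\}$ with $x,y$ distinct non-trivial elements of $G$. By Remark \ref{fieldf1} (which rests on Theorem \ref{4}, using that we may take $C$ to generate the non-abelian group $G$ and $1\in B\cap C$ by Remark \ref{iden}), we have the lower bound
\[
|BC|\;\geq\;|C|+5 .
\]
On the other hand, Theorem \ref{endxy} — the substantive ingredient, proved via the map $f$ on $(H\cup xH)\setminus(H\cap xH)$ and the Kneser/Freiman-type characterization of Theorem \ref{set2} — gives the matching upper bound
\[
|BC|\;\leq\;2|C|-2 .
\]

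Putting these together yields $|C|+5\leq 2|C|-2$, hence $|C|\geq 7$, i.e. $|supp(\beta)|\geq 7$, as claimed. (Note that $|supp(\beta)|\geq 6$ is already known from Lemma \ref{fieldf}, so the content here is precisely the one-step improvement forced by the $-2$ in Theorem \ref{endxy}, which is available exactly because $supp(\alpha)$ has the special ``product'' form $\{1,x,y,xy\}$.)

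The only genuine work — namely proving the sharpened estimate $|BC|\leq 2|C|-2$ for supports of the form $\{1,x,y,xy\}$ — is done in Theorem \ref{endxy}; given that, the present statement is immediate. So there is no real obstacle remaining: the corollary is a one-line arithmetic consequence of Remark \ref{fieldf1} and Theorem \ref{endxy}.
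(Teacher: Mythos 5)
Your proposal is correct and is exactly the paper's argument: the paper derives this corollary by "combining Remark \ref{fieldf1} and Theorem \ref{endxy}," i.e., chaining $|C|+5\leq |BC|\leq 2|C|-2$ to get $|C|\geq 7$. Nothing further is needed.
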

Now, suppose that $ supp(\alpha)=\{1,x,x^{-1},y\} $ and $ |supp(\beta)|= 6 $. Remark \ref{fieldf1} implies that $ |BC|\in\{11 , 12\} $ and also if $ |BC|=12 $, then $ \delta_{(\alpha,\beta)}^{2}=12$, $ \delta_{(\alpha,\beta)}^{3}=0$, $ \delta_{(\alpha,\beta)}^{4}=0$ and if  $ |BC|=11 $, then either $ \delta_{(\alpha,\beta)}^{2}=10$, $ \delta_{(\alpha,\beta)}^{3}=0$, $ \delta_{(\alpha,\beta)}^{4}=1$ or $ \delta_{(\alpha,\beta)}^{2}=9$, $ \delta_{(\alpha,\beta)}^{3}=2$, $ \delta_{(\alpha,\beta)}^{4}=0$. As discussed before, if $ \Delta_{(\alpha,\beta)}^{3}=\varnothing $, then  for each $ a\in BC $, $ r_{BC}(a)\in\{2,4\}$ and thus if we let $ \alpha'=\sum_{b\in B}b $ and $ \beta'=\sum_{c\in C}c $, then $ \alpha',\beta' \in \mathbb{F}_2[G]$  and $ \alpha'\beta'=0 $ contradicting  \cite[Theorem 1.3]{PS}. Hence, it is enough  consider the case that $ \delta_{(\alpha,\beta)}^{2}=9$, $ \delta_{(\alpha,\beta)}^{3}=2$ and $ \delta_{(\alpha,\beta)}^{4}=0$. Suppose that  $ \Delta_{(\alpha,\beta)}^{3}=\{s,s'\} $.  By Corollary \ref{cap1}, $ |\mathcal{V}_{(\alpha,\beta)}(s)\cap\mathcal{V}_{(\alpha,\beta)}(s')|\leq 2 $. Then one of the following cases holds:
\begin{itemize}
\item[(i)] $ |\mathcal{V}_{(\alpha,\beta)}(s)\cap\mathcal{V}_{(\alpha,\beta)}(s')|=0 $: Hence, $ C=\mathcal{V}_{(\alpha,\beta)}(s)\cup\mathcal{V}_{(\alpha,\beta)}(s') $. Part $ (1) $ of Lemma  \ref{3part} implies $ deg_{Z(\alpha,\beta) }(g)=5 $  for all $ g\in C $. Clearly, there are at least two distinct elements $ g_1,g_2 \in \mathcal{V}_{(\alpha,\beta)}(s) $ such that $ s\notin \{g_1,g_2\} $. So, Lemma \ref{repetition} implies that $ |M_{Z(\alpha,\beta)}(g_{1})|\leq 1 $ and  $ |M_{Z(\alpha,\beta)}(g_{2})|\leq 1 $. Thus, there exist $ g,g'\in \mathcal{V}_{(\alpha,\beta)}(s') $ and $ \bar{g},\tilde{g} \in \mathcal{V}_{(\alpha,\beta)}(s') $  such that $ g_{1}\sim g $, $ g_{1}\sim g' $, $ g_{2}\sim \bar{g} $ and $ g_{2}\sim \tilde{g} $. Clearly, $ |\{g,g'\}\cap\{\bar{g},\tilde{g}\}|\geq 1 $. It is easy to see that if $ \{g,g'\}=\{\bar{g},\tilde{g}\} $, then $ Z(\alpha,\beta) $ includes both graphs $ \Gamma_{4} $ and $ \Gamma_{6} $  in Figure \ref{36} and if $ |\{g,g'\}\cap\{\bar{g},\tilde{g}\}|=1 $, then $ Z(\alpha,\beta) $ includes the graph $ \Gamma_{5} $  in Figure \ref{36} as a subgraph which give contradictions.
\item[(ii)]$ |\mathcal{V}_{(\alpha,\beta)}(s)\cap\mathcal{V}_{(\alpha,\beta)}(s')|=1 $: Suppose that  $\mathcal{V}_{(\alpha,\beta)}(s)\cap\mathcal{V}_{(\alpha,\beta)}(s')= \{\bar{g}\} $ and  $ C \setminus(\mathcal{V}_{(\alpha,\beta)}(s)\cup\mathcal{V}_{(\alpha,\beta)}(s'))=\{g\} $. By Remark \ref{deg12}, $ deg_{Z(\alpha,\beta) }(\bar{g})=6 $, $ deg_{Z(\alpha,\beta) }(g)=4 $ and $ deg_{Z(\alpha,\beta) }(g')=5$  for all $ g'\in (\mathcal{V}_{(\alpha,\beta)}(s)\cup\mathcal{V}_{(\alpha,\beta)}(s'))\setminus \{\bar{g}\} $. It follows from Remark \ref{deg4f} that $ g $ is adjacent to at least three distinct vertices 
 in $Z(\alpha,\beta)$. Consider two cases: $(1)$ $\bar{g}\sim g  $: Clearly in this case $Z(\alpha,\beta) $ contains the first graph of Figure \ref{tt} as a subgraph, a contradiction; $(2)$ $\bar{g}\nsim g  $: So $ |M_{Z(\alpha,\beta) }(\bar{g})|=2 $. Thus, by  Lemma \ref{repetition}, $\bar{g}\in\{ s,s'\} $. Without loss of generality, we may assume that $ \bar{g}=s $. So, Lemma \ref{repetition} implies that $ |M_{Z(\alpha,\beta) }(g_{1})|\leq 1 $ and $ |M_{Z(\alpha,\beta) }(g_{2})|\leq 1  $, where $ \{g_1,g_2\}=\mathcal{V}_{(\alpha,\beta)}(s)\setminus \{\bar{g}\} $. Therefore, for each $ i\in\{1,2\} $ there are two distinct vertices in $  \{g'_{1},g'_{2},g\} $  such that $ g_{i} $ is adjacent to each of them, where $ \{g'_1,g'_2\}=\mathcal{V}_{(\alpha,\beta)}(s')\setminus \{\bar{g}\} $. If there is $ i\in \{1,2\} $ such that $ g_{i}\sim g'_{1} $ and $ g_{i}\sim g'_{2} $, then clearly $ Z(\alpha,\beta) $ contains the graph $ \Gamma_{4} $ in Figure \ref{36} as a subgraph, a contradiction. Hence, $ g_{1}\sim g $ and $ g_{2}\sim g $ and therefore $ Z(\alpha,\beta) $ contains the first graph of Figure \ref{tt} as a subgraph, a contradiction.
\item[(iii)]$ |\mathcal{V}_{(\alpha,\beta)}(s)\cap\mathcal{V}_{(\alpha,\beta)}(s')|=2 $: Suppose that  $\mathcal{V}_{(\alpha,\beta)}(s)\cap\mathcal{V}_{(\alpha,\beta)}(s')=\{ g_{1},g_{2}\} $ and  $ C \setminus(\mathcal{V}_{(\alpha,\beta)}(s)\cup\mathcal{V}_{(\alpha,\beta)}(s'))=\{g,g'\} $.  By Remark \ref{deg12}, $ g $ and $ g' $ are vertices of degree $ 4 $ in $ Z(\alpha,\beta) $ and if $ \mathcal{V}_{(\alpha,\beta)}(s)\setminus \{g_1,g_2\}=\{g'_1\} $ and $ \mathcal{V}_{(\alpha,\beta)}(s')\setminus \{g_1,g_2\}=\{g'_2\} $, then   $ deg_{ Z(\alpha,\beta)}(g'_{1})=5 $ and $ deg_{ Z(\alpha,\beta)}(g'_{2})=5 $. Also, according to part $ (2) $ of Lemma \ref{3part} and our discussion in Section $ 5 $, $\{ s,s'\}=\{g_{1},g_{2}\} $ and therefore 
 Lemma \ref{repetition} implies $ |M_{ Z(\alpha,\beta)}(g'_{1})|\leq 1 $ and $ |M_{ Z(\alpha,\beta)}(g'_{2})|\leq 1  $. Thus,   there are two distinct vertices in $  \{g'_{2},g,g'\} $  such that $ g'_{1} $ is adjacent to each of them and   there are two distinct vertices in $  \{g'_{1},g,g'\} $  such that $ g'_{2} $ is adjacent to each of them. If $ g'_{1}\sim g'_{2}$, then clearly $ Z(\alpha,\beta) $ contains the graph $ \Gamma_{6} $ in Figure \ref{36} as a subgraph, a contradiction. Thus, $ g'_{1}\sim g$, $ g'_{1}\sim g'$, $ g'_{2}\sim g$ and  $ g'_{2}\sim g'$. Therefore,  $ Z(\alpha,\beta) $ contains the second graph of Figure \ref{tt} as a subgraph, a contradiction.
\end{itemize}
According to above argument, we have the following theorem:
\begin{thm}\label{F2vertex}
Let $ \alpha $ be a  zero divisor  in $\mathbb{F}[G]$ for a possible torsion-free group $ G $ and arbitrary field $\mathbb{F}$ with $ supp(\alpha)=\{1,x,x^{-1},y\} $, where $ x,y $ are distinct non-trivial elements of $ G$, and $ \beta $ be a mate of $ \alpha $. Then the size of the vertex set of $ Z(\alpha,\beta) $ is at least $ 7 $. 
\end{thm}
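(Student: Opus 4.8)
The plan is to derive the statement from Lemma~\ref{fieldf}, which already gives $|supp(\beta)|\ge 6$, by ruling out the possibility $|supp(\beta)|=6$; since the vertex set of $Z(\alpha,\beta)$ is exactly $supp(\beta)$, this suffices. So I would assume for contradiction that $|supp(\beta)|=6$, set $B=supp(\alpha)$, $C=supp(\beta)$, and first extract the numerical data. By Remark~\ref{fieldf1} we have $11\le |BC|\le 12$, and combined with $\delta_{(\alpha,\beta)}^{2}+\delta_{(\alpha,\beta)}^{3}+\delta_{(\alpha,\beta)}^{4}=|BC|$ and $2\delta_{(\alpha,\beta)}^{2}+3\delta_{(\alpha,\beta)}^{3}+4\delta_{(\alpha,\beta)}^{4}=24$ this leaves only $(\delta_{(\alpha,\beta)}^{2},\delta_{(\alpha,\beta)}^{3},\delta_{(\alpha,\beta)}^{4})\in\{(12,0,0),(10,0,1),(9,2,0)\}$. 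In the first two cases $\Delta_{(\alpha,\beta)}^{3}=\varnothing$, so $r_{BC}(a)\in\{2,4\}$ for all $a\in BC$; hence $\alpha'=\sum_{b\in B}b$ and $\beta'=\sum_{c\in C}c$ satisfy $\alpha'\beta'=0$ in $\mathbb{F}_{2}[G]$ with $|supp(\alpha')|=4$ and $|supp(\beta')|=6$, contradicting \cite[Theorem~1.3]{PS}. Thus only $(\delta_{(\alpha,\beta)}^{2},\delta_{(\alpha,\beta)}^{3},\delta_{(\alpha,\beta)}^{4})=(9,2,0)$ can survive.

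Next I would analyse $Z(\alpha,\beta)$ in this remaining case. Writing $\Delta_{(\alpha,\beta)}^{3}=\{s,s'\}$, each of $\mathcal{V}_{(\alpha,\beta)}(s)$ and $\mathcal{V}_{(\alpha,\beta)}(s')$ has three elements and induces a triangle (Remark~\ref{complete}), and by Corollary~\ref{cap1} they meet in at most two points, so $|\mathcal{V}_{(\alpha,\beta)}(s)\cap\mathcal{V}_{(\alpha,\beta)}(s')|\in\{0,1,2\}$, giving three subcases. In each subcase Remark~\ref{deg12} together with part~(1) of Lemma~\ref{3part} fixes the degree of every vertex (a vertex lying in some $\mathcal{V}_{(\alpha,\beta)}(s)$ has degree $>4$), the multigraph bound $|M_{Z(\alpha,\beta)}(g)|\le 2$ and Lemma~\ref{repetition} (with Remark~\ref{deg4f}) restrict which vertices can carry a double edge — in particular, a degree-$6$ vertex $\bar g$ with $\bar g\nsim$ its partner must be $s$ or $s'$ — and then one traces the forced adjacencies among the degree-$4$ vertices. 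The target is to show that in every branch $Z(\alpha,\beta)$ contains one of $\Gamma_{4},\Gamma_{5},\Gamma_{6}$ of Figure~\ref{36} or one of the first two graphs of Figure~\ref{tt} as a subgraph with the prescribed vertex degrees and types, which is forbidden by Theorem~\ref{app}.

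The main obstacle is precisely this last bookkeeping: one must pin down the adjacency pattern exactly in each of the three subcases. For instance, when $|\mathcal{V}_{(\alpha,\beta)}(s)\cap\mathcal{V}_{(\alpha,\beta)}(s')|=0$ one argues that the two vertices of $\mathcal{V}_{(\alpha,\beta)}(s)$ different from $s$ each have $|M_{Z(\alpha,\beta)}|\le 1$ by Lemma~\ref{repetition}, hence each must send two edges into $\mathcal{V}_{(\alpha,\beta)}(s')$, and according to whether these landing sets coincide one obtains $\Gamma_{4}$ together with $\Gamma_{6}$, or $\Gamma_{5}$; the other two subcases are handled similarly, using $\bar g\in\{s,s'\}$ whenever $|M_{Z(\alpha,\beta)}(\bar g)|=2$ and feeding the configurations into the first two graphs of Figure~\ref{tt} or into $\Gamma_{4},\Gamma_{6}$. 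No single step is deep, but ensuring that every branch is covered and that the degrees and types match the hypotheses of Theorem~\ref{app} is where the care lies. I would organise the write-up along the three values of $|\mathcal{V}_{(\alpha,\beta)}(s)\cap\mathcal{V}_{(\alpha,\beta)}(s')|$.
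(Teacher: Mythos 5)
Your proposal matches the paper's own argument essentially step for step: the same reduction via Remark \ref{fieldf1} to the three $\bigl(\delta_{(\alpha,\beta)}^{2},\delta_{(\alpha,\beta)}^{3},\delta_{(\alpha,\beta)}^{4}\bigr)$ triples, the same $\mathbb{F}_2$-reduction killing the $\Delta_{(\alpha,\beta)}^{3}=\varnothing$ cases via \cite[Theorem 1.3]{PS}, and the same three-way split on $|\mathcal{V}_{(\alpha,\beta)}(s)\cap\mathcal{V}_{(\alpha,\beta)}(s')|$ resolved by Lemma \ref{repetition}, Remark \ref{deg12} and the forbidden subgraphs $\Gamma_4,\Gamma_5,\Gamma_6$ and Figure \ref{tt} from Theorem \ref{app}. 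The approach is correct and identical to the paper's.
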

The following corollary  follows from Theorem \ref{F2vertex}.
\begin{cor}\label{F2}
Let $ \alpha $ be a  zero divisor  in $\mathbb{F}[G]$ for a possible torsion-free group $ G $ and arbitrary field $\mathbb{F}$ with $ supp(\alpha)=\{1,x,x^{-1},y\} $, where $ x,y $ are distinct non-trivial elements of $ G$, and $ \beta $ be a mate of $ \alpha $. Then $ | supp(\beta) |\geq 7 $. 
\end{cor}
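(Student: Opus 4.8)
The plan is to derive this corollary directly from Theorem~\ref{F2vertex}. By the very definition of the zero divisor graph $Z(\alpha,\beta)$ given in Section~\ref{section2}, the vertex set $\mathcal{V}_{Z(\alpha,\beta)}$ is exactly $supp(\beta)$; hence $|\mathcal{V}_{Z(\alpha,\beta)}| = |supp(\beta)|$. Theorem~\ref{F2vertex}, whose hypotheses ($supp(\alpha) = \{1,x,x^{-1},y\}$ with $x,y$ distinct non-trivial, and $\beta$ a mate of $\alpha$) coincide with those of the corollary, states that this vertex set has at least $7$ elements. Putting these together gives $|supp(\beta)| \geq 7$, as claimed. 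So there is essentially nothing more to do once Theorem~\ref{F2vertex} is in hand.

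If one instead wanted a self-contained argument, one would re-run the proof of Theorem~\ref{F2vertex}. First, by Lemma~\ref{fieldf} we already know $|supp(\beta)| \geq 6$, so it suffices to rule out $|supp(\beta)| = 6$. Assuming $|C| = 6$ where $C = supp(\beta)$ and $B=supp(\alpha)$, Remark~\ref{fieldf1} forces $|BC| \in \{11, 12\}$ and pins down the triples $(\delta^2_{(\alpha,\beta)}, \delta^3_{(\alpha,\beta)}, \delta^4_{(\alpha,\beta)})$ to a short list. The subcase $\Delta^3_{(\alpha,\beta)} = \varnothing$ is eliminated immediately: all $r_{BC}(a)$ would be even, so $\alpha' = \sum_{b \in B} b$ and $\beta' = \sum_{c \in C} c$ would form a zero-divisor pair over $\mathbb{F}_2$ with $|supp(\alpha')| = 4$ and $|supp(\beta')| = 6$, contradicting \cite[Theorem~1.3]{PS}. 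This leaves only $\delta^2_{(\alpha,\beta)} = 9,\ \delta^3_{(\alpha,\beta)} = 2,\ \delta^4_{(\alpha,\beta)} = 0$, i.e.\ $\Delta^3_{(\alpha,\beta)} = \{s, s'\}$.

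The heart of the matter is then the case split according to $|\mathcal{V}_{(\alpha,\beta)}(s) \cap \mathcal{V}_{(\alpha,\beta)}(s')| \in \{0, 1, 2\}$, the bound $2$ coming from Corollary~\ref{cap1}. In each case one uses Remark~\ref{deg12} to read off the degree sequence, Lemma~\ref{repetition} together with Remark~\ref{deg4f} to control which degree-$4$ vertices are ``of type $(ii)$'' and which vertices carry double edges, and the adjacencies forced by the complete subgraphs induced on $\mathcal{V}_{(\alpha,\beta)}(s)$ and $\mathcal{V}_{(\alpha,\beta)}(s')$ (Lemma~\ref{3part}), in order to exhibit inside $Z(\alpha,\beta)$ a copy of one of the forbidden configurations $\Gamma_4,\Gamma_5,\Gamma_6$ of Figure~\ref{36} or one of the first two graphs of Figure~\ref{tt}, with the prescribed degree conditions on its vertices; this contradicts Theorem~\ref{app}. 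The main obstacle is precisely this bookkeeping: keeping straight the degrees, the multi-edge structure, and the overlaps of the sets $\mathcal{V}_{(\alpha,\beta)}(\cdot)$ across the three cases, so that every branch genuinely produces one of the listed forbidden subgraphs. But for the corollary itself, no such work is needed: it is a one-line consequence of Theorem~\ref{F2vertex}.
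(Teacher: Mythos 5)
Your proposal is correct and matches the paper exactly: the paper also derives Corollary \ref{F2} as an immediate consequence of Theorem \ref{F2vertex}, using that the vertex set of $Z(\alpha,\beta)$ is $supp(\beta)$. Your sketch of the underlying case analysis likewise mirrors the paper's proof of Theorem \ref{F2vertex}.
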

\begin{thm}\label{final}
Let $ \alpha $ be a zero divisor in  $ \mathbb{F}[G] $ for a possible torsion-free group $ G $ and arbitrary field $\mathbb{F}$ with $ | supp(\alpha) | = 4$ and $ \beta $ be a mate of $ \alpha $, then $ | supp(\beta) |\geq 7 $.
\end{thm}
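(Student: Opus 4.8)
The plan is to combine the structural results already in place. Since Lemma \ref{fieldf} gives $|supp(\beta)| \geq 6$, it suffices to exclude the possibility $|supp(\beta)| = 6$, and by Lemma \ref{z} we may split according to whether $|S_\alpha| = 12$ or $|S_\alpha| = 10$ (writing $B = supp(\alpha)$, $C = supp(\beta)$ throughout).

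Assume first $|S_\alpha| = 12$ and, for a contradiction, $|supp(\beta)| = 6$. By Theorem \ref{2} together with Remark \ref{deg12}, $Z(\alpha,\beta)$ is a simple connected graph on $6$ vertices, each of degree $4$ or $5$; by Lemma \ref{5regular} the number of degree-$5$ vertices is a multiple of $6$, so $Z(\alpha,\beta)$ is either $4$-regular or $5$-regular. There is a unique $4$-regular graph on $6$ vertices (the octahedron $K_{2,2,2}$), and it contains $K_{1,2,2}$ of Figure \ref{606}; there is a unique $5$-regular graph on $6$ vertices ($K_6$), and it contains the graph of Figure \ref{6066}. Both are forbidden by Theorem \ref{55}, the degree conditions there being met because the ambient graph is regular of the corresponding degree. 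This contradiction is exactly Corollary \ref{end12}.

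Now assume $|S_\alpha| = 10$. By Lemma \ref{83} we may replace $\alpha$ by a suitable left translate and assume $supp(\alpha) = \{1,x,y,xy\}$ or $supp(\alpha) = \{1,x,x^{-1},y\}$ for distinct non-trivial $x, y \in G$. In the first case, Theorem \ref{endxy} gives $|BC| \leq 2|C| - 2$, which against the bound $|BC| \geq |C| + 5$ of Remark \ref{fieldf1} forces $|C| \geq 7$ (Corollary \ref{endf10xy}). In the second case, suppose $|supp(\beta)| = 6$: Remark \ref{fieldf1} leaves only $|BC| \in \{11,12\}$, and every admissible distribution of the $\delta_{(\alpha,\beta)}^{i}$ other than $(\delta_{(\alpha,\beta)}^{2}, \delta_{(\alpha,\beta)}^{3}, \delta_{(\alpha,\beta)}^{4}) = (9,2,0)$ has $\Delta_{(\alpha,\beta)}^{3} = \varnothing$, whence $\sum_{b \in B} b$ and $\sum_{c \in C} c$ would be a zero divisor over $\mathbb{F}_2$ with supports of sizes $4$ and $6$, contradicting \cite[Theorem 1.3]{PS}. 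Writing $\Delta_{(\alpha,\beta)}^{3} = \{s, s'\}$, Corollary \ref{cap1} bounds $|\mathcal{V}_{(\alpha,\beta)}(s) \cap \mathcal{V}_{(\alpha,\beta)}(s')| \leq 2$, and one runs the case split on this intersection size, using Remarks \ref{deg12} and \ref{deg4f} and Lemma \ref{repetition} to determine, in each configuration, which vertices have degree $4$ or $5$, the type of each degree-$4$ vertex, and where the (at most two) double edges lie; in every case $Z(\alpha,\beta)$ is forced to contain one of $\Gamma_4$, $\Gamma_5$, $\Gamma_6$ of Figure \ref{36} or one of the first two graphs of Figure \ref{tt}, all forbidden by Theorem \ref{app} — a contradiction, which is Corollary \ref{F2}.

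The delicate part is this last sub-case, $supp(\alpha) = \{1,x,x^{-1},y\}$: the combinatorics of placing the double edges and tracking the degree-$4$/degree-$5$ split among six vertices is fiddly, and it only becomes manageable because the relevant list of forbidden subgraphs has already been compiled via Theorem \ref{app}. The $|S_\alpha| = 12$ case and the $\{1,x,y,xy\}$ case, by contrast, collapse quickly — the former to a finite check on regular graphs of order $6$, the latter to the additive estimate of Theorem \ref{endxy}.
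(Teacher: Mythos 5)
Your proposal is correct and follows essentially the same route as the paper: the paper's proof of this theorem is precisely the combination of Lemmas \ref{z} and \ref{83} with Corollaries \ref{end12}, \ref{endf10xy} and \ref{F2}, and your reconstruction of each of those ingredients (the regular-graph check for $|S_\alpha|=12$, the additive bound of Theorem \ref{endxy} for $supp(\alpha)=\{1,x,y,xy\}$, and the case split on $|\mathcal{V}_{(\alpha,\beta)}(s)\cap\mathcal{V}_{(\alpha,\beta)}(s')|$ for $supp(\alpha)=\{1,x,x^{-1},y\}$) matches the paper's argument.
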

\begin{proof}
The result follows from  Lemmas \ref{z}, \ref{83}  and Corollaries \ref{end12}, \ref{endf10xy} and \ref{F2}.
\end{proof}
\section{ \textbf{Units whose supports are of size $ 4 $ in  group algebras of  torsion-free groups } }
Throughout this section let $  \mathsf{a} $ be a unit
in $ \mathbb{F}[G]  $ for a possible torsion-free group $ G $ and arbitrary field $\mathbb{F}$ with  $|supp(\mathsf{a})|= 4 $ and $  \mathsf{b} $ be a mate of $ \mathsf{a} $. It is known that $ |supp(\mathsf{b})|> 2 $ \cite[Theorem 4.2]{a55}. In  this section we will show that $ |supp(\mathsf{b})|\geq 6 $.\\
Let $B=supp(\mathsf{a})$, $C=supp(\mathsf{b})$. In view of Remark \ref{idenunit}, we may assume that $ C $ is a generating set of  $G$ and  $ 1\in B\cap C $. Since  $ G $ is not abelian \cite[Theorem 26.2]{PI},  Theorem \ref{4} implies $ |BC| \geq |B|+|C|+1 $.
\\ Since $ \mathsf{a}\mathsf{b}=1 $, we must have $1\in BC $, $ \sum_{(h,g)\in R_{BC}(1)}\alpha_h\beta_g=1 $ and  
$ \sum_{(h,g)\in R_{BC}(x)}\alpha_h\beta_g=0 $  for all $ x\in BC\setminus \{1\} $. Therefore,  by Definition \ref{set},  $ r_{BC}(1)\in \{1,2,\ldots,n\} $ and $ r_{BC}(x)\in \{2,3,\ldots,n\} $  for all $ x\in BC\setminus \{1\} $.
 According to above discussion and  Definition \ref{definitions}, $ \delta_{(\mathsf{a},\mathsf{b})}^{1}+\delta_{(\mathsf{a},\mathsf{b})}^{2}+\delta_{(\mathsf{a},\mathsf{b})}^{3}+\delta_{(\mathsf{a},\mathsf{b})}^{4}=|BC| $ and $\delta_{(\mathsf{a},\mathsf{b})}^{1}+2\delta_{(\mathsf{a},\mathsf{b})}^{2}+3\delta_{(\mathsf{a},\mathsf{b})}^{3}+4\delta_{(\mathsf{a},\mathsf{b})}^{4}=4|C| $, where $ \delta_{(\mathsf{a},\mathsf{b})}^{1}\in\{0,1\} $. So, if there is an integer number $ i\geq 1 $ such that $ |BC|=2|C|+i $, then $ \delta_{(\mathsf{a},\mathsf{b})}^{3}+2\delta_{(\mathsf{a},\mathsf{b})}^{4}<0 $, a contradiction. Hence, $ |BC|\leq 2|C| $.
 The following remark  follows from  above discussion.
\begin{rem}\label{sizeunit}
Let   $ \mathsf{a}  $ be a unit
in $ \mathbb{F}[G]  $ for a possible torsion-free group $ G $ and  arbitrary field $\mathbb{F}$ with $ |supp(\mathsf{a})|=4 $ and $ \mathsf{b}  $ be a mate of $ \mathsf{a}  $. Then $|supp(\mathsf{b})|+5\leq |supp(\mathsf{a})supp(\mathsf{b})|\leq 2|supp(\mathsf{b})| $. Also, according to Definition {\rm\ref{definitions}}, $ \delta_{(\mathsf{a},\mathsf{b})}^{1}+\delta_{(\mathsf{a},\mathsf{b})}^{2}+\delta_{(\mathsf{a},\mathsf{b})}^{3}+\delta_{(\mathsf{a},\mathsf{b})}^{4}=|supp(\mathsf{a})supp(\mathsf{b})| $ and $\delta_{(\mathsf{a},\mathsf{b})}^{1}+2\delta_{(\mathsf{a},\mathsf{b})}^{2}+3\delta_{(\mathsf{a},\mathsf{b})}^{3}+4\delta_{(\mathsf{a},\mathsf{b})}^{4}=4|supp(\mathsf{b})| $, where $ \delta_{(\mathsf{a},\mathsf{b})}^{1}\in\{0,1\} $.
\end{rem}
\begin{rem}\label{unit4}
Note that $ \mathbb{F}_2[G]  $ has no unit with support of size $ 4 $. Suppose, for a contradiction, that $ \mathsf{a}$ is a unit in  $ \mathbb{F}_2[G] $ with $ |supp(\mathsf{a})|=4 $ and $ \mathsf{b} $ is a mate of $ \mathsf{a} $.  $ \mathsf{a}\mathsf{b}=\sum_{x\in BC}r_{BC}(x)x=1 $ implies that $ 1\in BC $, $ r_{BC}(1)\in \{1,3\} $ and $ r_{BC}(x)\in \{2,4\} $  for all $ x\in BC\setminus \{1\} $, where $B=supp(\mathsf{a})$ and $C=supp(\mathsf{b})$. Hence, by Definition {\rm\ref{definitions}}, $ i+2\delta_{(\mathsf{a},\mathsf{b})}^{2}+4\delta_{(\mathsf{a},\mathsf{b})}^{4}=4|C| $, where $ i\in \{1,3\} $, a contradiction.
\end{rem}
The following lemma follows from Remark \ref{sizeunit}.
\begin{lem}
Let $ \mathsf{a} $ be a unit in  $ \mathbb{F}[G] $ for a possible torsion-free group $ G $ and arbitrary field $\mathbb{F}$ with $ |supp(\mathsf{a})|=4 $ and $ \mathsf{b} $ be a mate of $ \mathsf{a} $. Then $ |supp(\mathsf{b})|\geq 5 $.
\end{lem}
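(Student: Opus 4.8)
The plan is to obtain the bound directly from Remark~\ref{sizeunit}, which already packages the two counting estimates that do all the work. Writing $B=supp(\mathsf{a})$ and $C=supp(\mathsf{b})$, Remark~\ref{sizeunit} asserts $|C|+5\le |BC|\le 2|C|$; comparing the outer terms of this chain gives $|C|+5\le 2|C|$, i.e.\ $|C|\ge 5$, at once. So there is essentially nothing to do beyond citing Remark~\ref{sizeunit}, whose hypotheses ($G$ torsion-free, $\mathbb{F}$ arbitrary, $|supp(\mathsf{a})|=4$, and $\mathsf{b}$ a mate of $\mathsf{a}$) coincide exactly with those of the statement to be proved.

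For a self-contained write-up I would recall why each half of that chain is available. For the lower bound, by Remark~\ref{idenunit} we may assume $1\in B\cap C$ and that $C$ generates $G$, and $G$ is nonabelian (otherwise $\mathbb{F}[G]$ would have no nontrivial unit, cf.\ \cite[Theorem 26.2]{PI}), so Theorem~\ref{4} applies with $|B|=4$ and yields $|BC|\ge |B|+|C|+1=|C|+5$. For the upper bound, $\mathsf{a}\mathsf{b}=1$ forces $r_{BC}(1)\ge 1$ and $r_{BC}(x)\ge 2$ for every $x\in BC\setminus\{1\}$; combined with the identities $\delta_{(\mathsf{a},\mathsf{b})}^{1}+\delta_{(\mathsf{a},\mathsf{b})}^{2}+\delta_{(\mathsf{a},\mathsf{b})}^{3}+\delta_{(\mathsf{a},\mathsf{b})}^{4}=|BC|$ and $\delta_{(\mathsf{a},\mathsf{b})}^{1}+2\delta_{(\mathsf{a},\mathsf{b})}^{2}+3\delta_{(\mathsf{a},\mathsf{b})}^{3}+4\delta_{(\mathsf{a},\mathsf{b})}^{4}=4|C|$ from Definition~\ref{definitions} (which give $\delta_{(\mathsf{a},\mathsf{b})}^{3}+2\delta_{(\mathsf{a},\mathsf{b})}^{4}=4|C|-2|BC|+\delta_{(\mathsf{a},\mathsf{b})}^{1}$), any equation $|BC|=2|C|+i$ with $i\ge1$ would force $\delta_{(\mathsf{a},\mathsf{b})}^{3}+2\delta_{(\mathsf{a},\mathsf{b})}^{4}\le 1-2i<0$, a contradiction; hence $|BC|\le 2|C|$.

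I do not expect any genuine obstacle here: the statement is a purely formal consequence of inequalities already established in the paragraph preceding Remark~\ref{sizeunit}. The one point worth a sanity check is that it is the exact value $|supp(\mathsf{a})|=4$ (not merely $\le 5$) that turns $|B|+|C|+1$ into precisely $|C|+5$, which is what makes the comparison with $2|C|$ yield $|C|\ge 5$ rather than a weaker bound.
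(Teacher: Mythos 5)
Your proof is correct and matches the paper exactly: the paper also derives this lemma as an immediate consequence of Remark~\ref{sizeunit}, whose chain $|C|+5\le|BC|\le 2|C|$ gives $|C|\ge 5$ at once, and your justification of the two halves of that chain reproduces the discussion preceding that remark (Theorem~\ref{4} for the lower bound, the counting identities for the upper bound).
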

\begin{thm}\label{endxyunit}
Let  $ \mathsf{a} $ be a  unit in $\mathbb{F}[G]$ for a possible torsion-free group $ G $ and arbitrary field $\mathbb{F}$ such that $ supp(\mathsf{a})= \{1,x,y,xy\} $, where $ x,y $ are distinct non-trivial elements of $ G $, and $ \mathsf{b} $ be a mate of $ \mathsf{a} $. Then $  |supp(\mathsf{a})supp(\mathsf{b})|\leq 2|supp(\mathsf{b})|-1$. 
\end{thm}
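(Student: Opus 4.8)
The plan is to run the proof of Theorem~\ref{endxy} verbatim with $\alpha\beta=0$ replaced by $\mathsf{a}\mathsf{b}=1$; the sole structural difference is that the identity element of $BC$ may have $r_{BC}(1)=1$ and carries coefficient $1$ (not $0$) in the defining relation, and this single asymmetry is exactly what turns the bound $2|supp(\mathsf{b})|-2$ into $2|supp(\mathsf{b})|-1$.

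First I would normalize: scaling $\mathsf{a}$ to have constant term $1$ and $\mathsf{b}$ by the reciprocal scalar (which preserves $\mathsf{b}$ being a mate), we may write $\mathsf{a}=1+\lambda x+\gamma y+\delta xy$ with $\lambda,\gamma,\delta\in\mathbb{F}\setminus\{0\}$. If $\delta=\lambda\gamma$ then $\mathsf{a}=(1+\lambda x)(1+\gamma y)$, so $1+\lambda x$ acquires a one-sided inverse while being supported on two points, which is impossible by \cite[Theorem 4.2]{a55}; moreover $\mathbb{F}\neq\mathbb{F}_2$ by Remark~\ref{unit4}. Put $B=supp(\mathsf{a})$, $C=supp(\mathsf{b})$; by Remark~\ref{idenunit} we may assume $1\in B\cap C$ and $G=\langle B\rangle=\langle C\rangle$, with $G$ non-abelian and $|C|\geq 5$. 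Recall $r_{BC}(z)\in\{2,3,4\}$ for $z\neq 1$, $r_{BC}(1)\in\{1,2,3,4\}$, and $|BC|\leq 2|C|$ by Remark~\ref{sizeunit}. Suppose, for a contradiction, that $|BC|=2|C|$. Subtracting twice the first identity of Remark~\ref{sizeunit} from the second yields $\delta_{(\mathsf{a},\mathsf{b})}^{3}+2\delta_{(\mathsf{a},\mathsf{b})}^{4}=\delta_{(\mathsf{a},\mathsf{b})}^{1}\leq 1$, hence $\delta_{(\mathsf{a},\mathsf{b})}^{4}=0$ and $\delta_{(\mathsf{a},\mathsf{b})}^{3}=\delta_{(\mathsf{a},\mathsf{b})}^{1}\in\{0,1\}$; so every $r_{BC}(z)$ equals $2$ except possibly one value equal to $3$, and $r_{BC}(1)\in\{1,2\}$.

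I would then split on $\delta_{(\mathsf{a},\mathsf{b})}^{1}$. If $\delta_{(\mathsf{a},\mathsf{b})}^{1}=0$, all $r_{BC}(z)=2$; taking $\mathsf{a}'=\sum_{h\in B}h=(1+x)(1+y)$ and $\mathsf{b}'=\sum_{g\in C}g$ in $\mathbb{F}_2[G]\setminus\{0\}$ gives $(1+x)(1+y)\mathsf{b}'=\sum_{z\in BC}r_{BC}(z)z=0$, forcing $\mathsf{b}'=0$ since neither $1+x$ nor $1+y$ is a zero divisor \cite[Theorem 2.1]{PS} --- a contradiction. If $\delta_{(\mathsf{a},\mathsf{b})}^{1}=\delta_{(\mathsf{a},\mathsf{b})}^{3}=1$, then $r_{BC}(1)=1$, so $R_{BC}(1)=\{(1,1)\}$ and hence $x^{-1},y^{-1},(xy)^{-1}\notin C$, there is a unique $s_0\in BC$ with $r_{BC}(s_0)=3$, and all other multiplicities are $2$. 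Here I would transplant the machinery of Theorem~\ref{endxy}: with $H=C\cup yC$ and $t=|C\cap yC|$ one has $BC=\{1,x\}H$ and $|H|=2|C|-t$, so $(H\cup xH)\setminus(H\cap xH)=BC\setminus(H\cap xH)$ has cardinality $2(|BC|-|H|)=2t$. The proof of inequality \eqref{maximum} carries over unchanged (it uses only $\delta\neq\lambda\gamma$ and that the relevant two relations sit at elements of $BC\setminus\{1\}$), and since $x^{-1},y^{-1}\notin C$ every $a\in C\cap yC$ satisfies $a\neq 1$ and $xa\neq 1$, so \eqref{maximum} applies to all such $a$. Consequently the injection $f$ of that proof --- sending $a\in H$ to $a$ and $a\in xH$ to $x^{-1}a$ --- is well defined and injective on $\bigl(BC\setminus(H\cap xH)\bigr)\setminus\{1\}$; here $1$ lies in the natural domain but cannot be admitted (that would require $r_{BC}(1)\geq 2$), and this deletion of one element is the precise source of the weaker bound. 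We obtain $2t-1\leq t$, i.e.\ $t\leq 1$.

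Finally I would dispose of the two surviving values of $t$. If $t=0$ then $2(|BC|-|H|)=0$, so $H=xH$, whence $\langle x\rangle\subseteq H$: impossible, as $H$ is finite but $x$ has infinite order. If $t=1$ then $|BC|=|H|+1=|\{1,x\}H|$, so Theorem~\ref{set2} applies and makes $\{1,x\}$, $H$ an extremal pair, forcing $H\subseteq\langle x\rangle$ and hence $G=\langle C\rangle\subseteq\langle x\rangle$ abelian --- again a contradiction. Therefore $|BC|=2|C|$ cannot occur, and $|BC|\leq 2|C|-1$. The one genuinely delicate point, beyond transcribing the argument for Theorem~\ref{endxy}, is the bookkeeping around the identity of $BC$: one must confirm that in the case $r_{BC}(1)=1$ the identity belongs to $BC\setminus(H\cap xH)$ yet is barred from the domain of $f$, so that removing it costs exactly one element, and that the extremal case of Theorem~\ref{set2} remains excluded by the non-abelianness of $G$.
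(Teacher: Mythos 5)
Your proof is correct and follows essentially the same route as the paper's: the same normalization and exclusion of $\delta=\lambda\gamma$ via \cite[Theorem 4.2]{a55}, the same case split on $\delta_{(\mathsf{a},\mathsf{b})}^{1}$ with the $\mathbb{F}_2$ reduction when all multiplicities equal $2$, and the same $H=C\cup yC$ decomposition with the injection $f$ (with the identity deleted from its domain) together with Theorem~\ref{set2}. The only difference is in how the endgame is organized --- you derive $t\le 1$ from $2t-1\le t$ and then eliminate $t=0$ and $t=1$ directly, while the paper shows $t=1$ from the multiplicity count and $t\ge 3$ from the same counting plus the extremal case of Kemperman --- so the ingredients are identical.
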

\begin{proof}
 Let $B=supp(\mathsf{a})$ and $C=supp(\mathsf{b})$. Replacing $ \mathsf{a} $ by $ {\alpha_1}^{-1}\mathsf{a} $ and $ \mathsf{b} $ by $ \mathsf{b}{\alpha_1} $, we may assume that $ \mathsf{a}= 1+\lambda x+\gamma y+\delta xy$, where $ \gamma ,\lambda,\delta\in \mathbb{F}\setminus\{0\} $.  Note that if $ \delta=\lambda\gamma $, then $ \mathsf{a}\mathsf{b}=(1+\lambda x)(1+\gamma y)\mathsf{b}=1 $ contradicting   \cite[Theorem 4.2]{a55}. By Remark \ref{sizeunit}, $ |BC|\leq 2|C| $ and if  $ |BC|= 2|C| $, then two cases hold: $ (1) $ $ \delta_{(\mathsf{a},\mathsf{b})}^{2}=2|C| $ and $ \delta_{(\mathsf{a},\mathsf{b})}^{i}=0 $  for all $ i\in \{1,3,4\} $; $ (2) $ $ \delta_{(\mathsf{a},\mathsf{b})}^{2}=2|C|-2 $, $ \delta_{(\mathsf{a},\mathsf{b})}^{1}=1 $, $ \delta_{(\mathsf{a},\mathsf{b})}^{3}=1 $ and $ \delta_{(\mathsf{a},\mathsf{b})}^{4}=0 $.
 If the case $ (1) $ holds, then if we let $ \mathsf{a}'=\sum_{h\in B}h $ and $ \mathsf{b}'=\sum_{g\in C}g $, then $ \mathsf{a}',\mathsf{b}'\in \mathbb{F}_2[G] $ and $ \mathsf{a}'\mathsf{b}'=0 $  contradicting \cite[Theorem 2.1]{PS}. Hence,  the case $ (2) $ holds. In other words, if  $ |BC|= 2|C| $, then $ r_{BC}(1)=1 $ and there exists $ s\in BC $ such that $ r_{BC}(s)=3 $ and $ r_{BC}(m)=2 $  for all $ m\in BC\setminus \{1,s\} $. Let $ H=C\cup yC $ and $ |C\cap yC|=t $.  By the same argument as in the proof of  \ref{maximum}, it can be seen that  for each $ z\in  C\cap y C $, 
\begin{equation}\label{maximumunit}
max\{r_{BC}(z),r_{BC}(xz)\}\geq 3.
\end{equation}
Clearly, if $ \Delta_{(\mathsf{a},\mathsf{b})}^{3}\neq \varnothing $, then $t\geq 1  $. Thus,  by \ref{maximumunit}, if $ |BC|= 2|C| $, then $ t=1 $.\\
 Now, suppose that  $ z\in (H\cup xH) \setminus (\{1\}\cup (H\cap xH)) $. Since $ r_{BC}(z)\geq 2 $, if $ z\in H $, then  $ z\in C\cap yC $ and  if  $ z\in xH $, then $ x^{-1}z\in C\cap yC $. Therefore, we can define a map $ f:(H\cup xH) \setminus (\{1\}\cup (H\cap xH))\longrightarrow C\cap y C $ such that for each $ z\in (H\cup xH) \setminus (\{1\}\cup (H\cap xH)) $, 
\[
f(z)=\left\{
\begin{array}{ll}
z &  z\in H\\
x^{-1}z &  z\in xH\\
\end{array} \right. .
\]
By a similar argument as in the proof of Theorem \ref{endxy}, it can be seen that $ f $ is injective and also it follows from  Theorem \ref{set2}    that $ t\geq 3 $ and therefore $ |BC|\neq 2|C| $. This completes the proof.  
\end{proof}
\subsection*{The size of $ supp(\mathsf{b}) $ must be at least $ 6 $}  By Lemma \ref{unit size}, $ |S_{\mathsf{a}}|\in \{10,12\} $. At first suppose that $ |S_{\mathsf{a}}|=12 $. If $ |supp(\mathsf{b})|=5 $. Then by Remark \ref{deg12unit} and Lemma \ref{2unit}  and since the number of vertices with odd degree in each graph is even, $ U(\mathsf{a},\mathsf{b}) $ is isomorphic to the complete graph with $ 5 $ vertices. This graph  contains the graph $ K_{1,1,3} $ in Figure \ref{606} as a subgraph contradicting Theorem \ref{55unit}. Thus, we have the following theorem:
\begin{thm}\label{unit vertex 12}
Let $ \mathsf{a} $ be a unit in  $ \mathbb{F}[G] $ for a possible torsion-free group $ G $ and arbitrary field $\mathbb{F}$ with $ |supp(\mathsf{a})|=4 $, $ |S_{a}|=12 $  and $\mathsf{b}$ be a mate of $\mathsf{a} $. Then the size of the vertex set of $ U(\mathsf{a},\mathsf{b}) $ is at least $ 6 $.
\end{thm}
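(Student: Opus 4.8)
The plan is to argue by contradiction, assuming $|supp(\mathsf{b})| = 5$; combined with the lemma immediately preceding this theorem (which already gives $|supp(\mathsf{b})| \geq 5$), this is the only remaining case to rule out. First I would invoke Remark \ref{deg12unit}: since $|supp(\mathsf{a})| = 4$ and $|S_{\mathsf{a}}| = 12$, the graph $U(\mathsf{a},\mathsf{b})$ is connected, every vertex has degree between $3$ and $12$, and at most one vertex has degree $3$. Because $U(\mathsf{a},\mathsf{b})$ has only $5$ vertices, no vertex can have degree exceeding $4$, so every vertex has degree $3$ or $4$. The handshake lemma forces the number of odd-degree vertices to be even, so a single vertex of degree $3$ is impossible; hence every vertex has degree exactly $4$ and $U(\mathsf{a},\mathsf{b})$ is $4$-regular.

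Next I would use Theorem \ref{2unit}: when $|S_{\mathsf{a}}| = 12$, the graph $U(\mathsf{a},\mathsf{b})$ is the induced subgraph of the Cayley graph of $G$ with connection set $S_{\mathsf{a}}$ on the vertex set $supp(\mathsf{b})$; in particular it is a simple graph. A simple $4$-regular graph on $5$ vertices must be the complete graph $K_5$, since each vertex is then adjacent to all four of the others. Thus $U(\mathsf{a},\mathsf{b}) \cong K_5$.

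Finally, I would observe that $K_5$ contains a copy of $K_{1,1,3}$ --- one of the graphs in Figure \ref{606} --- as a subgraph (it has $7$ edges on $5$ vertices, comfortably inside the $10$ edges of $K_5$), and since $U(\mathsf{a},\mathsf{b})$ is $4$-regular, every vertex of this subgraph has degree $4$ in $U(\mathsf{a},\mathsf{b})$. This is exactly the configuration forbidden by Theorem \ref{55unit}, a contradiction. Therefore $|supp(\mathsf{b})| \neq 5$, so the vertex set of $U(\mathsf{a},\mathsf{b})$ has size at least $6$.

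The real difficulty is not in this short deduction but is packaged into its inputs: the substantive work lies in Theorem \ref{55unit}, whose proof occupies Appendix \ref{app12} and is computer-assisted, ruling out $K_{1,1,3}$, together with Theorem \ref{2unit} guaranteeing simplicity. Granting those, the only thing that needs care here is the elementary graph-theoretic point that the unique simple $4$-regular graph on $5$ vertices is $K_5$ and that it embeds $K_{1,1,3}$ with the required degrees, which is routine.
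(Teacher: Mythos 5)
Your proposal is correct and follows essentially the same route as the paper: assume $|supp(\mathsf{b})|=5$, use Remark \ref{deg12unit}, Theorem \ref{2unit} and the handshake lemma to force $U(\mathsf{a},\mathsf{b})\cong K_5$, and then contradict Theorem \ref{55unit} via the $K_{1,1,3}$ subgraph. The only (cosmetic) remark is that the bound ``degree $\leq 4$ on $5$ vertices'' already uses simplicity from Theorem \ref{2unit}, which you invoke a paragraph later; the logic is unaffected.
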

The following corollary follows  from Theorem \ref{unit vertex 12}.
 \begin{cor}\label{unit support 12}
Let $ \mathsf{a} $ be a unit in  $ \mathbb{F}[G] $ for a possible torsion-free group $ G $ and arbitrary field $\mathbb{F}$ with $ |supp(\mathsf{a})|=4 $, $ |S_{\mathsf{a}}|=12 $  and $\mathsf{b}$ be a mate of $ \mathsf{a} $. Then $ |supp(\mathsf{b})|\geq 6$.
\end{cor}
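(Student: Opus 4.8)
The plan is to obtain this as an immediate consequence of Theorem~\ref{unit vertex 12}: since the vertex set of the unit graph $U(\mathsf{a},\mathsf{b})$ is by construction exactly $supp(\mathsf{b})$, the inequality $|supp(\mathsf{b})|\geq 6$ is just a restatement of the assertion that $U(\mathsf{a},\mathsf{b})$ has at least $6$ vertices. So the whole task reduces to proving Theorem~\ref{unit vertex 12}, and for that, in view of Remark~\ref{sizeunit} (which already forces $|supp(\mathsf{b})|\geq 5$), the only case left to exclude is $|supp(\mathsf{b})|=5$.

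First I would assume $|supp(\mathsf{b})|=5$ and pin down the graph $U(\mathsf{a},\mathsf{b})$. Because $|S_{\mathsf{a}}|=12$, Theorem~\ref{2unit} identifies $U(\mathsf{a},\mathsf{b})$ with the induced subgraph of the Cayley graph of $G$ with respect to $S_{\mathsf{a}}$ on $supp(\mathsf{b})$; in particular $U(\mathsf{a},\mathsf{b})$ is a \emph{simple} graph, so every vertex has degree at most $4$. On the other hand Remark~\ref{deg12unit} says $U(\mathsf{a},\mathsf{b})$ is connected, every vertex has degree at least $3$, and at most one vertex has degree $3$. Hence at least four of the five vertices have degree exactly $4$; and since in any graph the number of odd-degree vertices is even, a single vertex of degree $3$ cannot occur. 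Therefore $U(\mathsf{a},\mathsf{b})$ is $4$-regular on $5$ vertices, i.e. $U(\mathsf{a},\mathsf{b})\cong K_5$.

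Next I would derive the contradiction. The complete graph $K_5$ contains $K_{1,1,3}$ (the first graph in Figure~\ref{606}) as a subgraph, and in $K_5$ every vertex has degree $4$, so this copy of $K_{1,1,3}$ sits inside $U(\mathsf{a},\mathsf{b})$ with all of its vertices of degree $4$ in $U(\mathsf{a},\mathsf{b})$. This is precisely the configuration forbidden by Theorem~\ref{55unit} in the case $|S_{\mathsf{a}}|=12$. Thus $|supp(\mathsf{b})|=5$ is impossible, which proves Theorem~\ref{unit vertex 12}, and the corollary follows by reading off $|\,supp(\mathsf{b})| = |\mathcal{V}_{U(\mathsf{a},\mathsf{b})}|$.

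The real difficulty is not in this argument: the hard ingredient is Theorem~\ref{55unit} itself (the forbidden-subgraph list of Figure~\ref{606}), whose proof is the computer-assisted case analysis deferred to Appendix~\ref{app12}. Granting that, the only delicate point here is the degree bookkeeping — checking carefully that the combination ``degree $\leq 4$ everywhere'', ``degree $\geq 3$ everywhere'', ``at most one vertex of degree $3$'', together with the parity of the number of odd-degree vertices, genuinely forces $4$-regularity, and that $K_5$ does contain $K_{1,1,3}$ with the degree condition inherited.
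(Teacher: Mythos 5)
Your argument is correct and is essentially identical to the paper's: the paper also rules out $|supp(\mathsf{b})|=5$ by combining Remark~\ref{deg12unit}, Theorem~\ref{2unit}, and the parity of the number of odd-degree vertices to force $U(\mathsf{a},\mathsf{b})\cong K_5$, and then invokes the forbidden subgraph $K_{1,1,3}$ from Theorem~\ref{55unit}. Your degree bookkeeping (simple graph on $5$ vertices caps degrees at $4$, at most one vertex of degree $3$, parity kills it) is exactly the intended reasoning.
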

Now, suppose that $ |S_\mathsf{a}|=10 $. By Lemma \ref{83unit}, we may assume that  $ supp(\mathsf{a})$ is one of the sets  $\{1,x,y,xy\} $ or  $ \{1,x,x^{-1},y\} $, where $ x,y $ are distinct non-trivial elements of $ G$. 
By Remark \ref{sizeunit} and Theorem \ref{endxyunit}, we have the following result:
 \begin{thm}\label{unit x,y,xy}
Let $ \mathsf{a} $ be a unit in  $ \mathbb{F}[G] $ for a possible torsion-free group $ G $ and arbitrary field $\mathbb{F}$ with $ supp(\mathsf{a})=\{1,x,y,xy\} $, where $ x $ and $ y $ are distinct non-trivial elements of $ G$, and $\mathsf{b} $ be a mate of $ \mathsf{a} $. Then $ |supp(\mathsf{b})|\geq 6$.
\end{thm}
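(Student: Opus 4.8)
The plan is to argue by contradiction, deducing everything from the size estimates already in hand, so that no new combinatorial work is needed here. Suppose $|supp(\mathsf{b})|\leq 5$. Since Remark \ref{sizeunit} already forces $|supp(\mathsf{b})|\geq 5$ (because $|supp(\mathsf{a})supp(\mathsf{b})|\geq |supp(\mathsf{b})|+5$ and $|supp(\mathsf{a})supp(\mathsf{b})|\leq 2|supp(\mathsf{b})|$ together give $|supp(\mathsf{b})|\geq 5$), we may assume $|supp(\mathsf{b})|=5$. Write $B=supp(\mathsf{a})$ and $C=supp(\mathsf{b})$, so $|C|=5$; here we use the normalization of Remark \ref{idenunit}, under which $1\in B\cap C$ and $G=\langle B\rangle=\langle C\rangle$, exactly as required by the hypotheses of the results we wish to cite.

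First I would apply Remark \ref{sizeunit} with $|C|=5$: it gives $10=|C|+5\leq |BC|\leq 2|C|=10$, which pins down $|BC|=10=2|C|$ and moreover forces $\delta_{(\mathsf{a},\mathsf{b})}^2=10$ and $\delta_{(\mathsf{a},\mathsf{b})}^i=0$ for $i\in\{1,3,4\}$ (this last point is not even needed, but it is a free consequence). Next I would invoke Theorem \ref{endxyunit}, whose hypotheses are precisely our standing ones — $\mathsf{a}$ a unit with $supp(\mathsf{a})=\{1,x,y,xy\}$, $x,y$ distinct non-trivial elements, and $\mathsf{b}$ a mate of $\mathsf{a}$ — to conclude $|BC|=|supp(\mathsf{a})supp(\mathsf{b})|\leq 2|supp(\mathsf{b})|-1=9$. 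This contradicts $|BC|=10$, so the assumption $|supp(\mathsf{b})|\leq 5$ is untenable and $|supp(\mathsf{b})|\geq 6$.

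There is essentially no obstacle at this stage: the argument is a two-line arithmetic squeeze between the lower bound $|BC|\geq |C|+5$ and the refined upper bound $|BC|\leq 2|C|-1$. The genuinely hard part has already been carried out inside the proof of Theorem \ref{endxyunit} — namely showing that if $\delta=\lambda\gamma$ fails then, for each $z\in C\cap yC$, $\max\{r_{BC}(z),r_{BC}(xz)\}\geq 3$, constructing the injective map $f$ from $(H\cup xH)\setminus(\{1\}\cup(H\cap xH))$ into $C\cap yC$, and combining this with the Kneser–Freiman-type bound $|\{1,x\}H|\geq |H|+2$ of Theorem \ref{set2} to force $|BC|\neq 2|C|$. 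Since that theorem is available to us, the present statement follows immediately; I would simply write the contradiction out explicitly and note the reduction to the case $\mathbb{F}\neq\mathbb{F}_2$ is automatic because $\mathbb{F}_2[G]$ has no unit of support size $4$ (Remark \ref{unit4}).
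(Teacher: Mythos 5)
Your proposal is correct and uses exactly the same two ingredients as the paper, which derives the theorem in one line by combining Remark \ref{sizeunit} ($|supp(\mathsf{b})|+5\leq |supp(\mathsf{a})supp(\mathsf{b})|$) with Theorem \ref{endxyunit} ($|supp(\mathsf{a})supp(\mathsf{b})|\leq 2|supp(\mathsf{b})|-1$); your contradiction argument at $|supp(\mathsf{b})|=5$ is just an explicit unwinding of the same arithmetic squeeze. No gaps.
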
  
Now, suppose that $ supp(\mathsf{a})=\{1,x,x^{-1},y\} $, where $ x $ and $ y $ are distinct non-trivial elements of $ G$, and $ |supp(\mathsf{b})|=5 $. By Remark  \ref{sizeunit},  $ |BC|=10$  and  we have two cases to consider: $ (1) $ $ \delta_{(\mathsf{a},\mathsf{b})}^{2}=10 $ and $ \delta_{(\mathsf{a},\mathsf{b})}^{i}=0 $  for all $ i\in \{1,3,4\} $; $ (2) $ $ \delta_{(\mathsf{a},\mathsf{b})}^{2}=8 $, $ \delta_{(\mathsf{a},\mathsf{b})}^{1}=1 $, $ \delta_{(\mathsf{a},\mathsf{b})}^{3}=1 $ and $ \delta_{(\mathsf{a},\mathsf{b})}^{4}=0 $.
 If the case $ (1) $ holds, then if we let $ \bar{\mathsf{a}}=\sum_{h\in B}h $ and $ \bar{\mathsf{b}}=\sum_{g\in C}g $, then $ \bar{\mathsf{a}},\bar{\mathsf{b}}\in \mathbb{F}_2[G] $ and $ \bar{\mathsf{a}}\bar{\mathsf{b}}=0 $ contradicting Theorem \ref{final}. Hence,  the case $ (2) $ holds.  Suppose that $ \Delta_{(\mathsf{a},\mathsf{b})}^{3}=\{s\} $ and $ R_{BC}(1)=\{(a',b')\} $. At first suppose that $ b'\notin \mathcal{V}_{(\mathsf{a},\mathsf{b})}(s) $. Let  $ \{g'\}=C\setminus(\mathcal{V}_{(\mathsf{a},\mathsf{b})}(s)\cup \{b'\}) $. According to the proof of  Lemma \ref{degreeconnunit}, $ deg_{U(\mathsf{a},\mathsf{b})}(b')=3 $,  $deg_{U(\mathsf{a},\mathsf{b})}(g')=4 $ and $ deg_{U(\mathsf{a},\mathsf{b})}(g)=5 $ for all $ g\in \mathcal{V}_{(\mathsf{a},\mathsf{b})}(s) $. It is clear that if $ b'\sim g $ for all $ g\in \mathcal{V}_{(\mathsf{a},\mathsf{b})}(s)$, then $ U(\mathsf{a},\mathsf{b}) $ contains the subgraph $ (a) $ in Figure \ref{36unit}, a contradiction. Hence, there exists $ g\in \mathcal{V}_{(\mathsf{a},\mathsf{b})}(s) $ such that $ g\nsim b' $. So, $ |M_{U(\mathsf{a},\mathsf{b})}(g)|=2 $ and therefore by Remark \ref{repetitionunit}, $g=s$, $ \mathcal{V}_{(\mathsf{a},\mathsf{b})}(g)=\{g,xg,x^{-1}g\} $ and  $ |M_{U(\mathsf{a},\mathsf{b})}(\bar{g})|=1  $ for each $ \bar{g}\in \{xg,x^{-1}g\} $. On the other hand, if $ b'\sim g' $, then $ U(\mathsf{a},\mathsf{b}) $ contains the subgraph $ (a) $ in Figure \ref{36unit}, a contradiction. Thus, we must have $ b'\nsim g' $, $ |M_{U(\mathsf{a},\mathsf{b})}(b')|=1 $ and  $ |M_{U(\mathsf{a},\mathsf{b})}(g')|=1 $. So,  $ \{g',b'\}=\{x^2g,x^{-2}g\}$ and therefore $ G $ is a cyclic group contradicting \cite[Theorem 26.2]{PI}. \\
 Now, suppose that $ b'\in \mathcal{V}_{(\mathsf{a},\mathsf{b})}(s) $. Let $ \{g'_1,g'_2\}=C\setminus\mathcal{V}_{(\mathsf{a},\mathsf{b})}(s) $ and $ \{g_1,g_2\}=\mathcal{V}_{(\mathsf{a},\mathsf{b})}(s)\setminus \{b'\} $. According to the proof of  Lemma \ref{degreeconnunit}, $ deg_{U(\mathsf{a},\mathsf{b})}(b')=4 $,  $deg_{U(\mathsf{a},\mathsf{b})}(g'_i)=4 $ and $ deg_{U(\mathsf{a},\mathsf{b})}(g_i)=5 $ for each $ i\in \{1,2\} $. If $|M_{U(\mathsf{a},\mathsf{b})}(g_i)|=1  $ for each $ i\in \{1,2\} $, then  $ U(\mathsf{a},\mathsf{b}) $ contains the subgraph $ (b) $ in Figure \ref{36unit}, a contradiction. Hence, we may assume that $|M_{U(\mathsf{a},\mathsf{b})}(g_1)|=2 $. So,  by Remark \ref{repetitionunit},   $|M_{U(\mathsf{a},\mathsf{b})}(b')|=1 $ and $|M_{U(\mathsf{a},\mathsf{b})}(g_2)|=1 $. With this conditions, we must have $|M_{U(\mathsf{a},\mathsf{b})}(g'_i)|=1 $ for each $ i\in \{1,2\} $ and $ g'_1\sim g'_2 $. Since there exists  $ i\in\{1,2\} $ such that $ b'\sim g'_i $ and $ g_2 $ is adjacent to all vertices in $ U(\mathsf{a},\mathsf{b}) $, $ U(\mathsf{a},\mathsf{b}) $ contains the subgraph $ (c) $ in Figure \ref{36unit}, a contradiction.
 According to above discussion, we have the following theorem:
\begin{thm}\label{unit vertex 10}
Let $ \mathsf{a} $ be a unit in  $ \mathbb{F}[G] $ for a possible torsion-free group $ G $ and arbitrary field $\mathbb{F}$  with $ supp(\mathsf{a})=\{1,x,x^{-1},y\} $, where $ x $ and $ y $ are distinct non-trivial elements of $ G$,  and $ \mathsf{b} $ be a mate of $ \mathsf{a} $. Then the size of the vertex set of $ U(\mathsf{a},\mathsf{b}) $ is at least $ 6 $.
\end{thm}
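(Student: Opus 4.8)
The plan is to argue by contradiction. Since it is already known that $|supp(\mathsf{b})| \geq 5$, it suffices to rule out $|supp(\mathsf{b})| = 5$. Write $B = supp(\mathsf{a})$ and $C = supp(\mathsf{b})$. By Remark~\ref{sizeunit}, $|BC| = 10$ and, solving the two identities $\delta_{(\mathsf{a},\mathsf{b})}^{1}+\delta_{(\mathsf{a},\mathsf{b})}^{2}+\delta_{(\mathsf{a},\mathsf{b})}^{3}+\delta_{(\mathsf{a},\mathsf{b})}^{4}=10$ and $\delta_{(\mathsf{a},\mathsf{b})}^{1}+2\delta_{(\mathsf{a},\mathsf{b})}^{2}+3\delta_{(\mathsf{a},\mathsf{b})}^{3}+4\delta_{(\mathsf{a},\mathsf{b})}^{4}=20$ with $\delta_{(\mathsf{a},\mathsf{b})}^{1}\in\{0,1\}$, exactly one of the following two profiles holds: $(1)$ $\delta_{(\mathsf{a},\mathsf{b})}^{2} = 10$ and $\delta_{(\mathsf{a},\mathsf{b})}^{i} = 0$ for $i \in \{1,3,4\}$; $(2)$ $\delta_{(\mathsf{a},\mathsf{b})}^{1} = 1$, $\delta_{(\mathsf{a},\mathsf{b})}^{2} = 8$, $\delta_{(\mathsf{a},\mathsf{b})}^{3} = 1$, $\delta_{(\mathsf{a},\mathsf{b})}^{4} = 0$.

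First I would eliminate profile $(1)$: if $r_{BC}(x) = 2$ for every $x \in BC$, then $\bar{\mathsf{a}} := \sum_{h \in B} h$ and $\bar{\mathsf{b}} := \sum_{g \in C} g$ lie in $\mathbb{F}_2[G]$ and satisfy $\bar{\mathsf{a}}\bar{\mathsf{b}} = 0$; thus $\bar{\mathsf{a}}$ would be a zero divisor with $|supp(\bar{\mathsf{a}})| = 4$ having a mate of support size at most $5$, contradicting Theorem~\ref{final}. Hence profile $(2)$ holds, so there is a unique $s \in BC$ with $r_{BC}(s) = 3$, while $r_{BC}(1) = 1$, say $R_{BC}(1) = \{(a',b')\}$, and $r_{BC}(m) = 2$ for all remaining $m \in BC$.

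Next I would split into the cases $b' \notin \mathcal{V}_{(\mathsf{a},\mathsf{b})}(s)$ and $b' \in \mathcal{V}_{(\mathsf{a},\mathsf{b})}(s)$. When $b' \notin \mathcal{V}_{(\mathsf{a},\mathsf{b})}(s)$, the degree computation from the proof of Lemma~\ref{degreeconnunit} gives $deg_{U(\mathsf{a},\mathsf{b})}(b') = 3$, $deg_{U(\mathsf{a},\mathsf{b})}(g') = 4$ for the unique remaining vertex $g'$, and $deg_{U(\mathsf{a},\mathsf{b})}(g) = 5$ for $g \in \mathcal{V}_{(\mathsf{a},\mathsf{b})}(s)$; if $b'$ were adjacent to all of $\mathcal{V}_{(\mathsf{a},\mathsf{b})}(s)$ one reads off subgraph $(a)$ of Figure~\ref{36unit}, contradicting Theorem~\ref{appunit}, so some $g \in \mathcal{V}_{(\mathsf{a},\mathsf{b})}(s)$ has $g \nsim b'$, forcing $|M_{U(\mathsf{a},\mathsf{b})}(g)| = 2$ and hence, by Remark~\ref{repetitionunit}, $g = s$ and $\mathcal{V}_{(\mathsf{a},\mathsf{b})}(g) = \{g, xg, x^{-1}g\}$; a short further step (again excluding $(a)$) shows $b' \nsim g'$, whence $|M_{U(\mathsf{a},\mathsf{b})}(b')| = |M_{U(\mathsf{a},\mathsf{b})}(g')| = 1$ and $\{g', b'\} = \{x^{2}g, x^{-2}g\}$, making $G = \langle x, y\rangle$ cyclic and contradicting \cite[Theorem~26.2]{PI}. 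When $b' \in \mathcal{V}_{(\mathsf{a},\mathsf{b})}(s)$, put $\{g'_1, g'_2\} = C \setminus \mathcal{V}_{(\mathsf{a},\mathsf{b})}(s)$ and $\{g_1, g_2\} = \mathcal{V}_{(\mathsf{a},\mathsf{b})}(s) \setminus \{b'\}$; then $deg_{U(\mathsf{a},\mathsf{b})}(b') = deg_{U(\mathsf{a},\mathsf{b})}(g'_i) = 4$ and $deg_{U(\mathsf{a},\mathsf{b})}(g_i) = 5$. If $|M_{U(\mathsf{a},\mathsf{b})}(g_1)| = |M_{U(\mathsf{a},\mathsf{b})}(g_2)| = 1$ one obtains subgraph $(b)$; otherwise, say $|M_{U(\mathsf{a},\mathsf{b})}(g_1)| = 2$, so by Remark~\ref{repetitionunit} $|M_{U(\mathsf{a},\mathsf{b})}(b')| = |M_{U(\mathsf{a},\mathsf{b})}(g_2)| = 1$, which forces $|M_{U(\mathsf{a},\mathsf{b})}(g'_i)| = 1$ and $g'_1 \sim g'_2$; combining this with $b' \sim g'_i$ for some $i$ and the fact that the degree-$5$ vertex $g_2$ is adjacent to every vertex yields subgraph $(c)$. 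All three possibilities contradict Theorem~\ref{appunit}, which completes the proof.

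The step I expect to be the main obstacle is the bookkeeping in the last paragraph: correctly reading each vertex's degree off the $\delta$-profile together with its membership in the sets $\mathcal{V}_{(\mathsf{a},\mathsf{b})}(s)$ and $R_{BC}(1)$, identifying which vertex is forced to equal $s$, and then matching the resulting adjacency patterns exactly to the forbidden subgraphs $(a)$, $(b)$, $(c)$ of Figure~\ref{36unit} with their gray/white degree constraints; in particular, in the first case one must check carefully that $x^{2}g$ and $x^{-2}g$ are genuinely forced and distinct so that the cyclicity contradiction is valid.
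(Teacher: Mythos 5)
Your proposal is correct and follows essentially the same route as the paper: the same $\delta$-profile analysis from Remark~\ref{sizeunit}, the same reduction of profile $(1)$ to Theorem~\ref{final} via the $\mathbb{F}_2$ trick, and the same case split on whether $b'\in\mathcal{V}_{(\mathsf{a},\mathsf{b})}(s)$, with each branch terminating in one of the forbidden subgraphs $(a)$, $(b)$, $(c)$ of Figure~\ref{36unit} or in the cyclicity contradiction $\{g',b'\}=\{x^{2}g,x^{-2}g\}$. The bookkeeping steps you flag as the main obstacle are carried out in the paper exactly as you sketch them.
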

\begin{cor}\label{unit support 10}
Let $ \mathsf{a} $ be a unit in  $ \mathbb{F}[G] $ for a possible torsion-free group $ G $ and arbitrary field $\mathbb{F}$  with $ supp(\mathsf{a})=\{1,x,x^{-1},y\} $, where $ x $ and $ y $ are distinct non-trivial elements of $ G$,  and $ \mathsf{b} $ be a mate of $\mathsf{a} $. Then $ |supp(\mathsf{b})|\geq 6$.
\end{cor}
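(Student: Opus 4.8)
The plan is a one-line reduction. By the definition of the unit graph, the vertex set of $U(\mathsf{a},\mathsf{b})$ is precisely $supp(\mathsf{b})$, so $|supp(\mathsf{b})|$ equals the size of the vertex set of $U(\mathsf{a},\mathsf{b})$. The hypotheses of this corollary — $G$ torsion-free, $\mathbb{F}$ arbitrary, $supp(\mathsf{a})=\{1,x,x^{-1},y\}$ with $x,y$ distinct and non-trivial, and $\mathsf{b}$ a mate of $\mathsf{a}$ — are exactly those of Theorem~\ref{unit vertex 10}, which asserts that this vertex set has size at least $6$. Hence $|supp(\mathsf{b})|\geq 6$, and the proof is complete. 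So at the level of the corollary there is nothing further to do and no genuine obstacle; one simply invokes the preceding theorem.

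For context, the substantive work sits inside Theorem~\ref{unit vertex 10}, whose proof has already been carried out in the excerpt: one assumes for a contradiction that $|supp(\mathsf{b})|=5$, sets $B=supp(\mathsf{a})$ and $C=supp(\mathsf{b})$, and uses Remark~\ref{sizeunit} to force $|BC|=10$ and to cut the possible values of the $\delta_{(\mathsf{a},\mathsf{b})}^{i}$ down to two cases. The case in which every product has multiplicity $2$ is eliminated by passing to $\mathbb{F}_2$-coefficients and applying Theorem~\ref{final}, leaving $r_{BC}(1)=1$ together with a unique $s$ satisfying $r_{BC}(s)=3$. One then branches according to whether the vertex $b'$ with $R_{BC}(1)=\{(a',b')\}$ lies in $\mathcal{V}_{(\mathsf{a},\mathsf{b})}(s)$, and in each branch combines the degree formula from Lemma~\ref{degreeconnunit}, the edge-multiplicity bounds of Remark~\ref{multiunit} and Remark~\ref{repetitionunit}, and the forbidden-subgraph list of Theorem~\ref{appunit} (Figure~\ref{36unit}) to show that every configuration either contains one of the graphs $(a)$, $(b)$, $(c)$ of Figure~\ref{36unit} or forces $G$ to be cyclic, contradicting \cite[Theorem 26.2]{PI}. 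That branching case analysis, together with the verification that the graphs of Figure~\ref{36unit} are genuinely forbidden, is the hard part, and it has been settled before we reach the corollary; nothing more is required here.
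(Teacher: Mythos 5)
Your reduction is exactly what the paper does: the vertex set of $U(\mathsf{a},\mathsf{b})$ is $supp(\mathsf{b})$ by definition, so the corollary is an immediate consequence of Theorem~\ref{unit vertex 10}, whose hypotheses match those of the corollary. Your summary of where the real work lies (the case analysis preceding Theorem~\ref{unit vertex 10}) is also an accurate account of the paper's argument, so nothing further is needed.
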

\begin{thm}\label{finalunit}
Let $ \mathsf{a} $ be a unit in  $ \mathbb{F}[G] $ for a possible torsion-free group $ G $ and arbitrary field $\mathbb{F}$  with $ | supp(\mathsf{a}) | = 4$ and $\mathsf{b} $ be a mate of $ \mathsf{a} $, then $ | supp(\mathsf{b}) |\geq 6 $.
\end{thm}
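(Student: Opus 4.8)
The plan is to assemble the final theorem directly from the pieces already established in this section and the previous one, exactly as Theorem \ref{final} was assembled for the zero-divisor case. First I would invoke Lemma \ref{unit size} to split into the two cases $|S_{\mathsf{a}}|=12$ and $|S_{\mathsf{a}}|=10$, noting that we may assume $\mathsf{b}$ is a mate of $\mathsf{a}$ since the statement is about the minimal possible support size of a mate; this is harmless because any $\mathsf{b}'$ with $\mathsf{a}\mathsf{b}'=1$ satisfies $|supp(\mathsf{b}')|\geq|supp(\mathsf{b})|$ by definition of mate. In the case $|S_{\mathsf{a}}|=12$, Corollary \ref{unit support 12} gives $|supp(\mathsf{b})|\geq 6$ immediately.

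In the case $|S_{\mathsf{a}}|=10$, I would apply Lemma \ref{83unit} to reduce, up to graph isomorphism of $U(\mathsf{a},\mathsf{b})$ and replacement of $(\mathsf{a},\mathsf{b})$ by a unit pair of the same support sizes, to one of the two normalized forms $supp(\mathsf{a})=\{1,x,y,xy\}$ or $supp(\mathsf{a})=\{1,x,x^{-1},y\}$ with $x,y$ distinct non-trivial elements. In the first form, Theorem \ref{unit x,y,xy} gives $|supp(\mathsf{b})|\geq 6$. In the second form, Corollary \ref{unit support 10} gives $|supp(\mathsf{b})|\geq 6$. Since these three cases are exhaustive, the bound $|supp(\mathsf{b})|\geq 6$ holds in all cases, which is the desired conclusion.

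There is essentially no obstacle here: the theorem is a bookkeeping corollary that bundles Lemma \ref{unit size}, Lemma \ref{83unit}, Corollary \ref{unit support 12}, Theorem \ref{unit x,y,xy}, and Corollary \ref{unit support 10}. The only point requiring a sentence of care is that Lemma \ref{83unit} replaces $(\mathsf{a},\mathsf{b})$ by a possibly different unit pair $(\mathsf{a}',\mathsf{b}')$; but since that lemma preserves $|supp(\mathsf{a}')|=4$ and $|supp(\mathsf{b}')|=|supp(\mathsf{b})|$ (an isomorphism of unit graphs preserves the number of vertices, which is $|supp(\mathsf{b})|$), applying the subsequent results to $(\mathsf{a}',\mathsf{b}')$ yields the bound for $(\mathsf{a},\mathsf{b})$ as well. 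So the proof is one line: ``The result follows from Lemmas \ref{unit size}, \ref{83unit} and Corollaries \ref{unit support 12}, \ref{unit support 10} together with Theorem \ref{unit x,y,xy}.''
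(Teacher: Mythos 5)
Your proposal is correct and follows exactly the route of the paper's own (one-line) proof: split by Lemma \ref{unit size} into $|S_{\mathsf{a}}|\in\{10,12\}$, handle $|S_{\mathsf{a}}|=12$ by Corollary \ref{unit support 12}, and handle $|S_{\mathsf{a}}|=10$ via the normalization of Lemma \ref{83unit} together with Theorem \ref{unit x,y,xy} and Corollary \ref{unit support 10}. The extra sentence you add about the mate reduction and the unit-graph isomorphism preserving $|supp(\mathsf{b})|$ is a reasonable clarification the paper leaves implicit.
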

\begin{proof}
The result follows from  Lemmas \ref{unit size}, \ref{83unit} and Corollaries  \ref{unit support 12}, \ref{unit x,y,xy} and  \ref{unit support 10}.
\end{proof}
\section{\textbf{The zero divisor graph of length $4 $ over  $ \mathbb{F}_2 $ and on torsion-free groups }}
Throughout this section, let $ \alpha$ be a  zero divisor  in $\mathbb{F}_{2}[G]$ for a possible torsion-free group $ G $ with $|supp(\alpha)|=4,$  $ \beta $ be a mate of $ \alpha $,  $B= supp(\alpha) $ and $C= supp(\beta) $. 
It is known that   $|C|\geq 8$ \cite[Theorem 1.3]{PS}. In this section, we improve the latter to $|C|\geq 9$. By Lemma \ref{z}, $ S_{\alpha}\in\{10,12\} $. In the sequel, we consider the size of the  vertex set of $Z(\alpha,\beta) $ in each of the cases $ S_{\alpha}=12 $ and $ S_{\alpha}=10 $.
\subsection{\textbf{The size of the  vertex set of} $Z\mathbf{(\alpha,\beta)} $,\textbf{ where} $ \mathbf{|S_\alpha|=12} $}  Theorem \ref{2} and Remark \ref{deg12} imply that if $ |S_\alpha|=12 $ and  $ |C|=n $, then  $Z(\alpha,\beta) $ is a simple and connected graph with $ n $ vertices such that the degree of each vertex in $ Z(\alpha,\beta) $ is $ 4,6,8,10 $ or $ 12$. Also, Corollary \ref{4regular} implies that it is sufficient that we study all non-isomorphic connected $ 4 $-regular graphs with $ n $ vertices. We obtain all  non-isomorphic connected $ 4 $-regular graphs when $ n<10 $. In Table \ref{forbidden}, we give all results about the existence of the forbidden subgraphs (see Figure \ref{606}) in the non-isomorphic connected $ 4 $-regular graphs with $ n $ vertices, where $ n\in\{5,6,7,8,9\} $. The entry $(n,\Gamma)$ of Table \ref{forbidden}, where $n\in\{5,6,7,8,9\}$ and $\Gamma$ is one of the graphs  $ K_{1,1,3}, K_{1,2,2},  \Gamma_1,  \Gamma_2,  \Gamma_3,  \Gamma_4,  \Gamma_5, \Gamma_6, \Gamma_7 $, shows the number of graphs of order $n$ having a subgraph isomorphic to $\Gamma$. The following theorem follows from  the results in Table \ref{forbidden} and Theorem \ref{4regularlem}.
\begin{table}
\begin{tabular}{|c|c|c|c|c|c|c|c|c|c|c|}
\cline{2-11}
\multicolumn{1}{c|}
{} & Total & $ K_{1,1,3} $ & $  K_{1,2,2} $ & $ \Gamma_1 $ & $ \Gamma_2 $& $ \Gamma_3 $& $ \Gamma_4 $& $ \Gamma_5 $& $ \Gamma_6 $& $ \Gamma_7 $\\
\hline
$ n=5 $&$ 1 $&$ 1 $&$ 0 $&$ 0 $&$ 0 $&$ 0 $&$ 0 $&$ 0 $&$ 0 $&$ 0 $\\
\hline
$ n=6 $&$ 1 $&$ 0 $&$ 1 $&$ 0 $&$ 0 $&$ 0 $&$ 0 $&$ 0 $&$ 0 $&$ 0 $\\
\hline
$ n=7 $&$ 2 $&$ 1$&$ 0 $&$ 0 $&$ 0 $&$ 0 $&$ 1 $&$ 0 $&$ 0 $&$ 0 $\\
\hline
$ n=8 $&$ 6 $&$ 1 $&$ 0 $&$ 0 $&$ 1 $&$ 2 $&$ 0 $&$ 1 $&$ 0 $&$ 1 $\\
\hline
$ n=9 $&$ 16 $&$ 3 $&$ 1 $&$ 1 $&$ 0 $&$ 1 $&$ 0 $&$ 2 $&$ 0 $&$ 1 $\\
\hline
\end{tabular}
\caption{Existence of the forbidden subgraphs in the non-isomorphic connected $ 4 $-regular graphs. }\label{forbidden}
\end{table}
\begin{thm}\label{set vertex}
Let $ \alpha$ be a   zero divisor  in $\mathbb{F}_{2}[G]$ for a possible torsion-free group $ G $ with $|supp(\alpha)|=4,$  $ |S_\alpha|=12 $ and $ \beta $ be a mate of $ \alpha $. Then the size of the vertex set of $ Z(\alpha,\beta) $ is at least $ 9 $. Moreover, if the size of the vertex set of $ Z(\alpha,\beta) $ is  $ 9 $, then  $ Z(\alpha,\beta) $  contains  a subgraph isomorphic to one of the graphs in Figure {\rm\ref{regular}}.
\end{thm}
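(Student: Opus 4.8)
The plan is to combine the structural restrictions already established with an exhaustive but finite graph-theoretic search. First, recall from Theorem~\ref{2} that for $|S_\alpha|=12$ the graph $Z(\alpha,\beta)$ is the induced subgraph of the Cayley graph of $G$ with connection set $S_\alpha$ on the vertex set $C=\mathrm{supp}(\beta)$; in particular it is simple and, by Remark~\ref{deg12}, connected with every vertex of degree $4,6,8,10$ or $12$. By Theorem~\ref{4regularlem}, $Z(\alpha,\beta)$ contains a $4$-regular (spanning, once we restrict to a suitable subgraph on all $|C|=n$ vertices) subgraph $H$; more precisely, Corollary~\ref{4regular} tells us that if every $4$-regular graph on $n$ vertices is a forbidden subgraph of a length-$4$ zero divisor graph over $\mathbb{F}_2$ on $G$, then $|C|\neq n$. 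So it suffices to rule out $n\in\{5,6,7,8\}$ and to analyse the case $n=9$.

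For the ruling-out step I would enumerate, for each $n\in\{5,6,7,8,9\}$, all non-isomorphic connected $4$-regular graphs on $n$ vertices (there are $1,1,2,6,16$ respectively, as recorded in Table~\ref{forbidden}), and check each one for the presence of a subgraph isomorphic to one of the forbidden graphs $K_{1,1,3},K_{1,2,2},\Gamma_1,\dots,\Gamma_7$ of Figure~\ref{606}. Here one must be careful about the degree condition attached to each forbidden configuration: Theorem~\ref{55} forbids these subgraphs only when each of their vertices has degree exactly $4$ in $Z(\alpha,\beta)$. But in a $4$-regular subgraph $H$ of $Z(\alpha,\beta)$ one cannot conclude the vertices have degree $4$ in $Z(\alpha,\beta)$ itself. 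The correct reading is that we apply the search to $Z(\alpha,\beta)$ when it is itself $4$-regular, i.e. when $\Delta^4_{(\alpha,\beta)}=\varnothing$; if $\Delta^4_{(\alpha,\beta)}\neq\varnothing$ some vertex has degree $\geq 6$, and one argues separately (using Remark~\ref{deg12} and the structure of the $K[s]$ for $s\in\Delta^4_{(\alpha,\beta)}$, together with Lemma~\ref{3part} and Corollary~\ref{cap}) that still a forbidden configuration of degree-$4$ vertices appears, or that $n$ must be large. The table shows that for $n=5,6,7$ every connected $4$-regular graph contains one of $K_{1,1,3},K_{1,2,2},\Gamma_4$; for $n=8$ every one of the six graphs contains one of $K_{1,1,3},\Gamma_2,\Gamma_3,\Gamma_5,\Gamma_7$. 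Hence $|C|\notin\{5,6,7,8\}$, giving $|C|\geq 9$.

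For the ``moreover'' clause, I would go through the $16$ connected $4$-regular graphs on $9$ vertices: Table~\ref{forbidden} records that $3+1+1+1+2+1=9$ of them contain a forbidden subgraph from Figure~\ref{606}, hence cannot be (isomorphic to) $Z(\alpha,\beta)$ when the latter is $4$-regular. The remaining $16-9=7$ graphs survive the forbidden-subgraph test; these are exactly the graphs collected in Figure~\ref{regular}, and the claim is that $Z(\alpha,\beta)$ must contain one of them as a subgraph. When $Z(\alpha,\beta)$ is itself $4$-regular this is immediate from the enumeration; when it is not $4$-regular, one invokes the $4$-regular subgraph $H$ of Theorem~\ref{4regularlem}, notes $H$ is connected (or analyses its components) with $9$ vertices, and concludes $H$ — hence $Z(\alpha,\beta)$ — contains one of the seven survivors.

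The main obstacle is the computational enumeration and the subgraph-isomorphism checks: generating all connected $4$-regular graphs on up to $9$ vertices and verifying the entries of Table~\ref{forbidden} is where the real work (done in GAP, cf.\ the references to \cite{a9} and the appendices) lies, and it must be organised carefully so that the degree hypotheses of Theorem~\ref{55} are genuinely met by the copies of the forbidden graphs found. A secondary subtlety is the reduction from ``$Z(\alpha,\beta)$ contains a $4$-regular subgraph'' to ``$Z(\alpha,\beta)$ contains one of the graphs of Figure~\ref{regular}'': one needs that deleting edges to reach $H$ does not destroy the relevant induced structure, which is exactly the content of the construction in the proof of Theorem~\ref{4regularlem} together with Remark~\ref{deg12}.
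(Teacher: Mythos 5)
Your proposal follows the paper's own route essentially verbatim: Theorem~\ref{2} and Remark~\ref{deg12} give that $Z(\alpha,\beta)$ is simple, connected, with all degrees in $\{4,6,8,10,12\}$; Theorem~\ref{4regularlem} and Corollary~\ref{4regular} reduce the question to the finitely many connected $4$-regular graphs on $n\leq 9$ vertices (for $n\leq 9$ such a graph is automatically connected, since each component needs at least $5$ vertices); and the enumeration recorded in Table~\ref{forbidden} eliminates $n\in\{5,6,7,8\}$ and leaves exactly the $16-9=7$ graphs of Figure~\ref{regular} when $n=9$. Strategically there is nothing you do differently.

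The one substantive point is the caveat you yourself raise. Theorem~\ref{55} forbids the configurations of Figure~\ref{606} only when every vertex of the configuration has degree $4$ in $Z(\alpha,\beta)$, whereas the copy of such a configuration produced by the enumeration sits inside the spanning $4$-regular subgraph $H$ of Theorem~\ref{4regularlem}, whose vertices may have degree $6,8,10$ or $12$ in $Z(\alpha,\beta)$ itself. When $\Delta_{(\alpha,\beta)}^{4}=\varnothing$, so that $Z(\alpha,\beta)=H$ is $4$-regular, the argument closes at once; when $\Delta_{(\alpha,\beta)}^{4}\neq\varnothing$, your ``one argues separately'' is precisely the missing step and is not supplied. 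Filling it would require either a version of Theorem~\ref{55} valid for subgraphs of $H$ (the appendix derivations use $\theta_3(g)=\theta_4(g)=0$, i.e.\ degree $4$ in $Z(\alpha,\beta)$, to constrain the admissible tuples, so this is not automatic), or a direct degree-sequence analysis of the non-regular case in the spirit of what the paper does for $|S_\alpha|=10$. You should be aware, however, that the paper's own proof of this theorem consists of the single assertion that it ``follows from the results in Table~\ref{forbidden} and Theorem~\ref{4regularlem}'' and does not address this point either; your writeup is more explicit about where the real difficulty lies, but it does not resolve it.
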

\begin{figure}
\begin{tikzpicture}[scale=.8]
\draw [fill] (0,1.25) circle
[radius=0.09] node  [above]  {};
\draw [fill] (.85,.9) circle
[radius=0.09] node  [right]  {};
\draw [fill] (1.15,.3) circle
[radius=0.09] node  [right]  {};
\draw [fill] (1.05,-.4) circle
[radius=0.09] node  [right]  {};
\draw [fill] (.53,-.9) circle
[radius=0.09] node  [below]  {};
\draw [fill] (-.5,-.85) circle
[radius=0.09] node  [below]  {};
\draw [fill] (-1.05,-.4) circle
[radius=0.09] node  [left]  {};
\draw [fill] (-1.15,.3) circle
[radius=0.09] node  [left]  {};
\draw [fill] (-.85,.9) circle
[radius=0.09] node  [left]  {};
\draw  (0,1.25) -- (.85,.9);
\draw  (0,1.25) -- (1.15,.3);
\draw  (0,1.25) -- (1.05,-.4);
\draw  (0,1.25) -- (.53,-.9);
\draw  (.85,.9) -- (1.15,.3);
\draw  (.85,.9) -- (-.5,-.85);
\draw  (.85,.9) -- (-1.05,-.4);
\draw  (-1.15,.3) -- (1.15,.3);
\draw  (-.85,.9) -- (1.15,.3);
\draw  (-1.15,.3) -- (1.05,-.4);
\draw  (-1.05,-.4) -- (1.05,-.4);
\draw  (-.5,-.85) -- (1.05,-.4);
\draw  (-.85,.9) -- (.53,-.9);
\draw  (-1.15,.3) -- (.53,-.9);
\draw  (-.5,-.85) -- (.53,-.9);
\draw  (-.5,-.85) -- (-.85,.9);
\draw  (-1.05,-.4) -- (-.85,.9);
\draw  (-1.05,-.4) -- (-1.15,.3);
\end{tikzpicture}
\begin{tikzpicture}[scale=.8]
\draw [fill] (0,1.25) circle
[radius=0.09] node  [above]  {};
\draw [fill] (.85,.9) circle
[radius=0.09] node  [right]  {};
\draw [fill] (1.15,.3) circle
[radius=0.09] node  [right]  {};
\draw [fill] (1.05,-.4) circle
[radius=0.09] node  [right]  {};
\draw [fill] (.53,-.9) circle
[radius=0.09] node  [below]  {};
\draw [fill] (-.5,-.85) circle
[radius=0.09] node  [below]  {};
\draw [fill] (-1.05,-.4) circle
[radius=0.09] node  [left]  {};
\draw [fill] (-1.15,.3) circle
[radius=0.09] node  [left]  {};
\draw [fill] (-.85,.9) circle
[radius=0.09] node  [left]  {};
\draw  (0,1.25) -- (.85,.9);
\draw  (0,1.25) -- (1.15,.3);
\draw  (0,1.25) -- (1.05,-.4);
\draw  (0,1.25) -- (.53,-.9);
\draw  (.85,.9) -- (1.15,.3);
\draw  (.85,.9) -- (-.5,-.85);
\draw  (.85,.9) -- (-1.05,-.4);
\draw  (-1.15,.3) -- (1.15,.3);
\draw  (-.85,.9) -- (1.15,.3);
\draw  (-1.15,.3) -- (1.05,-.4);
\draw  (-.5,-.85) -- (1.05,-.4);
\draw  (.53,-.9) -- (1.05,-.4);
\draw  (.53,-.9) -- (-1.05,-.4);
\draw  (.53,-.9) -- (-.85,.9);
\draw  (-.5,-.85) -- (-1.05,-.4);
\draw  (-.5,-.85) -- (-.85,.9);
\draw  (-1.15,.3) -- (-1.05,-.4);
\draw  (-1.15,.3) -- (-.85,.9);
\end{tikzpicture}
\begin{tikzpicture}[scale=.8]
\draw [fill] (0,1.25) circle
[radius=0.09] node  [above]  {};
\draw [fill] (.85,.9) circle
[radius=0.09] node  [right]  {};
\draw [fill] (1.15,.3) circle
[radius=0.09] node  [right]  {};
\draw [fill] (1.05,-.4) circle
[radius=0.09] node  [right]  {};
\draw [fill] (.53,-.9) circle
[radius=0.09] node  [below]  {};
\draw [fill] (-.5,-.85) circle
[radius=0.09] node  [below]  {};
\draw [fill] (-1.05,-.4) circle
[radius=0.09] node  [left]  {};
\draw [fill] (-1.15,.3) circle
[radius=0.09] node  [left]  {};
\draw [fill] (-.85,.9) circle
[radius=0.09] node  [left]  {};
\draw  (0,1.25) -- (.85,.9);
\draw  (0,1.25) -- (1.15,.3);
\draw  (0,1.25) -- (1.05,-.4);
\draw  (0,1.25) -- (.53,-.9);
\draw  (.85,.9) -- (1.15,.3);
\draw  (.85,.9) -- (-.5,-.85);
\draw  (.85,.9) -- (-1.05,-.4);
\draw  (-1.15,.3) -- (1.15,.3);
\draw  (-.85,.9) -- (1.15,.3);
\draw  (-1.15,.3) -- (1.05,-.4);
\draw  (-.5,-.85) -- (1.05,-.4);
\draw  (.53,-.9) -- (1.05,-.4);
\draw  (.53,-.9) -- (-1.05,-.4);
\draw  (-.85,.9) -- (.53,-.9);
\draw  (-1.05,-.4) -- (.53,-.9);
\draw  (-1.05,-.4) -- (-.5,-.85);
\draw  (-1.15,.3) -- (-.5,-.85);
\draw  (-1.05,-.4) -- (-.85,.9);
\draw  (-1.15,.3) -- (-.85,.9);
\end{tikzpicture}
\begin{tikzpicture}[scale=.8]
\draw [fill] (0,1.25) circle
[radius=0.09] node  [above]  {};
\draw [fill] (.85,.9) circle
[radius=0.09] node  [right]  {};
\draw [fill] (1.15,.3) circle
[radius=0.09] node  [right]  {};
\draw [fill] (1.05,-.4) circle
[radius=0.09] node  [right]  {};
\draw [fill] (.53,-.9) circle
[radius=0.09] node  [below]  {};
\draw [fill] (-.5,-.85) circle
[radius=0.09] node  [below]  {};
\draw [fill] (-1.05,-.4) circle
[radius=0.09] node  [left]  {};
\draw [fill] (-1.15,.3) circle
[radius=0.09] node  [left]  {};
\draw [fill] (-.85,.9) circle
[radius=0.09] node  [left]  {};
\draw  (0,1.25) -- (.85,.9);
\draw  (0,1.25) -- (1.15,.3);
\draw  (0,1.25) -- (1.05,-.4);
\draw  (0,1.25) -- (.53,-.9);
\draw  (.85,.9) -- (1.15,.3);
\draw  (.85,.9) -- (-.5,-.85);
\draw  (.85,.9) -- (1.05,-.4);
\draw  (-1.15,.3) -- (1.15,.3);
\draw  (-1.05,-.4) -- (1.15,.3);
\draw  (-1.05,-.4) -- (1.05,-.4);
\draw  (-.85,.9) -- (1.05,-.4);
\draw  (-.85,.9) -- (.53,-.9);
\draw  (-1.15,.3) -- (.53,-.9);
\draw  (-1.05,-.4) -- (.53,-.9);
\draw  (-.85,.9) -- (-.5,-.85);
\draw  (-1.15,.3) -- (-.5,-.85);
\draw  (-1.05,-.4) -- (-.5,-.85);
\draw  (-.85,.9) -- (-1.15,.3);
\end{tikzpicture}
\begin{tikzpicture}[scale=.8]
\draw [fill] (0,1.25) circle
[radius=0.09] node  [above]  {};
\draw [fill] (.85,.9) circle
[radius=0.09] node  [right]  {};
\draw [fill] (1.15,.3) circle
[radius=0.09] node  [right]  {};
\draw [fill] (1.05,-.4) circle
[radius=0.09] node  [right]  {};
\draw [fill] (.53,-.9) circle
[radius=0.09] node  [below]  {};
\draw [fill] (-.5,-.85) circle
[radius=0.09] node  [below]  {};
\draw [fill] (-1.05,-.4) circle
[radius=0.09] node  [left]  {};
\draw [fill] (-1.15,.3) circle
[radius=0.09] node  [left]  {};
\draw [fill] (-.85,.9) circle
[radius=0.09] node  [left]  {};
\draw  (0,1.25) -- (.85,.9);
\draw  (0,1.25) -- (1.15,.3);
\draw  (0,1.25) -- (1.05,-.4);
\draw  (0,1.25) -- (.53,-.9);
\draw  (.85,.9) -- (1.15,.3);
\draw  (.85,.9) -- (-.5,-.85);
\draw  (.85,.9) -- (1.05,-.4);
\draw  (.53,-.9) -- (1.15,.3);
\draw  (-1.05,-.4) -- (1.15,.3);
\draw  (-.5,-.85) -- (1.05,-.4);
\draw  (-1.15,.3) -- (1.05,-.4);
\draw  (-.85,.9) -- (.53,-.9);
\draw  (-1.05,-.4) -- (.53,-.9);
\draw  (-.85,.9) -- (-.5,-.85);
\draw  (-1.15,.3) -- (-.5,-.85);
\draw  (-.85,.9) -- (-1.05,-.4);
\draw  (-1.15,.3) -- (-1.05,-.4);
\draw  (-1.15,.3) -- (-.85,.9);
\end{tikzpicture}
\begin{tikzpicture}[scale=.8]
\draw [fill] (0,1.25) circle
[radius=0.09] node  [above]  {};
\draw [fill] (.85,.9) circle
[radius=0.09] node  [right]  {};
\draw [fill] (1.15,.3) circle
[radius=0.09] node  [right]  {};
\draw [fill] (1.05,-.4) circle
[radius=0.09] node  [right]  {};
\draw [fill] (.53,-.9) circle
[radius=0.09] node  [below]  {};
\draw [fill] (-.5,-.85) circle
[radius=0.09] node  [below]  {};
\draw [fill] (-1.05,-.4) circle
[radius=0.09] node  [left]  {};
\draw [fill] (-1.15,.3) circle
[radius=0.09] node  [left]  {};
\draw [fill] (-.85,.9) circle
[radius=0.09] node  [left]  {};
\draw  (0,1.25) -- (.85,.9);
\draw  (0,1.25) -- (1.15,.3);
\draw  (0,1.25) -- (1.05,-.4);
\draw  (0,1.25) -- (.53,-.9);
\draw  (.85,.9) -- (1.15,.3);
\draw  (.85,.9) -- (-.5,-.85);
\draw  (.85,.9) -- (1.05,-.4);
\draw  (.53,-.9) -- (1.15,.3);
\draw  (-1.05,-.4) -- (1.15,.3);
\draw  (-1.05,-.4) -- (1.05,-.4);
\draw  (-1.15,.3) -- (1.05,-.4);
\draw  (-.85,.9) -- (.53,-.9);
\draw  (-1.15,.3) -- (.53,-.9);
\draw  (-.85,.9) -- (-.5,-.85);
\draw  (-1.15,.3) -- (-.5,-.85);
\draw  (-.85,.9) -- (-1.05,-.4);
\draw  (-.5,-.85) -- (-1.05,-.4);
\draw  (-1.15,.3) -- (-.85,.9);
\end{tikzpicture}
\begin{tikzpicture}[scale=.8]
\draw [fill] (0,1.25) circle
[radius=0.09] node  [above]  {};
\draw [fill] (.85,.9) circle
[radius=0.09] node  [right]  {};
\draw [fill] (1.15,.3) circle
[radius=0.09] node  [right]  {};
\draw [fill] (1.05,-.4) circle
[radius=0.09] node  [right]  {};
\draw [fill] (.53,-.9) circle
[radius=0.09] node  [below]  {};
\draw [fill] (-.5,-.85) circle
[radius=0.09] node  [below]  {};
\draw [fill] (-1.05,-.4) circle
[radius=0.09] node  [left]  {};
\draw [fill] (-1.15,.3) circle
[radius=0.09] node  [left]  {};
\draw [fill] (-.85,.9) circle
[radius=0.09] node  [left]  {};
\draw  (0,1.25) -- (.85,.9);
\draw  (0,1.25) -- (1.15,.3);
\draw  (0,1.25) -- (1.05,-.4);
\draw  (0,1.25) -- (.53,-.9);
\draw  (.85,.9) -- (1.15,.3);
\draw  (.85,.9) -- (-.5,-.85);
\draw  (.85,.9) -- (1.05,-.4);
\draw  (.53,-.9) -- (1.15,.3);
\draw  (-.5,-.85) -- (1.15,.3);
\draw  (-1.05,-.4) -- (1.05,-.4);
\draw  (-1.15,.3) -- (1.05,-.4);
\draw  (-.85,.9) -- (.53,-.9);
\draw  (-1.05,-.4) -- (.53,-.9);
\draw  (-.85,.9) -- (-.5,-.85);
\draw  (-1.15,.3) -- (-1.05,-.4);
\draw  (-.85,.9) -- (-1.05,-.4);
\draw  (-.5,-.85) -- (-1.15,.3);
\draw  (-1.15,.3) -- (-.85,.9);
\end{tikzpicture}
\caption{Possible $4$-regular subgraphs occuring in a zero divisor graph of length 4 with 9 vertices over $\mathbb{F}_2$. }\label{regular}
\end{figure}
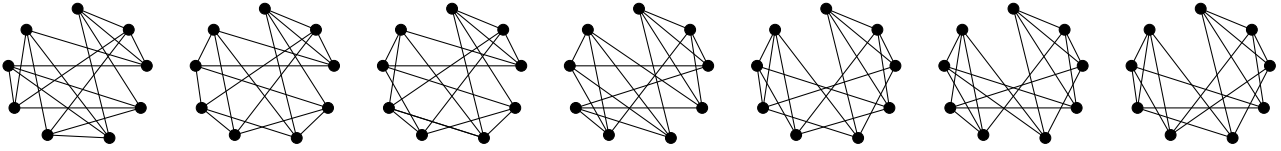
\subsection{\textbf{The zero divisor graph $Z(\alpha,\beta) $, where  $ |S_\alpha|=10 $}} According to Remark \ref{multi}, if $ |S_\alpha|=10 $, then $  Z(\alpha,\beta) $ is a multigraph such that  every pair of its vertices are adjacent by  at most two edges and  for each  $ g\in supp(\beta) $, $ 0\leq |M_{Z(\alpha,\beta)}(g)|\leq 2  $. By Lemma \ref{83},  we may assume that $supp(\alpha)=\{1,x,x^{-1},y\} $, where $x$ and $y$ are distinct non-trivial elements of $G$, and therefore if  $ \{(h_1,h'_1,g,g'),(h'_1,h_1,g',g)\}$ and $ \{(h_2,h'_2,g,g'),(h'_2,h_2,g',g)\} $ are distinct elements of $ \mathcal{E}_{Z(\alpha,\beta)}$, then 
\[\{(h_1,h'_1),(h_2,{h'_2}^{-1})\}\in\big{\{}\{(1,x),(x^{-1},1)\} \;\text{or}\; \{(1,x^{-1}),(x,1)\}\big{\}}
 .\]  
Therefore we may say that multiple edges in the zero divisor graph  are ``corresponding" to $x$ or $x^{-1}$ whenever $|S_{\alpha}|=10$ and each of these edges gives an equivalent equality $xg_1=g_2$ or $g_1=x^{-1}g_2$, where $g_1$ and $g_2$ are endpoints of the multiple edges. So we may replace these 
multiple edges by one edge, as we do in the following:
 \begin{defn}\label{cal}
Let $ \alpha$ be a   zero divisor  in $\mathbb{F}_{2}[G]$ for a possible torsion-free group $ G $ with $|supp(\alpha)|=4,$  $ |S_\alpha|=10 $ and $ \beta $ be a mate of $ \alpha $. Let $ \mathcal{Z}(\alpha,\beta) $ be the graph obtained from $ Z(\alpha,\beta) $ by replacing  multiple edges by a single edge. Clearly, $ \mathcal{Z}(\alpha,\beta) $ is a simple and connected graph.
 \end{defn}
\begin{lem}\label{r}
The degree of each  vertex of $ \mathcal{Z}(\alpha ,\beta) $ is  $ 3,4,5,6,7,8 $ or  $10$.
\end{lem}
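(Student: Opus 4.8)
The plan is to translate each vertex degree of $\mathcal{Z}(\alpha,\beta)$ into the degree of the same vertex in $Z(\alpha,\beta)$ together with bookkeeping of the multiplicities, and then to read off the admissible values from Remark \ref{deg12} and Remark \ref{multi}. Fix a vertex $g$ of $\mathcal{Z}(\alpha,\beta)$; by Definition \ref{cal} this is the same as a vertex $g$ of $Z(\alpha,\beta)$, and passing to $\mathcal{Z}(\alpha,\beta)$ replaces every double edge at $g$ by a single edge. Let $m:=|M_{Z(\alpha,\beta)}(g)|$ be the number of double edges incident with $g$. Then $\deg_{\mathcal{Z}(\alpha,\beta)}(g)=\deg_{Z(\alpha,\beta)}(g)-m$, since exactly $m$ of the edges incident with $g$ in $Z(\alpha,\beta)$ are discarded when the corresponding pairs of parallel edges are merged. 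By Remark \ref{multi}(ii), $m\in\{0,1,2\}$.

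Next I would plug in the degree formula of Remark \ref{deg12}. Since $\mathbb{F}=\mathbb{F}_2$, that remark gives $\deg_{Z(\alpha,\beta)}(g)=4+2\theta_4(g)$ with $0\le\theta_4(g)\le 4$, so $\deg_{Z(\alpha,\beta)}(g)\in\{4,6,8,10,12\}$. Hence
\[
\deg_{\mathcal{Z}(\alpha,\beta)}(g)\in\{4,6,8,10,12\}-\{0,1,2\}=\{2,3,4,5,6,7,8,9,10,11,12\}
\]
a priori, and it remains to rule out the values $2,9,11,12$. The value $12$ and the value $11$ are excluded because they would force $\deg_{Z(\alpha,\beta)}(g)=12$ (the maximum) with $m=0$ or $m=1$; but $\deg_{Z(\alpha,\beta)}(g)=12$ means $\theta_4(g)=4$, i.e. every $h\in B$ lies in $\Theta_4(g)$, and in particular $g\in\mathcal{V}_{(\alpha,\beta)}(s)$ for $s=g$ with $r_{BC}(g)=4$; by Lemma \ref{repetition} (its $\mathbb{F}_2$ clause) this forces $|M_{Z(\alpha,\beta)}(g)|=2$, i.e. $m=2$, so $\deg_{\mathcal{Z}(\alpha,\beta)}(g)=10$, contradicting $11$ or $12$. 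Symmetrically, $\deg_{\mathcal{Z}(\alpha,\beta)}(g)=9$ would require $\deg_{Z(\alpha,\beta)}(g)=10$ with $m=1$; but $\deg_{Z(\alpha,\beta)}(g)=10$ means $\theta_4(g)=3$, and I must check that this still forces $m=2$: among the three elements $h\in\Theta_4(g)$ one has $h^{-1}g$ ranging so that, using $supp(\alpha)=\{1,x,x^{-1},y\}$ (Lemma \ref{83}, applicable since $\mathbb{F}=\mathbb{F}_2$), the relations $r_{BC}(hg)=4$ for $h\in\{1,x,x^{-1}\}$ again give $g=xg'$ and $g=x^{-1}g''$ for distinct $g',g''\in C\setminus\{g\}$, whence $|M_{Z(\alpha,\beta)}(g)|=2$ by the first part of Lemma \ref{repetition}. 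So $\deg_{Z(\alpha,\beta)}(g)\ge 10$ always forces $m=2$, killing $9,11,12$. Finally $\deg_{\mathcal{Z}(\alpha,\beta)}(g)=2$ would need $\deg_{Z(\alpha,\beta)}(g)=4$ and $m=2$; but $m=2$ together with Lemma \ref{repetition} gives $r_{BC}(g)\in\{3,4\}$, and in the $\mathbb{F}_2$ case $r_{BC}(g)=4$, so $g\in\Theta_4(g)$ and thus $\theta_4(g)\ge 1$, forcing $\deg_{Z(\alpha,\beta)}(g)\ge 6$, a contradiction. This leaves exactly $\deg_{\mathcal{Z}(\alpha,\beta)}(g)\in\{3,4,5,6,7,8,10\}$, as claimed.

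The main obstacle I anticipate is the careful case analysis linking $\theta_4(g)$ (equivalently $\deg_{Z(\alpha,\beta)}(g)$) to the multiplicity count $m=|M_{Z(\alpha,\beta)}(g)|$: I must make sure that in every situation where $\deg_{Z(\alpha,\beta)}(g)$ is large (namely $10$) the vertex $g$ necessarily sits in a $4$-fold product set in a way that produces two distinct double edges, and conversely that $m=2$ always pushes $\deg_{Z(\alpha,\beta)}(g)$ up to at least $6$. All of this is exactly the content of Lemma \ref{repetition} (in particular its $\mathbb{F}=\mathbb{F}_2$ statement), so the argument is really just a bounded enumeration over the finitely many values of $\deg_{Z(\alpha,\beta)}(g)$ and $m$; the only care needed is to invoke Lemma \ref{83} first so that the shape $supp(\alpha)=\{1,x,x^{-1},y\}$ is available, and to note that the value $9$ (which a naive subtraction would permit) is genuinely excluded. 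Once these exclusions are in place, the statement follows.
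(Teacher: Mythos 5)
Your proposal is correct and follows essentially the same route as the paper: write $\deg_{\mathcal{Z}(\alpha,\beta)}(g)=\deg_{Z(\alpha,\beta)}(g)-|M_{Z(\alpha,\beta)}(g)|$, use Remark \ref{deg12} and Remark \ref{multi} to bound the two terms, and then invoke Lemma \ref{repetition} (with $supp(\alpha)=\{1,x,x^{-1},y\}$ from Lemma \ref{83}) to exclude the values $2,9,11,12$. Your treatment of the degree-$10$ case is slightly loosely worded (when $1\notin\Theta_4(g)$ one should note $\Theta_4(g)=\{x,x^{-1},y\}$ and deduce $r_{BC}(g)\geq 3$, hence $=4$ over $\mathbb{F}_2$), but this is the same gap-filling the paper itself glosses over, and the argument is sound.
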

\begin{proof} 
 According to  Remarks \ref{deg12} and \ref{multi} and  Definition \ref{cal}, the degree of each vertex of $ \mathcal{Z}(\alpha,\beta) $ is $ 2,3,\ldots,11 $ or $ 12 $.
Therefore, it is sufficient that we prove $ deg_{\mathcal{Z}(\alpha ,\beta)}(g)\notin\{ 2,9,11 , 12 \}$  for all $ g\in supp(\beta) $. By Lemma \ref{83},  we may assume that $supp(\alpha)=\{1,x,x^{-1},y\} $, where $x$ and $y$ are distinct non-trivial elements of $G$.\\ Suppose that $ g\in supp(\beta) $.  It  follows from  Remark \ref{deg12} and Lemma \ref{repetition} that if  $ deg_{Z(\alpha ,\beta)}(g)= 4 $, then $ |M_{Z(\alpha ,\beta)}(g)|\leq 1$. So, $ deg_{\mathcal{Z}(\alpha ,\beta)}(g)\neq 2 $.
By Remark \ref{deg12}, if $ g $ is a vertex of degree $  10 $ or 12, then  $1\in \Theta_4(g)$. Hence, Lemma  \ref{repetition} implies that $ |M_{Z(\alpha ,\beta)}(g)|= 2 $. Thus, $ deg_{\mathcal{Z}(\alpha ,\beta)}(g)\notin\{9,11,12\} $. This completes the proof.
\end{proof}
\begin{rem}\label{completemathcalz}
 Let $ \alpha$ be a   zero divisor  in $\mathbb{F}_{2}[G]$ for a possible torsion-free group $ G $ with $supp(\alpha)=\{1,x,x^{-1},y\} $, where $x$ and $y$ are distinct non-trivial elements of $G$, and $ \beta $ be a mate of $ \alpha $. Suppose that $ s\in \Delta_{(\alpha,\beta)}^4$.  Obviously, $ \mathcal{V}_{(\alpha,\beta)}(s)=\{s,x^{-1}s,xs,y^{-1}s\} $. Also, the following statements follow from Remark {\rm\ref{deg12}} and  Lemma {\rm\ref{repetition}}:
 \begin{itemize}
 \item[-] $ deg_{\mathcal{Z}(\alpha,\beta)}(s) $ is even and  $ deg_{\mathcal{Z}(\alpha,\beta)}(g)\geq 5 $ for all $ g\in \{xs,x^{-1}s,y^{-1}s\} $.
  \item[-] If the degrees of $ xs$ and  $x^{-1}s $ in $ \mathcal{Z}(\alpha ,\beta) $ are even, then $ deg_{\mathcal{Z}(\alpha,\beta)}(s)\in\{8,10\} $.
 \end{itemize}    
\end{rem}
In the sequel, we study the properties of vertices with different degrees in $ \mathcal{Z}(\alpha ,\beta) $. Note that by Lemma \ref{83},  we may assume that $supp(\alpha)=\{1,x,x^{-1},y\} $, where $x$ and $y$ are distinct non-trivial elements of $G$. Let $ g $ be a  vertex  in $ \mathcal{Z}(\alpha ,\beta) $ such that $deg_{\mathcal{Z}(\alpha ,\beta)}(g)= n $.\\ Clearly, if $  n=3$, then  $deg_{Z(\alpha ,\beta)}(g)= 4  $ and $ |M_{Z(\alpha ,\beta)}(g)|= 1 $. If $  n=4$, then one of the following cases occurs:
\begin{itemize}\label{typeii}
\item[ $ (i) $:]
$deg_{Z(\alpha ,\beta)}(g)= 4  $ and $ |M_{Z(\alpha ,\beta)}(g)|= 0 $. Hence, Remark \ref{deg12} implies  $\theta_4(g)=0$.   
\item[ $ (ii) $:]  $ deg_{Z(\alpha ,\beta)}(g)=6 $ and $ |M_{Z(\alpha ,\beta)}(g)|= 2 $. So, by  Remark  \ref{deg12}  and  Lemma \ref{repetition}, $ \Theta_4(g)=\{1\} $.
\end{itemize}
More precisely, we shall speak of a vertex of degree $ 4 $ of type $ (j) $  in $ \mathcal{Z}(\alpha ,\beta) $ if the vertex satisfies in the condition $ (j) $ in the above list ($ j $ being $ i $ or $ ii $). \\The following remark follows from above discussion, Lemma \ref{3part} and Remark  \ref{completemathcalz}.
\begin{rem}
If $ g $ is a vertex of degree $4$ of type $ (ii) $ in $ \mathcal{Z}(\alpha ,\beta) $, then there exists exactly an element  $ s\in \Delta_{(\alpha,\beta)}^ {4} $ such that $ g\in \mathcal{V}_{(\alpha,\beta)}(s)$. Moreover,   the degrees of all vertices of $ K[s]\setminus\{g\} $ are greater than or equal to $5$ in $ \mathcal{Z}(\alpha,\beta) $ and $ K[s] $ has at least two  vertices with  odd degrees in $ \mathcal{Z}(\alpha,\beta) $. Also, if $ g $ is a vertex of degree $ 3 $ or $4$ of type $ (i) $ in $ \mathcal{Z}(\alpha ,\beta) $, then $ g\notin \mathcal{V}_{(\alpha,\beta)}(s) $ for all $ s\in \Delta_{(\alpha,\beta)}^ {4} $.
\end{rem}
\begin{lem}\label{s4}
If $ 2 $ vertices of degree $ 4 $ in $ \mathcal{Z}(\alpha,\beta) $ are adjacent, then at least one of them is of type $ (i) $.
\end{lem}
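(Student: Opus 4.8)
The plan is to prove the statement by contradiction: suppose $g_{1}$ and $g_{2}$ are adjacent vertices of $\mathcal{Z}(\alpha,\beta)$, both of degree $4$, and \emph{both of type $(ii)$}, and derive a contradiction. By Lemma~\ref{83} we may assume $supp(\alpha)=\{1,x,x^{-1},y\}$ throughout, and I write $B=supp(\alpha)$ and $C=supp(\beta)$.

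The first step is to unwind the arithmetic content of ``type $(ii)$''. If $g$ is a vertex of degree $4$ of type $(ii)$ in $\mathcal{Z}(\alpha,\beta)$, then $\Theta_{4}(g)=\{1\}$, which says precisely that $r_{BC}(g)=4$ while $r_{BC}(hg)\neq 4$ for $h\in\{x,x^{-1},y\}$. Since $\mathbb{F}=\mathbb{F}_{2}$ forces every value of $r_{BC}$ on $BC$ to be even and at least $2$ (as established in the proof of Lemma~\ref{degreeconn}), this upgrades to $r_{BC}(xg)=r_{BC}(x^{-1}g)=r_{BC}(yg)=2$. Moreover $r_{BC}(g)=4=|B|$ forces $b^{-1}g\in C$ for every $b\in B$; in particular $xg\in C$ and $x^{-1}g\in C$. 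So each of $g_{1},g_{2}$ carries exactly these four facts.

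Next, since $g_{1}$ and $g_{2}$ are adjacent in $\mathcal{Z}(\alpha,\beta)$ they are adjacent in $Z(\alpha,\beta)$, so there exist distinct $h_{1},h_{2}\in B$ with $s:=h_{1}g_{1}=h_{2}g_{2}$. I would first exclude $1\in\{h_{1},h_{2}\}$: if, say, $h_{1}=1$, then $g_{1}=h_{2}g_{2}$; the subcase $h_{2}=1$ gives $g_{1}=g_{2}$, which is impossible, while $h_{2}\in\{x,x^{-1},y\}$ gives $r_{BC}(g_{1})=r_{BC}(h_{2}g_{2})=2$, contradicting $r_{BC}(g_{1})=4$ (and $h_{2}=1$ is symmetric). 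Hence $h_{1},h_{2}\in\{x,x^{-1},y\}$, so $r_{BC}(s)=r_{BC}(h_{1}g_{1})=2$ and therefore $R_{BC}(s)=\{(h_{1},g_{1}),(h_{2},g_{2})\}$; since $h_{1},h_{2}\neq 1$ we get $(1,s)\notin R_{BC}(s)$, i.e. $s\notin C$. But if $h_{1}\in\{x,x^{-1}\}$ then $s=h_{1}g_{1}\in\{xg_{1},x^{-1}g_{1}\}\subseteq C$, a contradiction, so $h_{1}=y$; the same argument applied to $g_{2}$ forces $h_{2}=y$, contradicting $h_{1}\neq h_{2}$. This closes all cases, so no two adjacent vertices of degree $4$ in $\mathcal{Z}(\alpha,\beta)$ can both be of type $(ii)$, i.e. at least one of them is of type $(i)$.

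I do not expect a genuine obstacle; the only place that needs care is the first step, where one must check that type $(ii)$ pins down the entire multiset $R_{BC}(g_{i})$ and all the values $r_{BC}(hg_{i})$ — the $\mathbb{F}_{2}$‑parity of $r_{BC}$ is essential here, since it is what upgrades ``$\neq 4$'' to ``$=2$'' and, in the last step, what rules out $r_{BC}(s)=3$. Once those facts are in hand, the three micro‑cases for $h_{i}$ (namely $h_{i}=1$, $h_{i}\in\{x,x^{-1}\}$, $h_{i}=y$) each close in a single line.
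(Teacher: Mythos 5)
Your proof is correct, and it reaches the paper's own punchline by the same underlying mechanism: type $(ii)$ gives $\Theta_{4}(g_{1})=\Theta_{4}(g_{2})=\{1\}$, and every candidate edge relation $h_{1}g_{1}=h_{2}g_{2}$ is then forced into $h_{1}=h_{2}=y$, which is impossible. The paper's two-line proof only makes explicit the exclusion of the cases with $1\in\{h_{1},h_{2}\}$ (phrased as $g_{2}\notin\{xg_{1},x^{-1}g_{1},y^{-1}g_{1}\}$ and symmetrically) before jumping to ``hence $yg=yg'$''; the six remaining cases with $h_{1},h_{2}\in\{x,x^{-1},y\}$ not both equal to $y$ are left implicit. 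Your argument supplies exactly the missing step: from $r_{BC}(s)=2$ and $h_{1},h_{2}\neq 1$ you deduce $s\notin C$, while $r_{BC}(g_{i})=4$ forces $xg_{i},x^{-1}g_{i}\in C$, so $h_{i}\in\{x,x^{-1}\}$ is impossible. That observation (that $r_{BC}(g_{i})=|B|$ pins down all of $R_{BC}(g_{i})$ and hence membership of $x^{\pm 1}g_{i}$ in $C$) is the same fact the paper uses implicitly via Lemma~\ref{repetition} and Remark~\ref{completemathcalz}, but your write-up makes the case analysis airtight. No gaps.
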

\begin{proof}
Suppose that  $ g$  and $g' $ are $2$ vertices  of degree $ 4 $ in $ \mathcal{Z}(\alpha,\beta) $ such that $ g \sim g'$. If $ g$  and $g' $ are of type $ (ii) $, then $\Theta_4(g)=\{1\}=\Theta_4(g')$ and therefore $ g'\notin\{x^{-1}g,xg,y^{-1}g\}$ and $ g\notin\{x^{-1}g',xg',y^{-1}g'\}$. Hence, we must have $ yg=yg' $, a contradiction. 
\end{proof}
 Now suppose that $  n=5$. Hence,  $deg_{Z(\alpha ,\beta)}(g)= 6  $ and $ |M_{Z(\alpha ,\beta)}(g)|= 1 $. Thus,  by  Remarks \ref{deg12} and \ref{completemathcalz} and  Lemma \ref{repetition} we have the following:
 \begin{rem}\label{deg5}
 If $ g $ is a vertex of degree $ 5 $ in $ \mathcal{Z}(\alpha,\beta) $, then there exists exactly an element $ s\in \Delta_{(\alpha,\beta)}^{4} $ such that $ g\in\mathcal{V}_{(\alpha,\beta)}(s) $. Moreover, $ K[s] $ has at least  a vertex with even degree and  at least three vertices of degree greater than $ 4 $ in $ \mathcal{Z}(\alpha,\beta) $. Also, if the degrees of all vertices of $ \mathcal{V}_{(\alpha,\beta)}(s)\setminus\{g\} $ in $ \mathcal{Z}(\alpha,\beta) $ are even, then the degrees of all vertices of $ K[s] $ must be  greater than $4$ in $ \mathcal{Z}(\alpha,\beta) $. 
\end{rem}
The following lemma follows from Remark \ref{deg5}.
\begin{lem}
 $ \mathcal{Z}(\alpha,\beta) $ is not isomorphic to any $ 5 $-regular graph.
\end{lem}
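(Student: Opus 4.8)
The plan is a one-step argument by contradiction, extracting a parity obstruction from Remark~\ref{deg5}. Assume, for a contradiction, that $\mathcal{Z}(\alpha,\beta)$ is isomorphic to a $5$-regular graph. Since $\beta\neq 0$, the vertex set $supp(\beta)$ of $\mathcal{Z}(\alpha,\beta)$ is non-empty, so we may fix a vertex $g$, and the $5$-regularity forces $deg_{\mathcal{Z}(\alpha,\beta)}(g)=5$.

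Next I would invoke the normalization of Lemma~\ref{83}: since $|S_\alpha|=10$ and $\mathbb{F}=\mathbb{F}_2$ in this section, we may assume $supp(\alpha)=\{1,x,x^{-1},y\}$, which is exactly the hypothesis under which Remark~\ref{deg5} is stated. Applying Remark~\ref{deg5} to the degree-$5$ vertex $g$ yields an element $s\in\Delta_{(\alpha,\beta)}^{4}$ with $g\in\mathcal{V}_{(\alpha,\beta)}(s)$, and, crucially, the fact that the complete subgraph $K[s]$ on $\mathcal{V}_{(\alpha,\beta)}(s)$ contains at least one vertex whose degree in $\mathcal{Z}(\alpha,\beta)$ is even. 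In a $5$-regular graph every vertex has the odd degree $5$, so no such vertex can exist; this contradiction completes the argument.

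There is essentially no obstacle here once Remark~\ref{deg5} is available, since the lemma is a direct corollary; the only points to state carefully in the write-up are that the conclusion of Remark~\ref{deg5} was obtained under the assumption $supp(\alpha)=\{1,x,x^{-1},y\}$, legitimate by Lemma~\ref{83}, and that the existence of a vertex of degree $5$ to which the remark can be applied is guaranteed by the assumed $5$-regularity together with $supp(\beta)\neq\varnothing$. (One could equally well use the companion assertion in Remark~\ref{deg5} that $K[s]$ has at least three vertices of degree greater than $4$, but the even-degree statement gives the cleanest contradiction.)
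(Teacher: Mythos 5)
Your proof is correct and is essentially the paper's own argument: the paper simply states that the lemma "follows from Remark \ref{deg5}", and the intended deduction is exactly your parity contradiction, namely that $K[s]$ must contain a vertex of even degree while a $5$-regular graph has none. One small caveat: your parenthetical alternative does not work, since "at least three vertices of degree greater than $4$" is perfectly consistent with $5$-regularity, so the even-degree clause is not merely the cleanest route but the only one of the two that yields a contradiction.
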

If $  n=6$, then one of the following cases occurs:
\begin{itemize}
\item[ (a) :]
$deg_{Z(\alpha ,\beta)}(g)= 6  $ and $ |M_{Z(\alpha ,\beta)}(g)|= 0 $. Hence, by Remark \ref{deg12},   $\theta_4(g)=1$. Note that in this case $ \Theta_4(g)=\{y\} $ since otherwise $ |M_{Z(\alpha ,\beta)}(g)|\geq 1 $.
\item[ (b) :]  $ deg_{Z(\alpha ,\beta)}(g)=8 $ and $ |M_{Z(\alpha ,\beta)}(g)|= 2$. So, it follows from   Remark \ref{deg12} and  Lemma \ref{repetition} that in this case $ \Theta_4(g) $ is one of the sets $ \{1,x\} $, $ \{1,x^{-1}\} $ or $ \{1,y\} $.
\end{itemize}
Combining Lemma \ref{repetition}, Remark \ref{completemathcalz} and above discussion, we have the following: 
\begin{rem}\label{deg6}
 If  $ g $ is a vertex of degree $ 6 $ in $ \mathcal{Z}(\alpha,\beta) $, then exactly one of the following  holds:
\begin{itemize}
\item[(i)]  There exists exactly an element  $ s\in \Delta_{(\alpha,\beta)}^ {4} $ such that $ g\in \mathcal{V}_{(\alpha,\beta)}(s)$. Moreover, $ K[s] $ has at least three vertices of degree greater than $ 4 $ in $ \mathcal{Z}(\alpha,\beta) $ and if the degrees of all vertices of $ K[s] $ in $ \mathcal{Z}(\alpha,\beta) $ are even, then $ K[s] $ must have  a vertex of degree  $8$ or $ 10 $ in $ \mathcal{Z}(\alpha,\beta) $ {\rm(}this case corresponds  to the above case {\rm(a) )}.
\item[(ii)]  There exist exactly two distinct elements $ s,s'\in \Delta_{(\alpha,\beta)}^ {4} $ such that $ g\in \mathcal{V}_{(\alpha,\beta)}(s)\cap \mathcal{V}_{(\alpha,\beta)}(s') $. Moreover,  $ |\mathcal{V}_{(\alpha,\beta)}(s)\cap \mathcal{V}_{(\alpha,\beta)}(s')|=1 $, the degrees of all vertices of $ K[s] $ in $ \mathcal{Z}(\alpha,\beta) $ are  greater than $ 4 $ and $ K[s] $ has at least two vertices with odd degrees in $ \mathcal{Z}(\alpha,\beta) $. Also, $ K[s'] $ has at least three vertices of degree greater than $ 4 $ in $ \mathcal{Z}(\alpha,\beta) $ and if the degrees of all vertices of $ K[s'] $ in $ \mathcal{Z}(\alpha,\beta) $ are even, then $ K[s'] $  must have  a vertex of degree  $8$ or $ 10 $  in $ \mathcal{Z}(\alpha,\beta) $ {\rm(}this case corresponds to the above case {\rm(b)}, where $ \Theta_4(g)=\{1,y\} ${\rm)}. 
\item[(iii)]There exist exactly two distinct elements $ s,s'\in \Delta_{(\alpha,\beta)}^ {4} $ such that $ g\in \mathcal{V}_{(\alpha,\beta)}(s)\cap \mathcal{V}_{(\alpha,\beta)} (s') $. Moreover, $ |\mathcal{V}_{(\alpha,\beta)}(s)\cap \mathcal{V}_{(\alpha,\beta)} (s')|=2 $, the degrees of all vertices of $ K[s] $ and $ K[s'] $ in $ \mathcal{Z}(\alpha,\beta) $ are  greater than $ 4 $ and if $ \{g'\}=(\mathcal{V}_{(\alpha,\beta)}(s)\cap \mathcal{V}_{(\alpha,\beta)} (s'))\setminus \{g\} $, then $ deg_{ \mathcal{Z}(\alpha,\beta)}(g') $ must be even. Also,  
$ K[s] $ has  exactly two vertices with odd degrees and  if the degrees of all vertices of $ K[s'] $ in $ \mathcal{Z}(\alpha,\beta) $ are even, then $ deg_{ \mathcal{Z}(\alpha,\beta)}(g')\in\{8,10\} $ {\rm(}this case corresponds to the above case {\rm(b)}, where $ \Theta_4(g)\in\big{\{} \{1,x\},\{1,x^{-1}\}\big{\}} ${\rm)}.
\end{itemize}
\end{rem}
The following lemma follows from  Remark \ref{deg6}:
\begin{lem}
$ \mathcal{Z}(\alpha,\beta) $ is not isomorphic to any $ 6 $-regular graph.
\end{lem}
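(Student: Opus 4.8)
The plan is to argue by contradiction: suppose $\mathcal{Z}(\alpha,\beta)$ is $6$-regular, so every vertex $g\in\mathrm{supp}(\beta)$ has $\deg_{\mathcal{Z}(\alpha,\beta)}(g)=6$. By Lemma~\ref{83} we may assume $\mathrm{supp}(\alpha)=\{1,x,x^{-1},y\}$ with $x,y$ distinct non-trivial elements of $G$. The key structural input is Remark~\ref{deg6}: every degree-$6$ vertex falls into exactly one of the three cases (i), (ii), (iii), and each of these cases forces the existence of an $s\in\Delta_{(\alpha,\beta)}^{4}$ with $g\in\mathcal{V}_{(\alpha,\beta)}(s)$, i.e.\ of a $4$-clique $K[s]$ through $g$. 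So the first step is to observe that $\Delta_{(\alpha,\beta)}^{4}\neq\varnothing$ (otherwise Remark~\ref{deg12} gives $\deg_{Z(\alpha,\beta)}(g)=4$ for all $g$, and then by Lemma~\ref{repetition} $\deg_{\mathcal{Z}(\alpha,\beta)}(g)\in\{3,4\}$, contradicting $6$-regularity). Fix some $s\in\Delta_{(\alpha,\beta)}^{4}$; then $\mathcal{V}_{(\alpha,\beta)}(s)=\{s,x^{-1}s,xs,y^{-1}s\}$ by Remark~\ref{completemathcalz}.

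Next I would examine the clique $K[s]$ more closely. Since $\mathcal{Z}(\alpha,\beta)$ is $6$-regular, all four vertices of $K[s]$ have even degree $6$; in particular $\deg_{\mathcal{Z}(\alpha,\beta)}(s)$ is even. By the second bullet of Remark~\ref{completemathcalz}, if both $xs$ and $x^{-1}s$ have even degree—which they do, being $6$—then $\deg_{\mathcal{Z}(\alpha,\beta)}(s)\in\{8,10\}$, contradicting $\deg_{\mathcal{Z}(\alpha,\beta)}(s)=6$. This is the crux, and it should dispatch the case at once: the local structure around any $4$-clique in the collapsed graph forces the "center" vertex $s$ of that clique to have degree $8$ or $10$ whenever the two $x$-translates of $s$ are of even degree, and $6$-regularity makes that hypothesis automatic. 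One should double-check that the labelling is consistent—i.e.\ that whichever of the four points of $\mathcal{V}_{(\alpha,\beta)}(s)$ is named $s$ (the unique element with $1\in\Theta_4(\cdot)$ attached to that clique), the roles of $xs$ and $x^{-1}s$ are as in Remark~\ref{completemathcalz}—but this is exactly what that remark asserts.

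Alternatively, if one prefers to route the argument through Remark~\ref{deg6} directly: in case (i), "if the degrees of all vertices of $K[s]$ are even then $K[s]$ must have a vertex of degree $8$ or $10$" already contradicts $6$-regularity; in cases (ii) and (iii) the same sub-clause appears (for $K[s]$ or for $K[s']$, respectively), and again all degrees being $6$ is even, forcing a vertex of degree $8$ or $10$, a contradiction. Since Remark~\ref{deg6} is exhaustive for degree-$6$ vertices, every possibility is eliminated. The main obstacle is essentially bookkeeping: making sure the exhaustiveness of Remark~\ref{deg6} is invoked correctly and that the "all even degrees" hypothesis in each sub-clause is genuinely met under $6$-regularity (it is, trivially). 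Thus no $6$-regular graph can be realized as $\mathcal{Z}(\alpha,\beta)$, completing the proof.
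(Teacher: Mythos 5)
Your proof is correct and follows essentially the paper's route: the paper derives the lemma directly from Remark~\ref{deg6}, which is exactly your ``alternative'' argument, and your primary argument via the second bullet of Remark~\ref{completemathcalz} (a nonempty $\Delta^{4}_{(\alpha,\beta)}$ yields a clique $K[s]$ whose $x$-translates have even degree, forcing $\deg_{\mathcal{Z}(\alpha,\beta)}(s)\in\{8,10\}$) is just the underlying fact from which Remark~\ref{deg6} is itself assembled. Your explicit check that $\Delta^{4}_{(\alpha,\beta)}\neq\varnothing$ is a detail the paper leaves implicit, and it is handled correctly.
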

If $  n=7$, then $deg_{Z(\alpha ,\beta)}(g)=8  $ and $ |M_{Z(\alpha ,\beta)}(g)|= 1 $. Thus, by Remark \ref{deg12},  $\theta_4(g)=2$ and by Lemma \ref{repetition}, $ 1\notin\Theta_4(g) $. Therefore, $ \Theta_4(g)\neq \{x,x^{-1}\} $ since otherwise $ 1\in\Theta_4(g) $. Hence, $ \Theta_4(g)$ is one of the sets $ \{y,x^{-1}\} $ or $ \{x,y\} $. \\
The following remark follows from Remark \ref{completemathcalz} and above discussion. 
\begin{rem}\label{deg7}
If $ g $ is a vertex of degree $ 7 $ in $ \mathcal{Z}(\alpha,\beta)  $, then there exist exactly two distinct elements $ s,s'\in \Delta_{(\alpha,\beta)}^{4} $ such that $ g\in \mathcal{V}_{(\alpha,\beta)}(s)\cap  \mathcal{V}_{(\alpha,\beta)} (s') $. Moreover, $ |\mathcal{V}_{(\alpha,\beta)}(s)\cap  \mathcal{V}_{(\alpha,\beta)} (s')|=1 $,  each of the $ K[s] $ and $ K[s'] $ has at least three vertices  of degree greater than $ 4 $ and   at least  a vertex with even degree  in $ \mathcal{Z}(\alpha,\beta) $. Also, if the degrees of all vertices in $ \mathcal{V}_{(\alpha,\beta)}(s)\setminus \{g\} $ in $ \mathcal{Z}(\alpha,\beta)  $ are even, then $ K[s] $ must have  a vertex of degree  $8$ or $ 10 $  in $ \mathcal{Z}(\alpha,\beta) $.
\end{rem}
  The following Lemma directly follows  from  Remark \ref{deg7}.
\begin{lem}
 $ \mathcal{Z}(\alpha,\beta) $ is not isomorphic to any $ 7 $-regular graph.
\end{lem}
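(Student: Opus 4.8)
The final statement to prove is the lemma that $\mathcal{Z}(\alpha,\beta)$ is not isomorphic to any $7$-regular graph.

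\medskip

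The plan is to deduce this directly from Remark \ref{deg7}, which was just established and which completely pins down the local structure of any degree-$7$ vertex in $\mathcal{Z}(\alpha,\beta)$. First I would argue by contradiction: suppose $\mathcal{Z}(\alpha,\beta)$ is $7$-regular, so every vertex $g \in supp(\beta)$ has $deg_{\mathcal{Z}(\alpha,\beta)}(g) = 7$. By Remark \ref{deg7}, each such $g$ lies in $\mathcal{V}_{(\alpha,\beta)}(s) \cap \mathcal{V}_{(\alpha,\beta)}(s')$ for exactly two distinct $s,s' \in \Delta_{(\alpha,\beta)}^4$, and moreover each of $K[s]$ and $K[s']$ must contain at least one vertex of \emph{even} degree in $\mathcal{Z}(\alpha,\beta)$. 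But in a $7$-regular graph there are no vertices of even degree at all, which is an immediate contradiction. So the core of the proof is essentially one line once Remark \ref{deg7} is in hand.

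\medskip

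To be safe, I would also check that the hypothesis of Remark \ref{deg7} is not vacuous — that is, I should confirm that a $7$-regular $\mathcal{Z}(\alpha,\beta)$ actually forces the existence of a degree-$7$ vertex, which is trivially true since $supp(\beta) \neq \varnothing$ and every vertex has degree $7$. Then pick any vertex $g$, invoke Remark \ref{deg7} to get the associated $s \in \Delta_{(\alpha,\beta)}^4$, and observe that the clause ``$K[s]$ has at least a vertex with even degree in $\mathcal{Z}(\alpha,\beta)$'' is violated because every degree in $\mathcal{Z}(\alpha,\beta)$ equals $7$. This yields the contradiction and completes the proof.

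\medskip

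I do not anticipate a real obstacle here: the entire difficulty has already been absorbed into the chain of Remarks \ref{completemathcalz}, \ref{deg5}, \ref{deg6}, \ref{deg7}, which carefully track parity of vertex degrees inside each ``clique'' $K[s]$ coming from $\Delta_{(\alpha,\beta)}^4$. The only thing to be careful about is to cite the correct clause of Remark \ref{deg7} (the even-degree vertex requirement, not the ``three vertices of degree greater than $4$'' requirement, which a $7$-regular graph would actually satisfy). Analogous one-line arguments were used for the $5$-regular and $6$-regular cases via Remarks \ref{deg5} and \ref{deg6}, so this is the same template applied once more.
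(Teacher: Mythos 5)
Your proposal is correct and matches the paper, which states that the lemma "directly follows from Remark \ref{deg7}": in a $7$-regular graph every degree is odd, so the clause of Remark \ref{deg7} asserting that $K[s]$ contains a vertex of even degree in $\mathcal{Z}(\alpha,\beta)$ is violated at any vertex. You also correctly single out the even-degree clause as the one doing the work, since the "three vertices of degree greater than $4$" condition would indeed be satisfied by a $7$-regular graph.
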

If $  n=8$, then one of the following cases occurs:
\begin{itemize}
\item[ $ (i) $:]
$deg_{Z(\alpha ,\beta)}(g)= 8  $ and $ |M_{Z(\alpha ,\beta)}(g)|= 0$. Hence, Remark \ref{deg12} implies  $\theta_4(g)=2$. Since $|M_{Z(\alpha ,\beta)}(g)|= 0$, $ \Theta_4(g) $ does not contain any elements of  $\{1,x,x^{-1}\} $ and therefore this case is impossible.
\item[ $ (ii) $:]  $ deg_{Z(\alpha ,\beta)}(g)=10 $ and $ |M_{Z(\alpha ,\beta)}(g)|= 2 $. According to Lemma \ref{repetition} and Remark \ref{deg12}, $1\in \Theta_4(g) $ and $ \theta_4(g)=3 $. So, $ \Theta_4(g) $ is one of the sets $ \{1,x,x^{-1}\} $, $ \{1,x^{-1},y\} $ or $ \{1,y,x\} $.
\end{itemize}
If $  n=10$, then one of the following cases occurs:
\begin{itemize}
\item[ $ (i) $:]
$deg_{Z(\alpha ,\beta)}(g)= 10  $ and $ |M_{Z(\alpha ,\beta)}(g)|= 0 $. Hence, Remark \ref{deg12} implies  $\theta_4(g)=3$. Since $|M_{Z(\alpha ,\beta)}(g)|= 0$, $ \Theta_4(g) $ does not contain any elements of  $\{1,x,x^{-1}\} $ and therefore this case is impossible.
\item[ $ (ii) $:]  $ deg_{Z(\alpha ,\beta)}(g)=12 $ and $ |M_{Z(\alpha ,\beta)}(g)|=2 $. So, Remark \ref{deg12} implies  $ \Theta_4(g)=supp(\alpha) $.
\end{itemize}
\begin{thm}\label{appmatcal}
Let $ \alpha $ be a zero divisor in $ \mathbb{F}_2[G] $ for a possible torsion-free group $ G $ with $ |supp(\alpha)|=4$, $ |S_{\alpha}|=10 $,  and $ \beta $ be a mate of $ \alpha $. Then $ \mathcal{Z}(\alpha,\beta) $ does not contain any  subgraph isomorphic to one of  the graphs  in Figure {\rm\ref{forbidden mathcal}}, where the degrees of vertices  $ g $ and $ g' $ of any subgraph in   $ \mathcal{Z}(\alpha,\beta) $ must be  $ 4 $ of type $ (ii) $ and $ 5  $, respectively.
\end{thm}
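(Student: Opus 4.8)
The plan is to argue by contradiction in the spirit of the reduction described earlier in the paper (see the paragraph preceding Theorem \ref{55}): suppose $\mathcal{Z}(\alpha,\beta)$ contains a subgraph $\Gamma_0$ isomorphic to one of the graphs in Figure \ref{forbidden mathcal}, with the prescribed degree constraints on the distinguished vertices $g$ (degree $4$ of type $(ii)$) and $g'$ (degree $5$). Since $|S_\alpha|=10$ and $\mathbb{F}=\mathbb{F}_2$, Lemma \ref{83} lets us assume $supp(\alpha)=\{1,x,x^{-1},y\}$. The first step is to translate each edge of $\Gamma_0$ in $\mathcal{Z}(\alpha,\beta)$ into a group relation of the form $g_eg_e'^{-1}=h_e^{-1}h_e'$ with $h_e^{-1}h_e'\in S_\alpha=\{x,x^{-1},y,y^{-1},x^2,x^{-2},xy,y^{-1}x^{-1},\dots\}$ (the explicit list from Lemma \ref{01}, case $b=a^{-1}$, i.e. $S_\alpha=\{x^{\pm1},y^{\pm1},x^2,x^{-2},x^{-1}y,xy,y^{-1}x,y^{-1}x^{-1},\dots\}$ as recorded in Table \ref{1010}), and then to exploit the degree information to pin down which relations are forced.

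The key leverage comes from Remark \ref{completemathcalz} and the classification of vertices by degree that immediately precedes this theorem. A vertex of degree $4$ of type $(ii)$ satisfies $\Theta_4(g)=\{1\}$, so there is a unique $s\in\Delta_{(\alpha,\beta)}^4$ with $g\in\mathcal{V}_{(\alpha,\beta)}(s)$, and in fact $g=s$ and $\mathcal{V}_{(\alpha,\beta)}(s)=\{g,xg,x^{-1}g,y^{-1}g\}$ with $|M_{\mathcal{Z}(\alpha,\beta)}(g)|=2$; moreover by Remark \ref{completemathcalz} the degrees of $xg$, $x^{-1}g$, $y^{-1}g$ are all $\geq 5$, and at least two of the vertices of $K[s]$ have odd degree. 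A vertex of degree $5$ satisfies $\Theta_4(g')=\{1\}$ with $|M|=1$, i.e. it lies in exactly one $K[s']$ but is not the "center" $s'$. So for each candidate graph in Figure \ref{forbidden mathcal} I would: (i) use these structural facts to identify the edges incident to $g$ and $g'$ forced to be "$x$-edges" or "$x^{-1}$-edges", (ii) read off the remaining edge relations from the shape of $\Gamma_0$, (iii) multiply them around each cycle of $\Gamma_0$ to obtain a word $w$ in $x,y$ that must equal $1$ in $G$, and (iv) check that the resulting group $\mathcal{G}=\langle x,y\mid \text{these relations}\rangle$ is abelian, a quotient of a Baumslag–Solitar group $BS(1,n)$, or has a nontrivial torsion element — each of which contradicts the hypotheses (via \cite[Theorem 26.2]{PI}, \cite[Remark 3.17]{AT}, or torsion-freeness, exactly as in the template before Theorem \ref{55}). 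Since $g'$ has degree $5$ it forces an extra $\Delta^4$-cell, which together with the $\Delta^4$-cell at $g$ typically over-determines the cycle structure and produces a torsion relation $x^n=1$ or $y^n=1$ or an abelianizing relation.

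I expect the main obstacle to be the case analysis bookkeeping: for each of the graphs in Figure \ref{forbidden mathcal} there are several ways the $x$- and $x^{-1}$-labelled multi-edges can be distributed among the edges meeting $g$ and $g'$ (constrained but not uniquely determined), and for each distribution one must verify that \emph{every} completion of the remaining edge-labels by elements of $S_\alpha$ yields a forbidden group. The degree hypotheses and Remark \ref{completemathcalz} should cut the branching down drastically — in particular the parity constraints ("$K[s]$ has at least two odd-degree vertices", "the non-$g$ vertices of $K[s]$ have degree $\geq 5$") eliminate most branches before any computation — but the residual cases will still need a GAP-assisted enumeration exactly as in Appendices \ref{app12} and \ref{app10}. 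Accordingly I would relegate the exhaustive check to Appendix \ref{app10} and in the main text only indicate the structural reductions and one representative branch. We refer the reader to Appendix \ref{app10} for the details of the proof of Theorem \ref{appmatcal}.
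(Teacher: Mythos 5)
Your proposal is correct and follows essentially the same strategy as the paper: use the degree classification (a degree-$4$ type $(ii)$ vertex $g$ is the centre of a unique $K[s]$ with $\mathcal{V}_{(\alpha,\beta)}(s)=\{g,xg,x^{-1}g,y^{-1}g\}$, and a degree-$5$ vertex lies in exactly one such cell) to constrain the edge labels, then convert edges to relations in $S_\alpha$ and rule out every labelling by a GAP-assisted check that the resulting group is abelian, a $BS(1,n)$-quotient, or has torsion. The only organizational difference is that the paper does not re-enumerate for each graph in Figure \ref{forbidden mathcal}: it observes that the offending configurations force two triangles sharing an edge in $Z(\alpha,\beta)$ subject to exactly the same tuple constraints as the graph $H_1$ already shown forbidden in Appendix \ref{app10}, and simply reuses that computation.
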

We refer the reader to Appendix  \ref{app10} to see details of the proof of Theorem  \ref{appmatcal}.
\begin{figure}
\begin{tikzpicture}[scale=.75]
\draw [fill] (0,0) circle
[radius=0.1] node  [left]  {$ g $};
\draw [fill] (1,0) circle
[radius=0.1] node  [left]  {};
\draw [fill] (0,-1) circle
[radius=0.1] node  [left]  {};
\draw [fill] (1,-1) circle
[radius=0.1] node  [left]  {};
\draw [fill] (.5,1) circle
[radius=0.1] node  [left]  {};
\draw  (.09,.1) --  (.45,.95) ;
\draw  (.55,.95) --  (.95,.05) ;
\draw  (.1,0) --  (.9,0) ;
\draw  (1,-.9) --  (1,-.1) ;
\draw  (.9,-1) --  (.1,-1) ;
\draw  (0,-.1) --  (0,-.9) ;
\draw (1,-.9) .. controls (1.5,1) and (1.75,1)  .. (.5,1) ;
\draw  (1,-.93) --  (.1,-.1) ;
\draw   (.99,-.05)  --  (0,-.93) ;
\end{tikzpicture}
\begin{tikzpicture}[scale=.75]
\draw [fill] (0,0) circle
[radius=0.1] node  [left]  {$g'$};
\draw [fill] (1,0) circle
[radius=0.1] node  [left]  {};
\draw [fill] (0,-1) circle
[radius=0.1] node  [left]  {};
\draw [fill] (1,-1) circle
[radius=0.1] node  [left]  {};
\draw [fill] (.5,1) circle
[radius=0.1] node  [left]  {};
\draw [fill] (-.5,1) circle
[radius=0.1] node  [left]  {};
\draw  (.09,.1) --  (.45,.95) ;
\draw  (.55,.95) --  (.95,.05) ;
\draw  (.1,0) --  (.9,0) ;
\draw  (1,-.9) --  (1,-.1) ;
\draw  (.9,-1) --  (.1,-1) ;
\draw  (0,-.1) --  (0,-.9) ;
\draw  (1,-.93) --  (.1,-.1) ;
\draw   (.99,-.05)  --  (0,-.93) ;
\draw  (-.5,1) --  (.95,.05) ;
\draw  (-.5,1) --  (0,.1) ;
\end{tikzpicture}
\begin{tikzpicture}[scale=.75]
\draw [fill] (0,0) circle
[radius=0.1] node  [left]  {$g'$};
\draw [fill] (1,0) circle
[radius=0.1] node  [right]  {$g$};
\draw [fill] (0,-1) circle
[radius=0.1] node  [left]  {};
\draw [fill] (1,-1) circle
[radius=0.1] node  [left]  {};
\draw [fill] (.5,1) circle
[radius=0.1] node  [left]  {};
\draw [fill] (-.5,1) circle
[radius=0.1] node  [left]  {};
\draw  (.09,.1) --  (.45,.95) ;
\draw  (.55,.95) --  (.95,.05) ;
\draw  (.1,0) --  (.9,0) ;
\draw  (1,-.9) --  (1,-.1) ;
\draw  (.9,-1) --  (.1,-1) ;
\draw  (0,-.1) --  (0,-.9) ;
\draw  (1,-.93) --  (.1,-.1) ;
\draw   (.99,-.05)  --  (0,-.93) ;
\draw  (-.5,1) --  (0,.1) ;
\draw  (-.5,1) --  (.5,1) ;
\end{tikzpicture}
\caption{Some forbidden subgraphs of   $ \mathcal{Z}(\alpha,\beta) $, where the degrees of vertices  $ g $ and $ g' $ of any subgraph in   $ \mathcal{Z}(\alpha,\beta) $ must be  $ 4 $ of type $ (ii) $ and $ 5  $, respectively.}\label{forbidden mathcal}
\end{figure}
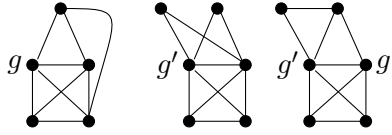
\subsection{\textbf{The size of the  vertex set of $Z(\alpha,\beta) $, where  $ |S_\alpha|=10 $}}   
According to Remark \ref{deg4f} and our discussion in this section,  $ g $ is a vertex of degree 4 of type $ (i) $ and $ (ii) $ in $ Z(\alpha,\beta) $ if and only if $ g $ is a vertex of degree 4 of type $ (i) $ and 3 in $ \mathcal{Z}(\alpha,\beta) $, respectively. So, the following Theorem follows from   Theorem \ref{app}.
\begin{thm}\label{graphf2}
Let $ \alpha $ be a  zero divisor in $ \mathbb{F}_2[G] $ for a possible torsion-free group $ G $ such that  $ | supp(\alpha)|=4
 $, $ | S_\alpha|=10$  and $ \beta $ be a mate of $ \alpha $. Then $ \mathcal{Z}(\alpha,\beta) $ does not contain any  subgraph isomorphic to one of  the graphs $ \Gamma_1,\Gamma_2 $ and $ \Gamma_3 $ in  Figure {\rm\ref{36}} and all graphs in Figure {\rm\ref{tt}}, where  the  degrees of gray and white vertices  in any subgraph  in $ \mathcal{Z}(\alpha,\beta) $ must be $ 3 $  and  $ 3 $ or $ 4 $ of type $ (i) $, respectively. 
\end{thm}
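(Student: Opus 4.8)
The plan is to obtain this as a direct corollary of Theorem~\ref{app}, by transferring the list of forbidden configurations from $Z(\alpha,\beta)$ to $\mathcal{Z}(\alpha,\beta)$ through the correspondence that was assembled in the paragraphs preceding the statement.

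First I would use the last sentence of Lemma~\ref{83}: since $\mathbb{F}=\mathbb{F}_2$ and $|S_\alpha|=10$, case $(i)$ occurs, so after replacing $\alpha$ by an element with the same zero divisor graph we may assume $supp(\alpha)=\{1,x,x^{-1},y\}$ for distinct non-trivial $x,y\in G$; this is exactly the hypothesis of Theorem~\ref{app} and the setting of Definition~\ref{cal}, so $\mathcal{Z}(\alpha,\beta)$ is defined. Then I would record two facts, both noted just before the statement. First, over $\mathbb{F}_2$ every vertex of $Z(\alpha,\beta)$ has even degree (Remark~\ref{deg12}), so $Z(\alpha,\beta)$ has no vertex of degree $5$; hence the graphs $\Gamma_4,\Gamma_5,\Gamma_6$ of Figure~\ref{36}, each carrying a vertex $g$ that Theorem~\ref{app} requires to have degree $5$, are vacuously absent here, which is why only $\Gamma_1,\Gamma_2,\Gamma_3$ of Figure~\ref{36} occur in the claim. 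Second, by Remark~\ref{deg4f} and the degree-by-degree discussion of this section, a vertex is of degree $4$ of type $(i)$ (respectively of type $(ii)$) in $Z(\alpha,\beta)$ precisely when it is of degree $4$ of type $(i)$ (respectively of degree $3$) in $\mathcal{Z}(\alpha,\beta)$; in particular a vertex that is of degree $3$ or of degree $4$ of type $(i)$ in $\mathcal{Z}(\alpha,\beta)$ has degree $4$ in $Z(\alpha,\beta)$.

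Now suppose, for a contradiction, that $\mathcal{Z}(\alpha,\beta)$ contains a subgraph $\mathcal{H}$ isomorphic to one of $\Gamma_1,\Gamma_2,\Gamma_3$ of Figure~\ref{36} or to one of the graphs of Figure~\ref{tt}, with each gray vertex of degree $3$ and each white vertex of degree $3$ or of degree $4$ of type $(i)$. By Definition~\ref{cal} and Remark~\ref{multi}, $\mathcal{Z}(\alpha,\beta)$ is obtained from $Z(\alpha,\beta)$ by replacing each pair of parallel edges by a single edge, so the two graphs have the same vertex set and the same adjacency relation; choosing one representative edge of $Z(\alpha,\beta)$ for each edge of $\mathcal{H}$ yields a subgraph $H\subseteq Z(\alpha,\beta)$ isomorphic, as a simple graph, to the same graph of Figure~\ref{36} or~\ref{tt}. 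By the second fact above, in $Z(\alpha,\beta)$ each gray vertex of $\mathcal{H}$ is a vertex of degree $4$ of type $(ii)$ and each white vertex of $\mathcal{H}$ is a vertex of degree $4$. Hence $H$ is exactly the kind of subgraph that Theorem~\ref{app} declares impossible (gray vertices of degree $4$ of type $(ii)$, the remaining distinguished vertices of degree $4$, and no vertex asked to be the degree-$5$ vertex $g$, since $\Gamma_4,\Gamma_5,\Gamma_6$ were excluded), a contradiction.

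The only non-routine part is the figure-by-figure matching of vertex labels hidden in ``$H$ is exactly the kind of subgraph Theorem~\ref{app} forbids'': for each of $\Gamma_1,\Gamma_2,\Gamma_3$ and each graph of Figure~\ref{tt} one must check that collapsing the relevant parallel edges of the Theorem~\ref{app} configuration returns precisely the graph together with the $\mathcal{Z}(\alpha,\beta)$-degree labels stated here, i.e.\ that the gray positions correspond to the degree-$4$-of-type-$(ii)$ (equivalently degree-$3$-in-$\mathcal{Z}(\alpha,\beta)$) positions and that the white positions admit either a degree-$3$ or a degree-$4$-of-type-$(i)$ vertex of $\mathcal{Z}(\alpha,\beta)$. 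Once the parity remark has removed the degree-$5$ vertex and this degree dictionary is set down, the remaining verification is short.
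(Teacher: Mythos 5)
Your proposal is correct and follows essentially the same route as the paper: the paper also deduces the statement directly from Theorem~\ref{app} via the degree dictionary (degree $4$ of type $(i)$ in $Z(\alpha,\beta)$ corresponds to degree $4$ of type $(i)$ in $\mathcal{Z}(\alpha,\beta)$, and degree $4$ of type $(ii)$ corresponds to degree $3$), after reducing to $supp(\alpha)=\{1,x,x^{-1},y\}$ by Lemma~\ref{83}. Your additional remarks (even degrees over $\mathbb{F}_2$ excluding the degree-$5$ configurations, and the edge-representative argument transferring a subgraph of $\mathcal{Z}(\alpha,\beta)$ back to $Z(\alpha,\beta)$) simply make explicit what the paper leaves implicit.
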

\begin{rem}\label{h1}
If $ \mathcal{Z}(\alpha,\beta) $ has a vertex of degree $ |supp(\beta)|-1 $ and a vertex of degree $ 3 $ or $ 4 $ of type $ (i) $, then clearly $  Z(\alpha,\beta) $ contains a subgraph isomorphic to the first graph  in Figure {\rm\ref{tt}} that is a contradiction.
\end{rem}
 Suppose that $ \alpha $ is a zero divisor in $ \mathbb{F}_2[G] $ for a possible torsion-free group $ G $ with  $  supp(\alpha)=\{1,x,x^{-1},y\}$, where $ x,y $ are distinct non-trivial elements of $ G $,   and $ \beta $ is a mate of $ \alpha $. Let $ B=supp(\alpha) $, $ C=supp(\beta) $, $ |C|=n $.  We know that for each $ a\in BC $, $ r_{BC}(a)\in\{2,4\} $. So, by Remark \ref{fieldf1}, we have the following statements:
 \begin{itemize}
 \item[-] $n+5 \leq |BC|\leq 2n $.
 \item[-] If $ |BC|= 2n  $, then $ r_{BC}(a)=2 $  for all $ a\in BC $ and therefore the degrees of all vertices in $  \mathcal{Z}(\alpha,\beta) $ are $ 3 $ or $ 4 $ of type $ (i) $.
 \item[-] If $ |BC|= 2n-i  $, where $ i $ is an integer number greater than or equal to $1$, then  $ \delta_{(\alpha,\beta)}^{4}=i $.
 \end{itemize}
  Note that in the sequel, we  apply repeatedly Remarks \ref{deg12}, \ref{completemathcalz} and the explanations that are given  about the vertices of different degrees in $  \mathcal{Z}(\alpha,\beta) $ in this section, with out referring.  \\Suppose that $ |BC|= 2n-1  $ and  $\Delta_{(\alpha,\beta)}^{4}=\{a\} $. Thus,  $ a $ is a vertex of degree $ 4 $ of type $ (ii) $ in $  \mathcal{Z}(\alpha,\beta) $, the degrees of vertices $ xa, x^{-1}a $ in $  \mathcal{Z}(\alpha,\beta) $ are $ 5 $ and  $ deg_{\mathcal{Z}(\alpha,\beta)}(y^{-1}a)\in\{ 5,6\} $. Also,   for each $ g\in C\setminus \mathcal{V}_{(\alpha,\beta)}(a) $, $ g $ is a vertex of degree $ 3 $ or $ 4 $ of type $ (i) $ in $ \mathcal{Z}(\alpha,\beta) $. In other words, if $ |BC|= 2|C|-1  $, then $ \mathcal{Z}(\alpha,\beta) $ must have $ n-3 $ vertices of degree  $ 3 $ or $ 4 $  and either  $ 3 $ vertices of degree $ 5 $ or  $ 2 $ vertices of degree $ 5 $ and a vertex of degree $ 6 $.\\ Now, suppose that $ |BC|= 2|C|-2  $. Then there exist distinct elements $ a,a'\in BC $ such that $ \Delta_{(\alpha,\beta)}^{4}=\{a,a'\} $. Let $S=\mathcal{V}_{(\alpha,\beta)}(a)\cap \mathcal{V}_{(\alpha,\beta)}(a')$. By Corollary \ref{cap1},  $ |S|\leq 2  $. \\Suppose that $ n=7 $ and $ |BC|= 2|C|-2  $. Clearly, $ |S|\neq 0 $. If $ |S|=1 $, then we must have $ a=y^{-1}a' $. Hence, $ deg_{\mathcal{Z}(\alpha,\beta)}(x^{-1}a')=5 $ and therefore there exist two distinct vertices in $ \mathcal{V}_{(\alpha,\beta)}(a)\setminus\{a\} $ such that $ x^{-1}a' $ is adjacent to each of them. So, $ \mathcal{Z}(\alpha,\beta) $ contains the second graph in Figure \ref{forbidden mathcal}, a contradiction. Then,  $ |S|=2 $.  Thus,  according to  part $ (2) $ of Lemma \ref{3part}, either $ a=xa' $ and $ x^{-1}a=a' $ or $ a=x^{-1}a' $ and $ xa=a' $. Therefore, $ \mathcal{Z}(\alpha,\beta) $ has a vertex of degree $ 6 $  and a vertex of degree  $ 3 $ or $ 4 $ of type $ (i) $  contradicting Remark \ref{h1}.\\ 
Suppose that $  n=8$ and  $ |BC|= 2|C|-2  $. Clearly, if $ |S| =0 $, then $ a $ and $a' $ are of degree  $ 4 $ of type $ (ii) $ in $ \mathcal{Z}(\alpha,\beta) $, $ deg_{\mathcal{Z}(\alpha,\beta)}(g)=5 $ for all $g\in\{ xa,x^{-1}a,xa', x^{-1}a'\} $ and $ y^{-1}a $, $ y^{-1}a' $ are vertices of degree  $ 5 $ or $ 6 $ in $ \mathcal{Z}(\alpha,\beta) $. If $ |S|=1 $, then two different cases hold:  (1)  $ S=\{a\} $ (resp. $ S=\{a'\} $): Then   $ a=y^{-1}a' $ (resp. $ a'=y^{-1}a $) since otherwise $ |S|=2 $. Therefore, the degree of $ a $ (resp. $ a' $) in $  \mathcal{Z}(\alpha,\beta) $ is $ 6 $, $ a' $ (resp. $ a $) is a  vertex of degree $ 4 $ of type $ (ii) $ in $  \mathcal{Z}(\alpha,\beta) $, $ deg_{\mathcal{Z}(\alpha,\beta)}(g)=5 $ for all $g\in\{ xa,x^{-1}a,xa', x^{-1}a'\} $ and $ y^{-1}a $ (resp. $ y^{-1}a' $) is of degree  $ 5 $ or $ 6 $ in $  \mathcal{Z}(\alpha,\beta) $. Also, the degree of $ g\in C\setminus (\mathcal{V}_{(\alpha,\beta)}(a)\cup \mathcal{V}_{(\alpha,\beta)}(a')) $ in $  \mathcal{Z}(\alpha,\beta) $ is  $ 3 $ or $ 4 $ of type $ (i) $; (2)  $ a\notin S$ and $a'\notin S $: Then we have $ y^{-1}a =x^{-1}a',\,y^{-1}a =xa',\,y^{-1}a' =xa$  or $y^{-1}a'=x^{-1}a $ since otherwise $ \delta_{(\alpha,\beta)}^{4}>2 $. It is clear that  in this case the degree of   $g\in S $ in $ \mathcal{Z}(\alpha,\beta) $ is $ 7 $ and the degree of  $ g'\in C\setminus (\mathcal{V}_{(\alpha,\beta)}(a)\cup \mathcal{V}_{(\alpha,\beta)}(a'))$ in $ \mathcal{Z}(\alpha,\beta) $ is  $ 3 $ or $ 4 $ of type $ (i) $  contradicting Remark \ref{h1}. If $ |S|=2 $, then by the same discussion about similar conditions for $  n=7$,  $ \mathcal{Z}(\alpha,\beta) $ has $ 2 $ vertices of degree $ 3 $ or $ 4 $, $ 2 $ vertices of degree $ 6$, $ 2 $ vertices of degree $5 $  and 2    vertices of degree $ 5 $ or $ 6 $. \\
 Now, suppose that $  n=8$ and  $ |BC|= 2|C|-3  $. Then there exist distinct elements $ a,a',a''\in BC $ such that  $ \Delta_{(\alpha,\beta)}^{4}= \{a,a',a''\} $. Let $ A_1= \mathcal{V}_{(\alpha,\beta)}(a) $, $ A_2= \mathcal{V}_{(\alpha,\beta)}(a') $ and $ A_3= \mathcal{V}_{(\alpha,\beta)}(a'') $. By inclusion-exclusion principle, $ |A_1\cup A_2\cup A_3|=12-|A_1\cap A_2|-|A_1\cap A_3|-|A_2\cap A_3|+|A_1\cap A_2\cap A_3|\leq 8 $. 
It follows from Lemma \ref{cap1} that $ |A_i\cap A_j|\leq 2 $ for each pair  $ (i,j)$ of  distinct elements of $\{1,2,3\} $. Note that if $ g\in A_1\cap A_2\cap A_3 $, then $ \theta_4(g)=3 $ which implies $ deg_{\mathcal{Z}}(g)>7 $, a contradiction. On the other hand, if $ |A_i\cap A_j|= 2 $ and $ |A_i\cap A_k|=2 $, where $ \{i,j,k\} = \{1,2,3\} $, then there exists  $ g\in A_1\cap A_2\cap A_3 $, a contradiction. So, without loss of generality we may assume that $ |A_1\cap A_2|=2 $, $ |A_1\cap A_3|=1$,  $|A_2\cap A_3|=1$,  $|A_1\cap A_2\cap A_3|=0  $ and $ a=xa',\, x^{-1}a=a' $. Let us show that it is impossible that $A_1\cap A_3= \{xa\} $ and $A_2\cap A_3= \{x^{-1}a'\} $. Suppose that  $ xa\in A_1\cap A_3 $. It is clear that $ xa\neq xa''$. On the other hand, if $ xa=a'' $, then contradicting  $ | A_1\cap A_2\cap A_3 |=0$ and if $ xa=x^{-1}a'' $, then $\delta_{(\alpha,\beta)}^{4}>3$, a contradiction. Hence, $ xa=y^{-1}a'' $. Similarly, if $A_2\cap A_3= \{x^{-1}a'\} $, then $ x^{-1}a'=y^{-1}a'' $ which implies $ | A_1\cap A_2\cap A_3 |=1$, a contradiction. Obviously,  $ ya=hg $, $ yx^{-1}a=h'g' $, where $ g,g' $  are distinct elements of $ \{x^{-1}a'',xa'',y^{-1}a''\} $ and $ h,h'\in \{1,x,x^{-1}\} $. If  $ xa=y^{-1}a'' $ (resp. $ x^{-1}a'=y^{-1}a'' $), then $y^{-1}a'=a'' $ (resp. $ y^{-1}a=a'' $) and  therefore according to above discussion   in this case $ \mathcal{Z}(\alpha,\beta) $ has  a vertex of degree  $ 7 $, $ 4 $ vertices of degree $ 6 $ and $ 3 $ vertices of degree $ 5 $. Now, suppose that  $A_1\cap A_3= \{y^{-1}a\} $ and $A_2\cap A_3= \{y^{-1}a'\} $. Then according to before discussion  $ a''\in\{y^{-1}a,y^{-1}a'\} $ and therefore in this case $ \mathcal{Z}(\alpha,\beta) $ has a vertex of degree $ 7 $, $ 4 $ vertices of degree $ 6 $ and $ 3 $ vertices of degree $ 5 $.
The following remark follows from  above discussion.
\begin{lem}\label{degreese}
Let $ \alpha $ be a  zero divisor in $ \mathbb{F}_2[G] $ for a possible torsion-free group $ G $ such that  $ | supp(\alpha)|=4$, $ | S_\alpha|=10$  and $ \beta $ be a mate of $ \alpha $. Then  the following items hold:
 \begin{itemize}
 \item[(i)] If $ | supp(\beta)|=6$, then either the degree of all vertices in $ \mathcal{Z}(\alpha,\beta) $ is $ 3 $ or $ 4 $ or the degree sequence {\rm(}i.e., non-increasing sequence of degrees{\rm)} of $ \mathcal{Z}(\alpha,\beta) $ is $ [5,5,5,4,4,3] $.
 \item[(ii)] If $ | supp(\beta)|=7$, then either the degree of all vertices in $ \mathcal{Z}(\alpha,\beta) $ is $ 3 $ or $ 4 $ or the degree sequence of $ \mathcal{Z}(\alpha,\beta) $ is $ [6,5,5,4,4,3,3] $, $ [6,5,5,4,4,4,4] $, $ [5,5,5,4,3,3,3] $ or $ [5,5,5,4,4,4,3] $.
  \item[(iii)] If $ | supp(\beta)|=8$, then either the degree of all vertices in $ \mathcal{Z}(\alpha,\beta) $ is $ 3 $ or $ 4 $ or the degree sequence of $ \mathcal{Z}(\alpha,\beta) $ is $ [6,5,5,4,3,3,3,3] $, $ [6,5,5,4,4,4,3,3] $, $[6,5,5,4,4,4,4,4] $, $ [5,5,5,4,4,3,3,3] $, $ [5,5,5,4,4,4,4,3] $, $ [6,6,5,5,5,5,4,4] $, $ [5,5,5,5,5,5,4,4] $, $ [6,5,5,5,5,5,4,3] $, $ [6,6,6,6,5,5,4,4] $, $ [6,6,6,6,5,5,3,3] $,  $ [6,6,6,5,5,5,4,3] $,  $ [6,6,5,5,5,5,3,3] $ or  $ [7,6,6,6,6,5,5,5] $.
 \end{itemize}
\end{lem}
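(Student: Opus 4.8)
The plan is to reduce to the normal form and then run a bounded but exhaustive case analysis. By Lemma \ref{83}, since $\mathbb{F}=\mathbb{F}_2$ we may assume $supp(\alpha)=\{1,x,x^{-1},y\}$ with $x,y$ distinct non-trivial, so that all the structural facts about $\mathcal{Z}(\alpha,\beta)$ collected in Remarks \ref{deg12}, \ref{completemathcalz}, \ref{h1} and Lemmas \ref{repetition}, \ref{r} are available. Writing $B=supp(\alpha)$, $C=supp(\beta)$, $n=|C|$, the governing parameter is $\delta:=\delta_{(\alpha,\beta)}^{4}=2n-|BC|$, and by Remark \ref{fieldf1} one has $0\le\delta\le n-5$, so $\delta\le 1,2,3$ according as $n=6,7,8$. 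Each $s\in\Delta_{(\alpha,\beta)}^{4}$ contributes a $4$-element clique $A_s:=\mathcal{V}_{(\alpha,\beta)}(s)=\{s,xs,x^{-1}s,y^{-1}s\}$ of $\mathcal{Z}(\alpha,\beta)$ (Remark \ref{completemathcalz}; these four elements are genuinely distinct since $x,y,x^{-1}$ are distinct non-trivial and $|B|=4$), in which $s$ has even degree and the other three vertices have degree $\ge 5$, while every vertex lying in no such clique has $\theta_4=0$ and hence degree $3$ or $4$ of type $(i)$.

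If $\delta=0$, then $r_{BC}(a)=2$ for every $a\in BC$, so $\theta_4\equiv 0$ and every vertex of $\mathcal{Z}(\alpha,\beta)$ has degree $3$ or $4$; this is the first alternative in each of (i)--(iii). For $\delta\ge1$ I would enumerate the ways the cliques $A_{s_1},\dots,A_{s_\delta}$ intersect: by Corollary \ref{cap1} any two meet in at most two vertices, by part $(2)$ of Lemma \ref{3part} a $2$-element intersection forces the two distinguished elements to be related by multiplication by $x^{\pm1}$, and no vertex lies in three of the cliques, because $\theta_4(g)=3$ would force $deg_{\mathcal{Z}(\alpha,\beta)}(g)\ge 8$, incompatible with $n\le 8$ together with the lower bounds on the degrees of the remaining clique vertices (this is exactly the inclusion--exclusion bookkeeping $|A_{s_1}\cup\cdots\cup A_{s_\delta}|\le n$ carried out in the paragraphs preceding the statement). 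For each admissible intersection pattern the degree multiset of $\mathcal{Z}(\alpha,\beta)$ is read off directly from the clique structure --- degree $4$ of type $(ii)$ at each $s_j$, degree $5$ or $6$ at the remaining clique vertices according to how many cliques they lie in and to which of $x^{\pm1}g,y^{-1}g$ lie in $C$, and degree $3$ or $4$ of type $(i)$ elsewhere --- and the inadmissible multisets are then discarded by two tools: the handshake lemma (an even number of odd-degree vertices), and the forbidden-subgraph results, namely Remark \ref{h1} (no vertex of degree $n-1$ together with a vertex of degree $3$ or $4$ of type $(i)$), Lemma \ref{s4}, Theorem \ref{graphf2} and Theorem \ref{appmatcal}. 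The surviving multisets are precisely those listed.

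Concretely: for $n=6$ only $\delta\le1$ occurs, and $\delta=1$ yields the clique degrees $\{4,5,5,5\}$ (the fourth clique vertex has degree $\ge5$ but $\le n-1=5$, and $s_1$ has even degree $\le 5$, hence $4$ by Lemma \ref{r}) together with two further vertices of degree $3$ or $4$, whereupon the handshake lemma forces exactly one of the latter to have degree $3$, giving $[5,5,5,4,4,3]$ (this item is in fact vacuous by Corollary \ref{F2}, but the argument proves it directly). For $n=7$, $\delta=1$ gives clique degrees $\{4,5,5,d\}$ with $d\in\{5,6\}$ plus three outside vertices of degree $3$ or $4$, and the handshake lemma already cuts the list to the four sequences stated; $\delta=2$ splits on $|A_{s_1}\cap A_{s_2}|\in\{1,2\}$, the value $1$ producing one of the forbidden subgraphs of Theorem \ref{appmatcal}, and the value $2$ producing a degree-$6$ vertex (equal to $n-1$) alongside a degree-$3$-or-$4$-of-type-$(i)$ vertex, contradicting Remark \ref{h1}. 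For $n=8$, $\delta\in\{1,2,3\}$; the cases $\delta=1,2$ are analogous to $n=7$ (now with the looser degree bounds and, for $\delta=2$, the additional sub-case of disjoint cliques), while $\delta=3$ is the delicate one: $|A_{s_1}\cup A_{s_2}\cup A_{s_3}|\le 8$ with no triple intersection pins the pattern to $|A_{s_1}\cap A_{s_2}|=2$, $|A_{s_1}\cap A_{s_3}|=|A_{s_2}\cap A_{s_3}|=1$, after which the multiplicative relations among $xs_j,x^{-1}s_j,y^{-1}s_j$ determine the remaining adjacencies and give $[7,6,6,6,6,5,5,5]$.

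The main obstacle is the bookkeeping rather than any single idea: one must make the split on $\delta$ and on the clique-intersection patterns genuinely exhaustive, and for each branch verify that every degree multiset other than those in the list is killed by parity or by one of the forbidden-subgraph theorems. The $n=8$, $\delta\ge 2$ branches --- where two or three $4$-cliques can overlap in several inequivalent ways, and where a degree-$6$ vertex can a priori coexist with vertices of low degree --- are where the enumeration is heaviest and where care is needed to apply Corollary \ref{cap1}, Lemma \ref{3part} and Remark \ref{h1} correctly.
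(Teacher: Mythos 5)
Your proposal follows essentially the same route as the paper: the paper likewise reduces to $supp(\alpha)=\{1,x,x^{-1},y\}$, splits on $\delta_{(\alpha,\beta)}^{4}=2|C|-|BC|$, analyzes the intersection patterns of the $4$-cliques $\mathcal{V}_{(\alpha,\beta)}(s)$ via Corollary \ref{cap1}, Lemma \ref{3part} and the no-triple-intersection argument, and discards degree multisets using parity, Remark \ref{h1} and the forbidden subgraphs of Theorems \ref{graphf2} and \ref{appmatcal}. The case breakdown you describe (including the elimination of $\delta=2$ for $n=7$ and the pinning of the $\delta=3$, $n=8$ pattern to $[7,6,6,6,6,5,5,5]$) matches the paper's discussion preceding the lemma.
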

It can be seen that the number of all non-isomorphic connected graphs with $ 7 $ and $8$ vertices which satisfy Lemma \ref{degreese} is   $ 51 $ and $ 567 $, respectively. We checked all of these graphs, it follows from Theorems \ref{graphf2}, \ref{appmatcal} and Remarks \ref{deg5}, \ref{deg6} and \ref{deg7} that $ \mathcal{Z}(\alpha,\beta) $ is not isomorphic to any of these graphs. Then, by  above discussion and Theorem \ref{final} we have the following theorem:
\begin{thm}\label{set vertex 10}
Let $ \alpha $ be a  zero divisor in $ \mathbb{F}_2[G] $ for a possible torsion-free group $ G $ with $ |Supp(\alpha)|=4 $, $ |S_\alpha|=10 $ and $ \beta $ be a mate of $ \alpha $. Then the size of the vertex set of $  \mathcal{Z}(\alpha,\beta) $ is at least $ 9 $.
\end{thm}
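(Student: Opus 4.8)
The plan is to reduce Theorem~\ref{set vertex 10} to a finite, computer-assisted check, with Theorem~\ref{final} supplying the easy half. By Lemma~\ref{z} we have $|S_\alpha|\in\{10,12\}$; we are in the case $|S_\alpha|=10$, and Lemma~\ref{83} (the alternative~(i) being the one that occurs over $\mathbb{F}_2$) lets us assume $supp(\alpha)=\{1,x,x^{-1},y\}$ for distinct non-trivial $x,y\in G$, while Definition~\ref{cal} tells us $\mathcal{Z}(\alpha,\beta)$ is a simple connected graph whose vertex set is $supp(\beta)$. Write $B=supp(\alpha)$, $C=supp(\beta)$ and $n=|C|$. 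Theorem~\ref{final} already gives $n\geq 7$, so it suffices to rule out $n=7$ and $n=8$; assume for contradiction that $n\in\{7,8\}$.

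First I would invoke Lemma~\ref{degreese} to pin the degree sequence of $\mathcal{Z}(\alpha,\beta)$ down to a short list: for $n=7$ (resp.\ $n=8$) it is either ``all degrees in $\{3,4\}$'' or one of the four (resp.\ thirteen) sequences listed there. When all degrees lie in $\{3,4\}$ and $n=7$ one argues directly: a vertex of degree $4$ of type~(ii) would, by Remark~\ref{completemathcalz}, force an adjacent vertex of degree $\geq 5$, so $\Delta_{(\alpha,\beta)}^{4}=\varnothing$, hence $|BC|=2n$ and $r_{BC}(a)=2$ for every $a\in BC$; then $\alpha'=\sum_{b\in B}b$ and $\beta'=\sum_{c\in C}c$ are non-zero elements of $\mathbb{F}_2[G]$ with $\alpha'\beta'=0$ and $|supp(\beta')|=7$, contradicting \cite[Theorem~1.3]{PS}. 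For $n=8$ the graphs with all degrees in $\{3,4\}$ do \emph{not} yield to this reduction---it only gives $|supp(\beta)|\geq 8$---so they have to be carried through the enumeration together with the rest.

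Next I would enumerate, with GAP~\cite{a9}, all non-isomorphic connected simple graphs realising each admissible degree sequence---there are $51$ of them for $n=7$ and $567$ for $n=8$---and for each candidate graph $\Gamma$ verify that it cannot be $\mathcal{Z}(\alpha,\beta)$. Concretely, for every such $\Gamma$ one attaches the extra data that $\mathcal{Z}(\alpha,\beta)$ carries beyond its underlying simple graph---which degree-$4$ vertices are of type~(i) and which of type~(ii), and what the sets $\mathcal{V}_{(\alpha,\beta)}(s)$ (equivalently the complete graphs $K[s]$) are for $s\in\Delta_{(\alpha,\beta)}^{4}$---and then checks that $\Gamma$ must contain a subgraph forbidden by Theorem~\ref{graphf2} (one of $\Gamma_1,\Gamma_2,\Gamma_3$ of Figure~\ref{36} or one of the graphs of Figure~\ref{tt}, with the prescribed degrees and types on the gray and white vertices), or a subgraph forbidden by Theorem~\ref{appmatcal} (Figure~\ref{forbidden mathcal}, with a vertex of degree $4$ of type~(ii) and a vertex of degree $5$), or else that the local structure of some $K[s]$ around a vertex of degree $5$, $6$ or $7$ contradicts Remark~\ref{deg5}, \ref{deg6} or \ref{deg7}. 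Since each candidate is eliminated this way, $\mathcal{Z}(\alpha,\beta)$ is none of them; hence $n\geq 9$, and since the vertex set of $\mathcal{Z}(\alpha,\beta)$ is $supp(\beta)$ this is exactly the asserted bound.

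The hard part will be the enumeration step: it is finite but involves $618$ graphs, and the condition to be verified for each is not a plain graph-theoretic property but depends on the refined type-and-incidence information recorded by $\mathcal{Z}(\alpha,\beta)$; organising the bookkeeping so that every candidate is killed by one of the structural results above, rather than by an ad hoc argument, is where care (and the GAP computation) is essential. A secondary awkwardness is the $n=8$, all-degrees-in-$\{3,4\}$ case noted above, which must be absorbed into the enumeration because the quick reduction to \cite[Theorem~1.3]{PS} is not available there.
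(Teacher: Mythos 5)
Your proposal is correct and follows essentially the same route as the paper: reduce to $supp(\alpha)=\{1,x,x^{-1},y\}$ via Lemma~\ref{83}, constrain the degree sequences as in Lemma~\ref{degreese}, and then eliminate the $51$ (resp.\ $567$) candidate connected graphs on $7$ (resp.\ $8$) vertices by the forbidden subgraphs of Theorems~\ref{graphf2} and~\ref{appmatcal} together with Remarks~\ref{deg5}--\ref{deg7}. The only cosmetic difference is your separate treatment of the $n=7$, all-degrees-in-$\{3,4\}$ case, which is in any event already subsumed by the bound $|supp(\beta)|\geq 8$ of \cite[Theorem~1.3]{PS} that the paper records at the start of the section.
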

\subsection{\textbf{The size of $ supp(\beta)$ must be  equal or greater than  $ 9 $} }
The following corollary directly follows  from  Theorem \ref{set vertex}.
\begin{cor}\label{end1}
Let $ \alpha $ be a  zero divisor in $ \mathbb{F}_2[G] $ for a possible torsion-free group $ G $ with $ | supp(\alpha) | = 4,$   $ | S_{\alpha} |=12$ and $ \beta $ be a mate of $ \alpha $. Then $ | supp(\beta) |\geq 9 $. 
\end{cor}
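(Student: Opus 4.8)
The plan is to observe that Corollary \ref{end1} is an immediate consequence of Theorem \ref{set vertex}, so the ``proof'' is really just a translation of notation. Recall from the definition of the zero divisor graph in Section \ref{section2} that the vertex set of $Z(\alpha,\beta)$ is, by construction, exactly $supp(\beta)$; hence the size of the vertex set of $Z(\alpha,\beta)$ equals $|supp(\beta)|$. Under the hypotheses of the corollary --- $G$ torsion-free, $\mathbb{F}=\mathbb{F}_2$, $|supp(\alpha)|=4$, $|S_\alpha|=12$, and $\beta$ a mate of $\alpha$ --- Theorem \ref{set vertex} asserts that this vertex set has size at least $9$. Combining the two statements gives $|supp(\beta)|\geq 9$, which is precisely the assertion of the corollary. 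So the single step is: invoke Theorem \ref{set vertex}, then read off $|V_{Z(\alpha,\beta)}|=|supp(\beta)|$.

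There is no genuine obstacle here, since all of the substantive content has already been discharged in the proof of Theorem \ref{set vertex}: the reduction (via Theorem \ref{4regularlem} and Corollary \ref{4regular}) to studying connected $4$-regular graphs, the enumeration of all such graphs on at most $9$ vertices, and the verification using Table \ref{forbidden} that every one of them with fewer than $9$ vertices contains one of the forbidden subgraphs of Figure \ref{606}. The only thing worth spelling out explicitly in the write-up is the identification of ``size of the vertex set of $Z(\alpha,\beta)$'' with ``$|supp(\beta)|$'', together with the remark that the hypothesis ``$\beta$ a mate of $\alpha$'' is exactly the hypothesis under which Theorem \ref{set vertex} was proved, so no further reduction is needed. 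Accordingly I would keep the proof to one line, citing Theorem \ref{set vertex} directly.
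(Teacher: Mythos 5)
Your proposal is correct and matches the paper exactly: the paper also derives this corollary directly from Theorem \ref{set vertex}, using only the fact that the vertex set of $Z(\alpha,\beta)$ is $supp(\beta)$ by definition. Nothing further is needed.
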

The following corollary follows immediately from  Theorem \ref{set vertex 10}.
\begin{cor}\label{end2}
Let $ \alpha $ be a  zero divisor in $ \mathbb{F}_2[G] $ for a possible torsion-free group $ G $ with $ | supp(\alpha)|=4
 $, $ | S_\alpha|=10$  and $ \beta $ be a mate of $ \alpha $. Then $  | supp(\beta)|\geq 9 $.
\end{cor}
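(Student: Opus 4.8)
The plan is to read the claim off directly from Theorem~\ref{set vertex 10}, after unwinding one definition. The only preliminary point is the trivial observation that the graph $\mathcal{Z}(\alpha,\beta)$ of Definition~\ref{cal} is obtained from $Z(\alpha,\beta)$ purely by collapsing each bundle of parallel edges to a single edge, so it has the \emph{same} vertex set as $Z(\alpha,\beta)$; since the vertex set of $Z(\alpha,\beta)$ is $supp(\beta)$, we get the identity $|supp(\beta)|=|\mathcal{V}_{\mathcal{Z}(\alpha,\beta)}|$. Here one uses that, by Lemma~\ref{83}, the hypotheses $\mathbb{F}=\mathbb{F}_2$ and $|S_\alpha|=10$ permit the normalisation $supp(\alpha)=\{1,x,x^{-1},y\}$ with $x,y$ distinct nontrivial, so that $\mathcal{Z}(\alpha,\beta)$ is meaningful and $\beta$ remains a mate of $\alpha$.

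Then I would simply invoke Theorem~\ref{set vertex 10}, which under exactly these hypotheses asserts that the size of the vertex set of $\mathcal{Z}(\alpha,\beta)$ is at least $9$. Combining this with the identity of the previous paragraph yields $|supp(\beta)|\geq 9$, which is precisely the statement of the corollary.

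There is no real obstacle inside the corollary itself; it is a one-line consequence of Theorem~\ref{set vertex 10}. All of the substance lies upstream: in the degree-sequence restrictions on $\mathcal{Z}(\alpha,\beta)$ collected in Lemma~\ref{degreese} (built from Remarks~\ref{deg12} and~\ref{completemathcalz} and the case-by-case analysis of vertex degrees), in the forbidden-subgraph results of Theorems~\ref{graphf2} and~\ref{appmatcal} together with Remarks~\ref{deg5}, \ref{deg6}, \ref{deg7}, and in the computer-assisted verification that none of the $51$ connected graphs on $7$ vertices and $567$ connected graphs on $8$ vertices compatible with Lemma~\ref{degreese} can be realised as $\mathcal{Z}(\alpha,\beta)$. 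That verification, together with Theorem~\ref{final} ruling out $|supp(\beta)|\le 6$, is what forces $|supp(\beta)|\ge 9$ and hence powers the present corollary.
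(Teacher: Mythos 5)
Your proposal is correct and matches the paper's own proof: the paper derives Corollary~\ref{end2} immediately from Theorem~\ref{set vertex 10}, using exactly the observation that $\mathcal{Z}(\alpha,\beta)$ shares its vertex set $supp(\beta)$ with $Z(\alpha,\beta)$ since only multiple edges are collapsed. Your remarks about where the real work lies (Lemma~\ref{degreese}, Theorems~\ref{graphf2} and~\ref{appmatcal}, and the graph-by-graph verification) accurately describe the upstream content and do not change the fact that the corollary itself is the one-line deduction you give.
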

Combining  Lemma \ref{z} and Corollaries  \ref{end1} and \ref{end2}, we obtain the following theorem:
\begin{thm}
Let $ \alpha $ be a  zero divisor in $ \mathbb{F}_2[G] $ for a possible torsion-free group $ G $ with $ | supp(\alpha)|=4 $ and $ \beta $ be a mate of $ \alpha $. Then $  | supp(\beta)|\geq 9 $.
\end{thm}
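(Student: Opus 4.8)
The plan is to reduce the statement to the two cases already treated, according to the size of $S_\alpha$. By Lemma \ref{z}, since $\alpha$ is a zero divisor in $\mathbb{F}_2[G]$ with $|supp(\alpha)|=4$, we have $|S_\alpha|\in\{10,12\}$, so it suffices to bound $|supp(\beta)|$ from below in each of these two cases and then take the minimum of the two bounds.

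First I would dispose of the case $|S_\alpha|=12$. Here the relevant input is Corollary \ref{end1} (itself a consequence of Theorem \ref{set vertex}), which already asserts $|supp(\beta)|\geq 9$ under exactly the hypotheses $|supp(\alpha)|=4$, $|S_\alpha|=12$, $\beta$ a mate of $\alpha$, over $\mathbb{F}_2$. So this case is immediate once the dichotomy is in place; nothing further is needed.

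Next I would treat the case $|S_\alpha|=10$. This is handled by Corollary \ref{end2}, which follows from Theorem \ref{set vertex 10}; again it gives precisely $|supp(\beta)|\geq 9$ under the same standing hypotheses. Combining the two cases, in all situations $|supp(\beta)|\geq 9$, which is the claim. (If one wants the conclusion for an arbitrary $\beta'$ with $\alpha\beta'=0$ rather than only for a mate, one simply notes that by definition of a mate $|supp(\beta')|\geq|supp(\beta)|\geq 9$.)

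The honest remark is that this final theorem carries essentially no content of its own: it is a bookkeeping step assembling the two hard analyses. The genuine obstacle lies upstream — in the case $|S_\alpha|=12$ it is the reduction (via Corollary \ref{4regular} and Theorem \ref{4regularlem}) to a finite computer-assisted check over all non-isomorphic connected $4$-regular graphs on $n\le 8$ vertices together with the forbidden-subgraph list of Figure \ref{606}, and in the case $|S_\alpha|=10$ it is the intricate degree-sequence bookkeeping for $\mathcal{Z}(\alpha,\beta)$ (Lemma \ref{degreese}) combined with the forbidden-subgraph results of Theorems \ref{graphf2} and \ref{appmatcal} and the structural Remarks \ref{deg5}, \ref{deg6}, \ref{deg7}. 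Given those, the present statement follows formally.

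\begin{proof}
By Lemma \ref{z}, $|S_\alpha|\in\{10,12\}$. If $|S_\alpha|=12$, then $|supp(\beta)|\geq 9$ by Corollary \ref{end1}. If $|S_\alpha|=10$, then $|supp(\beta)|\geq 9$ by Corollary \ref{end2}. In either case $|supp(\beta)|\geq 9$.
\end{proof}
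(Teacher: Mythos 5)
Your proof is correct and matches the paper's own argument exactly: the paper likewise obtains this theorem by combining Lemma \ref{z} with Corollaries \ref{end1} and \ref{end2} according to whether $|S_\alpha|=12$ or $|S_\alpha|=10$. Nothing further is needed.
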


\small\noindent{\bf Alireza Abdollahi} \\
Department of Mathematics, University of Isfahan, Isfahan 81746-73441,
 Iran,\\ and \\ School of Mathematics, Institute for Research in Fundamental Sciences (IPM), P.O.Box 19395-5746, Tehran, Iran\\
E-mail address: {\tt a.abdollahi@math.ui.ac.ir}\\

\noindent {\bf Fatemeh Jafari} \\
 Department of Mathematics, University of Isfahan, Isfahan 81746-73441, Iran.\\
E-mail address: {\tt f.jafari@sci.ui.ac.ir}
\section{Appendix}\label{app12}
In Theorems \ref{55} and \ref{55unit}, we determine some graphs as forbidden subgraphs of $ Z(\alpha,\beta) $ and $ U(\alpha,\beta) $, where $ |S_\alpha|=12 $,  without details. In this section we give details about finding these graphs.  Before we do this, we need some explanation  which are stated below.\\
In this section, let    $ \alpha$ be   a  zero divisor or a unit in $\mathbb{F}[G]$ for a possible torsion free group $G$ and arbitrary field $\mathbb{F}$ with  $ |supp(\alpha)|=4 $, $  | S_\alpha |=12 $ and $ \beta $ be a mate of $ \alpha $. By Remarks \ref{iden} and \ref{idenunit}, we may assume that $ 1\in supp(\alpha) $ and $ G=\left\langle supp(\alpha)\right\rangle=\left\langle supp(\beta)\right\rangle  $. Since $G$ is not  abelian \cite[Theorem 26.2]{PI} or isomorphic to the Klein bottle group (see Remark \ref{klein}),  Theorem \ref{01} implies the following: 
\begin{rem}\label{one}
Let  $ \alpha $ be a zero divisor or a unit in $\mathbb{F}[G]$ for a possible torsion free group $G$ and arbitrary field $\mathbb{F}$ with  $ |supp(\alpha)|=4 $ and $  | S_\alpha |=12 $. If $ supp(\alpha)=\{1,x,y,z\} $, where $ x,y,z $ are pairwise distinct non-trivial elements of $ G$, then   $\{ x,y,z\} $ is not any of  sets $\{a,a^{-1},b\},\{ a,a^2,b\},\{a,b,ab^{-1}a\}$ or $\{a,b,ab\}$ for some $a,b\in \{ x,y,z \}$. 
\end{rem}
 Suppose  that $ C $ is a cycle of length $ n $ as Figure \ref{cn} in $ Z(\alpha,\beta) $ or $ U(\alpha,\beta) $. Suppose further that $ \{g_1,\ldots,g_n\}\subseteq supp(\beta) $ is the  vertex set of $ C $ such that $ g_i\sim g_{i+1} $ for all $ i\in \{1,\ldots,n-1\} $ and $ g_1\sim g_n $. By an arrangement $ l $ of the vertex set $C$, we mean a sequence of all vertices as $x_1,\ldots,x_k$ such that $ x_i\sim x_{i+1} $ for all $ i\in \{1,\ldots,n-1\} $ and $ x_1\sim x_n $.
\begin{figure}[htp]
\begin{tikzpicture}
\draw (0,-1.25) .. controls (-0.700,-1.25) and (-1.25,-0.7) .. (-1.25,0)
.. controls (-1.25,0.7) and (-0.7,1.25) .. (0,1.25)
.. controls (0.7,1.25) and (1.25,0.7) .. (1.25,0);
\draw (1.25,0)[dashed] .. controls (1.25,-0.7) and (0.7,-1.25) .. (0,-1.25);
\draw [fill] (0,-1.25) circle
[radius=0.09] node  [below]  {$g_{n-2}$};
\draw [fill] (-.95,-.8) circle
[radius=0.09] node  [left]  {$g_{n-1}$};
\draw [fill] (-1.20,.4) circle
[radius=0.09] node  [left]  {$g_{n}$};
\draw [fill] (-.5,1.15) circle
[radius=0.09] node  [above]  {$g_{1}$};
\draw [fill] (.8,1) circle
[radius=0.09] node  [above]  {$g_{2}$};
\draw [fill] (1.25,0) circle
[radius=0.09] node  [right]  {$g_{3}$};
\end{tikzpicture}
\caption{A cycle of length $ n $ in $ Z(\alpha,\beta) $ or $ U(\alpha,\beta) $.}\label{cn}
\end{figure}
Since $ g_i\sim g_{i+1} $ for all $ i\in \{1,\ldots,n-1\} $ and $ g_1\sim g_n $ for each $ i\in \{1,2,\ldots,n\} $, there exist distinct elements $ h_i,h'_i\in supp(\alpha) $ satisfying the following relations:
\begin{equation}\label{cycle}
C: \left\{
\begin{array}{ll}
 h_{1} g_{1}=h'_{1}g_{2} \rightarrow g_{1}g_{2}^{-1}= h^{-1}_{1}h'_{1}\\  
h_{2}g_{2}=h'_{2}g_{3} \rightarrow g_{2} g_{3}^{-1}={h}_{2}^{-1}h'_{2}\\
\vdots\\
h_{n}g_{n}=h'_{n}g_{1} \rightarrow g_{n}g_{1}^{-1}=h_{n}^{-1}h'_{n}
\end{array} \right. .
\end{equation} 
 We assign a $ 2n $-tuple $ T_C^l=[h_1,h'_1,h_2,h'_2,\ldots ,h_n,h'_n] $  to the cycle $ C $ corresponding to the above arrangement $ l $ of the vertex set of $C$. We denote by $ R( T_C^l) $ the above set $R$ of relations.
It can be derived from the relations \ref{cycle} that $ r( T_C^l):=(h^{-1}_{1}h'_{1})({h}_{2}^{-1}h'_{2})\ldots (h_{n}^{-1}h'_{n}) $ is equal to 1. It
follows from Lemmas \ref{2} and \ref{2unit} that if $ T_C^{l'} $ is  a $ 2n $-tuple of $ C $ corresponding to another arrangement $ l' $ of the vertex set of $C$, then $ T_C^{l'} $ is one of the following $ 2n $-tuples:
 \begin{center}
  \begin{tabular}{c}
  $ [h_1,h'_1,h_2,h'_2,\ldots ,h_n,h'_n] $,\\
 $ [h_2,h'_2,h_3,h'_3,\ldots ,h_n,h'_n,h_1,h'_1] $,\\
 $ [h_3,h'_3,h_4,h'_4,\ldots ,h_1,h'_1,h_2,h'_2] $,\\
 $ \vdots $\\
 $ [h_n,h'_n,h_1,h'_1,\ldots ,h_{n-2},h'_{n-2},h_{n-1},h'_{n-1}] $,\\
$ [h'_n,h_n,h'_{n-1},h_{n-1},\ldots ,h'_2,h_2,h'_1,h_1] $,\\
$ \vdots $\\
$ [h'_1,h_1,h'_{n},h_{n},\ldots ,h_3,h'_3,h_2,h'_2] $.
 \end{tabular}
 \end{center}
 The set of all such $ 2n $-tuples will be denoted by $ \mathcal{T}(C) $.  Also, $ \mathcal{R}(T)=\{R(C)|T\in  \mathcal{T}(C) \}$.
 \begin{defn}
Let $ \Gamma $ be a zero divisor graph or a unit graph on a pair of elements $ (\alpha,\beta) $ in $\mathbb{F}[G]$ for a possible torsion free group $G$ and arbitrary field $\mathbb{F}$ such that   $ |supp(\alpha)|=4 $ and $  | S_\alpha |=12 $. Let $ C $ be a cycle of length $ n $ in $ \Gamma $. Since $ r(T_1)=1 $ if and only if $ r(T_2)=1 $, for all $ T_1,T_2\in  \mathcal{T}(C) $, a member of $ \{r(T)|T \in  \mathcal{T}(C) \} $ is given as a representative and denoted by $ r(C) $. Also, $ r(C)=1 $  is called the relation of $ C $.
\end{defn}
 \begin{defn}\label{eqquidef}
Let $ \Gamma $ be a zero divisor graph or a unit graph on a pair of elements $ (\alpha,\beta) $ in $\mathbb{F}[G]$ for a possible torsion free group $G$ and arbitrary field $\mathbb{F}$ such that   $ |supp(\alpha)|=4 $ and $  | S_\alpha |=12 $. Let $ C $ and $ C' $ be two cycles of length $ n $ in $  \Gamma$. We say that these two
cycles are equivalent, if $ \mathcal{T}(C)\cap \mathcal{T}(C')\neq\varnothing $ and otherwise are non-equivalent.
\end{defn}
 \begin{rem}\label{eqquirem}
 Suppose  that $ C $ and $ C' $ are two cycles of length $ n $ in $ \Gamma $. Then  $ C $ and $ C' $ are equivalent if $ \mathcal{T}(C)=\mathcal{T}(C')$.
 \end{rem}
 \begin{rem}\label{tuple}
 Let  $ \alpha $ be a zero divisor in $\mathbb{F}[G]$ for a possible torsion free group $G$ and arbitrary field $\mathbb{F}$ with  $ |supp(\alpha)|=4 $ and $  | S_\alpha |=12 $ and $ \beta $ be a mate of $ \alpha $. By Remark {\rm\ref{deg12}}, if $ g $ is a vertex of degree $ 4 $ in $ Z(\alpha,\beta) $, then $ \theta_{3}(g)=0 $ and  $ \theta_{4}(g)=0 $. Thus, if $ C $ is a cycle of length $ n $  on vertices of degree $ 4 $ in $ Z(\alpha,\beta) $ and $ T_1=[h_1,h'_1,h_2,h'_2,\ldots ,h_n,h'_n] $ is  a $ 2n $-tuple of $ C $, then $ h'_i\neq h_{i+1} $ for all $ i\in \{1,\ldots,n-1\} $ and $ h'_n\neq h_{1} $.
 \end{rem}
 \begin{rem}\label{tupleunit}
 Let  $ \alpha $ be a unit in $\mathbb{F}[G]$ for a possible torsion free group $G$ and arbitrary field $\mathbb{F}$ with  $ |supp(\alpha)|=4 $ and $  | S_\alpha |=12 $ and $ \beta $ be a mate of $ \alpha $. Let $ F=\{g|g\in \mathcal{V}_{U(\alpha,\beta)}, deg_U(g)=4\} $. In view of Proof of Lemma  {\rm\ref{degreeconnunit}}, either for each  $ g\in F $, $ \theta_{3}(g)=0 $ and $ \theta_{4}(g)=0 $ or there exists  $ g'\in F $ such that  $ \theta_{3}(g')=1 $ and $ \theta_{4}(g')=0 $ and for each  $ g\in F\setminus \{g'\} $, $ \theta_{3}(g)=0 $ and $ \theta_{4}(g)=0 $. Hence, if $ C $ is a cycle of length $ n $  on vertices of degree $ 4 $ in $ U(\alpha,\beta) $ and $ T_1=[h_1,h'_1,h_2,h'_2,\ldots ,h_n,h'_n] $ is  a $ 2n $-tuple of $ C $, then $ h'_i\neq h_{i+1} $ for all $ i\in \{1,\ldots,n-1\} $ and $ h'_n\neq h_{1} $.
 \end{rem}
  \begin{rem}
 For integers $m$ and $ n $, the Baumslag-Solitar group $BS(m,n)$
is the group given by the presentation $\left\langle a,b|ba^mb^{-1}=a^n\right\rangle$. According to   \cite[Remark 3.17]{AT} every torsion-free quotient of $BS(1,n)$  satisfies both Conjectures  {\rm\ref{KZDC}} and {\rm\ref{uk}}.
 \end{rem} 
 \begin{rem}
Let  $ \Gamma $ be a graph such that the degree of all its vertices is at most $ 12 $ and   $ B $  be the set of all cycles in $ \Gamma $. Let $\mathcal{G}(\Gamma):=\left\langle supp(\alpha) |r(C),C\in B\right\rangle.$
If at least one of the following cases happens, then clearly  $ \Gamma $ is a forbidden subgraph of $ Z(\alpha,\beta) $ and $ U(\alpha,\beta) $:
\begin{itemize}
\item[1.]$ \mathcal{G}(\Gamma) $ is an abelian group.
\item[3.]$ \mathcal{G}(\Gamma)$ is a quotient  group  of
 $ BS(1,n)$.
\item[4.]$ \mathcal{G}(\Gamma) $ has a non-trivial torsion element.
\end{itemize} 
 \end{rem}
\textbf{Proof of Theorems \ref{55} and \ref{55unit}:}
\begin{figure}
\subfloat{$ H_1 $}
\begin{tikzpicture}[scale=.75]
\draw [fill] (.75,1.25) circle
[radius=0.1] node  [right]  {$g_1$};
\draw [fill] (-.5,0) circle
[radius=0.1] node  [below]  {$g_3$};
\draw [fill] (.75,0) circle
[radius=0.1] node  [below]  {$g_2$};
\draw [fill] (2,0) circle
[radius=0.1] node  [below]  {$g_4$};
\draw  (2,0) --  (.75,1.2) ;
\draw   (.75,0)--(1.9,0);
\draw   (.75,1.25)--(.75,.1);
\draw   (.75,0) -- (-.5,0) -- (.75,1.25);
\draw   (.75,0) -- (-.4,0);
\end{tikzpicture}
\subfloat{$ K_{1,1,3} $}
\begin{tikzpicture}[scale=.75]
\draw [fill] (0,0) circle
[radius=0.1] node  [left]  {$g_1$};
\draw [fill] (1.5,0) circle
[radius=0.1] node  [right]  {$g_2$};
\draw [fill] (-.75,-1.25) circle
[radius=0.1] node  [below]  {$g_3$};
\draw [fill] (.75,-1.25) circle
[radius=0.1] node  [below]  {$g_4$};
\draw [fill] (2,-1.25) circle
[radius=0.1] node  [below]  {$g_5$};
\draw   (-.75,-1.25) -- (0,-.1);
\draw   (.75,-1.25) -- (0,-.1);
\draw   (2,-1.25) -- (0,-.1);
\draw   (1.5,0) -- (.75,-1.2);
\draw   (0,0) -- (1.4,0);
\draw   (1.5,0) -- (2,-1.2);
\draw   (1.5,0) -- (-.75,-1.2);
\end{tikzpicture}
\subfloat{$  K_{1,2,2} $}
\begin{tikzpicture}[scale=.75]
\draw [fill] (0,0) circle
[radius=0.1] node  [left]  {$g_2$};
\draw [fill] (2,0) circle
[radius=0.1] node  [right]  {$g_3$};
\draw [fill] (0,-1.25) circle
[radius=0.1] node  [below]  {$g_4$};
\draw [fill] (2,-1.25) circle
[radius=0.1] node  [below]  {$g_5$};
\draw [fill] (1,-.65) circle
[radius=0.1] node  [below]  {$g_1$};
\draw (2,-1.25)--(2,-.1);
\draw (2,-1.25)--(.1,-1.25);
\draw  (0,0)--(1.9,0);
\draw (0,0)--(0,-1.2);
\draw  (1,-.65)--(0,-.05);
\draw  (2,-.05)--(1.05,-.65);
\draw  (0,-1.25)--(.95,-.7);
\draw  (1,-.65)--(2,-1.2) ;
\end{tikzpicture}
\caption{   $H_1$ and two forbidden subgraphs of $ Z(\alpha,\beta) $  and $ U(\alpha,\beta) $ which contains it, where the degrees of all vertices of any subgraph in $ Z(\alpha,\beta) $  and $ U(\alpha,\beta) $ must be 4. }\label{707}
\end{figure}
Let $ \Gamma $ be a zero divisor graph or a unit graph on a pair of elements $ (\alpha,\beta) $ in $\mathbb{F}[G]$ for a possible torsion free group $G$ and arbitrary field $\mathbb{F}$ such that   $ supp(\alpha)=\{1,x,y,z\} $, where $ x,y,z $ are pairwise distinct non-trivial elements of $ G $, and $  | S_\alpha |=12 $. In the sequel, we give details about finding all graphs in Figure \ref{606} as forbidden subgraphs of $ \Gamma $. \\ 
 $\mathbf{K_{1,1,3}}$: Suppose that $ C $ is a cycle of length $3$ in $ \Gamma $ with the vertex set  $\mathcal{V}_C=\{g_1,g_2,g_3\}\subset supp(\beta) $  such that  $deg_\Gamma(g)=4$  for all $ g\in \mathcal{V}_C  $.  Suppose further that $ T\in \mathcal{T}(C) $ and $ T=[h_1,h'_1,h_2,h'_2,h_3,h'_3] $. 
 Remarks \ref{tuple} and \ref{tupleunit} imply that $ h_1\neq h'_1\neq h_2\neq h'_2\neq h_3\neq h'_3\neq h_1 $. Hence,  by using GAP \cite{a9}, it can be seen that there exist $126$ non-equivalent cases for $ T $. The relations of such  non-equivalent cases  are listed in Table \ref{t1}. Among these cases, the cases  $1,56,93,114,123,126$   marked by "*"s in Table \ref{t1} lead to contradictions because  each of these cases leads to $ \mathcal{G}(C) $ having a non-trivial torsion element. Hence, there are $ 120 $ cases which may lead
to the existence of $ C $ in $ \Gamma $.
\begin{longtable}{|l|l|l|l|}
\caption{Non-equivalent relations of a cycle of length $3$ on vertices of degrees $ 4 $ in $ Z(\alpha,\beta) $ and $U(\alpha,\beta) $.}\label{t1}\\
\hline
\endfirsthead
\multicolumn{4}{c}%
{{\bfseries \tablename\ \thetable{} -- continued from previous page}} \\\hline
\endhead

\hline \multicolumn{4}{|r|}{{Continued on next page}} \\ \hline
\endfoot

\hline \hline
\endlastfoot

$1.x^3=1^{*}$&
$2.x^2y=1$&
$3.x^2z=1$&
$4.x^2y^{-1}x=1$\\
$5.x^2y^{-1}z=1$&
$6.x^2z^{-1}x=1$&
$7.x^2z^{-1}y=1$&
$8.xy^2=1$\\
$9.xyz=1$&
$10.xyx^{-1}y=1$&
$11.xyx^{-1}z=1$&
$12.xyz^{-1}x=1$\\
$13.xyz^{-1}y=1$&
$14.xzy=1$&
$15.xz^2=1$&
$16.xzx^{-1}y=1$\\
$17.xzx^{-1}z=1$&
$18.xzy^{-1}x=1$&
$19.xzy^{-1}z=1$&
$20.xy^{-1}x^{-1}y=1$\\
$21.xy^{-1}x^{-1}z=1$&
$22.xy^{-2}x=1$&
$23.xy^{-2}z=1$&
$24.xy^{-1}z^{-1}x=1$\\
$25.xy^{-1}z^{-1}y=1$&
$26.xy^{-1}xy=1$&
$27.xy^{-1}xz=1$&
$28.(xy^{-1})^2x=1$\\
$29.(xy^{-1})^2z=1$&
$30.xy^{-1}xz^{-1}x=1$&
$31.xy^{-1}xz^{-1}y=1$&
$32.xy^{-1}zy=1$\\
$33.xy^{-1}z^2=1$&
$34.xy^{-1}zx^{-1}y=1$&
$35.xy^{-1}zx^{-1}z=1$&
$36.xy^{-1}zy^{-1}x=1$\\
$37.x(y^{-1}z)^2=1$&
$38.xz^{-1}x^{-1}y=1$&
$39.xz^{-1}x^{-1}z=1$&
$40.xz^{-1}y^{-1}x=1$\\
$41.xz^{-1}y^{-1}z=1$&
$42.xz^{-2}x=1$&
$43.xz^{-2}y=1$&
$44.xz^{-1}xy=1$\\
$45.xz^{-1}xz=1$&
$46.xz^{-1}xy^{-1}x=1$&
$47.xz^{-1}xy^{-1}z=1$&
$48.(xz^{-1})^2x=1$\\
$49.(xz^{-1})^2y=1$&
$50.xz^{-1}y^2=1$&
$51.xz^{-1}yz=1$&
$52.xz^{-1}yx^{-1}y=1$\\
$53.xz^{-1}yx^{-1}z=1$&
$54.xz^{-1}yz^{-1}x=1$&
$55.x(z^{-1}y)^2=1$&
$56.y^3=1^{*}$\\
$57.y^2z=1$&
$58.y^2x^{-1}y=1$&
$59.y^2x^{-1}z=1$&
$60.y^2z^{-1}x=1$\\
$61.y^2z^{-1}y=1$&
$62.yz^2=1$&
$63.yzx^{-1}y=1$&
$64.yzx^{-1}z=1$\\
$65.yzy^{-1}x=1$&
$66.yzy^{-1}z=1$&
$67.yx^{-1}yz=1$&
$68.(yx^{-1})^2y=1$\\
$69.(yx^{-1})^2z=1$&
$70.yx^{-1}yz^{-1}x=1$&
$71.yx^{-1}yz^{-1}y=1$&
$72.yx^{-1}z^2=1$\\
$73.yx^{-1}zx^{-1}y=1$&
$74.y(x^{-1}z)^2=1$&
$75.yx^{-1}zy^{-1}x=1$&
$76.yx^{-1}zy^{-1}z=1$\\
$77.yz^{-1}x^{-1}y=1$&
$78.yz^{-1}x^{-1}z=1$&
$79.yz^{-1}y^{-1}x=1$&
$80.yz^{-1}y^{-1}z=1$\\
$81.yz^{-2}x=1$&
$82.yz^{-2}y=1$&
$83.yz^{-1}xz=1$&
$84.yz^{-1}xy^{-1}x=1$\\
$85.yz^{-1}xy^{-1}z=1$&
$86.y(z^{-1}x)^2=1$&
$87.yz^{-1}xz^{-1}y=1$&
$88.yz^{-1}yz=1$\\
$89.yz^{-1}yx^{-1}y=1$&
$90.yz^{-1}yx^{-1}z=1$&
$91.(yz^{-1})^2x=1$&
$92.(yz^{-1})^2y=1$\\
$93.z^3=1^{*}$&
$94.z^2x^{-1}y=1$&
$95.z^2x^{-1}z=1$&
$96.z^2y^{-1}x=1$\\
$97.z^2y^{-1}z=1$&
$98.z(x^{-1}y)^2=1$&
$99.zx^{-1}yx^{-1}z=1$&
$100.zx^{-1}yz^{-1}x=1$\\
$101.zx^{-1}yz^{-1}y=1$&
$102.(zx^{-1})^2y=1$&
$103.(zx^{-1})^2z=1$&
$104.zx^{-1}zy^{-1}x=1$\\
$105.zx^{-1}zy^{-1}z=1$&
$106.z(y^{-1}x)^2=1$&
$107.zy^{-1}xy^{-1}z=1$&
$108.zy^{-1}xz^{-1}x=1$\\
$109.zy^{-1}xz^{-1}y=1$&
$110.zy^{-1}zx^{-1}y=1$&
$111.zy^{-1}zx^{-1}z=1$&
$112.(zy^{-1})^2x=1$\\
$113.(zy^{-1})^2z=1$&
$114.(x^{-1}y)^3=1^{*}$&
$115.(x^{-1}y)^2x^{-1}z=1$&
$116.(x^{-1}y)^2z^{-1}y=1$\\
$117.x^{-1}y(x^{-1}z)^2=1$&
$118.x^{-1}yx^{-1}zy^{-1}z=1$&
$119.x^{-1}yz^{-1}xy^{-1}z=1$&
$120.x^{-1}yz^{-1}xz^{-1}y=1$\\
$121.x^{-1}yz^{-1}yx^{-1}z=1$&
$122.x^{-1}(yz^{-1})^2y=1$&
$123.(x^{-1}z)^3=1^{*}$&
$124.(x^{-1}z)^2y^{-1}z=1$\\
$125.x^{-1}(zy^{-1})^2z=1$&
$126.(y^{-1}z)^3=1^{*}$&&\\
\hline
\end{longtable}
 Now, suppose that $ C' $ is another cycle of length $  3$  in $\Gamma$ with vertex set  $\mathcal{V}_{C'} =\{ g_1,g_2,g_4 \}\subset supp(\beta)$ (i.e.,  $ C $ and $ C' $ have exactly an edge in common (see the graph $H_1$ of Figure \ref{707})), such that  $deg_\Gamma(g)=4$  for all $ g\in \mathcal{V}_{C'}  $.  Further suppose that $ T'\in \mathcal{T}(C') $ and $ T'=[h_1,h'_1,t_2,t'_2,t_3,t'_3] $, where the first two components
are related to the common edge between these cycles.
Remarks \ref{tuple} and \ref{tupleunit} imply that $ h_1\neq h'_1\neq t_2\neq t'_2\neq t_3\neq t'_3\neq  h_1 $, $ h_{2}\neq {t}_{2} $ and $ h'_{3}\neq t'_{3} $.\\
Using GAP \cite{a9}, it can be seen that there are $1006$ different cases for the relations of two cycles of length $3$ with this structure. We examined these relations, the cases which lead to contradictions are listed  in Table \ref{t2}. Note that in Table \ref{t2} the column  labelled  by $E$ shows the structure of   $\mathcal{G}(H_1)$, which leads to a contradiction, $ r_1 $ and $ r_2 $ are the  relations of $ C $ and $ C' $, respectively, and also  we use the symbol T when  $\mathcal{G}(H_1)$ has a non-trivial torsion element.\\
\begin{center}

Now, suppose that $ C'' $ is another cycle of length 3  in $\Gamma$ with  vertex set  $\mathcal{V}_{C''}=\{ g_1,g_2,g_5\} $ (i.e., $ C,C' $ and  $ C'' $ have exactly one edge in common)  such that  $ deg_{\Gamma} (g)=4 $ for all   $ g\in \mathcal{V}_{C''} $. Further suppose that $ T''\in \mathcal{T}(C'') $ and $ T''=[h_1,h'_1,r_2,r'_2,r_3,r'_3] $, where the first two components
are related to the common edge between these cycles.
  Remarks \ref{tuple} and \ref{tupleunit} imply that $ h_1\neq h'_1\neq r_2\neq r'_2\neq r_3\neq r'_3\neq h_1 $ $, r_{2}\neq {t}_{2} $, $ r_{2}\neq {h}_{2} $ and $ r'_{3}\neq h'_{3} $ and $ r'_{3}\neq t'_{3} $. Note that replacing $ \alpha $  by $ {h_1}^{-1}\alpha $, we may assume that $ h_1=1 $.\\
It can be seen that there are $2264$ different cases for the relations of $3$ cycles of length $3$ which have exactly one edge in common. Using GAP \cite{a9}, we see that in  $2016$ cases among these $2264$ cases, a free group with generators $ x,y,z $ and the  relations of each of such cases  is finite or abelian that is a contradiction. We checked the other cases,  all of them  lead to  contradictions (see Table \ref{t3}) and therefore $ \Gamma $ does not contain any subgraph isomorphic to  $ K_{1,1,3} $  on vertices of degree $ 4 $. Note that  in Table \ref{t3} the column   marked  by $E$ shows the structure of   $G(K_{1,1,3})$,  which leads to a contradiction, $ r_1,\; r_2 $ and $ r_3 $ are the relations of $ C'', \; C $ and $ C' $, respectively, and also  we use the symbol T when  $G(K_{1,1,3})$ has a non-trivial torsion element.
\begin{longtable}{|l|l|l|l||l|}
\caption{The relations of $ 3 $ cycles of length $ 3 $ which have exactly one edge in common.}\label{t3}\\
\hline
\multicolumn{1}{|l|}{\textbf{$ n $}} & \multicolumn{1}{c|}{\textbf{$r_1$}} & \multicolumn{1}{c|}{\textbf{$r_2$}}& \multicolumn{1}{c|}{\textbf{$r_3$}}& \multicolumn{1}{c|}{\textbf{$E$}} \\ \hline 
\endfirsthead

\multicolumn{5}{c}%
{{\bfseries \tablename\ \thetable{} -- continued from previous page}} \\
\hline \multicolumn{1}{|c|}{\textbf{$ n $}} & \multicolumn{1}{c|}{\textbf{$r_1$}} & \multicolumn{1}{c|}{\textbf{$r_2$}}& \multicolumn{1}{c|}{\textbf{$r_3$}}& \multicolumn{1}{c|}{\textbf{$E$}}   \\ \hline 
\endhead

\hline \multicolumn{5}{|r|}{{Continued on next page}} \\ \hline
\endfoot

\hline \hline
\endlastfoot

\hline
$1$&$x^2y=1$&$xz^{-1}xz=1$&$xy^{-2}x=1$&T\\
\hline
$2$&$x^2y=1$&$xz^{-1}yz=1$&$xy^{-2}x=1$&T\\
\hline
$3$&$x^2y=1$&$xz^{-1}y^{-1}z=1$&$(xy^{-1})^2x=1$&T\\
\hline
$4$&$x^2y=1$&$xz^{-1}xz=1$&$(xy^{-1})^2x=1$&T\\
\hline
$5$&$x^2y=1$&$xz^{-1}yz=1$&$(xy^{-1})^2x=1$&T\\
\hline
$6$&$x^2y=1$&$xz^{-1}yx^{-1}z=1$&$(xy^{-1})^2x=1$&T\\
\hline
$7$&$x^2z=1$&$xz^{-2}x=1$&$xy^{-1}x^{-1}y=1$&T\\
\hline
$8$&$x^2z=1$&$(xz^{-1})^2x=1$&$xy^{-1}x^{-1}y=1$&T\\
\hline
$9$&$x^2z=1$&$xz^{-1}yz^{-1}x=1$&$xy^{-1}x^{-1}y=1$&Abelian\\
\hline
$10$&$x^2z=1$&$(xz^{-1})^2x=1$&$xy^{-1}z^{-1}y=1$&T\\
\hline
$11$&$x^2z=1$&$xz^{-2}x=1$&$xy^{-1}xy=1$&T\\
\hline
$12$&$x^2z=1$&$(xz^{-1})^2x=1$&$xy^{-1}xy=1$&T\\
\hline
$13$&$x^2z=1$&$xz^{-2}x=1$&$xy^{-1}zy=1$&T\\
\hline
$14$&$x^2z=1$&$(xz^{-1})^2x=1$&$xy^{-1}zy=1$&T\\
\hline
$15$&$x^2y^{-1}x=1$&$xz^{-1}y^{-1}z=1$&$xy^{-1}x^{-1}y=1$&BS$(3,1)$\\
\hline
$16$&$x^2y^{-1}x=1$&$xz^{-1}xz=1$&$xy^{-1}x^{-1}y=1$&BS$(1,-1)$\\
\hline
$17$&$x^2y^{-1}x=1$&$xz^{-1}xy^{-1}z=1$&$xy^{-1}x^{-1}y=1$&BS$(2,1)$\\
\hline
$18$&$x^2y^{-1}x=1$&$xz^{-1}yz=1$&$xy^{-1}x^{-1}y=1$&BS$(3,-1)$\\
\hline
$19$&$x^2y^{-1}x=1$&$xz^{-1}yx^{-1}z=1$&$xy^{-1}x^{-1}y=1$&BS$(2,-1)$\\
\hline
$20$&$x^2y^{-1}x=1$&$xz^{-1}x^{-1}z=1$&$xy^{-1}xz^{-1}y=1$&Abelian\\
\hline
$21$&$x^2y^{-1}z=1$&$xz^{-2}x=1$&$xy^{-1}x^{-1}y=1$&Abelian\\
\hline
$22$&$x^2y^{-1}z=1$&$(xz^{-1})^2x=1$&$xy^{-1}x^{-1}y=1$&Abelian\\
\hline
$23$&$x^2y^{-1}z=1$&$xz^{-1}yz^{-1}x=1$&$xy^{-1}x^{-1}y=1$&Abelian\\
\hline
$24$&$x^2y^{-1}z=1$&$x(z^{-1}y)^2=1$&$xy^{-1}z^{-1}x=1$&T\\
\hline
$25$&$x^2y^{-1}z=1$&$xz^{-2}x=1$&$xy^{-1}z^{-1}y=1$&$x=z$\\
\hline
$26$&$x^2z^{-1}x=1$&$xz^{-1}x^{-1}z=1$&$xy^{-1}x^{-1}y=1$&Abelian\\
\hline
$27$&$x^2z^{-1}x=1$&$xz^{-1}y^{-1}z=1$&$xy^{-1}x^{-1}y=1$&Abelian\\
\hline
$28$&$x^2z^{-1}x=1$&$xz^{-1}x^{-1}z=1$&$xy^{-1}z^{-1}y=1$&BS$(3,1)$\\
\hline
$29$&$x^2z^{-1}x=1$&$xz^{-1}x^{-1}z=1$&$xy^{-1}xy=1$&BS$(1,-1)$\\
\hline
$30$&$x^2z^{-1}x=1$&$xz^{-1}xz=1$&$xy^{-1}xy=1$&BS$(1,-1)$\\
\hline
$31$&$x^2z^{-1}x=1$&$xz^{-2}y=1$&$xy^{-1}xz=1$&Abelian\\
\hline
$32$&$x^2z^{-1}x=1$&$xz^{-1}x^{-1}z=1$&$xy^{-1}xz^{-1}y=1$&BS$(2,1)$\\
\hline
$33$&$x^2z^{-1}x=1$&$xz^{-1}xz=1$&$xy^{-1}xz^{-1}y=1$&T\\
\hline
$34$&$x^2z^{-1}x=1$&$xz^{-1}x^{-1}z=1$&$xy^{-1}zy=1$&BS$(3,-1)$\\
\hline
$35$&$x^2z^{-1}x=1$&$xz^{-1}xz=1$&$xy^{-1}zy=1$&T\\
\hline
$36$&$x^2z^{-1}x=1$&$xz^{-1}x^{-1}z=1$&$xy^{-1}zx^{-1}y=1$&BS$(2,-1)$\\
\hline
$37$&$x^2z^{-1}x=1$&$xz^{-1}xz=1$&$xy^{-1}zx^{-1}y=1$&T\\
\hline
$38$&$x^2z^{-1}y=1$&$xz^{-1}x^{-1}z=1$&$xy^{-2}x=1$&BS$(1,-1)$\\
\hline
$39$&$x^2z^{-1}y=1$&$xz^{-1}y^{-1}x=1$&$x(y^{-1}z)^2=1$&T\\
\hline
$40$&$xy^2=1$&$xz^{-1}x^{-1}z=1$&$xy^{-2}x=1$&T\\
\hline
$41$&$xy^2=1$&$xz^{-1}xz=1$&$xy^{-2}x=1$&T\\
\hline
$42$&$xy^2=1$&$xz^{-1}yz=1$&$xy^{-2}x=1$&T\\
\hline
$43$&$xy^2=1$&$xz^{-1}x^{-1}z=1$&$(xy^{-1})^2x=1$&T\\
\hline
$44$&$xy^2=1$&$xz^{-1}xz=1$&$(xy^{-1})^2x=1$&T\\
\hline
$45$&$xyz=1$&$xz^{-2}x=1$&$xy^{-1}x^{-1}y=1$&Abelian\\
\hline
$46$&$xyz=1$&$(xz^{-1})^2x=1$&$xy^{-1}x^{-1}y=1$&Abelian\\
\hline
$47$&$xyz=1$&$xz^{-2}x=1$&$xy^{-1}z^{-1}y=1$&BS$(1,-1)$\\
\hline
$48$&$xyz=1$&$(xz^{-1})^2y=1$&$xy^{-1}xz^{-1}x=1$&T\\
\hline
$49$&$xyz=1$&$xz^{-1}y^2=1$&$xy^{-1}xz^{-1}x=1$&T\\
\hline
$50$&$xyx^{-1}y=1$&$xz^{-1}y^{-1}x=1$&$xy^{-1}x^{-1}z=1$&BS$(1,-1)$\\
\hline
$51$&$xyx^{-1}y=1$&$xz^{-1}x^{-1}z=1$&$xy^{-2}x=1$&T\\
\hline
$52$&$xyx^{-1}y=1$&$xz^{-1}y^{-1}z=1$&$xy^{-2}x=1$&T\\
\hline
$53$&$xyx^{-1}y=1$&$xz^{-1}xz=1$&$xy^{-2}x=1$&T\\
\hline
$54$&$xyx^{-1}y=1$&$xz^{-1}xy^{-1}z=1$&$xy^{-2}x=1$&T\\
\hline
$55$&$xyx^{-1}y=1$&$xz^{-1}yz=1$&$xy^{-2}x=1$&T\\
\hline
$56$&$xyx^{-1}y=1$&$xz^{-1}yx^{-1}z=1$&$xy^{-2}x=1$&T\\
\hline
$57$&$xyx^{-1}y=1$&$xz^{-2}x=1$&$xy^{-2}z=1$&BS$(1,-1)$\\
\hline
$58$&$xyx^{-1}y=1$&$xz^{-1}yx^{-1}z=1$&$xy^{-1}z^{-1}x=1$&BS$(1,-1)$\\
\hline
$59$&$xyx^{-1}y=1$&$xz^{-1}x^{-1}z=1$&$(xy^{-1})^2x=1$&T\\
\hline
$60$&$xyx^{-1}y=1$&$xz^{-1}xz=1$&$(xy^{-1})^2x=1$&T\\
\hline
$61$&$xyx^{-1}y=1$&$xz^{-2}x=1$&$(xy^{-1})^2z=1$&BS$(1,-1)$\\
\hline
$62$&$xyx^{-1}y=1$&$xz^{-1}yx^{-1}z=1$&$xy^{-1}xz^{-1}x=1$&BS$(1,-1)$\\
\hline
$63$&$xyx^{-1}y=1$&$xz^{-1}x^{-1}z=1$&$xy^{-1}zy^{-1}x=1$&BS$(1,-1)$\\
\hline
$64$&$xyx^{-1}y=1$&$xz^{-1}yx^{-1}z=1$&$xy^{-1}zy^{-1}x=1$&BS$(1,-1)$\\
\hline
$65$&$xyx^{-1}z=1$&$xz^{-2}x=1$&$xy^{-1}z^{-1}y=1$&BS$(1,-1)$\\
\hline
$66$&$xyz^{-1}x=1$&$xz^{-1}x^{-1}z=1$&$xy^{-1}x^{-1}y=1$&Abelian\\
\hline
$67$&$xyz^{-1}x=1$&$xz^{-1}x^{-1}y=1$&$xy^{-1}x^{-1}z=1$&BS$(1,-1)$\\
\hline
$68$&$xyz^{-1}x=1$&$x(z^{-1}y)^2=1$&$xy^{-1}x^{-1}z=1$&BS$(1,-1)$\\
\hline
$69$&$xyz^{-1}x=1$&$xz^{-1}yz=1$&$xy^{-1}z^{-1}y=1$&$x=y^{-1}$\\
\hline
$70$&$xyz^{-1}x=1$&$xz^{-1}xz=1$&$xy^{-1}xy=1$&BS$(1,-1)$\\
\hline
$71$&$xyz^{-1}x=1$&$x(z^{-1}y)^2=1$&$xy^{-1}xz=1$&BS$(1,2)$\\
\hline
$72$&$xyz^{-1}x=1$&$xz^{-1}yx^{-1}z=1$&$xy^{-1}xz^{-1}y=1$&T\\
\hline
$73$&$xyz^{-1}x$&$xz^{-1}xy=1$&$xy^{-1}z^2=1$&Abelian\\
\hline
$74$&$xyz^{-1}x=1$&$xz^{-1}xy=1$&$xy^{-1}zx^{-1}z=1$&Abelian\\
\hline
$75$&$xyz^{-1}x=1$&$xz^{-1}x^{-1}y=1$&$x(y^{-1}z)^2=1$&BS$(1,-2)$\\
\hline
$76$&$xyz^{-1}y=1$&$xz^{-1}x^{-1}z=1$&$xy^{-2}x=1$&Abelian\\
\hline
$77$&$xyz^{-1}y=1$&$xz^{-1}y^{-1}z=1$&$xy^{-2}x=1$&BS$(1,-1)$\\
\hline
$78$&$xyz^{-1}y=1$&$xz^{-1}x^{-1}z=1$&$xy^{-1}zy^{-1}x=1$&T\\
\hline
$79$&$xyz^{-1}y=1$&$xz^{-1}xz=1$&$xy^{-1}zy^{-1}x=1$&T\\
\hline
$80$&$xzy=1$&$xz^{-1}y^{-1}z=1$&$xy^{-2}x=1$&BS$(1,-1)$\\
\hline
$81$&$xzy=1$&$xz^{-1}xy^{-1}x=1$&$(xy^{-1})^2z=1$&T\\
\hline
$82$&$xzy=1$&$xz^{-1}xy^{-1}x=1$&$xy^{-1}z^2=1$&T\\
\hline
$83$&$xz^2=1$&$xz^{-2}x=1$&$xy^{-1}x^{-1}y=1$&T\\
\hline
$84$&$xz^2=1$&$(xz^{-1})^2x=1$&$xy^{-1}x^{-1}y=1$&T\\
\hline
$85$&$xz^2=1$&$xz^{-1}yz^{-1}x=1$&$xy^{-1}x^{-1}y=1$&Abelian\\
\hline
$86$&$xz^2=1$&$xz^{-2}x=1$&$xy^{-1}z^{-1}y=1$&T\\
\hline
$87$&$xz^2=1$&$(xz^{-1})^2x=1$&$xy^{-1}z^{-1}y=1$&T\\
\hline
$88$&$xz^2=1$&$xz^{-2}x=1$&$xy^{-1}xy=1$&T\\
\hline
$89$&$xz^2=1$&$(xz^{-1})^2x=1$&$xy^{-1}xy=1$&T\\
\hline
$90$&$xz^2=1$&$xz^{-2}x=1$&$xy^{-1}zy=1$&T\\
\hline
$91$&$xzx^{-1}y=1$&$xz^{-1}y^{-1}z=1$&$xy^{-2}x=1$&BS$(1,-1)$\\
\hline
$92$&$xzx^{-1}z=1$&$xz^{-2}x=1$&$xy^{-1}x^{-1}y=1$&T\\
\hline
$93$&$xzx^{-1}z=1$&$(xz^{-1})^2x=1$&$xy^{-1}x^{-1}y=1$&T\\
\hline
$94$&$xzx^{-1}z=1$&$xz^{-1}yz^{-1}x=1$&$xy^{-1}x^{-1}y=1$&BS$(1,-1)$\\
\hline
$95$&$xzx^{-1}z=1$&$xz^{-2}y=1$&$xy^{-2}x=1$&BS$(1,-1)$\\
\hline
$96$&$xzx^{-1}z=1$&$(xz^{-1})^2y=1$&$xy^{-2}x=1$&BS$(1,-1)$\\
\hline
$97$&$xzx^{-1}z=1$&$xz^{-1}x^{-1}y=1$&$xy^{-1}z^{-1}x=1$&BS$(1,-1)$\\
\hline
$98$&$xzx^{-1}z=1$&$xz^{-2}x=1$&$xy^{-1}z^{-1}y=1$&T\\
\hline
$99$&$xzx^{-1}z=1$&$xz^{-2}x=1$&$xy^{-1}xy=1$&T\\
\hline
$100$&$xzx^{-1}z=1$&$(xz^{-1})^2x=1$&$xy^{-1}xy=1$&T\\
\hline
$101$&$xzx^{-1}z=1$&$xz^{-2}x=1$&$xy^{-1}xz^{-1}y=1$&T\\
\hline
$102$&$xzx^{-1}z=1$&$xz^{-2}x=1$&$xy^{-1}zy=1$&T\\
\hline
$103$&$xzx^{-1}z=1$&$(xz^{-1})^2x=1$&$xy^{-1}zy=1$&T\\
\hline
$104$&$xzx^{-1}z=1$&$xz^{-1}y^{-1}x=1$&$xy^{-1}zx^{-1}y=1$&BS$(1,-1)$\\
\hline
$105$&$xzx^{-1}z=1$&$xz^{-2}x=1$&$xy^{-1}zx^{-1}y=1$&T\\
\hline
$106$&$xzx^{-1}z=1$&$xz^{-1}xy^{-1}x=1$&$xy^{-1}zx^{-1}y=1$&BS$(1,-1)$\\
\hline
$107$&$xzx^{-1}z=1$&$xz^{-1}yz^{-1}x=1$&$xy^{-1}zx^{-1}y=1$&BS$(1,-1)$\\
\hline
$108$&$xzy^{-1}x=1$&$xz^{-1}x^{-1}z=1$&$xy^{-1}x^{-1}y=1$&Abelian\\
\hline
$109$&$xzy^{-1}x=1$&$xz^{-1}x^{-1}y=1$&$xy^{-1}x^{-1}z=1$&BS$(1,-1)$\\
\hline
$110$&$xzy^{-1}x=1$&$x(z^{-1}y)^2=1$&$xy^{-1}x^{-1}z=1$&BS$(1,-2)$\\
\hline
$111$&$xzy^{-1}x=1$&$xz^{-1}xz=1$&$xy^{-1}xy=1$&Abelian\\
\hline
$112$&$xzy^{-1}x=1$&$xz^{-2}y=1$&$xy^{-1}xz=1$&BS$(1,-1)$\\
\hline
$113$&$xzy^{-1}x=1$&$(xz^{-1})^2y=1$&$xy^{-1}xz=1$&Abelian\\
\hline
$114$&$xzy^{-1}x=1$&$xz^{-1}x^{-1}z=1$&$xy^{-1}xz^{-1}y=1$&Abelian\\
\hline
$115$&$xzy^{-1}x=1$&$xz^{-1}y^{-1}z=1$&$xy^{-1}zy=1$&$x=y$\\
\hline
$116$&$xzy^{-1}x=1$&$xz^{-1}x^{-1}z=1$&$xy^{-1}zx^{-1}y=1$&Abelian\\
\hline
$117$&$xzy^{-1}x=1$&$xz^{-1}xy^{-1}z=1$&$xy^{-1}zx^{-1}y=1$&T\\
\hline
$118$&$xzy^{-1}x=1$&$xz^{-1}x^{-1}y=1$&$x(y^{-1}z)^2=1$&BS$(1,-1)$\\
\hline
$119$&$xzy^{-1}x=1$&$xz^{-1}xy=1$&$x(y^{-1}z)^2=1$&BS$(1,2)$\\
\hline
$120$&$xzy^{-1}z=1$&$xz^{-2}x=1$&$xy^{-1}x^{-1}y=1$&BS$(1,-1)$\\
\hline
$121$&$xzy^{-1}z=1$&$(xz^{-1})^2x=1$&$xy^{-1}x^{-1}y=1$&Abelian\\
\hline
$122$&$xzy^{-1}z=1$&$xz^{-1}yz^{-1}x=1$&$xy^{-1}x^{-1}y=1$&T\\
\hline
$123$&$xzy^{-1}z=1$&$xz^{-2}x=1$&$xy^{-1}z^{-1}y=1$&BS$(1,-1)$\\
\hline
$124$&$xzy^{-1}z=1$&$xz^{-1}yz^{-1}x=1$&$xy^{-1}xy=1$&T\\
\hline
$125$&$xy^{-1}x^{-1}y=1$&$xz^{-1}xy^{-1}x=1$&$x^2y^{-1}z=1$&Abelian\\
\hline
$126$&$xy^{-1}x^{-1}y=1$&$xz^{-1}x^{-1}z=1$&$x^2z^{-1}x=1$&Abelian\\
\hline
$127$&$xy^{-1}x^{-1}y=1$&$xz^{-1}xy^{-1}z=1$&$xzy^{-1}x=1$&Abelian\\
\hline
$128$&$xy^{-1}x^{-1}y=1$&$xz^{-1}xy^{-1}x=1$&$xzy^{-1}z=1$&Abelian\\
\hline
$129$&$xy^{-2}x=1$&$xz^{-1}x^{-1}z=1$&$xy^2=1$&T\\
\hline
$130$&$xy^{-2}x=1$&$xz^{-1}xz=1$&$xy^2=1$&T\\
\hline
$131$&$xy^{-2}x=1$&$xz^{-1}xy^{-1}z=1$&$xy^2=1$&T\\
\hline
$132$&$xy^{-2}x=1$&$xz^{-1}yx^{-1}z=1$&$xy^2=1$&T\\
\hline
$133$&$xy^{-2}x=1$&$xz^{-1}x^{-1}z=1$&$xyx^{-1}y=1$&T\\
\hline
$134$&$xy^{-2}x=1$&$xz^{-1}y^{-1}z=1$&$xyx^{-1}y=1$&T\\
\hline
$135$&$xy^{-2}x=1$&$xz^{-1}xz=1$&$xyx^{-1}y=1$&T\\
\hline
$136$&$xy^{-2}x=1$&$xz^{-1}xy^{-1}z=1$&$xyx^{-1}y=1$&T\\
\hline
$137$&$xy^{-2}x=1$&$xz^{-1}yz=1$&$xyx^{-1}y=1$&T\\
\hline
$138$&$xy^{-2}x=1$&$xz^{-1}yx^{-1}z=1$&$xyx^{-1}y=1$&T\\
\hline
$139$&$xy^{-2}x=1$&$xz^{-1}x^{-1}y=1$&$xyx^{-1}z=1$&BS$(1,-1)$\\
\hline
$140$&$xy^{-2}x=1$&$xz^{-1}x^{-1}z=1$&$xyz^{-1}y=1$&BS$(1,-1)$\\
\hline
$141$&$xy^{-2}x=1$&$xz^{-1}y^{-1}z=1$&$xyz^{-1}y=1$&BS$(1,-1)$\\
\hline
$142$&$xy^{-2}x=1$&$xz^{-1}y^{-1}z=1$&$xzy=1$&BS$(1,-1)$\\
\hline
$143$&$xy^{-2}x=1$&$xz^{-1}y^{-1}z=1$&$xzx^{-1}y=1$&BS$(1,-1)$\\
\hline
$144$&$xy^{-2}x=1$&$xz^{-1}y^2=1$&$xzx^{-1}z=1$&BS$(1,-1)$\\
\hline
$145$&$xy^{-2}x=1$&$xz^{-1}yx^{-1}y=1$&$xzx^{-1}z=1$&BS$(1,-1)$\\
\hline
$146$&$xy^{-2}z=1$&$xz^{-2}x=1$&$xyx^{-1}y=1$&BS$(1,-1)$\\
\hline
$147$&$xy^{-1}z^{-1}x=1$&$xz^{-1}yx^{-1}z=1$&$xyx^{-1}y=1$&BS$(1,-1)$\\
\hline
$148$&$xy^{-1}z^{-1}y=1$&$xz^{-1}x^{-1}z=1$&$x^2z^{-1}x=1$&BS$(3,1)$\\
\hline
$149$&$xy^{-1}z^{-1}y=1$&$xz^{-1}yz=1$&$xyz^{-1}x=1$&$x=y^{-1}$\\
\hline
$150$&$xy^{-1}xy$&$xz^{-1}x^{-1}z=1$&$x^2z^{-1}x=1$&BS$(1,-1)$\\
\hline
$151$&$xy^{-1}xz=1$&$xz^{-2}y=1$&$xzy^{-1}x=1$&Abelian\\
\hline
$152$&$xy^{-1}xz=1$&$(xz^{-1})^2y=1$&$xzy^{-1}x=1$&Abelian\\
\hline
$153$&$xy^{-1}xz=1$&$xz^{-1}y^2=1$&$xzy^{-1}x=1$&Abelian\\
\hline
$154$&$xy^{-1}xz=1$&$xz^{-1}yx^{-1}y=1$&$xzy^{-1}x=1$&Abelian\\
\hline
$155$&$(xy^{-1})^2x=1$&$xz^{-1}x^{-1}z=1$&$xy^2=1$&T\\
\hline
$156$&$(xy^{-1})^2x=1$&$xz^{-1}xz=1$&$xy^2=1$&T\\
\hline
$157$&$(xy^{-1})^2x=1$&$xz^{-1}x^{-1}z=1$&$xyx^{-1}y=1$&T\\
\hline
$158$&$(xy^{-1})^2x=1$&$xz^{-1}xz=1$&$xyx^{-1}y=1$&T\\
\hline
$159$&$(xy^{-1})^2x=1$&$xz^{-1}xy^{-1}z=1$&$xyx^{-1}y=1$&T\\
\hline
$160$&$(xy^{-1})^2z=1$&$xz^{-2}x=1$&$xyx^{-1}y=1$&BS$(1,-1)$\\
\hline
$161$&$(xy^{-1})^2z=1$&$xz^{-1}xy^{-1}x=1$&$xzy=1$&T\\
\hline
$162$&$xy^{-1}xz^{-1}x=1$&$(xz^{-1})^2y=1$&$xyz=1$&T\\
\hline
$163$&$xy^{-1}xz^{-1}x=1$&$xz^{-1}y^2=1$&$xyz=1$&T\\
\hline
$164$&$xy^{-1}xz^{-1}x=1$&$xz^{-1}yx^{-1}z=1$&$xyx^{-1}y=1$&BS$(1,-1)$\\
\hline
$165$&$xy^{-1}xz^{-1}y=1$&$xz^{-1}x^{-1}z=1$&$x^2z^{-1}x=1$&BS$(2,1)$\\
\hline
$166$&$xy^{-1}xz^{-1}y=1$&$xz^{-1}yx^{-1}z=1$&$xyz^{-1}x=1$&T\\
\hline
$167$&$xy^{-1}zy$&$xz^{-1}x^{-1}z=1$&$x^2z^{-1}x=1$&BS$(3,-1)$\\
\hline
$168$&$xy^{-1}zy=1$&$xz^{-1}y^{-1}z=1$&$xzy^{-1}x=1$&$x=z^{-1}$\\
\hline
$169$&$xy^{-1}z^2=1$&$xz^{-2}y=1$&$x^2y^{-1}x=1$&T\\
\hline
$170$&$xy^{-1}z^2=1$&$xz^{-2}x=1$&$xyx^{-1}y=1$&T\\
\hline
$171$&$xy^{-1}z^2=1$&$xz^{-1}xy^{-1}x=1$&$xzy=1$&T\\
\hline
$172$&$xy^{-1}zx^{-1}y=1$&$xz^{-1}y^{-1}x=1$&$x^2y^{-1}z=1$&$y=x^{2}$\\
\hline
$173$&$xy^{-1}zx^{-1}y=1$&$xz^{-1}x^{-1}z=1$&$x^2z^{-1}x=1$&BS$(2,-1)$\\
\hline
$174$&$xy^{-1}zx^{-1}y=1$&$xz^{-1}xy^{-1}z=1$&$xzy^{-1}x=1$&T\\
\hline
$175$&$xy^{-1}zx^{-1}z=1$&$xz^{-2}x=1$&$xyx^{-1}y=1$&BS$(1,-1)$\\
\hline
$176$&$xy^{-1}zy^{-1}x=1$&$xz^{-1}x^{-1}z=1$&$xyx^{-1}y=1$&BS$(1,-1)$\\
\hline
$177$&$xy^{-1}zy^{-1}x=1$&$xz^{-1}yx^{-1}z=1$&$xyx^{-1}y=1$&BS$(1,-1)$\\
\hline
$178$&$xy^{-1}zy^{-1}x=1$&$xz^{-1}x^{-1}z=1$&$xyz^{-1}y=1$&T\\
\hline
$179$&$xy^{-1}zy^{-1}x$&$xz^{-1}xz=1$&$xyz^{-1}y=1$&T\\
\hline
$180$&$x(y^{-1}z)^2=1$&$(xz^{-1})^2y=1$&$x^2y^{-1}x=1$&Abelian\\
\hline
$181$&$xz^{-1}x^{-1}y=1$&$x(y^{-1}z)^2=1$&$xzy^{-1}x=1$&BS$(1,-1)$\\
\hline
$182$&$xz^{-1}x^{-1}z=1$&$xy^{-1}xz^{-1}y=1$&$xzy^{-1}x=1$&Abelian\\
\hline
$183$&$xz^{-1}y^{-1}x=1$&$xy^{-1}x^{-1}y=1$&$xz^2=1$&Abelian\\
\hline
$184$&$xz^{-1}y^{-1}x=1$&$xy^{-1}zx^{-1}y=1$&$xzx^{-1}z=1$&BS$(1,-1)$\\
\hline
$185$&$xz^{-1}y^{-1}z=1$&$xy^{-1}x^{-1}y=1$&$x^2y^{-1}x=1$&BS$(3,1)$\\
\hline
$186$&$xz^{-1}y^{-1}z=1$&$xy^{-1}zy=1$&$xzy^{-1}x=1$&T\\
\hline
$187$&$xz^{-2}x=1$&$xy^{-1}z^{-1}y=1$&$xyz=1$&BS$(1,-1)$\\
\hline
$188$&$xz^{-2}x=1$&$xy^{-1}z^2=1$&$xyx^{-1}y=1$&T\\
\hline
$189$&$xz^{-2}x=1$&$xy^{-1}zx^{-1}z=1$&$xyx^{-1}y=1$&BS$(1,-1)$\\
\hline
$190$&$xz^{-2}x=1$&$xy^{-1}z^{-1}y=1$&$xyx^{-1}z=1$&BS$(1,-1)$\\
\hline
$191$&$xz^{-2}x=1$&$xy^{-1}x^{-1}y=1$&$xz^2=1$&T\\
\hline
$192$&$xz^{-2}x=1$&$xy^{-1}xy=1$&$xz^2=1$&T\\
\hline
$193$&$xz^{-2}x=1$&$xy^{-1}x^{-1}z=1$&$xzx^{-1}y=1$&BS$(1,-1)$\\
\hline
$194$&$xz^{-2}x=1$&$xy^{-1}x^{-1}y=1$&$xzx^{-1}z=1$&T\\
\hline
$195$&$xz^{-2}x=1$&$xy^{-1}z^{-1}y=1$&$xzx^{-1}z=1$&T\\
\hline
$196$&$xz^{-2}x=1$&$xy^{-1}xy=1$&$xzx^{-1}z=1$&T\\
\hline
$197$&$xz^{-2}x=1$&$xy^{-1}xz^{-1}y=1$&$xzx^{-1}z=1$&T\\
\hline
$198$&$xz^{-2}x=1$&$xy^{-1}zy=1$&$xzx^{-1}z=1$&T\\
\hline
$199$&$xz^{-2}x=1$&$xy^{-1}zx^{-1}y=1$&$xzx^{-1}z=1$&T\\
\hline
$200$&$xz^{-2}x=1$&$xy^{-1}x^{-1}y=1$&$xzy^{-1}z=1$&BS$(1,-1)$\\
\hline
$201$&$xz^{-2}x=1$&$xy^{-1}z^{-1}y=1$&$xzy^{-1}z=1$&BS$(1,-1)$\\
\hline
$202$&$xz^{-2}y=1$&$xy^{-2}x=1$&$xzx^{-1}z=1$&BS$(1,-1)$\\
\hline
$203$&$xz^{-1}xy=1$&$xy^{-1}z^2=1$&$xyz^{-1}x=1$&Abelian\\
\hline
$204$&$xz^{-1}xy=1$&$xy^{-1}zx^{-1}z=1$&$xyz^{-1}x=1$&Abelian\\
\hline
$205$&$xz^{-1}xy=1$&$x(y^{-1}z)^2=1$&$xzy^{-1}x=1$&BS$(1,2)$\\
\hline
$206$&$xz^{-1}xz=1$&$xy^{-1}x^{-1}y=1$&$x^2y^{-1}x=1$&BS$(1,-1)$\\
\hline
$207$&$xz^{-1}xy^{-1}x=1$&$(xy^{-1})^2z=1$&$xzy=1$&T\\
\hline
$208$&$xz^{-1}xy^{-1}x=1$&$xy^{-1}z^2=1$&$xzy=1$&T\\
\hline
$209$&$xz^{-1}xy^{-1}x=1$&$xy^{-1}x^{-1}y=1$&$xz^2=1$&Abelian\\
\hline
$210$&$xz^{-1}xy^{-1}x=1$&$xy^{-1}x^{-1}y=1$&$xzx^{-1}z=1$&Abelian\\
\hline
$211$&$xz^{-1}xy^{-1}x=1$&$xy^{-1}zx^{-1}y=1$&$xzx^{-1}z=1$&BS$(1,-1)$\\
\hline
$212$&$xz^{-1}xy^{-1}x=1$&$xy^{-1}x^{-1}y=1$&$xzy^{-1}z=1$&Abelian\\
\hline
$213$&$xz^{-1}xy^{-1}z=1$&$xy^{-1}x^{-1}y=1$&$x^2y^{-1}x=1$&BS$(2,1)$\\
\hline
$214$&$xz^{-1}xy^{-1}z=1$&$xy^{-1}zx^{-1}y=1$&$xzy^{-1}x=1$&T\\
\hline
$215$&$(xz^{-1})^2x=1$&$xy^{-1}x^{-1}y=1$&$xz^2=1$&T\\
\hline
$216$&$(xz^{-1})^2x=1$&$xy^{-1}xy=1$&$xz^2=1$&T\\
\hline
$217$&$(xz^{-1})^2x=1$&$xy^{-1}x^{-1}y=1$&$xzx^{-1}z=1$&T\\
\hline
$218$&$(xz^{-1})^2x=1$&$xy^{-1}xy=1$&$xzx^{-1}z=1$&T\\
\hline
$219$&$(xz^{-1})^2x=1$&$xy^{-1}xz^{-1}y=1$&$xzx^{-1}z=1$&T\\
\hline
$220$&$(xz^{-1})^2y=1$&$xy^{-1}xz^{-1}x=1$&$xyz=1$&T\\
\hline
$221$&$(xz^{-1})^2y=1$&$xy^{-2}x=1$&$xzx^{-1}z=1$&BS$(1,-1)$\\
\hline
$222$&$xz^{-1}y^2=1$&$xy^{-2}z=1$&$x^2z^{-1}x=1$&T\\
\hline
$223$&$xz^{-1}y^2=1$&$xy^{-1}xz^{-1}x=1$&$xyz=1$&T\\
\hline
$224$&$xz^{-1}y^2=1$&$xy^{-2}x=1$&$xzx^{-1}z=1$&T\\
\hline
$225$&$xz^{-1}yz=1$&$xy^{-1}x^{-1}y=1$&$x^2y^{-1}x=1$&BS$(3,-1)$\\
\hline
$226$&$xz^{-1}yz=1$&$xy^{-1}z^{-1}y=1$&$xyz^{-1}x=1$&T\\
\hline
$227$&$xz^{-1}yx^{-1}y=1$&$xy^{-2}x=1$&$xzx^{-1}z=1$&BS$(1,-1)$\\
\hline
$228$&$xz^{-1}yx^{-1}z=1$&$xy^{-1}x^{-1}y=1$&$x^2y^{-1}x=1$&BS$(2,-1)$\\
\hline
$229$&$xz^{-1}yx^{-1}z=1$&$xy^{-1}z^{-1}x=1$&$x^2z^{-1}y=1$&$z=x^{2}$\\
\hline
$230$&$xz^{-1}yx^{-1}z=1$&$xy^{-1}xz^{-1}y=1$&$xyz^{-1}x=1$&T\\
\hline
$231$&$xz^{-1}yz^{-1}x=1$&$xy^{-1}x^{-1}y=1$&$xz^2=1$&Abelian\\
\hline
$232$&$xz^{-1}yz^{-1}x=1$&$xy^{-1}x^{-1}y=1$&$xzx^{-1}z=1$&BS$(1,-1)$\\
\hline
$233$&$xz^{-1}yz^{-1}x=1$&$xy^{-1}zx^{-1}y=1$&$xzx^{-1}z=1$&BS$(1,-1)$\\
\hline
$234$&$xz^{-1}yz^{-1}x=1$&$xy^{-1}x^{-1}y=1$&$xzy^{-1}z=1$&T\\
\hline
$235$&$xz^{-1}yz^{-1}x=1$&$xy^{-1}xy=1$&$xzy^{-1}z=1$&T\\
\hline
$236$&$y^2z^{-1}x=1$&$yz^{-1}x^{-1}y=1$&$y(x^{-1}z)^2=1$&T\\
\hline
$237$&$y^2z^{-1}y=1$&$yz^{-1}y^{-1}z=1$&$yx^{-1}yz^{-1}x=1$&BS$(2,1)$\\
\hline
$238$&$y^2z^{-1}y=1$&$yz^{-1}yz=1$&$yx^{-1}yz^{-1}x=1$&T\\
\hline
$239$&$y^2z^{-1}y=1$&$yz^{-1}y^{-1}z=1$&$yx^{-1}zy^{-1}x=1$&BS$(2,-1)$\\
\hline
$240$&$y^2z^{-1}y=1$&$yz^{-1}yz=1$&$yx^{-1}zy^{-1}x=1$&T\\
\hline
$241$&$yzx^{-1}y=1$&$yz^{-1}y^{-1}x=1$&$y(x^{-1}z)^2=1$&BS$(1,-1)$\\
\hline
$242$&$yzx^{-1}y=1$&$yz^{-1}yx^{-1}z=1$&$yx^{-1}zy^{-1}x=1$&T\\
\hline
$243$&$yzy^{-1}z=1$&$yz^{-2}y=1$&$yx^{-1}yz^{-1}x=1$&T\\
\hline
$244$&$yzy^{-1}z=1$&$yz^{-1}x^{-1}y=1$&$yx^{-1}zy^{-1}x=1$&BS$(1,-1)$\\
\hline
$245$&$yzy^{-1}z=1$&$yz^{-2}y=1$&$yx^{-1}zy^{-1}x=1$&T\\
\hline
$246$&$yzy^{-1}z=1$&$yz^{-1}xz^{-1}y=1$&$yx^{-1}zy^{-1}x=1$&BS$(1,-1)$\\
\hline
$247$&$yzy^{-1}z=1$&$yz^{-1}yx^{-1}y=1$&$yx^{-1}zy^{-1}x=1$&BS$(1,-1)$\\
\hline
$248$&$yx^{-1}yz^{-1}x=1$&$yz^{-1}y^{-1}z=1$&$y^2z^{-1}y=1$&BS$(2,1)$\\
\hline
$249$&$yx^{-1}z^2=1$&$yz^{-2}x=1$&$y^2x^{-1}y=1$&T\\
\hline
$250$&$yx^{-1}zy^{-1}x=1$&$yz^{-1}y^{-1}z=1$&$y^2z^{-1}y=1$&BS$(2,-1)$\\
\hline
$251$&$yx^{-1}zy^{-1}x=1$&$yz^{-1}yx^{-1}z=1$&$yzx^{-1}y=1$&T\\
\hline
$252$&$yz^{-1}x^{-1}y=1$&$yx^{-1}zy^{-1}x=1$&$yzy^{-1}z=1$&BS$(1,-1)$\\
\hline
$253$&$yz^{-2}y=1$&$yx^{-1}yz^{-1}x=1$&$yzy^{-1}z=1$&T\\
\hline
$254$&$yz^{-2}y=1$&$yx^{-1}zy^{-1}x=1$&$yzy^{-1}z=1$&T\\
\hline
$255$&$yz^{-1}xz^{-1}y=1$&$yx^{-1}zy^{-1}x=1$&$yzy^{-1}z=1$&BS$(1,-1)$\\
\hline
$256$&$yz^{-1}yx^{-1}y=1$&$yx^{-1}zy^{-1}x=1$&$yzy^{-1}z=1$&BS$(1,-1)$\\
\hline
$257$&$yz^{-1}yx^{-1}z=1$&$yx^{-1}zy^{-1}x=1$&$yzx^{-1}y=1$&T\\
\hline
$258$&$(yz^{-1})^2y=1$&$yx^{-1}yz^{-1}x=1$&$yzy^{-1}z=1$&T\\
\hline
\end{longtable}
$\mathbf{K_{1,2,2}}$: With the same argument such as about $ K_{1,1,3} $, we see that there are $123$ different cases for the relations of $4$ cycles of length $3$  with the structure of  $ K_{1,2,2} $ as Figure \ref{707} on vertices of degree $ 4 $ in $ \Gamma $. Using GAP \cite{a9}, a free group with generators $ x,y,z $ and the relations of these $ 4 $ cycles which are between $122$ cases of these $123$ cases  is finite or abelian, that is a
contradiction.   In the following it can be seen that another case leads to a contradiction and so, the graph $ \Gamma $ contains no subgraph isomorphic to the graph $ K_{1,2,2} $ on vertices of degree $ 4 $ in $ \Gamma $. Another case is as follows:\\
$r_1:y^{2}z^{-1}y=1,\;\;r_2:yz^{-1}y^{-1}z=1,\;\;r_3:x^{-1}yz^{-1}xz^{-1}y=1,\;\;r_4:x^{-1}yx^{-1}zy^{-1}z=1.$ \\
$ r_1 $ and $ r_3 $ imply that $ (y^{-2})^x=y^{2} $ (a), $ r_1 $ and $ r_4 $ imply that $ x^{-1}yx^{-1}y^5=1 $ (b) and $ r_3 $ and $ r_4 $ imply that $ x^{-1}y^{-1}x^{-1}y^3=1 $ (c). By (a) and (c), $ x^{-1}yx^{-1}y^{-3}=1 $ (d). (d) and (b) lead to  $ y^{8}=1 $, a contradiction. \\
\textbf{$ \mathbf{C_4} $:}
Suppose that $ C $ is a cycle of length $4$ in $ \Gamma $ with the vertex set  $\mathcal{V}_C=\{g_1,g_2,g_3,g_4\}\subset supp(\beta) $  such that  $deg_\Gamma(g)=4$  for all $ g\in \mathcal{V}_C  $.  Suppose further that $ T\in \mathcal{T}(C) $ and $ T=[h_1,h'_1,h_2,h'_2,h_3,h'_3,h_4,h'_4] $. 
 Remarks \ref{tuple} and \ref{tupleunit} imply that $ h_1\neq h'_1\neq h_2\neq h'_2\neq h_3\neq h'_3\neq h_4\neq h'_4\neq h_1 $. Thus,  by using GAP \cite{a9}, it can be seen that there exist $834$ non-equivalent cases for $ T $. The relations of such  non-equivalent cases  are listed in Table \ref{tc4}. Among these cases, cases  $1,455,698,799,829,834 $ marked by "*"s in Table \ref{tc4} lead to contradictions because  each of these cases leads to $ G(C) $ having a non-trivial torsion element and also according to Remark \ref{one}, the cases $56,110,211,263,362,412,492,528,563,633,666,719,745,764,782,808,$  $822,832$ marked by "*"s in Table \ref{tc4} lead to contradictions. Hence, there are $ 810 $ cases which may lead
to the existence of $ C $ in $ \Gamma $.
 \begin{center}

\caption{ The graph $\Gamma_1$ and some forbidden subgraphs which contains it, where the degrees of all vertices of any subgraph  in  $ Z(\alpha,\beta) $ and $ U(\alpha,\beta) $ must be $ 4 $. }\label{gamma}
\end{figure}
 $\mathbf{\Gamma_1}$ \textbf{of Figure \ref{gamma}:}  Taking into account the relations from Table \ref{t222}, it can be seen that  there are  $9623$ different cases for the relations of $4$ cycles of length $3$ on vertices of degree $ 4 $ in $ \Gamma $ with the structure of the graph $ \Gamma_1 $ as Figure \ref{gamma}. Using GAP \cite{a9}, a free group with generators $ x,y,z $ and  the  relations of these $ 4 $ cycles which are between $9380$ cases of these $9623$ cases  is finite or abelian, that is a
contradiction. Hence, there are $ 243 $ cases which may lead
to the existence of $ \Gamma_1$ in $ \Gamma $. We checked these such cases. In Table \ref{tgamma1}, it can be seen that $ 166 $ cases among these $ 243 $ cases lead to contradictions and so there are just $ 77 $ cases which may lead
to the existence of $ \Gamma _1$ in $ \Gamma $ which listed in Table \ref{t4}.
\begin{longtable}{|l|l|l|}
\caption{The relations of $ 4 $ cycles of length $ 3 $ on vertices of degree $ 4 $ in $ Z(\alpha,\beta) $ and $ U(\alpha,\beta) $ with the structure of $ \Gamma_1 $ as Figure \ref{gamma} which lead to  contradictions. }\label{tgamma1}\\
\hline
\endfirsthead
\multicolumn{3}{c}%
{{\bfseries \tablename\ \thetable{} -- continued from previous page}} \\\hline
\hline 
 \multicolumn{1}{|l|}{\textbf{$ n $}} & \multicolumn{1}{l|}{\textbf{$r_1,\;r_2,\;r_3,\;r_4$}}&  \multicolumn{1}{|l|}{\textbf{$E $}}  \\ \hline
\endhead

\hline \multicolumn{3}{|r|}{{Continued on next page}} \\ \hline
\endfoot

\hline 
\endlastfoot
\hline
\textbf{$ n $}& \textbf{$r_1,\;r_2,\;r_3,\;r_4$} & \textbf{$E $}\\
\hline
$1$&$xz^{-2}x=1,\;xy^2=1,\;(xz^{-1})^2x=1,\;xy^{-1}x^{-1}y=1$&$x=z$\\\hline
$2$&$xz^{-2}x=1,\;xy^2=1,\;(xz^{-1})^2x=1,\;xy^{-1}xy=1$&$x=z$\\
\hline
$3$&$xz^{-2}x=1,\;xyx^{-1}y=1,\;(xz^{-1})^2x=1,\;xy^{-1}x^{-1}y=1$&T\\\hline
$4$&$xz^{-2}x=1,\;xyx^{-1}y=1,\;(xz^{-1})^2x=1,\;xy^{-1}xy=1$&$x=z$\\
\hline
$5$&$xy^{-2}x=1,\;xz^2=1,\;(xy^{-1})^2x=1,\;xz^{-1}x^{-1}z=1$&$x=y$\\\hline
$6$&$xy^{-2}x=1,\;xzx^{-1}z=1,\;(xy^{-1})^2x=1,\;xz^{-1}x^{-1}z=1$&$x=y$\\
\hline
$7$&$xz^{-1}xz=1,\;xy^{-1}x^{-1}y=1,\;z^2x^{-1}z=1,\;(zy^{-1})^2x=1$&T\\\hline
$8$&$xz^{-1}xz=1,\;xy^{-1}xy=1,\;z^2x^{-1}z=1,\;(zy^{-1})^2x=1$&T\\
\hline
$9$&$xz^{-2}y=1,\;(xy^{-1})^2z=1,\;xz^{-1}yx^{-1}y=1,\;xy^{-1}z^2=1$&T\\\hline
$10$&$yz^{-2}x=1,\;(yx^{-1})^2z=1,\;yz^{-1}xy^{-1}x=1,\;yx^{-1}z^2=1$&T\\
\hline
$11$&$xy^2=1,\;xz^{-1}x^{-1}z=1,\;xyx^{-1}y=1,\;xz^{-2}x=1$&$x=z$\\\hline
$12$&$xy^2=1,\;xz^{-1}x^{-1}z=1,\;xyx^{-1}y=1,\;(xz^{-1})^2x=1$&T\\
\hline
$13$&$xy^2=1,\;xz^{-1}xz=1,\;xyx^{-1}y=1,\;(xz^{-1})^2x=1$&T\\\hline
$14$&$xy^2=1,\;xz^{-1}xz=1,\;xyx^{-1}y=1,\;xz^{-2}x=1$&T\\
\hline
$15$&$xz^2=1,\;xy^{-1}x^{-1}y=1,\;xzx^{-1}z=1,\;xy^{-2}x=1$&T\\\hline
$16$&$xz^2=1,\;xy^{-1}x^{-1}y=1,\;xzx^{-1}z=1,\;(xy^{-1})^2x=1$&T\\
\hline
$17$&$xz^2=1,\;xy^{-1}xy=1,\;xzx^{-1}z=1,\;(xy^{-1})^2x=1$&T\\\hline
$18$&$xy^{-2}z=1,\;(xz^{-1})^2y=1,\;xy^{-1}zx^{-1}z=1,\;xz^{-1}y^2=1$&T\\
\hline
$19$&$xy^{-1}xy=1,\;xz^{-1}x^{-1}z=1,\;y^2x^{-1}y=1,\;(yz^{-1})^2x=1$&T\\\hline
$20$&$xy^{-1}xy=1,\;xz^{-1}xz=1,\;y^2x^{-1}y=1,\;(yz^{-1})^2x=1$&T\\
\hline
$21$&$xy^{-2}x=1,\;xz^2=1,\;(xy^{-1})^2x=1,\;xz^{-1}xz=1$&T\\\hline
$22$&$xy^{-2}x=1,\;xzx^{-1}z=1,\;(xy^{-1})^2x=1,\;xz^{-1}xz=1$&$x=y$\\
\hline
$23$&$xz^2=1,\;xy^{-1}xy=1,\;xzx^{-1}z=1,\;xy^{-2}x=1$&T\\\hline
$24$&$xz^{-1}x^{-1}z=1,\;x^2z^{-1}x=1,\;yx^{-1}zy^{-1}x=1,\;yz^{-1}y^{-1}z=1$&BS$(1,-2)$\\
\hline
$25$&$xz^{-1}x^{-1}z=1,\;x^2z^{-1}x=1,\;yx^{-1}zy^{-1}z=1,\;yz^{-1}y^{-1}x=1$&BS$(1,3)$\\\hline
$26$&$xz^{-1}x^{-1}z=1,\;x^2z^{-1}x=1,\;x^{-1}yx^{-1}zy^{-1}z=1,\;x^{-1}yz^{-1}xz^{-1}y=1$&T\\
\hline
$27$&$xz^{-1}x^{-1}z=1,\;xyx^{-1}y=1,\;yx^{-1}zx^{-1}y=1,\;yz^{-1}y^{-1}z=1$&BS$(1,-1)$\\\hline
$28$&$xz^{-2}x=1,\;xyx^{-1}y=1,\;x^{-1}yz^{-1}xz^{-1}y=1,\;x^{-1}yx^{-1}zy^{-1}z=1$&T\\
\hline
$29$&$xy^{-1}zy^{-1}x=1,\;xyx^{-1}z=1,\;(xy^{-1})^2z=1,\;xzx^{-1}y=1$&Abelian\\\hline
$30$&$xy^{-1}z^2=1,\;xyz^{-1}x=1,\;xzy^{-1}z=1,\;xy^{-1}x^{-1}y=1$&Abelian\\
\hline
$31$&$xz^{-1}y^2=1,\;xyz^{-1}x=1,\;xyz^{-1}x=1,\;xz^{-1}y^{-1}z=1$&Abelian\\\hline
$32$&$xz^{-1}y^2=1,\;xyz^{-1}x=1,\;xyz^{-1}y=1,\;xz^{-1}x^{-1}z=1$&Abelian\\
\hline
$33$&$xz^{-1}y^2=1,\;xyz^{-1}x=1,\;xyz^{-1}y=1,\;xz^{-1}y^{-1}z=1$&Abelian\\\hline
$34$&$xz^{-1}yz^{-1}x=1,\;xzy=1,\;(xz^{-1})^2y=1,\;xyz=1$&Abelian\\
\hline
$35$&$xz^{-1}yz^{-1}x=1,\;xzx^{-1}y=1,\;(xz^{-1})^2y=1,\;xyx^{-1}z=1$&Abelian\\\hline
$36$&$xy^{-1}x^{-1}y=1,\;xzx^{-1}z=1,\;(yx^{-1})^2y=1,\;y(z^{-1}x)^2=1$&T\\
\hline
$37$&$xy^{-1}zx^{-1}y=1,\;xzx^{-1}z=1,\;(yx^{-1})^2y=1,\;yzy^{-1}z=1$&Abelian\\\hline
$38$&$xy^{-1}z^2=1,\;xzy^{-1}x=1,\;xzy^{-1}x=1,\;xy^{-1}z^{-1}y=1$&$x=z$\\
\hline
$39$&$xy^{-1}z^2=1,\;xzy^{-1}x=1,\;xzy^{-1}z=1,\;xy^{-1}x^{-1}y=1$&$x=z$\\\hline
$40$&$xy^{-1}z^2=1,\;xzy^{-1}x=1,\;xzy^{-1}z=1,\;xy^{-1}z^{-1}y=1$&Abelian\\
\hline
$41$&$xz^{-1}y^2=1,\;xzy^{-1}x=1,\;xyz^{-1}y=1,\;xz^{-1}x^{-1}z=1$&Abelian\\\hline
$42$&$xy^{-1}x^{-1}y=1,\;xzy^{-1}z=1,\;(yx^{-1})^2y=1,\;yz^{-1}y^{-1}z=1$&Abelian\\
\hline
$43$&$xz^{-1}x^{-1}z=1,\;xy^{-1}x^{-1}y=1,\;yx^{-1}zx^{-1}y=1,\;yz^{-1}x^{-1}z=1$&Abelian\\\hline
$44$&$xz^{-1}x^{-1}z=1,\;xy^{-1}x^{-1}y=1,\;yx^{-1}zx^{-1}y=1,\;yz^{-1}y^{-1}x=1$&Abelian\\
\hline
$45$&$xz^{-1}x^{-1}z=1,\;xy^{-1}x^{-1}y=1,\;yx^{-1}zx^{-1}y=1,\;yz^{-1}y^{-1}z=1$&Abelian\\\hline
$46$&$xz^{-1}x^{-1}z=1,\;xy^{-1}x^{-1}y=1,\;yx^{-1}zx^{-1}y=1,\;yz^{-2}x=1$&Abelian\\
\hline
$47$&$xz^{-1}x^{-1}z=1,\;xy^{-1}x^{-1}y=1,\;yx^{-1}zx^{-1}y=1,\;yz^{-1}xz=1$&Abelian\\\hline
$48$&$xz^{-1}x^{-1}z=1,\;xy^{-1}x^{-1}y=1,\;yx^{-1}zx^{-1}y=1,\;yz^{-1}xy^{-1}z=1$&Abelian\\
\hline
$49$&$xz^{-1}x^{-1}z=1,\;xy^{-1}x^{-1}y=1,\;yx^{-1}zx^{-1}y=1,\;yz^{-1}yz=1$&Abelian\\\hline
$50$&$xz^{-1}x^{-1}z=1,\;xy^{-1}x^{-1}y=1,\;yx^{-1}zx^{-1}y=1,\;yz^{-1}yx^{-1}z=1$&Abelian\\
\hline
$51$&$xz^{-1}x^{-1}z=1,\;xy^{-1}x^{-1}y=1,\;x^{-1}y(x^{-1}z)^2=1,\;x^{-1}(yz^{-1})^2y=1$&Abelian\\\hline
$52$&$xz^{-1}y^{-1}z=1,\;xy^{-1}x^{-1}y=1,\;xy^{-1}zx^{-1}y=1,\;xz^{-1}x^{-1}z=1$&Abelian\\
\hline
$53$&$xz^{-1}y^{-1}z=1,\;xy^{-1}x^{-1}y=1,\;zy^{-1}zx^{-1}y=1,\;zx^{-1}yx^{-1}z=1$&$x=y$\\\hline
$54$&$xz^{-2}x=1,\;xy^{-1}x^{-1}y=1,\;(xz^{-1})^2x=1,\;xy^{-1}x^{-1}y=1$&$x=z$\\
\hline
$55$&$xz^{-2}x=1,\;xy^{-1}x^{-1}y=1,\;(xz^{-1})^2x=1,\;xy^{-1}xy=1$&$x=z$\\\hline
$56$&$xz^{-2}x=1,\;xy^{-1}x^{-1}y=1,\;(xz^{-1})^2x=1,\;x(y^{-1}z)^2=1$&$x=z$\\
\hline
$57$&$xz^{-2}x=1,\;xy^{-1}x^{-1}y=1,\;y(z^{-1}x)^2=1,\;yx^{-1}yz=1$&Abelian\\\hline
$58$&$xz^{-2}x=1,\;xy^{-1}x^{-1}y=1,\;yz^{-1}xz^{-1}y=1,\;(yx^{-1})^2z=1$&Abelian\\
\hline
$59$&$xz^{-1}yz^{-1}x=1,\;xy^{-1}x^{-1}y=1,\;(xz^{-1})^2y=1,\;x^2y^{-1}x=1$&Abelian\\\hline
$60$&$xz^{-1}yz^{-1}x=1,\;xy^{-1}x^{-1}y=1,\;(xz^{-1})^2y=1,\;xyz=1$&Abelian\\
\hline
$61$&$xz^{-1}yz^{-1}x=1,\;xy^{-1}x^{-1}y=1,\;(xz^{-1})^2y=1,\;xzy^{-1}x=1$&Abelian\\\hline
$62$&$xz^{-1}x^{-1}y=1,\;xy^{-1}x^{-1}z=1,\;(yx^{-1})^2y=1,\;yz^{-1}y^{-1}z=1$&Abelian\\
\hline
$63$&$xz^{-1}x^{-1}z=1,\;xy^{-2}x=1,\;yx^{-1}zy^{-1}x=1,\;yz^{-1}y^{-1}z=1$&BS$(1,-1)$\\\hline
$64$&$xz^{-1}x^{-1}y=1,\;xy^{-2}z=1,\;(x^{-1}y)^2x^{-1}z=1,\;x^{-1}(yz^{-1})^2y=1$&BS$(-1,2)$\\
\hline
$65$&$xz^{-1}x^{-1}y=1,\;xy^{-1}z^{-1}x=1,\;(x^{-1}y)^2z^{-1}y=1,\;x^{-1}yz^{-1}xy^{-1}z=1$&BS$(3,-1)$\\\hline
$66$&$xz^{-2}y=1,\;xy^{-1}z^{-1}x=1,\;x^{-1}(yz^{-1})^2y=1,\;(x^{-1}y)^2x^{-1}z=1$&BS$(-1,3)$\\
\hline
$67$&$xz^{-1}yz=1,\;xy^{-1}z^{-1}y=1,\;xzx^{-1}y=1,\;xz^{-1}yz=1$&BS$(1,-1)$\\\hline
$68$&$xz^{-1}x^{-1}z=1,\;xy^{-1}xy=1,\;yx^{-1}zx^{-1}y=1,\;yz^{-1}y^{-1}z=1$&BS$(1,-1)$\\
\hline
$69$&$xz^{-1}x^{-1}z=1,\;xy^{-1}xy=1,\;yx^{-1}zx^{-1}y=1,\;yz^{-1}xy^{-1}z=1$&BS$(1,-1)$\\\hline
$70$&$xz^{-2}x=1,\;xy^{-1}xy=1,\;(xz^{-1})^2x=1,\;xy^{-1}x^{-1}y=1$&$x=z$\\
\hline
$71$&$xz^{-2}x,\;xy^{-1}xy=1,\;(xz^{-1})^2x=1,\;xy^{-1}xy=1$&$x=z$\\\hline
$72$&$xz^{-2}x=1,\;xy^{-1}xy=1,\;(xz^{-1})^2x,\;x(y^{-1}z)^2=1$&$x=z$\\
\hline
$73$&$xz^{-1}xz=1,\;xy^{-1}xy=1,\;yzx^{-1}y=1,\;yz^{-1}xy^{-1}z=1$&BS$(1,-1)$\\\hline
$74$&$xz^{-1}x^{-1}z=1,\;(xy^{-1})^2x=1,\;yx^{-1}zy^{-1}x=1,\;yz^{-1}y^{-1}z=1$&Abelian\\
\hline
$75$&$xz^{-1}y^2=1,\;xy^{-1}xz^{-1}x=1,\;xyz^{-1}y=1,\;xz^{-1}x^{-1}z=1$&Abelian\\\hline
$76$&$xz^{-1}x^{-1}z=1,\;xy^{-1}xz^{-1}y=1,\;yx^{-1}zx^{-1}y=1,\;yz^{-1}y^{-1}z=1$&BS$(1,-1)$\\
\hline
$77$&$xz^{-1}x^{-1}z=1,\;xy^{-1}zx^{-1}y=1,\;yx^{-1}zx^{-1}y=1,\;yz^{-1}y^{-1}z=1$&$x=y$\\\hline
$78$&$xz^{-2}x=1,\;xy^{-1}zx^{-1}y=1,\;(xz^{-1})^2x=1,\;xy^{-1}x^{-1}y=1$&T\\
\hline
$79$&$xz^{-2}x=1,\;xy^{-1}zx^{-1}y=1,\;(xz^{-1})^2x=1,\;xy^{-1}z^{-1}y=1$&T\\\hline
$80$&$xz^{-2}x=1,\;xy^{-1}zx^{-1}y=1,\;(xz^{-1})^2x=1,\;xy^{-1}xy=1$&T\\
\hline
$81$&$xz^{-2}x=1,\;xy^{-1}zx^{-1}y=1,\;(xz^{-1})^2x=1,\;xy^{-1}zy=1$&T\\\hline
$82$&$xz^{-1}yz^{-1}x=1,\;xy^{-1}zx^{-1}y=1,\;(xz^{-1})^2y=1,\;xyz^{-1}x=1$&T\\
\hline
$83$&$xz^{-1}y^{-1}x=1,\;xy^{-1}zx^{-1}z=1,\;(xy^{-1})^2z=1,\;xz^{-1}y^{-1}x=1$&Abelian\\\hline
$84$&$xz^{-1}x^{-1}z=1,\;xy^{-1}zy^{-1}x=1,\;yx^{-1}zy^{-1}x=1,\;yz^{-1}x^{-1}z=1$&Abelian\\
\hline
$85$&$xz^{-1}x^{-1}z=1,\;xy^{-1}zy^{-1}x=1,\;yx^{-1}zy^{-1}x=1,\;yz^{-1}y^{-1}z=1$&$x=y$\\\hline
$86$&$xz^{-1}x^{-1}z=1,\;xy^{-1}zy^{-1}x=1,\;zx^{-1}zy^{-1}z=1,\;z(y^{-1}x)^2=1$&Abelian\\
\hline
$87$&$xz^{-1}y^{-1}z=1,\;xy^{-1}zy^{-1}x=1,\;xy^{-1}zy^{-1}x=1,\;xz^{-1}x^{-1}z=1$&Abelian\\\hline
$88$&$yx^{-1}zx^{-1}y=1,\;y^2z=1,\;(yx^{-1})^2z=1,\;yzx^{-1}y=1$&T\\
\hline
$89$&$yz^{-1}y^{-1}z=1,\;y^2z^{-1}y=1,\;xy^{-1}zx^{-1}y=1,\;xz^{-1}x^{-1}z=1$&BS$(1,-2)$\\\hline
$90$&$yz^{-1}y^{-1}z=1,\;y^2z^{-1}y=1,\;xy^{-1}zx^{-1}z=1,\;xz^{-1}x^{-1}y=1$&BS$(1,3)$\\
\hline
$91$&$yx^{-1}zy^{-1}x=1,\;yzy^{-1}z=1,\;(xy^{-1})^2x=1,\;xzx^{-1}z=1$&Abelian\\\hline
$92$&$yz^{-1}y^{-1}z=1,\;(yx^{-1})^2y=1,\;xy^{-1}zx^{-1}y=1,\;xz^{-1}x^{-1}z=1$&Abelian\\
\hline
$93$&$yz^{-1}x^{-1}y=1,\;(yx^{-1})^2z=1,\;z(x^{-1}y)^2=1,\;(zy^{-1})^2z=1$&Abelian\\\hline
$94$&$yz^{-1}x^{-1}y=1,\;yx^{-1}yz^{-1}x=1,\;yx^{-1}yz^{-1}x=1,\;yz^{-1}y^{-1}z=1$&Abelian\\
\hline
$95$&$yz^{-1}y^{-1}z=1,\;yx^{-1}yz^{-1}x=1,\;xy^{-1}zy^{-1}x=1,\;xz^{-1}x^{-1}z=1$&BS$(1,-1)$\\\hline
$96$&$yz^{-1}y^{-1}z=1,\;yx^{-1}yz^{-1}x=1,\;xy^{-1}zy^{-1}x=1,\;xz^{-1}xz=1$&BS$(1,-1)$\\
\hline
$97$&$yz^{-1}x^{-1}z=1,\;yx^{-1}zx^{-1}y=1,\;yx^{-1}zx^{-1}y=1,\;yz^{-1}y^{-1}z=1$&Abelian\\\hline
$98$&$yz^{-1}y^{-1}z=1,\;yx^{-1}zx^{-1}y=1,\;xy^{-1}zx^{-1}y=1,\;xz^{-1}x^{-1}z=1$&$y=x$\\
\hline
$99$&$yz^{-1}y^{-1}z=1,\;yx^{-1}zy^{-1}x=1,\;xy^{-1}zy^{-1}x=1,\;xz^{-1}x^{-1}z=1$&$y=x$\\\hline
$100$&$x^2z^{-1}x=1,\;xz^{-1}x^{-1}z=1,\;yz^{-1}xy^{-1}z=1,\;yzy^{-1}x=1$&BS$(1,-3)$\\
\hline
$101$&$x^2z^{-1}x=1,\;xz^{-1}x^{-1}z=1,\;yz^{-1}xy^{-1}x=1,\;yzy^{-1}z=1$&BS$(1,2)$\\\hline
$102$&$xy^2=1,\;xz^{-1}x^{-1}z=1,\;xyx^{-1}y=1,\;xz^{-1}x^{-1}z=1$&T\\
\hline
$103$&$xy^2=1,\;xz^{-1}x^{-1}z=1,\;xyx^{-1}y=1,\;xz^{-1}y^{-1}z=1$&T\\\hline
$104$&$xy^2=1,\;xz^{-1}x^{-1}z=1,\;xyx^{-1}y=1,\;xz^{-1}xz=1$&T\\
\hline
$105$&$xy^2=1,\;xz^{-1}x^{-1}z=1,\;xyx^{-1}y=1,\;xz^{-1}xy^{-1}z=1$&T\\\hline
$106$&$xy^2=1,\;xz^{-1}xz=1,\;xyx^{-1}y=1,\;xz^{-1}x^{-1}z=1$&T\\
\hline
$107$&$xy^2=1,\;xz^{-1}xz=1,\;xyx^{-1}y=1,\;xz^{-1}xz=1$&T\\\hline
$108$&$xyz^{-1}y=1,\;xz^{-1}x^{-1}z=1,\;xz^{-1}yx^{-1}z=1,\;xyx^{-1}y=1$&Abelian\\
\hline
$109$&$xyz^{-1}y=1,\;xz^{-1}x^{-1}z=1,\;xz^{-1}yx^{-1}z=1,\;xyz^{-1}y=1$&Abelian\\\hline
$110$&$xyz^{-1}y=1,\;xz^{-2}x=1,\;xz^{-1}yz^{-1}x=1,\;xyx^{-1}y=1$&BS$(1,-1)$\\
\hline
$111$&$xyz^{-1}y=1,\;xz^{-1}yx^{-1}z=1,\;xz^{-1}yx^{-1}z=1,\;xyx^{-1}y=1$&BS$(1,-1)$\\\hline
$112$&$xyz^{-1}y=1,\;xz^{-1}yz^{-1}x=1,\;xz^{-1}yz^{-1}x=1,\;xyx^{-1}y=1$&BS$(1,-1)$\\
\hline
$113$&$xz^2=1,\;xy^{-1}x^{-1}y=1,\;xzx^{-1}z=1,\;xy^{-1}x^{-1}y=1$&T\\\hline
$114$&$xz^2=1,\;xy^{-1}x^{-1}y=1,\;xzx^{-1}z=1,\;xy^{-1}xy=1$&T\\
\hline
$115$&$xz^2=1,\;xy^{-1}xy=1,\;xzx^{-1}z=1,\;xy^{-1}x^{-1}y=1$&T\\\hline
$116$&$xz^2=1,\;xy^{-1}xy=1,\;xzx^{-1}z=1,\;xy^{-1}xy=1$&T\\
\hline
$117$&$xzx^{-1}z=1,\;xy^{-1}x^{-1}y=1,\;yx^{-1}zx^{-1}y=1,\;yzx^{-1}z=1$&Abelian\\\hline
$118$&$xzx^{-1}z=1,\;xy^{-2}x=1,\;yx^{-1}zy^{-1}x=1,\;yzx^{-1}z=1$&BS$(1,-1)$\\
\hline
$119$&$xzx^{-1}z=1,\;xy^{-2}x=1,\;yx^{-1}zy^{-1}x=1,\;yzy^{-1}z=1$&BS$(1,-1)$\\\hline
$120$&$xzx^{-1}z=1,\;(xy^{-1})^2x=1,\;yx^{-1}zy^{-1}x=1,\;yzy^{-1}z=1$&Abelian\\
\hline
$121$&$xzx^{-1}z=1,\;xy^{-1}zx^{-1}y=1,\;yx^{-1}zx^{-1}y=1,\;yzy^{-1}z=1$&$x=y$\\\hline
$122$&$xzx^{-1}z=1,\;xy^{-1}zx^{-1}y=1,\;yx^{-1}zx^{-1}y=1,\;yzx^{-1}z=1$&$x=y$\\
\hline
$123$&$xzx^{-1}z=1,\;xy^{-1}zy^{-1}x=1,\;yx^{-1}zy^{-1}x=1,\;yzx^{-1}z=1$&$x=y$\\\hline
$124$&$xzx^{-1}z=1,\;xy^{-1}zy^{-1}x=1,\;yx^{-1}zy^{-1}x=1,\;yzy^{-1}z=1$&$x=y$\\
\hline
$125$&$xzy^{-1}x=1,\;xz^{-1}yx^{-1}z=1,\;(xy^{-1})^2z=1,\;xzy^{-1}x=1$&$x=y^{-1}$\\\hline
$126$&$xzy^{-1}z=1,\;xy^{-1}x^{-1}y=1,\;xy^{-1}zx^{-1}y=1,\;xzx^{-1}z=1$&Abelian\\
\hline
$127$&$xzy^{-1}z=1,\;xy^{-1}x^{-1}y=1,\;xy^{-1}zx^{-1}y=1,\;xzy^{-1}z=1$&Abelian\\\hline
$128$&$xzy^{-1}z=1,\;xy^{-2}x=1,\;xy^{-1}zy^{-1}x=1,\;xzx^{-1}z=1$&BS$(1,-1)$\\
\hline
$129$&$xzy^{-1}z=1,\;xy^{-2}x=1,\;xy^{-1}zy^{-1}x=1,\;xzy^{-1}z=1$&BS$(1,-1)$\\\hline
$130$&$xzy^{-1}z=1,\;xy^{-1}zx^{-1}y=1,\;xy^{-1}zx^{-1}y=1,\;xzx^{-1}z=1$&$y=x$\\
\hline
$131$&$xzy^{-1}z=1,\;xy^{-1}zy^{-1}x=1,\;xy^{-1}zy^{-1}x=1,\;xzx^{-1}z=1$&BS$(1,-1)$\\\hline
$132$&$xy^{-1}x^{-1}y=1,\;xz^{-1}x^{-1}z=1,\;(yx^{-1})^2z=1,\;yz^{-1}x^{-1}y=1$&Abelian\\
\hline
$133$&$xy^{-1}x^{-1}y=1,\;xz^{-1}x^{-1}z=1,\;(yx^{-1})^2z=1,\;yz^{-2}x=1$&Abelian\\\hline
$134$&$xy^{-1}x^{-1}y=1,\;xz^{-1}x^{-1}z=1,\;(yx^{-1})^2z=1,\;yz^{-2}y=1$&Abelian\\
\hline
$135$&$xy^{-1}x^{-1}y=1,\;xz^{-1}x^{-1}z=1,\;(yx^{-1})^2z=1,\;yz^{-1}xz^{-1}y=1$&Abelian\\\hline
$136$&$xy^{-1}x^{-1}y=1,\;xz^{-1}y^{-1}z=1,\;(yx^{-1})^2y=1,\;yz^{-1}yz=1$&BS$(1,-1)$\\
\hline
$137$&$xy^{-1}x^{-1}y=1,\;xz^{-2}x=1,\;(x^{-1}y)^2z^{-1}y=1,\;x^{-1}yz^{-1}xy^{-1}z=1$&Abelian\\\hline
$138$&$xy^{-1}x^{-1}y=1,\;xz^{-1}xz=1,\;(yx^{-1})^2y=1,\;yz^{-1}x^{-1}z=1$&BS$(1,-1)$\\
\hline
$139$&$xy^{-1}x^{-1}y=1,\;xz^{-1}xz=1,\;zx^{-1}yx^{-1}z=1,\;zy^{-1}xz^{-1}y=1$&BS$(1,-1)$\\\hline
$140$&$xy^{-1}x^{-1}y=1,\;(xz^{-1})^2x=1,\;yx^{-1}yz^{-1}x=1,\;yz^{-1}y^{-1}z=1$&Abelian\\
\hline
$141$&$xy^{-1}x^{-1}y=1,\;(xz^{-1})^2x=1,\;(x^{-1}y)^2z^{-1}y=1,\;x^{-1}yz^{-1}xy^{-1}z=1$&Abelian\\\hline
$142$&$xy^{-2}x=1,\;xz^{-1}x^{-1}z=1,\;(xy^{-1})^2x=1,\;xz^{-1}x^{-1}z=1$&$x=y$\\
\hline
$143$&$xy^{-2}x=1,\;xz^{-1}x^{-1}z=1,\;(xy^{-1})^2x=1,\;xz^{-1}xz=1$&$x=y$\\\hline
$144$&$xy^{-2}x=1,\;xz^{-1}x^{-1}z=1,\;(xy^{-1})^2x=1,\;x(z^{-1}y)^2=1$&$x=y$\\
\hline
$145$&$xy^{-2}x=1,\;xz^{-1}xz=1,\;(xy^{-1})^2x=1,\;xz^{-1}x^{-1}z=1$&T\\\hline
$146$&$xy^{-2}x=1,\;xz^{-1}xz=1,\;(xy^{-1})^2x=1,\;xz^{-1}xz=1$&$x=y$\\
\hline
$147$&$xy^{-2}x=1,\;xz^{-1}xz=1,\;(xy^{-1})^2x=1,\;x(z^{-1}y)^2=1$&$x=y$\\\hline
$148$&$xy^{-1}z^{-1}x=1,\;(xz^{-1})^2y=1,\;y(z^{-1}x)^2=1,\;(yx^{-1})^2y=1$&Abelian\\
\hline
$149$&$xy^{-1}z^{-1}y=1,\;xz^{-1}yz^{-1}x=1,\;xz^{-1}yz^{-1}x=1,\;xy^{-1}x^{-1}y=1$&Abelian\\\hline
$150$&$xy^{-1}zy=1,\;xz^{-1}x^{-1}z=1,\;xyx^{-1}z=1,\;xy^{-1}zy=1$&Abelian\\
\hline
$151$&$xy^{-1}z^2=1,\;xz^{-1}y^{-1}x=1,\;xzy^{-1}z=1,\;xy^{-1}x^{-1}y=1$&Abelian\\\hline
$152$&$xy^{-1}z^2=1,\;xz^{-1}y^{-1}x=1,\;xzy^{-1}z=1,\;xy^{-1}z^{-1}x=1$&Abelian\\
\hline
$153$&$xy^{-1}z^2=1,\;xz^{-1}xy^{-1}x=1,\;xzy^{-1}z=1,\;xy^{-1}x^{-1}y=1$&Abelian\\\hline
$154$&$xy^{-1}z^2=1,\;xz^{-1}xy^{-1}x=1,\;xzy^{-1}z=1,\;xy^{-1}xz^{-1}x=1$&Abelian\\
\hline
$155$&$y^2z^{-1}y=1,\;yz^{-1}y^{-1}z=1,\;xz^{-1}yx^{-1}z=1,\;xyx^{-1}y=1$&BS$(1,-1)$\\\hline
$156$&$y^2z^{-1}y=1,\;yz^{-1}y^{-1}z=1,\;xz^{-1}yx^{-1}y=1,\;xyx^{-1}z=1$&BS$(1,2)$\\
\hline
$157$&$y^2z^{-1}y=1,\;yz^{-1}y^{-1}z=1,\;xz^{-1}yx^{-1}z=1,\;xzx^{-1}y=1$&BS$(1,-3)$\\\hline
$158$&$y^2z^{-1}y=1,\;yz^{-1}y^{-1}z=1,\;xz^{-1}yx^{-1}y=1,\;xzx^{-1}z=1$&BS$(1,2)$\\
\hline
$159$&$yz^2=1,\;yx^{-1}yz^{-1}x=1,\;yzy^{-1}z=1,\;yx^{-1}yz^{-1}x=1$&T\\\hline
$160$&$yz^2=1,\;yx^{-1}yz^{-1}x=1,\;yzy^{-1}z=1,\;yx^{-1}zy^{-1}x=1$&T\\
\hline
$161$&$yz^2=1,\;yx^{-1}zy^{-1}x=1,\;yzy^{-1}z=1,\;yx^{-1}yz^{-1}x=1$&T\\\hline
$162$&$yz^2=1,\;yx^{-1}zy^{-1}x=1,\;yzy^{-1}z=1,\;yx^{-1}zy^{-1}x=1$&T\\
\hline
$163$&$yzx^{-1}y=1,\;yz^{-1}xy^{-1}z=1,\;(yx^{-1})^2z=1,\;yzx^{-1}y=1$&Abelian\\\hline
$164$&$yzy^{-1}z=1,\;yx^{-1}zx^{-1}y=1,\;xy^{-1}zx^{-1}y=1,\;xzx^{-1}z=1$&$y=x$\\\hline
$165$&$yzy^{-1}z=1,\;yx^{-1}zy^{-1}x=1,\;xy^{-1}zy^{-1}x=1,\;xzx^{-1}z=1$&$y=x$\\\hline
$166$&$yzx^{-1}z=1,\;yx^{-1}zx^{-1}y=1,\;yx^{-1}zx^{-1}y=1,\;yzy^{-1}z=1$&$y=x^{-1}$\\\hline
\hline
\end{longtable}
\begin{longtable}{|l|l|l|l|l|}
\caption{The  relations for existing the graph $ \Gamma_1 $ as Figure \ref{gamma} in $Z(\alpha,\beta)$ and $U(\alpha,\beta)$. }\label{t4}\\
\hline
\endfirsthead
\multicolumn{5}{c}%
{{\bfseries \tablename\ \thetable{} -- continued from previous page}} \\\hline
\hline 
 \multicolumn{1}{|l|}{\textbf{$ n $}} & \multicolumn{1}{l|}{\textbf{$r_1$}}&  \multicolumn{1}{|l|}{\textbf{$r_2 $}} & \multicolumn{1}{l|}{\textbf{$r_3$}}&  \multicolumn{1}{|l|}{\textbf{$r_4 $}} \\ \hline
\endhead

\hline \multicolumn{5}{|r|}{{Continued on next page}} \\ \hline
\endfoot

\hline 
\endlastfoot
$ n $& $ r_1 $ &$ r_2 $ &$ r_3 $ &$ r_4 $ \\
\hline
$1$&$xy^{-1}xy=1$&$xz^{-1}x^{-1}z=1$&$(zx^{-1})^2z=1$&$(zy^{-1})^2x=1$\\
$2$&$xz^{-1}xz=1$&$xy^{-1}x^{-1}y=1$&$(yx^{-1})^2y=1$&$(yz^{-1})^2x=1$\\
$3$&$xyx^{-1}z=1$&$xy^{-1}zy=1$&$xyx^{-1}z=1$&$xy^{-1}zy=1$\\
$4$&$xyx^{-1}z=1$&$xz^{-1}y^2=1$&$xyx^{-1}z=1$&$xz^{-1}y^2=1$\\
$5$&$xyz^{-1}x=1$&$xy^{-1}zy=1$&$xyz^{-1}x=1$&$xy^{-1}zy=1$\\
$6$&$xyz^{-1}x=1$&$xz^{-1}y^2=1$&$xyz^{-1}x=1$&$xz^{-1}y^2=1$\\
$7$&$xyz^{-1}y=1$&$xz^{-2}x=1$&$x^{-1}yz^{-1}xz^{-1}y=1$&$x^{-1}yx^{-1}zy^{-1}z=1$\\
$8$&$xzx^{-1}y=1$&$xy^{-1}z^2=1$&$xzx^{-1}y=1$&$xy^{-1}z^2=1$\\
$9$&$xzx^{-1}y=1$&$xz^{-1}yz=1$&$xzx^{-1}y=1$&$xz^{-1}yz=1$\\
$10$&$xzx^{-1}z=1$&$xy^{-1}x^{-1}y=1$&$(yx^{-1})^2y=1$&$yz^{-1}y^{-1}z=1$\\
$11$&$xzx^{-1}z=1$&$xy^{-1}x^{-1}y=1$&$(yx^{-1})^2y=1$&$yz^{-1}yz=1$\\
$12$&$xzy^{-1}x=1$&$xy^{-1}z^2=1$&$xzy^{-1}x=1$&$xy^{-1}z^2=1$\\
$13$&$xzy^{-1}x=1$&$x(y^{-1}z)^2=1$&$x(y^{-1}z)^2=1$&$xzy^{-1}x=1$\\
$14$&$xzy^{-1}x=1$&$xz^{-1}yz=1$&$xzy^{-1}x=1$&$xz^{-1}yz=1$\\
$15$&$xy^{-1}x^{-1}y=1$&$xz^{-1}x^{-1}z=1$&$(zx^{-1})^2z=1$&$(zy^{-1})^2x=1$\\
$16$&$xy^{-1}x^{-1}y=1$&$xz^{-1}y^{-1}z=1$&$zy^{-1}zx^{-1}y=1$&$zx^{-1}zy^{-1}x=1$\\
$17$&$xy^{-1}x^{-1}y=1$&$xz^{-2}x=1$&$x^{-1}yz^{-1}xz^{-1}y=1$&$x^{-1}yx^{-1}zy^{-1}z=1$\\
$18$&$xy^{-2}x=1$&$xz^{-1}x^{-1}z=1$&$x^{-1}yx^{-1}zy^{-1}z=1$&$x^{-1}yz^{-1}xz^{-1}y=1$\\
$19$&$xy^{-2}x=1$&$xz^{-1}y^{-1}z=1$&$(zy^{-1})^2x=1$&$zx^{-1}yx^{-1}z=1$\\
$20$&$xy^{-1}z^{-1}y=1$&$xz^{-1}x^{-1}z=1$&$yx^{-1}zx^{-1}y=1$&$yz^{-1}y^{-1}x=1$\\
$21$&$xy^{-1}z^{-1}y=1$&$xz^{-1}x^{-1}z=1$&$yx^{-1}zx^{-1}y=1$&$yz^{-1}yx^{-1}z=1$\\
$22$&$xy^{-1}z^{-1}y=1$&$xz^{-2}x=1$&$yz^{-1}xz^{-1}y=1$&$yx^{-1}yz^{-1}x=1$\\
$23$&$xy^{-1}z^{-1}y=1$&$xz^{-2}x=1$&$x^{-1}yz^{-1}xz^{-1}y=1$&$x^{-1}yx^{-1}zy^{-1}z=1$\\
$24$&$xy^{-1}xy=1$&$xz^{-2}x=1$&$x^{-1}yz^{-1}xz^{-1}y=1$&$x^{-1}yx^{-1}zy^{-1}z=1$\\
$25$&$(xy^{-1})^2z=1$&$xz^{-1}y^{-1}x=1$&$(xy^{-1})^2z=1$&$xz^{-1}y^{-1}x=1$\\
$26$&$xy^{-1}zy=1$&$xz^{-2}x=1$&$x^{-1}yz^{-1}xz^{-1}y=1$&$x^{-1}yx^{-1}zy^{-1}z=1$\\
$27$&$xy^{-1}zx^{-1}y=1$&$xz^{-1}y^{-1}z=1$&$xy^{-1}zx^{-1}y=1$&$xz^{-1}y^{-1}z=1$\\
$28$&$xy^{-1}zy^{-1}x=1$&$xz^{-1}y^{-1}z=1$&$xy^{-1}zy^{-1}x=1$&$xz^{-1}y^{-1}z=1$\\
$29$&$yzx^{-1}y=1$&$yx^{-1}z^2=1$&$yzx^{-1}y=1$&$yx^{-1}z^2=1$\\
$30$&$yzx^{-1}y=1$&$yz^{-1}xz=1$&$yzx^{-1}y=1$&$yz^{-1}xz=1$\\
$31$&$yzy^{-1}x=1$&$yx^{-1}z^2=1$&$yzy^{-1}x=1$&$yx^{-1}z^2=1$\\
$32$&$(yx^{-1})^2z=1$&$yz^{-1}x^{-1}y=1$&$(yx^{-1})^2z=1$&$yz^{-1}x^{-1}y=1$\\
$33$&$yx^{-1}yz^{-1}x=1$&$yz^{-1}x^{-1}y=1$&$yx^{-1}yz^{-1}x=1$&$yz^{-1}x^{-1}y=1$\\
$34$&$yx^{-1}yz^{-1}x=1$&$yz^{-1}x^{-1}y=1$&$yx^{-1}yz^{-1}x=1$&$yz^{-1}xy^{-1}z=1$\\
$35$&$yx^{-1}zx^{-1}y=1$&$yz^{-1}x^{-1}z=1$&$yx^{-1}zx^{-1}y=1$&$yz^{-1}x^{-1}z=1$\\
$36$&$yx^{-1}zy^{-1}x=1$&$yz^{-1}x^{-1}z=1$&$yx^{-1}zy^{-1}x=1$&$yz^{-1}x^{-1}z=1$\\
$37$&$xz^{-1}xy^{-1}z=1$&$xyz^{-1}x=1$&$xz^{-1}xy^{-1}z=1$&$xyz^{-1}x=1$\\
$38$&$(xz^{-1})^2y=1$&$xyz^{-1}x=1$&$(xz^{-1})^2y=1$&$xyz^{-1}x=1$\\
$39$&$xz^{-1}yx^{-1}z=1$&$xyz^{-1}y=1$&$xz^{-1}yx^{-1}z=1$&$xyz^{-1}y=1$\\
$40$&$xz^{-1}yz^{-1}x=1$&$xyz^{-1}y=1$&$xz^{-1}yz^{-1}x=1$&$xyz^{-1}y=1$\\
$41$&$(xy^{-1})^2z=1$&$xzy^{-1}x=1$&$(xy^{-1})^2z=1$&$xzy^{-1}x=1$\\
$42$&$xy^{-1}xz^{-1}y=1$&$xzy^{-1}x=1$&$xy^{-1}xz^{-1}y=1$&$xzy^{-1}x=1$\\
$43$&$xy^{-1}zx^{-1}y=1$&$xzy^{-1}z=1$&$xy^{-1}zx^{-1}y=1$&$xzy^{-1}z=1$\\
$44$&$xy^{-1}zy^{-1}x=1$&$xzy^{-1}z=1$&$xy^{-1}zy^{-1}x=1$&$xzy^{-1}z=1$\\
$45$&$xz^{-1}x^{-1}z=1$&$xy^{-1}x^{-1}y=1$&$(yx^{-1})^2y=1$&$yz^{-1}yz=1$\\
$46$&$xz^{-1}x^{-1}z=1$&$xy^{-1}x^{-1}y=1$&$(yx^{-1})^2y=1$&$(yz^{-1})^2x=1$\\
$47$&$xz^{-1}y^{-1}z=1$&$xy^{-1}x^{-1}y=1$&$(yx^{-1})^2y=1$&$yz^{-1}x^{-1}z=1$\\
$48$&$xz^{-1}y^{-1}z=1$&$xy^{-1}x^{-1}y=1$&$(yx^{-1})^2y=1$&$yz^{-1}xz=1$\\
$49$&$xz^{-1}y^{-1}z=1$&$xy^{-1}x^{-1}y=1$&$zx^{-1}yx^{-1}z=1$&$zy^{-1}zx^{-1}y=1$\\
$50$&$xz^{-1}xz=1$&$xy^{-1}x^{-1}y=1$&$(yx^{-1})^2y=1$&$yz^{-1}y^{-1}z=1$\\
$51$&$xz^{-1}xz=1$&$xy^{-1}x^{-1}y=1$&$(yx^{-1})^2y=1$&$yz^{-1}yz=1$\\
$52$&$xz^{-1}xy^{-1}z=1$&$xy^{-1}x^{-1}y=1$&$(yx^{-1})^2y=1$&$yz^{-1}x^{-1}z=1$\\
$53$&$xz^{-1}xy^{-1}z=1$&$xy^{-1}x^{-1}y=1$&$(yx^{-1})^2y=1$&$yz^{-1}y^{-1}z=1$\\
$54$&$xz^{-1}xy^{-1}z=1$&$xy^{-1}x^{-1}y=1$&$(yx^{-1})^2y=1$&$yz^{-1}xz=1$\\
$55$&$xz^{-1}xy^{-1}z=1$&$xy^{-1}x^{-1}y=1$&$(yx^{-1})^2y=1$&$yz^{-1}yz=1$\\
$56$&$xz^{-1}yz=1$&$xy^{-1}x^{-1}y=1$&$(yx^{-1})^2y=1$&$yz^{-1}x^{-1}z=1$\\
$57$&$xz^{-1}yx^{-1}z=1$&$xy^{-1}x^{-1}y=1$&$(yx^{-1})^2y=1$&$yz^{-1}x^{-1}z=1$\\
$58$&$xz^{-1}yx^{-1}z=1$&$xy^{-1}x^{-1}y=1$&$(yx^{-1})^2y=1$&$yz^{-1}y^{-1}z=1$\\
$59$&$xz^{-1}yx^{-1}z=1$&$xy^{-1}x^{-1}y=1$&$(yx^{-1})^2y=1$&$yz^{-1}xz=1$\\
$60$&$xz^{-1}yx^{-1}z=1$&$xy^{-1}x^{-1}y=1$&$(yx^{-1})^2y=1$&$yz^{-1}yz=1$\\
$61$&$xz^{-1}x^{-1}y=1$&$xy^{-1}x^{-1}z=1$&$yx^{-1}zx^{-1}y=1$&$yz^{-1}y^{-1}z=1$\\
$62$&$xz^{-1}y^{-1}x=1$&$xy^{-1}x^{-1}z=1$&$x^{-1}yx^{-1}zy^{-1}z=1$&$x^{-1}yz^{-1}xz^{-1}y=1$\\
$63$&$xz^{-1}y^{-1}z=1$&$xy^{-2}x=1$&$zy^{-1}xy^{-1}z=1$&$zx^{-1}zy^{-1}x=1$\\
$64$&$xz^{-1}x^{-1}y=1$&$xy^{-1}z^{-1}x=1$&$x^{-1}yz^{-1}xz^{-1}y=1$&$x^{-1}yx^{-1}zy^{-1}z=1$\\
$65$&$xz^{-1}y^{-1}z=1$&$xy^{-1}z^{-1}x=1$&$x^{-1}yz^{-1}xy^{-1}z=1$&$(x^{-1}y)^2z^{-1}y=1$\\
$66$&$xz^{-2}y=1$&$xy^{-1}z^{-1}x=1$&$x^{-1}yz^{-1}xz^{-1}y=1$&$x^{-1}yx^{-1}zy^{-1}z=1$\\
$67$&$(xz^{-1})^2y=1$&$xy^{-1}z^{-1}x=1$&$(xz^{-1})^2y=1$&$xy^{-1}z^{-1}x=1$\\
$68$&$xz^{-1}x^{-1}z=1$&$xy^{-1}z^{-1}y=1$&$yz^{-1}yx^{-1}z=1$&$yx^{-1}yz^{-1}x=1$\\
$69$&$xz^{-1}x^{-1}z=1$&$xy^{-1}z^{-1}y=1$&$yz^{-1}yx^{-1}z=1$&$yx^{-1}zx^{-1}y=1$\\
$70$&$xz^{-2}x=1$&$xy^{-1}z^{-1}y=1$&$(yz^{-1})^2x=1$&$yx^{-1}zx^{-1}y=1$\\
$71$&$xz^{-1}yx^{-1}z=1$&$xy^{-1}z^{-1}y=1$&$xz^{-1}yx^{-1}z=1$&$xy^{-1}z^{-1}y=1$\\
$72$&$xz^{-1}yz^{-1}x=1$&$xy^{-1}z^{-1}y=1$&$xz^{-1}yz^{-1}x=1$&$xy^{-1}z^{-1}y=1$\\
$73$&$xz^{-1}y^{-1}z=1$&$xy^{-1}zy=1$&$xyx^{-1}z=1$&$xy^{-1}zy=1$\\
$74$&$(yx^{-1})^2z=1$&$yzx^{-1}y=1$&$(yx^{-1})^2z=1$&$yzx^{-1}y=1$\\
$75$&$yx^{-1}yz^{-1}x=1$&$yzx^{-1}y=1$&$yx^{-1}yz^{-1}x=1$&$yzx^{-1}y=1$\\
$76$&$yx^{-1}zy^{-1}x=1$&$yzx^{-1}z=1$&$yx^{-1}zy^{-1}x=1$&$yzx^{-1}z=1$\\
$77$&$yx^{-1}zx^{-1}y=1$&$yzx^{-1}z=1$&$yx^{-1}zx^{-1}y=1$&$yzx^{-1}z=1$\\
\hline
\end{longtable}
 $\mathbf{\Gamma_2}$\textbf{ of Figure \ref{gamma}:} Taking into account the relations from table \ref{t1} which are not disproved and the relations from Table \ref{t4}, it can be seen that  there are  $16$ different cases for the relations of $5$ cycles of length $3$ on vertices of degree $ 4 $ in $ \Gamma $ with the structure of the graph $ \Gamma_2 $ as Figure \ref{gamma}. Using GAP \cite{a9}, a free group with generators $ x,y,z $ and     the relations of these $ 5 $ cycles which are among these cases   is finite or abelian, that is a
contradiction. Hence, $ \Gamma $ does not contain any subgraph isomorphic to $\Gamma_2$ on vertices of degree $ 4 $.\\
$\mathbf{\Gamma_3}$ \textbf{of Figure \ref{gamma}:}
In view of   the relations from table \ref{tc4} which are not rejected and the relations from Table \ref{t4},, it can be seen that there are  $71$ different cases for the relations of $ 4 $ cycles of length $ 3 $ and a cycle of length $ 4 $  on vertices of degree $ 4 $ in $\Gamma$  with the structure of $ \Gamma_3 $. Using GAP \cite{a9},  a free group with generators $ x,y,z $ and the relations of these $ 5 $ cycles of   $  67$  cases among these $71$ cases is finite or abelian that is a contradiction. Hence, there are just  $ 4 $ cases which may lead
to the existence of $ \Gamma_1$ in $ \Gamma $. In the following, we show that all such cases lead to contradictions. Thus, $ \Gamma $ does not contain any subgraph isomorphic to the graph $\Gamma_3$.
 \begin{itemize}
 \item[(1)] $ r_1:xz^{-1}xz=1,\;\;r_2:xy^{-1}x^{-1}y=1,\;\;r_3:(yx^{-1})^2y=1,\;\;r_4:yz^{-1}y^{-1}z=1 ,\;\;\; r_5:z(x^{-1}y)^2z^{-1}y=1 $.\\
 $ r_2 $ and $ r_4 $ imply that  $\left\langle  x,y\right\rangle  $ and $\left\langle  z,y\right\rangle  $ are  abelian groups. Hence,  $ r_3 $ leads to $ x^2=y^3 $. On the other hand, by $ r_1 $, $ x^2=(x^{-2})^z $, thereupon $ y^3=(y^{-3})^z $ which implies  $ y^6=1 $, a contradiction.
\item[(2)] $ r_1:xz^{-1}xz=1,\;\;\;r_2:xy^{-1}x^{-1}y=1,\;\;\;r_3:(yx^{-1})^2y=1,\;\;\;r_4:yz^{-1}yz=1,\;\;\; r_5:z(x^{-1}y)^2z^{-1}y=1 $.\\
$ r_3 $ and $ r_5 $ imply that $\left\langle  z,y\right\rangle  $ is an abelian group. Then $ r_4 $ implies  $ y^2=1 $, a contradiction.
\item[(3)] $ r_1:xz^{-1}yz=1,\;\;r_2:xy^{-1}x^{-1}y=1,\;\;r_3:(yx^{-1})^2y=1,\;\;r_4:yz^{-1}x^{-1}z=1,\;\; r_5:z(x^{-1}y)^2z^{-1}x=1 $.\\
$ r_2 $ implies that $\left\langle  x,y\right\rangle  $ is an abelian group.  Hence,  $ r_3 $ leads to $ x^2=y^3 $. Also by  $ r_1 $ and $ r_4 $, $ x^2=(y^2)^{z^{-1}} $ and $ x^2=(y^{-2})^z $ which imply  $ (y^2)^{z^{-1}}y^{-3}=1 $ and $ (y^{-2})^zy^{-3}=1  $. Thereupon, $ (y^5)^{z^{-1}}y^{-1}=1 $.  Therefore, $ \left\langle x,y,z\right\rangle  $ is isomorphic to a quotient group of BS$ (1,5) $, that is a contradiction. 
\item[(4)]$r_1:xy^{-1}x^{-1}y=1,\;\;r_2:xz^{-1}y^{-1}z=1,\;\;r_3:zy^{-1}zx^{-1}y=1,\;\; r_4: zx^{-1}zy^{-1}x=1,\;\; r_5:(x^{-1}y)^2z^{-1}xy^{-1}z=1$.\\
In this case we have $x=y$, a contradiction.
 \end{itemize}
$\mathbf{\Gamma_4}$ \textbf{of Figure \ref{gamma}:}Taking into account the relations from table \ref{tc4} which are not rejected and the relations from Table \ref{t4}, it can be seen that  there are  $26$ different cases for the relations of $4$ cycles of length $3$ and a cycle of length $ 4 $  with the structure of the graph $ \Gamma_4 $ as Figure \ref{gamma} on vertices of degree $ 4 $ in $ \Gamma $.
Using GAP \cite{a9}, a free group with generators $ x,y,z $ and    the  relations of these $ 5 $ cycles which are among $25$ cases of these $26$ cases  is finite or abelian, that is a
contradiction. The other case is the following:
\[
r_1:xz^{-1}xz=1,\;r_2:xy^{-1}x^{-1}y=1,\;r_3:(yx^{-1})^2y=1,\;r_4:yz^{-1}yz=1,\;r_5:zx^{-1}yz^{-1}x^{-1}y=1.   
\]
Now, according to above relation and the relations from table \ref{t1} which are not rejected,  it can be seen that there is no cases for the existence of  $\Gamma_4$  of Figure \ref{gamma} as a subgraph  of $\Gamma$. Then,   $ \Gamma $ does not contain any subgraph isomorphic to $\Gamma_4$.\\
 \begin{figure}
\subfloat{$ G_1 $}
\begin{tikzpicture}[scale=.75]
\draw [fill] (0,0) circle
[radius=0.1] node  [above]  {};
\draw [fill] (1,0) circle
[radius=0.1] node  [right]  {};
\draw [fill] (0,-1) circle
[radius=0.1] node  [above right]  {};
\draw [fill] (1,-1) circle
[radius=0.1] node  [below]  {};
\draw [fill] (-1,0) circle
[radius=0.1] node  [left]  {};
\draw [fill] (-1,-1) circle
[radius=0.1] node  [below]  {};
\draw [fill] (0,-2) circle
[radius=0.1] node  [below]  {};
\draw  (0,0) -- (1,0) ;
\draw   (0,-1) -- (0,-.1);
\draw   (-1,0)-- (0,-.95);
\draw  (-1,-1)-- (0,-1);
\draw (0,0)--(-1,0);
\draw  (1,0) -- (1,-1);
\draw  (0,0) -- (-1,0);
\draw  (0,0) -- (-1,-1);
\draw  (1,-1) -- (0,-1);
\draw   (0,-2)-- (1,-1) ;
\draw  (-1,-1) -- (0,-2);
\end{tikzpicture}
\subfloat{$  G_2 $}
\begin{tikzpicture}[scale=.75]
\draw [fill] (0,0) circle
[radius=0.1] node  [above]  {};
\draw [fill] (1,0) circle
[radius=0.1] node  [right]  {};
\draw [fill] (0,-1) circle
[radius=0.1] node  [above right]  {};
\draw [fill] (1,-1) circle
[radius=0.1] node  [below right]  {};
\draw [fill] (-1,0) circle
[radius=0.1] node  [left]  {};
\draw [fill] (-1,-1) circle
[radius=0.1] node  [below]  {};
\draw [fill] (0,-2) circle
[radius=0.1] node  [below]  {};
\draw  (0,0) -- (1,0) ;
\draw   (0,-1) -- (0,-.1);
\draw   (-1,0)-- (0,-.95);
\draw  (-1,-1)-- (0,-1);
\draw (0,0)--(-1,0);
\draw  (1,0) -- (1,-1);
\draw  (0,0) -- (-1,0);
\draw  (0,0) -- (-1,-1);
\draw  (1,-1) -- (0,-1);
\draw   (0,-2)-- (1,-1) ;
\draw  (-1,-1) -- (0,-2);
\draw  (0,-2) -- (-1,-1);
\draw  (0,-2) -- (-1,0);
\end{tikzpicture}
\subfloat{$  G_3 $}
\begin{tikzpicture}[scale=.75]
\draw [fill] (0,0) circle
[radius=0.1] node  [above]  {};
\draw [fill] (1,0) circle
[radius=0.1] node  [right]  {};
\draw [fill] (0,-1) circle
[radius=0.1] node  [above right]  {};
\draw [fill] (1,-1) circle
[radius=0.1] node  [below]  {};
\draw [fill] (-1,0) circle
[radius=0.1] node  [left]  {};
\draw [fill] (-1,-1) circle
[radius=0.1] node  [below]  {};
\draw [fill] (0,-2) circle
[radius=0.1] node  [below]  {};
\draw [fill] (-1.5,-2) circle
[radius=0.1] node  [below]  {};
\draw  (0,0) -- (1,0) ;
\draw   (0,-1) -- (0,-.1);
\draw   (-1,0)-- (0,-.95);
\draw  (-1,-1)-- (0,-1);
\draw (0,0)--(-1,0);
\draw  (1,0) -- (1,-1);
\draw  (0,0) -- (-1,0);
\draw  (0,0) -- (-1,-1);
\draw  (1,-1) -- (0,-1);
\draw   (0,-2)-- (1,-1) ;
\draw   (-1.5,-2)-- (1,-1) ;
\draw   (-1,-1) -- (0,-2);
\draw   (-1,0) -- (-1.5,-2);
\end{tikzpicture}
\caption{ The graph $G_1$ and two forbidden subgraphs which contains it, where the degrees of all vertices of any subgraph in $ Z(\alpha,\beta) $ and $ U(\alpha,\beta) $ must be $ 4 $. }\label{g1234}
\end{figure}
 $\mathbf{G_1}$ \textbf{of Figure \ref{g1234}:} Taking into account the relations from Table \ref{t222} and the relations from table \ref{tc4} which are not rejected , it can be see that  there are  $11052$ different cases for the relations of $ 2 $ cycles of length $ 3 $ and  $ 2 $ cycles of length $ 4 $ with the structure of the graph $G_1$ of Figure \ref{g1234} on vertices of degree $ 4 $ in $ \Gamma $. Using GAP \cite{a9}, a free group with generators $ x,y,z $ and the relations of these $ 4 $ cycles which are between $10952$ cases of these $11052$ cases  is finite or abelian, that is a
contradiction. Hence, there are $ 100 $ cases which may lead
to the existence of $ G_1$ in $ \Gamma $. We checked these such cases. In Table \ref{t6}, it can be seen that $ 84 $ cases among these $ 100 $ cases lead to contradictions and so there are just $ 16 $ cases which may lead
to the existence of $ G _1$ in $ \Gamma $ which listed in Table \ref{t66}.
\begin{longtable}{|l|l|l|}
\caption{The relations of  $ 2 $ cycles of length $ 3 $ and  $ 2 $ cycles of length $ 4 $ with the structure of the graph $G_1$ of Figure \ref{g1234} which lead to  contradictions. }\label{t6}\\
\hline
\endfirsthead
\multicolumn{3}{c}%
{{\bfseries \tablename\ \thetable{} -- continued from previous page}} \\\hline
\hline 
 \multicolumn{1}{|l|}{\textbf{$ n $}} & \multicolumn{1}{l|}{\textbf{$r_1,\;r_2,\;r_3,\;r_4$}}&  \multicolumn{1}{|l|}{\textbf{$E $}}  \\ \hline
\endhead

\hline \multicolumn{3}{|r|}{{Continued on next page}} \\ \hline
\endfoot

\hline 
\endlastfoot

\hline
\textbf{$ n $} & \textbf{$r_1,\;r_2,\;r_3,\;r_4$} & \textbf{$E$}\\
\hline
$1$&$x^2y^{-1}x=1,\;xy^{-1}x^{-1}y=1,\;xz^{-1}xzy^{-1}z=1,\;x^{-1}y(z^{-1}x)^2y^{-1}z$&BS$(1,3)$\\
\hline
$2$&$x^2y^{-1}x=1,\;xy^{-1}x^{-1}y=1,\;(xz^{-1})^2x^{-1}z=1,\;x^{-1}yz^{-1}xy^{-1}zx^{-1}z=1$&BS$(1,2)$\\\hline
$3$&$xzy^{-1}x=1,\;xy^{-1}z^{-1}y=1,\;(xz^{-1})^2x^{-1}z=1,\;x^{-1}yx^{-1}zy^{-1}zx^{-1}z=1$&BS$(1,2)$\\\hline
$4$&$xzy^{-1}x=1,\;xy^{-1}z^{-1}y=1,\;xz^{-1}yz^{-1}y^{-1}z=1,\;x^{-1}y(x^{-1}z)^2y^{-1}z=1$&$z=x$\\\hline
$5$&$xzy^{-1}x,\;xy^{-1}xz^{-1}y=1,\;xz^{-1}yz^{-1}xy^{-1}z=1,\;x^{-1}yx^{-1}(zy^{-1})^2z=1$&Abelian\\\hline
$6$&$xzy^{-1}x=1,\;xy^{-1}zy=1,\;xz^{-1}xzx^{-1}z=1,\;(x^{-1}y)^2z^{-1}yx^{-1}z=1$&BS$(1,-3)$\\\hline
$7$&$xzy^{-1}x=1,\;xy^{-1}zy=1,\;xz^{-1}xzy^{-1}z=1,\;(x^{-1}y)^2z^{-1}xy^{-1}z=1$&Abelian\\\hline
$8$&$xzy^{-1}x=1,\;xy^{-1}zy=1,\;xz^{-1}xy^{-1}x^{-1}z=1,\;(x^{-1}y)^2z^{-1}yx^{-1}z=1$&BS$(-1,3)$\\\hline
$9$&$xzy^{-1}x=1,\;xy^{-1}zy=1,\;xz^{-1}xy^{-2}z=1,\;(x^{-1}y)^2z^{-1}xy^{-1}z=1$&Abelian\\\hline
$10$&$xzy^{-1}x=1,\;xy^{-1}zy=1,\;xz^{-1}yzx^{-1}z=1,\;(x^{-1}y)^2z^{-1}yx^{-1}z=1$&BS$(-1,3)$\\\hline
$11$&$xzy^{-1}x=1,\;xy^{-1}zy=1,\;xz^{-1}yzy^{-1}z=1,\;(x^{-1}y)^2z^{-1}xy^{-1}z=1$&Abelian\\\hline
$12$&$xzy^{-1}x=1,\;xy^{-1}zx^{-1}y=1,\;xz^{-2}x^{-1}z=1,\;(x^{-1}y)^2(x^{-1}z)^2=1$&BS$(1,2)$\\\hline
$13$&$xzy^{-1}x=1,\;xz^{-2}y=1,\;xy^{-1}z^2y^{-1}z=1,\;x^{-1}yx^{-1}zy^{-1}xy^{-1}z=1$&Abelian\\\hline
$14$&$xzy^{-1}x=1,\;xz^{-1}yz=1,\;xy^{-1}zx^{-1}z^{-1}y=1,\;(x^{-1}y)^2(z^{-1}y)^2=1$&$y=x^{-1}$\\\hline
$15$&$xy^{-2}x=1,\;xz^{-1}x^{-1}z=1,\;x^3z^{-1}y=1,\;x^{-1}yx^{-1}zx^{-1}yz^{-1}y=1$&Abelian\\\hline
$16$&$xy^{-2}x=1,\;xz^{-1}x^{-1}z=1,\;x^2yz^{-1}y=1,\;x^{-1}yx^{-1}zy^{-1}xz^{-1}y=1$&BS$(1,-1)$\\\hline
$17$&$xy^{-2}x=1,\;xz^{-1}x^{-1}z=1,\;x^2y^{-1}z^{-1}y=1,\;x^{-1}yx^{-1}zx^{-1}yz^{-1}y=1$&BS$(1,-1)$\\\hline
$18$&$xy^{-2}x=1,\;xz^{-1}x^{-1}z=1,\;x^2y^{-1}z^{-1}y=1,\;x^{-1}yx^{-1}zy^{-1}xz^{-1}y=1$&BS$(1,-1)$\\\hline
$19$&$xy^{-2}x=1,\;xz^{-1}x^{-1}z=1,\;x^2y^{-1}xz^{-1}y=1,\;x^{-1}yx^{-1}zx^{-1}yz^{-1}y=1$&BS$(1,-1)$\\\hline
$20$&$xy^{-2}x=1,\;xz^{-1}x^{-1}z=1,\;x^2z^{-2}y=1,\;x^{-1}yx^{-1}zx^{-1}yz^{-1}y=1$&Abelian\\\hline
$21$&$xy^{-2}x=1,\;xz^{-1}x^{-1}z=1,\;x^2z^{-2}y=1,\;x^{-1}yx^{-1}zy^{-1}xz^{-1}y=1$&Abelian\\\hline
$22$&$xy^{-2}x=1,\;xz^{-1}x^{-1}z=1,\;x(xz^{-1})^2y=1,\;x^{-1}yx^{-1}zx^{-1}yz^{-1}y=1$&Abelian\\\hline
$23$&$xy^{-2}x=1,\;xz^{-1}x^{-1}z=1,\;xyxz^{-1}y=1,\;x^{-1}yx^{-1}zx^{-1}yz^{-1}y=1$&BS$(1,-1)$\\\hline
$24$&$xy^{-2}x=1,\;xz^{-1}x^{-1}z=1,\;xy^2z^{-1}y=1,\;x^{-1}yx^{-1}zy^{-1}xz^{-1}y=1$&BS$(1,-1)$\\\hline
$25$&$xy^{-2}x=1,\;xz^{-1}x^{-1}z=1,\;xyx^{-1}z^{-1}y=1,\;x^{-1}yx^{-1}zx^{-1}yz^{-1}y=1$&BS$(1,-1)$\\\hline
$26$&$xy^{-2}x=1,\;xz^{-1}x^{-1}z=1,\;xyx^{-1}z^{-1}y=1,\;x^{-1}yx^{-1}zy^{-1}xz^{-1}y=1$&BS$(1,-1)$\\\hline
$27$&$xy^{-2}x=1,\;xz^{-1}x^{-1}z=1,\;xyx^{-1}yz^{-1}y=1,\;x^{-1}yx^{-1}zy^{-1}xz^{-1}y=1$&BS$(1,-1)$\\\hline
$28$&$xy^{-2}x=1,\;xz^{-1}x^{-1}z=1,\;xyz^{-2}y=1,\;x^{-1}yx^{-1}zx^{-1}yz^{-1}y=1$&Abelian\\\hline
$29$&$xy^{-2}x=1,\;xz^{-1}x^{-1}z=1,\;xzxz^{-1}y=1,\;x^{-1}yx^{-1}zx^{-1}yz^{-1}y=1$&Abelian\\\hline
$30$&$xy^{-2}x=1,\;xz^{-1}x^{-1}z=1,\;xzyz^{-1}y=1,\;x^{-1}yx^{-1}zy^{-1}xz^{-1}y=1$&T\\\hline
$31$&$xy^{-2}x=1,\;xz^{-1}x^{-1}z=1,\;xzx^{-1}z^{-1}y=1,\;x^{-1}yx^{-1}zx^{-1}yz^{-1}y=1$&Abelian\\\hline
$32$&$xy^{-2}x=1,\;xz^{-1}x^{-1}z=1,\;xzx^{-1}z^{-1}y=1,\;x^{-1}yx^{-1}zy^{-1}xz^{-1}y=1$&Abelian\\\hline
$33$&$xy^{-2}x=1,\;xz^{-1}x^{-1}z=1,\;xzx^{-1}yz^{-1}y=1,\;x^{-1}yx^{-1}zy^{-1}xz^{-1}y=1$&T\\\hline
$34$&$xy^{-2}x=1,\;xz^{-1}y^{-1}z=1,\;x^3z^{-1}y=1,\;x^{-1}yx^{-1}zx^{-1}yz^{-1}y=1$&BS$(1,-1)$\\\hline
$35$&$xy^{-2}x=1,\;xz^{-1}y^{-1}z=1,\;x^2yz^{-1}y=1,\;x^{-1}yx^{-1}zy^{-1}xz^{-1}y=1$&Abelian\\\hline
$36$&$xy^{-2}x=1,\;xz^{-1}y^{-1}z=1,\;x^2y^{-1}z^{-1}y=1,\;x^{-1}yx^{-1}zx^{-1}yz^{-1}y=1$&Abelian\\\hline
$37$&$xy^{-2}x=1,\;xz^{-1}y^{-1}z=1,\;x^2y^{-1}z^{-1}y=1,\;x^{-1}yx^{-1}zy^{-1}xz^{-1}y=1$&Abelian\\\hline
$38$&$xy^{-2}x=1,\;xz^{-1}y^{-1}z=1,\;x^2y^{-1}xz^{-1}y=1,\;x^{-1}yx^{-1}zx^{-1}yz^{-1}y=1$&Abelian\\\hline
$39$&$xy^{-2}x=1,\;xz^{-1}y^{-1}z=1,\;x^2z^{-2}y=1,\;x^{-1}yx^{-1}zx^{-1}yz^{-1}y=1$&Abelian\\\hline
$40$&$xy^{-2}x=1,\;xz^{-1}y^{-1}z=1,\;x^2z^{-2}y=1,\;x^{-1}yx^{-1}zy^{-1}xz^{-1}y=1$&Abelian\\\hline
$41$&$xy^{-2}x=1,\;xz^{-1}y^{-1}z=1,\;x(xz^{-1})^2y=1,\;x^{-1}yx^{-1}zx^{-1}yz^{-1}y=1$&Abelian\\\hline
$42$&$xy^{-2}x=1,\;xz^{-1}y^{-1}z=1,\;x^2(z^{-1}y)^2=1,\;x^{-1}yx^{-1}zy^{-1}xz^{-1}y=1$&BS$(1,-1)$\\\hline
$43$&$xy^{-2}x=1,\;xz^{-1}y^{-1}z=1,\;xyx^{-1}z^{-1}y=1,\;x^{-1}yx^{-1}zy^{-1}xz^{-1}y=1$&$z=xy$\\\hline
$44$&$xy^{-2}x=1,\;xz^{-1}y^{-1}z=1,\;xzy^{-1}xz^{-1}y=1,\;x^{-1}yx^{-1}zx^{-1}yz^{-1}y=1$&T\\\hline
$45$&$xy^{-2}x=1,\;xz^{-1}xz=1,\;xzx^{-1}z^{-1}y=1,\;x^{-1}yx^{-1}zx^{-1}yz^{-1}y=1$&BS$(1,-1)$\\\hline
$46$&$xy^{-2}x=1,\;xz^{-1}xz=1,\;xzx^{-1}z^{-1}y=1,\;x^{-1}yx^{-1}zy^{-1}xz^{-1}y=1$&BS$(1,-1)$\\\hline
$47$&$xy^{-2}x=1,\;xz^{-1}xz=1,\;xzx^{-1}yz^{-1}y=1,\;x^{-1}yx^{-1}zy^{-1}xz^{-1}y=1$&T\\\hline
$48$&$xy^{-2}x=1,\;xz^{-1}xz=1,\;xzy^{-1}z^{-1}y=1,\;x^{-1}yx^{-1}zx^{-1}yz^{-1}y=1$&$y=x$\\\hline
$49$&$xy^{-2}x=1,\;xz^{-1}xz=1,\;xzy^{-1}z^{-1}y=1,\;x^{-1}yx^{-1}zy^{-1}xz^{-1}y=1$&T\\\hline
$50$&$xy^{-2}x=1,\;(xz^{-1})^2y=1,\;xzyx^{-1}z=1,\;(x^{-1}y)^2(x^{-1}z)^2=1$&T\\\hline
$51$&$xy^{-2}x=1,\;(xz^{-1})^2y=1,\;xzy^{-1}x^{-1}z=1,\;(x^{-1}y)^2(x^{-1}z)^2=1$&$y=x$\\\hline
$52$&$xy^{-2}x=1,\;xz^{-1}yz=1,\;xzx^{-1}z^{-1}y=1,\;x^{-1}yx^{-1}zx^{-1}yz^{-1}y=1$&$y=x$\\\hline
$53$&$xy^{-2}x=1,\;xz^{-1}yz=1,\;xzx^{-1}z^{-1}y=1,\;x^{-1}yx^{-1}zy^{-1}xz^{-1}y=1$&BS$(1,2)$\\\hline
$54$&$xy^{-2}x=1,\;xz^{-1}yz=1,\;xzy^{-1}z^{-1}y=1,\;x^{-1}yx^{-1}zx^{-1}yz^{-1}y=1$&$x=y$\\\hline
$55$&$xy^{-2}x=1,\;xz^{-1}yz=1,\;xzy^{-1}z^{-1}y=1,\;x^{-1}yx^{-1}zy^{-1}xz^{-1}y=1$&$x=y$\\\hline
$56$&$xz^{-1}y^{-1}x=1,\;x^2y^{-1}z=1,\;xy^{-1}z^{-1}xz^{-1}y=1,\;x^{-1}yz^{-1}x(z^{-1}y)^2$=1&T\\\hline
$57$&$xy^{-1}zy^{-1}x=1,\;xz^{-1}x^{-1}z=1,\;x^2(z^{-1}y)^2=1,\;(x^{-1}y)^2z^{-1}xz^{-1}y=1$&BS$(1,-1)$\\\hline
$58$&$xy^{-1}zy^{-1}x=1,\;xz^{-1}x^{-1}z=1,\;xyx^{-1}z^{-1}y=1,\;(x^{-1}y)^2(z^{-1}y)^2=1$&BS$(1,-1)$\\\hline
$59$&$xy^{-1}zy^{-1}x=1,\;xz^{-1}x^{-1}z=1,\;xzy^{-1}z^{-1}y=1,\;(x^{-1}y)^2(z^{-1}y)^2=1$&BS$(1,-1)$\\\hline
$60$&$xy^{-1}zy^{-1}x=1,\;xz^{-1}x^{-1}z=1,\;xzy^{-1}xz^{-1}y=1,\;(x^{-1}y)^2(z^{-1}y)^2=1$&BS$(1,-1)$\\\hline
$61$&$xy^{-1}zy^{-1}x=1,\;xz^{-2}y=1,\;x^2y^{-2}z=1,\;x^{-1}yx^{-1}zy^{-1}xy^{-1}z=1$&Abelian\\\hline
$62$&$xy^{-1}zy^{-1}x=1,\;xz^{-1}xz=1,\;xyxz^{-1}y=1,\;(x^{-1}y)^2(z^{-1}y)^2=1$&BS$(1,-1)$\\\hline
$63$&$xy^{-1}zy^{-1}x=1,\;xz^{-1}xz=1,\;xyx^{-1}z^{-1}y=1,\;(x^{-1}y)^2(z^{-1}y)^2=1$&BS$(1,-1)$\\\hline
$64$&$xy^{-1}zy^{-1}x=1,\;xz^{-1}xz=1,\;xzy^{-1}z^{-1}y=1,\;(x^{-1}y)^2(z^{-1}y)^2=1$&BS$(1,-1)$\\\hline
$65$&$xy^{-1}zy^{-1}x=1,\;xz^{-1}xz=1,\;xzy^{-1}xz^{-1}y=1,\;(x^{-1}y)^2(z^{-1}y)^2=1$&BS$(1,-1)$\\\hline
$66$&$yzy^{-1}x=1,\;yz^{-1}x^{-1}z=1,\;yx^{-1}zy^{-1}z^{-1}y=1,\;x^{-1}yx^{-1}zx^{-1}yz^{-1}y=1$&BS$(1,-1)$\\\hline
$67$&$yzy^{-1}x=1,\;yz^{-1}x^{-1}z=1,\;yx^{-1}zy^{-1}z^{-1}y=1,\;x^{-1}yx^{-1}zy^{-1}xz^{-1}y=1$&BS$(1,-1)$\\\hline
$68$&$yx^{-1}zy^{-1}x=1,\;yz^{-1}y^{-1}z=1,\;yzx^{-1}yz^{-1}y=1,\;(x^{-1}y)^2z^{-1}xz^{-1}y=1$&Abelian\\\hline
$69$&$zx^{-1}zy^{-1}x=1,\;zy^{-1}xz^{-1}y=1,\;z(zy^{-1})^2z=1,\;x^{-1}yx^{-1}zy^{-1}xy^{-1}z=1$&BS$(1,-1)$\\\hline
$70$&$zx^{-1}zy^{-1}x=1,\;zy^{-1}zx^{-1}y=1,\;z(zx^{-1})^2z=1,\;(x^{-1}y)^2z^{-1}yx^{-1}z=1$&BS$(1,-1)$\\\hline
$71$&$zx^{-1}zy^{-1}x=1,\;zy^{-1}zx^{-1}y=1,\;z^2x^{-1}zy^{-1}z=1,\;(x^{-1}y)^2z^{-1}xy^{-1}z=1$&Abelian\\\hline
$72$&$zx^{-1}zy^{-1}x=1,\;zy^{-1}zx^{-1}y=1,\;z^2y^{-1}zx^{-1}z=1,\;(x^{-1}y)^2z^{-1}yx^{-1}z=1$&Abelian\\\hline
$73$&$zx^{-1}zy^{-1}x=1,\;zy^{-1}zx^{-1}y=1,\;z(zy^{-1})^2z=1,\;(x^{-1}y)^2z^{-1}xy^{-1}z=1$&BS$(1,-1)$\\\hline
$74$&$xz^{-1}y^{-1}x=1,\;x^2y^{-1}z=1,\;xy^{-2}xz^{-1}y=1,\;x^{-1}yz^{-1}x(z^{-1}y)^2=1$&T\\\hline
$75$&$xz^{-1}y^{-1}x=1,\;x^2y^{-1}z=1,\;xy^{-1}zxz^{-1}y=1,\;x^{-1}yz^{-1}x(z^{-1}y)^2=1=$&T\\\hline
$76$&$xz^{-1}y^{-1}x=1,\;x^2y^{-1}z=1,\;xy^{-1}zx^{-1}z^{-1}y=1,\;x^{-1}yz^{-1}x(z^{-1}y)^2=1$&T\\\hline
$77$&$xz^{-1}y^{-1}x=1,\;x^2y^{-1}z=1,\;xy^{-1}zx^{-1}z^{-1}y=1,\;x^{-1}(yz^{-1})^2xz^{-1}y=1$&BS$(1,-1)$\\\hline
$78$&$xz^{-1}yz^{-1}x=1,\;xy^{-1}x^{-1}y=1,\;xyz^{-1}y^{-1}z=1,\;(x^{-1}z)^2(y^{-1}z)^2=1$&BS$(1,-1)$\\\hline
$79$&$xz^{-1}yz^{-1}x=1,\;xy^{-1}x^{-1}y=1,\;xyz^{-1}xy^{-1}z=1,\;(x^{-1}z)^2(y^{-1}z)^2=1$&BS$(1,-1)$\\\hline
$80$&$xz^{-1}yz^{-1}x=1,\;xy^{-1}x^{-1}y=1,\;xzxy^{-1}z=1,\;(x^{-1}z)^2(y^{-1}z)^2=1$&BS$(1,-1)$\\\hline
$81$&$xz^{-1}y^{-1}x=1,\;xy^{-1}zx^{-1}y=1,\;xyx^{-1}y^{-1}z=1,\;x^{-1}yz^{-1}yx^{-1}zy^{-1}z=1$&BS$(1,-1)$\\\hline
$82$&$xz^{-1}y^{-1}x=1,\;xy^{-1}zx^{-1}y=1,\;xzxy^{-1}z=1,\;x^{-1}yz^{-1}yx^{-1}zy^{-1}z=1$&T\\\hline
$83$&$yz^{-2}x=1,\;y^2x^{-1}y=1,\;yx^{-1}zy^{-2}z=1,\;(x^{-1}z)^2(y^{-1}z)^2=1$&T\\\hline
$84$&$yx^{-1}zy^{-1}x=1,\;yzy^{-1}z=1,\;yz^{-1}x^{-1}yz^{-1}y=1,\;(x^{-1}y)^2z^{-1}xz^{-1}y=1$&BS$(1,-1)$\\\hline
\end{longtable}
\begin{longtable}{|l|l|l|l|l|}
\caption{The relations of the  existence of  $ G_1 $ as Figure \ref{g1234}  in  $ Z(\alpha,\beta) $ and $ U(\alpha,\beta) $. }\label{t66}\\
\hline
\endfirsthead
\multicolumn{5}{c}%
{{\bfseries \tablename\ \thetable{} -- continued from previous page}} \\\hline
\hline 
 \multicolumn{1}{|l|}{\textbf{$ n $}} & \multicolumn{1}{l|}{\textbf{$r_1$}}&  \multicolumn{1}{|l|}{\textbf{$r_2 $}} & \multicolumn{1}{l|}{\textbf{$r_3$}}&  \multicolumn{1}{|l|}{\textbf{$r_4 $}} \\ \hline
\endhead

\hline \multicolumn{5}{|r|}{{Continued on next page}} \\ \hline
\endfoot

\hline 
\endlastfoot

\hline
\textbf{$ n $} & \textbf{$r_1$} & \textbf{$r_2$}& \textbf{$r_3$} & \textbf{$r_4$}\\
\hline
$ 1 $&$ x^2y^{-1}x=1  $&$xy^{-1}x^{-1}y =1  $&$ xz^{-1}x^{-1}zy^{-1}z=1  $&$ x^{-1}(yz^{-1})^2xy^{-1}z =1 $\\
$ 2 $&$ x^2y^{-1}x=1 $&$ xy^{-1}x^{-1}y =1 $&$  xz^{-2}xy^{-1}z=1 $&$ x^{-1}yz^{-1}yx^{-1}zy^{-1}z=1  $\\
$ 3 $&$ x^2y^{-1}x =1$&$ xy^{-1}x^{-1}y =1 $&$ (xz^{-1})^2y^{-1}z=1 $&$ x^{-1}yz^{-1}yx^{-1}zy^{-1}z=1  $\\
$ 4 $&$ x^2y^{-1}x=1 $&$ xy^{-1}x^{-1}y =1 $&$ xz^{-1}yz^{-1}y^{-1}z =1 $&$ x^{-1}y(z^{-1}x)^2y^{-1}z=1  $\\
$ 5 $&$ xy^{-2}x =1 $&$ xz^{-1}x^{-1}z =1 $&$ x^2(z^{-1}y)^2=1  $&$  x^{-1}yx^{-1}zy^{-1}xz^{-1}y=1$\\
$ 6 $&$  xy^{-2}x=1 $&$ xz^{-1}x^{-1}z=1 $&$ xzy^{-1}xz^{-1}y =1 $&$x^{-1}yx^{-1}zx^{-1}yz^{-1}y=1   $\\
$  7$&$  xy^{-2}x=1 $&$ xz^{-1}y^{-1}z=1  $&$ xyz^{-2}y =1$&$ x^{-1}yx^{-1}zy^{-1}xz^{-1}y =1 $\\
$8  $&$  xy^{-2}x=1 $&$  xz^{-1}y^{-1}z=1 $&$ xzx^{-1}yz^{-1}y =1 $&$ x^{-1}yx^{-1}zy^{-1}xz^{-1}y=1 $\\
$ 9 $&$ xy^{-2}x=1 $&$ xz^{-1}xz=1 $&$ xzy^{-1}xz^{-1}y =1 $&$ x^{-1}yx^{-1}zx^{-1}yz^{-1}y =1 $\\
$ 10 $&$ xy^{-2}x =1 $&$ xz^{-1}yz=1 $&$ xzx^{-1}yz^{-1}y =1 $&$ x^{-1}yx^{-1}zy^{-1}xz^{-1}y =1 $\\
$ 11 $&$ xy^{-2}x=1  $&$  xz^{-1}yz=1 $&$ xzy^{-1}xz^{-1}y =1 $&$ x^{-1}yx^{-1}zx^{-1}yz^{-1}y=1$\\
$ 12 $&$ xz^{-1}y^{-1}x=1 $&$ xyz^{-1}y =1 $&$ xy^{-1}z^{-1}yx^{-1}z =1 $&$ (x^{-1}y)^2(x^{-1}z)^2=1 $\\
$ 13 $&$ xz^{-1}yz^{-1}x=1  $&$ xy^{-1}x^{-1}y =1 $&$ xzx^{-1}y^{-1}z =1 $&$ (x^{-1}z)^2(y^{-1}z)^2=1  $\\
$ 14 $&$ xz^{-1}xy^{-1}x =1 $&$ xy^{-1}z^{-1}y=1 $&$x^2z^{-1}y^{-1}z=1   $&$ x^{-1}y(z^{-1}x)^2y^{-1}z=1  $\\
$ 15 $&$ xz^{-1}xy^{-1}x =1 $&$ xy^{-1}z^{-1}y =1 $&$ x(zy^{-1})^2z=1  $&$ x^{-1}y(z^{-1}x)^2y^{-1}z =1 $\\
$  16$&$ xz^{-1}y^{-1}x =1 $&$ xy^{-1}zy=1  $&$ xzx^{-1}yx^{-1}z=1 $&$ (x^{-1}y)^2(x^{-1}z)^2=1  $\\
\hline
\end{longtable}
$\mathbf{G_2}$ \textbf{of Figure \ref{g1234}:} 
Taking into account the relations from Table \ref{t66} and the relations from Table \ref{tc4} which are not rejected, it can be see that  there are  $53$ different cases for the relations of $2$ cycles of length $3$ and $ 3 $ cycles of length $ 4 $  with the structure of the graph $ G_2 $ as Figure \ref{g1234} on vertices of degree $ 4 $ in $ \Gamma $. Using GAP \cite{a9}, a free group with generators $ x,y,z $ and   the relations of such cases  is finite, that is a contradiction. Hence, $ \Gamma $ does not contain any subgraph isomorphic to $G_2$ of Figure \ref{g1234} on vertices of degree $ 4 $.\\
$\mathbf{G_3}$ \textbf{of Figure \ref{g1234}:} According to the  the relations from Table \ref{t66} and the relations from Table \ref{tc4} which are not rejected, it can be see that  there are  $222$ different cases for the relations of $2$ cycles of length $3$ and $ 3 $ cycles of length $ 4 $ with the structure of the graph $ G_3 $ as Figure \ref{g1234} on vertices of degree $ 4 $ in $ \Gamma $ . Using GAP \cite{a9}, a free group with generators $ x,y,z $ and the  relations of these $ 5 $ cycles which are between $221$ cases of these $222$ cases  is finite, that is a contradiction. Another case is as follows:\\
$ r_1:x^2y^{-1}x=1,\; r_2:xy^{-1}x^{-1}y=1,\; r_3:(xz^{-1})^2y^{-1}z=1,\; 
r_4:x^{-1}yz^{-1}yx^{-1}zy^{-1}z=1,\;r_5:y^{-1}xz^{-1}yz^{-1}xy^{-1}z=1$.\\
It is not hard to see that in this $ \left\langle x,y,z\right\rangle   $  is a quotient  group  of
 $ BS(-1,2)$, a contradiction. Therefore,  $ \Gamma $ does not contain any subgraph isomorphic to $G_3$ of Figure \ref{g1234} on vertices of degree $ 4 $.\\
$\mathbf{K_1}$ \textbf{of \ref{125}:} According to the  the relations from Table \ref{t222} and the relations from Table \ref{tc4} which are not rejected, it can be seen that  there are  $54385$ different cases for the relations of $4$ cycles of length $3$ and a cycle of length $ 4 $ with the structure of the graph $ K_1 $ as Figure \ref{125} on vertices of degree $ 4 $ in $ \Gamma $ . Using GAP \cite{a9}, a free group with generators $ x,y,z $ and  the  relations of these $ 5 $ cycles which are between $54293$ cases of these $54385$ cases  is finite or abelian, that is a
contradiction. Hence, there are $ 92 $ cases which may lead
to the existence of $ K_1$ in $ \Gamma $. In Table \ref{t7}, it can be seen that all such cases  lead to contradictions and therefore $ \Gamma $ does not contain any subgraph isomorphic to $K_1$ of Figure \ref{125} on vertices of degree $ 4 $.\\
\begin{figure}
\subfloat{$ K_1 $}
\begin{tikzpicture}[scale=.75,shorten >=2pt]
\draw [fill] (0,0) circle
[radius=0.1] node  [above]  {};
\draw [fill] (1.5,0) circle
[radius=0.1] node  [above]  {};
\draw [fill] (0,-1.25) circle
[radius=0.1] node  [below]  {};
\draw [fill] (1.5,-1.25) circle
[radius=0.1] node  [below]  {};
\draw [fill] (-1.5,0) circle
[radius=0.1] node  [left]  {};
\draw [fill] (-1.5,-1.25) circle
[radius=0.1] node  [below]  {};
\draw [fill] (3,0) circle
[radius=0.1] node  [right]  {};
\draw [fill] (3,-1.25) circle
[radius=0.1] node  [below]  {};
\draw   (3,-1.25)--(1.5,0)  ;
\draw  (1.5,-1.25)-- (3,-1.25);
\draw  (1.5,-1.25)-- (3,0);
\draw   (3,0)-- (1.5,0);
\draw  (-1.5,0) -- (0,0) -- (1.5,0) ;
\draw  (1.5,0) -- (1.5,-1.25) ;
\draw  (1.5,-1.25) -- (0,-1.25);
\draw  (0,-1.25) -- (0,0);
\draw (-1.5,0)-- (0,-1.25);
\draw  (0,0) -- (-1.5,0);
\draw  (-1.5,-1.25)-- (0,-1.25);
\draw  (0,0) -- (-1.5,-1.25);
\end{tikzpicture}
\subfloat{$K_2$}
\begin{tikzpicture}[scale=.75,shorten >=2pt]
\draw [fill] (0,0) circle
[radius=0.1] node  [left]  {};
\draw [fill] (1.5,0) circle
[radius=0.1] node  [left]  {};
\draw [fill] (3,0) circle
[radius=0.1] node  [left]  {};
\draw [fill] (4.5,0) circle
[radius=0.1] node  [right]  {};
\draw [fill] (1.5,1.1) circle
[radius=0.1] node  [left]  {};
\draw [fill] (1.5,-1.1) circle
[radius=0.1] node  [left]  {};
\draw [fill] (0,-2.2) circle
[radius=0.1] node  [left]  {};
\draw  (1.5,-1.1)--(1.5,0); 
\draw  (1.5,0) --(1.5,1.1) ;
\draw  (1.5,1.1) -- (0,0) ;
\draw  (0,0)-- (1.5,-1.1);
\draw  (1.5,1.1) -- (3,0) ;
\draw  (3,0)-- (1.5,-1.1);
\draw  (1.5,1.1) -- (4.5,0) ;
\draw  (4.5,0)-- (1.5,-1.1);
\draw  (0,0)--(0,-2.2);
\draw  (0,-2.2) -- (4.5,0);
\draw  (0,-2.2) -- (1.5,0);
\end{tikzpicture}
\caption{ Two forbidden subgraphs of $Z(\alpha,\beta)$ and $U(\alpha,\beta)$, where the degree of all vertices of any subgraph in $Z(\alpha,\beta)$ and $U(\alpha,\beta)$ must be  $ 4 $. }\label{125}
\end{figure}
\begin{longtable}{|l|l|l|}
\caption{The relations of  $  4$ cycles of length $ 3 $ and a cycle of length $ 4 $ with the  structure of the graph  $ K_1 $ as Figure \ref{125}  which lead to  contradictions. }\label{t7}\\
\hline
\endfirsthead
\multicolumn{3}{c}%
{{\bfseries \tablename\ \thetable{} -- continued from previous page}} \\\hline
\hline 
 \multicolumn{1}{|l|}{\textbf{$ n $}} & \multicolumn{1}{l|}{\textbf{$R_1,\;R_2,\;R_3,\;R_4,\;R_5$}}&  \multicolumn{1}{|l|}{\textbf{$E $}}  \\ \hline
\endhead

\hline \multicolumn{3}{|r|}{{Continued on next page}} \\ \hline
\endfoot

\hline 
\endlastfoot

\hline
\textbf{$ n $} & \textbf{$R_1,\;R_2,\;R_3,\;R_4,\;R_5$} & \textbf{$E$}\\
\hline
$1$&$x^2y^{-1}x=1,\;xy^{-1}x^{-1}y=1,\;xy^{-1}x^{-1}y=1,\;xz^{-1}y^{-1}z=1,\;xzxz^{-1}x=1$&T\\\hline
$2$&$x^2y^{-1}x=1,\;xy^{-1}x^{-1}y=1,\;xy^{-1}x^{-1}y=1,\;xz^{-1}yx^{-1}z=1,\;xzxz^{-1}x=1$&BS$(2,-1)$\\\hline
$3$&$x^2y^{-1}x=1,\;xy^{-1}x^{-1}y=1,\;(xy^{-1})^2x=1,\;xz^{-1}xz=1,\;xzxz^{-1}y=1$&T\\\hline
$4$&$x^2z^{-1}x=1,\;xz^{-1}x^{-1}z=1,\;xy^{-1}z^{-1}y=1,\;xz^{-1}x^{-1}z=1,\;xyxy^{-1}x=1$&T\\\hline
$5$&$x^2z^{-1}x=1,\;xz^{-1}x^{-1}z=1,\;xy^{-1}xy=1,\;(xz^{-1})^2x=1,\;xyxy^{-1}z=1$&T\\\hline
$6$&$x^2z^{-1}x=1,\;xz^{-1}x^{-1}z=1,\;xy^{-1}zx^{-1}y=1,\;xz^{-1}x^{-1}z=1,\;xyxy^{-1}x=1$&BS$(2,-1)$\\\hline
$7$&$x^2z^{-1}y=1,\;xy^{-1}z^{-1}x=1,\;x^2y^{-1}z=1,\;xz^{-1}y^{-1}x=1,\;xy^{-1}zxz^{-1}y=1$&T\\\hline
$8$&$xy^2=1,\;xz^{-1}xz=1,\;xy^{-1}x^{-1}y=1,\;xz^{-1}xz=1,\;x^3y^{-1}x=1$&T\\\hline
$9$&$xy^2=1,\;xz^{-1}xz=1,\;xy^{-1}x^{-1}y=1,\;xz^{-1}yz=1,\;x^3y^{-1}x=1$&T\\\hline
$10$&$xyz=1,\;xy^{-1}xz^{-1}x=1,\;xy^{-1}xy=1,\;xz^{-1}xz=1,\;xyxz^{-1}x=1$&$x=z$\\\hline
$11$&$xyx^{-1}y=1,\;xz^{-1}xz=1,\;xy^{-1}x^{-1}y=1,\;xz^{-1}xz=1,\;x^3y^{-1}x=1$&T\\\hline
$12$&$xyx^{-1}y=1,\;xz^{-1}xz=1,\;xy^{-1}x^{-1}y=1,\;xz^{-1}xy^{-1}z=1,\;x^3y^{-1}x=1$&T\\\hline
$13$&$xyx^{-1}y=1,\;xz^{-1}xy^{-1}z=1,\;xy^{-1}x^{-1}y=1,\;xz^{-1}xz=1,\;x^3y^{-1}x=1$&T\\\hline
$14$&$xyx^{-1}y=1,\;xz^{-1}xy^{-1}z=1,\;xy^{-1}x^{-1}y=1,\;xz^{-1}xy^{-1}z=1,\;x^3y^{-1}x=1$&T\\\hline
$15$&$xyx^{-1}y=1,\;xz^{-1}yx^{-1}z=1,\;xy^{-1}x^{-1}y=1,\;xz^{-1}yx^{-1}z=1,\;x^3y^{-1}x=1$&T\\\hline
$16$&$xyx^{-1}z=1,\;x(z^{-1}y)^2=1,\;xy^{-1}xy=1,\;xz^{-2}x=1,\;x^3y^{-1}z=1$&T\\\hline
$17$&$xzy=1,\;xz^{-1}xy^{-1}x=1,\;xy^{-1}xy=1,\;xz^{-1}xz=1,\;xzxy^{-1}x=1$&$x=y$\\\hline
$18$&$xz^2=1,\;xy^{-1}x^{-1}y=1,\;x^2z^{-1}x=1,\;xz^{-1}x^{-1}z=1,\;xy^{-1}x^2z^{-1}y=1$&T\\\hline
$19$&$xz^2=1,\;xy^{-1}x^{-1}y=1,\;xy^{-1}x^{-1}y=1,\;xz^{-1}x^{-1}z=1,\;x^3z^{-1}x=1$&T\\\hline
$20$&$xz^2=1,\;xy^{-1}xy=1,\;xy^{-1}xy=1,\;xz^{-1}x^{-1}z=1,\;x^3z^{-1}x=1$&T\\\hline
$21$&$xz^2=1,\;xy^{-1}xy=1,\;xy^{-1}zy=1,\;xz^{-1}x^{-1}z=1,\;x^3z^{-1}x=1$&T\\\hline
$22$&$xzx^{-1}z=1,\;xy^{-1}x^{-1}y=1,\;x^2z^{-1}x=1,\;xz^{-1}x^{-1}z=1,\;xy^{-1}x^2z^{-1}y=1$&T\\\hline
$23$&$xzx^{-1}z=1,\;xy^{-1}x^{-1}y=1,\;xy^{-1}x^{-1}y=1,\;xz^{-1}x^{-1}z=1,\;x^3z^{-1}x=1$&T\\\hline
$24$&$xzx^{-1}z=1,\;xy^{-1}xy=1,\;xy^{-1}z^{-1}y=1,\;xz^{-1}xz=1,\;x^3z^{-1}x=1$&T\\\hline
$25$&$xzx^{-1}z=1,\;xy^{-1}xy=1,\;xy^{-1}xy=1,\;xz^{-1}x^{-1}z=1,\;x^3z^{-1}x=1$&T\\\hline
$26$&$xzx^{-1}z=1,\;xy^{-1}xy=1,\;xy^{-1}xz^{-1}y=1,\;xz^{-1}x^{-1}z=1,\;x^3z^{-1}x=1$&T\\\hline
$27$&$xzx^{-1}z=1,\;xy^{-1}xz^{-1}y=1,\;xy^{-1}xy=1,\;xz^{-1}x^{-1}z=1,\;x^3z^{-1}x=1$&T\\\hline
$28$&$xzx^{-1}z=1,\;xy^{-1}xz^{-1}y=1,\;xy^{-1}xz^{-1}y=1,\;xz^{-1}x^{-1}z=1,\;x^3z^{-1}x=1$&T\\\hline
$29$&$xzx^{-1}z=1,\;xy^{-1}zx^{-1}y=1,\;xy^{-1}zx^{-1}y=1,\;xz^{-1}x^{-1}z=1,\;x^3z^{-1}x=1$&T\\\hline
$30$&$xzy^{-1}z=1,\;xy^{-1}x^{-1}y=1,\;x^2z^{-1}x=1,\;xz^{-1}x^{-1}z=1,\;xy^{-1}x^2z^{-1}y=1$&Abelian\\\hline
$31$&$xzy^{-1}z=1,\;xy^{-1}x^{-1}y=1,\;xy^{-1}x^{-1}y=1,\;xz^{-1}x^{-1}z=1,\;x^3z^{-1}x=1$&Abelian\\\hline
$32$&$xy^{-1}x^{-1}y=1,\;xz^{-1}x^{-1}z=1,\;xy^{-1}x^{-1}y=1,\;xz^{-2}x=1,\;x^3z=1$&T\\\hline
$33$&$xy^{-1}x^{-1}y=1,\;xz^{-1}x^{-1}z=1,\;xy^{-1}x^{-1}y=1,\;(xz^{-1})^2x=1,\;x^3z=1$&T\\\hline
$34$&$xy^{-1}x^{-1}y=1,\;xz^{-1}x^{-1}z=1,\;xy^{-1}x^{-1}y=1,\;xz^{-1}yz^{-1}x=1,\;x^3z=1$&Abelian\\\hline
$35$&$xy^{-1}x^{-1}y=1,\;xz^{-1}y^{-1}z=1,\;xy^{-2}x=1,\;xz^{-1}y^{-1}z=1,\;x^3y$&T\\\hline
$36$&$xy^{-1}x^{-1}y=1,\;xz^{-1}xz=1,\;xy^{-2}x=1,\;xz^{-1}xz=1,\;x^3y=1$&T\\\hline
$37$&$xy^{-1}x^{-1}y=1,\;xz^{-1}xz=1,\;xy^{-2}x=1,\;xz^{-1}yz=1,\;x^3y=1$&T\\\hline
$38$&$xy^{-1}x^{-1}y=1,\;xz^{-1}xz=1,\;(xy^{-1})^2x=1,\;xz^{-1}xz=1,\;x^3y=1$&T\\\hline
$39$&$xy^{-1}x^{-1}y=1,\;xz^{-1}xy^{-1}z=1,\;(xy^{-1})^2x=1,\;xz^{-1}xz=1,\;x^3y=1$&T\\\hline
$40$&$xy^{-1}x^{-1}y=1,\;xz^{-1}yz=1,\;xy^{-2}x=1,\;xz^{-1}xz=1,\;x^3y=1$&T\\\hline
$41$&$xy^{-1}x^{-1}y=1,\;xz^{-1}yz=1,\;xy^{-2}x=1,\;xz^{-1}yz=1,\;x^3y=1$&T\\\hline
$42$&$xy^{-1}x^{-1}y=1,\;xz^{-1}yx^{-1}z=1,\;(xy^{-1})^2x=1,\;xz^{-1}yx^{-1}z=1,\;x^3y=1$&T\\\hline
$43$&$xy^{-1}z^{-1}y=1,\;xz^{-1}x^{-1}z=1,\;xy^{-1}z^{-1}y=1,\;xz^{-2}x=1,\;x^3z=1$&T\\\hline
$44$&$xy^{-1}xy=1,\;xz^{-1}x^{-1}z=1,\;xy^{-1}xy=1,\;xz^{-2}x=1,\;x^3z=1$&T\\\hline
$45$&$xy^{-1}xy=1,\;xz^{-1}x^{-1}z=1,\;xy^{-1}xy=1,\;(xz^{-1})^2x=1,\;x^3z=1$&T\\\hline
$46$&$xy^{-1}xy=1,\;xz^{-1}x^{-1}z=1,\;(xy^{-1})^2x=1,\;xz^{-1}x^{-1}z=1,\;x^3y=1$&T\\\hline
$47$&$xy^{-1}xy=1,\;xz^{-1}x^{-1}z=1,\;xy^{-1}zy=1,\;xz^{-2}x=1,\;x^3z=1$&T \\\hline
$48$&$xy^{-1}xz^{-1}y=1,\;xz^{-1}x^{-1}z=1,\;xy^{-1}xy=1,\;(xz^{-1})^2x=1,\;x^3z=1$&T\\\hline
$49$&$xy^{-1}zy=1,\;xz^{-1}x^{-1}z=1,\;xy^{-1}xy=1,\;xz^{-2}x=1,\;x^3z=1$&T\\\hline
$50$&$xy^{-1}zy=1,\;xz^{-1}x^{-1}z=1,\;xy^{-1}zy=1,\;xz^{-2}x=1,\;x^3z=1$&T\\\hline
$51$&$y^2z=1,\;yx^{-1}zx^{-1}y=1,\;x^2z=1,\;xy^{-1}zy^{-1}x=1,\;xz^{-1}xyz^{-1}y=1$&T\\\hline
$52$&$y^2z=1,\;yx^{-1}zx^{-1}y=1,\;xzx^{-1}z=1,\;xy^{-2}x=1,\;xz^{-1}xyz^{-1}y=1$&T\\\hline
$53$&$y^2z=1,\;yx^{-1}zx^{-1}y=1,\;xzx^{-1}z=1,\;xy^{-1}zy^{-1}x=1,\;xz^{-1}xyz^{-1}y=1$&Abelian\\\hline
$54$&$y^2z=1,\;yx^{-1}zx^{-1}y=1,\;xy^{-2}x=1,\;xz^{-1}x^{-1}z=1,\;x^2yz^{-1}y=1$&T\\\hline
$55$&$y^2z=1,\;yx^{-1}zx^{-1}y=1,\;xy^{-2}x=1,\;xz^{-1}xz=1,\;x^2yz^{-1}y=1$&T\\\hline
$56$&$y^2z=1,\;yx^{-1}zx^{-1}y=1,\;xy^{-1}zy^{-1}x=1,\;xz^{-1}x^{-1}z=1,\;x^2yz^{-1}y=1$&T\\\hline
$57$&$y^2z=1,\;yx^{-1}zx^{-1}y=1,\;xy^{-1}zy^{-1}x=1,\;xz^{-1}xz=1,\;x^2yz^{-1}y=1$&T\\\hline
$58$&$y^2x^{-1}y=1,\;yz^{-2}x=1,\;x^2y^{-1}x=1,\;xz^{-2}y=1,\;xy^{-1}zyx^{-1}z=1$&T\\\hline
$59$&$y^2x^{-1}y=1,\;yz^{-2}x=1,\;xyx^{-1}y=1,\;xz^{-2}x=1,\;xy^{-1}zyx^{-1}z=1$&T\\\hline
$60$&$yzy^{-1}z=1,\;yx^{-1}yz^{-1}x=1,\;xy^{-1}xy=1,\;xz^{-1}x^{-1}z=1,\;xy^2z^{-1}x=1$&BS$(1,-1)$\\\hline
$61$&$yzy^{-1}z=1,\;yx^{-1}yz^{-1}x=1,\;xy^{-1}zx^{-1}y=1,\;xz^{-1}x^{-1}z=1,\;xy^2z^{-1}x=1$&BS$(1,-1)$\\\hline
$62$&$yzy^{-1}z=1,\;yx^{-1}yz^{-1}x=1,\;yx^{-1}yz^{-1}x=1,\;yz^{-1}y^{-1}z=1,\;y^3z^{-1}y=1$&T\\\hline
$63$&$yzy^{-1}z=1,\;yx^{-1}zx^{-1}y=1,\;x^2z=1,\;xy^{-1}zy^{-1}x=1,\;xz^{-1}xyz^{-1}y=1$&T\\\hline
$64$&$yzy^{-1}z=1,\;yx^{-1}zx^{-1}y=1,\;xzx^{-1}z=1,\;xy^{-2}x=1,\;xz^{-1}xyz^{-1}y=1$&BS$(1,-1)$\\\hline
$65$&$yzy^{-1}z=1,\;yx^{-1}zx^{-1}y=1,\;xzx^{-1}z=1,\;xy^{-1}zy^{-1}x=1,\;xz^{-1}xyz^{-1}y1=$&T\\\hline
$66$&$yzy^{-1}z=1,\;yx^{-1}zx^{-1}y=1,\;xy^{-2}x=1,\;xz^{-1}x^{-1}z=1,\;x^2yz^{-1}y=1$&T\\\hline
$67$&$yzy^{-1}z=1,\;yx^{-1}zx^{-1}y=1,\;xy^{-2}x=1,\;xz^{-1}xz=1,\;x^2yz^{-1}y=1$&T\\\hline
$68$&$yzy^{-1}z=1,\;yx^{-1}zx^{-1}y=1,\;xy^{-1}zy^{-1}x=1,\;xz^{-1}x^{-1}z=1,\;x^2yz^{-1}y=1$&T\\\hline
$69$&$yzy^{-1}z=1,\;yx^{-1}zx^{-1}y=1,\;xy^{-1}zy^{-1}x=1,\;xz^{-1}xz=1,\;x^2yz^{-1}y=1$&T\\\hline
$70$&$yzy^{-1}z=1,\;yx^{-1}zy^{-1}x=1,\;xyz=1,\;xy^{-1}zx^{-1}y=1,\;xz^{-1}y^2z^{-1}x=1$&$z=yx$\\\hline
$71$&$yzy^{-1}z=1,\;yx^{-1}zy^{-1}x=1,\;xzx^{-1}z=1,\;xy^{-1}zx^{-1}y=1,\;xz^{-1}y^2z^{-1}x=1$&BS$(1,-1)$\\\hline
$72$&$yzy^{-1}z=1,\;yx^{-1}zy^{-1}x=1,\;xy^{-1}xy=1,\;xz^{-1}x^{-1}z=1,\;xy^2z^{-1}x=1$&BS$(1,-1)$\\\hline
$73$&$yzy^{-1}z=1,\;yx^{-1}zy^{-1}x=1,\;xy^{-1}zx^{-1}y=1,\;xz^{-1}x^{-1}z=1,\;xy^2z^{-1}x=1$&Abelian\\\hline
$74$&$yzy^{-1}z=1,\;yx^{-1}zy^{-1}x=1,\;yx^{-1}zy^{-1}x=1,\;yz^{-1}y^{-1}z=1,\;y^3z^{-1}y=1$&T\\\hline
$75$&$(yx^{-1})^2y=1,\;yz^{-1}x^{-1}z=1,\;(xy^{-1})^2x=1,\;xz^{-1}y^{-1}z=1,\;x^2y^2=1$&T\\\hline
$76$&$(yx^{-1})^2y=1,\;yz^{-1}x^{-1}z=1,\;(xy^{-1})^2x=1,\;xz^{-1}xz=1,\;x^2y^2=1$&T\\\hline
$77$&$(yx^{-1})^2y=1,\;yz^{-1}yz=1,\;(xy^{-1})^2x=1,\;xz^{-1}y^{-1}z=1,\;x^2y^2=1$&T\\\hline
$78$&$(yx^{-1})^2y=1,\;yz^{-1}yz=1,\;(xy^{-1})^2x=1,\;xz^{-1}xz=1,\;x^2y^2=1$&T\\\hline
$79$&$yx^{-1}zx^{-1}y=1,\;yz^{-1}y^{-1}z=1,\;x^2z=1,\;xy^{-1}zy^{-1}x=1,\;xz^{-1}xy^2=1$&T\\\hline
$80$&$yx^{-1}zx^{-1}y=1,\;yz^{-1}y^{-1}z=1,\;xzx^{-1}z=1,\;xy^{-2}x=1,\;xz^{-1}xy^2=1$&T\\\hline
$81$&$yx^{-1}zx^{-1}y=1,\;yz^{-1}y^{-1}z=1,\;xzx^{-1}z=1,\;xy^{-1}zy^{-1}x=1,\;xz^{-1}xy^2=1$&T\\\hline
$82$&$yx^{-1}zx^{-1}y=1,\;yz^{-1}y^{-1}z=1,\;xy^{-2}x=1,\;xz^{-1}x^{-1}z=1,\;x^2y^2=1$&T\\\hline
$83$&$yx^{-1}zx^{-1}y=1,\;yz^{-1}y^{-1}z=1,\;xy^{-2}x=1,\;xz^{-1}xz=1,\;x^2y^2=1$&T\\\hline
$84$&$yx^{-1}zx^{-1}y=1,\;yz^{-1}y^{-1}z=1,\;xy^{-1}zy^{-1}x=1,\;xz^{-1}x^{-1}z=1,\;x^2y^2=1$&BS$(1,-1)$\\\hline
$85$&$yx^{-1}zx^{-1}y=1,\;yz^{-1}y^{-1}z=1,\;xy^{-1}zy^{-1}x=1,\;xz^{-1}xz=1,\;x^2y^2=1$&BS$(1,-1)$\\\hline
$86$&$yx^{-1}zx^{-1}y=1,\;yz^{-1}yz=1,\;x^2z=1,\;xy^{-1}zy^{-1}x=1,\;xz^{-1}xy^2=1$&BS$(1,-1)$\\\hline
$87$&$yx^{-1}zx^{-1}y=1,\;yz^{-1}yz=1,\;xzx^{-1}z=1,\;xy^{-2}x=1,\;xz^{-1}xy^2=1$&BS$(1,-1)$\\\hline
$88$&$yx^{-1}zx^{-1}y=1,\;yz^{-1}yz=1,\;xzx^{-1}z=1,\;xy^{-1}zy^{-1}x=1,\;xz^{-1}xy^2=1$&T\\\hline
$89$&$yx^{-1}zx^{-1}y=1,\;yz^{-1}yz=1,\;xy^{-2}x=1,\;xz^{-1}x^{-1}z=1,\;x^2y^2=1$&T\\\hline
$90$&$yx^{-1}zx^{-1}y=1,\;yz^{-1}yz=1,\;xy^{-2}x=1,\;xz^{-1}xz=1,\;x^2y^2=1$&T\\\hline
$91$&$yx^{-1}zx^{-1}y=1,\;yz^{-1}yz=1,\;xy^{-1}zy^{-1}x=1,\;xz^{-1}x^{-1}z=1,\;x^2y^2=1$&T\\\hline
$92$&$yx^{-1}zx^{-1}y=1,\;yz^{-1}yz=1,\;xy^{-1}zy^{-1}x=1,\;xz^{-1}xz=1x^2y^2=1$&T\\\hline
\end{longtable}
$\mathbf{K_2}$ \textbf{of Figure \ref{125}:} Taking into account the relations from Table \ref{tc4} which are not rejected, it can be see that  there are  $634$ different cases for the relations of $5$ cycles of length $4$ on vertices of degree $ 4 $ in $ \Gamma $ with the structure of the graph $ K_2 $ as Figure \ref{125}. Using GAP \cite{a9}, a free group with generators $ x,y,z $ and the  relations of these $ 5 $ cycles which are between $621$ cases of these $634$ cases  is finite or abelian, that is a
contradiction. Hence, there are $ 13 $ cases which may lead
to the existence of $ K_2$  in $ \Gamma $. 
We checked these such cases. In the following, we show that all of such cases lead to contradictions and therefore $ \Gamma $ does not contain any subgraph isomorphic to $K_{2}$ as Figure \ref{125}.
\begin{itemize}
\item[(1)] $r_1:xzx^{-1}zy^{-1}x=1,\;\;r_2:xzy^{-1}xz^{-1}y=1,\;\;r_3:xzyz=1,\;\;r_4:(x^{-1}y)^2(z^{-1}y)^2=1,\;\;r_5:y(zx^{-1})^2y=1$\\
In this case, it can be seen that $ \left\langle x,y,z\right\rangle  $ has a non-trivial torsion element, a contradiction.
\item[(2)]$r_1:xz^{-1}x^{-1}zy^{-1}x=1,\;\;r_2:xz^{-1}y^{-1}xy^{-1}z=1,\;\;r_3:xz^{-2}yx^{-1}y=1,\;\;r_4:(x^{-1}y)^2x^{-1}zy^{-1}z=1,\;\;r_5:yz^{-1}x^{-1}zx^{-1}y=1$ \\
In this case, it can be seen that  $x=z$, a contradiction.
\item[(3)]$r_1:xz^{-1}y^{-1}zy^{-1}x=1,\;\;r_2:xz^{-2}x^{-1}z=1,\;\;r_3:xz^{-1}x^{-1}zx^{-1}y=1,\;\;r_4:(x^{-1}y)^2z^{-1}yx^{-1}z=1,\;\;r_5:yz^{-1}y^{-1}zx^{-1}y=1$\\
In this case, it can be seen that $ \left\langle x,y,z\right\rangle  $ is isomorphic to a  quotient  group  of  BS$(1,2)$.
\item[(4)]$r_1:yz^{-1}x^{-1}zy^{-1}x=1,\;\;r_2:yz^{-1}y^{-1}xy^{-1}z=1,\;\;r_3:yz^{-2}yx^{-1}y=1,\;\;r_4:(x^{-1}y)^2x^{-1}zy^{-1}z=1\;\;r_5:xz^{-1}x^{-1}zx^{-1}y=1$ \\
In this case, it can be seen that  $y=z$, a contradiction.
\item[(5)]$r_1:yz^{-1}x^{-1}zy^{-1}x=1,\;\;r_2:yz^{-1}y^{-1}zx^{-1}z=1,\;\;r_3:yz^{-2}xz^{-1}y=1,\;\;r_4:(x^{-1}yx^{-1}z)^2=1,\;\;r_5:xz^{-1}(x^{-1}z)^2=1$\\   
In this case, it can be seen that  $y=z$, a contradiction.
\item[(6)]$r_1:yz^{-1}x^{-1}zy^{-1}x=1,\;\;r_2:yz^{-2}y^{-1}z=1,\;\;r_3:yz^{-1}y^{-1}zx^{-1}y=1,\;\;r_4:(x^{-1}y)^2z^{-1}xy^{-1}z=1,\;\;r_5:xz^{-1}x^{-1}zx^{-1}y=1$\\
In this case, it can be seen that $ \left\langle x,y,z\right\rangle  $ is isomorphic to a  quotient  group  of  BS$(1,2)$, a contradiction.
\item[(7)]$r_1:x(zy^{-1})^2x=1,\;\;r_2:xzx^{-1}yz^{-1}y=1,\;\;r_3:xzyz=1,\;\;r_4:(x^{-1}y)^2z^{-1}xz^{-1}y=1,\;\;r_5:yzy^{-1}zx^{-1}y=1$ \\
In this case, it can be seen that $ \left\langle x,y,z\right\rangle  $ is Abelian.
\item[(8)]$r_1:(xz^{-1})^2y^{-1}x=1,\;\;r_2:xz^{-1}x^2z^{-1}y=1,\;\;r_3:xz^{-1}xy^{-1}xz=1,\;\;r_4:(x^{-1}y)^2(z^{-1}y)^2=1,\;\;r_5:yz^{-1}xz^{-1}x^{-1}y=1$ \\
In this case, it can be seen that  $x=y$, a contradiction.
\item[(9)]$r_1:(xz^{-1})^2y^{-1}x=1,\;\;r_2:xz^{-1}xyz^{-1}y=1,\;\;r_3:xz^{-1}xy^{-1}xz=1,\;\;r_4:(x^{-1}y)^2z^{-1}xz^{-1}y=1,\;\;r_5:yz^{-1}xz^{-1}x^{-1}y=1$ \\
In this case, it can be seen that  $x=y$, a contradiction.
\item[(10)]$r_1:xz^{-1}yz^{-1}y^{-1}x=1,\;\;r_2:xz^{-1}y^2z^{-1}y=1,\;\;r_3:xz^{-1}yx^{-1}yz=1,\;\;r_4:(x^{-1}y)^2z^{-1}xz^{-1}y=1,\;\;r_5:(yz^{-1})^2x^{-1}y=1$ \\
In this case, it can be seen that  $x=yz$, a contradiction.
\item[(11)]$r_1:yzx^{-1}zy^{-1}x=1,\;\;r_2:yz^2y^{-1}z=1,\;\;r_3:yzy^{-1}zx^{-1}y=1,\;\;r_4:(x^{-1}y)^2z^{-1}xy^{-1}z=1,\;\;r_5:x(zx^{-1})^2y=1$.
In this case, it can be seen that $ \left\langle x,y,z\right\rangle  $ is isomorphic to a  quotient  group  of  BS$(1,-2)$, a contradiction.
\item[(12)]$r_1:yz^{-2}xy^{-1}x=1,\;\;r_2:yz^{-1}(y^{-1}z)^2=1,\;\;r_3:yz^{-1}x^{-1}zx^{-1}y=1,\;\;r_4:x^{-1}(yz^{-1})^2xy^{-1}z=1,\;\;r_5:xz^{-2}xz^{-1}y=1$ $y=x$\\
In this case, it can be seen that  $x=y$, a contradiction.
\item[(13)]$r_1:yz^{-1}xz^{-1}y^{-1}x=1,\;\;r_2:yz^{-1}xyz^{-1}y=1,\;\;r_3:yz^{-1}xy^{-1}xz=1,\;\;r_4:(x^{-1}y)^2z^{-1}xz^{-1}y=1,\;\;r_5:(xz^{-1})^2x^{-1}y=1$\\
In this case, it can be seen that  $z=y$, a contradiction.
\end{itemize}
\textbf{H of Figure \ref{k114}:} Suppose that graph H  is a subgraph of $ Z(\alpha ,\beta) $. Since $ deg_{Z(\alpha ,\beta)}(g_1)=5$ and $ deg_{Z(\alpha ,\beta)}(g_2)=5$, Remark \ref{deg12} implies $ \theta_4(g_1)=0,\;\theta_3(g_1)=1,\;\theta_4(g_2)=0 $ and $ \theta_3(g_2)=1 $. Hence, there exist $ s,s'\in \Delta_{(\alpha,\beta)}^{3} $ such that $ g_1\in \mathcal{V}_{(\alpha,\beta)}(s) $ and $ g_2\in \mathcal{V}_{(\alpha,\beta)}(s') $.
In view of the graph H,  a  cycle of length $ 3 $ in $ Z(\alpha ,\beta) $  contains $ g_1 $ if and only if it  contains $ g_2 $. Hence, $ s= s' $ since otherwise  contradicting to the part $ (3) $ of Lemma \ref{3part}.  Let $ \mathcal{V}_{(\alpha,\beta)}(s)=\{g_{s1},g_1,g_2\} $ and $ C,C'$ and $C'' $ be $ 3 $ cycles of length $ 3 $ with vertex sets  $\mathcal{V}_{C}=\{ g_1,g_2,g_3\} $, $ \mathcal{V}_{C'}=\{g_1,g_2,g_4\} $ and $ \mathcal{V}_{C''}=\{g_1,g_2,g_5 \}$, respectively. Suppose that $T_C=[h_1,h'_1,h_2,h'_2,h_3,h'_3] $, $T_{C'}=[h_1,h'_1,r_2,r'_2,r_3,r'_3] $ and $T_{C''}=[h_1,h'_1,t_2,t'_2,t_3,t'_3] $ are  the $ 6 $-tuples of $ C,C'$ and $C'' $, respectively, with the following relations:
 \begin{equation*}
R(C): \left\{
\begin{array}{ll}
h_{1} g_{1}=h'_{1}g_{2} \\  
h_{2}g_{2}=h'_{2}g_{3} \\
h_{3}g_{3}=h'_{3}g_{1} 
\end{array} \right.,
\qquad\qquad
R(C'):\left\{
\begin{array}{ll}
h_{1} g_{1}=h'_{1}g_{2} \\  
r_{2}g_{2}=r'_{2}g_{4} \\
r_{3}g_{4}=r'_{3}g_{1} 
\end{array} \right.,
\qquad\qquad
R(C''):\left\{
\begin{array}{ll}
h_{1} g_{1}=h'_{1}g_{2} \\  
t_{2}g_{2}=t'_{2}g_{5} \\
t_{3}g_{5}=t'_{3}g_{1} 
\end{array} \right. .
\end{equation*}
Clearly, $ h_1=sg_1^{-1} $ and $ h'_1=sg_2^{-1} $. Now, since  $ \Theta_3(g_1)=\{sg_1^{-1}\}$,  $ \Theta_4(g_2)=\{sg_2^{-1}\}$, $  \theta_4(g_1)=0$ and $\theta_4(g_2)=0 $, it is easy to see that 
$ h_1\neq h'_1\neq h_2\neq h'_2\neq h_3\neq h'_3\neq h_1$, $  r_2\neq r'_2\neq r_3\neq r'_3\neq h_1 $, $ t_2\neq t'_2\neq t_3\neq t'_3\neq h_1 $, $ h_2\neq r_2\neq t_2\neq h_2  $ and $ h'_3\neq r'_3\neq t'_3\neq h'_3   $. On the other hand, replacing $ \alpha $ by $ (sg_1^{-1})^{-1}\alpha $, we may assume that $ h_1=1 $. Then the relations corresponding to these 3 cycles are the same as the relations  corresponding to $ 3 $ cycles of length $ 3 $ in $ K_{1,1,3} $ as Figure \ref{707} and therefore H is a forbidden subgraph of $ Z(\alpha ,\beta) $.   This completes the proof of Theorem \ref{55}.
\begin{figure}
\begin{tikzpicture}[scale=.75]
\draw [fill] (0,0) circle
[radius=0.1] node  [left]  {$ g_1 $};
\draw [fill] (1.5,0) circle
[radius=0.1] node  [right]  {$ g_2 $};
\draw [fill] (-.75,-1.5) circle
[radius=0.1] node  [below]  {$ g_{s1} $};
\draw [fill] (.25,-1.5) circle
[radius=0.1] node  [below]  {$ g_3 $};
\draw [fill] (1.25,-1.5) circle
[radius=0.1] node  [below]  {$ g_4 $};
\draw [fill] (2.25,-1.5) circle
[radius=0.1] node  [below]  {$ g_5 $};
\draw (0,0) -- (1.5,0) -- (-.75,-1.5) -- (0,0);
\draw  (1.5,0) -- (.25,-1.5) -- (0,0);
\draw  (1.5,0) -- (1.25,-1.5) -- (0,0);
\draw  (1.5,0) -- (2.25,-1.5) -- (0,0);
\end{tikzpicture}
\subfloat{H}
\caption{A forbidden subgraph of $ Z(\alpha ,\beta) $, where the degree of vertices  $g_1$ and $ g_2$ of the subgraph in $ Z(\alpha ,\beta) $ must be 5. }\label{k114}
\end{figure}

\section{\textbf{Appendix}}\label{app10}
In  this section we give more details on the proof of  Theorems  \ref{app}  and \ref{appunit}. Before we  do these,  we need some explanation which are almost similar to Appendix \ref{app12} as follows:\\
In this section, let    $ \alpha$ be   a  zero divisor or a unit in $\mathbb{F}[G]$ for a possible torsion free group $G$ and arbitrary field $\mathbb{F}$ with  $ supp(\alpha)=\{1,x,x^{-1},y\} $, where $ x $ and $y$ are distinct non-trivial elements of $G$, and $ \beta $ be a mate of $ \alpha $. By Remarks \ref{iden} and \ref{idenunit}, we may assume that  $ G=\left\langle supp(\alpha)\right\rangle=\left\langle supp(\beta)\right\rangle  $. 
 Suppose  that $ C $ is a cycle of length $ n $ as Figure \ref{cn10} in $ Z(\alpha,\beta) $ or $ U(\alpha,\beta) $. Suppose further that $ \{g_1,\ldots,g_n\}\subseteq supp(\beta) $ is the  vertex set of $ C $ such that $ g_i\sim g_{i+1} $ for all $ i\in \{1,\ldots,n-1\} $ and $ g_1\sim g_n $. By an arrangement $ l $ of the vertex set $C$, we mean a sequence of all vertices as $x_1,\ldots,x_k$ such that $ x_i\sim x_{i+1} $ for all $ i\in \{1,\ldots,n-1\} $ and $ x_1\sim x_n $.
\begin{figure}[htp]
\begin{tikzpicture}
\draw (0,-1.25) .. controls (-0.700,-1.25) and (-1.25,-0.7) .. (-1.25,0)
.. controls (-1.25,0.7) and (-0.7,1.25) .. (0,1.25)
.. controls (0.7,1.25) and (1.25,0.7) .. (1.25,0);
\draw (1.25,0)[dashed] .. controls (1.25,-0.7) and (0.7,-1.25) .. (0,-1.25);
\draw [fill] (0,-1.25) circle
[radius=0.09] node  [below]  {$g_{n-2}$};
\draw [fill] (-.95,-.8) circle
[radius=0.09] node  [left]  {$g_{n-1}$};
\draw [fill] (-1.20,.4) circle
[radius=0.09] node  [left]  {$g_{n}$};
\draw [fill] (-.5,1.15) circle
[radius=0.09] node  [above]  {$g_{1}$};
\draw [fill] (.8,1) circle
[radius=0.09] node  [above]  {$g_{2}$};
\draw [fill] (1.25,0) circle
[radius=0.09] node  [right]  {$g_{3}$};
\end{tikzpicture}
\caption{A cycle of length $ n $ in $ Z(\alpha,\beta) $ or $ U(\alpha,\beta) $.}\label{cn10}
\end{figure}
Since $ g_i\sim g_{i+1} $ for all $ i\in \{1,\ldots,n-1\} $ and $ g_1\sim g_n $ for each $ i\in \{1,2,\ldots,n\} $, there exist distinct elements $ h_i,h'_i\in supp(\alpha) $ satisfying the following relations:
\begin{equation}\label{cycle10}
C: \left\{
\begin{array}{ll}
 h_{1} g_{1}=h'_{1}g_{2} \rightarrow g_{1}g_{2}^{-1}= h^{-1}_{1}h'_{1}\\  
h_{2}g_{2}=h'_{2}g_{3} \rightarrow g_{2} g_{3}^{-1}={h}_{2}^{-1}h'_{2}\\
\vdots\\
h_{n}g_{n}=h'_{n}g_{1} \rightarrow g_{n}g_{1}^{-1}=h_{n}^{-1}h'_{n}
\end{array} \right.. 
\end{equation} 
 We assign a $ 2n $-tuple $ T_C^l=[h_1,h'_1,h_2,h'_2,\ldots ,h_n,h'_n] $  to the cycle $ C $ corresponding to the above arrangement $ l $ of the vertex set of $C$. We denote by $ R( T_C^l) $ the above set $R$ of relations.
It can be derived from the relations \ref{cycle10} that $ r( T_C^l):=(h^{-1}_{1}h'_{1})({h}_{2}^{-1}h'_{2})\ldots (h_{n}^{-1}h'_{n}) $ is equal to 1. According to Remarks \ref{multi} and \ref{multiunit} and the explanations before them in Section \ref{s5}, if  $ [h_1,h'_1,h_2,h'_2,\ldots ,h_n,h'_n] $ is a $ 2n $-tuple of $ C $ according to arrangement $ l $ of the vertex set of $C$ and there is $ i\in \{1,\ldots,n\} $ such that  $ (h_i,h'_i)=(1,x), (x,1),(1,x^{-1})$ or $ (x^{-1},1) $, then  $ [h_1,h'_1,\ldots,h_{i-1},h'_{i-1},x^{-1},1,\ldots ,h_n,h'_n] $, $ [h_1,h'_1,\ldots,h_{i-1},h'_{i-1},1,$  $x^{-1},\ldots ,h_n,h'_n] $, $ [h_1,h'_1,\ldots,h_{i-1},h'_{i-1},x,1,\ldots ,\\h_n,h'_n] $ or $ [h_1,h'_1,\ldots,h_{i-1},h'_{i-1},1,x,\ldots ,h_n,h'_n] $, respectively, is also $ 2n $-tuple of $ C $ according to arrangement $ l $ of the vertex set of $C$. For each arrangement $ l $ of the vertex set of $C$, the set of all such $ 2n $-tuples will be denoted by $ \mathcal{T}^{l}(C) $.  Then, if $ T_C^{l'} $ is  a $ 2n $-tuple of $ C $ corresponding to another arrangement $ l' $ of the vertex set of $C$, then there exists $2n $-tuple $ T=[h_1,h'_1,h_2,h'_2,\ldots ,h_n,h'_n]$,  $T\in \mathcal{T}^{l}(C)  $ such that $ T_C^{l'} $ is one of the following $ 2n $-tuples:
 \begin{center}
  \begin{tabular}{c}
 $ [h_2,h'_2,h_3,h'_3,\ldots ,h_n,h'_n,h_1,h'_1] $,\\
 $ [h_3,h'_3,h_4,h'_4,\ldots ,h_1,h'_1,h_2,h'_2] $,\\
 $ \vdots $\\
 $ [h_n,h'_n,h_1,h'_1,\ldots ,h_{n-2},h'_{n-2},h_{n-1},h'_{n-1}] $,\\
$ [h'_n,h_n,h'_{n-1},h_{n-1},\ldots ,h'_2,h_2,h'_1,h_1] $,\\
$ \vdots $\\
$ [h'_1,h_1,h'_{n},h_{n},\ldots ,h_3,h'_3,h_2,h'_2] $,
 \end{tabular}
 \end{center}
Suppose   that $ \mathcal{T}(C)=\bigcup_{l\in L}\mathcal{T}^{l}(C)$, where $ L $ is the set of all such arrangement of the vertex set of $ C $, and also $ \mathcal{R}(T)=\{R(C)|T\in  \mathcal{T}(C) \}$.
\begin{defn}
Let $ \Gamma $ be a zero divisor graph or a unit graph on a pair of elements $ (\alpha,\beta) $ in $\mathbb{F}[G]$ for a possible torsion free group $G$ and arbitrary field $\mathbb{F}$ with  $ supp(\alpha)=\{1,x,x^{-1},y\} $, where $ x $ and $y$ are distinct non-trivial elements of $G$, and $ \beta $ be a mate of $ \alpha $. Let $ C $ be a cycle of length $ n $ in $ \Gamma $. Since $ r(T_1)=1 $ if and only if $ r(T_2)=1 $, for all $ T_1,T_2\in  \mathcal{T}(C) $, a member of $ \{r(T)|T \in  \mathcal{T}(C) \} $ is given as a representative and denoted by $ r(C) $. Also, $ r(C)=1 $  is called the relation of $ C $.
\end{defn}
 \begin{defn}\label{eqquidef10}
Let $ \Gamma $ be a zero divisor graph or a unit graph on a pair of elements $ (\alpha,\beta) $ in $\mathbb{F}[G]$ for a possible torsion free group $G$ and arbitrary field $\mathbb{F}$ with $ supp(\alpha)=\{1,x,x^{-1},y\} $, where $ x $ and $y$ are distinct non-trivial elements of $G$, and $ \beta $ be a mate of $ \alpha $. Let $ C $ and $ C' $ be two cycles of length $ n $ in $  \Gamma$. We say that these two
cycles are equivalent, if $ \mathcal{T}(C)\cap \mathcal{T}(C')\neq\varnothing $ and otherwise are non-equivalent.
\end{defn}
 \begin{rem}\label{eqquirem10}
 Suppose  that $ C $ and $ C' $ are two cycles of length $ n $ in $ Z(\alpha,\beta) $ or $ U(\alpha,\beta) $. Then clearly  if $ C $ and $ C' $ are equivalent cycles, then $ \mathcal{T}(C)= \mathcal{T}(C') $. 
 \end{rem}
 \begin{rem}\label{3p}
 Let  $ \alpha $ be a zero divisor in $\mathbb{F}[G]$ for a possible torsion free group $G$ and arbitrary field $\mathbb{F}$ with $ supp(\alpha)=\{1,x,x^{-1},y\} $, where $ x $ and $y$ are distinct non-trivial elements of $G$, and $ \beta $ be a mate of $ \alpha $. Suppose that $ C $ is a cycle of length $ n $ in $ Z(\alpha,\beta) $ with the vertex set $ \{g_1,g_2,\ldots,g_n\}\subseteq supp(\beta) $ and $ T_1=[h_1,h'_1,h_2,h'_2,\ldots ,h_n,h'_n] $ is  a $ 2n $-tuple of $ C $ according to arrangement $ g_1,g_2,\ldots,g_n $. By Remark {\rm\ref{deg12}}, if $ g $ is a vertex of degree $ 4 $ in $ Z(\alpha,\beta) $, then $ \theta_{3}(g)=0 $ and  $ \theta_{4}(g)=0 $. By the latter and since if $ g $ is a vertex of degree $ 4 $ of types $ (i)$ and $ (ii) $, then $ |M_{Z(\alpha,\beta)}(g)|=0 $ and $ |M_{Z(\alpha,\beta)}(g)|=1 $, respectively,  the following statements hold:
 \begin{itemize}
  \item If $ g_i $ is a vertex of degree $ 4 $  in $Z(\alpha,\beta)$, $ i\in \{1,\ldots,n\} $, then:
  \begin{itemize}
  \item if $ i\in \{3,\ldots,n-1\} $, then $ h_i\neq h'_{i-1} $, $ h'_i\neq h_{i+1} $ and $ h_{i-1}\neq h'_{i-2} $, if  $i=1 $, then $ h_1\neq h'_{n} $, $ h'_1\neq h_{2} $ and $ h_{n}\neq h'_{n-1} $, if  $i=2 $, then $ h_2\neq h'_{1} $, $ h'_2\neq h_{3} $ and $ h_{1}\neq h'_{n} $ and if  $i=n$, then $ h_n\neq h'_{n-1} $, $ h'_n\neq h_{1} $ and $ h_{n-1}\neq h'_{n-2} $. 
   \item Suppose that $ i\in \{3,\ldots,n-1\} $. If  $ h_k^{-1}h'_k = x$, where $ k\in\{i,i-1\} $, then  $ h_{k+1}\notin\{ 1,x\}  $ and $ h'_{k-1}\notin\{ 1,x^{-1}\} $, if  $ h_{i+1}^{-1}h'_{i+1}=x$, then  $ h'_{i}\notin\{ 1,x^{-1}\} $ and if $ h_{i-2}^{-1}h'_{i-2}=x$, then  $ h_{i-1}\notin\{ 1,x\} $. 
   If   $ h_k^{-1}h'_k = x^{-1}$, where $ k\in\{i,i-1\} $, then  $ h_{k+1}\notin\{ 1,x^{-1}\}  $ and $ h'_{k-1}\notin\{ 1,x\} $, if   $ h_{i+1}^{-1}h'_{i+1}=x^{-1}$, then  $ h'_{i}\notin\{ 1,x\} $ and if $ h_{i-2}^{-1}h'_{i-2}=x^{-1}$, then  $ h_{i-1}\notin\{ 1,x^{-1}\} $.
  \item Suppose that $ i=1 $. If  $ h_1^{-1}h'_1 = x$, then  $ h_{2}\notin\{ 1,x\}  $ and $ h'_{n}\notin\{ 1,x^{-1}\} $, if  $ h_n^{-1}h'_n = x$, then  $ h_{1}\notin\{ 1,x\}  $ and $ h'_{n-1}\notin\{ 1,x^{-1}\} $, if  $ h_{2}^{-1}h'_{2}=x$, then  $ h'_{1}\notin\{ 1,x^{-1}\} $ and if $ h_{n-1}^{-1}h'_{n-1}=x$, then  $ h_{n}\notin\{ 1,x\} $. 
   If  $ h_1^{-1}h'_1 = x^{-1}$, then  $ h_{2}\notin\{ 1,x^{-1}\}  $ and $ h'_{n}\notin\{ 1,x\} $, if  $ h_n^{-1}h'_n = x^{-1}$, then  $ h_{1}\notin\{ 1,x^{-1}\}  $ and $ h'_{n-1}\notin\{ 1,x\} $, if  $ h_{2}^{-1}h'_{2}=x^{-1}$, then  $ h'_{1}\notin\{ 1,x\} $ and if $ h_{n-1}^{-1}h'_{n-1}=x^{-1}$, then  $ h_{n}\notin\{ 1,x^{-1}\} $. 
  \item Suppose that $ i=2 $. If  $ h_2^{-1}h'_2 = x$, then  $ h_{3}\notin\{ 1,x\}  $ and $ h'_{1}\notin\{ 1,x^{-1}\} $, if  $ h_1^{-1}h'_1 = x$, then  $ h_{2}\notin\{ 1,x\}  $ and $ h'_{n}\notin\{ 1,x^{-1}\} $, if  $ h_{3}^{-1}h'_{3}=x$, then  $ h'_{2}\notin\{ 1,x^{-1}\} $ and if $ h_{n}^{-1}h'_{n}=x$, then  $ h_{1}\notin\{ 1,x\} $. 
   If  $ h_2^{-1}h'_2 = x^{-1}$, then  $ h_{3}\notin\{ 1,x^{-1}\}  $ and $ h'_{1}\notin\{ 1,x\} $, if  $ h_1^{-1}h'_1 = x^{-1}$, then  $ h_{2}\notin\{ 1,x^{-1}\}  $ and $ h'_{n}\notin\{ 1,x\} $, if  $ h_{3}^{-1}h'_{3}=x^{-1}$, then  $ h'_{2}\notin\{ 1,x\} $ and if $ h_{n}^{-1}h'_{n}=x^{-1}$, then  $ h_{1}\notin\{ 1,x^{-1}\} $. 
  \item   Suppose that $ i=n $. If  $ h_n^{-1}h'_n = x$, then  $ h_{1}\notin\{ 1,x\}  $ and $ h'_{n-1}\notin\{ 1,x^{-1}\} $, if  $ h_{n-1}^{-1}h'_{n-1} = x$, then  $ h_{n}\notin\{ 1,x\}  $ and $ h'_{n-2}\notin\{ 1,x^{-1}\} $, if  $ h_{1}^{-1}h'_{1}=x$, then  $ h'_{n}\notin\{ 1,x^{-1}\} $ and if $ h_{n-2}^{-1}h'_{n-2}=x$, then  $ h_{n-1}\notin\{ 1,x\} $. 
   If  $ h_n^{-1}h'_n = x^{-1}$, then  $ h_{1}\notin\{ 1,x^{-1}\}  $ and $ h'_{n-1}\notin\{ 1,x\} $, if  $ h_{n-1}^{-1}h'_{n-1} = x^{-1}$, then  $ h_{n}\notin\{ 1,x^{-1}\}  $ and $ h'_{n-2}\notin\{ 1,x\} $, if  $ h_{1}^{-1}h'_{1}=x^{-1}$, then  $ h'_{n}\notin\{ 1,x\} $ and if $ h_{n-2}^{-1}h'_{n-2}=x^{-1}$, then  $ h_{n-1}\notin\{ 1,x^{-1}\} $. 
   \end{itemize}
  \item If $ g_i $ is a vertex of degree $ 4 $ of type $ (ii) $ in $Z(\alpha,\beta)$, $ i\in \{1,\ldots,n\} $, then:
  \begin{itemize}
   \item If $ \{h_i,h'_{i-1}\}=\{x,x^{-1}\} $, then $ h'_i\neq h_{i-1} $ and $ 1\in \{h'_i,h_{i-1}\} $.
  \item If $ h_i=1 $, then $ h'_i\in \{x,x^{-1}\} $ and if $ h'_{i-1}=1 $, then $ h_{i-1}\in \{x,x^{-1}\} $.
\end{itemize}   
  \item If $ g_i $ is a vertex of degree $ 4 $ of type $ (i) $ in $Z(\alpha,\beta)$, $ i\in \{1,\ldots,n\} $, then:
  \begin{itemize}
  \item If $ h_i\in \{x,x^{-1}\} $, then  $ h'_i\neq 1$  and if $ h'_{i-1}\in \{x,x^{-1}\} $, then $h_{i-1}\neq 1 $.
  \item If $ h_i=1 $, then $ h'_i=y $ and if $ h'_{i-1}=1 $, then $ h_{i-1}=y $.
   \end{itemize}
  \end{itemize}
 \end{rem}
 \begin{rem}\label{3punit}
 Let  $ \alpha $ be a unit in $\mathbb{F}[G]$ for a possible torsion free group $G$ and arbitrary field $\mathbb{F}$ with $ supp(\alpha)=\{1,x,x^{-1},y\} $, where $ x $ and $y$ are distinct non-trivial elements of $G$, and $ \beta $ be a mate of $ \alpha $. Let $ F=\{g|g\in \mathcal{V}_{U(\alpha,\beta)}, deg_{U(\alpha,\beta)}(g)=4\} $. In view of the proof of Lemma {\rm \ref{degreeconnunit}}, if $ g $ is a vertex of degree $ 3 $ in $ U(\alpha,\beta) $, then  $ \theta_{3}(g)=0 $ and $ \theta_{4}(g)=0 $ and  either for each  $ g\in F $, $ \theta_{3}(g)=0 $ and $ \theta_{4}(g)=0 $ or there exists  $ g'\in F $ such that  $ \theta_{3}(g')=1 $ and $ \theta_{4}(g')=0 $ and for each  $ g\in F\setminus \{g'\} $, $ \theta_{3}(g)=0 $ and $ \theta_{4}(g)=0 $. Hence, if $ C $ is a cycle of length $ 3 $ in $ U(\alpha,\beta) $ such that the vertex set of it contains at least a vertex of degree $ 3 $ in $ U(\alpha,\beta) $ or  two  vertices of degree $ 4 $  in $ U(\alpha,\beta) $  and $ T_1=[h_1,h'_1,h_2,h'_2,h_3,h'_3] $ is  a $ 2n $-tuple of $ C $, then $ h_1\neq h'_1\neq h_2\neq h'_2\neq h_3\neq h'_3 \neq h_1 $. Also, there exists at most one integer number $ j\in\{1,2,3\} $ such that  $ h_{j}^{-1}h'_{j}\in \{x,x^{-1}\}$. Moreover, if $ h_{1}^{-1}h'_{1}= x$  \rm{(}resp. $h_{1}^{-1}h'_{1}=x^{-1})$, then $ h_2\notin\{ 1,x\}$   \rm{(}resp. $h_2\notin\{ 1,x^{-1}\}) $ and $ h'_3\notin\{ 1,x^{-1}\} $  $(h_3\notin\{ 1,x\}) $, if $ h_{2}^{-1}h'_{2}= x$ \rm{(}resp. $h_{2}^{-1}h'_{2}=x^{-1})$, then $ h_3\notin\{ 1,x\}$  $(h_3\notin\{ 1,x^{-1}\}) $ and $ h'_1\notin\{ 1,x^{-1}\}$ \rm{(}resp. $h'_1\notin\{ 1,x\}) $ and if $ h_{3}^{-1}h'_{3}= x$  $(h_{3}^{-1}h'_{3}=x^{-1})$, then $ h_1\notin\{ 1,x\}$ \rm{(}resp. $h_1\notin\{ 1,x^{-1}\}) $ and $ h'_2\notin\{ 1,x^{-1}\}$  \rm{(}resp. $h'_2\notin\{ 1,x\}) $.
 \end{rem}
 \begin{rem}
Let  $ \Gamma $ be a graph with vertices of degree less than or equal to  $ 12 $ and   $ B $  be the set of all cycles in $ \Gamma $. Let $\mathcal{G}(\Gamma):=\left\langle supp(\alpha)\, |\,r(C),\,C\in B\right\rangle.$
If at least one of the following cases happens, then clearly  $ \Gamma $ is a forbidden subgraph of $ Z(\alpha,\beta) $ and $ U(\alpha,\beta) $:
\begin{itemize}
\item[1.]$ \mathcal{G}(\Gamma) $ is an abelian group.
\item[2.]$ \mathcal{G}(\Gamma)$ is a quotient  group  of
 $ BS(1,n)$.
\item[3.]$ \mathcal{G}(\Gamma) $ has a non-trivial torsion element.
\end{itemize} 
 \end{rem}
\textbf{Proof of Theorems \ref{a} and \ref{appmatcal}} Let  $ \alpha $ be a zero divisor in $\mathbb{F}[G]$ for a possible torsion free group $G$ and arbitrary field $\mathbb{F}$ with $ supp(\alpha)=\{1,x,x^{-1},y\} $, where $ x $ and $y$ are distinct non-trivial elements of $G$, and $ \beta $ be a mate of $ \alpha $.\\
$ \mathbf{C_3} $\textbf{:}
  Suppose that $ C $ is a cycle of length $ 3 $ in $ Z:Z(\alpha,\beta) $ such that  its  vertex set  contains at least a vertex of degree $ 4 $ in $ Z(\alpha,\beta) $.  Suppose further that $ T\in \mathcal{T}(C) $ and $ T=[h_1,h'_1,h_2,h'_2,h_3,h'_3] $. 
By Remark \ref{3p}, $ h_1\neq h'_1\neq h_2\neq h'_2\neq h_3\neq h'_3\neq h_1 $ and  there exists at most one integer number $ j\in\{1,2,3\} $ such that  $ h_{j}^{-1}h'_{j}\in \{x,x^{-1}\}$.  Moreover, if $ h_{1}^{-1}h'_{1}= x$    \rm{(}respectively, $h_{1}^{-1}h'_{1}=x^{-1})$, then $ h_2\notin\{ 1,x\}$    \rm{(}respectively, $h_2\notin\{ 1,x^{-1}\}) $ and $ h'_3\notin\{ 1,x^{-1}\} $   \rm{(}respectively, $h_3\notin\{ 1,x\}) $, if $ h_{2}^{-1}h'_{2}= x$   \rm{(}respectively, $h_{2}^{-1}h'_{2}=x^{-1})$, then $ h_3\notin\{ 1,x\}$   \rm{(}respectively, $h_3\notin\{ 1,x^{-1}\}) $ and $ h'_1\notin\{ 1,x^{-1}\}$   \rm{(}respectively, $h'_1\notin\{ 1,x\}) $ and if $ h_{3}^{-1}h'_{3}= x$   \rm{(}respectively, $h_{3}^{-1}h'_{3}=x^{-1})$, then $ h_1\notin\{ 1,x\}$   \rm{(}respectively, $h_1\notin\{ 1,x^{-1}\}) $ and $ h'_2\notin\{ 1,x^{-1}\}$   \rm{(}respectively, $h'_2\notin\{ 1,x\}) $. Also by   \cite[Lemma 2.4]{AT}, we may assume that $ h_1\in \{1,y\} $. Hence,  by using GAP \cite{a9}, it can be seen that there exist $71$ non-equivalent cases for $ T $. We checked the relations of such  non-equivalent cases. In  Table \ref{tt10}, it can be seen that $ 57 $ cases among $71$ cases  lead to contradictions. 
Hence, there are just $ 14 $ cases  which may lead
to the existence of the cycle $ C $  in $ Z(\alpha,\beta) $ which are listed in Table \ref{c3e}.\\
\begin{table}[!hbtp]
\begin{tabular}{|l|l|l|l|}
\hline
$ 1.x^2yx^{-1}y=1 $&$2. x^{-1}y^{-1}xy^{-1}x^{-1}=1 $&$3.yx^{-1}yx^2 =1 $&$ 4.x^{2}y^{-1}x^{-1}y^{-1}=1$ \\\hline
$5. yxyx^{-2}=1 $&$ 6. yx^{-3}y=1  $&$ 7.yx^3y =1$&$ 8.x^{-2}yxy=1 $\\\hline
$ 9.y^{-1}x^3y^{-1}x^{-1}=1  $&$ 10.y^{-1}x^2yx^{-2}=1 $&$ 11.y^{-1}x^2yx^2=1$&$12. (y^{-1}x)^2y^{-1}x^{-1}=1  $\\\hline
$ 13.y^{-1}xy^{-1}x^{-3}=1  $&$14. y^{-1}x(y^{-1}x^{-1})^2=1  $\\\cline{1-2}
\end{tabular}
\caption{Possible relations of a cycle of length $ 3 $ in $ Z(\alpha,\beta) $ which its vertex set contains at least a vertex of degree $ 4 $ in $ Z(\alpha,\beta) $. }\label{c3e}
\end{table}

\begin{longtable}{|l|l||l|l||l|l|}
\caption{The relations of  a cycle of length $ 3 $ which its vertex set contains at least a vertex of degree  $ 4 $  in $ Z(\alpha,\beta) $ that lead to  contradictions. }\label{tt10}\\
\hline
\endfirsthead
\multicolumn{6}{c}%
{{\bfseries \tablename\ \thetable{} -- continued from previous page}} \\\hline
\hline 
 \multicolumn{1}{|l|}{\textbf{$ r(C_3) $}} & \multicolumn{1}{l|}{\textbf{$E$}}&  \multicolumn{1}{|l|}{\textbf{$ r(C_3) $}} & \multicolumn{1}{l|}{\textbf{$E$}}&\multicolumn{1}{|l|}{\textbf{$ r(C_3) $}} & \multicolumn{1}{l|}{\textbf{$E$}}\\ \hline
\endhead

\hline \multicolumn{6}{|r|}{{Continued on next page}} \\ \hline
\endfoot

\hline 
\endlastfoot
\hline
\textbf{$ r(C_3) $}& \textbf{$E$} & \textbf{$ r(C_3)$}& \textbf{$E$} &\textbf{$ r(C_3) $}& \textbf{$E$} \\
\hline
$1.x^3y=1$&Abelian&
$2.x^{5}=1$&T&
$3.x^{4}y=1$&Abelian\\
\hline
$4.x^3y^{-1}x=1$&Abelian&
$5.x^2y^2=1$&BS$ (1,-1) $&
$6.x^2yx^2=1$&Abelian\\
\hline
$7.x(xy)^2=1$&Abelian&
$8.xy^{-1}x^{-1}y=1$&Abelian&
$9.xy^{-1}x^2=1$&Abelian\\
\hline
$10.xy^{-1}xy=1$&BS$ (1,-1) $&
$11.xy^{-2}x=1$&BS$ (1,-1) $&
$12.xy^{-1}xy=1$&BS$ (1,-1) $\\
\hline
$13.xy^{-1}x^3=1$&Abelian&
$14.xy^{-1}x^2y=1$&BS$ (2,-1) $&
$15.(xy^{-1})^2x=1$&Abelian\\
\hline
$16.(yx)^2x=1$&Abelian &
$17.xy^{-1}x^{-1}y=1$&Abelian&
$18.xy^{-1}x^{-2}y=1$&BS$ (2,1) $\\
\hline
$19.x^{-3}y=1$&Abelian&
$20.x^{-5}=1$&T&
$21.x^{-4}y=1$&Abelian\\
\hline
$22.x^{-3}y^{-1}x^{-1}=1$&Abelian&
$23.x^{-2}y^2=1$&BS$ (1,-1) $&
$24.x^{-2}yx^{-2}=1$&Abelian\\
\hline
$25.x^{-1}(x^{-1}y)^2=1$&Abelian&
$26.x^{-1}y^{-1}x^{-2}=1$&Abelian&
$27.x^{-1}y^{-1}x^{-1}y=1$&BS$ (-1,1) $\\
\hline
$28.x^{-1}y^{-1}xy=1$&Abelian&
$29.x^{-1}y^{-2}x^{-1}=1$&BS$ (1,-1) $&
$30.x^{-1}y^{-1}xy=1$&Abelian\\
\hline
$31.x^{-1}y^{-1}x^2y$&BS$ (2,1) $&
$32.yx^{4}=1$&Abelian&
$33.yx^2y^{-1}x=1$&BS$ (1,-2) $\\
\hline
$34.x^{-1}y^{-1}x^{-1}y=1$&BS$ (-1,1) $&
$35.x^{-1}y^{-1}x^{-3}=1$&Abelian&
$36.x^{-1}y^{-1}x^{-2}y=1$&BS$ (-2,1) $\\
\hline
$37.(x^{-1}y^{-1})^2x^{-1}=1$&Abelian&
$38.y^3=1$&T&
$39.y^2x^{-2}=1$&BS$ (1,-1) $\\
\hline
$40.y^2x^{-1}y=1$&Abelian&
$41.y^2x^2=1$&BS$ (1,-1) $&
$42.y^2xy=1$&Abelian\\
\hline
$43.yx^{-4}=1$&Abelian&
$44.yx^{-2}y^{-1}x=1$&BS$ (1,2) $&
$45.yx^{-2}y^{-1}x^{-1}=1$&BS$ (-1,2) $\\
\hline
$46.(yx^{-1})^2x^{-1}=1$&Abelian&
$47.(yx^{-1})^2y=1$&Abelian&
$48.yx^{-1}yxy=1$&BS$ (1,-2) $\\
\hline
$49.yx^2y^{-1}x^{-1}=1$&BS$ (1,2) $&
$50.(yx)^2y=1$&Abelian&
$ 51.yxyx^{-1}y =1$ &BS$ (2,-1)$\\
\hline
$52. y^{-1}x^5=1  $&Abelian &$53. y^{-1}x^3y^{-1}x =1 $&BS$ (1,-3) $&$ 54.(y^{-1}x)^3 =1 $&T \\
\hline
$55. y^{-1}x^{-5} =1 $& Abelian&$56. y^{-1}x^{-3}y^{-1}x^{-1} =1 $&BS$ (-1,1) $&$ 57.(y^{-1}x^{-1})^3 =1 $& T\\
\hline
\end{longtable}

\textbf{Graph} $ \mathbf{H_1} $ \textbf{of Figure \ref{10k}:}
Suppose that $ C $ and $ C' $ are two cycles of  length $  3$  in $Z(\alpha,\beta)$ with vertex sets  $\mathcal{V}_{C} =\{ g_1,g_2,g_3 \}\subset supp(\beta)$ and $\mathcal{V}_{C'} =\{ g_1,g_2,g_4 \}\subset supp(\beta)$, respectively, (i.e.,  $ C $ and $ C' $ have exactly an edge in common )  such that  $deg_Z(g_1)=4$.  Further suppose that $ T'\in \mathcal{T}(C') $, $ T\in \mathcal{T}(C) $ and $ T'=[h_1,h'_1,t_2,t'_2,t_3,t'_3] $ and $ T=[h_1,h'_1,h_2,h'_2,h_3,h'_3] $, where the first two components
are related to the common edge between these cycles. We note that by Remark \ref{3p}, $ h'_3\neq t'_3 $, if $ t_3^{-1}t'_3=x $, then $ h'_3\notin\{1,x\} $ and if $ t_3^{-1}t'_3=x^{-1} $, then $ h'_3\notin\{1,x^{-1}\} $ and  if $ h_2=t_2 $, then $ h'_2\neq t'_2 $ since otherwise $ g_3=g_4 $,  a contradiction.\\ 
With consideration of Table \ref{c3e} and by using GAP \cite{a9}, it can be seen that there are $9$ different cases for the relations of two cycles of length $3$ with this structure. Using GAP \cite{a9}, a free group with generators $ x,y $ and the relations of these cycles which are between $5$ cases of such $9$ cases  is finite, that is a
contradiction.   In the following it can be seen that $ 4 $ other cases lead to  contradictions and so, the graph $Z(\alpha,\beta)$ contains no subgraph isomorphic to the graph $ H_1 $ of Figure \ref{10k}.
\begin{itemize} 
\item[(1)]$ r_1:y^{-1}x^3y^{-1}x^{-1}=1  $, $ r_2:y^{-1}x^2yx^2=1  $.\\
 Using  Tietze transformation where $ y\mapsto xy$ and $ x\mapsto x $, we have:\\
 $ r_1\mapsto y^{-1}x^2y^{-1}x^{-2}=1  $ and $ r_2\mapsto r_2 $. By $ r_1 $ and $ r_2 $, $ x^{4}=y^{-2} $ and therefore $ x^{4}\in Z(\left\langle x,y\right\rangle ) $. On the other hand, $ r_2 $ implies $x^{4}=(x^{-4})^{y}  $. Hence, $ x^{8}=1 $, a contradiction.
\item[(2)]$ r_1:y^{-1}x^2yx^2=1  $, $ r_2:(y^{-1}x)^2y^{-1}x^{-1}=1  $.\\
Using  Tietze transformation where $ y\mapsto xy$ and $ x\mapsto x $, we have:\\
$ r_1\mapsto r_1 $ and $ r_2\mapsto  x^{2}=y^{-3} $. Thus, $ x^{2}\in Z(\left\langle x,y\right\rangle ) $ and therefore $ x^4=1 $, a contradiction.
\item[(3)]$ r_1:y^{-1}x^2yx^2=1  $, $ r_2:y^{-1}xy^{-1}x^{-3}=1   $.\\
Using  Tietze transformation where $ y\mapsto xy$ and $ x\mapsto x $, it can be seen that $ x^8=1 $, a contradiction.
\item[(4)]$ r_1:y^{-1}x^2yx^2=1  $, $ r_2:y^{-1}x(y^{-1}x^{-1})^2 =1   $.\\
Using  Tietze transformation where $ y\mapsto x^{-1}y$ and $ x\mapsto x $, it can be seen that $ x^4=1 $, a contradiction.
\end{itemize}

 \begin{figure}[ht]
\begin{tikzpicture}[scale=.75]
\draw [fill] (0,0) circle
[radius=0.1] node  [left]  {$ g_3 $};
\draw [fill] (2,0) circle
[radius=0.1] node  [right]  {$ g_4 $};
\draw [] (1,1) circle
[radius=0.1] node  [left]  {$ g_1 $};
\draw [fill] (1,-1) circle
[radius=0.1] node  [left]  {$ g_2 $};
\draw  (0,0) -- (.95,.95);
\draw  (1,-1)-- (.03,-.05);
\draw  (1,.9) --  (1,-.94) ;
\draw   (2,.06)--(1.05,.95) ; 
\draw   (1,-.96)--(2,-.05);
\end{tikzpicture}
\subfloat{$ H_1$}
\begin{tikzpicture}[scale=.75,shorten >=2pt]
\draw [] (0,0) circle
[radius=0.1] node  [left]  {$ g_3 $};
\draw [] (2,0) circle
[radius=0.1] node  [right]  {$ g_4 $};
\draw [fill] (1,1) circle
[radius=0.1] node  [left]  {$ g_1 $};
\draw [fill] (1,-1) circle
[radius=0.1] node  [left]  {$ g_2$};
\draw  (.05,.05) -- (1,1);
\draw  (1.98,.08)--(1,1);
\draw  (1,-1)-- (0,-.03);
\draw   (1,1) --  (1,-1) ;
\draw   (1,-1)--(2,-.03);
\end{tikzpicture}
\subfloat{$ H_2$}
 \begin{tikzpicture}[scale=.75]
\draw [] (0,0) circle
[radius=0.1] node  [left]  {};
\draw [fill] (1,0) circle
[radius=0.1] node  [right]  {};
\draw [] (1,-1) circle
[radius=0.1] node  [right]  {};
\draw [fill] (0,-1) circle
[radius=0.1] node  [left]  {};
\draw  (.1,0) -- (.95,0);
\draw  (1,-.1)-- (1,-.9);
\draw  (.9,-1) --  (.1,-1) ;
\draw   (0,-.9)--(0,-.1) ; 
\end{tikzpicture}
\subfloat{$(a)$}
 \begin{tikzpicture}[scale=.75]
\draw [] (0,0) circle
[radius=0.1] node  [left]  {};
\draw [] (1,0) circle
[radius=0.1] node  [right]  {};
\draw [fill] (1,-1) circle
[radius=0.1] node  [right]  {};
\draw [fill] (0,-1) circle
[radius=0.1] node  [left]  {};
\draw  (.1,0) -- (.95,0);
\draw  (1,-.1)-- (1,-.9);
\draw  (.9,-1) --  (.1,-1) ;
\draw   (0,-.9)--(0,-.1) ; 
\end{tikzpicture}
\subfloat{$ (b)$}
\begin{tikzpicture}[scale=.75]
\draw [fill] (0,0) circle
[radius=0.1] node  [left]  {};
\draw [fill] (2,0) circle
[radius=0.1] node  [right]  {};
\draw [fill][black!20!white] (1,1) circle
[radius=0.1] node  [left]  {};
\draw [fill][black!20!white] (1,-1) circle
[radius=0.1] node  [left]  {};
\draw [fill][black!20!white] (1,0) circle
[radius=0.1] node  [left]  {};
\draw  (.05,.05) -- (.95,.95);
\draw  (1.95,.05)--(1.1,.95) ;
\draw  (1.1,-.9)--(1.95,-.05);
\draw(.91,-.9)-- (.05,-.05);
\draw  (1,.9) --  (1,.1) ;
\draw  (1,-.1) --  (1,-.9) ;
\end{tikzpicture}
\subfloat{$ H_3$}\\
\begin{tikzpicture}[scale=.75]
\draw [fill] (0,0) circle
[radius=0.1] node  [left]  {};
\draw [fill] (2,0) circle
[radius=0.1] node  [right]  {};
\draw [] (1,1) circle
[radius=0.1] node  [left]  {};
\draw [] (1,-1) circle
[radius=0.1] node  [left]  {};
\draw [fill] (1,0) circle
[radius=0.1] node  [left]  {};
\draw [fill] (1.5,0) circle
[radius=0.1] node  [right]  {};
\draw   (1.47,.05) -- (1,.95);
\draw   (1,-1)--(1.47,-.05) ;
\draw   (.05,.05) -- (1,.95);
\draw  (1.95,.05)--(1,.95)  ;
\draw   (1,-.95)-- (.05,-.05);
\draw   (1,.95) --  (1,.13) ;
\draw   (1,-.1) --  (1,-.9) ;
\draw   (1,-.95)--(1.95,-.05);
\end{tikzpicture}
\subfloat{$ H_4$}
\begin{tikzpicture}[scale=.75]
\draw [] (0,0) circle
[radius=0.1] node  [left]  {};
\draw [] (2,0) circle
[radius=0.1] node  [right]  {};
\draw [fill] (1,1) circle
[radius=0.1] node  [left]  {};
\draw [fill] (1,-1) circle
[radius=0.1] node  [left]  {};
\draw [] (1,0) circle
[radius=0.1] node  [left]  {};
\draw [] (1.5,0) circle
[radius=0.1] node  [right]  {};
\draw   (1.47,.05) -- (1,.95);
\draw   (1,-1)--(1.47,-.05) ;
\draw   (.05,.05) -- (1,.95);
\draw  (1.95,.05)--(1,.95)  ;
\draw   (1,-1)-- (.05,-.05);
\draw   (1,1) --  (1,.13) ;
\draw   (1,-.1) --  (1,-.9) ;
\draw   (1,-1)--(1.95,-.05);
\end{tikzpicture}
\subfloat{$ H_5$}
\begin{tikzpicture}[scale=.75]
\draw [] (0,0) circle
[radius=0.1] node  [left]  {};
\draw [] (2,0) circle
[radius=0.1] node  [right]  {};
\draw [fill] (1,1) circle
[radius=0.1] node  [left]  {};
\draw [fill] (1,-1) circle
[radius=0.1] node  [left]  {};
\draw [] (1,0) circle
[radius=0.1] node  [left]  {};
\draw [fill] (2,1) circle
[radius=0.1] node  [left]  {};
\draw   (1.1,.05) -- (2,1);
\draw   (2,.05) -- (2,1);
\draw   (.05,.05) -- (1,.95);
\draw  (1.95,.05)--(1,.95)  ;
\draw   (1,-1)-- (.05,-.05);
\draw   (1,1) --  (1,.13) ;
\draw   (1,-.1) --  (1,-.9) ;
\draw   (1,-1)--(1.95,-.05);
\end{tikzpicture}
\subfloat{$ H_6$}
\begin{tikzpicture}[scale=.75]
\draw [] (0,0) circle
[radius=0.1] node  [left]  {};
\draw [fill] (1,0) circle
[radius=0.1] node  [right]  {};
\draw [fill] (0,-1) circle
[radius=0.1] node  [left]  {};
\draw [] (1,-1) circle
[radius=0.1] node  [right]  {};
\draw [fill] (.5,1) circle
[radius=0.1] node  [left]  {};
\draw   (.5,1) --(0,.1);
\draw    (1,0)--(.5,.98)  ;
\draw   (.1,0) --  (.98,0) ;
\draw     (1,0)--(1,-.9) ;
\draw   (.9,-1) --  (0,-1) ;
\draw   (0,-1) --(0,-.1) ;
\end{tikzpicture}
\subfloat{$ H_7$}
\begin{tikzpicture}[scale=.75]
\draw [fill] (0,0) circle
[radius=0.1] node  [left]  {};
\draw [fill] (1,0) circle
[radius=0.1] node  [right]  {};
\draw [] (0,-1) circle
[radius=0.1] node  [left]  {};
\draw [] (1,-1) circle
[radius=0.1] node  [right]  {};
\draw [] (.5,1) circle
[radius=0.1] node  [left]  {};
\draw   (.45,.92) --(0,.1);
\draw    (1,0)--(.55,.92)  ;
\draw   (.1,0) --  (.9,0) ;
\draw     (1,0)--(1,-.9) ;
\draw   (.9,-1) --  (.1,-1) ;
\draw   (0,-.9) --(0,-.1) ;
\end{tikzpicture}
\subfloat{$ H_8$}
\begin{tikzpicture}[scale=.75]
\draw [] (0,0) circle
[radius=0.1] node  [left]  {};
\draw [] (1,0) circle
[radius=0.1] node  [right]  {};
\draw [fill] (0,-1) circle
[radius=0.1] node  [left]  {};
\draw [fill] (1,-1) circle
[radius=0.1] node  [right]  {};
\draw [fill] (.5,1) circle
[radius=0.1] node  [left]  {};
\draw   (.5,1) --(0,.1);
\draw    (1,.1)--(.5,.98)  ;
\draw   (.1,0) --  (.9,0) ;
\draw     (1,-.1)--(1,-.9) ;
\draw   (.9,-1) --  (0,-1) ;
\draw   (0,-1) --(0,-.1) ;
\end{tikzpicture}
\subfloat{$ H_9$}
\caption{ The degrees of white and gray vertices in $ Z(\alpha,\beta) $ must be  $ 4 $  and   $ 4 $ of type $ (ii) $, respectively. }\label{10k}
\end{figure}
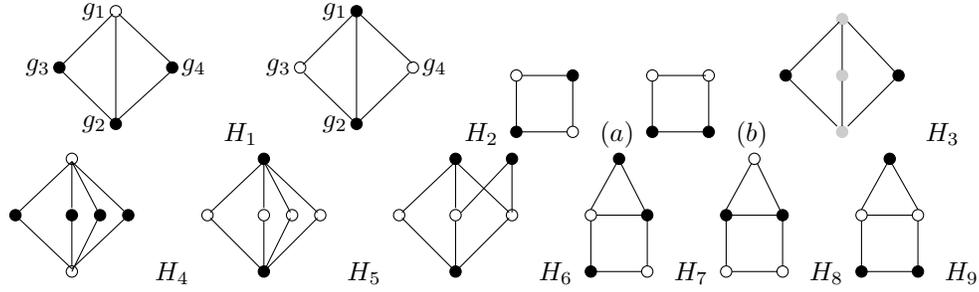
\textbf{Graph} $ \mathbf{H_2} $ \textbf{of Figure \ref{10k}:}
Suppose that $ C $ and $ C' $ are two cycles of  length $  3$  in $Z(\alpha,\beta)$ with vertex sets  $\mathcal{V}_{C} =\{ g_1,g_2,g_3 \}\subset supp(\beta)$ and $\mathcal{V}_{C'} =\{ g_1,g_2,g_4 \}\subset supp(\beta)$, respectively, (i.e.,  $ C $ and $ C' $ have exactly an edge in common )  such that  $deg_Z(g_3)=4$ and $deg_Z(g_4)=4$.  Further suppose that $ T'\in \mathcal{T}(C') $, $ T\in \mathcal{T}(C) $, $ T'=[h_1,h'_1,t_2,t'_2,t_3,t'_3] $ and $ T=[h_1,h'_1,h_2,h'_2,h_3,h'_3] $, where the first two components
are related to the common edge between these cycles. We note that in view of  Remark \ref{3p}, $ h'_3\neq t'_3 $, if $ t_3^{-1}t'_3=x $, then $ h'_3\notin\{1,x\} $, if $ t_3^{-1}t'_3=x^{-1} $, then $ h'_3\notin\{1,x^{-1}\} $ and also  $ h_2\neq t_2 $ and if $ t_2^{-1}t'_2=x $, then $ h_2\notin\{1,x^{-1}\} $, if $ t_2^{-1}t'_2=x^{-1} $, then $ h_2\notin\{1,x\} $. Thus, clearly  the relations  corresponding to the graph $ H_2 $ are a subset of the relations corresponding to the graph $ H_1 $ as Figure \ref{10k}. Hence, the graph $Z(\alpha,\beta)$ contains no subgraph isomorphic to the graph $ H_2 $ of Figure \ref{10k}.\\
$ \mathbf{C_4:} $ Suppose that $ C $ is a cycle of length $ 4 $ in $ Z(\alpha,\beta) $ such that  its  vertex set  contains at least two vertices of degree $ 4 $ in $ Z(\alpha,\beta) $ (see graphs $ (a),(b) $ of Figure \ref{10k}).  Suppose further that $ T\in \mathcal{T}(C) $ and $ T=[h_1,h'_1,h_2,h'_2,h_3,h'_3,h_4,h'_4] $. 
By Remark \ref{3p}, $ h_1\neq h'_1\neq h_2\neq h'_2\neq h_3\neq h'_3\neq h_4\neq h'_4\neq h_1 $,  if $ h_{1}^{-1}h'_{1}= x$   $(h_{1}^{-1}h'_{1}=x^{-1})$, then $ h_2\notin\{ 1,x\}$   $(h_2\notin\{ 1,x^{-1}\}) $ and $ h'_4\notin\{ 1,x^{-1}\} $  $(h_4\notin\{ 1,x\}) $, if $ h_{2}^{-1}h'_{2}= x$  $(h_{2}^{-1}h'_{2}=x^{-1})$, then $ h_3\notin\{ 1,x\}$  $(h_3\notin\{ 1,x^{-1}\}) $ and $ h'_1\notin\{ 1,x^{-1}\}$  $(h'_1\notin\{ 1,x\}) $, if $ h_{3}^{-1}h'_{3}= x$  $(h_{3}^{-1}h'_{3}=x^{-1})$, then $ h_4\notin\{ 1,x\}$  $(h_4\notin\{ 1,x^{-1}\}) $ and $ h'_2\notin\{ 1,x^{-1}\}$  $(h'_2\notin\{ 1,x\}) $ and if $h_{4}^{-1}h'_{4}= x$  $(h_{4}^{-1}h'_{4}=x^{-1})$, then $ h_1\notin\{ 1,x\}$  $(h_1\notin\{ 1,x^{-1}\}) $ and $ h'_3\notin\{ 1,x^{-1}\}$  $(h'_3\notin\{ 1,x\}) $. Also as for Lemma 2.4 of \cite{Dr. abdollahi T}, we may assume that $ h_1\in \{1,y\} $. Hence,  by using GAP \cite{a9}, it can be seen that there exist $351$ non-equivalent cases for $ T $. We checked the relations of such  non-equivalent cases. In  Table \ref{tt1}, it can be seen that $ 120 $ cases among $351$ cases  lead to contradictions. Hence, there are just $ 231 $ cases which may lead
to the existence of $ C$ in $ Z(\alpha,\beta) $ which listed in Table \ref{tt011}.\\
\begin{longtable}{|l|l||l|l||l|l|}
\caption{The relations of  a cycle of length $ 4 $ which its vertex set contains  at least $ 2 $ vertices of degree  $ 4 $  in $ Z(\alpha,\beta) $ that lead to  contradictions. }\label{tt1}\\
\hline
\endfirsthead
\multicolumn{6}{c}%
{{\bfseries \tablename\ \thetable{} -- continued from previous page}} \\\hline
\hline 
\multicolumn{1}{|l|}{\textbf{$ n $}} & \multicolumn{1}{|l|}{\textbf{$ r(C_4) $}} & \multicolumn{1}{l|}{\textbf{$E$}}& \multicolumn{1}{|l|}{\textbf{$ n $}} & \multicolumn{1}{|l|}{\textbf{$ r(C_4) $}} & \multicolumn{1}{l|}{\textbf{$E$}}\\ \hline
\endhead

\hline \multicolumn{6}{|r|}{{Continued on next page}} \\ \hline
\endfoot

\hline 
\endlastfoot
\hline
\textbf{$n $} & \textbf{$ r(C_4) $}& \textbf{$E$} & \textbf{$ n $} & \textbf{$ r(C_4)$}& \textbf{$E$}  \\
\hline
 $1$&$x^6=1$&T&
$2$&$x^5y=1$&Abelian\\\hline
$3$&$x^4y^{-1}x=1$&Abelian&
$4$&$x^3yx^2=1$&Abelian\\\hline
$5$&$x^2(xy)^2=1$&BS$(1,-1)$&
$6$&$x^3y=1$&Abelian\\\hline
$7$&$x^7=1$&T&
$8$&$x^6y=1$&Abelian\\\hline
$9$&$x^5y^{-1}x=1$&Abelian&
$10$&$x^4yx^2=1$&Abelian\\\hline
$11$&$x^3y^{-1}x^{-1}y=1$&BS$(1,3)$&
$12$&$x^3y^{-1}x^2=1$&Abelian\\\hline
$13$&$x^3y^{-1}xy=1$&BS$(1,-3)$&
$14$&$x^3y^{-1}x^3=1$&Abelian\\\hline
$15$&$(x^2y)^2=1$&Abelian&
$16$&$x^2yxy^{-1}x=1$&BS$(3,-1)$\\\hline
$17$&$x^2yx^{-1}y^{-1}x=1$&BS$(3,1)$&
$18$&$x^2yx^3=1$&Abelian\\\hline
$19$&$x^2yx^4=1$&Abelian&
$20$&$x^2yx^3y=1$&Abelian\\\hline
$21$&$x(xy)^2y=1$&BS$(2,-1)$&
$22$&$x(xy)^3=1$&Abelian\\\hline
$23$&$xy^{-1}x^{-2}y=1$&BS$(2,1)$&
$24$&$xy^{-1}x^{-3}y=1$&BS$(3,1)$\\\hline
$25$&$xy^{-1}x^{-1}y^2=1$&BS$(2,1)$&
$26$&$xy^{-1}(x^{-1}y)^2=1$&BS$(2,1)$\\\hline
$27$&$xy^{-1}x^{-1}yx^2=1$&BS$(1,3)$&
$28$&$xy^{-1}x^{-1}yxy=1$&BS$(2,1)$\\\hline
$29$&$xy^{-1}x^2y=1$&BS$(2,-1)$&
$30$&$xy^{-1}x^4=1$&Abelian\\\hline
$31$&$xy^{-1}x^3y=1$&BS$(3,-1)$&
$32$&$(xy^{-1}x)^2=1$&Abelian\\\hline
$33$&$xy^{-1}xyx^2=1$&BS$(1,-3)$&
$34$&$xy^{-2}x^{-1}y=1$&BS$(1,2)$\\\hline
$35$&$xy^{-1}(y^{-1}x)^2=1$&BS$(2,-1)$&
$36$&$xy^{-1}x^5=1$&Abelian\\\hline
$37$&$xy^{-1}x^4y=1$&BS$(4,-1)$&
$38$&$xy^{-1}x^3y^{-1}x=1$&Abelian \\\hline
$39$&$(xy^{-1})^2x^{-1}y=1$&BS$(1,2)$&
$40$&$(xy^{-1})^2x^2=1$&BS$(1,-1)$\\\hline
$41$&$(xy^{-1})^2y^{-1}x=1$&BS$(1,-2)$&
$42$&$(xy^{-1})^3x=1$&Abelian\\\hline
$43$&$xy^{-1}x^{-3}y=1$&BS$(3,1)$&
$44$&$xy^{-1}x^{-4}y=1$&BS$(4,1)$\\\hline
$45$&$x(y^{-1}x^{-1})^2y=1$&BS$(1,2)$&
$46$&$x^{-6}=1$&T\\\hline
$47$&$x^{-5}y=1$&Abelian&
$48$&$x^{-4}y^{-1}x^{-1}=1$&Abelian\\\hline
$49$&$x^{-3}yx^{-2}=1$&Abelian&
$50$&$x^{-2}(x^{-1}y)^2=1$&BS$(1,-1)$\\\hline
$51$&$x^{-7}=1$&T&
$52$&$x^{-6}y=1$&Abelian\\\hline
$53$&$x^{-5}y^{-1}x^{-1}=1$&Abelian&
$54$&$x^{-4}yx^{-2}=1$&Abelian\\\hline
$55$&$x^{-3}y^{-1}x^{-2}=1$&Abelian&
$56$&$x^{-3}y^{-1}x^{-1}y=1$&BS$(1,-3)$\\\hline
$57$&$x^{-3}y^{-1}xy=1$&BS$(1,3)$&
$58$&$x^{-3}y^{-1}x^{-3}=1$&Abelian\\\hline
$59$&$x^{-2}yx^{-1}y^{-1}x^{-1}=1$&BS$(-3,1)$&
$60$&$(x^{-2}y)^2=1$&Abelian\\\hline
$61$&$x^{-2}yx^{-4}=1$&Abelian&
$62$&$x^{-2}yx^{-3}y=1$&Abelian\\\hline
$63$&$x^{-1}(x^{-1}y)^2y=1$&BS$(2,-1)$&
$64$&$x^{-1}(x^{-1}y)^3=1$&Abelian\\\hline
$65$&$x^{-2}yxy^{-1}x^{-1}=1$&BS$(3,1)$&
$66$&$x^{-1}y^{-1}x^{-2}y=1$&BS$(-2,1)$\\\hline
$67$&$x^{-1}y^{-1}x^{-4}=1$&Abelian&
$68$&$x^{-1}y^{-1}x^{-3}y=1$&BS$(-3,1)$\\\hline
$69$&$(x^{-1}y^{-1}x^{-1})^2=1$&Abelian&
$70$&$x^{-1}y^{-1}x^{-1}yx^{-2}=1$&BS$(-1,3)$\\\hline
$71$&$x^{-1}y^{-1}x^3y=1$&BS$(3,1)$&
$72$&$x^{-1}y^{-1}xy^2=1$&BS$(1,2)$\\\hline
$73$&$x^{-1}y^{-1}xyx^{-2}=1$&BS$(1,3)$&
$74$&$x^{-1}y^{-1}xyx^{-1}y=1$&BS$(1,2)$\\\hline
$75$&$x^{-1}y^{-1}(xy)^2=1$&BS$(2,1)$&
$76$&$x^{-1}y^{-1}(y^{-1}x^{-1})^2=1$&BS$(2,-1)$\\\hline
$77$&$x^{-1}y^{-1}x^4y=1$&BS$(4,1)$&
$78$&$x^{-1}(y^{-1}x)^2y=1$&BS$(2,1)$\\\hline
$79$&$x^{-1}y^{-1}x^{-3}y=1$&BS$(-3,1)$&
$80$&$x^{-1}y^{-1}x^{-5}=1$&Abelian\\\hline
$81$&$x^{-1}y^{-1}x^{-4}y=1$&BS$(-4,1)$&
$82$&$x^{-1}y^{-1}x^{-3}y^{-1}x^{-1}=1$&Abelian\\\hline
$83$&$(x^{-1}y^{-1})^2x^{-2}$&BS$(1,-1)$&
$84$&$(x^{-1}y^{-1})^2xy=1$&BS$(1,2)$\\\hline
$85$&$(x^{-1}y^{-1})^2y^{-1}x^{-1}=1$&BS$(2,-1)$&
$86$&$(x^{-1}y^{-1})^3x^{-1}=1$&Abelian\\\hline
$87$&$y^4=1$&T&
$88$&$y^3x^{-1}y=1$&Abelian\\\hline
$89$&$y^3xy=1$&Abelian&
$90$&$y(yx^{-1})^2x^{-1}=1$&BS$(1,-2)$\\\hline
$91$&$y(yx^{-1})^2y=1$&BS$(1,-1)$&
$92$&$y^2x^{-1}yxy=1$&BS$(1,-3)$\\\hline
$93$&$y^2xyx^{-1}y=1$&BS$(3,-1)$&
$94$&$y(yx)^2x=1$&BS$(2,-1)$\\\hline
$95$&$y(yx)^2y=1$&BS$(1,-1)$&
$96$&$(yx^{-2})^2=1$&Abelian\\\hline
$97$&$yx^{-1}(x^{-1}y)^2=1$&BS$(2,-1)$&
$98$&$yx^{-6}=1$&Abelian\\\hline
$99$&$yx^{-4}y^{-1}x=1$&BS$(1,4)$&
$100$&$yx^{-4}y^{-1}x^{-1}=1$&BS$(1,-4)$\\\hline
$101$&$yx^{-3}yx^{-2}=1$&Abelian&
$102$&$(yx^{-1}y)^2=1$&Abelian\\\hline
$103$&$(yx^{-1})^3x^{-1}=1$&Abelian&
$104$&$(yx^{-1})^3y=1$&Abelian\\\hline
$105$&$(yx^2)^2=1$&Abelian&
$106$&$yx(xy)^2=1$&BS$(2,-1)$\\\hline
$107$&$yx^6=1$&Abelian&
$108$&$yx^4y^{-1}x=1$&BS$(1,-4)$\\\hline
$109$&$yx^4y^{-1}x^{-1}=1$&BS$(1,4)$&
$110$&$(yxy)^2=1$&Abelian\\\hline
$111$&$(yx)^3x=1$&Abelian&
$112$&$(yx)^3y=1$&Abelian \\\hline
$113$&$yx^3yx^2=1$&Abelian&$ 114 $&$ y^{-1}x^7 =1 $&Abelian\\
\hline
$115$&$(y^{-1}x^3)^2 =1$&Abelian&$ 116 $&$ (y^{-1}x)^4 =1$&Abelian\\\hline
$117$&$(y^{-1}xy^{-1}x^{-1})^2 =1$&BS$(1,-1)$&$ 118 $&$ y^{-1}x^{-7} =1$&Abelian\\\hline
$119$&$(y^{-1}x^{-1})^4 =1$&Abelian&120&$ (y^{-1}x^{-3})^2 =1$ & Abelian\\\hline
 \end{longtable}
\begin{rem}
In view of  Remark {\rm\ref{3p}}, it can be seen that  there exist  $ 31 $ cases  for the  existence of  a cycle of length $ 4 $ on vertices of degree  $4$ of type $ (ii) $ in $ Z(\alpha,\beta) $ which are marked by  *s in Table {\rm\ref{tt011}}. 
\end{rem}  

 \begin{longtable}{|l|l|l|l|l|l|}
\caption{The relations for the existence of a cycle of length $ 4 $ which its vertex set contains  at least $ 2 $ vertices of degree  $ 4 $  in $ Z(\alpha,\beta) $. }\label{tt011}\\
\hline
\endfirsthead
\multicolumn{6}{c}%
{{\bfseries \tablename\ \thetable{} -- continued from previous page}} \\\hline
\hline  
 \multicolumn{1}{|l|}{\textbf{$ n $}} & \multicolumn{1}{l|}{\textbf{$r(C_4)$}}&  \multicolumn{1}{|l|}{\textbf{$ n $}} & \multicolumn{1}{l|}{\textbf{$r(C_4)$}}&\multicolumn{1}{|l|}{\textbf{$ n $}} & \multicolumn{1}{l|}{\textbf{$r(C_4)$}}\\ \hline
\endhead

\hline \multicolumn{6}{|r|}{{Continued on next page}} \\ \hline
\endfoot

\hline 
\endlastfoot
\hline
\textbf{$ n $}& \textbf{$r(C_4)$} & \textbf{$ n $}& \textbf{$r(C_4)$} &\textbf{$ n $}& \textbf{$r(C_4)$} \\
\hline
 $1$&$x^3y^2=1$&
$2$&$x^3yx^{-1}y=1$&
$3$&$x^4y^2=1$\\
$4$&$x^4yx^{-1}y=1$&
$5$&$x^3(xy)^2=1$&
$6$&$x^3y^{-2}x=1$\\
$7$&$x^3y^{-1}x^2y=1$&
$8^{*}$&$x^2(xy^{-1})^2x=1$&
$9$&$x^3y^{-1}x^{-2}y=1$\\
$10^{*}$&$x^3y^{-1}x^{-1}y^{-1}x=1$&
$11^{*}$&$x^2yx^{-2}y=1$&
$12$&$x^2y^3=1$\\
$13$&$x^2y^2x^{-1}y=1$&
$14$&$x^2y^2x^2=1$&
$15$&$x^2y^2xy=1$\\
$16$&$x^2yx^{-2}y=1$&
$17$&$x^2yx^{-3}y=1$&
$18^{*}$&$x^2yx^{-2}y^{-1}x=1$\\
$19$&$x^2yx^{-1}y^2=1$&
$20^{*}$&$x^2(yx^{-1})^2y=1$&
$21^{*}$&$x^2yx^{-1}yx^2=1$\\
$22^{*}$&$x^2yx^{-1}yxy=1$&
$23^{*}$&$x^2yx^2y^{-1}x=1$&
$24^{*}$&$x(xy)^2x^{-1}y=1$\\
$25^{*}$&$x(xy)^2x^2=1$&
$26$&$xy^{-1}x^{-2}y^{-1}x=1$&
$27$&$xy^{-1}xy^2=1$\\
$28$&$xy^{-1}xyx^{-1}y=1$&
$29$&$xy^{-1}(xy)^2=1$&
$30$&$xy^{-2}x^2=1$\\
$31$&$xy^{-2}xy=1$&
$32$&$xy^{-3}x=1$&
$33$&$xy^{-2}xy=1$\\
$34$&$xy^{-2}x^3=1$&
$35$&$xy^{-2}x^2y=1$&
$36$&$xy^{-2}x^{-2}y=1$\\
$37$&$xy^{-2}x^{-1}y^{-1}x=1$&
$38$&$xy^{-1}xy^2=1$&
$39$&$xy^{-1}xyx^{-1}y=1$\\
$40$&$xy^{-1}(xy)^2=1$&
$41$&$xy^{-1}x^2y^2=1$&
$42$&$xy^{-1}x^2yx^{-1}y=1$\\
$43$&$xy^{-1}x^2yx^2=1$&
$44$&$xy^{-1}x(xy)^2=1$&
$45$&$(xy^{-1})^2xy=1$\\
$46$&$(xy^{-1})^2xy=1$&
$47$&$(xy^{-1})^2x^3=1$&
$48$&$(xy^{-1})^2x^2y=1$\\
$49$&$(xy^{-1})^2x^{-2}y=1$&
$50^{*}$&$(xy^{-1})^2x^{-1}y^{-1}x=1$&
$51^{*}$&$xy^{-1}x^{-2}y^{-1}x=1$\\
$52$&$xy^{-1}x^{-3}y^{-1}x=1$&
$53$&$xy^{-1}x^{-2}y^2=1$&
$54$&$xy^{-1}x^{-1}(x^{-1}y)^2=1$\\
$55$&$xy^{-1}x^{-2}yx^2=1$&
$56$&$xy^{-1}x^{-2}yxy=1$&
$57$&$xy^{-1}x^{-1}y^{-1}x^2=1$\\
$58$&$xy^{-1}x^{-1}y^{-1}xy=1$&
$59$&$xy^{-1}x^{-1}y^{-2}x=1$&
$60$&$xy^{-1}x^{-1}y^{-1}xy=1$\\
$61$&$xy^{-1}x^{-1}y^{-1}x^3=1$&
$62$&$xy^{-1}x^{-1}y^{-1}x^2y=1$&
$63^{*}$&$xy^{-1}x^{-1}(y^{-1}x)^2=1$\\
$64$&$x(y^{-1}x^{-1})^2x^{-1}y=1$&
$65^{*}$&$x(y^{-1}x^{-1})^2y^{-1}x=1$&
$66$&$x^{-3}y^2=1$\\
$67$&$x^{-3}yxy=1$&
$68$&$x^{-4}y^2=1$&
$69$&$x^{-3}(x^{-1}y)^2=1$\\
$70$&$x^{-4}yxy=1$&
$71$&$x^{-3}y^{-2}x^{-1}=1$&
$72$&$x^{-3}y^{-1}x^2y=1$\\
$73^{*}$&$x^{-3}y^{-1}xy^{-1}x^{-1}=1$&
$74$&$x^{-3}y^{-1}x^{-2}y=1$&
$75^{*}$&$x^{-2}(x^{-1}y^{-1})^2x^{-1}=1$\\
$76$&$x^{-2}y^3=1$&
$77$&$x^{-2}y^2x^{-2}=1$&
$78$&$x^{-2}y^2x^{-1}y=1$\\
$79$&$x^{-2}y^2xy=1$&
$80^{*}$&$x^{-2}yx^{-2}y^{-1}x^{-1}=1$&
$81^{*}$&$x^{-1}(x^{-1}y)^2x^{-2}=1$\\
$82^{*}$&$x^{-1}(x^{-1}y)^2xy=1$&
$83^{*}$&$x^{-2}yx^2y=1$&
$84^{*}$&$x^{-2}yx^2y=1$\\
$85$&$x^{-2}yx^3y=1$&
$86^{*}$&$x^{-2}yx^2y^{-1}x^{-1}=1$&
$87$&$x^{-2}yxy^2=1$\\
$88^{*}$&$x^{-2}yxyx^{-2}=1$&
$89^{*}$&$x^{-2}yxyx^{-1}y=1$&
$90^{*}$&$x^{-2}(yx)^2y=1$\\
$91$&$x^{-1}y^{-1}x^{-1}y^2=1$&
$92$&$x^{-1}y^{-1}(x^{-1}y)^2=1$&
$93$&$x^{-1}y^{-1}x^{-1}yxy=1$\\
$94$&$x^{-1}y^{-1}x^2y^{-1}x^{-1}=1$&
$95$&$x^{-1}y^{-2}x^{-2}=1$&
$96$&$x^{-1}y^{-2}x^{-1}y=1$\\
$97$&$x^{-1}y^{-3}x^{-1}=1$&
$98$&$x^{-1}y^{-2}x^2y=1$&
$99$&$x^{-1}y^{-2}xy^{-1}x^{-1}=1$\\
$100$&$x^{-1}y^{-2}x^{-1}y=1$&
$101$&$x^{-1}y^{-2}x^{-3}=1$&
$102$&$x^{-1}y^{-2}x^{-2}y=1$\\
$103$&$x^{-1}y^{-1}x^3y^{-1}x^{-1}=1$&
$104$&$x^{-1}y^{-1}x^2y^2=1$&
$105$&$x^{-1}y^{-1}x^2yx^{-2}=1$\\
$106$&$x^{-1}y^{-1}x^2yx^{-1}y=1$&
$107$&$x^{-1}y^{-1}x(xy)^2=1$&
$108$&$x^{-1}y^{-1}xy^{-1}x^{-2}=1$\\
$109$&$x^{-1}y^{-1}xy^{-1}x^{-1}y=1$&
$110$&$x^{-1}y^{-1}xy^{-2}x^{-1}=1$&
$111$&$x^{-1}(y^{-1}x)^2xy=1$\\
$112^{*}$&$x^{-1}(y^{-1}x)^2y^{-1}x^{-1}=1$&
$113$&$x^{-1}y^{-1}xy^{-1}x^{-1}y=1$&
$114$&$x^{-1}y^{-1}xy^{-1}x^{-3}=1$\\
$115$&$x^{-1}y^{-1}xy^{-1}x^{-2}y=1$&
$116^{*}$&$x^{-1}y^{-1}x(y^{-1}x^{-1})^2=1$&
$117$&$x^{-1}y^{-1}x^{-1}y^2=1$\\
$118$&$x^{-1}y^{-1}(x^{-1}y)^2=1$&
$119$&$x^{-1}y^{-1}x^{-1}yxy=1$&
$120$&$x^{-1}y^{-1}x^{-2}y^2=1$\\
$121$&$x^{-1}y^{-1}x^{-2}yx^{-2}=1$&
$122$&$x^{-1}y^{-1}x^{-1}(x^{-1}y)^2=1$&
$123$&$x^{-1}y^{-1}x^{-2}yxy=1$\\
$124$&$(x^{-1}y^{-1})^2x^{-1}y=1$&
$125$&$(x^{-1}y^{-1})^2x^2y=1$&
$126^{*}$&$(x^{-1}y^{-1})^2xy^{-1}x^{-1}=1$\\
$127$&$(x^{-1}y^{-1})^2x^{-1}y=1$&
$128$&$(x^{-1}y^{-1})^2x^{-3}=1$&
$129$&$(x^{-1}y^{-1})^2x^{-2}y=1$\\
$130$&$y^3x^{-2}=1$&
$131$&$y^3x^2=1$&
$132$&$y^2x^{-4}=1$\\
$133$&$y^2x^{-3}y=1$&
$134$&$y^2x^{-2}y^{-1}x=1$&
$135$&$y^2x^{-2}y^{-1}x^{-1}=1$\\
$136$&$y^2x^{-1}yx^2=1$&
$137$&$y^2x^4=1$&
$138$&$y^2x^3y=1$\\
$139$&$y^2x^2y^{-1}x=1$&
$140$&$y^2x^2y^{-1}x^{-1}=1$&
$141$&$y^2xyx^{-2}=1$\\
$142$&$yx^{-2}yx^2=1$&
$143$&$yx^{-2}yxy=1$&
$144$&$yx^{-5}y=1$\\
$145$&$yx^{-2}(x^{-1}y)^2=1$&
$146$&$yx^{-3}yx^2=1$&
$147$&$yx^{-3}yxy=1$\\
$148$&$yx^{-2}y^{-1}x^{-2}=1$&
$149$&$yx^{-2}y^{-1}x^{-1}y=1$&
$150$&$yx^{-2}y^{-1}x^2=1$\\
$151$&$yx^{-2}y^{-1}xy=1$&
$152$&$yx^{-2}y^{-2}x=1$&
$153$&$yx^{-2}y^{-2}x^{-1}=1$\\
$154$&$yx^{-2}y^{-1}x^3=1$&
$155$&$yx^{-2}y^{-1}x^2y=1$&
$156$&$yx^{-2}(y^{-1}x)^2=1$\\
$157$&$yx^{-2}y^{-1}xy^{-1}x^{-1}=1$&
$158$&$yx^{-2}y^{-1}x^{-3}=1$&
$159$&$yx^{-2}y^{-1}x^{-2}y=1$\\
$160$&$yx^{-1}(x^{-1}y^{-1})^2x=1$&
$161$&$yx^{-1}(x^{-1}y^{-1})^2x^{-1}=1$&
$162$&$yx^{-1}y^2x^2=1$\\
$163$&$yx^{-1}y^2xy=1$&
$164$&$(yx^{-1})^2x^{-3}=1$&
$165$&$(yx^{-1})^2x^{-2}y=1$\\
$166$&$(yx^{-1})^2x^{-1}y^{-1}x=1$&
$167$&$(yx^{-1})^2x^{-1}y^{-1}x^{-1}=1$&
$168$&$(yx^{-1})^2yx^2=1$\\
$169$&$(yx^{-1})^2yxy=1$&
$170$&$yx^{-1}yx^4=1$&
$171$&$yx^{-1}yx^3y=1$\\
$172$&$yx^{-1}yx^2y^{-1}x=1$&
$173$&$yx^{-1}yx^2y^{-1}x^{-1}=1$&
$174$&$yx^{-1}yxyx^{-2}=1$\\
$175$&$yx^{-1}yxyx^{-1}y=1$&
$176$&$yx^{-1}(yx)^2x=1$&
$177$&$yx^{-1}(yx)^2y=1$\\
$178$&$yx^5y=1$&
$179$&$yx^3yx^{-2}=1$&
$180$&$yx^3yx^{-1}y=1$\\
$181$&$yx^2(xy)^2=1$&
$182$&$yx^2y^{-1}x^3=1$&
$183$&$yx^2y^{-1}x^2y=1$\\
$184$&$yx(xy^{-1})^2x=1$&
$185$&$yx(xy^{-1})^2x^{-1}=1$&
$186$&$yx^2y^{-1}x^{-3}=1$\\
$187$&$yx^2y^{-1}x^{-2}y=1$&
$188$&$yx^2y^{-1}x^{-1}y^{-1}x=1$&
$189$&$yx^2(y^{-1}x^{-1})^2=1$\\
$190$&$yxyx^{-4}=1$&
$191$&$yxyx^{-3}y=1$&
$192$&$yxyx^{-2}y^{-1}x=1$\\
$193$&$yxyx^{-2}y^{-1}x^{-1}=1$&
$194$&$yx(yx^{-1})^2x^{-1}=1$&
$195$&$yx(yx^{-1})^2y=1$\\
$196$&$yxyx^{-1}yx^2=1$&
$197$&$yxyx^{-1}yxy=1$&
$198$&$(yx)^2x^3=1$\\
$199$&$(yx)^2x^2y=1$&
$200$&$(yx)^2xy^{-1}x=1$&
$201$&$(yx)^2xy^{-1}x^{-1}=1$\\
$202$&$(yx)^2yx^{-2}=1$&
$203$&$(yx)^2yx^{-1}y=1$&204& $ y^{-1}x^5y^{-1}x =1$\\
205&$ y^{-1}x^5y^{-1}x^{-1} =1$&206& $ y^{-1}x^4yx^{-2} =1$&207&$ y^{-1}x(xy)^2x^2 =1$\\ 
208& $ y^{-1}x^4yx^2 =1$&209&  $ y^{-1}x^2(xy^{-1})^2x =1$&210& $ y^{-1}x^2yx^2y^{-1}x^{-1} =1$\\
211&$ y^{-1}x^2(xy^{-1})^2x^{-1} =1$&212&$ y^{-1}x^3y^{-1}x^{-3} =1$&213&$ y^{-1}x(xy)^2x^{-2} =1$\\
214&$ y^{-1}x^3y^{-1}x^{-1}y^{-1}x =1$&215& $ y^{-1}x^3(y^{-1}x^{-1})^2 =1$&216&$ y^{-1}x^2yx^2y^{-1}x =1$\\ 
217&$ y^{-1}x^2yx^{-4} =1$&218& $ y^{-1}x^2yx^{-2}y^{-1}x =1$&219&$ y^{-1}x^2yx^4 =1$\\
220& $ y^{-1}x^2yx^{-2}y^{-1}x^{-1} =1$&221&$ y^{-1}x^2(yx^{-1})^2x^{-1} =1$&222&$ y^{-1}x^2yx^{-1}yx^2 =1$\\
$ 223^* $& $ (y^{-1}x)^3y^{-1}x^{-1} =1$&224&$ (y^{-1}x)^2y^{-1}x^{-3} =1$&$225^{*}$&$ (y^{-1}x)^2(y^{-1}x^{-1})^2 =1$\\
226&$ y^{-1}xy^{-1}x^{-5} =1$&227&$ y^{-1}xy^{-1}x^{-3}y^{-1}x^{-1} =1$&228& $ y^{-1}x(y^{-1}x^{-1})^2x^{-2} =1$\\
$ 229^* $&$ y^{-1}x(y^{-1}x^{-1})^3 =1$&230&$ y^{-1}x^{-5}y^{-1}x^{-1} =1$&231&$ y^{-1}x^{-2}(x^{-1}y^{-1})^2x^{-1} =1$ \\
\end{longtable}

\textbf{Graphs} $ \mathbf{H_3, H_4, H_5}$   \textbf{and}  $\mathbf{H_6}$ \textbf{of Figure \ref{10k}:} Suppose that $ C $ and $ C' $ are two cycles of  length $  4$  in $Z(\alpha,\beta)$ with vertex sets  $\mathcal{V}_{C} =\{ g_1,g_2,g_3,g_4 \}\subset supp(\beta)$ and $\mathcal{V}_{C'} =\{ g_1,g_2,g_3,g_5 \}\subset supp(\beta)$, respectively, (i.e.,  $ C $ and $ C' $ have exactly two consecutive edges in common )  such that  each of the sets $ \mathcal{V}_{C'} $ and $ \mathcal{V}_{C} $ and $ \{g_1,g_4,g_3,g_5\} $ contains at least two vertices of degree $ 4 $ in $Z(\alpha,\beta)$.  Further suppose that $ T\in \mathcal{T}(C) $, $ T'\in \mathcal{T}(C') $, $ T=[h_1,h'_1,h_2,h'_2,h_3,h'_3,,h_4,h'_4] $ and   $T'=[h_1,h'_1,h_2,h'_2,t_3,t'_3,t_4,t'_4] $, where the first four components are related to the common edges between these cycles. Taking into account Table \ref{tt011}  and by a same discussion as about the graph $  H_1$ as Figure \ref{10k} and using GAP \cite{a9}, it can be seen that  there are  $442$ different cases for  the cycles of length $ 4 $ with above structure. Using GAP \cite{a9},   a free group with generators $ x,y $ and the relations of these two cycles which are between $410$ cases of these $442$ cases  is finite or abelian, that is a
contradiction. Hence, there are $ 32 $ cases which may lead
to the existence of two cycles with this structure in $ Z(\alpha,\beta) $. We checked these such cases. In Table \ref{tk23}, it can be seen that $ 28 $ cases among these $ 35 $ cases lead to contradictions and so there are just $ 4 $ cases which may lead
to the existence of these cycles  in $Z(\alpha,\beta) $ as follows:
 \begin{itemize}
 \item[1.]$ T=[ 1, y, x, y, 1, y, x^{-1}, y ] \rightarrow r(C):yx^{-1}y^2xy=1 $ and $ T'=[ 1, y, x, y, x^{-1}, x, y, x^{-1} ]\rightarrow r(C'):yx^{-1}yx^2y^{-1}x^{-1}=1 $.
 \item[2.]$ T=[ 1, y, x, y, 1, y, x^{-1}, y ] \rightarrow r(C):yx^{-1}y^2xy=1 $ and $ T'=[ 1, y, x, y, x, y,  x^{-1},x ]\rightarrow r(C'):(yx^{-1})^2yx^2=1 $.
  \item[3.]$ T=[ y, x, x^{-1},x, x^{-1}, x, y, x ] \rightarrow r(C):y^{-1}x^{5}y^{-1}x=1 $ and $ T'=[ y, x, x^{-1},x, y, x^{-1},  y, x^{-1} ]\rightarrow r(C'):y^{-1}x^3(y^{-1}x^{-1})^2=1 $.
  \item[4.]$ T=[ y, x, x^{-1},y, x, y, x^{-1}, x ] \rightarrow r(C):y^{-1}x^{2}yx^{-1}xyx^{2}=1 $ and $ T'=[ y, x, x^{-1},y, x^{-1},x,  y, x^{-1} ]$  $\rightarrow r(C'):y^{-1}x^2yx^{2}y^{-1}x^{-1}=1 $.
 \end{itemize}
As for Remark  \ref{3p}, none of the above cases can not be related to the graph $H_3$ of Figure \ref{10k} and therefore $ Z(\alpha,\beta) $ does not contain any subgraph isomorphic to $H_3$.\\ 
With consideration of  above relations and  Table \ref{tt011}, it can be seen that  there are  $3$ different cases for the relations of three cycles of length $ 4 $ with the structure of the graphs $ H_4 $ and $ H_5 $ in $ Z(\alpha,\beta) $. Using GAP \cite{a9},    a free group with generators $ x,y $ and the relations of such cases  is finite. Another case is the following:\\
 $r_1:yx^{-1}y^2xy =1,\;\; r_2:(yx^{-1})^2yx^2 =1,\;\;r_3:yx^{-1}yx^2y^{-1}x^{-1}=1$.\\
$ r_2 $ and $ r_3 $ imply that $ y $ is a torsion element of $ G $, a contradiction. Thus, $ Z(\alpha,\beta) $ does not contain any subgraph isomorphic to $H_4$ and $ H_5 $ as Figure \ref{10k}.
Similarly, with consideration of  above relations and  Table \ref{tt011}, it can be seen that  there are  $25$ different cases for the relations of three cycles of length $ 4 $ with the structure of the graph  $ H_6 $ in $ Z(\alpha,\beta) $. Using GAP \cite{a9},    a free group with generators $ x,y $ and the relations of such cases  is finite and therefore $ Z(\alpha,\beta) $ does not contain any subgraph isomorphic to   $H_6$.\\ 
  \begin{longtable}{|l|l|l|l|}
\caption{The relations of two cycles of length $ 4 $ in $ Z(\alpha,\beta) $ which have exactly two consecutive edges in common that lead to contradictions.}\label{tk23}\\
\hline
\endfirsthead
\multicolumn{4}{c}%
{{\bfseries \tablename\ \thetable{} -- continued from previous page}} \\\hline
\hline 
 \multicolumn{1}{|l|}{\textbf{$ n $}} & \multicolumn{1}{l|}{\textbf{$r_1$}}&  \multicolumn{1}{|l|}{\textbf{$r_2 $}} & \multicolumn{1}{l|}{\textbf{$E$}}\\ \hline
\endhead

\hline \multicolumn{4}{|r|}{{Continued on next page}} \\ \hline
\endfoot

\hline 
\endlastfoot
\hline
\textbf{$ n $}& \textbf{$r_1$} &  \textbf{$r_2$} &\textbf{$ E $} \\
\hline
1 &$x^3y^2=1$&$x^3y^{-2}x=1$&T\\
2&$x^4y^2=1$&$x^3y^{-2}x=1$&T\\
3&$x^2y^2x^{-1}y=1$&$x(xy)^2x^2=1$&Abelian\\
4&$x^{-4}y^2=1$&$x^{-3}y^{-2}x^{-1}=1$&T\\
5&$x^{-2}y^2x^{-2}=1$&$x^{-2}yx^2y=1$&T\\
6&$y^2x^{-4}=1$&$y^2x^4=1$&T\\
7&$yx^{-2}yx^2=1$&$yx^{-2}y^{-1}x^{-2}=1$&T\\
8&$yx^{-2}yx^2=1$&$yx^{-2}y^{-1}x^{-2}y=1$&T\\
9&$(yx^{-1})^2x^{-3}=1$&$yx^{-1}yxyx^{-1}y=1$&T\\
10&$yxyx^{-1}yxy=1$&$(yx)^2x^3=1$&T\\
11&$x^2y^2x^2=1$&$x^2yx^{-2}y=1$&T\\
12&$xy^{-1}x^{-2}y^{-1}x=1$&$xy^{-1}x^{-2}yxy=1$&BS$ (2,-1) $\\
13&$xy^{-1}x^{-2}y^2=1$&$xy^{-1}x^{-1}y^{-1}x^2=1$&BS$ (2,-1) $\\
14&$xy^{-1}x^{-2}yxy=1$&$x(y^{-1}x^{-1})^2y^{-1}x=1$&T\\
15&$x^{-1}y^{-1}x^2y^2=1$&$x^{-1}y^{-1}xy^{-1}x^{-2}=1$&BS$ (1,2) $\\
16&$x^{-1}y^{-1}x^2yx^{-1}y=1$&$x^{-1}(y^{-1}x)^2y^{-1}x^{-1}=1$&T\\
17&$yx^{-2}yxy=1$&$yx^{-2}y^{-1}x^2=1$&Abelian\\
18&$(yx^{-1})^2yx^2=1$&$yx^{-1}yx^2y^{-1}x^{-1}=1$&T\\
19&$yxyx^{-2}y^{-1}x=1$&$(yx)^2yx^{-2}=1$&T\\
20&$ y^{-1}x^5y^{-1}x=1 $&$ y^{-1}x^3y^{-1}x^{-3}=1 $&T \\
21&$ y^{-1}x^4yx^{-2}=1 $&$ y^{-1}x^2(xy^{-1})^2x=1 $&T \\
22&$ y^{-1}x^4yx^{-2}=1 $&$ y^{-1}x^3y^{-1}x^{-1}y^{-1}x=1 $& T\\
23&$ y^{-1}x^4yx^2=1 $&$  y^{-1}x^2(xy^{-1})^2x^{-1}=1 $&T \\
24&$ y^{-1}x^4yx^2=1 $&$ y^{-1}x^3y^{-1}x^{-3}=1 $& T\\
25&$ y^{-1}x^4yx^2 =1 $&$  y^{-1}x^3(y^{-1}x^{-1})^2=1 $&T \\
26&$ y^{-1}x^2yx^{-4} =1 $&$  y^{-1}x^2yx^4=1$&T \\
27&$ y^{-1}x^2yx^{-2}y^{-1}x^{-1}=1 $&$ y^{-1}x(xy)^2x^2=1 $& Abelian\\
28&$ y^{-1}x^2(yx^{-1})^2x^{-1}=1 $&$ y^{-1}x^2yx^2y^{-1}x=1 $& Abelian \\
\hline
 \end{longtable}
$\mathbf{ H_7, H_8}$   \textbf{and}  $\mathbf{H_9}$ \textbf{of Figure \ref{10k}:} Taking into account Tables \ref{tt011} and \ref{c3e},  it can be seen that  there are  $217$ different cases for the relations of  a cycle of length $ 3 $ and a cycle of length $ 4 $ with the structure of each of the graphs $  H_7, H_8 $ and $ H_9 $. Using GAP \cite{a9},   a free group with generators $ x,y$ and the relations of these $ 2$ cycles which are between $194$ cases of these $217$ cases  is finite or abelian, that is a
contradiction.   Hence, there are $ 23 $ cases which may lead
to the existence of each of the graphs $  H_7, H_8 $ and $ H_9 $ in $ Z(\alpha,\beta) $. We checked these such cases. In Table \ref{h7}, it can be seen that all such cases  lead to contradictions and so $ Z(\alpha,\beta) $ does not contain any subgraph isomorphic to the graphs $  H_7, H_8 $ and $ H_9 $.
 \begin{longtable}{|l|l|l|l|}
\caption{The relations of a cycle of length $ 3$ and a cycle of length $ 4$  which have exactly an edge in common.}\label{h7}\\
\hline
\endfirsthead
\multicolumn{4}{c}%
{{\bfseries \tablename\ \thetable{} -- continued from previous page}} \\\hline
\hline 
 \multicolumn{1}{|l|}{\textbf{$ n $}} & \multicolumn{1}{l|}{\textbf{$r_1$}}&  \multicolumn{1}{|l|}{\textbf{$r_2 $}} & \multicolumn{1}{l|}{\textbf{$E$}}\\ \hline
\endhead

\hline \multicolumn{4}{|r|}{{Continued on next page}} \\ \hline
\endfoot

\hline 
\endlastfoot
\hline
\textbf{$ n $}& \textbf{$r_1$} &  \textbf{$r_2$} &\textbf{$ E $} \\
\hline
1&$ x^2yx^{-1}y =1 $&$ (xy^{-1})^2x^3 =1 $& T\\
2&$ xy^{-1}x^{-1}y^{-1}x =1 $&$ x^3(xy)^2=1 $&T\\
3&$ x^{-2}yxy=1 $&$ (x^{-1}y^{-1})^2x^{-3}=1 $&T\\
4&$ x^{-1}y^{-1}xy^{-1}x^{-1}=1 $&$ x^{-3}(x^{-1}y)^2=1 $&T\\
5&$y^{-1}x^2yx^2=1$&$(y^{-1}x)^3y^{-1}x^{-1}=1$&T\\
6&$y^{-1}x^2yx^2=1$&$(y^{-1}x)^2y^{-1}x^{-3}=1$&BS$ (5,1) $\\
7&$y^{-1}x^2yx^2=1$&$(y^{-1}x)^2(y^{-1}x^{-1})^2=1$&T\\
8&$y^{-1}x^2yx^2=1$&$y^{-1}xy^{-1}x^{-5}=1$&T\\
9&$y^{-1}x^2yx^2=1$&$y^{-1}xy^{-1}x^{-3}y^{-1}x^{-1}=1$&T\\
10&$y^{-1}x^2yx^2=1$&$y^{-1}x(y^{-1}x^{-1})^2x^{-2}=1$&T\\
11&$y^{-1}x^2yx^2=1$&$y^{-1}x(y^{-1}x^{-1})^3=1$&T\\
12&$(y^{-1}x)^2y^{-1}x^{-1}=1$&$y^{-1}x^4yx^2=1$&T\\
13&$(y^{-1}x)^2y^{-1}x^{-1}=1$&$y^{-1}x^2(xy^{-1})^2x=1$&T\\
14&$(y^{-1}x)^2y^{-1}x^{-1}=1$&$y^{-1}x^3y^{-1}x^{-1}y^{-1}x=1$&T\\
15&$(y^{-1}x)^2y^{-1}x^{-1}=1$&$y^{-1}x^2yx^4=1$&BS$ (5,-1) $\\
16&$y^{-1}xy^{-1}x^{-3}=1$&$y^{-1}x^5y^{-1}x=1$&T\\
17&$y^{-1}xy^{-1}x^{-3}=1$&$y^{-1}x^4yx^2=1$&T\\
18&$y^{-1}xy^{-1}x^{-3}=1$&$y^{-1}x^2yx^4=1$&T\\
19&$y^{-1}xy^{-1}x^{-3}=1$&$y^{-1}x(xy)^2x^2=1$&T\\
20&$y^{-1}x(y^{-1}x^{-1})^2=1$&$y^{-1}x^4yx^2=1$&T\\
21&$y^{-1}x(y^{-1}x^{-1})^2=1$&$y^{-1}x^2(xy^{-1})^2x=1$&T\\
22&$y^{-1}x(y^{-1}x^{-1})^2=1$&$y^{-1}x^3y^{-1}x^{-1}y^{-1}x=1$&T\\
23&$y^{-1}x(y^{-1}x^{-1})^2=1$&$y^{-1}x^2yx^4=1$&T\\
\hline
 \end{longtable}
  $ \mathbf{\Gamma_1}$ and $ \mathbf{\Gamma_2}$ \textbf{of Figure \ref{10b}:} Taking into account the relations from Table \ref{tt011} which are marked by *s, it can be seen that  there are  $31$ different cases for the relations of $2$ cycles of length $3$ which have exactly an edge in common on vertices of degree $ 3 $ in $  Z(\alpha,\beta)  $. Using GAP \cite{a9},    a free group with generators $ x,y $ and the relations of these $ 2$ cycles which are between $  24$ cases of these $31$  cases, is finite. Hence there are $ 7 $ cases which may lead to the existence of these $2$ cycles.   We checked $ 7 $ such  cases,  $ 4 $ cases which are between these $ 7 $ cases imply  that $ G $ has a non-trivial torsion element that is  a contradiction. Four such cases are as follows:
 \begin{itemize}
 \item[(1)]$r_1:x^2yx^{-2}y =1,\qquad r_2: x(y^{-1}x^{-1})^2y^{-1}x $.
 \item[(2)]$r_1:x^2(yx^{-1})^2y =1,\;\; r_2: xy^{-1}x^{-2}y^{-1}x=1$.
 \item[(3)]$r_1:x^{-2}yx^2y =1 \qquad r_2:x^{-1}(y^{-1}x)^2y^{-1}x^{-1}=1    $.
 \item[(4)]$r_1:x^{-2}yx^2y =1 \qquad r_2:x^{-1}(y^{-1}x)^2y^{-1}x^{-1}=1    $.
\end{itemize}  
Hence, there are $ 3 $ cases which may lead to  the existence of $2$ cycles of length $3$ which have exactly an edge in common on vertices of degree $ 3 $ in $  Z(\alpha,\beta)  $. Three such cases are as follows: 
\begin{itemize}
 \item[(1)]$r_1:x^2yx^{-2}y=1,\qquad r_2:xy^{-1}x^{-2}y^{-1}x=1 $.
 \item[(2)]$r_1:x^2(yx^{-1})^2y =1,\qquad r_2:x(y^{-1}x^{-1})^2y^{-1}x =1$.
 \item[(3)]$r_1: x^{-2}(yx)^2y=1, \qquad r_2: x^{-1}(y^{-1}x)^2y^{-1}x^{-1}=1 $.
\end{itemize} 
 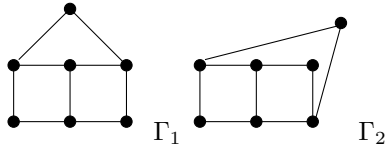
\begin{figure}
\begin{tikzpicture}[scale=.75]
\draw [fill] (0,0) circle
[radius=0.1] node  [left]  {};
\draw [fill] (1,0) circle
[radius=0.1] node  [below left]  {};
\draw [fill] (0,-1) circle
[radius=0.1] node  [left]  {};
\draw [fill](1,-1) circle
[radius=0.1] node  [below]  {};
\draw [fill](2,0) circle
[radius=0.1] node  [right]  {};
\draw [fill] (2,-1) circle
[radius=0.1] node  [right]  {};
\draw [fill] (1,1) circle
[radius=0.1] node  [left]  {};
\draw    (.95,.95) --(.09,.1);
\draw    (1.95,.05)--(1.05,.95)  ;
\draw  (.1,0)--(.9,0) ;
\draw  (1,-.1) --(1,-.9);
\draw  (.9,-1)-- (.1,-1)  ;
\draw  (0,-.9)--(0,-.1)  ;
\draw  (1.9,0)--(1.1,0)  ;
\draw  (2,-.9)--(2,-.1) ;
\draw  (1.1,-1)--(1.9,-1)  ;
\end{tikzpicture}
\subfloat{$\Gamma_1$}
\begin{tikzpicture}[scale=.75]
\draw [fill] (0,0) circle
[radius=0.1] node  [left]  {};
\draw [fill] (1,0) circle
[radius=0.1] node  [below left]  {};
\draw [fill] (0,-1) circle
[radius=0.1] node  [left]  {};
\draw [fill](1,-1) circle
[radius=0.1] node  [below]  {};
\draw [fill](2,0) circle
[radius=0.1] node  [below left]  {};
\draw [fill] (2,-1) circle
[radius=0.1] node  [right]  {};
\draw [fill](2.5,.75) circle
[radius=0.1] node  [above]  {};
\draw  (.1,0)--(.9,0) ;
\draw  (1,-.1) --(1,-.9);
\draw  (.9,-1)-- (.1,-1)  ;
\draw  (0,-.9)--(0,-.1)  ;
\draw  (1.9,0)--(1.1,0)  ;
\draw  (2,-.9)--(2,-.1) ;
\draw  (1.1,-1)--(1.9,-1)  ;
\draw   (2.45,.7)--(.09,.1) ;
\draw   (2.05,-.95) --(2.5,.7) ;
\end{tikzpicture}
\subfloat{$ \Gamma_2$}
\caption{ Two forbidden subgraphs of $ Z(\alpha ,\beta) $, where the degree of all vertices of any subgraph in $ Z(\alpha ,\beta) $ must be 4 of type $ (ii) $. }\label{10b}
\end{figure}
By considering  the relations from Table \ref{tt011} which are marked by *s and the relations of $ 3 $ above cases, it can be seen that  there are  $2$ different cases for the relations of $3$ cycles of length $3$ with the structure of the graph $ \Gamma_1 $ as Figure \ref{10b} on vertices of degree $ 3 $ in $  Z(\alpha,\beta)  $. Using GAP \cite{a9},    a free group with generators $ x,y $ and  the relations of each of these two cases is finite. Hence, $ Z(\alpha ,\beta) $ does not contain any subgraph isomorphic to the graph $ \Gamma_1 $ as Figure \ref{10b}. Also by considering  the relations of $ 3 $ above cases, it is easy to see that there are $ 2 $ different cases for two cycles of length $ 4 $ and a cycle of length $ 5 $ with the structure of the graph  $ \Gamma_2$ as Figure \ref{10b}. Using GAP \cite{a9},    a free group with generators $ x,y $ and  the relations of each of these two cases is finite. Hence, $ Z(\alpha ,\beta) $ does not contain any subgraph isomorphic to the graph $ \Gamma_2 $ as Figure \ref{10b}.\\
\textbf{Graphs $ \mathbf{\Gamma_{12},\; \Gamma_{13}} $ and $ \mathbf{\Gamma_{14}} $ of Figure \ref{36}:} Suppose that $ g $ is a vertex of degree $ 5 $ in $  Z(\alpha,\beta)  $. Hence, by Remark \ref{deg12}, $ \theta_3(g)=1 $ and $ \theta_4(g)=0 $. Thus, there exist exactly an element  $ s\in \Delta^3_{(\alpha,\beta)} $ such that $ g\in \mathcal{V}_{(\alpha,\beta)}(s) $. If  one of the  graphs $ \Gamma_{12} ,\; \Gamma_{13} $ and $ \Gamma_{14}$ as Figure \ref{36}  is a subgraph of $ Z(\alpha ,\beta) $, then clearly there are at least two cycles of length $ 3 $ in  $ Z(\alpha ,\beta) $ such as $ C $ and $ C' $ such that $ |\mathcal{V}_{C'}\cap \mathcal{V}_C|=2 $ (i.e., $ C $ and $ C' $ have exactly an edge in common), $ g\in \mathcal{V}_{C'}\cap \mathcal{V}_C\cap \mathcal{V}_{(\alpha,\beta)}(s) $,  $ |\mathcal{V}_C\cap \mathcal{V}_{(\alpha,\beta)}(s)|\leq 2 $ and  $ |\mathcal{V}_{C'}\cap \mathcal{V}_{(\alpha,\beta)}(s)|\leq 2 $. Now, suppose that  $ T_C=[h_1,h'_1,h_2,h'_2,h_3,h'_3] $ and $ T_{C'}=[h_1,h'_1,t_2,t'_2,t_3,t'_3] $ are $ 6- $tuples corresponding  to $ C $ and $ C' $, respectively, where the first two components are related to the common edge between  these cycles. With the above explanation and as for part (3) of Lemma \ref{3part}, it is easy to see that the conditions for $ T_C $ and $ T_{C'} $ are exactly the same as the conditions relating to the $ 6- $tuples of two cycles of length $ 3 $ with the structure of the graph $ H_1 $ as Figure \ref{10k} which  was explained in the early part of this section. Hence, the relations of all possible cases for $ T_C $ and $ T_{C'} $ lead to contradictions and therefore $ Z(\alpha ,\beta) $ does not contain any subgraph isomorphic to one of the graphs $ \Gamma_{12} ,\; \Gamma_{13} $ and $ \Gamma_{14}$ as Figure \ref{36}. 
 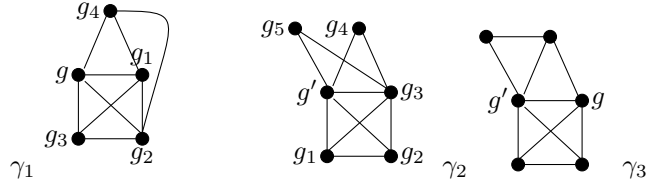
\begin{figure}
 \subfloat{$\gamma_1$}
 \begin{tikzpicture}[scale=.85]
\draw [fill] (0,0) circle
[radius=0.1] node  [left]  {$ g $};
\draw [fill] (1,0) circle
[radius=0.1] node  [above]  {$ g_1 $};
\draw [fill] (0,-1) circle
[radius=0.1] node  [left]  {$ g_3 $};
\draw [fill] (1,-1) circle
[radius=0.1] node  [below]  {$ g_2 $};
\draw [fill] (.5,1) circle
[radius=0.1] node  [left]  {$ g_4 $};
\draw  (.09,.1) --  (.45,.95) ;
\draw  (.55,.95) --  (.95,.05) ;
\draw  (.1,0) --  (.9,0) ;
\draw  (1,-.9) --  (1,-.1) ;
\draw  (.9,-1) --  (.1,-1) ;
\draw  (0,-.1) --  (0,-.9) ;
\draw (1,-.9) .. controls (1.5,1) and (1.75,1)  .. (.5,1) ;
\draw  (1,-.93) --  (.1,-.1) ;
\draw   (.99,-.05)  --  (0,-.93) ;
\end{tikzpicture}
\qquad
\begin{tikzpicture}[scale=.85]
\draw [fill] (0,0) circle
[radius=0.1] node  [left]  {$g'$};
\draw [fill] (1,0) circle
[radius=0.1] node  [right]  {$ g_3 $};
\draw [fill] (0,-1) circle
[radius=0.1] node  [left]  {$ g_1 $};
\draw [fill] (1,-1) circle
[radius=0.1] node  [right]  {$ g_2 $};
\draw [fill] (.5,1) circle
[radius=0.1] node  [left]  {$ g_4 $};
\draw [fill] (-.5,1) circle
[radius=0.1] node  [left]  {$ g_5 $};
\draw  (.09,.1) --  (.45,.95) ;
\draw  (.55,.95) --  (.95,.05) ;
\draw  (.1,0) --  (.9,0) ;
\draw  (1,-.9) --  (1,-.1) ;
\draw  (.9,-1) --  (.1,-1) ;
\draw  (0,-.1) --  (0,-.9) ;
\draw  (1,-.93) --  (.1,-.1) ;
\draw   (.99,-.05)  --  (0,-.93) ;
\draw  (-.5,1) --  (.95,.05) ;
\draw  (-.5,1) --  (0,.1) ;
\end{tikzpicture}
\subfloat{$\gamma_2$}
\begin{tikzpicture}[scale=.85]
\draw [fill] (0,0) circle
[radius=0.1] node  [left]  {$g'$};
\draw [fill] (1,0) circle
[radius=0.1] node  [right]  {$g$};
\draw [fill] (0,-1) circle
[radius=0.1] node  [left]  {};
\draw [fill] (1,-1) circle
[radius=0.1] node  [left]  {};
\draw [fill] (.5,1) circle
[radius=0.1] node  [left]  {};
\draw [fill] (-.5,1) circle
[radius=0.1] node  [left]  {};
\draw  (.09,.1) --  (.45,.95) ;
\draw  (.55,.95) --  (.95,.05) ;
\draw  (.1,0) --  (.9,0) ;
\draw  (1,-.9) --  (1,-.1) ;
\draw  (.9,-1) --  (.1,-1) ;
\draw  (0,-.1) --  (0,-.9) ;
\draw  (1,-.93) --  (.1,-.1) ;
\draw   (.99,-.05)  --  (0,-.93) ;
\draw  (-.5,1) --  (0,.1) ;
\draw  (-.5,1) --  (.5,1) ;
\end{tikzpicture}
\subfloat{$\gamma_3$}
\caption{  Three forbidden subgraphs of $ \mathcal{Z}(\alpha,\beta) $, where the degrees of vertices  $ g $ and $ g' $ of any subgraph in $ \mathcal{Z}(\alpha,\beta) $ must be $ 4 $ of type $ (ii) $ and $ 5 $. }\label{37}
\end{figure}
\\ \textbf{$ \mathbf{\gamma_1,\;\gamma_2 } $ and $ \mathbf{\gamma_3} $ of Figure \ref{37}:} Suppose that $ g $ is a vertex of degree $ 4 $ of type $ (ii) $ in $ \mathcal{Z}(\alpha,\beta) $ and $ g_1,g_2,g_3 $ and $ g_4 $ are all vertices in $ \mathcal{Z}(\alpha,\beta) $ which are adjacent to $ g  $. According to our discussion in Section 8, there exists exactly an element $ s\in \Delta_{(\alpha,\beta)}^{4} $ such that $ g\in \mathcal{V}_{(\alpha,\beta)}(s) $ and $ \theta_3(g)=0 $. Suppose that $\mathcal{V}_{(\alpha,\beta)}(s)=\{g,g_1,g_2,g_3\} $ and $ g_4\sim g_1 $ and $ g_4\sim g_2 $ (see Figure \ref{37}). Hence, there are two cycles of length 3 which have an edge in common in $ Z(\alpha,\beta) $ such as  $ C $ and $ C' $ with the vertex sets  $\mathcal{V}_{C}=\{ g,g_4,g_1\} $ and $\mathcal{V}_{C}=\{ g,g_4,g_2\} $. Let  $ T_C=[h_1,h'_1,h_2,h'_2,h_3,h'_3] $ and $ T_{C'}=[h_1,h'_1,t_2,t'_2,t_3,t'_3] $ be the $ 6- $tuples corresponding  to $ C $ and $ C' $, respectively, where the first two component are related to common edge of these cycles. With the above explanation, it is easy to see that the conditions for $ T_C $ and $ T_{C'} $ are exactly the same as the conditions relating to the $ 6- $tuples of two cycles of length $ 3 $ with the structure of graph $ H_1 $ as Figure \ref{10k} which  was explained in the early part of this section. Hence, the relations of all possible cases for $ T_C $ and $ T_{C'} $ lead to contradictions and therefore $ \mathcal{Z}(\alpha ,\beta) $ does not contain any subgraph isomorphic to  the graph $ \gamma_{1}$ as Figure \ref{37}. \\
Now, suppose that $ g' $ is a vertex of degree $ 5 $ in $ \mathcal{Z}(\alpha,\beta) $ and $ g_1,g_2,g_3, g_4 $ and $ g_5 $ are all vertices in $ \mathcal{Z}(\alpha,\beta) $ which are adjacent to $ g'  $. According to our discussion in Section 8, there exists exactly an element $ s\in \Delta_{(\alpha,\beta)}^{4} $ such that $ g'\in \mathcal{V}_{(\alpha,\beta)}(s) $ and $ \theta_3(g')=0 $. Suppose that $ \mathcal{V}_{(\alpha,\beta)}(s)=\{g',g_1,g_2,g_3\} $ (see Figure \ref{37}). With the same description as about the graph $\gamma_{1}$ as Figure \ref{37}, it can be seen that  it is impossible that $ g_4\sim g_3 $, $ g_4\sim g_2 $ and $ g_4\sim g_1 $ and also it is impossible that $ g_4\sim g_3 $, $ g_5\sim g_3 $.  Also it can be seen that if $ g_3 $ is a vertex of degree $ 4 $ of type $ (ii) $ in $ \mathcal{Z}(\alpha,\beta) $, then it is impossible that $ g_4\sim g_3 $ and $ g_4\sim g_5 $. Therefore, $ \mathcal{Z}(\alpha ,\beta) $ does not contain any subgraph isomorphic to one of the graphs $ \gamma_{2}$ and $ \gamma_{3}$ as Figure \ref{37}. This completes the proof of Theorems \ref{app} and \ref{appmatcal}. \\
In view of Remark \ref{3punit}, the argument to prove Theorem \ref{appunit} is exactly the same as the argument to prove that the graph $  H_1$ as Figure \ref{10k} is a forbidden subgraph of  $ Z(\alpha,\beta) $ which  was explained in the early part of this section.
\end{document}